\numberwithin{equation}{section}
\newtheorem{theorem}{Theorem}[section]
\newtheorem{lemma}[theorem]{Lemma}
\newtheorem{corollary}[theorem]{Corollary}
\newtheorem{proposition}[theorem]{Proposition}
\newtheorem*{theorem*}{Theorem}
\theoremstyle{definition}
\newtheorem{definition}[theorem]{Definition}
\theoremstyle{remark}
\newtheorem*{remark}{Remark}
\theoremstyle{remark}
\theoremstyle{definition}
\newcommand{\NN}{\mathbb{N}}
\newcommand{\QQ}{\mathbb{Q}}
\newcommand{\RR}{\mathbb{R}}
\renewcommand{\SS}{\mathbb{S}}
\newcommand{\ZZ}{\mathbb{Z}}
\newcommand{\cB}{\mathcal B}
\newcommand{\cC}{\mathcal C}
\newcommand{\cF}{\mathcal F}
\newcommand{\cG}{\mathcal G}
\renewcommand{\cH}{\mathcal H}
\newcommand{\cK}{\mathcal K}
\newcommand{\cM}{\mathcal M}
\newcommand{\cN}{\mathcal N}
\newcommand{\cS}{\mathcal S}
\newcommand{\cT}{\mathcal T}
\newcommand{\cU}{\mathcal U}
\newcommand{\be}{\mathbf{e}}
\newcommand{\bE}{\mathbf{E}}
\newcommand{\bp}{\mathbf{p}}
\newcommand{\bH}{\mathbf{H}}
\newcommand{\bF}{\mathbf{F}}
\newcommand{\bW}{\mathbf{W}}
\newcommand{\bx}{\mathbf{x}}
\newcommand{\bX}{\mathbf{X}}
\newcommand{\by}{\mathbf{y}}
\newcommand{\bY}{\mathbf{Y}}
\newcommand{\bZ}{\mathbf{Z}}
\newcommand{\bOh}{\mathbf{0}}
\newcommand{\ft}{\mathfrak{t}}
\newcommand{\fS}{\mathfrak{S}}
\DeclareMathOperator{\tr}{tr}
\DeclareMathOperator{\Span}{span}
\DeclareMathOperator{\supp}{supp}
\newcommand{\bangle}[1]{\left\langle #1 \right\rangle}
\DeclareMathOperator{\Ric}{Ric}
\DeclareMathOperator{\Id}{Id} 
\newcommand{\eps}{\varepsilon}
\DeclareMathOperator{\sing}{sing}
\DeclareMathOperator{\reg}{reg}
\DeclareMathOperator{\genus}{genus}
\DeclareMathOperator{\rank}{rank}
\DeclareMathOperator{\coker}{coker}
\DeclareMathOperator{\image}{image}
\DeclareMathOperator{\Graph}{graph}
\DeclareMathOperator{\Div}{div}
\newcommand{\bomega}{{\boldsymbol\omega}}
\title{Mean curvature flow with generic initial data II}
\author{Otis Chodosh} 
\address{OC: Department of Mathematics, Bldg.\ 380, Stanford University, Stanford, CA 94305, USA}
\email{ochodosh@stanford.edu}
\author{Kyeongsu Choi}
\address{KC: School of Mathematics, Korea Institute for Advanced Study, 85 Hoegiro, Dongdaemun-gu, Seoul 02455, Republic of Korea}
\email{choiks@kias.re.kr}
\author{Felix Schulze}
\address{FS: Department of Mathematics, Zeeman Building, University of Warwick, Gibbet Hill Road, Coventry CV7 4AL,
UK}
\email{felix.schulze@warwick.ac.uk} 
\begin{document}

\begin{abstract}
We show that the mean curvature flow of a generic closed surface in $\RR^3$ avoids multiplicity one tangent flows that are not round spheres/cylinders. In particular, we show that any non-cylindrical self-shrinker with a cylindrical end cannot  arise generically. 
\end{abstract}

\maketitle

\tableofcontents

\newpage


\section{Introduction}

Mean curvature flow is the natural heat equation for submanifolds. A family of surfaces $M(t) \subset \RR^{3}$ flows by mean curvature flow if 
\begin{equation} \label{eq:mean-curvature-flow}
\left(\tfrac{\partial}{\partial t} \bx \right)^{\perp} = \bH_{M(t)}(\bx),
\end{equation}
where $\bH_{M(t)}(\bx)$ denotes the mean curvature vector of $M(t)$ at $\bx$. When the initial surface $M(0)$ is compact, mean curvature flow is guaranteed to become singular in finite time. The simplest way to analyze such a singularity is to parabolically dilate around a singular point in space-time. Huisken's monotonicity formula \cite{Huisken:sing} (cf.~\cite{Ilmanen:singularities}) guarantees that a subsequential limit of such dilations will weakly limit to a \emph{tangent flow} which will be a weak solution to \eqref{eq:mean-curvature-flow}, evolving only by homothety. Ilmanen has showed that such a tangent flow will be supported on a smooth surface $\Sigma\subset \RR^3$ so that $(-\infty,0) \mapsto \sqrt{-t}\Sigma$ is a solution to \eqref{eq:mean-curvature-flow}. Such a surface is called a \emph{self-shrinker} (see Section \ref{subsec:convent}). 

A deeper understanding of such tangent flows is necessary in order to continue the flow past the onset of singularities (either by constructing a flow with surgery, or by showing that weak solutions to the flow have good partial regularity and well-posedness properties). The two main obstructions to doing so are the potential presence of \emph{multiplicity} and of tangent flows supported on self-shrinkers whose scalar mean curvature \emph{changes sign}. By a result of Huisken \cite{Huisken:sing} (cf.\ \cite{ColdingMinicozzi:generic}) a shrinker whose mean curvature does not change sign is either a plane or a round sphere or cylinder. Such singularities (when they occur with multiplicity one) are known to be well-behaved, thanks to regularity results of Colding--Minicozzi \cite{ColdingMinicozzi:sing-generic} (cf.\ \cite{White:stratification,ColdingIlmanenMinicozzi,ColdingMinicozzi:uniqueness-tangent-flow}) and the resolution of the mean convex neighborhood conjecture by Choi--Haslhofer--Hershkovits \cite{ChoiHaslhoferHershkovits}. 

In this paper, we show that the second obstruction does not occur for \emph{generic} initial data $M(0)\subset \RR^3$. Our main result is roughly as follows (see Section \ref{subsec:sketch-genus-drop} for a more detailed overview and Theorem \ref{theo:generic.flow.R3} for the precise statement):
\begin{quote}
\emph{The mean curvature flow of a generic initial surface $M(0)\subset \RR^3$ encounters only spherical and cylindrical singularities at least until multiplicity occurs for some tangent flow. In particular, up to the first time multiplicity occurs, the flow is well-posed and is completely smooth for almost every time. }
\end{quote}
This confirms---up to the potential occurrence of multiplicity---a long-standing conjecture of Huisken \cite[\# 8]{Ilmanen:problems}. 

The well-known \emph{multiplicity-one conjecture} posits that multiplicity never occurs, but this is widely open even for generic initial data (we note that considerable progress has been made under additional geometric assumptions on $M(0)$ or the flow; see \cite{White:size,ShengWang,Andrews:noncollapse,Lin:star,Brendle:inscribed-sharp,LiWang1,LiWang2,BrendleNaff:noncollapse}). We review previous work related to generic mean curvature flow in Section \ref{subsec:related-work} below. 

There is strong evidence that the \emph{generic} hypothesis in the above result is necessary. Indeed, Ilmanen--White have indicated \cite{white:ICM} a construction of a closed surface $M(0)\subset \RR^3$ whose mean curvature flow is not well-posed after the onset of singularities. (On the other hand, if $M(0)\subset \RR^3$ has genus zero, an important work of Brendle \cite{Brendle:genus0} shows that the statement above holds without ``generic.'')

\begin{remark} 
By combining our main result with a surgery construction by Daniels-Holgate \cite{Daniels-Holgate}, one can construct a mean curvature flow with surgery for a generic initial $M(0)\subset \RR^3$ until the first time multiplicity occurs for the weak flow above. 
\end{remark}

\subsection{The no-cylinder conjecture} 
At the first time singularities appear (or more generally, the first time that some singularity other than multiplicity-one spherical/cylindrical singularities occur) we have seen that any tangent flow is supported on a smooth self-shrinker $\Sigma$ (possibly with multiplicity). A fundamental result of Wang \cite{Wang:ends-conical} shows that if $\Sigma\subset\RR^3$ is a non-compact self-shrinker then any end of $\Sigma$ is smoothly asymptotic---with multiplicity one---to a smooth cone or a cylinder. 

Ilmanen has asked if it is possible that a non-cylindrical shrinker has a cylindrical end (the \emph{no-cylinder conjecture} \cite[\# 12]{Ilmanen:problems}). See \cite{Wang:cylindrical-ends-unique} for some partial progress towards non-existence. Moreover, we note that are by now many constructions of self-shrinkers (both numerical and rigorous) \cite{Angenent:doughnuts,Chopp,KKM:AC,Nguyen:AC,Ketover:self-shrinkers,SunWangZhou,BuzanoNguyenSchulz} but no example with a cylindrical end has been found.

The main new ingredient this paper can be stated as follows:
\begin{quote}
 \emph{Even if a non-cylindrical shrinker with a cylindrical end exists, it does not arise generically (with multiplicity one at the first singular time).}
 \end{quote}
 This builds on our  previous work with Mantoulidis \cite{CCMS:generic1} where we proved that asymptotically conical and compact shrinkers do not arise generically, but left open the possibility of a shrinker with a cylindrical end. As we discuss below, the analysis in the cylindrical end case is significantly complicated by the \emph{a priori} lack of graphicality of the corresponding ancient flow along the cylindrical end.

\subsection{Generic initial data and one-sided ancient flows}
In our previous paper with Mantoulidis \cite{CCMS:generic1}, we developed a new approach to the study of the mean curvature flow of generic initial data by relating this to the classification of certain ancient solutions. (One may view this as the \emph{parabolic} analogue of the Hardt--Simon foliation and corresponding generic regularity results for area minimizing hypersurfaces in $\RR^8$ \cite{HardtSimon:foliation,Smale,Wang:HSfol}, cf.\ \cite{CMS:910}.) 

\subsubsection{One-sided flows as local models for generic mean curvature flow} The basic strategy is as follows (see also \cite[\S 1.5]{CCMS:generic1} for a more detailed exposition of the method). Embed the initial surface $M_0(0)\subset \RR^3$ in a local foliation $M_s(0)$ and flow all of the surface simultaneously by mean curvature flow $M_s(t)$. Suppose that $M_0(t)$ encounters a singularity at time $T$ at point $\bx \in \RR^3$. (Let us assume that this is the first singular time, but in reality we can work with the first time with a singularity other a multiplicity one sphere/cylinder.) 

As explained above, the parabolically dilated flows 
\[
\lambda(M_0(T + \lambda^{-2}t  ) - \bx) : = M_0^\lambda(t)
\]
weakly limit subsequentially (as $\lambda\to\infty$) to the shrinking flow $t\mapsto \sqrt{-t}\Sigma$, $t<0$, associated to a smooth self-shrinker $\Sigma$. (Of course, this limit could also occur with some integer multiplicity $k\geq 2$; we will assume that $k=1$ below.)

The key observation in \cite{CCMS:generic1} is that if we \emph{simultaneously} dilate the entire foliation 
\[
\lambda(M_s(T + \lambda^{-2}t  ) - \bx) : = M_s^\lambda(t)
\]
then by choosing a subsequence of $s\to 0$ appropriately with $\lambda\to\infty$ we can pass $M_s^\lambda(t)$ to the limit to find $\check M(t)$ a (nontrivial) ancient mean curvature flow that's disjoint from $t\mapsto\sqrt{-t}\Sigma$ (we call such a flow \emph{one-sided}). The main strategy is then to show that $\check M(t)$ has \emph{improved} behavior as compared to $M_0(t)$ and $t\mapsto\sqrt{-t}\Sigma$. This is achieved by showing (see Section \ref{subsec:sketch}  for a more detailed discussion) that up to a parabolic dilation, there exists a \emph{unique} ancient one-sided flow $\overline M(t)$ so that $\tfrac{1}{\sqrt{-t}}\overline M(t)$ is close (in a $C^\infty_\textrm{loc}$ sense, with multiplicity one) to $\Sigma$ as $t\to-\infty$ (existence is proven in Proposition \ref{prop:exist-ancient-rescaled-MCF-C2-rescaling-argument} and uniqueness is proven in Theorem \ref{thm:uniqueness-global}) and moreover $\overline M(t)$ has the following improved properties (see Propositions \ref{coro:summary-tleq0-non-rescaled} and \ref{prop:genus-drop-one-sided-flow} )
\begin{enumerate}
\item $\overline M(t)$ is a shrinker mean-convex flow; geometrically this means that the space-time track is parabolically star-shaped\footnote{Compare with the ``elliptic'' star-shapedness of the Hardt--Simon foliation. } around $(\bOh,0)$,
\item as a consequence of the shrinker mean-convexity, all singularities of $\overline M(t)$ are spherical/cylindrical with multiplicity-one, 
\item for $t\ll0$, $\frac{1}{\sqrt{-t}}\overline M(t)$ is a small $C^\infty$ graph over $\Sigma$, and 
\item for $t<0$ sufficiently close to $0$, $\overline M(t)$ has genus zero. 
\end{enumerate}
Properties (1) and (2) can be used to show that for $s\sim 0$ appropriately chosen, the flow $M_s(t)$ will only have spherical/cylindrical singularities near $(\bx,T)$. Thus, if we assumed that the tangent flow to $M_0(t)$ at $(\bx,T)$ was supported on a more complicated shrinker, the flow $M_s(t)$ has improved \emph{locally}. Of course, this does not rule out the possibility of $M_s(t)$ having other such singularities elsewhere. 

To ensure that this improvement is actually reflected \emph{globally} it is crucial that we only have to iterate this perturbation process finitely many times (if we perturb infinitely many times, smooth regions that we already controlled might become singular in the limit). This is guaranteed by combining (3) and (4) with the classification of genus zero self-shrinkers due to Brendle \cite{Brendle:genus0}. Indeed, at any non-spherical/cylindrical singularity, \cite{Brendle:genus0} and (3) implies that $\overline M(t)$ has non-trivial genus for $t \ll 0$. In particular, some handle of $M_s(t)$ ``concentrates'' near $(\bx,T)$. However, handle pinches off before time $T$, since by (4) it does so for the local model $\overline M(t)$. Along with a result of White \cite{White:topology-weak} stating that genus is monotone non-increasing along mean curvature flow, this implies that we need only perturb finitely many times. 

\subsubsection{Existence/uniqueness of ancient flows} Fix a self-shrinker $\Sigma\subset \RR^3$. In the above sketch, we crucially used the existence of a one-sided ancient flow with properties (1)-(4) as well as the uniqueness of this flow. The basic mechanism involves the spectral properties of the $L$-operator
\begin{equation}\label{eq:intro-L-operator}
L u = \Delta_\Sigma u - \frac 12 \bx^T \cdot \nabla_\Sigma u + |A_\Sigma|^2 u + \frac 12 u
\end{equation}
obtained by linearizing $\bH + \tfrac 12 \bx^\perp$ at $\Sigma$ (with respect to normal variations). In their foundational work \cite{ColdingMinicozzi:generic}, Colding--Minicozzi introduced the $L$-operator and proved (among many other things) that the first eigenvalue $\mu$ of $L$ (with respect to the Gaussian weighted space) is negative. 

We can thus give an indication at the linear level why the existence and uniqueness result should hold. After an appropriate rescaling, an ancient solution to (rescaled\footnote{Given $N(t)$ a solution to mean curvature flow, the rescaled solution around $(\bx,T)$ is the one-parameter family of surfaces $\tau\mapsto e^{\tau/2}(N(T-e^{-\tau}) -\bx)$. It is easily checked that this procedure (around $(\bOh,0)$) converts the shrinking self-shrinker $t\mapsto \sqrt{-t}\Sigma$ into a static $\tau\mapsto\Sigma$. In general, the rescaled family will solve $\left(\tfrac{\partial}{\partial t} \bx \right)^{\perp} = \bH(\bx) + \tfrac 12 \bx^\perp$ (compare with \eqref{eq:mean-curvature-flow}). }) mean curvature flow that's close to $\Sigma$ can be modeled on an ancient solution to the parabolic equation
\begin{equation}\label{eq:par-L-eqn-intro}
\partial_\tau u = L u
\end{equation}
with $u\to 0$ as $\tau \to-\infty$. One-sidedness is reflected at the linear level by $u>0$ and shrinker mean convexity is reflected at the linear level by $\partial_\tau u \geq 0$ (shrinker mean convexity is equivalent to the rescaled mean curvature flow moving in one direction). 

For ``existence,'' we have the solution $e^{-\mu\tau}\varphi$, for $\varphi$ the first eigenfunction. Since the first eigenfunction does not change sign, we see that this is a ``one-sided'' solution and since $\mu<0$ we see that this is a ``shrinker mean convex'' solution. Conversely, for ``uniqueness'' we note that by a straightforward linear analysis, any ancient solution to \eqref{eq:par-L-eqn-intro} will be of the form
\begin{equation}\label{eq:intro-lin-analysis-stupid}
u(\bx,t) = c e^{-\mu\tau}\varphi +  \sum_{\mu < \lambda < 0} e^{-\lambda\tau} \varphi_\lambda(\bx)
\end{equation}
where the sum is taken over eigenvalues of $L$ and $\varphi_\lambda$ is a some corresponding eigenfunction (each of which must change sign, by the usual characterization of the first eigenfunction). Since $e^{-\lambda\tau} \gg e^{-\mu\tau}$ for $\tau\to-\infty$, we see that if $u$ is ``one-sided'' (i.e., $u>0$) then it holds that $u(\bx,t) = c e^{-\mu\tau}\varphi$, giving uniqueness in the linear setting. 

Of course, in the actual problem one must account for the nonlinearity of the equation. To prove existence we rely on a new upper/lower barrier construction for the ancient flow (cf.\ Lemma \ref{lemm:sub-super-one-sided}). (This allows for a somewhat more direct proof that the one-sided flow is shrinker mean convex, cf.\ \cite[\S 7]{CCMS:generic1}.) 

For uniqueness, if we were to assume \emph{a priori} that the unknown one-sided rescaled flow is an \emph{entire} graph (with good decay properties) over $\Sigma$ for very negative times, then a Merle--Zaag type analysis readily yields uniqueness, cf.\ Proposition \ref{prop:cond.unique.2}. However, establishing this graphicality represents a major issue in this work. Indeed, in contrast to the conical ends (where graphicality follows automatically from pseudolocality, see \cite[Lemma 7.18]{CCMS:generic1}), an ancient one-sided rescaled flow is not \emph{a priori} an entire graph over the cylindrical ends of $\Sigma$ for very negative times. Establishing this graphicality is the main technical ingredient needed to carry out the above linear analysis. We prove graphicality and the corresponding uniqueness statement in Section \ref{sec:estimates-ancient-general}, see Section \ref{subsec:sketch} below for an outline of the proof.  

\subsection{Related work} \label{subsec:related-work} 

The first major advance towards a theory of generic mean curvature flow was taken by Colding--Minicozzi who introduced the notion of entropy (cf.\ Section \ref{subsec:dens-mon-entropy}) and classified the sphere/cylinder as the unique \emph{linearly stable} self-shrinkers in a precise sense \cite{ColdingMinicozzi:generic}. In particular, their work shows that if $M(0) = \Sigma\subset \RR^3$ is a non-spherical/cylindrical closed self-shrinker, then there is $M'(0)$ a small $C^\infty$-graph over $M(0)$ so that the mean curvature flow $M'(t)$ does not have the shrinking flow associated to $\Sigma$ at any tangent flow. 

\begin{remark} It follows from Colding--Minicozzi's linear analysis that any shrinker becomes shrinker mean-convex when perturbed to one side. This crucial observation was first used by Colding--Ilmanen--Minicozzi--White \cite{ColdingMinicozziIlmanenWhite} to classify the sphere as the closed self-shrinker of minimal entropy (see Section \ref{subsec:dens-mon-entropy}). This was later generalized in various ways by Bernstein--Wang \cite{BernsteinWang:1,BernsteinWang:TopologicalProperty,BernsteinWang:topology-small-ent} (cf.\ \cite{Zhu:entropy,HershkovitsWhite:sharp-entropy}). Of particular relevance to this article, we note that Bernstein--Wang's proof \cite{BernsteinWang:TopologicalProperty} that the cylinder has the second-least entropy among non-flat shrinkers in $\RR^3$ proceeds by constructing a ``pre-ancient'' one-sided mean curvature flow (on one side of a hypothetical lower entropy asymptotically conical shrinker) and observing that it must lose genus at (non-rescaled) time $t=0$. 
\end{remark}

More recently, in a joint work with Mantoulidis \cite{CCMS:generic1} we introduced a new approach to the study of mean curvature flow with generic initial data. In particular, as explained above, we showed how the problem was related to the existence/uniqueness of ancient one-sided flows for a given shrinker $\Sigma$ and studied the case where $\Sigma$ is smoothly asymptotic to a cone. In this case, using pseudolocality (cf.\ \cite{ChodoshSchulze}), such a flow is automatically an entire graph over $\Sigma$ in the ancient past and the techniques from work of the second-named author and Mantoulidis \cite{ChoiMantoulidis} could be used to prove existence/uniqueness. In particular, \cite{CCMS:generic1} proved that asymptotically conical self-shrinkers do not arise as multiplicity one tangent flows at the first (non-generic) singular time for a generic initial surface in $\RR^3$. (The current paper generalizes the techniques from \cite{CCMS:generic1} to account for cylindrical ends.)

We have also developed a related, but different, strategy to study generic mean curvature flows in a separate joint work with Mantoulidis \cite{CCMS:low-ent-gen}. This approach avoids the need for a classification of ancient flows and instead relies on a softer \emph{density drop} argument. This argument is most effective in a \emph{low entropy} setting and in particular proved that in $\RR^3$ a surface $M\subset \RR^3$ with entropy $\lambda(M) \leq 2$ can be perturbed so as to only encounter multiplicity one spherical/cylindrical singularities. However, the argument does not seem sensitive to the \emph{genus drop} proven in this paper (and in \cite{CCMS:generic1}) which limits its applicability---at present---to the low entropy case. Similarly, the work \cite{CCMS:low-ent-gen} applied in $\RR^4$ to yield a similar result for hypersurfaces $M\subset \RR^4$ with entropy $\lambda(M) \leq \lambda(\SS^1\times \RR^2)$; combined with the surgery result of Daniels--Holgate \cite{Daniels-Holgate} this proves that the Schoenflies conjecture holds under such a hypothesis (this improves on an influential work of Bernstein--Wang who established the Schoenflies conjecture under the assumption $\lambda(M) \leq \lambda(\SS^2\times \RR)$, cf.\  \cite{BernsteinWang:SpaceOfExpanders,BernsteinWang:expander-compactness,BernsteinWang:degree-expander,BernsteinWang:relative-entropy,bernsteinWang:top-uniqueness-expanders,Bernstein-Wang:mtn-pass,Bernstein-Wang:schoenflies}). 

\begin{remark} The existence and uniqueness result for ancient flows proven here in fact holds for self-shrinking hypersurfaces $\Sigma\subset \RR^{n+1}$ ($n+1 = 3,\dots,7$) with smooth conical and cylindrical ends. In fact, we can even allow for $\Sigma$ to have ends modeled on cylinders of the form $\hat\Sigma\times\RR$, for $\hat\Sigma\subset \RR^n$ an arbitrary closed shrinker. However, in the case of ends modeled on $\hat\Sigma\times \RR$, we have to impose an extra condition on $\Sigma$, namely we require that the first eigenfunction of the $L$ operator is $o(1)$ at spatial infinity (we call this $\Sigma$ having \emph{strictly stable ends}). (This holds automatically for $\hat\Sigma$ a round sphere, by work of Colding--Minicozzi \cite{ColdingMinicozzi:generic}; cf.\ Proposition \ref{prop:ends.shrinkers.R3} and Lemma \ref{lem:decay-lowest-eigenfuncton}). This generalization has some consequences for perturbing away such singularities in higher dimensions (in particular, yielding precise local information in contrast with the local perturbation result described in  \cite[Appendix C]{CCMS:low-ent-gen}).
\end{remark}

\begin{remark}
Some time after our work with Mantoulidis \cite{CCMS:generic1,CCMS:low-ent-gen} appeared, a more analytic approach to the mean curvature flow of generic initial data has been studied by Sun--Xue \cite{SunXue:cpt,SunXue:conical,SunXue:cylinder} (modeled on an earlier program introduced by Colding--Minicozzi \cite{ColdingMinicozzi:dynamics}). They have proven that tangent flows modeled on (multiplicity one) compact or asymptotically conical tangent flows (other than spheres/cylinders) at the first singular time can be \emph{locally} perturbed away by adjusting the initial data. It is yet unclear if their methods can be applied to shrinkers with cylindrical ends (as we do in this paper) or if their methods can recover the crucial \emph{genus drop} property used here to prove \emph{global} results. 
\end{remark}

\subsection{Outline of existence/uniqueness for the one-sided ancient flow}\label{subsec:sketch}

In this section we give a more detailed description of the existence and uniqueness of the one-sided ancient flow. Consider $\Sigma\subset \RR^{n+1}$ (for\footnote{This is a technical assumption related to certain difficulties in White's theory of mean convex flows in dimensions admitting non-flat stable minimal cones; we expect that it could be removed, but have not pursued this further. Of course, this  restriction does not prevent applications in the most relevant dimensions $n=2,3$.} $n\leq 6$) a smoothly embedded self-shrinker. We assume that any end of $\Sigma$ is either smoothly asymptotic to a smooth cone (with multiplicity one) or smoothly asymptotic to a cylinder $\hat\Sigma\times \RR$ for $\hat\Sigma\subset \RR^n$ a closed smooth self-shrinker. We call this the \emph{nice ends} hypothesis (cf.\ Definition \ref{defi:nice-ends}). 

It follows from standard linear theory for the Gaussian weighted $L^2$-space that the $L$ operator \eqref{eq:intro-L-operator} admits a positive first eigenfunction $\varphi$. We will assume that $\Sigma$ has \emph{strictly stable ends}  (cf.\ Definition \ref{defi:nice-stable}) meaning that $\varphi(\bx) = o(1)$ as $\bx\to \infty$. By work of Colding--Minicozzi and the barrier argument given in Lemma \ref{lem:decay-lowest-eigenfuncton} this automatically holds if all cylindrical ends of $\Sigma$ are asymptotic to \emph{round cylinders} but we have been unable to determine if it holds in general. (Note that for $\Sigma\subset \RR^3$, work of Wang and Colding--Minicozzi guarantee that $\Sigma$ has nice, stable ends, see Proposition \ref{prop:ends.shrinkers.R3}.)

\begin{remark}
The stable ends hypothesis does not seem to have been considered elsewhere. If there exists $\Sigma\subset \RR^{n+1}$ with nice but unstable ends, we conjecture that there exists an ancient one-sided flow coming out of $\Sigma$, but the flow is not graphical in the ancient past. Indeed, one may expect that the ancient flow along a cylindrical end $\hat\Sigma\times \RR$ is modeled on the elliptic regularization translator solution that limits to the one-sided flow coming out of $\hat\Sigma$ (in one dimension lower). 
To prove existence and uniqueness in this setting one would need to analyze the ``tips" of the cylindrical ends by generalizing the arguments from \cite{ADS2} to apply in this setting. Since this is not needed for the main result in $\RR^3$, we leave such investigations for future work.
\end{remark}

We now state (somewhat less informally) our existence and uniqueness results for the ancient one-sided flow. 

\begin{theorem*}[Existence of ancient one-sided flow, informal]
Consider $\Sigma\subset \RR^{n+1}$, $n\leq 6$, a self-shrinker with nice, strictly stable ends. Fix one of the two open sets $\Omega$ with $\Sigma = \partial\Omega$ and choose the inwards pointing unit normal for $\Sigma$. There exists a weak\footnote{precisely, $\bar M(t)$ is an integral cyclic unit regular Brakke flow with corresponding weak set flow $\cK$ so that $\bar M(t) = \cH^n\lfloor \partial\cK(t)$} rescaled mean curvature flow $(\bar M(\tau))_{\tau \in \RR}$, with the following properties:
\begin{enumerate}
\item the flow is one-sided: $\bar M(\tau) \subset \Omega$
\item the flow is shrinker mean convex: the speed of $\tau \mapsto \bar M(\tau)$ is everywhere positive 
\item the flow has good regularity properties: $\bar M(\tau)$ has only spherical/cylindrical multiplicity one singularities
\item the flow is graphical in the ancient past: for $\tau \ll0$, there is $v(\cdot,\tau) > 0 \in C^\infty(\Sigma)$ so that $\bar M(\tau)$ is the normal graph of $v(\cdot,\tau)$,
\item convergence in the ancient past: the function $v(\cdot,\tau)\to0$ in $C^k(\Sigma)$ as $\tau \to -\infty$, and
\item the graph is first eigenfunction dominated: the function $v(\cdot,\tau)$ satisfies $\tfrac 12 e^{-\mu\tau} \varphi(\cdot) \leq v(\cdot,\tau) \leq 2 e^{-\mu\tau} \varphi(\cdot)$, for $\mu$ and $\varphi$ the first eigenvalue and eigenfunction of the $L$-operator. 
\end{enumerate}
\end{theorem*}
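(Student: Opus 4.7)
The plan is to produce $\bar M(\tau)$ as a subsequential limit of (weak) rescaled mean curvature flows started at times $\tau_i \to -\infty$ from normal-graphical perturbations of $\Sigma$ modeled on the first-eigenfunction solution to the linearized equation. Writing $\Sigma_v$ for the normal graph of a small $v\in C^\infty(\Sigma)$ over $\Sigma$ using the inward normal into $\Omega$, the rescaled mean curvature flow equation for $\tau \mapsto \Sigma_{v(\cdot,\tau)}$ reads
\[
\partial_\tau v = Lv + Q(v,\nabla v,\nabla^2 v),
\]
where $Q$ collects the quadratic-and-higher nonlinearity. The first input, which is Lemma \ref{lemm:sub-super-one-sided}, is to construct, for $\tau \ll 0$, sub/super-barriers $v^-(\cdot,\tau)\le v^+(\cdot,\tau)$ of the schematic form $v^\pm = (1\pm\delta)\,e^{-\mu\tau}\varphi$ (possibly plus a small correction decaying faster than $\varphi$ at spatial infinity) satisfying $\partial_\tau v^- - Lv^- - Q(v^-) \le 0 \le \partial_\tau v^+ - Lv^+ - Q(v^+)$. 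The strictly-stable-ends hypothesis ($\varphi\to 0$ at infinity) enters precisely here: it guarantees that $|Q(v^\pm)|$ is genuinely lower order than the positive linear reserve $\delta|\mu|e^{-\mu\tau}\varphi$ gained from perturbing by $\delta$, uniformly along the non-compact cylindrical ends.

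\textbf{Approximating flows and passage to the limit.} For each $\tau_i \to -\infty$, take as initial datum at time $\tau=\tau_i$ the normal graph $\Sigma_{e^{-\mu\tau_i}\varphi}$, which lies in $\Omega$ and is sandwiched between $\Sigma_{v^-(\cdot,\tau_i)}$ and $\Sigma_{v^+(\cdot,\tau_i)}$; evolve it by a weak rescaled mean curvature flow (obtained, e.g., by Ilmanen's elliptic regularization, after truncating to a large compact region and passing to the limit). The weak avoidance principle, applied against the barrier foliations $\tau \mapsto \Sigma_{v^\pm(\cdot,\tau)}$, keeps the approximating flow $\bar M_i(\tau)$ inside $\Omega$ and sandwiched between $\Sigma_{v^-(\cdot,\tau)}$ and $\Sigma_{v^+(\cdot,\tau)}$ for $\tau \in [\tau_i, T_*]$ with $T_*$ independent of $i$. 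Huisken's monotonicity yields a uniform entropy bound (since $\Sigma_{v^\pm(\cdot,\tau)}\to \Sigma$ in $C^\infty_{\mathrm{loc}}$ as $\tau\to -\infty$), and Brakke compactness extracts a subsequential weak limit $\bar M(\tau)$ defined for all $\tau\in\RR$.

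\textbf{Verifying the properties, and the main obstacle.} Properties (1), (4), (5), (6) are immediate from the barrier sandwich and parabolic regularity: one-sidedness passes to the limit; for $\tau\ll 0$ the flow is trapped between two graphs whose sup-norms decay like $e^{-\mu\tau}\varphi$, so $\bar M(\tau)$ is itself a normal graph $v(\cdot,\tau)$ with $\tfrac12 e^{-\mu\tau}\varphi \le v \le 2\,e^{-\mu\tau}\varphi$, and $C^k$ convergence to $\Sigma$ follows from standard interior estimates for the graph PDE once smallness is established. For (2) I would use a translation argument: for each $h>0$, $\tau \mapsto \bar M(\tau+h)$ is another ancient one-sided flow, and the ancient-past barrier sandwich forces it to strictly enclose $\bar M(\tau)$ for $\tau\ll 0$; the weak avoidance principle propagates this enclosure to all $\tau$, which is precisely the positive normal speed defining shrinker mean convexity. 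Property (3) then follows from the regularity theory for shrinker mean convex weak flows in the rescaled setting (a rescaled analogue of the mean convex theory of White and Haslhofer--Kleiner, which is the reason for the restriction $n\le 6$), combined with the Huisken--Colding--Minicozzi classification of non-negatively-mean-curved shrinkers as spheres or cylinders with multiplicity one. The main obstacle is the barrier construction itself along the cylindrical ends: because $\varphi$ vanishes at infinity, the naive barriers $(1\pm\delta)e^{-\mu\tau}\varphi$ do not automatically dominate $Q$, and one must exploit both the strictly-stable-ends hypothesis and the precise decay rate of $\varphi$ (Lemma \ref{lem:decay-lowest-eigenfuncton}) to make the barrier inequalities hold uniformly on all of $\Sigma$. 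This is exactly the new difficulty absent in the asymptotically conical case of \cite{CCMS:generic1}, where pseudolocality supplied graphicality for free.
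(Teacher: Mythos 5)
The overall strategy (build barriers, sandwich a graphical flow started from $\eps\varphi$, take a Brakke-flow limit as $\eps\to 0$, then appeal to shrinker-mean-convex regularity theory) does match the paper, but your barrier ansatz contains a genuine error that sinks the key step.

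You propose $v^\pm = (1\pm\delta)\,e^{-\mu\tau}\varphi$ and claim a ``positive linear reserve $\delta|\mu|e^{-\mu\tau}\varphi$ gained from perturbing by $\delta$.'' This reserve does not exist. Since $L\varphi = -\mu\varphi$, the function $e^{-\mu\tau}\varphi$ solves $\partial_\tau u = Lu$ \emph{exactly}, and multiplying a solution of a linear equation by a constant $(1\pm\delta)$ still gives an exact solution: $(\partial_\tau - L)v^\pm = 0$. There is nothing left over to absorb the nonlinearity $Q$. You sense the difficulty (``the naive barriers... do not automatically dominate $Q$'') but misdiagnose its source --- the issue is not that $\varphi$ vanishes at infinity, it is that amplitude rescaling produces zero slack. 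The paper's Lemma \ref{lemm:sub-super-one-sided} instead uses
\[
v_\pm(\bx,\tau) = \bigl(e^{-\mu\tau} \pm M e^{-2\mu\tau}\bigr)\varphi(\bx),
\]
where the correction has the \emph{same} spatial profile $\varphi$ but decays faster in \emph{time}. Now $(\partial_\tau - L)v_+ = M|\mu|\,e^{-2\mu\tau}\varphi$, which is a genuine positive reserve of size $O(e^{-2\mu\tau}\varphi)$. The nonlinear error is $E = uE_1 + E_2(\nabla u,\nabla u)$ with $|E_1| = O(e^{-\mu\tau})$, $|E_2| = O(1)$, so $|E| \le C e^{-2\mu\tau}\varphi + C e^{-2\mu\tau}|\nabla\varphi|^2$. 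The estimate $\varphi^{-1}|\nabla\varphi|^2 \in L^\infty(\Sigma)$ from Lemma \ref{lem:decay-lowest-eigenfuncton}(3) then gives $|E| \le C e^{-2\mu\tau}\varphi$ uniformly, so taking $M$ large beats the error. Note also that this pointwise estimate --- and hence the barrier construction --- needs only the \emph{stable}-ends hypothesis $\varphi\in L^\infty$ (class $\cS_n''$), not strict stability. The strictly-stable-ends hypothesis ($\varphi\to 0$ at spatial infinity, class $\cS_n'''$) enters later, in Propositions \ref{prop:long-time-exist-Sigma-eps} and \ref{prop:exist-ancient-rescaled-MCF-C2-rescaling-argument}, to control the blow-down of the weak flow near infinity via pseudolocality/cylinder stability when extending the flow to all $\tau\in\RR$. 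Your placement of where strict stability ``enters precisely'' is therefore also off.

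A smaller remark: your translation argument for shrinker mean convexity (comparing $\bar M(\tau)$ with $\bar M(\tau+h)$) only yields the enclosure from the barrier sandwich when $h$ is large enough that $\tfrac12 e^{-\mu(\tau+h)} > 2 e^{-\mu\tau}$, i.e.\ $h > |\mu|^{-1}\log 4$; getting the infinitesimal version (positive normal speed) from this discrete comparison needs an additional argument. The paper instead derives strict shrinker mean convexity directly on the smooth graphical piece by the Ecker--Huisken maximum principle applied to $2tH + \bx\cdot\nu$ (as in Bernstein--Wang), and then propagates it to the weak flow via Ilmanen's localized avoidance; that route is cleaner. Your outline of the rest of the proof --- one-sidedness and decay from the sandwich, regularity (3) from shrinker-mean-convex theory with $n\le 6$ --- is correct.
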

Precise references are as follows: for (1)-(3) see Propositions \ref{theo:basic-prop-cK-cM}  and \ref{coro:summary-tleq0-non-rescaled}, for (4)-(6) see Proposition \ref{prop:exist-ancient-rescaled-MCF-C2-rescaling-argument}.

\begin{theorem*}[Uniqueness of ancient one-sided flow, informal]
For $\Sigma,\Omega,\bar M(\tau)$ as in the existence result, consider $\check M(\tau)$ an ancient weak\footnote{integral unit regular Brakke flow} rescaled mean curvature flow with the following properties: 
\begin{enumerate}
\item one-sided: $\check M(\tau) \subset \Omega$ 
\item smoothly limiting in the ancient past: $\check M(\tau)$ limits to $\Sigma$ in $C^\infty_\textnormal{loc}(\Sigma)$ with multiplicity one as $\tau\to-\infty$. 
\end{enumerate}
Then there is $\tau_0 \in \RR$ so that $\check M(\tau)  = \bar M(\tau + \tau_0)$. 
\end{theorem*}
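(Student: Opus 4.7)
The strategy is to convert the weak flow uniqueness into a scalar PDE uniqueness for the normal graph height $v(\cdot,\tau)$ of $\check M(\tau)$ over $\Sigma$, and then run the Merle--Zaag analysis sketched around \eqref{eq:intro-lin-analysis-stupid}. Once $\check M(\tau)$ is realized as the entire normal graph of some $v(\cdot,\tau) > 0$ for $\tau \ll 0$ with $v \to 0$ in a suitable norm, $v$ satisfies $\partial_\tau v = L v + Q(v)$ for a quadratic nonlinearity $Q$, and Proposition \ref{prop:cond.unique.2} applies to give the leading asymptotic $v(\cdot,\tau) = c\,e^{-\mu\tau}\varphi(1 + o(1))$ for some $c > 0$. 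A time translation of $\bar M$ matching this leading coefficient, followed by a comparison/sliding argument against the shrinker mean convex foliation generated by $\bar M$, then closes the uniqueness.

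The main obstacle --- and essentially the only new difficulty compared to the asymptotically conical case treated in \cite{CCMS:generic1} --- is establishing the entire graphicality of $\check M(\tau)$ over $\Sigma$ for $\tau \ll 0$. Along the conical ends of $\Sigma$ this is automatic from pseudolocality, exactly as in \cite[Lemma 7.18]{CCMS:generic1}: $C^\infty_\textnormal{loc}$ smallness on a large compact piece propagates outward along the cone. Along a cylindrical end $\hat\Sigma \times \RR$, however, $C^\infty_\textnormal{loc}$ convergence gives no a priori uniform control along the cylindrical axis, and pseudolocality no longer closes. I plan to exploit the strictly stable ends hypothesis in an essential way: since the first eigenfunction satisfies $\varphi(\bx) = o(1)$ as $\bx \to \infty$, the shrinker mean convex sub/super barriers constructed in Lemma \ref{lemm:sub-super-one-sided} have height on the order of $e^{-\mu\tau}\varphi$ and hence become arbitrarily small far out along the cylindrical end. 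Combining one-sidedness $\check M(\tau) \subset \Omega$, these small barriers, Brakke regularity, and the interior $C^\infty_\textnormal{loc}$ graphicality guaranteed by hypothesis (2), I expect to propagate graphicality outward to all of $\Sigma$ at sufficiently negative $\tau$, together with the decay estimates needed for the subsequent Merle--Zaag step.

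With graphicality in hand, the rest is essentially linear. Projecting $v$ onto the $L$-eigenspaces in the Gaussian weighted $L^2$-space produces an ODE system perturbed by $Q$. Since any eigenfunction beyond the first must change sign, positivity of $v$ combined with the standard Merle--Zaag trichotomy forces all non-leading unstable modes to vanish, yielding $v(\cdot,\tau) = c\, e^{-\mu\tau}\varphi(1 + o(1))$ for some $c > 0$. A time translation $\tau \mapsto \tau + \tau_0$ of $\bar M$ then matches its leading coefficient to that of $\check M$. To conclude, I would compare $\check M(\tau)$ to the one-parameter family $s \mapsto \bar M(\tau + \tau_0 + s)$, which foliates a neighborhood of $\Sigma$ thanks to the shrinker mean convexity property (2) of the existence theorem: the standard sliding argument (initial nesting for $s$ large, avoidance propagation, infimum achieving contact, strong maximum principle) identifies $\check M$ with some leaf $\bar M(\cdot + \tau_0 + s_0)$, and matching leading asymptotics forces $s_0 = 0$.

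The hard step, as emphasized throughout the excerpt, is the cylindrical-end graphicality: without it, no spectral analysis is available, and it is precisely the strict stability of the ends --- entering through the decay $\varphi = o(1)$ --- that makes the barrier-driven propagation of graphicality viable. Everything downstream is a careful but structurally standard Merle--Zaag plus foliation argument.
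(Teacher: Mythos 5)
Your high-level intuition is right (barriers $\to$ graphicality $\to$ spectral/Merle--Zaag $\to$ match $\bar M$), but there is a genuine gap precisely at the step you correctly flag as hardest: propagating graphicality along the cylindrical ends. The sub/super barriers of Lemma~\ref{lemm:sub-super-one-sided} cannot do this job. Those barriers are themselves graphs of $(e^{-\mu\tau}\pm Me^{-2\mu\tau})\varphi$ over $\Sigma$, and the comparison principle used to verify them (Ecker--Huisken) presupposes the unknown flow is already a graph. Applied to the a priori non-graphical Brakke flow $\check M$, the only available tool is an avoidance-type argument, and then two problems appear. First, along a cylindrical end $\varphi$ decays only polynomially in $z$ (Lemma~\ref{lem:decay-lowest-eigenfuncton}), so a barrier of height $\sim e^{-\mu\tau}\varphi$ does not ``close up'' — a noncompact Brakke flow trapped between two noncompact hypersurfaces whose separation decays at infinity can leak out at infinity, and Ilmanen's localized avoidance gives nothing without control at spatial infinity. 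Second, even after trapping the support, support closeness is not graphicality: the flow could double back or develop singularities in the trapped slab. Also note that the strictly stable end hypothesis is not what makes this step work — the whole graphicality/decay chain in Section~\ref{sec:estimates-ancient-general} only uses $\Sigma \in \cS_n''$ (stable ends); strict stability enters only at the very end to invoke Proposition~\ref{prop:cond.unique.1} and upgrade uniqueness in the ancient past to uniqueness for all $t<0$.

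The paper's resolution is the welded barrier construction of Section~\ref{sec:barriers}, which is the real content you would need to supply. One first proves a nearly sharp interior decay rate (Theorem~\ref{thm:interior decay}) via a subsolution of the form $ae^{-\mu_0\tau}(1-aMe^{-\mu_0\tau})\varphi_0$ together with a Li--Yau differential Harnack inequality (Proposition~\ref{prop:Li-Yau}) to rule out boundary contact; this controls the small parameter $\eta$ feeding the barrier. Then one builds a ``long barrier'' $\Xi_\tau$ as the graph of $\tilde v_\tau + \eta z^\alpha\hat\varphi(\bomega)$ over $\hat\Sigma\times[\alpha,R]$ — crucially peeling \emph{away} from $\Sigma$ as $z$ grows — and ``welds'' it at $z\sim \eta^{-1/\alpha}$ to (a portion of) a translator $\Gamma^\lambda_\delta$ approximating the inward rescaled flow of $\hat\Sigma_\delta$; the translator is shown to be bounded or asymptotically conical and well-separated from $\Sigma$ at infinity (Lemma~\ref{lemm:lin-grow-trans}), which is what lets Ilmanen's avoidance apply to a noncompact Brakke flow (Theorem~\ref{theo:barriers-inner-outer}). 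Only then is closeness upgraded to graphicality via a compactness/continuity argument against the cylindrical model (Proposition~\ref{prop:barrier.to.graph}). Finally, even with graphicality on the exponentially growing region $B_{\rho(\tau)}$, the leading-coefficient matching does not finish via a sliding/foliation argument as you propose, because graphicality is never established globally in space; the paper instead shows $\bar M$ lies (weakly) below $\check M$ using cut-off approximants (Propositions~\ref{prop:est-vm}, \ref{prop:entire-sol-below}) and then closes with a bootstrap on the weighted $L^2$ gap $W(\tau)$ (Theorem~\ref{thm:uniqueness-global}), iterating improved $L^2$ smallness $\to$ larger graphical radius $\to$ smaller cutoff error. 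That iteration, not a foliation contact argument, is what forces $W\equiv 0$.
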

The precise reference is Theorem \ref{thm:uniqueness-global}. 

\subsubsection{Outline of the proof of existence}
For $\mu,\varphi$ the first eigenvalue/eigenfunction for the $L$-operator, we prove in Lemma \ref{lemm:sub-super-one-sided} that the graphs of
\[
v_\pm(\bx,\tau) : =  (e^{-\mu \tau}\pm M e^{-2\mu \tau}) \varphi(\bx),
\]
define a supersolution (for $v_+$) and subsolution (for $v_-$) for rescaled mean curvature flow, for $\tau \ll0$ and $M\gg0$ (see Lemma \ref{lemm:sub-super-one-sided}).\footnote{This is a key innovation in the existence part of the present paper, allowing us to simplify several arguments from \cite{CCMS:generic1} even though the cylindrical ends seem like they should complicate certain steps.} The key feature is that to highest order $v_\pm \sim e^{-\mu\tau}\varphi$ decays at exactly the expected rate. Using these solutions as barriers, we see that the rescaled mean curvature flow with initial condition the graph of $\eps\varphi$ starting at time $\tau = |\mu|^{-1} \log \eps$---where $\eps>0$ is sufficiently small---exists until some fixed time $\tau_0$ independent of $\eps>0$ (this follows from the bootstrap arguments in Section \ref{sec:bootstrap}, see Proposition \ref{prop:short-time-exist-Sigma-eps}). 

Because the flows exist for a uniform amount of time as $\eps\to 0$, we can pass to the $\eps\to 0$ limit to obtain a graphical ancient one-sided rescaled mean curvature flow $\bar M(\tau)$ defined for $\tau \in (-\infty,\tau_0]$. In fact, using standard elliptic regularization methods we can continue $\bar M(\tau)$ to all $\tau \in\RR$ as a weak flow. This yields (1). Properties (4)-(6) follow from the barriers. 

Shrinker mean convexity for all $\tau\in\RR$ follows from shrinker mean convexity in the ancient past and spatial infinity. This proves (2) and allows us (as in \cite{CCMS:generic1}) to apply White's regularity theory to deduce (3). 

\subsubsection{Outline of the proof of uniqueness}

As mentioned above, establishing uniqueness of the ancient one-sided flow represents the central difficulty in this paper. 

For a shrinker $\Sigma = \partial\Omega$ with nice, stable ends consider an ancient weak rescaled mean curvature flow $\check M(\tau)$ that converges to $\Sigma$ in $C^\infty_\textrm{loc}$ with multiplicity-one as $\tau\to-\infty$. We claim that $\check M (\tau) = \bar M(\tau + \tau_0)$ for some $\tau_0\in\RR$, where $\bar M(\cdot)$ is the ancient one-sided flow constructed above. 

For simplicity, we assume in this outline that $\Sigma$ has exactly one end and is asymptotically cylindrical, modeled on $\hat\Sigma\times \RR$. Multiple cylindrical ends do not interact in a significant way and conical ends can be handled by the arguments from \cite{CCMS:generic1}, so this is not a significant loss of generality. We will choose coordinates $(\bomega,z) \in \RR^n\times \RR$ so that the $\RR$-factor of the asymptotic cylinder  points in the $\be = \partial_z$ direction. 

The steps to prove uniqueness are as follows:
\begin{enumerate}
\item We prove a nearly sharp upper bound for the graphical function $\check u$ in a fixed compact region $\Sigma_0\Subset \Sigma$ (with smooth boundary) as $\tau\to-\infty$. Namely, for any $\mu_0$ the first eigenvalue of $L$ on $\Sigma_0$ (with Dirichlet boundary conditions) we show (see Theorem \ref{thm:interior decay}) that 
\begin{equation}\label{eq:nearly-sharp-decay-intro-sketch}
\sup_{\Sigma_0} \check u \leq C e^{-\mu_0\tau}
\end{equation}
as $\tau\to-\infty$. (By taking $\Sigma_0$ larger we can take $\mu_0$ as close to $\mu$ as we want.)

The basic idea is to consider $\varphi_0$ the first eigenfunction for $L$ on $\Sigma_0$ with Dirichlet boundary and to form a subsolution\footnote{In reality, this expression fails to be a subsolution near $\partial\Sigma_0$. We use a different argument to prevent contact there by contrasting the Hopf boundary point lemma with a differential Harnack inequality proven for $\check u$. } of the form 
\[
a e^{-\mu_0\tau}(1-aM e^{-\mu_0\tau})\varphi_0
\]
(this is similar to the ansatz used in the proof of existence). If the desired decay was false then by using the (parabolic) Harnack inequality we would be able to fit the subsolution below $\check M(\tau)$ for larger and larger values of $a$ (as $\tau\to-\infty$). 

This part of the argument is very general and we expect it will be useful in other contexts. 
\item Fixing $\delta(\tau) : = \Vert \check u(\cdot,\tau)\Vert_{C^4(\Sigma\cap B_{4R}(\bOh))}$ for $R\gg0$ fixed, step (1) (along with Schauder theory) imply that $\delta(\tau)$ has nearly sharp exponential decay as $\tau \to-\infty$. We combine this with a barrier argument to extend this to graphicality/decay in the $z$-direction (i.e., along the cylindrical end). In particular, we prove that there is some $\alpha \gg0$ so that $\check M(\tau)$ is a small graph over the exponentially growing region $\Sigma \cap B_{\tfrac 12 \delta(\tau)^{-\frac{1}{1+\alpha}}}(\bOh)$ (see Proposition \ref{prop:global-decay}). The barrier argument proceeds in the following manner:
\begin{enumerate}
\item Construction of the barriers: In the case that the cylindrical end was modeled on a round cylinder, we could try to use the foliation of the complement of the end of the cylinder by self-shrinkers as introduced by Angenent--Daskalopoulos--\v{S}e\v{s}um \cite{ADS} (cf.\ \cite{KleeneMoller}). 

However, since the actual end of $\Sigma$ is likely to cross in and outside of the cylinder, these shrinkers will not fit perfectly inside/outside of $\Sigma$. To solve this problem, we observe that we can construct a ``long'' barrier by first writing the end of $\Sigma$ as a graph over $\hat\Sigma\times \RR$ and then perturbing this graph by $\eta z^\alpha \hat\varphi(\bomega)$ for $\hat \varphi$ the first eigenfunction of $L_{\hat\Sigma}$ (see Proposition \ref{prop:bendy-barrier-good-ends}). When $\eta>0$ is sufficiently small, this property holds for $\alpha \leq z \leq \eta^{-\frac 1\alpha}$. When $z = \eta^{-\frac 1\alpha}$, the long barrier has peeled away from $\Sigma$ a sufficient amount so that it can be ``welded''  (cf.\ Proposition \ref{prop:welding}) to the $z\geq \eta^{-\frac 1\alpha}$ portion of a ADS shrinker that fits within the end of $\Sigma$. 

(When $\hat\Sigma$ is some arbitrary closed shrinker rather than the round sphere, we construct a replacement for the ADS shrinkers formed from the translator approximations to the rescaled (ancient) mean curvature flow coming out of $\hat\Sigma$; see Section \ref{subsec:trans-barr}. We note also that Sun--Wang have very recently \cite{SunWang:translator} constructed complete translators asymptotic to $\hat\Sigma\times \RR$ which could alternatively be used here.)
\item The flow fits under the barrier with $\eta \sim \delta(\tau)$: We then need to show that the flow $\check M(\tau)$ can be controlled along the end of $\Sigma$ by using the barrier described in (a). To do so, we prove that the barriers from (a) either have bounded length or are asymptotically conical (see Lemma \ref{lemm:lin-grow-trans}) and that $\check M(\tau)$ decays to $\Sigma$ in a sublinear manner (see Proposition \ref{prop:sublinear-growth}). Combining these facts (and the decay from (1) above, we see that the flow $\check \cM(\tau)$ is pinched between $\Sigma$ and the barrier. 
\item Upgrading the barrier argument to graphicality:  \emph{A priori} it could hold that the flow bends into multiple sheets or develops a singularity in the region where it is pinched between the barrier and $\Sigma$. We know that this does not occur on fixed compact sets. Using a geometric continuity/bootstrap argument we can then extend this to the full barrier region (see Proposition \ref{prop:barrier.to.graph}). 
\end{enumerate}
\item At this point the barrier argument from (b) has given that $\check M(\tau)$ is graphical on a exponentially growing region (with nearly sharp pointwise estimates). We can use this, along with a Merle--Zaag type spectral dynamic analysis to prove that the graphical function $\check u$ is equal to $\check a e^{-\mu \tau} \varphi + O(e^{(-\mu + \tfrac {\delta_0} {4}) \tau})$ on compact sets, i.e., $\check u$ behaves like one would expect based on a naive linear analysis as in \eqref{eq:intro-lin-analysis-stupid} above (see Theorem \ref{thm:decaying-mode}). Below we will assume that $\check a = 1$ (this can be arranged by a translation in time unless $\check a = 0$; we ignore this possibility in this sketch, as it is easily handled by a modification of the $\check a = 1$ argument). We will also assume that $\bar M(\tau)$ (the graphical flow constructed above) has the same top order spectral behavior, i.e., $\bar a=1$. 
\item We now prove that $\bar M(\tau)$ lies (weakly) \emph{below} $\check M(\tau)$. (Later we will show they agree.) The main idea is for $\tau_m\to-\infty$ appropriately chosen, we can cut off $(1-o(1))e^{-\mu\tau_m}\varphi(\cdot)$ near radius $r_m\to\infty$ to form an initial condition for the rescaled mean curvature flow that lies below both solutions $\check M(\tau_m),\bar M(\tau_m)$. Since $\bar M(\tau_m)$ remains graphical (with good exponential growth estimates) until a fixed time (independent of $m$), a bootstrap argument implies that the rescaled mean curvature flow of this initial condition also exists until a fixed time (independent of $m$) and remains below both solutions $\check M(\tau),\bar M(\tau)$. 

As $m\to\infty$ these new solutions limit to ancient one-sided solutions that satisfy similar graphicality/decay estimates to the constructed graphical solution $\bar M(\tau)$ (see Proposition \ref{prop:est-vm}). Because the initial condition is formed by cutting off $(1-o(1))e^{-\mu\tau_m}\varphi(\cdot)$, a simple spectral dynamics and uniqueness argument implies that in fact this new solution agrees with $\bar M(\tau)$. Thus, $\bar M(\tau)$ lies below $\check M(\tau)$ (see Proposition \ref{prop:entire-sol-below}). 
\item It thus remains to estimate $\check M(\tau)$ ``from above'' (in reality we do not yet know that $\check M(\tau)$ is an entire graph over $\Sigma$). As a first step, we observe that we could repeat the barrier argument used in (2) to pinch $\check M(\tau)$ between barriers and $\bar M(\tau)$ (instead of pinching it between barriers and $\Sigma$). This involves observing that the ``long barrier'' can be also considered as a graph over $\bar M(\tau)$ instead of $\Sigma$ and one can construct suitable barriers in this setting as well. This allows us to bootstrap estimates for the quantity 
\[
W(\tau)^2 : = \sup_{s\leq \tau} \int_{\Sigma\cap B_{5R}(\bOh)} (\check u(\cdot,s)-\bar u(\cdot,s))^2 e^{-\frac 14 |\bx|^2} .
\]
Indeed, when $W(\tau)$ is very small, we can fit very close barriers (upgrading $L^2$-smallness to $L^\infty$-smallness using Schauder estimates) to obtain graphicality on a very large scale $\sim W(\tau)^{-\frac{1}{1+\alpha}}$. Then, this implies that the cutoff error estimates in a spectral dynamics argument for $\check u(\cdot,s)-\bar u(\cdot,s)$ are very small, leading to further improved estimates for $W(\tau)$. Eventually (see Theorem \ref{thm:uniqueness-global}), this implies that $W(\tau) \equiv 0$, finishing the proof.
\end{enumerate}

\subsection{Outline of application to mean curvature flow in $\RR^3$}\label{subsec:sketch-genus-drop}

Consider $M(0)\subset \RR^3$ a closed embedded surface. Assume that some weak\footnote{integral unit regular Brakke flow} mean curvature flow $M(t)$ starting at $M(0)$ has only multiplicity one spherical and cylindrical tangent flows until time $t=T$\footnote{By the resolution of the mean convex neighborhood conjecture \cite{ChoiHaslhoferHershkovits} any two integral unit regular Brakke flows with the given initial condition agree at least until $t=T$. }. Assume that  all singularities at time $t=T$ have multiplicity one. For simplicity, we assume that there is exactly one singular point at $t=T$, at $\bx \in \RR^3$. 

We now consider a local foliation $M_s(0)$ around $M(0)$ and flow each leaf simultaneously into $M_s(t)$. Fixing $s_j\to 0$ so that the level set flow of $M_{s_j}(0)$ does not fatten, we parabolically dilate $M_{s_j}(t)$ and $M_0(t)$ around $(\bx,T)$ by the parabolic distance $d_j$ of the (space-time) support of the flow $t\mapsto M_{s_j}(t)$ to $(\bx,T)$. Passing both flows to the limit, we obtain an ancient flow $\check M(t)$ on one side of some\footnote{Uniqueness of tangent flows has not been proven in the case of shrinkers with cylindrical ends, cf.\ \cite{Schulze:Loj,ColdingMinicozzi:uniqueness-tangent-flow,ChodoshSchulze}. } tangent flow to $t\mapsto M_0(t)$ at $(\bx,T)$. By a standard argument based on Huisken's monotonicity formula (cf.\ the proof of Lemma \ref{lemm:Tgen-est-one-sidedR3}) we have $\lambda(\check M(t)) \leq F(\Sigma)$, so the tangent flow to $t\mapsto \check M(t)$ is $t\mapsto \cH^2\lfloor \sqrt{-t}\Sigma$ (i.e., the multiplicity is one).  By the choice of rescaling, $\check M(t)$ is not $t\mapsto \cH^2\lfloor \sqrt{-t}\Sigma$, so by the uniqueness result discussed above, it must be a parabolic dilation of the ancient one-sided flow $\bar M(t)$. We have that $t\mapsto \bar M(t)$ has only multiplicity one spherical/cylindrical singularities, at least for $t<0$\footnote{For the arguments developed here, we only need to analyze the flow for $t<0$. This is a strong contrast to \cite{CCMS:generic1} which considers the properties of the one-sided flow for all $t \in \RR$ (see, however \cite[\S 8.4]{CCMS:generic1}). }. 

In particular, returning to the original scale, we find that $t\mapsto M_{s_j}(t)$ has only multiplicity one spherical/cylindrical singularities, at least until time $ t= T - o(d_j^2)$ as $j\to\infty$. We claim that the genus of $M_{s_j}(T - o(d_j^2))$ has strictly dropped in comparison to the genus of $M_0(t)$ right before $t=T$. Recalling that the genus is non-increasing along mean curvature flow by work of White \cite{White:topology-weak}, we thus conclude the following result after $\leq \genus(M(0))$ such perturbations:
\begin{theorem*}[Mean curvature flow of generic initial data in $\RR^3$, informal]
For $M(0)\subset \RR^3$ closed embedded surface, there is $M'(0)\subset\RR^3$ an arbitrarily small $C^\infty$ normal graph over $M(0)$ so that the mean curvature flow $t\mapsto M'(t)$ satisfies one of the following conditions:
\begin{itemize}
\item all tangent flows at singular points are multiplicity one shrinking spheres or cylinders, or
\item at the first time this fails, some tangent flow has multiplicity $\geq 2$. 
\end{itemize} 
\end{theorem*}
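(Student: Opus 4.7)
The plan is to iterate a single perturbation step that strictly decreases the genus, appealing to White's monotonicity of genus under mean curvature flow to terminate the iteration. Given $M(0)$, let $T_{\mathrm{gen}}$ denote the first time the weak flow $t\mapsto M(t)$ has a singularity that is not a multiplicity-one sphere or cylinder tangent flow — the flow is unique up to this time by the mean convex neighborhood theorem of Choi--Haslhofer--Hershkovits. If $T_{\mathrm{gen}}=+\infty$ we are in the first alternative; if some tangent flow at $T_{\mathrm{gen}}$ has multiplicity $\ge 2$ we are in the second. Otherwise every tangent flow at $T_{\mathrm{gen}}$ is multiplicity one but at least one, say at $(\bx,T)$, is supported on a shrinker $\Sigma$ that is not a round sphere, plane, or cylinder; the goal of one perturbation step is to produce an arbitrarily $C^\infty$-small perturbation $M'(0)$ whose flow has strictly smaller genus at some time slightly before $T_{\mathrm{gen}}$.

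To perform the step, I would embed $M(0)$ in a smooth local foliation $\{M_s(0)\}$ and flow each leaf simultaneously. Since only countably many $s$ produce a fattening level set flow of $M_s(0)$, pick $s_j\to 0$ off this countable set. Let $d_j$ be the parabolic distance from the space-time support of $M_{s_j}$ to $(\bx,T)$, and parabolically dilate both $M_0$ and $M_{s_j}$ around $(\bx,T)$ by $d_j^{-1}$. A subsequential Brakke limit yields an ancient weak flow $\check M$ lying on one side of $\Sigma$. Huisken's monotonicity yields $\lambda(\check M)\le F(\Sigma)$, so the tangent flow of $\check M$ at $-\infty$ is $\sqrt{-t}\,\Sigma$ with multiplicity one, and $\check M(\tau)\to\Sigma$ smoothly on compact sets as $\tau\to-\infty$. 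The uniqueness theorem stated above then forces $\check M(\tau)=\bar M(\tau+\tau_0)$ for some $\tau_0\in\RR$, where $\bar M$ is the canonical one-sided ancient flow.

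Next, I would exploit properties (3) and (4) of $\bar M$: only multiplicity-one sphere/cylinder singularities, and genus zero for all $\tau$ sufficiently close to $0$. Combined with Brendle's classification of genus-zero shrinkers in $\RR^3$, the hypothesis that $\Sigma$ is not a sphere, plane, or cylinder forces $\bar M(\tau)$ to carry positive genus for $\tau\ll 0$; equivalently a handle of $M_{s_j}$ concentrates into the parabolic $O(d_j)$-neighborhood of $(\bx,T)$ and is pinched off strictly before $T_{\mathrm{gen}}$. Transported back, for $j$ large and a suitable $\eta=o(d_j^2)$ the flow $M_{s_j}(T_{\mathrm{gen}}-\eta)$ has only generic singularities in the neighborhood of $(\bx,T)$ (by property (3)) and strictly fewer handles there than $M_0$ had just before $T_{\mathrm{gen}}$. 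White's genus monotonicity then yields $\genus(M_{s_j}(T_{\mathrm{gen}}-\eta))<\genus(M_0)$. Since genus is a nonnegative integer, iterating at most $\genus(M(0))$ times produces the desired $M'(0)$.

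The main obstacle is ensuring that the iteration terminates \emph{globally}: a single perturbation improves the flow near $(\bx,T)$ but could in principle degrade it far away. This is exactly why we need the strict global genus drop, and it is here that the full strength of the paper's machinery enters. Specifically one needs (a) the uniqueness theorem for $\bar M$, so that the extracted $\check M$ is forced to inherit the shrinker-mean-convex structure (without uniqueness one would not control the singularities of $\check M$), (b) Brendle's classification to promote genus zero at the shrinker scale into positive genus for $\bar M$, and (c) a careful choice of foliation and non-fattening parameter so that the Brakke limit $\check M$ is a genuine one-sided flow with multiplicity one in the limit. The case of simultaneous non-generic singularities at $T_{\mathrm{gen}}$ is handled by running the extraction at each bad point in parallel; only finitely many can exist by the entropy and density bounds.
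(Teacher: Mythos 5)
Your high-level strategy is the same as the paper's: extract a one-sided ancient flow by simultaneously flowing a foliation around $M(0)$, choose a non-fattening parameter $s_j$, parabolically dilate by the separation $d_j^{-1}$, invoke the uniqueness theorem to identify the limit with the canonical flow $\bar M$, use Brendle's genus-zero classification and the genus drop of $\bar M$, and iterate using White's genus monotonicity. There are, however, two genuine gaps.

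The principal one is the inference ``strictly fewer handles there $\ldots$ White's genus monotonicity then yields $\genus(M_{s_j}(T_{\mathrm{gen}}-\eta))<\genus(M_0)$.'' A drop in the \emph{local} genus $\genus(\check M_j(t)\cap B_R)$ from positive to zero does not, by itself, force a strict drop in the global genus. A handle visible in $B_R$ (in the cap-off sense of Definition \ref{defi:genus.surface.w.bdry}) need not contribute to $\genus(\check M_j(t))$: for an embedded surface $\Gamma\subset\RR^3$ one can have $\genus(\Gamma\cap B_R)>\genus(\Gamma)$ (e.g., a topological sphere shaped like a bent tube so that $\Gamma\cap B_R$ caps off to a torus). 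White's theorem gives non-increase of the global genus, which is perfectly compatible with no strict drop. To convert the local drop into a global one, the paper (Proposition \ref{prop:generic.flow.R3.iterate}) needs the localized topological monotonicity machinery: at the later time $t_{1,j}$, where the local genus in $B_R$ is zero, Lemma \ref{lemma:genus.loops.inside.first} provides a basis of $H_1$ of the exterior region each of whose generators either lies outside $B_R$ or is detected by a mod-2 intersection number with a component of $\partial B_R$; by \cite{White:topology-weak} and Theorem \ref{theo:homotope-time-zero-MCF-complement-simple-flow} these can be homotoped back to time $t_{0,j}$ while preserving the inside/outside dichotomy (this uses property (3) of Proposition \ref{prop:genus-drop-one-sided-flow}, simplicity of the flow near $\partial B_R$); and then Lemma \ref{lemma:genus.loops.inside.second} derives a contradiction with positive local genus at $t_{0,j}$. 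Without this localization argument the strict inequality does not follow.

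A secondary gap is the assumption that the perturbed flow remains generic up to time $T_\textrm{gen}(M)-o(d_j^2)$, which is needed to even speak of $M_{s_j}(T_\textrm{gen}-\eta)$ and to apply property (3) of $\bar M$ at the dilated scale. This is not automatic: the perturbed flow could \emph{a priori} develop a non-generic singularity elsewhere, earlier. The paper proves this as Lemma \ref{lemm:Tgen-est-one-sidedR3}, via a second application of the uniqueness theorem to a rescaled limit and a contradiction with property (1) of Proposition \ref{theo:basic-prop-cK-cM}. Both gaps are isolated steps rather than wrong turns, but they are the technical heart of the argument and cannot be absorbed into ``by White'' or ``by continuity.''
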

See Theorem \ref{theo:generic.flow.R3} for the precise statement. 

As such, the remaining ingredient we need to discuss is the genus drop. It suffices to show that the genus of the one-sided ancient rescaled flow $\tau \mapsto \bar M(\tau)$ loses genus at some (rescaled) time $\tau<\infty$. Indeed, by an important result of Brendle \cite{Brendle:genus0}, $\Sigma$ has non-zero genus (if it is not a sphere/cylinder) and thus $\bar M(\tau)$ does as well, for $\tau\ll0$, since it is graphical at such times. In \cite{CCMS:generic1} we established the genus drop (when $\Sigma$ is asymptotically conical or compact) by generalizing an observation of Bernstein--Wang \cite{BernsteinWang:TopologicalProperty} who noted that shrinker mean-convexity implies that the flow is \emph{star-shaped} (and thus genus zero) at time $t=0$. 

Here, we use a softer argument (cf.\ Proposition \ref{prop:genus-drop-one-sided-flow}), still using shrinker mean convexity, but only in the sense that the rescaled flow $\tau\mapsto \bar M(\tau)$ flows in one direction. If $M(\tau)$ has positive genus for $\tau \gg0$ we can find a non-trivial loop in the component of $\RR^3\setminus M(\tau)$ that $M(\tau)$ is flowing \emph{into}. (Some care must be taken to avoid a loop $\gamma$ that just goes around a cylindrical end.) The avoidance principle for mean curvature flows rescales to imply that $M(\tau)$ diverges as $\tau \to\infty$. Thus the loop $\gamma$ can be chosen outside of any fixed compact set. In particular, with respect to $M(\tau)$, $\tau \ll0$ (so $M(\tau)$ is $C^\infty$ close to $\Sigma$), the loop $\gamma$ lies near infinity and thus bounds a disk (near infinity $\Sigma$ only consists of cylindrical/conical ends and we can choose $\gamma$ not going around any of these ends). By work of White \cite{White:topology-weak}, a loop cannot bound a disk in the complement of a mean curvature flow at an earlier time but not a later time. This is a contradiction, completing the proof.

\subsection{Organization of the paper} We collect several fundamental definitions and conventions in Section \ref{sec:prelim}. In Section \ref{sec:self-shrinkers} we discuss self-shrinkers including their ends, and linear theory for the $L$ operator. Section \ref{sec:bootstrap} contains some techniques for constructing graphical rescaled mean curvature flows over shrinkers. We prove \emph{existence} of the ancient one-sided flow $\tau \mapsto \bar M(\tau)$ in Section \ref{sec:existence-one-sided-ancient}. In Section \ref{sec:barriers} we construct barriers along the cylindrical end and then use these (along with other arguments) in Section to prove \emph{uniqueness} of ancient one-sided flows in Section \ref{sec:estimates-ancient-general}. We prove the genus drop for ancient one-sided flows in $\RR^3$ in Section \ref{sec:genus-drop-ancient} and combine everything to prove the main generic flow result in Section \ref{sec:main-generic}. 

There are also several appendices. Appendix \ref{app:graph-shrinker} contains some important computations concerning the specific form of the quadratic error term in the linearization of the rescaled mean curvature flow equation over a shrinker. Appendix \ref{app:graph-shrinker-pos} continues this investigation with the additional assumption that the graph of a positive function, proving a Li--Yau estimate used in the proof of Theorem \ref{thm:interior decay}. Appendix \ref{app:integral-brakke-X-flows} recalls a definition of Brakke flow with additional forces while Appendix \ref{app:ilmanen.avoidance} recalls Ilmanen's avoidance principle. Appendix \ref{app:bf-avoid-bd-curv} contains a result (possibly of independent interest) concerning the avoidance of a Brakke flow and a non-compact flow with uniformly bounded curvature.  Appendix \ref{sec:top-regions-R3} contains some results about the topology of regions in $\RR^3$ and Appendix \ref{app:loc-top-mon} recalls some concepts from \cite{CCMS:generic1} concerning localized topological monotonicity for  mean curvature flows. Finally, Appendix \ref{app:unique-weak-tf} contains some results about weak versions of uniqueness of tangent flows in $\RR^3$ (also possibly of independent interest). 

\subsection{Acknowledgements}
We are grateful to Christos Mantoulidis for countless discussions about generic mean curvature flow and related topics. We would also like to thank Steve Kerckhoff and Lu Wang for answering some questions.  O.C. was supported by an NSF grant (DMS-2016403), a Terman Fellowship, and a Sloan Fellowship.  K.C. was supported by KIAS Individual Grant MG078901 and POSCO Science Fellowship. F.S. was partially supported by a Leverhulme Trust Research
Project Grant RPG-2016-174.


\section{Preliminaries}\label{sec:prelim}

In this section we collect some preliminary definitions, conventions, and results. 

\subsection{Spacetime and the level set flow}
We define the \emph{time} map $\ft : \RR^{n+1}\times \RR\to \RR$ to be the projection $\ft(\bx,t) = t$. For $E\subset \RR^{n+1}\times \RR$ we will write $E(t) : = E\cap\ft^{-1}(t)$. The knowledge of $E(t)$ for all $t$ is the same thing as knowing $E$, so we will often ignore the distinction. 

For a compact $n$-manifold $M$ (possibly with boundary), we consider $f: M\times [a,b] \to \RR^{n+1}$ so that (i) $f$ is continuous (ii) $f$ is smooth on $(M\setminus\partial M)\times (a,b]$ (iii) $f|_{M\times \{t\}}$ is injective for each $t \in [a,b]$ and (iii) $t\mapsto f(M\setminus\partial M,t)$ is flowing by mean curvature flow. In this case we call
\[
\cM : =\cup_{t\in [a,b]} f(M,t)\times \{t\} \subset \RR^{n+1}\times \RR
\]
a \emph{classical mean curvature flow} and define the \emph{heat boundary} of $\cM$ by
\[
\partial\cM : = f(M,a) \cup f(\partial M,[a,b]). 
\]
Classical flows that intersect must intersect in a point that belongs to at least one of their heat boundaries (cf.\ \cite[Lemma 3.1]{White:topology-weak}). 

For $\Gamma \subset \RR^{n+1}\times [0,\infty)$, $\cM \subset \RR^{n+1}\times \RR$ is a \emph{weak set flow} (generated by $\Gamma$) if $\cM(0) =\Gamma(0)$ and if $\cM'$ is a classical flow with $\partial \cM'$ disjoint from $\cM$ and $\cM'$ disjoint from $\Gamma$ then $\cM'$ is disjoint from $\cM$. There may be more than one weak set flow generated by $\Gamma$. 

The biggest such flow is called the \emph{level set flow}, which can be constructed as follows: For $\Gamma \subset \RR^{n+1}\times [0,\infty)$ as above, we set
\[
W_0 : = \{(\bx,0) : (\bx,0) \not \in \Gamma\}
\]
and then let $W_{k+1}$ denote the union of all classical flows $\cM'$ with $\cM'$ disjoint from $\Gamma$ and $\partial\cM\subset W_k$. The \emph{level set flow} generated by $\Gamma$ is then defined by
\[
\cM : = (\RR^{n+1}\times [0,\infty))\setminus \cup_k W_k \subset \RR^{n+1}\times [0,\infty). 
\]
See \cite{EvansSpruck1,Ilmanen:elliptic,White:topology-weak}. If $\Gamma\subset \RR^{n+1}\times \{0\}$, we will write $F_t(\Gamma) : = \cM(t)$ for the time $t$ slice of the corresponding level set flow. 

Fix $\Gamma \subset \RR^{n+1}$ compact. We say that the level set flow of $\Gamma$ is \emph{non-fattening} if $F_t(\Gamma)$ has no interior for each $t \geq 0$. This condition holds generically, namely if $u_0$ is a continuous function with compact level sets $u_0^{-1}(s)$ then the level set flow of $u_0^{-1}(s)$ fattens for at most countably many values of $s$ \cite[\S 11.3-4]{Ilmanen:elliptic}.

\subsection{Integral Brakke flows} An ($n$-dimensional\footnote{Of course one can consider $k$-dimensional flows in $\RR^{n+1}$ but we will never do so in this paper, so we will often omit the ``$n$-dimensionality'' and implicitly assume that all Brakke flows are flows of ``hypersurfaces.''}) integral Brakke flow in $\RR^{n+1}$ is a $1$-parameter family of Radon measures $(\mu(t))_{t\in I}$ so that
\begin{enumerate}
\item For almost every $t\in I$ there is an integral $n$-dimensional varifold $V(t)$ with $\mu(t) = \mu_{V(t)}$ and so that $V(t)$ has locally bounded first variation and mean curvature $\bH$ orthogonal to $\textrm{Tan}(V(t),\cdot)$ almost everywhere. 
\item For a bounded interval $[t_1,t_2]\in I$ and $K\subset \RR^{n+1}$ compact, we have
\[
\int_{t_1}^{t_2} \int_K (1+|\bH|^2) d\mu(t) dt < \infty.
\]
\item If $[t_1,t_2]\subset I$ and $f\in C^1_c(\RR^{n+1}\times [t_1,t_2])$ has $f\geq 0$ then
\[
\int f(\cdot,t_2) d\mu(t_2) - \int f(\cdot,t_1) d\mu(t_1) \leq \int_{t_1}^{t_2} \int (-|\bH|^2 f + \bH \cdot\nabla f + \tfrac{\partial f}{\partial t}) d\mu(t) dt.
\]
\end{enumerate}
We will sometimes write $\cM$ to represent a Brakke flow. 

We define the support of $\cM = (\mu(t))_t$ to be $\overline{\cup_t \supp \mu(t) \times \{t\}} \subset \RR^{n+1}\times \RR$.  It is useful to recall that the support a Brakke flow (with $t \in [0,\infty)$) is a weak set flow (generated by $\supp \mu(0)$) \cite[10.5]{Ilmanen:elliptic}. 

We say that a sequence of integral Brakke flows $\cM_i$ converges to another integral Brakke flow $\cM$ (written $\cM_i \rightharpoonup \cM$) if $\mu_i(t)$ weakly converges to $\mu(t)$ for all $t$ and for almost every $t$, after passing to a further subsequence depending on $t$, the associated integral varifolds converge $V_i(t) \to V(t)$. (Recall that if $\cM_i$ is a sequence of integral Brakke flows with uniform local mass bounds then a subsequence converges to an integral Brakke flow \cite[\S 7]{Ilmanen:elliptic}.)

We denote the set of points so that an integral Brakke flow $\cM$ is locally (in a backwards-forwards parabolic neighborhood) a smooth multiplicity one flow as $\reg \cM$ and if $\cM$ is defined on $[T_0,\infty)$, we set $\sing \cM : = \supp\cM \setminus (\reg\cM \cup \RR^{n+1}\times \{T_0\})$.

\subsection{Density, Huisken's monotonicity, and entropy}\label{subsec:dens-mon-entropy} For $X_0 = (\bx_0,t_0)\in\RR^{n+1}\times \RR$ we consider the ($n$-dimensional) backwards heat kernel based at $X_0$:
\begin{equation}\label{eq:gauss.dens.defn}
\rho_{X_0} (\bx,t) : = (4\pi(t_0-t))^{-\frac n2} \exp\left( -\frac{|\bx-\bx_0|^2}{4(t_0-t)}\right)
\end{equation}
for $\bx\in\RR^{n+1},t<t_0$. For $\cM$ a Brakke flow defined on $[T_0,\infty)$, $t_0>T_0$ and $0<r\leq\sqrt{T_0-t_0}$, we set
\[
\Theta_\cM(X_0,r) : = \int \rho_{X_0}(\bx,t_0-r^2) d\mu(t_0-r^2). 
\]
Huisken's monotonicity formula \cite{Huisken:sing,Ilmanen:singularities} implies that $r\mapsto \Theta_{\cM}(X_0,r)$ is non-decreasing (and constant only for a shrinking self-shrinker centered at $X_0$, cf.\ Section \ref{subsec:convent}). In particular we can define the density of $\cM$ at such $X_0$ by
\[
\Theta_\cM(X_0) : = \lim_{r\searrow 0} \Theta_\cM(X_0,r). 
\]
We call an integral Brakke flow $\cM$ \emph{unit-regular} if $\cM$ is smooth in a forwards-backwards space-time neighborhood of any space-time point $X$ with $\Theta_\cM(X) = 1$. Note that we can then write $\sing\cM = \{X \in\RR^{n+1}\times \RR : \Theta_\cM(X) > 1\}$.  Note that by \cite[Theorem 4.2]{SchulzeWhite} the class of unit-regular integral Brakke flows is closed under the convergence of Brakke flows. Furthermore, combining  \cite[Lemma 4.1]{SchulzeWhite} and \cite{White:Brakke} it follows that there is $\varepsilon_0>0$, depending only on dimension, such that every point $X\in \sing\cM$ has $\Theta_\cM(X) \geq 1 + \varepsilon_0$. Upper semi-continuity of density then implies that $\sing\cM$ is closed.

Following \cite{ColdingMinicozzi:generic} for a Radon measure $\mu$ on $\RR^{n+1}$ we define the entropy of $\mu$ by 
\[
\lambda(\mu) : = \sup_{\bx_0\in\RR^{n+1},t_0>0} \int \rho_{(\bx_0,t_0)}(\bx,0) d\mu
\]
where $\rho$ is the Gaussian as in \eqref{eq:gauss.dens.defn}. When $\mu = \cH^n \lfloor M$ for $M\subset \RR^{n+1}$ a properly embedded submanifold we will write $\lambda(M)$ instead. We define $\lambda_j : = \lambda(\SS^j \subset \RR^{j+1})$ and note that $\lambda_j = \lambda(\SS^j \times \RR^{n-j})$.

\subsection{Cyclic Brakke flows}
We call an integral Brakke flow $\cM = (\mu(t))_{t\in I}$ \emph{cyclic} if for a.e.\ $t\in I$ it holds that $\mu(t) = \mu_{V(t)}$ for an integral varifold $V(t)$ whose uniquely associated rectifiable mod-$2$ flat chain $[V(t)]$ has $\partial[V(t)]=0$. Loosely speaking, the cyclic condition can be used to rule out triple junctions (and generalizations thereof). By \cite[Theorem 4.2]{White:cyclic} the cyclic property is preserved under weak limits of integral Brakke flows.

\part{Existence and uniqueness of one sided ancient flows}
\section{Self-shrinkers}\label{sec:self-shrinkers}
\subsection{Self-shrinkers and curvature conventions} \label{subsec:convent}We recall that a \emph{self-shrinker} is a hypersurface $\Sigma^n \subset \RR^{n+1}$ satisfying 
\begin{equation}\label{eq:shrinker-def}
\bH +\frac{\bx^\perp}{2} = 0\, 
\end{equation}
where $\bH$ is its \emph{mean curvature vector}, $\bx$ the position vector and ${}^\perp$ the projection to the normal space.  

Recall that \eqref{eq:shrinker-def} is equivalent to any of the following conditions:
\begin{itemize}
\item $t\mapsto \sqrt{-t}\Sigma$ is a mean curvature flow for $t<0$,
\item $\Sigma$ is a minimal hypersurface with respect to $e^{-\frac{1}{2n}|\bx|^2} g_{\RR^{n+1}}$, or
\item $\Sigma$ is a critical point of the $F$-functional
\[
F(\Sigma) : = (4\pi)^{-\frac n2} \int_\Sigma e^{-\frac 14 |\bx|^2}
\]
among compactly supported deformations and translations/dilations. 
\end{itemize}
See \cite[Section 3]{ColdingMinicozzi:generic}. 

We denote with $\cS_n$ the set of smooth properly embedded self shrinkers $\Sigma^n \subset \RR^{n+1}$ with finitely many ends and let $\cS_n^*$ denote the subset of non-flat ones. Moreover (cf.\ \cite{BernsteinWang:topology-small-ent}) set 
\[
\cS_n(\Lambda) = \{\Sigma \in \cS_n : \lambda(\Sigma) < \Lambda\}.
\]
Furthermore, we consider tuples $(\Sigma, \Omega)$ where $\Sigma \in \cS_n$ and $\Omega$ is one of the two unique choices of open subsets $\Omega \subset \RR^{n+1}$ such that $\partial \Omega = \Sigma$. By an abuse of notation we will write $(\Sigma,\Omega) \in \cS_n$. 

We also define
\[
\cS^\textrm{gen}_n : = \left\{O(\SS^{j}(\sqrt{2j}) \times \RR^{n-j}) \in \cS_n : j = 1,\dots,k, \, O \in O(n+1) \right\}
\]
to be the set of (round) self-shrinking spheres and cylinders in $\RR^{n+1}$. 

We denote with $\nu_\Sigma$ the choice of unit normal vector field to $\Sigma$ pointing into $\Omega$. We define its \emph{shape operator} (or Weingarten map)
\begin{equation}\label{eq:convent.shape.oper}
S_p: T_p\Sigma \to T_p\Sigma, \qquad \xi \mapsto - D_\xi\nu_\Sigma 
\end{equation}
and its second fundamental form
\begin{equation}\label{eq:convent.sff}
 A: T_p\Sigma \times T_p\Sigma \to \RR,\qquad (\xi,\zeta) \mapsto A(\xi,\zeta) = S_p(\xi) \cdot \zeta\, .
 \end{equation}
We fix the sign of the \emph{scalar mean curvature} $H$ as follows
$$ \bH = H\, \nu_\Sigma\, ,$$
and thus $H = \tr S  = \tr A $, with the principal curvatures of $\Sigma$ being the eigenvalues of $S$. Observe that the shrinker mean curvature can be written as
\[
\bH + \frac{\bx^\perp}{2} = \left(H + \frac 12  \bx \cdot \nu_\Sigma \right) \nu_\Sigma
\]
Note that with these conventions, a sphere bounding a ball has normal vector pointing to the inside, positive mean curvature and positive principal curvatures. Similarly, a sphere bounding the complement of a closed ball has normal vector pointing to the outside, negative mean curvature and negative principal curvatures.

For a Riemannian manifold $(M,g)$ with Levi--Civita connection $\nabla$, we define the \emph{Riemannian curvature tensor} by
\begin{align}
R(\bX,\bY)\bZ & = \nabla^2_{\bX,\bY} \bZ - \nabla^2_{\bY,\bX} \bZ \label{eq:curv.con1}\\
R(\bX,\bY,\bZ,\bW) & = g(R(\bX,\bY)\bW,\bZ)\label{eq:curv.con2}
\end{align}
With this convention, if $\be_1,\be_2\in T_p M$ are orthonormal, the sectional curvature of $\Pi = \Span\{\be_1,\be_2\}$ is given by $K(\Pi) = R(\be_1,\be_2,\be_1,\be_2)$. Furthermore, if $M \subset \RR^{n+1}$ is a hypersurface then the Gauss equations become
\begin{equation}\label{eq:Gauss.convention}
R(\bX,\bY,\bZ,\bW) = A(\bX,\bZ)A(\bY,\bW) - A(\bX,\bW)A(\bY,\bZ). 
\end{equation}
In particular, we have
\begin{equation}\label{eq:Gauss.convention.traced}
\Ric(\bX,\bY) = H A(\bX,\bY) - A^2(\bX,\bY),
\end{equation}
where $A^2(\bX,\bY) = \sum_{i=1}^n A(\bX,\be_i)A(\bY,\be_i)$ for an orthonormal basis $\be_i$.

\subsection{Ends of shrinkers}For any $\Sigma \in \cS_n$, define 
\[
\sqrt{0}\Sigma : = \lim_{t\nearrow 0} \sqrt{-t}\Sigma
\]
where the limit is taken in the Hausdorff sense. For $\bx \in \sqrt{0}\Sigma$, define $\cT_\Sigma(\bx)$ to be the set of tangent flows to the flow $\{\sqrt{-t}\Sigma\}_{\{t<0\}}$ at $(\bx,0)$. Note that if $\bx \neq \bOh$, any tangent flow in $\cT_\Sigma(\bx)$ (weakly) splits a line in the $\bx$ direction. 

We define the class of shrinkers that we consider in this paper as follows.

\begin{definition}[Shrinkers with nice ends]\label{defi:nice-ends}
We say that $\Sigma \in \cS_n$ has \emph{nice ends} if for $\bx \in \sqrt{0}\Sigma\setminus\{\bOh\}$, the set $\cT_\Sigma(\bx)$ has precisely one element $(-\infty,0) \ni t\mapsto \cM(t)$ and moreover, one of the following holds:
\begin{enumerate}
\item $\cM(t)$ is a multiplicity one plane, or
\item $\cM(t)$ is the the multiplicity one flow associated to the smooth flow $\sqrt{-t} \hat\Sigma \times \RR_\bx$, for $\hat\Sigma$ a \emph{compact} smooth shrinker in $\RR^{n}_{\bx^\perp} : = \{\by \in \RR^{n+1} : \bangle{\by,\bx} = 0\}$. 
\end{enumerate} 
If $\Sigma\in\cS_n$ has nice ends, we write $\Sigma\in \cS'_n$. We define $\cS_n'(\Lambda) = \cS_n'\cap\cS_n(\Lambda)$. 
\end{definition}

The following lemma follows immediately from the definition of $\cS_n'$ combined with Brakke regularity. 

\begin{lemma}[Asymptotic decomposition of shrinkers with nice ends] \label{lem:ends-decomposition}
For $\Sigma \in \cS_n'$, there is $R=R(\Sigma)$ sufficiently large so that 
\[
\Sigma \setminus B_R(\bOh) = \Sigma_\textnormal{cyl} \cup \Sigma_\textnormal{con}
\]
so that $\lambda \Sigma_\textnormal{con}$ converges in $C^\infty_\textnormal{loc}(\RR^{n+1}\setminus\{0\})$ with multiplicity one to $\sqrt{0}\Sigma_\textnormal{con}$ as $\lambda \to 0$, and for any connected component $\Sigma ' \subset \Sigma_\textnormal{cyl}$ there exists $\bx \in \SS^n \subset \RR^{n+1}$ and a compact smooth shrinker $\hat \Sigma \subset \RR^n_{\bx^\perp}$ so that $\Sigma' - s\bx$ converges in $C^\infty_\textnormal{loc}(\RR^{n+1})$ with multiplicity one to $\hat \Sigma\times \RR_\bx$ as $s\to \infty$.
\end{lemma}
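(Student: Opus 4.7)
The plan is to leverage the nice-ends hypothesis---which provides a unique multiplicity-one classical tangent flow at each point of $\sqrt{0}\Sigma\setminus\{\bOh\}$---together with Brakke's regularity theorem and the self-similarity of $\Sigma$, producing the stated decomposition.

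First I would classify each $\bx\in\sqrt{0}\Sigma\setminus\{\bOh\}$ by its unique tangent flow $\cT_\Sigma(\bx)$. By Definition \ref{defi:nice-ends}, this tangent flow is either (C) a multiplicity-one plane $\Pi$ (necessarily containing the line $\RR\bx$), or (L) the multiplicity-one shrinking cylinder $t\mapsto\sqrt{-t}\hat\Sigma\times\RR_\bx$ for some compact smooth $\hat\Sigma\subset\RR^n_{\bx^\perp}$. In both cases the tangent flow is a smooth classical multiplicity-one flow. White's Brakke regularity---applied to the parabolic blow-up sequence about $(\bx,0)$, which weakly converges to this smooth tangent flow---therefore upgrades weak convergence to $C^\infty_\textnormal{loc}$ convergence with multiplicity one on every compact subset of the tangent flow's regular set (which here is all of it).

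In case (C), this yields that $t\mapsto\sqrt{-t}\Sigma$ is a smooth multiplicity-one graph over $\Pi$ in some backwards parabolic neighborhood of $(\bx,0)$, converging in $C^\infty$ to $\Pi$ as $t\to 0^-$. Using the self-similarity identity $\lambda\Sigma=\sqrt{-t}\Sigma\big|_{t=-\lambda^2}$, this translates into $C^\infty_\textnormal{loc}$ convergence of $\lambda\Sigma$ to a neighborhood of $\bx$ inside $\sqrt{0}\Sigma$ as $\lambda\to 0$. In case (L), a direct computation gives that $\cM_\lambda(-1)=\Sigma-\lambda\bx$ for the parabolic rescaling $\cM_\lambda$, so Brakke regularity on any compact subset of the smooth cylinder $\hat\Sigma\times\RR_\bx$ upgrades the weak convergence to $C^\infty_\textnormal{loc}(\RR^{n+1})$ convergence $\Sigma-s\bx\to\hat\Sigma\times\RR_\bx$ with multiplicity one as $s\to\infty$.

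Finally, I would assemble these local statements globally. Since $\Sigma\in\cS_n'\subset\cS_n$ has finitely many ends, only finitely many directions $\bx_1,\ldots,\bx_k\in\SS^n$ carry case (L) tangent flows, each with an associated cross-section $\hat\Sigma_i$. I would take $R=R(\Sigma)$ large enough so that: (a) for each $i$, a uniquely determined connected component of $\Sigma\setminus B_R(\bOh)$ lies in a narrow tubular neighborhood of the ray $\RR_{\geq 0}\bx_i$ and is a smooth graph asymptotic to $\hat\Sigma_i\times\RR_{\bx_i}$ as in case (L)---these collectively form $\Sigma_\textnormal{cyl}$---and (b) the complement $\Sigma_\textnormal{con}:=(\Sigma\setminus B_R(\bOh))\setminus\Sigma_\textnormal{cyl}$ lies over $\sqrt{0}\Sigma\setminus\bigcup_i\RR_{\geq 0}\bx_i$, where the case (C) smooth conical convergence holds uniformly. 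The main obstacle I anticipate is this global patching step: verifying (using the finite-ends hypothesis together with the uniqueness of tangent flow at every $\bx$) that the cylindrical directions do not accumulate, that a single end cannot switch between types along different sequences, and that the cylindrical and conical regions are cleanly separated outside $B_R(\bOh)$. A secondary subtlety is the application of Brakke regularity in case (L), where the tangent flow has Gaussian density $F(\hat\Sigma)>1$; here one uses the version of $\varepsilon$-regularity for Brakke flows converging to a \emph{smooth multiplicity-one} flow, which applies precisely because the nice-ends hypothesis certifies that the limit tangent flow is classical and multiplicity one.
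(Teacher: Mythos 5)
Your proof is correct and implements precisely the argument the paper gestures at; the paper itself offers no detailed proof beyond the one-line remark that the lemma "follows immediately from the definition of $\cS_n'$ combined with Brakke regularity," and your write-up spells out exactly that: classify points of $\sqrt{0}\Sigma\setminus\{\bOh\}$ by the nice-ends dichotomy, use White's Brakke regularity (the version upgrading weak convergence to a smooth multiplicity-one limit into locally smooth convergence) to obtain $C^\infty_\textnormal{loc}$ convergence, and assemble globally via the finitely-many-ends hypothesis. One small slip: for the rescaling $\cM_\lambda(t) = \lambda^{-1}(\cM(\lambda^2 t)-\bx)$ of $\cM(t)=\sqrt{-t}\Sigma$ one gets $\cM_\lambda(-1) = \Sigma - \lambda^{-1}\bx$, not $\Sigma-\lambda\bx$; the conclusion $\Sigma - s\bx\to\hat\Sigma\times\RR_\bx$ with $s=\lambda^{-1}\to\infty$ is what you intended and is correct. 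Your global-patching paragraph is informal but the underlying observation is right: a cylindrical tangent flow at $\bx$ has Gaussian density $F(\hat\Sigma)>1$ and forces $\sqrt{0}\Sigma$ near $\bx$ to be the isolated ray $\RR_{\geq 0}\bx$, so cylindrical rays cannot accumulate and are cleanly separated from the smooth cone $\sqrt{0}\Sigma_\textnormal{con}$; together with finitely many ends this gives the decomposition for $R$ large.
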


\subsection{Function spaces} For ease of reference, we collect below various norms to be used in later sections. We will use the notation $L^{2}_{W}$ for the Gaussian weighted $L^{2}$-space on a shrinker $\Sigma$, i.e.,
\[
\bangle{u,v}_{W} = (4\pi)^{-\frac n2} \int_{\Sigma} vw e^{-\tfrac 14 |\bx|^{2}}
\]
The following weighted H\"older spaces will be used along the conical ends. 
\begin{align*}
\Vert f \Vert^{(d)}_{k;\Sigma_{\textrm{con}}} & : = \sum_{i=0}^{k} \sup_{\bx \in\Sigma_{\textrm{con}}} |\bx|^{-d+i} |\nabla^{i}_{\Sigma} f(\bx)| \\
[f]^{(d)}_{\alpha;\Sigma_{\textrm{con}}} & : = \sup_{\bx\neq \by \in \Sigma_{\textrm{con}}} \frac{1}{|\bx|^{d-\alpha}+|\by|^{d-\alpha}} \frac{|f(\bx)-f(\by)|}{d_{\Sigma}(\bx,\by)^{\alpha}}\\
\Vert f \Vert^{(d)}_{k,\alpha;\Sigma_{\textrm{con}}} & : = \Vert f \Vert^{(d)}_{k;\Sigma_{\textrm{con}}} + [\nabla^{k}_{\Sigma}f]^{(d-k)}_{\alpha;\Sigma_{\textrm{con}}}. 
\end{align*}
When working with flows that are not known to be graphical over the entire cylindrical end, we will use the following semi-global norms
\[
\Vert \cdot \Vert_{k,\alpha; \text{cyl}}(\rho)=   \Vert  \cdot \Vert_{k,\alpha;\Sigma \cap B_{2R_0}(\bOh)} + \Vert \cdot \Vert_{k,\alpha;\Sigma_\textnormal{cyl} \cap B_{\rho}(\bOh)}
\]
and
\[ \Vert \cdot \Vert_{k,\alpha}(\rho)=  \Vert \cdot \Vert_{k,\alpha; \text{cyl}}(\rho) +  \Vert  \cdot \Vert_{k,\alpha;\Sigma_{\textnormal{con}}}^{(1)} 
\, ,\]
where the first norm only considers the compact part along with a finite part of the  cylindrical piece, while the second normal also controls the conical ends (with an appropriate weight). These norms are used in the proof of uniqueness in the ancient past in Section \ref{sec:estimates-ancient-general}. 

\subsection{Linear theory on shrinkers with stable ends}  Consider a shrinker with nice ends $\Sigma \in \cS'_n$. By Lemma \ref{lem:ends-decomposition} the ends of $\Sigma$  decompose as the disjoint union $\Sigma_\textnormal{cyl} \cup \Sigma_\textnormal{con}$ of cylindrical and smoothly asymptotically conical ends. We denote the asymptotically conical ends by $\Sigma'_{\textnormal{con}, i}$ for $i =1, \dots, N_\textnormal{con}$ with asymptotic cones $\cC_i$ and the asymptotically cylindrical ends $\Sigma'_{\textnormal{cyl}, j}$ for $j =1, \dots, N_\textnormal{cyl}$ with asymptotic directions $\bx_j \in \SS^n$ and asymptotic compact smooth shrinkers $\hat{\Sigma}_j\subset \RR^n_{\bx_j^\perp}$.

We denote by $\mu$ the smallest eigenvalue of the following operator\footnote{Recall that we say that $\mu$ is an eigenvalue of $L$ with eigenfunction $\varphi$ if $L\varphi = - \mu \varphi$. We will write $L_\Sigma$, $\Delta_\Sigma$, etc., when the dependence on $\Sigma$ is not clear.} acting on functions $u \in C^2_\textrm{loc}(\Sigma)$
$$Lu=\Delta u -\frac{1}{2} \bx^T \cdot \nabla u +\tfrac{1}{2}u +|A|^2 u$$
(where $A$ is the second fundamental form of $\Sigma$) and will write $0<\varphi \in H^1_W(\Sigma)$ for the corresponding first eigenfunction (unique up to scaling). We normalize $\Vert \varphi \Vert_{W} = 1$. The existence and uniqueness of $\varphi$ follows as in the proof of \cite[Proposition 4.1]{BernsteinWang:TopologicalProperty}, using the results in \cite{ColdingMinicozzi:generic}. We denote the corresponding eigenvalues and first eigenfunctions of the asymptotic shrinkers $\hat{\Sigma}_j$ by $\hat{\mu}_{j}$ and $\hat{\varphi}_{j}$ respectively. Recall that by Colding--Minicozzi's classification of entropy stable shrinkers, \cite[Theorems 0.17 and 9.36]{ColdingMinicozzi:generic}, all non flat shrinkers which are not a sphere or a (generalised) round cylinder satisfy $\mu < -1$.

\begin{definition}[Shrinkers with nice stable ends] \label{defi:nice-stable0} We say a shrinker $\Sigma \in \cS_n$ has \emph{stable ends} if the first eigenfunction $\varphi$ satisfies $\varphi \in L^{\infty}(\Sigma)$. 
If $\Sigma\in\cS'_n$ has nice stable ends, we write $\Sigma\in \cS''_n$. We define $\cS_n''(\Lambda) = \cS_n''\cap\cS_n(\Lambda)$. 
\end{definition}

\begin{definition}[Shrinkers with nice strictly stable ends] \label{defi:nice-stable} We say a shrinker $\Sigma \in \cS_n$ has \emph{strictly stable ends} if the first eigenfunction $\varphi$ satisfies $\varphi(\bx) = o(1)$ for $\Sigma\ni \bx \to\infty$. If $\Sigma\in\cS'_n$ has nice strictly stable ends, we write $\Sigma\in \cS'''_n$. We define $\cS_n'''(\Lambda) = \cS_n'''\cap\cS_n(\Lambda)$. 
\end{definition}

Clearly if $\Sigma = \hat \Sigma\times \RR$ for $\hat\Sigma \in \cS_{n-1}$ a smooth compact shrinker, it holds that $\Sigma$ has nice stable ends $\Sigma \in \cS_{n}''$ (but $\Sigma \not \in \cS_n'''$). For a non-cylindrical shrinker with nice ends, we show in part (2) of Proposition \ref{lem:decay-lowest-eigenfuncton} below that if $\hat{\mu}_{j}$ for $j = 1, \dots, N_\textnormal{cyl}$ are the first eigenvalues of the cylindrical ends of $\Sigma \in \cS_{n}'$, then a sufficient condition for $\Sigma$ to have strictly stable ends is $\mu< \hat{\mu}_{j}$ for $j = 1, \dots, N_\textnormal{cyl}$. 

We note that by the result of Wang \cite{Wang:ends-conical}, for $\Sigma \in \cS_2$ (i.e., an embedded shrinker in $\RR^3$), all ends are either cylindrical, with $\hat \Sigma$ being the shrinking circle, or asymptotically conical. Thus, $\Sigma \in \cS_{2}'$. Combining this  with Colding--Minicozzi's estimate $\mu<-1$  and  properties (1) and (2) Proposition \ref{lem:decay-lowest-eigenfuncton} below we see that any non-cylindrical shrinker in $\cS_2$ has nice, strictly stable ends.

\begin{proposition}[\cite{Wang:ends-conical,ColdingMinicozzi:generic}]\label{prop:ends.shrinkers.R3}
$\cS_2 = \cS_2' = \cS_2''$ and $\cS_2 \setminus \cS_2^\textnormal{gen} = \cS_2'''$. 
\end{proposition}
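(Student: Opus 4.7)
The plan is to combine three inputs: Wang's structure theorem for ends of embedded shrinkers in $\RR^3$, Colding--Minicozzi's bound $\mu<-1$ for non-round shrinkers, and the barrier Proposition \ref{lem:decay-lowest-eigenfuncton}. I would prove the three claimed identities in turn.

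First, for $\cS_2 = \cS_2'$, I would invoke Wang's theorem, which asserts that every end of a shrinker $\Sigma\in\cS_2$ is smoothly asymptotic with multiplicity one to either a smooth cone or a round cylinder $\SS^1(\sqrt{2})\times\RR$. Since the only smooth compact $1$-dimensional shrinker in $\RR^2$ is $\SS^1(\sqrt{2})$, any cylindrical end automatically matches condition (2) of Definition \ref{defi:nice-ends} with $\hat\Sigma = \SS^1(\sqrt{2})$, while a conical end produces a multiplicity one plane as its tangent flow at any point of $\sqrt{0}\Sigma\setminus\{\bOh\}$, matching condition (1). Wang's smooth convergence also delivers the uniqueness of the tangent flow required by the definition, so $\Sigma\in\cS_2'$; the reverse inclusion is immediate.

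Second, for $\cS_2' = \cS_2''$, I would apply part (1) of Proposition \ref{lem:decay-lowest-eigenfuncton} to the nice-ends decomposition supplied by Lemma \ref{lem:ends-decomposition}, which directly yields $\varphi\in L^\infty(\Sigma)$. The two end types are handled separately: on the conical components the Gaussian weight entering the definition of $\varphi\in H^1_W$ combined with elliptic regularity forces exponential pointwise decay of $\varphi$, while on cylindrical components modelled on $\SS^1(\sqrt{2})$ a barrier constructed from the positive constant first eigenfunction on the cross-section gives pointwise boundedness.

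Third, for $\cS_2\setminus \cS_2^\textnormal{gen} = \cS_2'''$, the forward inclusion is the main content. Given $\Sigma\in\cS_2\setminus\cS_2^\textnormal{gen}$, the Colding--Minicozzi classification of entropy-stable shrinkers (\cite[Theorems 0.17 and 9.36]{ColdingMinicozzi:generic}) yields $\mu<-1$ for the first $L$-eigenvalue of $\Sigma$. A direct computation shows that the first $L$-eigenvalue on $\SS^1(\sqrt{2})$ is $\hat\mu=-1$, with constant eigenfunction, so on every cylindrical end of $\Sigma$ one has the strict inequality $\mu<\hat\mu_j$. Part (2) of Proposition \ref{lem:decay-lowest-eigenfuncton} then forces $\varphi = o(1)$ as $\bx\to\infty$, i.e., $\Sigma\in\cS_2'''$. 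For the reverse inclusion one notes that the round cylinder $\SS^1(\sqrt{2})\times\RR$ satisfies $L(1)=1$ by direct computation, so its first eigenfunction is a positive constant that does not decay, whence round cylinders are excluded from $\cS_2'''$. The main obstacle is the barrier construction along cylindrical ends packaged in Proposition \ref{lem:decay-lowest-eigenfuncton}(2); once that tool is in hand the rest of the proof is essentially a bookkeeping exercise combining Wang's theorem and Colding--Minicozzi's classification.
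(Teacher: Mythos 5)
Your overall architecture --- Wang's end-structure theorem to get $\cS_2 = \cS_2'$, Colding--Minicozzi's $\mu<-1$ for non-round shrinkers, the computation $\hat\mu=-1$ on $\SS^1(\sqrt 2)$, and Lemma \ref{lem:decay-lowest-eigenfuncton}(1),(2) --- is exactly the paper's route, and your handling of the third equality is correct. However, two points need fixing.

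First, the mechanism you give for boundedness on conical ends is wrong. Membership of $\varphi$ in $H^1_W$ combined with elliptic regularity does \emph{not} force pointwise decay: any polynomial belongs to $L^2_W$ with Gaussian weight, so the weighted $L^2$ bound is compatible with arbitrary polynomial growth. The content of Lemma \ref{lem:decay-lowest-eigenfuncton}(1) is a barrier comparison against $|\bx|^{1+2\mu\pm 2\beta}$ (not exponential, and not a soft consequence of the weight), and it yields pointwise \emph{boundedness} of $\varphi$ on $\Sigma_\textnormal{con}$ only because $\mu\le -1/2$ makes the exponent nonpositive. You should credit the barrier argument directly rather than the $H^1_W$ membership.

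Second, your verification of $\cS_2'=\cS_2''$ is missing a case split. The barrier of Lemma \ref{lem:decay-lowest-eigenfuncton}(2) is constructed from $z^{-\alpha}$ with $\alpha=2(\hat\mu_j-\mu-\beta)>0$, so it only exists when $\mu<\hat\mu_j=-1$ strictly; this is precisely Colding--Minicozzi's inequality and it fails for the round cylinder, where $\mu=\hat\mu_j=-1$. For $\Sigma=\SS^1(\sqrt 2)\times\RR$ you must instead note directly that $\varphi$ is the positive constant, hence trivially bounded --- the paper records this observation (``Clearly if $\Sigma = \hat\Sigma\times\RR$ \dots it holds that $\Sigma\in\cS_n''$'') immediately after Definition \ref{defi:nice-stable0}. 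As written, your second step silently uses $\mu<-1$ without invoking it, and therefore does not cover the cylinder; once you split into the generic shrinkers (constant eigenfunction) and the rest (where $\mu<-1$ feeds into parts (1) and (2) of the lemma), the argument is complete.
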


It is an interesting question to what extent the inclusions $\cS_3'' \subset \cS_3'\subset \cS_3$, $\cS_3'''\subset \cS_3' \setminus \cS_3^\textrm{gen}$ are strict.\footnote{For example, if we fix a smooth closed embedded shrinker $\hat\Sigma\subset\RR^3$ (e.g., the round sphere, or the Angenent torus), we do not know if there exists $\Sigma \in \cS_3'$ with an end asymptotic to $\hat\Sigma\times \RR$ (other than $\Sigma = \hat \Sigma\times \RR$) and if such $\Sigma$ exists, we do not know if it must have (strictly) stable ends when $\Sigma$ is not a round sphere. These questions are likely to be relevant to the understanding of (generic) mean curvature flow of hypersurfaces in $\RR^4$.}

We extend here the analysis in \cite[Proposition 4.1]{BernsteinWang:TopologicalProperty} of the first eigenfunction of $L_\Sigma$ from asymptotically conical shrinkers to shrinkers with nice ends.

\begin{lemma}[Decay for first eigenfunction and sufficient condition for stable ends] \label{lem:decay-lowest-eigenfuncton}
Assume $(\Sigma, \Omega) \in \cS'_n$ is a smooth shrinker with nice ends. Write $\mu$ for the first eigenvalue of $L_{\Sigma}$ on $\Sigma$. 
\begin{enumerate}
\item Fix $\beta>0$. Along $\Sigma_\textnormal{con}$ we have
\begin{align*}
(1+|\mathbf{x}|^2)^{\frac 12 + \mu - \beta} \lesssim \varphi(\mathbf{x}) & \lesssim (1+|\mathbf{x}|^2)^{\frac 12 + \mu + \beta},\\
|\nabla^m \varphi(\mathbf{x})| & \lesssim (1+|\mathbf{x}|^2)^{\frac 12 + \mu + \beta - \frac m2}.
\end{align*}
\item Write $\mu_{j}$ for the first eigenvalue of $L_{\hat\Sigma_{j}\times\RR}$ for the exact cylinder associated to each cylindrical end $\Sigma_{\textnormal{cyl},j}'$, $j=1,\dots,N_{\textnormal{cyl}}$. Assume that 
\begin{equation}\label{eq:eig-cond-impl-stable-ends}
\mu<\mu_{j} \textrm{ for } j=1,\dots,N_{\textnormal{cyl}}.
\end{equation}
Then, along $\Sigma'_{\textnormal{cyl}, j} \subset \Sigma_\textnormal{cyl}$, we have
\begin{align}
(1+|\mathbf{x}|^2)^{\mu - \hat{\mu}_{j} - \beta} \lesssim \varphi (\mathbf{x}) & \lesssim (1+|\mathbf{x}|^2)^{\mu - \hat{\mu}_{j}+ \beta} \label{eq:C0_estimate_cylindrical_end},\\
|\nabla^m \varphi (\mathbf{x})| & \lesssim (1+|\mathbf{x}|^2)^{\mu - \hat{\mu}_{j}+ \beta}. \label{eq:Ck_estimate_cylindrical_end}
\end{align}
In particular, \eqref{eq:eig-cond-impl-stable-ends} implies that $\Sigma \in \cS_{n}'''$. 
\item Assume that $\Sigma\in\cS_{n}''$, i.e., $\Sigma$ has stable ends (but do not necessarily assume the $\mu<\mu_{j}$ condition in \eqref{eq:eig-cond-impl-stable-ends}). Then, $|\nabla^{m}\varphi|\in L^{\infty}(\Sigma)$ and $\varphi^{-1}|\nabla \varphi|^{2} \in L^{\infty}(\Sigma)$. 

Moreover, there is $\eps_0 = \eps_0(\Sigma)>0$ with the following properties. For $\eps \in (0,\eps_0)$, the normal graph of $\eps\varphi$ is a smooth surface $\Sigma_\eps \subset \Omega$. Denote $\Omega_\eps \subset \Omega$ by the open set with $\partial\Omega_\eps = \Sigma_\eps$. The surface $\Sigma_\eps$ is strictly shrinker mean convex to the interior of $\Omega_\eps$ in the sense that
\begin{equation}\label{eq:shrinker_mean_convexity}
2H_{\Sigma_\eps} + \mathbf{x}\cdot \nu_{\Sigma_\eps} \geq  \eps (2 |\mu| - C\eps) \varphi(\bx)
\end{equation}
for $C=C(\Sigma)$. Furthermore,  the entropy of $\Sigma_\eps$ satisfies the bound $\lambda(\Sigma_\eps) \leq F(\Sigma) + o(1)$ as $\eps \to 0$.
\end{enumerate}
\end{lemma}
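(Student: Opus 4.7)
The plan is to analyse $\varphi$ separately on each type of end given by Lemma \ref{lem:ends-decomposition}, constructing explicit super- and sub-solutions of $L + \mu$ on the asymptotic models and transferring the resulting bounds to $\Sigma$ by the maximum principle; the consequences in part (3) then follow from a Bernstein-type gradient estimate combined with a Taylor expansion of the shrinker mean curvature along the normal graph of $\eps\varphi$.

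Part (1) on the conical ends is essentially a direct adaptation of \cite[Proposition 4.1]{BernsteinWang:TopologicalProperty}. On the asymptotic cone $\cC_i$ one has $|A_\Sigma|^2 = O(|\bx|^{-2})$, so outside a large ball $L_\Sigma$ reduces at top order to $\Delta - \tfrac 12 \bx^T\cdot\nabla + \tfrac 12$. A direct radial calculation then shows that $(1+|\bx|^2)^{\tfrac 12 + \mu \pm \beta}$ are super- and sub-solutions of $(L + \mu)(\cdot) = 0$ on $\Sigma_{\textnormal{con}}\cap\{|\bx|\geq R_0\}$ for $R_0$ large, with the $\pm\beta$ slack swamping both the $O(|\bx|^{-2})$ error from $|A|^2$ and the geometric perturbation of $\Sigma_{\textnormal{con}}$ relative to $\cC_i$. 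Comparison via the weighted maximum principle between the fixed sphere $\partial B_{R_0}$ (where $\varphi$ is controlled by positivity and compactness) and infinity (using $\varphi\in H^1_W$ to exclude the exponentially growing mode) gives the pointwise bound, and interior Schauder after standard parabolic rescaling on conical ends yields the derivative estimates.

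Part (2) is the main content. On the model cylinder $\hat\Sigma_j\times\RR_{\bx_j}$ the operator separates as $L_{\hat\Sigma_j\times \RR} = L_{\hat\Sigma_j} + \partial_z^2 - \tfrac z 2 \partial_z$, and the ansatz $u(\bomega, z) = \hat\varphi_j(\bomega)\, g(z)$ reduces $L u = -\mu u$ to the ODE
\[
g''(z) - \tfrac z 2 g'(z) + (\mu - \hat\mu_j)\, g(z) = 0.
\]
With $\nu := \hat\mu_j - \mu > 0$, the two independent solutions as $z\to \infty$ behave respectively as $g\sim z^{-2\nu}$ and as a super-polynomially growing solution (of the form $e^{z^2/4}$ up to polynomial corrections); only the first is compatible with $\varphi\in H^1_W$, identifying $|\bx|^{2(\mu - \hat\mu_j)}$ as the expected decay rate along the $\bx_j$-axis. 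I would promote this to $\Sigma$ itself by using $\hat\varphi_j(\bomega)(1+z^2)^{\mu - \hat\mu_j \pm \beta}$ as barriers on $\Sigma'_{\textnormal{cyl},j}$: the $\beta$-slack produces a strict $(L + \mu)$-sign on the exact cylinder (by exactly the ODE computation above), and by the smooth convergence in Lemma \ref{lem:ends-decomposition} the differences $L_\Sigma - L_{\hat\Sigma_j\times \RR}$ and $|A_\Sigma|^2 - |\hat A_j|^2$ decay along the end and so preserve the sign outside a large radius $R_0 = R_0(\beta)$. The maximum principle on $\Sigma'_{\textnormal{cyl},j}\cap\{|\bx|\geq R_0\}$ then yields \eqref{eq:C0_estimate_cylindrical_end}, absorbing $\varphi|_{\{|\bx|=R_0\}}$ into the multiplicative constants; interior Schauder on unit balls (where the cylindrical end has bounded geometry) upgrades this to \eqref{eq:Ck_estimate_cylindrical_end}. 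The conclusion $\Sigma\in \cS_n'''$ then follows because $\mu<\hat\mu_j$ forces $\varphi\to 0$ along every cylindrical end, while for any non-cylindrical shrinker one has $\mu < -1 < -\tfrac 12$, so part (1) forces $\varphi\to 0$ along the conical ends as well.

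For part (3), on the conical ends the boundedness of $|\nabla^m\varphi|$ is immediate from (1) since $\tfrac 12 + \mu + \beta - \tfrac m 2 \leq 0$ for $\mu \leq -\tfrac 12$ and $m\geq 0$. On the cylindrical ends, where the geometry of $\Sigma$ is uniformly $C^k$-bounded on unit balls, I would run a Bernstein-type argument with cutoffs applied to $P = |\nabla\varphi|^2 + K\varphi^2$: at an interior maximum the drift term $\bx^T\cdot\nabla P$ vanishes, and the Bochner formula, combined with the equation $L\varphi = -\mu\varphi$ and an appropriate choice of $K$, yields $|\nabla\varphi|^2\leq C\varphi\Vert\varphi\Vert_\infty$ everywhere on $\Sigma$; this gives both $|\nabla\varphi|\in L^\infty$ (via the stable-ends hypothesis $\varphi\in L^\infty$) and $|\nabla\varphi|^2/\varphi \in L^\infty$, with higher derivatives coming from Schauder bootstrapping. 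Finally, Taylor-expanding $2H + \bx\cdot\nu$ along the normal graph of $\eps\varphi$ and using $2L\varphi = 2|\mu|\varphi$ gives
\[
2H_{\Sigma_\eps} + \bx\cdot\nu_{\Sigma_\eps} = 2\eps|\mu|\varphi + \eps^2 Q(\varphi,\nabla\varphi,\nabla^2\varphi),
\]
where the Bernstein bound $|\nabla\varphi|^2\leq C\varphi$ together with the $C^2$-bounds on $\varphi$ allows one to absorb $|Q|\leq C\varphi$ pointwise, yielding \eqref{eq:shrinker_mean_convexity} for $\eps_0$ sufficiently small. The entropy estimate $\lambda(\Sigma_\eps)\leq F(\Sigma)+o(1)$ then follows from the $C^2$-convergence $\Sigma_\eps \to \Sigma$ combined with Huisken's monotonicity and the shrinker mean convexity of $\Sigma_\eps$ (which keeps each $F_{\bx_0,t_0}$-functional monotone along the induced rescaled MCF and reduces its supremum to a neighborhood of $(\bOh,0)$). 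I expect the main obstacle to be the quantitative cylindrical barrier argument in (2): arranging the $\beta$-slack to simultaneously absorb the sub-leading ODE terms on the exact cylinder and the geometric perturbation from Lemma \ref{lem:ends-decomposition} requires a quantitative rate in the smooth convergence there, which in turn forces the cutoff radius $R_0$ to depend delicately on $\beta$.
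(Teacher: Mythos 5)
Your scheme in parts (1) and (2) matches the paper's. On the cylinder your separation-of-variables ODE $g'' - \tfrac z2 g' + (\mu - \hat{\mu}_j)g = 0$ correctly identifies the decay rate $z^{2(\mu-\hat{\mu}_j)}$; the paper reaches the same barrier by taking $g(\bomega,z) = z^{-\alpha}\bar{\varphi}(\bomega)$ as an \emph{ambient} function on $\RR^{n+1}$ (with $\bar{\varphi}$ a compactly supported extension of $\hat{\varphi}_j$) and computing $L_\Sigma g$ via the formula for $L$ applied to ambient functions, which sidesteps transporting a function from $\hat\Sigma_j\times\RR$ to $\Sigma$. The concrete difference is the comparison step: you invoke a pointwise maximum principle on $\Sigma'_{\textnormal{cyl},j}\cap\{|\bx|\geq R_0\}$, but this region is non-compact and you need to rule out the contact set escaping to infinity. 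The paper does not do this pointwise. It tests $(L+\mu)v > 0$ (with $v = \varphi - C\overline{g}$) against $v^+ \in H^1_W$, integrates by parts, and then applies Ecker's Gaussian Sobolev inequality to get $R^2\Vert v^+\Vert_W^2 \leq C\Vert v^+\Vert_W^2$, forcing $v^+\equiv 0$ once $R$ is large. You gesture at ``excluding the growing mode via $\varphi\in H^1_W$'' in part (1), but in part (2) as written the pointwise maximum principle on a non-compact domain is incomplete; the $L^2_W$ route (or an equivalent) is what actually closes it.

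The larger departure is in part (3). For $\varphi^{-1}|\nabla\varphi|^2\in L^\infty$ you propose a Bernstein estimate on $P=|\nabla\varphi|^2 + K\varphi^2$. The obstacle is again non-compactness: $P$ need not attain an interior maximum, and once you insert a cutoff $\chi$, the drift term $\bx^T\cdot\nabla(P\chi^2)$ contributes terms of order $P\,|\bx|\,|\nabla\chi|$ on the support of $\nabla\chi$, which grow linearly and do not obviously absorb into the good terms from Bochner. This can likely be salvaged with Gaussian-adapted cutoffs (the paper's footnote alludes to an argument modelled on \cite[Lemma 6.6]{Ilmanen:elliptic}), but it is strictly more work than the paper's argument, which is purely local and elementary: since $\varphi\geq 0$ with $|\nabla^2\varphi|\leq C$ and $\Sigma$ has bounded geometry, expand $0\leq\varphi(\by)$ to second order in normal coordinates around $\bx$ and take $\by = \bx - s\nabla\varphi(\bx)$ for $s>0$ small to get $0\leq\varphi(\bx) - s|\nabla\varphi(\bx)|^2 + Cs^2|\nabla\varphi(\bx)|^2$, hence $|\nabla\varphi(\bx)|^2\leq C'\varphi(\bx)$. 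No Bochner formula, no Ricci term, no cutoff; I recommend replacing the Bernstein step with this. Your Taylor expansion of $2H_{\Sigma_\eps}+\bx\cdot\nu_{\Sigma_\eps}$ with error $O(\eps^2\varphi)$ is then consistent with the paper (which organizes it via the structured error term of Corollary \ref{coro:expand-rescaled-mcf-app}), and the entropy estimate is fine.
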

\begin{proof} Recall from the proof of \cite[Proposition 4.1]{BernsteinWang:TopologicalProperty} that for every $\beta>0$ there exists $R = R(\Sigma, \beta)$ such along each asymptotically conical end $\Sigma'_{\textnormal{con}, i}$ we have barriers
\begin{equation}\label{eq: barrier asym.1}
 \overline{g}(\bx) = |x|^{1+2\mu+2\beta} \text{ and } \underline{g}(\bx) = |x|^{1+2\mu-2\beta}\, ,
\end{equation}
which satisfy
\begin{equation}\label{eq: barrier asym.2}
 L\overline{g} + \mu \overline{g} < 0 \text{ and } L\underline{g} + \mu \underline{g} > 0 
 \end{equation}
on $\Sigma'_{\textnormal{con}, i} \setminus \overline{B}_R$.

Similarly, for an asymptotically cylindrical end $\Sigma'_{\textnormal{cyl}, j}$, so that $\mu_{j}<\mu$, 
let $\be:=\bx_j$ and denote the coordinate in direction $\be$ by $z$. Let $\bar{\varphi} \in C^\infty_c(\RR^n_{\be^\perp})$ be an extension of the first eigenfunction of $L_{\hat \Sigma_i}$, $\hat{\varphi}_{j}$, to a neighborhood of $\hat{\Sigma}\subset \RR^n_{\be^\perp}$. We make the ansatz for a barrier as the ambient function 
$$g := z^{-\alpha} \bar{\varphi}$$ for some $\alpha>0$, where we extend $\bar{\varphi}$ constant in $\be$-direction. Note that
\begin{align}
\nabla_{\RR^{n+1}} g &= - \alpha z^{-\alpha-1} \bar{\varphi} \be + z^{-\alpha} \nabla_{\RR^{n+1}} \bar{\varphi} \notag \\
\nabla^2_{\RR^{n+1}} g &= \alpha (\alpha + 1) z^{-\alpha-2} \bar{\varphi}\, dz \otimes dz - \alpha z^{-\alpha -1} \left( dz \otimes d\bar{\varphi} + d\bar{\varphi} \otimes dz \right) + z^{-\alpha} \nabla^2_{\RR^{n+1}} \bar{\varphi} \label{eq: hess psi}
\end{align}
Since $g$ is an ambient function we compute (compare the proof of \cite[Lemma 3.15]{ChodoshSchulze})
$$L g = \Delta_{\RR^{n+1}}g - \nabla^2_{\RR^{n+1}} g(\nu, \nu) - \tfrac{1}{2} \bx \cdot \nabla_{\RR^{n+1}} g + \tfrac{1}{2} g + |A|^2 g\, .$$
Note that 
$$ - \tfrac{1}{2} \bx \cdot \nabla_{\RR^{n+1}}g = \tfrac{\alpha}{2} g - \tfrac{1}{2} z^{-\alpha} \bx \cdot \nabla_{\RR^{n}_{\be^{\perp}}} \bar{\varphi} \, .$$
Furthermore, since $\hat{\varphi}_{j} > 0$, using \eqref{eq: hess psi}, we can estimate as $z \to \infty$ that
\begin{align*}
\nabla^2_{\RR^{n+1}} g(\nu_\Sigma,\nu_\Sigma) &= z^{-\alpha} \nabla^2_{\RR^{n}_{\be^{\perp}}} \bar{\varphi}(\nu_{\hat{\Sigma}_j}, \nu_{\hat{\Sigma}_j}) + o(1) g \\
\Delta_{\RR^{n+1}}g &=  z^{-\alpha} \Delta_{\RR^n_{\be^\perp}} \bar{\varphi} + o(1) g\\
|A_\Sigma|^2 g &=  z^{-\alpha} |A_{\hat{\Sigma}_j}|^2\bar{\varphi} + o(1) g\, ,
\end{align*}
where have extended $\nu_{\hat{\Sigma}_j}$ and $|A_{\hat{\Sigma}_j}|^2$ to a neighborhood of $\hat{\Sigma}\subset \RR^n_{\be^\perp}$. We thus can estimate
\begin{equation*}\begin{split}
L g &= z^{-\alpha} \left( \Delta_{\RR^n_{\be^\perp}} \bar{\varphi} - \nabla^2_{\RR^n_{\be^\perp}} \bar{\varphi}(\nu_{\hat{\Sigma}_j}, \nu_{\hat{\Sigma}_j}) - \tfrac{1}{2} \bx \cdot \nabla_{\RR^{n}_{\be^{\perp}}} \bar{\varphi} + \tfrac{1}{2} \bar{\varphi} + |A_{\hat{\Sigma}_j}|^2\bar{\varphi} \right) + \tfrac{\alpha}{2} g + o(1) g \\
&= \left( \tfrac{\alpha}{2} - \hat{\mu}_{j} + o(1)\right) g\, .
\end{split}\end{equation*}
Thus for every $\beta > 0$ there is $R = R(\Sigma'_j, \beta)$ such that choosing $\alpha =  2(\hat{\mu}_{j} - \mu - \beta)$ we have an upper barrier $\overline{g}$ and choosing $\alpha =  2(\hat{\mu}_{j} - \mu + \beta)$ we have a lower barrier $\underline{g}$ satisfying
\begin{equation}\label{eq: barrier asym.3}
 L\overline{g} + \mu \overline{g} < 0 \text{ and } L\underline{g} + \mu \underline{g} > 0 
 \end{equation}
on $\Sigma'_j \setminus B_R$.
In both the asymptotically conical and the asymptotically cylindrical case we can choose $C>1$ such that
$$ C^{-1} \underline{g} < \varphi < C \overline{g} $$
on $\partial(\Sigma' \setminus B_R)$.  As such, $v = \varphi - C \overline{g}$ satisfies $L v + \mu v > 0$ and $v < 0$ in a neighborhood of $\partial(\Sigma' \setminus B_R)$. We now argue that $v \leq 0 $ on $\Sigma' \setminus B_R$. Since $v^+ \in H^1_W$, we find that
\begin{equation*}\begin{split}
-\mu \int_{\Sigma \setminus B_R} (v^+)^2 \rho\, d\cH^n &\leq \int_{\Sigma \setminus B_R} v^+ L v \rho\, d\cH^n \\
&= \int_{\Sigma \setminus B_R} \left(-|\nabla v^+|^2 + \left(\frac{1}{2} + |A_\Sigma|^2 \right)(v^+)^2\right) \rho\, d\cH^n\, .
\end{split}
\end{equation*}
Thus, using Ecker's Sobolev inequality \cite{Ecker:Sobolev} (cf.\ \cite[Proposition 3.9]{ChodoshSchulze}) and that $|A_\Sigma|$ is bounded, we find that
$$ R^2 \int_{\Sigma \setminus B_R}  (v^+)^2 \rho\, d\cH^n \leq C \int_{\Sigma \setminus B_R}  (v^+)^2 \rho\, d\cH^n\, .$$
Thus choosing $R$ sufficiently large, if necessary, we thus see that $v^+ \equiv 0$ on $\Sigma'\setminus B_R(0)$. 

Thus $\varphi \leq C \overline{g}$ on $\Sigma'\setminus B_R(0)$. The same reasoning gives $\varphi \geq C^{-1} \underline{g}$ on $\Sigma'\setminus B_R(0)$. This proves the $C^{0}$-estimate in properties (1) and (2). In particular, this proves that \eqref{eq:eig-cond-impl-stable-ends} implies $\Sigma \in \cS_n'''$. 

We now explain the derivative estimates in (1) and (2). In the asymptotically conical case, the decay estimates for the higher derivatives of $\varphi$ follow as in the proof of \cite[Proposition 4.1]{BernsteinWang:TopologicalProperty}.
For the asymptotically cylindrical ends, consider for $t<0$ and $\mathbf{q} \in \sqrt{-t} \Sigma : = \Sigma_t$ the function
$$ \tilde{\varphi}(\mathbf{q},t) = \varphi\left(\frac{\mathbf{q}}{\sqrt{-t}}\right)\, .$$
Then since $L\varphi = -\mu \varphi$ we have that
$$ \left(\frac{d}{dt} - \Delta_{\Sigma_t}\right) \tilde{\varphi} = \left(|A_{\Sigma_t}|^2 + \frac{1}{2} + \mu\right) \tilde{\varphi}\, .$$
Note that \eqref{eq:C0_estimate_cylindrical_end} implies that for $t \in (-3,-1]$ along $(\Sigma'_j)_t$
$$ \tilde{\varphi}(\bx) \leq C (1+ |\bx|^2)^{\mu-\hat{\mu}_j + \beta}$$
which is decaying for $\beta < \hat{\mu}_j - \mu$ and thus bounded. Standard interior estimates imply that all higher derivatives of $\tilde\varphi$ are uniformly bounded along $(\Sigma'_j)_t$ for $t \in [-2,-1]$ . Then \eqref{eq:Ck_estimate_cylindrical_end} follows from \eqref{eq:C0_estimate_cylindrical_end} via interpolation.

Finally, we consider (3). Assume that $\Sigma\in \cS_{n}''$. We first note that the bounds for higher derivatives of the first eigenfunction (with no decay) follows from interior estimates (on $t\mapsto\sqrt{-t}\Sigma$) exactly as in the above argument. We now estimate $\varphi^{-1}|\nabla \varphi|^{2}$. Using the fact that $\Sigma$ has  bounded geometry and the fact that $\varphi$ has bounded $C^{2}$ norm, there is $r>0$ so that normal coordinates exist on a ball of radius $r$ centered at each $\bx \in \Sigma$ and for $\by \in B_{r}(\bx)$ we have the following Taylor expansion
\[
0 \leq \varphi(\by) \leq \varphi(\bx) + \partial_{i}\varphi|_{\bx} (\by^{i}-\bx^{i}) + C|\by-\bx|^{2}. 
\]
in the normal coordinates (with constant $C$ uniformly bounded for all such $\bx,\by$). Since $\nabla \varphi \in L^{\infty}(\Sigma)$, we can choose $\by^{i} := - s \partial_{i}\varphi|_{\bx} + \bx^{i} \in B_{r}(\bx)$, for $s>0$ sufficiently small (independent of $\bx$). Thus, we find
\[
0 \leq \varphi(\bx) - s |\nabla \varphi(\bx)|^{2} + C s^{2} |\nabla \varphi(\bx)|^{2}. 
\]
Taking $s>0$ even smaller, we conclude that $\varphi^{-1}|\nabla \varphi|^{2}\in L^{\infty}(\Sigma)$.\footnote{Note that this estimate could alternatively have been proven by appropriately modifying the argument in \cite[Lemma 6.6]{Ilmanen:elliptic}.}

Because $\Sigma$ has a uniform tubular neighborhood, for $\eps>0$, the normal graph of $\eps\varphi$, $\Sigma_{\eps}$, is a smooth surface. Moreover, by Corollary \ref{coro:expand-rescaled-mcf-app} (and recalling that $v = (\nu_{\Gamma}\cdot\nu_{\Sigma})^{-1} = 1+o(1)$ as $\eps\to 0$)  we have 
\[
v(\bx) (H + \tfrac 12 \bx_{\Gamma}\cdot\nu_{\Gamma}) = \eps L \varphi + E = - \eps \mu \varphi + E
\]
where
\[
E= \eps \varphi E_{1} + \eps^{2} E_{2}(\nabla \varphi,\nabla \varphi)
\]
with $|E_{1}|= O(\eps),|E_{2}| = O(1)$ as $\eps\to 0$, uniformly on $\Sigma$. Using the $\varphi^{-1}|\nabla \varphi|^{2} \in L^{\infty}(\Sigma)$ estimate from above, we find
\[
v(\bx) (H + \tfrac 12 \bx_{\Gamma}\cdot\nu_{\Gamma}) = \eps L \varphi + E = - \eps \mu \varphi + O(\eps^{2} \varphi) 
\]
This proves the strictly shrinker mean convex assertion. 

The final statement on the entropy follows directly from the smooth convergence of $\Sigma_\eps \to \Sigma$ as $\eps \to 0$. 
\end{proof}


\section{Geometric estimates for graphical flows}\label{sec:bootstrap}
This section contains some well-known considerations relating graphs over shrinkers to rescaled mean curvature flows, including a long-time existence for rescaled flows with small $L^\infty$-norm. 

We begin by using pseudolocality and interior estimates to prove higher derivative estimates for a graphical rescaled mean curvature flow over a shrinker. 
\begin{proposition}[Bootstrap for graphical flows]\label{prop:bootstrap.graphical}
Fix a shrinker $\Sigma \in \cS_n'$. There are constants $\psi_0= \psi_0(\Sigma), \psi_1= \psi_1(\Sigma)$ with the following properties. For $T > 1$, suppose that $v \in C^2_\textnormal{loc}(\Sigma \times [0,T))$ has 
\begin{equation}\label{eq:restart.graph.flow}
\sup_\Sigma |v(\cdot,\tau)| \leq \psi_0, \qquad \sup_\Sigma |\nabla_\Sigma v(\cdot,\tau)|\leq \psi_1
\end{equation}
for all $\tau \in [0,T)$ and $\tau \mapsto \Gamma(\tau) : = \Graph_\Sigma v(\cdot,\tau)$ is a rescaled mean curvature flow. 

Then, for $m\in \NN$, it holds that
$$
\sup_{\Sigma\times [1,T)} |\nabla^m_\Sigma v| \leq C= C(m,\Sigma)
$$
Furthermore, if
$$
\limsup_{\tau \to T} \sup_\Sigma |v(\cdot,T)| < \psi_0
$$
then we can extend the flow to $v' \in (\Sigma\times [0,T'])$ for some $T'>T$ with $v'$ still satisfying \eqref{eq:restart.graph.flow} for all $\tau \in [0,T']$. 
\end{proposition}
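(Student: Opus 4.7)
The plan is to treat the graphical equation for $v$ as a quasilinear parabolic PDE on $\Sigma$ that is uniformly parabolic thanks to \eqref{eq:restart.graph.flow}, and then apply standard interior parabolic regularity theory. The key geometric input is that by Lemma \ref{lem:ends-decomposition} a shrinker with nice ends has uniformly bounded second fundamental form and a uniform lower bound on the normal injectivity radius, so there is a scale $r_0=r_0(\Sigma)>0$ on which $\Sigma$ admits a tubular neighborhood of uniform width and all intrinsic geodesic balls of radius $r_0$ have uniformly bounded geometry. We shrink $\psi_0$ at the outset so that any $v$ satisfying \eqref{eq:restart.graph.flow} has its normal graph well inside this tubular neighborhood.

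Using the expansion in Appendix \ref{app:graph-shrinker}, the condition that $\tau \mapsto \Graph_\Sigma v(\cdot,\tau)$ solve rescaled mean curvature flow becomes
$$\partial_\tau v = L v + Q(\bx, v, \nabla_\Sigma v, \nabla_\Sigma^2 v),$$
where $L$ is the Colding--Minicozzi operator of $\Sigma$ and $Q$ is smooth in its arguments and vanishes to second order at the origin. Under \eqref{eq:restart.graph.flow} with $\psi_0,\psi_1$ small relative to $\Sigma$, this equation is uniformly parabolic with coefficients whose $C^k$-norms are uniformly bounded on each parabolic ball of radius $r_0$. A two-step bootstrap then produces the higher derivative estimates: interior Krylov--Safonov (or De Giorgi--Nash--Moser) on parabolic cylinders of size $r_0$ gives a $C^{1,\alpha}$-bound at $\tau \geq \tfrac12$; iterating interior Schauder estimates on the differentiated equation then provides $\sup_\Sigma|\nabla_\Sigma^m v(\cdot,\tau)| \leq C(m,\Sigma)$ for $\tau \geq 1$. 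The waiting time $\tau\geq 1$ simply encodes that interior parabolic estimates require a fixed slice of earlier time.

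For the extension claim, assume $\limsup_{\tau\nearrow T}\sup_\Sigma|v(\cdot,\tau)| < \psi_0$. The uniform $C^{k,\alpha}$-bounds from the previous step combined with Arzela--Ascoli allow us to pass $v(\cdot,\tau)$ to a smooth limit $v(\cdot,T)$ with $\sup_\Sigma|v(\cdot,T)|<\psi_0$. Gagliardo--Nirenberg-type interpolation on $\Sigma$ (valid thanks to its bounded geometry) between this $L^\infty$-bound and the uniform $C^2$-bound yields $\sup_\Sigma|\nabla_\Sigma v(\cdot,T)| \leq C(\Sigma)\,\psi_0^{1/2} \leq \tfrac12\psi_1$, provided $\psi_0$ was chosen small enough relative to $\psi_1$ (this is the point at which the relationship between the two constants is fixed). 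Standard short-time existence for the quasilinear parabolic equation above, applied on parabolic cylinders of radius $r_0$ and patched using the uniform geometry of $\Sigma$, then extends $v$ to $[0,T+\delta]$ for some $\delta=\delta(\Sigma)>0$, and after shrinking $\delta$ continuity preserves \eqref{eq:restart.graph.flow}.

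The main obstacle is arranging all of the above estimates uniformly across the non-compact shrinker $\Sigma$---in particular, the Schauder constants and the short-time existence radius must be controlled independently of the base point. This is precisely what the nice-ends hypothesis provides via Lemma \ref{lem:ends-decomposition}: along each cylindrical end $\Sigma$ converges smoothly to $\hat\Sigma_j\times\RR$ with $\hat\Sigma_j$ compact, and along each conical end the curvature decays and $\Sigma$ looks flat on unit scale, so the relevant local constants are uniformly controlled by those on the compact core plus the finitely many asymptotic models.
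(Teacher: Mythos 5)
Your approach — treating the graph function $v$ as solving a quasilinear parabolic PDE on $\Sigma$ and invoking interior Krylov--Safonov/Schauder, then closing the extension via interpolation — is different from the paper's, but it has a genuine gap that you have not addressed.

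The problem is the drift term $-\tfrac12\,\bx^T\cdot\nabla_\Sigma v$ hidden inside the operator $L$. On a non-compact shrinker $\Sigma\in\cS_n'$ this coefficient grows linearly at spatial infinity, both along cylindrical ends (where $\bx^T\approx z\,\partial_z$) and conical ends. Your claim that the equation is ``uniformly parabolic with coefficients whose $C^k$-norms are uniformly bounded on each parabolic ball of radius $r_0$'' is therefore false: on a parabolic cylinder of fixed size $r_0$ centered at $|\bx_0|=R$, the drift is $\sim R$, so the Krylov--Safonov constant (and hence the Schauder constant on the next iteration) deteriorates as $R\to\infty$. This means your bootstrap does not deliver $\sup_{\Sigma\times[1,T)}|\nabla^m_\Sigma v|\leq C(m,\Sigma)$ uniformly across $\Sigma$, which is exactly what the proposition asserts. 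The nice-ends hypothesis (Lemma \ref{lem:ends-decomposition}) controls the geometry of $\Sigma$, but it does not tame the rescaling drift — that term is present even for a flat plane.

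The paper's proof avoids this by passing to the \emph{unrescaled} mean curvature flow $t\mapsto\sqrt{-t}\,\Graph_\Sigma v(\cdot,\bar\tau-\log(-t))$ on $t\in[-e,-1]$, which absorbs the drift into the ambient parabolic scaling; in that picture the flow is a uniformly small Lipschitz graph over tangent planes, and Ecker--Huisken's interior estimates for Lipschitz-graph MCF give curvature bounds $\sup|\nabla^m A|\leq C(m,\Sigma)$ with uniform constants. Evaluated at $t=-1$, these translate back into the rescaled $C^m$ bounds. If you rewrite your first step in the unrescaled picture (equivalently, change variables $\by=e^{\tau/2}\bx$), the parabolic regularity you want to use becomes available with uniform constants, and the rest of your argument can proceed. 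As a side remark, your interpolation device for the extension step ($\|\nabla v\|_\infty^2\lesssim\|v\|_\infty\|\nabla^2 v\|_\infty$ to regain the gradient bound from the $L^\infty$ bound plus the $C^2$ bound, after fixing $\psi_0$ small relative to $\psi_1$) is a legitimate and somewhat cleaner alternative to the paper's route, which instead preserves the $L^\infty$ bound via sphere barriers and then upgrades to a $C^1$ graph bound using a curvature $+$ entropy $+$ closeness criterion.
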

\begin{proof}
Choose $\psi_1=\psi_1(\Sigma)$ sufficiently small so that if $w \in C^2_\textnormal{loc}(\Sigma)$ has 
\[
\sup_\Sigma |w(\cdot)| , \sup_\Sigma |\nabla_\Sigma w(\cdot)|\leq \psi_1
\]
 then $\Graph_\Sigma w(\cdot)$ is a uniformly small Lipschitz graph in balls of uniform size (over tangent planes to $\Sigma$) and $F(\Graph_\Sigma w(\cdot)) \leq (2-\delta) F(\Sigma)$ for some $\delta\in(0,1)$ fixed. 

Now, consider $T > 1$ and $v \in C^2_\textnormal{loc}(\Sigma \times [0,T))$ with 
\begin{equation}\label{eq:restart.graph.flow.1}
\sup_\Sigma |v(\cdot,\tau)| \leq \psi_1, \qquad \sup_\Sigma |\nabla_\Sigma v(\cdot,\tau)|\leq \psi_1
\end{equation}
for all $\tau \in [0,T)$. Assume that $\tau\mapsto\Gamma(\tau)$ is a rescaled mean curvature flow. Due to the choice of $\psi_1$ above, we can locally write the flow, 
\[
[-e,-1] \ni t\mapsto \sqrt{-t} \left( \Graph_\Sigma v(\cdot,\bar \tau - \log(-t))\right)
\]
as a uniformly bounded Lipschitz graphs over balls of uniform size. Interior estimates \cite[Corollary 3.5]{EckerHuisken:interior} for mean curvature flow applied at $t=-1$ thus imply that 
\[
\sup_{\Gamma(\bar\tau)}|\nabla^m A_{\Gamma(\bar\tau)}| \leq C'(m,\Sigma)
\]
This yields the asserted $C^m$-estimates as long as $\psi_0 \leq \psi_1$. 

It remains to choose $\psi_0=\psi_0(\Sigma) \leq \psi_1$ sufficiently small so that the second property holds. Before doing so, we observe that the curvature estimates just derived show that $\Gamma(\tau)$ smoothly limits to some $\Gamma(T)$ and combined with \cite[Theorem 4.2]{EckerHuisken:interior}, we find that there is $T''>T$ (independent of $\psi_0$) so that the rescaled mean curvature flow $\tau \mapsto \Gamma(\tau)$ exists for $\tau \in [0,T'']$. The same argument used above implies that 
\begin{equation}\label{eq:curv.est.boostrap}
\sup_{\Gamma(\bar\tau)}|\nabla^m A_{\Gamma(\bar\tau)}| \leq C'(m,\Sigma)
\end{equation}
holds for $\tau \in [T,T'']$ (after shrinking $T''$ if necessary). 

We now fix $\psi_0=\psi_0(\Sigma,\psi_1) \leq \psi_1$ so that if $\Gamma \subset \RR^{n+1}$ is a properly embedded hypersurface with $\Gamma \subset U_{\psi_0}(\Sigma)$, $F(\Gamma) \leq (2-\delta) F(\Sigma)$ and $|A_\Gamma| \leq C'(1,\Sigma)$ (the constant in \eqref{eq:curv.est.boostrap} with $m=1$), then 
\[
\Gamma = \Graph_\Sigma w,
\]
where $w\in C^2_\textrm{loc}(\Sigma)$ satisfies
\[
\sup_\Sigma |w(\cdot)|\leq \psi_0,\qquad \sup_\Sigma |\nabla_\Sigma w(\cdot)|\leq \psi_1.
\]
(To see that $\psi_{0}$ exists, note that as $\psi_{0}\to 0$, $\Gamma$ will converge to $\Sigma$ in $C^{1,\alpha}$ due to the entropy and second fundamental form bounds. In this limit, we have the uniform decay $\nabla_\Sigma w = o(1)$, while $|w|\leq \psi_0$ follows from $\Gamma \subset U_{\psi_0}(\Sigma)$.) 

We now assume that 
\[
\limsup_{\tau \to T} \sup_\Sigma |v(\cdot,T)| < \psi_0
\]
where we have chosen $\psi_0$ above. Using small spheres as barriers we can conclude that there is $T'\in[T,T'']$ so that $\Gamma(\tau) \subset U_{\psi_0}(\Sigma)$ for $\tau \in [T,T']$. Moreover, $F(\Gamma(\tau)) \leq (2-\delta)F(\Sigma)$ by monotonicity of $F$ under the rescaled flow. Finally, \eqref{eq:curv.est.boostrap} implies that $|A_{\Gamma(\tau)}| \leq C'(1,\Sigma)$ for $\tau \in [T,T']$. Thus, by the choice of $\psi_0$, we see that $\Gamma(\tau) = \Graph_\Sigma v(\cdot,\tau)$ with 
\[
\sup_\Sigma |v(\cdot,\tau)|\leq \psi_0,\qquad \sup_\Sigma |\nabla_\Sigma v(\cdot,\tau)|\leq \psi_1.
\]
for all $\tau \in [T,T']$. This completes the proof. 
\end{proof}

\begin{corollary}[Existence of graphical flows]\label{coro:exist.graph.bootstrap}
Fix a shrinker $\Sigma \in \cS_n'$. There is a constant $\psi^* = \psi^*(\Sigma)$ so that if $v_0 \in C^2(\Sigma)$ has $\Vert v_0\Vert_{C^2(\Sigma)} \leq \psi^*$ then there is $T\in [1,\infty]$ and $v \in C^2_\textnormal{loc}(\Sigma\times [0,T])$ so that:
\begin{enumerate}
\item $v(\cdot,0) = v_0(\cdot)$, 
\item $\Gamma(\tau) : = \Graph_\Sigma v(\cdot,\tau)$ is a rescaled mean curvature flow, 
\item we have 
\[
\sup_\Sigma |\nabla^m v(\cdot,\tau)| \leq C(m,\Sigma)
\]
for all $\tau \in [1,T]$ and $m \in\NN$, and
\item if $T<\infty$, then it must hold that
\[
\sup_\Sigma |v(\cdot,T)| = \psi_0,
\]
for $\psi_0=\psi_0(\Sigma)$ fixed in Proposition \ref{prop:bootstrap.graphical}. 
\end{enumerate}
\end{corollary}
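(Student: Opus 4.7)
The plan is to combine a short-time existence statement for the rescaled mean curvature flow equation as a quasilinear parabolic PDE with iteration of the extension clause of Proposition \ref{prop:bootstrap.graphical}.

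First I would establish short-time existence in $C^2$. The rescaled MCF equation for a normal graph $v$ over $\Sigma$ takes the form $\partial_\tau v = F(\bx, v, \nabla_\Sigma v, \nabla^2_\Sigma v)$ where $F$ is smooth in its arguments, $F$ vanishes when $(v,\nabla v, \nabla^2 v) = 0$, and its linearization at $v \equiv 0$ is (a zeroth-order perturbation of) the operator $L_\Sigma$ from \eqref{eq:intro-L-operator}. Since $\Sigma \in \cS_n'$ has nice ends (Lemma \ref{lem:ends-decomposition}), $\Sigma$ has uniformly bounded geometry, so the linearization is uniformly parabolic with smooth coefficients of bounded geometry. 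Standard quasilinear parabolic theory (for instance via contraction mapping in an appropriate weighted H\"older space, or, given that small $C^2$ graphs are $C^{2,\alpha}$-close to the zero section, via local existence in uniform-sized patches together with uniqueness) then yields, for $\|v_0\|_{C^2(\Sigma)}$ sufficiently small, a unique $C^2$ solution $v$ on $\Sigma \times [0,1]$ that depends continuously on $v_0$ in $C^2$.

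Second, I would fix $\psi^* = \psi^*(\Sigma) > 0$ small enough that whenever $\|v_0\|_{C^2(\Sigma)} \leq \psi^*$, the above short-time solution satisfies the bounds \eqref{eq:restart.graph.flow} of Proposition \ref{prop:bootstrap.graphical} throughout $[0,1]$, i.e.,
\[
\sup_\Sigma |v(\cdot,\tau)| \leq \psi_0, \qquad \sup_\Sigma |\nabla_\Sigma v(\cdot,\tau)| \leq \psi_1 \quad \text{for all } \tau \in [0,1].
\]
Such a choice is possible by continuous dependence on initial data together with $\psi_0,\psi_1 > 0$.

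Third, I would run a maximal-extension argument. Define
\[
T := \sup\bigl\{ \tau^* \geq 1 : v \text{ extends to a rescaled MCF on } \Sigma\times[0,\tau^*] \text{ satisfying } \eqref{eq:restart.graph.flow} \bigr\} \in [1,\infty].
\]
By the second step $T \geq 1$. Conclusion (3) follows immediately from Proposition \ref{prop:bootstrap.graphical} applied on any $[0,\tau^*]$ with $\tau^*<T$. If $T = \infty$ we are done. Otherwise $T < \infty$, and I claim $\sup_\Sigma |v(\cdot,T)| = \psi_0$. For suppose instead $\limsup_{\tau \nearrow T} \sup_\Sigma |v(\cdot,\tau)| < \psi_0$; then the extension clause of Proposition \ref{prop:bootstrap.graphical} yields $T' > T$ and an extension $v' \in C^2_{\text{loc}}(\Sigma \times [0,T'])$ still satisfying \eqref{eq:restart.graph.flow}, contradicting the maximality of $T$. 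Thus the dichotomy (4) holds.

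The main obstacle is Step 1, the short-time existence on the non-compact $\Sigma$ with cylindrical ends: one must produce a solution on all of $\Sigma$ rather than on compact pieces, with bounds uniform out the ends. This is handled by using the bounded geometry of $\Sigma$ (from the nice-ends hypothesis) so that the linearized operator is uniformly parabolic with bounded smooth coefficients, and patching local existence via uniqueness; alternatively, pseudolocality guarantees that an initially small $C^2$ graph evolves for uniform time without losing graphicality. Once Step 1 is in hand, Steps 2 and 3 are essentially a direct application of Proposition \ref{prop:bootstrap.graphical}.
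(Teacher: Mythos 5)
Your proposal follows essentially the same route as the paper: short-time existence for the graphical rescaled flow under a smallness hypothesis on $\|v_0\|_{C^2}$, followed by a maximal-extension argument driven by the two clauses of Proposition \ref{prop:bootstrap.graphical}. The only difference is cosmetic: where you invoke general quasilinear parabolic machinery (contraction mapping in weighted H\"older spaces, patching plus uniqueness, or pseudolocality) for Step 1, the paper simply cites Ecker--Huisken's existence theorem for entire graphical mean curvature flow \cite[Theorem 4.2]{EckerHuisken:interior}, which handles the non-compactness of $\Sigma$ directly without a patching step; otherwise the structure, the definition of the maximal time $T$, and the contradiction argument for conclusion (4) coincide.
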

\begin{proof}
Taking $\psi^*=\psi^*(\Sigma)$ sufficiently small it is easy to see (e.g.\ using \cite[Theorem 4.2]{EckerHuisken:interior}) that the graphical rescaled mean curvature flow starting at $v_0$ exists and satisfies 
\begin{equation}\label{eq:bootstrap.graphical.conditions}
\sup_\Sigma |v(\cdot,\tau)|\leq \psi_0, \qquad \sup_\Sigma |\nabla_\Sigma v(\cdot,\tau)| \leq \psi_1,
\end{equation}
at least for $\tau \in [0,2]$. Choose $T \in (1,\infty]$ maximal so that the flow exists and satisfies \eqref{eq:bootstrap.graphical.conditions} for $\tau \in [0,T)$. Property (3) for $\tau < T$ follows immediately from Proposition \ref{prop:bootstrap.graphical}. This verifies (1) and (2) (since if $T<\infty$ then (3) guarantees the flow extends to $\tau =T$.)

In the case that $T=\infty$, condition (4) is vacuous so the proof is completed. Thus, we must verify (4) when $T<\infty$. If (4) fails, then 
\[
\sup_\Sigma |v(\cdot,T)| < \psi_0.
\]
However, due to Proposition \ref{prop:bootstrap.graphical}, this would would contradict the maximality of $T$, so this case cannot occur. This completes the proof.  
\end{proof}

\begin{proposition}[Barrier principle for graphical flows]\label{prop:barrier-graphical}
Fix a shrinker $(\Sigma,\Omega) \in \cS_{n}'$ and the corresponding constant $\psi^*$ from Corollary \ref{coro:exist.graph.bootstrap}. For $v_0 \in C^2(\Sigma) \setminus\{0\}$ with $v_0 \geq 0$ and $\Vert v_0\Vert_{C^2(\Sigma)} \leq \psi^{*}$, let $[0,T]\ni \tau \mapsto v(\cdot,\tau)$ denote the function whose graph is a rescaled mean curvature flow as in Corollary \ref{coro:exist.graph.bootstrap}. Then $1\leq T< \infty$. 

In particular,
\[
\sup_\Sigma |v(\cdot,T)| = \psi_0.
\]
Moreover, if $[0,T]\ni \tau \mapsto \cM(\tau)$ is a rescaled Brakke flow with $\supp \cM(\tau) \subset\Omega$ for all $\tau \in [0,T]$ and
\[
\supp\cM(0) \subset \Omega\setminus \{\bx + s v_0(\bx)\nu_\Sigma(\bx) : \bx \in\Sigma,s\in [0,1]\}
\]
then
\[
\supp\cM(\tau) \subset \Omega\setminus  \{\bx + s v(\bx,\tau)\nu_\Sigma(\bx) : \bx \in\Sigma,s\in [0,1)\}. 
\]
\end{proposition}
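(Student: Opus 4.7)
The plan is to prove the two assertions in sequence. The lower bound $T\geq 1$ is immediate from Corollary \ref{coro:exist.graph.bootstrap}. For $T<\infty$, I will exploit the instability of $\Sigma$ (encoded by $\mu<0$) to produce a subsolution that forces $v$ to grow exponentially until it exits the $C^{2}$-small regime. For the barrier inclusion, the graph $\Gamma(\tau):=\Graph_\Sigma v(\cdot,\tau)$ is itself a smooth rescaled MCF by construction, and the inclusion follows from a two-sided avoidance principle: $\cM(\tau)$ cannot cross $\Sigma$ since $\supp\cM\subset\Omega$, and cannot cross $\Gamma(\tau)$ by a bounded-curvature Ilmanen-style avoidance.

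For $T<\infty$, the graphical function satisfies a quasilinear equation of the form $\partial_\tau v=Lv+Q(v,\nabla v,\nabla^{2}v)$ with $Q$ quadratic in its arguments (see Appendix \ref{app:graph-shrinker}). Since $v_{0}\geq 0$ and $v_{0}\not\equiv 0$, the parabolic strong maximum principle applied to this equation gives $v(\cdot,1)>0$ pointwise. I then fix a smoothly bounded compact $K\Subset\Sigma$ large enough that the first Dirichlet eigenvalue $\mu_{K}$ of $L$ on $K$ is still negative (possible since $\mu<0$ and $\mu_{K}\to\mu$ as $K$ exhausts $\Sigma$), let $\varphi_{K}>0$ be the corresponding first eigenfunction, and choose $c>0$ so small that $c\varphi_{K}\leq v(\cdot,1)$ on $K$. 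Modeled on Lemma \ref{lemm:sub-super-one-sided}, the function
\[
\underline{v}(\bx,\tau)=\bigl(ce^{-\mu_{K}(\tau-1)}-Mc^{2}e^{-2\mu_{K}(\tau-1)}\bigr)\varphi_{K}(\bx)
\]
is a nonnegative subsolution of the quasilinear equation on $K$ with zero Dirichlet boundary, valid on a maximal interval $[1,\tau_{*}]$, for any $M=M(\Sigma,K)$ sufficiently large: direct computation shows $(\partial_\tau-L)\underline{v}=M\mu_{K}c^{2}e^{-2\mu_{K}(\tau-1)}\varphi_{K}$, whose absolute value dominates $|Q(\underline{v},\nabla\underline{v},\nabla^{2}\underline{v})|\lesssim \underline{v}^{2}$ once $M|\mu_{K}|$ beats the quadratic constant. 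Comparison on $K$ (where $\underline{v}=0\leq v$ on $\partial K$ and $\underline{v}(\cdot,1)\leq c\varphi_{K}\leq v(\cdot,1)$) gives $\underline{v}\leq v$. Since $\underline{v}$ attains values of order $e^{-\mu_{K}(\tau-1)}\to\infty$, the bound $|v|\leq\psi_{0}$ must eventually fail, proving $T<\infty$; Corollary \ref{coro:exist.graph.bootstrap}(4) then yields $\sup_{\Sigma}|v(\cdot,T)|=\psi_{0}$.

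For the barrier inclusion, $\Gamma(\tau)$ is a smooth rescaled MCF on $[0,T]$ with uniformly bounded second fundamental form on $[1,T]$ by Proposition \ref{prop:bootstrap.graphical}, and on $[0,1]$ by Ecker--Huisken interior estimates applied to the unrescaled flow together with the $C^{2}$-smallness of $v_{0}$; it is asymptotic to $\Sigma$ at spatial infinity with uniform decay on $[0,T]$. Set $U(\tau):=\{\bx+sv(\bx,\tau)\nu_\Sigma(\bx):\bx\in\Sigma,\,s\in(0,1)\}$; at $\tau=0$ the support of $\cM(0)$ lies at positive distance from $\overline{U(0)}$. Converting to unrescaled MCF and applying the bounded-curvature version of Ilmanen's avoidance from Appendix \ref{app:bf-avoid-bd-curv} against $\Gamma(\tau)$, combined with the standing constraint $\supp\cM(\tau)\subset\Omega$ (so that $\Sigma$ acts as a barrier on the other side), one concludes $\supp\cM(\tau)\cap U(\tau)=\emptyset$ for all $\tau\in[0,T]$, which is the asserted inclusion (touching on $\Gamma(\tau)$ itself being permitted since the excluded set in the conclusion is open on the $s=1$ side). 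The main technical obstacle is verifying the hypotheses of the bounded-curvature avoidance principle for the non-compact smooth flow $\Gamma(\tau)$: one needs uniform curvature and asymptotic control along the conical/cylindrical ends of $\Gamma(\tau)$ on the full interval $[0,T]$, both of which follow from Corollary \ref{coro:exist.graph.bootstrap} combined with the uniform graphicality of $\Gamma(\tau)$ over $\Sigma$.
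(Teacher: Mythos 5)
Your proof of the barrier inclusion ("Moreover\dots") matches the paper's: both reduce it to the bounded-curvature avoidance of Proposition~\ref{prop:avoidance-noncompact}, after noting that the graphs $\Gamma(\tau)$ form a smooth flow with uniformly bounded curvature and controlled area ratios by the bootstrap estimates. That part is fine.

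The argument you give for $T<\infty$ is different from the paper's, and it has two genuine gaps. The paper proves $T<\infty$ purely geometrically: the Ecker--Huisken maximum principle gives $v>0$, and then Ilmanen's localized avoidance (Theorem~\ref{theo:ilmanen-avoidance}) applied to the disjoint unrescaled flows $t\mapsto\sqrt{-t}\,\Sigma$ and $t\mapsto\sqrt{-t}\,\Graph_\Sigma v(\cdot,-\log(-t))$ shows the conformal distance between them is non-decreasing; but if $|v|\le\psi_0$ for all $\tau$, the two unrescaled flows become Euclidean-close inside a fixed compact set as $t\to 0^{-}$, so the conformal distance tends to zero, a contradiction. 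Your spectral subsolution route attempts to bypass this, but it does not close.

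First, the claim that
\[
\underline{v}(\bx,\tau)=\bigl(ce^{-\mu_{K}(\tau-1)}-Mc^{2}e^{-2\mu_{K}(\tau-1)}\bigr)\varphi_{K}(\bx)
\]
is a subsolution on all of $K$ for $M$ large is false near $\partial K$. From Corollary~\ref{coro:expand-rescaled-mcf-app} the error is $E=uE_1+E_2(\nabla u,\nabla u)$ with $|E_2|=O(1)$, so $|E(\underline{v})|\gtrsim|\nabla\underline{v}|^{2}\sim c^2 e^{-2\mu_K(\tau-1)}|\nabla\varphi_K|^{2}$, while the slack $|(\partial_\tau-L)\underline{v}|=M|\mu_K|c^2 e^{-2\mu_K(\tau-1)}\varphi_K$ is proportional to $\varphi_K$. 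Absorbing the gradient error therefore requires $M|\mu_K|\varphi_K\gtrsim|\nabla\varphi_K|^{2}$, i.e.\ a uniform bound on $\varphi_K^{-1}|\nabla\varphi_K|^{2}$ — but by the Hopf boundary point lemma this quantity blows up at $\partial K$, so no finite $M$ works. This is precisely the obstruction the paper confronts later (Proposition~\ref{prop:lower.barrier.compact.bc} and Theorem~\ref{thm:interior decay}), where it is circumvented not by enlarging $M$ but by a separate differential Harnack / Li--Yau estimate (Proposition~\ref{prop:Li-Yau}) that rules out first contact near $\partial K$. That machinery is not available at the point where Proposition~\ref{prop:barrier-graphical} is established, and your argument does not supply a substitute. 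Note also that Lemma~\ref{lemm:sub-super-one-sided}, which you cite as a model, is a genuinely global construction: it uses the first eigenfunction on all of $\Sigma$ together with $\varphi^{-1}|\nabla\varphi|^2\in L^\infty(\Sigma)$ (Lemma~\ref{lem:decay-lowest-eigenfuncton}~(3)), a property that fails for the Dirichlet eigenfunction on a compact subdomain.

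Second, even granting the subsolution property, $\underline{v}$ does not "attain values of order $e^{-\mu_K(\tau-1)}\to\infty$." Writing $y(\tau)=ce^{-\mu_K(\tau-1)}$ you have $\underline{v}=y(1-My)\varphi_K$, which peaks at $\tfrac{1}{4M}\varphi_K$ when $y=\tfrac{1}{2M}$ and then decreases to zero at $y=\tfrac 1M$, beyond which it is negative. So on the maximal interval $[1,\tau_*]$ where $\underline{v}\ge 0$, the comparison $\underline{v}\le v$ only yields $v\gtrsim\tfrac{1}{4M}\varphi_K$ at one intermediate time, which is certainly compatible with $|v|\le\psi_0$ when $M$ is large (as it must be). No contradiction to $T=\infty$ is produced. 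An ansatz that does grow without bound, or an iteration scheme, would be needed, and neither is present.
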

\begin{proof}
Assume that $T = \infty$. First note that $v(\cdot,\tau) >0 $ for all $\tau \in [0,\infty)$ by the Ecker--Huisken maximum principle \cite[Theorem 4.3]{EckerHuisken:interior}. Then, Ilmanen's localized avoidance principle (Theorem \ref{theo:ilmanen-avoidance}) applied to $t\mapsto \sqrt{-t}\Sigma$ and $t\mapsto \Graph_\Sigma v(\cdot,-\log(-t))$ implies that for any $R>0$, there is $t_0=t_0(R) < 0$ so that $\Graph_\Sigma v(\cdot,-\log(-t_0)) \cap B_R(\bOh) = \emptyset$. This is a contradiction, when $R$ is large. Thus $T<\infty$, so the ``blowup'' condition
\[
\sup_\Sigma |v(\cdot,T)| = \psi_0.
\]
follows from (4) in Corollary \ref{coro:exist.graph.bootstrap}. 

The remaining statement (``Moreover...'') follows from Proposition \ref{prop:avoidance-noncompact}. 
\end{proof}

\section{Existence of a one-sided ancient flow}\label{sec:existence-one-sided-ancient}

Fix $n\leq 6$ and a shrinker $(\Sigma, \Omega) \in  \cS''_n$ with nice stable ends. (Recall that this means that the ends of $\Sigma$ are smoothly asymptotic to cones or cylinders and the first eigenfunction of $L$ is bounded.) We construct here an ancient shrinker mean convex rescaled Brakke flow inside of $\Omega$ converging to $\Sigma$ (with multiplicity one) in the ancient past.

Recall that the unit normal to $\Sigma$ points into $\Omega$. By Proposition \ref{lem:decay-lowest-eigenfuncton}, the first eigenfunction $\varphi$ (with first eigenvalue $\mu$) of $L$ along $\Sigma$ has the property that  there is $\eps_0 >0$ so that for $\eps\in (0,\eps_0)$, the normal graph of $\eps \varphi$ (denoted here by $\Sigma_\eps$) is a smooth strictly shrinker mean convex hypersurface contained in $\Omega$. 

We first construct barriers for the one-sided flow.\footnote{These barriers allows us to simplify certain arguments used in \cite{CCMS:generic1}.}
\begin{lemma}[Global barriers for one-sided flow]\label{lemm:sub-super-one-sided}  
There is $M =M(\Sigma)>0$ sufficiently large and $\tau_0 = \tau_0(\Sigma)$ sufficiently negative so that $Me^{-\mu\tau_0} \leq 1/2$ and so that defining functions
\[
v_\pm(\bx,\tau) : =  (e^{-\mu \tau}\pm M e^{-2\mu \tau}) \varphi(\bx),
\]
 the hypersurfaces $\Sigma_\pm(\tau) : = \Graph_\Sigma v_\pm(\cdot,\tau)$ satisfy:
\[
(-\infty,\tau_0] \ni \tau \mapsto \Sigma_+(\tau)
\]
is a supersolution\footnote{Namely, a smooth solution to rescaled mean curvarure flow cannot make contact with $\Sigma_+$ from \emph{below} (with respect to the unit normal induced by $\Sigma$). } to rescaled mean curvature flow and
\[
(-\infty,\tau_0] \ni \tau \mapsto \Sigma_-(\tau)
\]
is a subsolution to rescaled mean curvature flow.
\end{lemma}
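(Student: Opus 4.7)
The plan is to reduce the super/sub-solution condition to a pointwise differential inequality for the graphical defining functions $v_\pm$, compute the linear part exactly using $L\varphi = -\mu\varphi$, and then absorb the quadratic error term into the $M$-correction by choosing $M$ large and $\tau_0$ sufficiently negative.

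First I would invoke the expansion of rescaled mean curvature flow over a graph of a function $u$ over $\Sigma$ that is developed in Appendix \ref{app:graph-shrinker}. This gives a nonlinear equation of the schematic form
\[
\partial_\tau u - Lu = E(u,\nabla u,\nabla^2 u),
\]
where $E$ is a quadratic-type error satisfying (uniformly on $\Sigma$, as long as $|u|+|\nabla u|$ is sufficiently small) a pointwise bound
\[
|E| \leq C\big(|u|\,|\nabla^2 u| + |\nabla u|^2 + |u|^2 + |u|\,|\nabla u|\big),
\]
with $C=C(\Sigma)$. Correspondingly, $\Sigma_\pm(\tau)=\Graph_\Sigma v_\pm(\cdot,\tau)$ is a supersolution (resp.\ subsolution) to rescaled mean curvature flow exactly when $\partial_\tau v_\pm - L v_\pm - E(v_\pm) \geq 0$ (resp.\ $\leq 0$), by the standard maximum principle argument and the sign convention on $\nu_\Sigma$.

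Next I would compute the linear part. Writing $a_\pm(\tau):=e^{-\mu\tau}\pm Me^{-2\mu\tau}$ and using $L\varphi=-\mu\varphi$, a direct computation gives
\[
\partial_\tau v_\pm - L v_\pm \;=\; \big(a_\pm'(\tau)+\mu\, a_\pm(\tau)\big)\varphi \;=\; \mp\,\mu\, M\, e^{-2\mu\tau}\varphi.
\]
Since $\mu<0$, the right-hand side equals $+|\mu|Me^{-2\mu\tau}\varphi$ for $v_+$ and $-|\mu|Me^{-2\mu\tau}\varphi$ for $v_-$; in both cases, the leading linear discrepancy has the correct sign and is of order $Me^{-2\mu\tau}\varphi$.

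Then I would bound the nonlinear error. The constraint $Me^{-\mu\tau_0}\leq 1/2$ forces $v_\pm \leq 2\, e^{-\mu\tau}\varphi$ for $\tau\leq\tau_0$. Combined with the $L^\infty$-bounds $\|\varphi\|_{L^\infty}, \|\nabla^k\varphi\|_{L^\infty} \leq C$ established in part (3) of Lemma \ref{lem:decay-lowest-eigenfuncton}, together with the crucial bound $\varphi^{-1}|\nabla\varphi|^2\in L^\infty(\Sigma)$ from the same lemma (which converts $|\nabla v_\pm|^2\lesssim e^{-2\mu\tau}|\nabla\varphi|^2$ into $\lesssim e^{-2\mu\tau}\varphi$), each of the four terms in the error estimate above is bounded by $C\, e^{-2\mu\tau}\varphi$ with $C=C(\Sigma)$. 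Thus
\[
|E(v_\pm)| \;\leq\; C_0\, e^{-2\mu\tau}\varphi \qquad \text{for all } \tau\leq\tau_0,
\]
where $C_0$ depends only on $\Sigma$ (and is independent of $M$ once $Me^{-\mu\tau_0}\leq 1/2$).

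Finally I would combine these. For the supersolution property of $v_+$:
\[
\partial_\tau v_+ - L v_+ - E(v_+)\;\geq\; |\mu|M e^{-2\mu\tau}\varphi \;-\; C_0\, e^{-2\mu\tau}\varphi \;=\; \big(|\mu|M-C_0\big)e^{-2\mu\tau}\varphi,
\]
which is nonnegative once $M\geq C_0/|\mu|$; the same choice gives the subsolution property of $v_-$ with the opposite sign. After fixing such $M=M(\Sigma)$, one then chooses $\tau_0=\tau_0(\Sigma)$ so negative that $Me^{-\mu\tau_0}\leq 1/2$ (which retroactively justifies the uniform $L^\infty$-smallness of $v_\pm,\nabla v_\pm$ used above, so that Appendix \ref{app:graph-shrinker} applies). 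The main technical point is really step three: cleanly absorbing the quadratic error into the $M e^{-2\mu\tau}\varphi$ term, and this is exactly where the stable-ends hypothesis enters via $\varphi^{-1}|\nabla\varphi|^2\in L^\infty$, since without this one cannot compare $|\nabla\varphi|^2$ to $\varphi$ near infinity along the cylindrical ends.
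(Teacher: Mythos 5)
Your proof is correct and follows essentially the same strategy as the paper: compute $(\partial_\tau - L)v_\pm = \mp\mu M e^{-2\mu\tau}\varphi$, use the graph expansion of Corollary \ref{coro:expand-rescaled-mcf-app} together with the bound $\varphi^{-1}|\nabla\varphi|^2\in L^\infty(\Sigma)$ from Lemma \ref{lem:decay-lowest-eigenfuncton} to control the nonlinear error by $Ce^{-2\mu\tau}\varphi$, and then choose $M$ large and $\tau_0$ negative. The only cosmetic difference is that you normalize the error constant to be $M$-independent from the outset (using $Me^{-\mu\tau_0}\leq 1/2$) and then choose $M\geq C_0/|\mu|$, whereas the paper carries the $M$-dependence explicitly as $C(1+M^2 e^{-2\mu\tau})e^{-2\mu\tau}\varphi$ and kills the $M^2 e^{-2\mu\tau}$ piece by taking $\tau_0$ sufficiently negative after fixing $M$; both bookkeeping choices lead to the same conclusion.
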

\begin{proof}
We consider $\Sigma_+$ for simplicity (the argument for $\Sigma_-$ is essentially identical). We first compute
\[
(\partial_\tau - L) v_+ =   - M  \mu e^{-2\mu \tau} \varphi = M  |\mu| e^{-2\mu \tau} \varphi
\]
By Corollary \ref{coro:expand-rescaled-mcf-app}, 
\[
(\nu_{\Sigma_+}\cdot\nu_\Sigma)^{-1} (\partial_{\tau}\bx_{\Gamma} - \bH -\tfrac 12 \bx_{\Gamma}) \cdot \nu_{\Gamma} = (\partial_{\tau} - L) u + E = M |\mu| e^{-2\mu\tau}\varphi + E,
\]
at $\bx_{\Gamma} = \bx + v_{+}(\bx,\tau)\nu_{\Gamma}(\bx)$ for  $E=v_{+}E_1 + E_2(\nabla v_{+},\nabla v_{+})$ with $E_1=O(e^{-\mu\tau}), E_2(\cdot, \cdot)=O(1)$ uniformly on $\Sigma$ as $\tau\to-\infty$. Note that 
\begin{align*}
|E(\bx)| & \leq v_{+}(\bx) |E_{1}(\bx)| + |E_{2}(\nabla v_{+}(\bx),\nabla v_{+}(\bx))|\\
& \leq C e^{-\mu\tau} (e^{-\mu\tau} + M e^{-2\mu\tau}) \varphi(\bx) + C (e^{-\mu\tau} + M e^{-2\mu\tau})^{2} |\nabla \varphi(\bx)|^{2}\\
&  \leq C (1 + M^2 e^{-2\mu \tau})  e^{-2\mu\tau} \varphi(\bx). 
\end{align*}
The final estimate uses $\varphi^{-1}|\nabla \varphi|^{2} \in L^{\infty}(\Sigma)$ as proven in part (3) of Lemma \ref{lem:decay-lowest-eigenfuncton}. Hence, 
\begin{align*}
(\nu_{\Sigma_+}\cdot\nu_\Sigma)^{-1} (\partial_{\tau}\bx_{\Gamma} - \bH -\tfrac 12 \bx_{\Gamma}) \cdot \nu_{\Gamma}  & \geq (M|\mu| - C - C M^2 e^{-2\mu \tau})  e^{-2\mu \tau}\varphi
\end{align*}
Taking $M$ large and then $\tau_{0}$ sufficiently negative ensures that $M|\mu| - C - C M^2 e^{-2\mu \tau} >0$ for all $\tau \in (-\infty,\tau_0)$. This proves the assertion. 
\end{proof}

The following result is an extension of \cite[Proposition 4.4]{BernsteinWang:TopologicalProperty} to allow for asymptotically cylindrical ends. 
We will denote $\eps_0'=\eps_0'(\Sigma)$ the constant from Lemma \ref{lem:decay-lowest-eigenfuncton} (denoted by $\eps_0$ there).  
\begin{proposition}[Existence of pre-ancient graphical one-sided flow]\label{prop:short-time-exist-Sigma-eps}
There is $\eps_0 = \eps_0(\Sigma) \in (0,\eps_0']$ so that for $\eps \in (0,\eps_0)$ there is a smooth mean curvature flow $\Sigma_\eps(t)$ for $t \in [-1,- (\eps/\eps_0)^{1/|\mu|}]$ with $\Sigma_\eps(-1) = \Sigma_\eps$ and $C=C(\Sigma)$ with the following properties:
\begin{enumerate}
\item the flow $\Sigma_\eps(t)$ remains strictly shrinker mean convex to the inside with the bound
\[
2t H_{\Sigma_\eps(t)} + \mathbf{x}\cdot\nu_{\Sigma_\eps(t)} \geq C^{-1}\eps (1+|\mathbf{x}|^2 + 2n(t+1))^{\mu} 
\]
for all $t \in [-1,- (\eps/\eps_0)^{1/|\mu|} ]$,
\item the rescaled surface $\tfrac{1}{\sqrt{-t}}\Sigma_\eps(t)$ is the normal exponential graph over $\Sigma$ of a function $v_\eps(\cdot,t)$ with $v_\eps \in C^\infty(\Sigma \times [-1, - (\eps/\eps_0)^{1/|\mu|}])$ and $\|v_\eps(\cdot,t)\|_{C^0(\Sigma)} \leq 2\eps_0$
for $t \in  [-1,- (\eps/\eps_0)^{1/|\mu|} ]$,
\item $\Vert v_\eps(\cdot,t)\Vert_{C^3(\Sigma)} \leq C $ for all $t \in [-1, - (\eps/\eps_0)^{1/|\mu|}]$, and
\item the following estimate
\[
\frac{\eps}{2} e^{-\mu \tau} \varphi(\bx) \leq v_{\eps}(\bx,-e^{-\tau}) \leq 2 \eps e^{-\mu \tau} \varphi(\bx)
\]
holds for all $(\bx,\tau) \in \Sigma\times [0,\tfrac{1}{\mu} (\log(\eps) - \log(\eps_0))]$. 
\end{enumerate} 
\end{proposition}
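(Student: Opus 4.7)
The plan is to combine the graphical short-time existence from Corollary~\ref{coro:exist.graph.bootstrap} with time-shifted copies of the sub/supersolution barriers of Lemma~\ref{lemm:sub-super-one-sided}. Working in rescaled time $\tau = -\log(-t)$, the target interval $t \in [-1, -(\eps/\eps_{0})^{1/|\mu|}]$ becomes $\tau \in [0, \tau_{1}]$ with $\tau_{1} := \tfrac{1}{\mu}(\log \eps - \log \eps_{0}) > 0$. Since $\varphi$ has uniformly bounded $C^{m}$-norm (Lemma~\ref{lem:decay-lowest-eigenfuncton}(3)), the initial datum $v_{0} := \eps \varphi$ satisfies $\|v_{0}\|_{C^{2}(\Sigma)} \leq C(\Sigma)\, \eps$, so for $\eps_{0}$ small enough Corollary~\ref{coro:exist.graph.bootstrap} produces a graphical rescaled MCF $v(\cdot,\tau)$ on a maximal interval $[0, T]$, $T \in [1, \infty]$, with the blow-up alternative $\sup_{\Sigma} |v(\cdot, T)| = \psi_{0}$ if $T < \infty$. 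The candidate flow is then $\Sigma_{\eps}(t) := \sqrt{-t}\, \Graph_{\Sigma} v(\cdot, -\log(-t))$.

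The heart of the argument is to sandwich $v$ between shifted barriers. Set $\tau_{*} := \log \eps / |\mu| < 0$ and
\[
\tilde v_{\pm}(\bx,\tau) := v_{\pm}(\bx, \tau + \tau_{*}) = \eps\, e^{-\mu\tau} \bigl( 1 \pm M \eps\, e^{-\mu\tau} \bigr) \varphi(\bx).
\]
Choosing $\eps_{0}$ so that $\eps_{0} \leq e^{|\mu|\tau_{0}}$ and $M\eps_{0} \leq 1/2$ ensures that Lemma~\ref{lemm:sub-super-one-sided}'s validity range $\tau + \tau_{*} \leq \tau_{0}$ covers all of $[0, \tau_{1}]$ and that the barrier coefficients stay positive. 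At $\tau = 0$ we have $\tilde v_{-}(\cdot, 0) \leq v_{0} \leq \tilde v_{+}(\cdot, 0)$, and an Ecker--Huisken maximum principle applied to the quasilinear graph equation for $v$ (equivalently, Proposition~\ref{prop:barrier-graphical} at the level of the underlying Brakke flows) propagates the sandwich
\[
\tfrac{\eps}{2}\, e^{-\mu\tau} \varphi \leq \tilde v_{-}(\cdot,\tau) \leq v(\cdot,\tau) \leq \tilde v_{+}(\cdot,\tau) \leq 2 \eps\, e^{-\mu\tau} \varphi
\]
on $[0, \min(T, \tau_{1})]$, where the outer bounds use $M\eps\, e^{-\mu\tau} \leq M\eps_{0} \leq 1/2$. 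This is precisely property (4). Shrinking $\eps_{0}$ once more so that $2\eps_{0}\|\varphi\|_{\infty} < \psi_{0}$, the bound $\|v(\cdot,\tau)\|_{C^{0}} < \psi_{0}$ throughout $[0, \tau_{1}]$ rules out the blow-up alternative and forces $T \geq \tau_{1}$, giving (2). Property (3) is then the bootstrap bound from Proposition~\ref{prop:bootstrap.graphical}.

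The main obstacle is property (1), the strict shrinker mean convexity with the explicit weight $(1 + |\bx|^{2} + 2n(t+1))^{\mu}$. I would adapt the maximum principle argument of \cite[Proposition 4.4]{BernsteinWang:TopologicalProperty} (which handles the asymptotically conical case) to the nice-stable-ends setting of $\cS''_{n}$. The initial positivity at $t = -1$ is supplied by~\eqref{eq:shrinker_mean_convexity}, which combined with the sharp end decay of $\varphi$ from Lemma~\ref{lem:decay-lowest-eigenfuncton}(1)(2) yields the comparison $w|_{t=-1} \gtrsim \eps (1 + |\bx|^{2})^{\mu}$ on both conical and cylindrical ends for the (signed) shrinker mean curvature $w$ of $\Sigma_{\eps}(t)$. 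Along the MCF, $w$ satisfies a Jacobi-type evolution, and since $|\bx|^{2} + 2nt$ is $(\partial_{t} - \Delta_{\Sigma_{\eps}(t)})$-harmonic, the function $(1 + |\bx|^{2} + 2n(t+1))^{\mu}$ is a natural sub-barrier for $w$ once the zeroth-order corrections involving $|A|^{2}$ and $\tfrac{1}{2}$ are absorbed. This absorption is possible thanks to the uniform $C^{3}$-control from (3) and the fact that $\mu$ is the first eigenvalue of $L$. A careful Ecker--Huisken-style maximum principle on the non-compact evolving surface then propagates $w \geq C^{-1}\eps (1 + |\bx|^{2} + 2n(t+1))^{\mu}$ for the entire time interval. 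The new technical input relative to \cite{BernsteinWang:TopologicalProperty} is the behavior along the cylindrical ends; here the decay of $\varphi$ from Lemma~\ref{lem:decay-lowest-eigenfuncton}(2) combined with the persistence of the cylindrical geometry ensures that the comparison functions still decay at spatial infinity, so that the max principle closes with no pathology at infinity. This propagation step is the most delicate ingredient of the proposition.
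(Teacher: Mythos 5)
Your proposal is correct and follows essentially the same route as the paper: graphical short-time existence via Corollary~\ref{coro:exist.graph.bootstrap}, time-shifted sub/supersolutions $v_{\pm}(\cdot,\tau+\tau_{*})$ from Lemma~\ref{lemm:sub-super-one-sided} with $e^{-\mu\tau_{*}}=\eps$ to propagate the sandwich via Ecker--Huisken and rule out the blow-up alternative, and an appeal to the Bernstein--Wang maximum-principle argument for strict shrinker mean convexity. The only cosmetic remark is that your sketch of property~(1) slightly overstates the work needed: since $(\partial_{t}-\Delta)(|\bx|^{2}+2n(t+1))=0$ and $\mu<0$, the barrier $(1+|\bx|^{2}+2n(t+1))^{\mu}$ is automatically a subsolution of the Jacobi equation for the shrinker mean curvature (the $|A|^{2}$ term has the favorable sign, and no ``$\tfrac12$'' correction enters), so the eigenvalue property of $\mu$ plays no role in that step.
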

\begin{proof}
Fix $\psi^{*}=\psi^{*}(\Sigma)$ and $\psi_{0}=\psi_{0}(\Sigma)$ as in Corollary \ref{coro:exist.graph.bootstrap}. Choose $\eps_{0} \in (0,\eps_{0}']$ (for $\eps_{0}'$ the constant from Lemma \ref{lem:decay-lowest-eigenfuncton} so that (i) $\eps_{0} \leq \Vert \varphi\Vert_{C^{2}(\Sigma)}^{-1}\psi^{*}$, (ii)  $\eps_{0} \Vert \varphi\Vert_{C^{0}(\Sigma)} (1+M \eps_{0}) < \psi_{0}$ for $M$ fixed in Lemma \ref{lemm:sub-super-one-sided} and (iii) $\eps_{0}\leq e^{-\mu\tau_{0}}$, for $\tau_{0}$ fixed in Lemma \ref{lemm:sub-super-one-sided}.

Condition (i) above and (the first part of) Proposition \ref{prop:barrier-graphical} yield $T \in [1,\infty)$ and a graphical rescaled mean curvature flow $[0,T]\ni \tau \mapsto v_{\eps}(\cdot,\tau)$ with $v_{\eps}(\cdot,0) = \eps \varphi(\cdot)$ and so that
\begin{equation}\label{eq:pre-ancient-blowup-barrier-crt}
\sup_{\Sigma}|v_{\eps}(\cdot,T)|  = \psi_{0}. 
\end{equation}

Choose $\bar \tau \leq \tau_0$ so that $e^{-\mu \bar \tau} = \eps$ (possible by (iii) above). Then we have that
\[
 v_-(\bx,\bar \tau) =  \eps (1- M\eps) \varphi(\bx) \leq \eps \varphi(\bx) \leq  \eps (1+ M\eps) \varphi(\bx)  = v_+(\bx,\bar \tau) 
\]
where $v_{\pm}$ are defined in Lemma \ref{lemm:sub-super-one-sided}. The Ecker--Huisken maximum principle \cite[Theorem 4.3]{EckerHuisken:interior} and Lemma \ref{lemm:sub-super-one-sided} yields
\[
 v_-(\bx,\bar \tau + \tau)  \leq v_{\eps}(\bx,\tau) \leq  v_+(\bx,\bar \tau + \tau) 
\]
for $\tau \leq T_{0} : = \min\{T,\tau_{0}-\bar \tau\}$. 

We claim that $T_{0}\geq \mu^{-1} \log (\eps/\eps_{0})$. Indeed, by (iii) (and $\mu<0$) we note that
\[
\mu^{-1} \log (\eps/\eps_{0}) \leq \tau_0 - \bar \tau.
\]
Thus, if $T_{0}< \mu^{-1} \log (\eps/\eps_{0})$ then $T_{0} = T$. Using \eqref{eq:pre-ancient-blowup-barrier-crt} and (ii) above, we thus find
\[
\psi_{0} = \sup_{\Sigma} |v_{\eps}(\cdot,T)| \leq \sup |v_{+}(\cdot,\bar \tau + \mu^{-1}\log(\eps/\eps_{0}))| \leq \eps_{0} \Vert \varphi\Vert_{C^{0}(\Sigma)} (1+M \eps_{0}) < \psi_{0} .
\]
This is a contradiction. The graph of $v_{\eps}(\cdot,\tau)$ yields the desired mean curvature flow (after rescaling), with the asserted existence time.

The shrinker mean convexity follows from the Ecker--Huisken maximum principle and Lemma \ref{lem:decay-lowest-eigenfuncton} exactly as in \cite[Proposition 3.3]{BernsteinWang:TopologicalProperty}. The remaining properties follow by construction. This finishes the proof.
\end{proof}

\begin{proposition}[Existence of ancient graphical one-sided flow]\label{prop:exist-graphical-one-sided}
Fix $(\Sigma,\Omega)\in \cS_n''$. Set $\tau_0 = |\mu|^{-1}\log \eps_0$. There is an ancient graphical shrinker mean convex rescaled mean curvature flow $  (-\infty,\tau_0] \ni \tau \mapsto \Graph_\Sigma v(\cdot,\tau)$  with  $\lambda(\Graph_\Sigma v(\cdot,\tau)) \leq F(\Sigma)$, $v(\bx,\tau) > 0$ for all $(\bx,\tau) \in \Sigma \times (-\infty,\tau_0]$, as well as the following properties for $\tau \in (-\infty,\tau_0]$:
\begin{enumerate}
\item $\Vert v(\cdot,\tau)\Vert_{C^3(\Sigma)} \leq C$,
\item $\frac{1}{2} e^{-\mu \tau} \varphi \leq v(\cdot, \tau) \leq 2 e^{-\mu \tau} \varphi$,
\item  $\Vert v(\cdot, \tau)\Vert_{C^k(\Sigma)} \leq C_k e^{-\mu \tau}$ for all $k\in \NN$, and
\item $ |e^{\mu\tau} \langle v(\cdot, \tau), \varphi\rangle_W -a| \leq Ce^{-\mu \tau}$ for some fixed $a >0$. 
\end{enumerate}
\end{proposition}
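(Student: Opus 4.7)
The plan is to construct $v$ as a subsequential limit of the pre-ancient flows $v_\eps$ from Proposition \ref{prop:short-time-exist-Sigma-eps}. For $\eps \in (0, \eps_0)$ set $\tau_\eps := |\mu|^{-1}\log \eps$, so $\tau_\eps \to -\infty$ as $\eps \to 0$. Time-translate the rescaled flow by $\tau_\eps$: define $V_\eps(\bx, \tau) := v_\eps(\bx, -e^{-(\tau-\tau_\eps)})$ on $\Sigma \times [\tau_\eps, \tau_0]$, which is a graphical rescaled mean curvature flow by the parabolic rescaling invariance of \eqref{eq:mean-curvature-flow}. The identity $\eps\, e^{\mu\tau_\eps} = 1$ converts part (4) of Proposition \ref{prop:short-time-exist-Sigma-eps} into the $\eps$-independent sandwich
\[
\tfrac{1}{2} e^{-\mu\tau}\varphi(\bx) \leq V_\eps(\bx, \tau) \leq 2 e^{-\mu\tau}\varphi(\bx), \qquad \tau \in [\tau_\eps, \tau_0] ,
\]
while part (3) gives a uniform $C^3$ bound on $V_\eps$. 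A diagonal Arzelà--Ascoli argument along $\eps_j \to 0$ then extracts $V_{\eps_j} \to v$ in $C^{2,\alpha}_{\mathrm{loc}}(\Sigma \times (-\infty, \tau_0])$, and Proposition \ref{prop:bootstrap.graphical} upgrades this to smooth convergence so that $\Graph_\Sigma v(\cdot, \tau)$ is a smooth ancient rescaled MCF.

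Property (1) is inherited directly from the uniform bound of Proposition \ref{prop:short-time-exist-Sigma-eps}(3); property (2), along with $v > 0$, follows by passing the above sandwich to the limit. Shrinker mean convexity passes to the limit from Proposition \ref{prop:short-time-exist-Sigma-eps}(1), with strict positivity coming from the strong maximum principle applied to the linear parabolic equation satisfied by $\partial_\tau v$. For the entropy bound, Lemma \ref{lem:decay-lowest-eigenfuncton}(3) yields $\lambda(\Sigma_\eps) \leq F(\Sigma) + o(1)$, and Huisken monotonicity in its rescaled form implies $\lambda(\Graph_\Sigma V_\eps(\cdot, \tau)) \leq \lambda(\Sigma_\eps)$; lower semicontinuity of entropy under varifold convergence then gives $\lambda(\Graph_\Sigma v(\cdot, \tau)) \leq F(\Sigma)$.

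For property (3), the key is that by Corollary \ref{coro:expand-rescaled-mcf-app} the function $v$ satisfies a semilinear parabolic equation of the form
\[
\partial_\tau v = L v + Q(v, \nabla v),
\]
where $Q$ is quadratic in $(v, \nabla v)$ with smooth, bounded coefficients (thanks to the bounded geometry of $\Sigma$ implied by the nice-ends decomposition of Lemma \ref{lem:ends-decomposition}). Since $\|v(\cdot, \tau)\|_{C^0(\Sigma)} \leq C e^{-\mu\tau}$ by (2), local parabolic Schauder estimates on unit-length time intervals $[\tau - 1, \tau]$, combined with interpolation, will bootstrap this to $\|v(\cdot, \tau)\|_{C^k(\Sigma)} \leq C_k e^{-\mu\tau}$. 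Finally, for property (4), I would pair the equation with $\varphi$ in $L^2_W$: since $L$ is self-adjoint on $L^2_W$ with $L\varphi = -\mu\varphi$,
\[
\partial_\tau\bigl( e^{\mu\tau}\langle v(\cdot, \tau), \varphi\rangle_W\bigr) = e^{\mu\tau}\langle Q(v, \nabla v), \varphi\rangle_W ,
\]
and property (3) implies the right-hand side is $O(e^{-\mu\tau})$. Integrating from $-\infty$, $e^{\mu\tau}\langle v(\cdot, \tau), \varphi\rangle_W$ is Cauchy as $\tau \to -\infty$, converges to some $a \in [\tfrac{1}{2}, 2]$ (using (2) and $\|\varphi\|_W = 1$, which in particular forces $a>0$), with error $O(e^{-\mu\tau})$.

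The main obstacle will be the quantitative control of the error term $\langle Q(v, \nabla v), \varphi\rangle_W$ along the cylindrical ends, which requires the bound $\varphi^{-1}|\nabla \varphi|^2 \in L^\infty(\Sigma)$ from Lemma \ref{lem:decay-lowest-eigenfuncton}(3) to absorb derivative contributions of the form $E_2(\nabla v, \nabla v)$ in the nonlinearity. The Gaussian weight $e^{-|\bx|^2/4}$ will make everything integrable at infinity, but uniformity in $\bx$ along the non-compact ends must be checked using the pointwise decay estimates for $\varphi$ and $\nabla^m \varphi$ established there.
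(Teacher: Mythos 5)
Your proof is correct and follows essentially the same strategy as the paper's: time-translate the pre-ancient graphical flows from Proposition~\ref{prop:short-time-exist-Sigma-eps} so that the sandwich in (4) becomes $\eps$-independent, extract a subsequential $C^\infty_{\mathrm{loc}}$ limit, pass (1) and (2) to the limit, obtain (3) from interior Schauder theory applied to the flow on unit time intervals, and obtain (4) by pairing the evolution equation with $\varphi$ and integrating the $O(e^{-\mu\tau})$ error from $-\infty$. A small caution on your treatment of (3): interpolating the $C^0$ decay $e^{-\mu\tau}$ against a $C^k$ bound that does not decay would only yield the weaker rate $e^{-\mu\tau(1-j/k)}$ for $C^j$, so the full rate must come from the Schauder estimate (bounding $C^{2,\alpha}$ by $C^0$ for the quasilinear equation with frozen coefficients, then iterating), not from interpolation; and Proposition~\ref{prop:bootstrap.graphical} is a result about existence of uniform bounds from $C^0/C^1$ smallness rather than an upgrade-of-convergence lemma — what you actually want here is interior estimates for the mean curvature flow equation combined with Arzelà--Ascoli, exactly as the paper does.
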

\begin{proof}
For $\Sigma_\eps(t)$ defined in Proposition \ref{prop:short-time-exist-Sigma-eps}, we set
\[
S_\eps(\tau) : = e^{(\tau-\tau(\eps))/2} \Sigma_\eps(-e^{-\tau+\tau(\eps)})
\]
for $\tau \in [\tau(\eps),\tau_0]$, $\tau_0=|\mu|^{-1}\log\eps_0$, and $\tau(\eps) = |\mu|^{-1}\log\eps$. Observe that $S_\eps(\tau)$ is a rescaled mean curvature flow and $S_\eps(\tau)$ is the graph of $\hat v_\eps(\cdot,\tau) : = v_\eps(\cdot,-e^{-\tau+\tau(\eps)})$, defined in Proposition \ref{prop:short-time-exist-Sigma-eps}. Property (3) in Proposition \ref{prop:short-time-exist-Sigma-eps} yields $\Vert \hat v_\eps(\cdot,\tau)\Vert_{C^3(\Sigma)} \leq C$ for $\tau \in [\tau(\eps), \tau_0]$. Similarly, (4) in Proposition \ref{prop:short-time-exist-Sigma-eps} yields 
\begin{equation}\label{eq:barrier_est}
\frac{1}{2} e^{-\mu \tau} \varphi(\bx) \leq \hat v_{\eps}(\bx, \tau) \leq 2 e^{-\mu \tau} \varphi(\bx)
\end{equation}
for $(\bx,\tau) \in\Sigma\times [\tau(\eps), \tau_0]$. 

We can thus find $\eps_i\to 0$ so that $\hat v_\eps$ converges in $C^\infty_\textrm{loc}(\Sigma\times (-\infty,\tau_0])$ to $v(\cdot,\tau)$ with $\Vert v(\cdot,\tau) \Vert_{C^3(\Sigma)} \leq C$ for $\tau \in (-\infty,\tau_0]$ and so that $\Graph_\Sigma v(\cdot,\tau)$ is  rescaled mean curvature flow. This proves property (1). Property (2) follows by passing \eqref{eq:barrier_est} to the limit. Property (3) follows from this, combined with interior Schauder estimates applied to the (non-rescaled) mean curvature flow equation. 

Finally, to prove (4), we recall that by \ref{coro:expand-rescaled-mcf-app}, $v$ satisfies
$$ \partial_\tau u = Lu + E\, ,$$
where (3) implies that $|E| \leq C e^{-2\mu\tau}$. Since 
$$ \partial_\tau \langle v,\varphi\rangle_W = -\mu \langle v,\varphi\rangle_W + \langle E,\varphi\rangle_W\, ,$$
the function $f(\tau):= e^{\mu \tau} \langle v,\varphi\rangle_W$ satisfies
$$ |\partial_\tau f| \leq C e^{-\mu \tau}\, .$$
Integrating this proves (4) (note that $a>0$ by (2)). 

The shrinker mean convexity follows from the Ecker--Huisken maximum principle and Lemma \ref{lem:decay-lowest-eigenfuncton} exactly as in \cite[Proposition 3.3]{BernsteinWang:TopologicalProperty}. The remaining properties follow by construction. This finishes the proof.
\end{proof}

\subsection{The flow for all times $t<0$} In this section we extend the flow constructed previously to all (non-rescaled) times $t<0$. To do so, we must assume that $\Sigma \in \cS_n'''$ has nice, \emph{strictly} stable ends. Recall that this means that the first eigenfunction $\varphi$ decays to $0$ at spatial infinity. 

\begin{remark}
We note that the strictly stable end condition excludes cylinders $\hat\Sigma \times \RR$, $\hat\Sigma \in \cS_{n-1}$ compact. In the exact cylindrical case one can construct the ancient flow coming out of $\hat\Sigma$ (using the arguments discussed below) and then cross with $\RR$. As such, we will not discuss this case further in this section. 
\end{remark}

For later reference we also note the following basic stability result.
\begin{lemma}[Stability of cylinders] \label{lem:smooth-stability-cylinder}
For every $\delta > 0$, $\Lambda>0$ there exists a $D \geq \delta^{-1}$ such that the following holds. Let $\cM$ be unit regular integral Brakke flow, such that $\lambda(\cM) \leq \Lambda$ and $\cM(-1)\lfloor B_{D}(\bOh)$ is a smooth graph over $\hat{\Sigma}_j \times \RR_{\bx_j}$ with the $C^3$-norm of the graph function bounded by $D^{-1}$. Then $\cM(t)$ remains a smooth graph over $(\hat{\Sigma}_j)_t \times \RR_{\bx_j}$ on $B_{\delta^{-1}}$ for all $t \in [-1, -\delta^2]$ with (rescaled) $C^3$-norm bounded by $\delta$.
\end{lemma}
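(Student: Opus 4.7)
\textbf{Proof plan for Lemma \ref{lem:smooth-stability-cylinder}.} The plan is a standard compactness/contradiction argument exploiting Brakke compactness, unit-regularity, and uniqueness of smooth mean curvature flow from smooth initial data.

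Suppose the conclusion fails. Then there exist $\delta_0,\Lambda_0 > 0$, a sequence $D_i \to \infty$, and unit regular integral Brakke flows $\cM_i$ with $\lambda(\cM_i) \leq \Lambda_0$ satisfying the hypothesis at scale $D_i$ but for which the conclusion fails at scale $\delta_0$. By Brakke compactness for sequences of integral Brakke flows with uniformly bounded entropy, and the fact (recalled in Section \ref{sec:prelim}) that unit-regularity is preserved under Brakke convergence, after passing to a subsequence $\cM_i \rightharpoonup \cM_\infty$ for some unit regular integral Brakke flow $\cM_\infty$. For any fixed $R > 0$, once $D_i > R$ the hypothesis presents $\cM_i(-1)\lfloor B_R(\bOh)$ as a smooth graph over $\hat{\Sigma}_j \times \RR_{\bx_j}$ with $C^3$-norm going to zero; passing to the limit identifies $\cM_\infty(-1)$ with $\hat{\Sigma}_j \times \RR_{\bx_j}$, equipped with multiplicity one.

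The heart of the argument is then to show $\cM_\infty(t) = (\hat{\Sigma}_j)_t \times \RR_{\bx_j}$ for all $t \in [-1, -\delta_0^2]$. Since every point of $\supp \cM_\infty(-1)$ has Gaussian density one, upper semi-continuity of density, the density gap above one for singular points of a unit-regular flow, and unit-regularity itself together imply that $\cM_\infty$ is a smooth multiplicity one flow in a space-time neighborhood of each $(p,-1)$ with $p \in \hat{\Sigma}_j \times \RR_{\bx_j}$. Local uniqueness of smooth mean curvature flow from smooth initial data (using the bounded-curvature avoidance of Appendix \ref{app:bf-avoid-bd-curv} to handle noncompactness of the cylinder) forces $\cM_\infty$ to agree with the shrinking cylinder on some slab $[-1, -1+\eta]$. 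Iterating this argument, staying away from the singular time $t = 0$, extends the identification to all of $[-1,-\delta_0^2]$.

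With the limit identified, the Brakke--White local regularity theorem upgrades weak Brakke convergence $\cM_i \rightharpoonup \cM_\infty$ to smooth convergence on compact subsets of $\reg \cM_\infty$. Applying this on the compact set $\overline{B_{\delta_0^{-1}}}(\bOh) \times [-1,-\delta_0^2]$, which lies in the smooth part of the cylinder flow, we conclude that for $i$ sufficiently large each $\cM_i(t)\lfloor B_{\delta_0^{-1}}(\bOh)$ is a smooth graph over $(\hat{\Sigma}_j)_t \times \RR_{\bx_j}$ with rescaled $C^3$-norm at most $\delta_0$, contradicting the standing assumption on the sequence. The main technical obstacle is the middle step: a priori a weak Brakke limit could lose mass or pick up higher multiplicity, and ruling this out near $t = -1$ requires combining unit-regularity with the fact that our initial graphical smoothness holds on balls exhausting all of $\RR^{n+1}$, rather than on a single fixed ball.
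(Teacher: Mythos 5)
Your proposal is correct and fills in exactly the compactness-and-contradiction argument that the paper's one-line proof alludes to ("a direct contradiction argument ... uniqueness of the evolution of $\hat{\Sigma}_j \times \RR_{\bx_j}$"). The only cosmetic point: the tool you want from Appendix \ref{app:bf-avoid-bd-curv} is specifically Lemma \ref{lemm:uniqueness-BF} (uniqueness of unit-regular Brakke flows starting from a smooth bounded-curvature hypersurface with controlled area ratios), which applies in one shot on $[-1,-\delta_0^2]$ and makes your iteration step unnecessary.
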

\begin{proof} By a direct contradiction argument, this follows from the uniqueness of the evolution of $\hat{\Sigma}_j \times \RR_{\bx_j}$.
\end{proof}

We recall the definition of the parabolic dilation map
\[ \cF_\lambda : \RR^{n+1} \times \RR\to  \RR^{n+1} \times \RR, \qquad \cF_\lambda : (\bx,t) \mapsto (\lambda \bx,\lambda^2 t) \]
and also that $\ft(\bx,t) = t$ is the projection onto the time coordinate.

In the following we will consider $\eps_0=\eps_0(\Sigma)$ as in Proposition \ref{prop:short-time-exist-Sigma-eps} 
and $t_0=-\eps_0^{-1/|\mu|}$. 

\begin{proposition}[Existence of a pre-ancient  weak one-sided flow]\label{prop:long-time-exist-Sigma-eps}
For $(\Sigma,\Omega)\in \cS_n''$ and $\eps \in (0,\eps_0)$ there exists unit-regular integral Brakke flows (resp.\ weak set flows $\cK_\eps$) with initial condition $\cM_\eps(-1) = \cH^n\lfloor \Sigma_\eps$ (resp.~$\cK_\eps(-1) = K_\eps$, the unique closed set with $K_\eps \subset \Omega$ and $\partial K_\eps = \Sigma_\eps)$
with the following properties:
\begin{enumerate}
 \item $\partial \cK_\eps \setminus \ft^{-1}(-1) \subset \supp \cM_\eps \subset \cK_\eps$
 \item for $t\in [-1, - (\eps/\eps_0)^{1/|\mu|}]$ we have $ \cM_\eps(t) = \cH^n\lfloor \Sigma_\eps(t)$, for $\Sigma_\eps(t)$ the flow constructed in Proposition \ref{prop:short-time-exist-Sigma-eps}. 
\end{enumerate}

Now, assume that $\Sigma \in \cS_n'''$ has strictly stable ends and define the dilated flows
\[
\hat\cM_\eps = \cF_{\eps^{-1/(2|\mu|)}}(\cM_\eps) \text{ resp. } \hat\cK_\eps = \cF_{\eps^{-1/(2|\mu|)}}(\cK_\eps).
\]
There is a monotonically non-decreasing function $R(t)$ on $(-\infty,0)$ with $R(t) \equiv 0$ for $t \in (-\infty,t_0]$ independent of $\eps \in (0,\eps_0)$ so that
\begin{equation}\label{eq:rescaled-pre-ancient-weak-reg-infty}
\hat\cM_\eps(t) \lfloor (\RR^{n+1}\setminus B_{R(t)}) \text{ and } \partial\hat\cK_\eps(t) \cap (\ft^{-1}(t)\setminus B_{R(t)})
\end{equation}
are the same, smooth, strictly shrinker mean convex mean curvature flow, which we will denote by $\hat\Sigma_\eps(t)$; moreover $\tfrac{1}{\sqrt{-t}} \hat\Sigma_\eps(t)$  can be written as a small $C^2$-graph over a subset of $\Sigma$. 
\end{proposition}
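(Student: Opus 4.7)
Here is a plan for proving Proposition~\ref{prop:long-time-exist-Sigma-eps}.

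\textbf{Construction of $\cM_\eps$ and $\cK_\eps$.} First I would construct the Brakke flow $\cM_\eps$ via Ilmanen's elliptic regularization applied to the smooth properly embedded initial surface $\Sigma_\eps$, yielding a unit-regular integral Brakke flow on $[-1,\infty)$ with $\cM_\eps(-1)=\cH^n\lfloor\Sigma_\eps$. Simultaneously I would take $\cK_\eps$ to be the weak set flow generated by $K_\eps$ at time $-1$. The containments in (1) are then standard consequences of elliptic regularization: the Brakke flow produced by this procedure is sandwiched between $\cK_\eps$ and the interior of its complement, so $\supp\cM_\eps\subset\cK_\eps$ and the topological boundary $\partial\cK_\eps\setminus\ft^{-1}(-1)$ lies in $\supp\cM_\eps$. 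For property (2), on the smooth existence interval $[-1,-(\eps/\eps_0)^{1/|\mu|}]$, the classical flow $\Sigma_\eps(t)$ from Proposition~\ref{prop:short-time-exist-Sigma-eps} has density one everywhere; White--Brakke local regularity combined with the short-time uniqueness of Brakke flows emanating from a smooth surface forces $\cM_\eps(t)=\cH^n\lfloor\Sigma_\eps(t)$ and $\cK_\eps(t) =$ the set bounded by $\Sigma_\eps(t)$ inside $\Omega$ on this interval.

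\textbf{Smoothness after dilation for $t\leq t_0$.} Now assume $\Sigma\in\cS_n'''$ and set $\lambda=\eps^{-1/(2|\mu|)}$. Parabolic dilation sends the smooth existence interval $[-1,-(\eps/\eps_0)^{1/|\mu|}]$ for $\cM_\eps$ to $[-\eps^{-1/|\mu|},t_0]$ for $\hat\cM_\eps$, so on this interval $\hat\cM_\eps$ and $\partial\hat\cK_\eps$ coincide with the smooth flow $\hat\Sigma_\eps(t):=\lambda\Sigma_\eps(t/\lambda^2)$, which is strictly shrinker mean convex by Proposition~\ref{prop:short-time-exist-Sigma-eps}(1). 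In particular one may take $R(t)\equiv 0$ for $t\in(-\infty,t_0]$. The rescaling identity gives
\[
\tfrac{1}{\sqrt{-t}}\hat\Sigma_\eps(t)=\tfrac{1}{\sqrt{-t/\lambda^2}}\Sigma_\eps(t/\lambda^2)=\Graph_\Sigma v_\eps(\cdot,t/\lambda^2),
\]
so the $C^0$-smallness in Proposition~\ref{prop:short-time-exist-Sigma-eps}(2) and the uniform $C^3$-bound in (3) yield the claimed small $C^2$-graph statement; crucially, at $t=t_0$ the bound in Proposition~\ref{prop:short-time-exist-Sigma-eps}(4) specializes (after conversion of variables, $e^{-\mu\tau}=\eps_0/\eps$) to
\[
\tfrac{\eps_0}{2}\varphi(\bx)\leq v_\eps(\bx,t_0/\lambda^2)\leq 2\eps_0\varphi(\bx),
\]
which is an $\eps$-independent bound and provides a uniform (in $\eps$) initial condition for propagation to later times.

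\textbf{Regularity at infinity for $t>t_0$ with $\eps$-independent radius.} This is the main step. I would fix $t\in(t_0,0)$ and show $\hat\Sigma_\eps(s)$ exists smoothly outside some $B_{R(t)}$ for $s\in[t_0,t]$, with $R(t)$ independent of $\eps$. Because strict stability forces $\varphi\to 0$ at infinity, the initial datum (for the forward-in-time problem starting at $t_0$) $\tfrac{1}{\sqrt{-t_0}}\hat\Sigma_\eps(t_0)$ is arbitrarily $C^3$-close to $\Sigma$ outside a sufficiently large ball, uniformly in $\eps$. Along the conical ends $\Sigma_\textnormal{con}$, pseudolocality then propagates this smooth closeness to a small graphical perturbation of $\sqrt{-s}\Sigma_\textnormal{con}$ up to any fixed time $s\in[t_0,t]$, provided the ball is large enough. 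Along each cylindrical end $\hat\Sigma_j\times\RR_{\bx_j}$, Lemma~\ref{lem:smooth-stability-cylinder} plays the analogous role: a sufficiently $C^3$-small graph over $\hat\Sigma_j\times\RR_{\bx_j}$ at time $t_0$ persists as a smooth graph on any fixed subsequent time interval in an $\eps$-independent exterior region. Taking $R(t)$ to be the maximum of the radii produced by the conical and cylindrical arguments gives the desired $\eps$-uniform control, and monotonicity of $R(t)$ is built into this definition (the sets on which smoothness is established shrink with time). Strict shrinker mean convexity and the $C^2$-graph property over a subset of $\Sigma$ transfer to the limit of the smooth flows, and Brakke regularity together with unit regularity ensures that the Brakke flow $\hat\cM_\eps$ and the topological boundary $\partial\hat\cK_\eps$ agree with this smooth flow on the exterior of $B_{R(t)}$.

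\textbf{Main obstacle.} The delicate point is the cylindrical ends, where pseudolocality alone does not yield long-time estimates (the cylinder does not shrink away). Propagating smoothness there requires Lemma~\ref{lem:smooth-stability-cylinder}, which in turn uses the uniqueness of the flow of $\hat\Sigma_j\times\RR_{\bx_j}$; strict stability is what turns the $\eps$-dependent initial perturbation into a uniformly small datum after dilation at the reference time $t_0$, and this is what makes $R(t)$ independent of $\eps$.
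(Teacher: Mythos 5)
Your approach is essentially the same as the paper's: elliptic regularization to produce $\cM_\eps,\cK_\eps$; Brakke/White regularity and smooth uniqueness for (2) on the interval of classical existence; and, for the dilated flows, the crucial observation that by strict stability the upper bound $v_\eps(\cdot,-(\eps/\eps_0)^{1/|\mu|})\leq 2\eps_0\varphi$ gives an $\eps$-independent smallness at time $t_0$ which, after recentering at points of $\hat\Sigma_\eps(t_0)$, can be propagated by pseudolocality on the conical ends and by Lemma~\ref{lem:smooth-stability-cylinder} on the cylindrical ends. You have correctly isolated the cylindrical ends as the place where pseudolocality alone fails, and identified strict stability as what makes the initial perturbation uniformly small; this is the heart of the argument and matches the paper.

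There is one genuine gap: the claim that ``strict shrinker mean convexity \dots transfers to the limit of the smooth flows'' is not an argument, and as stated it does not hold. Pseudolocality and the stability lemma give $C^3$-closeness of $\frac{1}{\sqrt{-s}}\hat\Sigma_\eps(s)$ to the static shrinker $\Sigma$ (or to $\hat\Sigma_j\times\RR$) on the exterior region, but the static shrinker has identically vanishing shrinker mean curvature, so $C^3$-closeness by itself says nothing about the \emph{sign} of $2sH+\bx\cdot\nu$ along $\hat\Sigma_\eps(s)$ for $s>t_0$. Propagating positivity forward in time cannot be done by a naive parabolic maximum principle either, because $\hat\cM_\eps$ may have singularities inside $B_{R(t)}$ that prevent a direct evolution argument on the whole hypersurface. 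What is needed (and what the paper does) is to express shrinker mean convexity geometrically---the space-time track should satisfy $\cF_\lambda(\hat\cK_\eps)\subset\hat\cK_\eps$ for $\lambda>1$---and to establish this globally using Ilmanen's localized avoidance principle (Theorem~\ref{theo:ilmanen-avoidance}) together with the Ecker--Huisken maximum principle at spatial infinity, giving weak shrinker mean convexity for all $t<0$; strict convexity then follows by the strong maximum principle together with the quantitative two-sided barriers in Proposition~\ref{prop:short-time-exist-Sigma-eps}(4), which exclude the degenerate case. Your proposal uses the upper bound in (4) only for the uniform initial smallness, and never invokes the barriers or avoidance principle, so the strict shrinker mean convexity conclusion is unsupported as written.
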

\begin{proof}
Approximating $\Sigma_\eps$ by smooth compact hypersurfaces whose level set flow does not fatten, we may use \cite{Ilmanen:elliptic} to find a sequence of compact unit regular integral Brakke flows $\cM_{\eps, \rho}$ and weak set flows $\cK_{\eps,\rho}$ satisfying
 \begin{equation}\label{eq:approx_flows}
  \supp \cM_{\eps,\rho} \cap \ft^{-1}((-1,\infty)) = \partial\cK_{\eps,\rho}\cap \ft^{-1}((-1,\infty))
 \end{equation}
 with $\cM_{\eps,\rho}$ converging to a Brakke flow $\cM_\eps$ as Brakke flows and $\cK_{\eps,\rho}$ converging to a weak set flow $\cK_\eps$ in the local Hausdorff sense (the flows $\cM_\eps,\cK_\eps$ are easily seen to have the asserted initial conditions). See  \cite[Proposition 7.3]{CCMS:generic1} for the (standard) details of this construction. 
 
 We recall here the argument from \cite[Lemma 7.5]{CCMS:generic1} proving (1). For $X \in \partial\cK_\eps\setminus \ft^{-1}(-1)$, there is $X_i \in \partial\cK_{\eps,\rho_i}\setminus \ft^{-1}(-1) = \supp\cM_{\eps,\rho_i}\setminus \ft^{-1}(-1)$ converging to $X$. The monotonicity formula guarantees that $X \in \supp\cM_\eps$. This proves the first inclusion in (1). For the second inclusion, take $X \in \supp\cM_\eps$. The same argument yields $X_i \in \cK_{\eps,\rho_i}$ with $X_i \to X$. Thus $X \in \cK_\eps$ since $\cK_\eps$ is closed. 
 
 Using pseudolocality, unit-regularity and uniqueness of smooth solutions to mean curvature flow with bounded curvature \cite[Theorem 1.1]{ChenYin}, it follows that $\cM_\eps(t) = \cH^n\lfloor \Sigma_\eps(t)$ as long as $\Sigma_\eps(t)$ has bounded curvature (which holds at least until $t = -(\eps/\eps_0)^{1/|\mu|}$). Thus the flows satisfy (2) as well. 
 
It thus remains to prove \eqref{eq:rescaled-pre-ancient-weak-reg-infty} under the assumption that $\Sigma \in \cS_n'''$ has strictly stable ends. We note that
\[
\cM_\eps(t_0) = \cH^n\lfloor \hat\Sigma_\eps(t_0)
\]
where $(-t_0)^{-1/2} \hat\Sigma_\eps(t_0) = (\eps/\eps_0)^{-1/2|\mu|} \Sigma_\eps((\eps/\eps_0)^{1/|\mu|}) = \Graph_\Sigma v_\eps(\cdot, (\eps/\eps_0)^{1/|\mu|})$ where 
\[
v_\eps(\bx,(\eps/\eps_0)^{1/2|\mu|}) \leq 2 \eps (\eps/\eps_0) \varphi(\bx) = 2 \eps_0 \varphi(\bx) 
\]
(this follows from (4) in Proposition \ref{prop:short-time-exist-Sigma-eps}). In particular, the strongly stable end hypothesis (and (3) in Lemma \ref{lem:decay-lowest-eigenfuncton}) implies that for any $\bx_i \in \hat\Sigma_\eps(t_0)$, $(-t_0)^{-1/2} \hat\Sigma_\eps(t_0) - \bx_i$ converges in $C^3_\textrm{loc}$ (up to passing to a subsequence and applying a fixed rotation) to either $\RR^n$ (on the conical part) or $\hat\Sigma_j\times \RR$ (one of the finitely many cylindrical ends of $\Sigma$). In particular, by applying pseudolocality (in the conical case) or stability of cylinders Lemma \ref{lem:smooth-stability-cylinder} (in the cylindrical case) we find $R(t)$ monotonically non-decreasing so that 
\[
\hat\cM_\eps(t) \lfloor (\RR^{n+1}\setminus B_{R(t)})
\]
is a smooth mean curvature flow, which we will denote by $\hat\Sigma_\eps(t)$; moreover $\tfrac{1}{\sqrt{-t}} \hat\Sigma_\eps(t)$  can be written as a small $C^2$-graph over a subset of $\Sigma$. Unit regularity at the level of the approximating flows $\cM_{\eps,\rho}$ guarantees that the same thing holds for $\cK_\eps$. 

That $[-\eps^{-1/|\mu|},t_0]\mapsto \hat\Sigma_\eps(t)$ is strictly shrinker mean convex follows from Proposition \ref{prop:short-time-exist-Sigma-eps}. Finally, we can use Ilmanen's localized avoidance principle (cf.\ Theorem \ref{theo:ilmanen-avoidance}) and the Ecker--Huisken maximum principle as in \cite[Corollary 7.11]{CCMS:generic1} to see that $\hat\Sigma_\eps(t)$ is weakly shrinker mean convex for all $t \in [-\eps^{-1/|\mu|},0)$.  Strict mean convexity then follows from the strong maximum principle and the barriers constructed in property (4) of Proposition \ref{prop:short-time-exist-Sigma-eps}. This completes the proof.
\end{proof}

\begin{remark}
In \cite[\S7]{CCMS:generic1} (i.e., for $\Sigma$ with only asymptotically conical ends), the estimate $|\bx| |A| + |\bx|^2 |\nabla A| + |\bx|^3 |\nabla^2 A|\leq C$ for the flow $\hat\Sigma_\eps$ is proven and used in several places. Such an estimate will not hold for asymptotically cylindrical ends, so in any reference below to  \cite[\S7]{CCMS:generic1}, such an estimate should be replaced by $|A| + |\nabla A| + |\nabla^2A|\leq C$. Fortunately, this does not affect the logic of any of the arguments referred to below.
\end{remark}

\begin{proposition}[Existence of ancient weak flow]\label{prop:exist-ancient-rescaled-MCF-C2-rescaling-argument}
Fix $\Sigma \in \cS'''_n$. 
There exists an ancient unit regular integral Brakke flow $\cM$ and a corresponding weak set flow $\cK$ with the following properties:
\begin{enumerate}
\item $\partial \cK \subset \supp \cM \subset \cK$,
\item $\lambda(\cM) \leq F(\Sigma)$,
\item there is a continuous, monotonically increasing function $R(t)$ on $(-\infty,0)$,  with $R(t)=0$ for $t \in (-\infty , t_0]$, $t_0<0$, such that
$$ \cM(t) \lfloor (\RR^{n+1}\setminus B_{R(t)}(\bOh)) \text{ and } \partial\cK \cap (\ft^{-1}(t)\setminus B_{R(t)}(\bOh)) $$
are the same, smooth, strictly shrinker mean convex mean curvature flow, which we will denote by $\Sigma(t)$; moreover $(-t)^{-1/2} \Sigma(t)$  can be written as a small $C^3$-graph over a subset of $\Sigma$, 
\item for $t \in (-\infty ,t_0]$,  $ (-t)^{-1/2}\Sigma(t) =\Graph_\Sigma v(\cdot, \tau)$, $t = -e^{-\tau}$, for $v$ as in Proposition \ref{prop:exist-graphical-one-sided}. 
\end{enumerate}
\end{proposition}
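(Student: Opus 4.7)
The plan is to obtain $\cM$ and $\cK$ as subsequential limits of the dilated pre-ancient flows $\hat\cM_\eps,\hat\cK_\eps$ from Proposition \ref{prop:long-time-exist-Sigma-eps} as $\eps \to 0$. Since $\hat\cM_\eps$ is defined on $[-\eps^{-1/|\mu|},\infty)$, the limit is automatically defined for all $t \in \RR$, and the four asserted properties are then verified one by one using (i) Brakke/weak set flow compactness, (ii) the uniform barriers from Proposition \ref{prop:short-time-exist-Sigma-eps}, and (iii) the uniform-in-$\eps$ graphical structure outside $B_{R(t)}(\bOh)$ already established in Proposition \ref{prop:long-time-exist-Sigma-eps}.

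First I would set up compactness. By Lemma \ref{lem:decay-lowest-eigenfuncton}(3), $\lambda(\Sigma_\eps) \leq F(\Sigma) + o(1)$, and since entropy is invariant under the parabolic dilation $\cF_{\eps^{-1/(2|\mu|)}}$, the flows $\hat\cM_\eps$ enjoy uniform entropy and hence uniform local mass bounds via Huisken's monotonicity. Along a subsequence $\eps_i \to 0$, $\hat\cM_{\eps_i} \rightharpoonup \cM$ to an ancient integral Brakke flow, unit-regular by \cite[Theorem 4.2]{SchulzeWhite}, and $\hat\cK_{\eps_i}$ converges in local Hausdorff sense to a weak set flow $\cK$. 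Property (2) is immediate from lower semicontinuity of entropy. For property (1), one passes the inclusion $\partial\hat\cK_{\eps_i}\setminus\ft^{-1}(-\eps_i^{-1/|\mu|}) \subset \supp\hat\cM_{\eps_i} \subset \hat\cK_{\eps_i}$ to the limit exactly as in the proof of Proposition \ref{prop:long-time-exist-Sigma-eps}, using unit regularity and upper semicontinuity of Gaussian density for the first inclusion, and closedness of $\cK$ for the second.

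Next I would verify property (4) and thereby identify the graphical structure in the ancient past. For each fixed $\tau \leq \tau_0$, once $\eps_i$ is small enough that $\tau(\eps_i) \leq \tau$, the rescaled dilated surface $\hat\Sigma_{\eps_i}(\tau)$ is the normal graph of $\hat v_{\eps_i}(\cdot,\tau)$ over $\Sigma$ satisfying the uniform $C^3$-bound and the pointwise pinching $\tfrac{1}{2}e^{-\mu\tau}\varphi \leq \hat v_{\eps_i}(\cdot,\tau)\leq 2 e^{-\mu\tau}\varphi$ from Proposition \ref{prop:short-time-exist-Sigma-eps}(3)--(4). Interior Schauder estimates applied to the rescaled mean curvature flow equation upgrade these to uniform $C^k$-bounds on compact subsets of $\Sigma\times(-\infty,\tau_0]$, and a diagonal subsequence then yields smooth convergence $\hat v_{\eps_i}\to v$ where $v$ is precisely the ancient graphical solution produced in Proposition \ref{prop:exist-graphical-one-sided} (arranging for the subsequences to coincide). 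This identifies $(-t)^{-1/2}\Sigma(t) = \Graph_\Sigma v(\cdot,-\log(-t))$ for $t \leq t_0 = -e^{-\tau_0}$, giving (4).

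Finally, for property (3), the crucial point is that the function $R(t)$ in Proposition \ref{prop:long-time-exist-Sigma-eps} is independent of $\eps$, and outside $B_{R(t)}(\bOh)$ every $\hat\cM_\eps(t)$ is a uniformly small $C^3$-graph over $\Sigma$ after rescaling. Ecker--Huisken interior estimates then promote this to uniform $C^k$-bounds on compact subsets of $\{(\bx,t) : t<0,\, |\bx|>R(t)\}$, so the smooth structure passes to a smooth limiting mean curvature flow $\Sigma(t)$ which agrees with $\supp\cM(t)$ on this region; agreement with $\partial\cK(t)$ follows from the Hausdorff convergence together with unit regularity of $\cM$. Strict shrinker mean convexity is preserved in the limit; alternatively it can be recovered by applying the strong maximum principle to $2tH + \bx\cdot\nu$ as at the end of the proof of Proposition \ref{prop:long-time-exist-Sigma-eps}. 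The main obstacle is the simultaneous matching, in the limit, of the Brakke flow support, the topological boundary $\partial\cK$, and the smooth graph $\Sigma(t)$ on the common region $\{|\bx|>R(t)\}$; this is made possible precisely because $R(t)$ is uniform in $\eps$, and because unit regularity rules out loss of density at the smooth points.
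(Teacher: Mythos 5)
Your proposal is correct and follows essentially the same route as the paper: take subsequential limits of the dilated pre-ancient flows $\hat\cM_{\eps_i},\hat\cK_{\eps_i}$ from Proposition \ref{prop:long-time-exist-Sigma-eps}, identify the ancient graphical part with Proposition \ref{prop:exist-graphical-one-sided}, and use unit regularity together with the $\eps$-uniform function $R(t)$ to match the smooth graphical flow with $\supp\cM$ and $\partial\cK$ outside $B_{R(t)}(\bOh)$. One small note: your claim that strict shrinker mean convexity is ``preserved in the limit'' is not quite right as stated (a priori only the weak inequality survives), but you immediately give the correct fix via the strong maximum principle combined with the strict inequality on $(-\infty,t_0]$ from Proposition \ref{prop:exist-graphical-one-sided}, which is exactly how the paper handles this.
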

\begin{proof}
The proof of Proposition \ref{prop:exist-graphical-one-sided} yields $\eps_i\to0$ so that $[-\eps^{1/|\mu|},t_0]\ni t\mapsto \hat\Sigma_\eps(t)$ converges in $C^\infty_\textrm{loc}$ to $(-\infty,t_0] \ni t\mapsto (-t)^{-1/2} \Graph_\Sigma v(\cdot, -\log(-t)) : = \Sigma(t)$  for $v$ as in Proposition \ref{prop:exist-graphical-one-sided}. Proposition \ref{prop:long-time-exist-Sigma-eps} allows us to find $\Sigma(t)$ a flow in $\RR^{n+1}\setminus B_{R(t)}(\bOh)$ so that (3) is satisfied.\footnote{A priori, $\Sigma(t)$ must only be weakly shrinker mean convex, but by  Proposition \ref{prop:exist-graphical-one-sided} it is strictly shrinker mean convex for $t \in (-\infty,t_0]$ so the strong maximum principle guarantees strict shrinker mean convexity of $\Sigma(t)$. } Letting the weak flows $\hat\cM_{\eps_i},\hat\cK_{\eps_i}$ pass to a subsequential limit we find an ancient unit regular integral Brakke flow $\cM$ and weak set flow $\cK$ which (thanks to Proposition \ref{prop:long-time-exist-Sigma-eps}) satisfy the various assertions. 
\end{proof}

We can now combine the previous results with Ilmanen's localised avoidance principle to extend the strict shrinker mean convexity to the whole flow for $t<0$ (cf.\ \cite[Theorem 7.17]{CCMS:generic1}). Recall that $\cK^\circ$ denotes the interior of $\cK$. 

\begin{proposition}[Basic properties of ancient weak flow I] \label{theo:basic-prop-cK-cM} The ancient unit regular integral Brakke flow and the corresponding weak set flow $(\cM, \cK)$ from the previous proposition have the following properties: let $\check\cK = \cK \cap \ft^{-1}((-\infty, 0))$, then
\begin{enumerate}
\item we have $d((\bOh,0), \check\cK) > 0$,
\item for $\lambda > 1$ we have $\cF_\lambda(\check\cK)\subset \check\cK^{\circ}$ and $\supp\cM \cap \cF_\lambda(\check\cK) = \emptyset$.
\end{enumerate}
\end{proposition}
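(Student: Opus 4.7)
The plan is to prove (2) first via strict shrinker mean convexity together with Ilmanen's localized avoidance principle, and then derive (1) from (2) via a tangent flow analysis at $(\bOh,0)$.

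For (2): The parabolic dilation $\cF_\lambda$ with $\lambda>1$ corresponds, in the rescaled mean curvature flow coordinates $\tau=-\log(-t)$, to a backward rescaled-time shift by $2\log\lambda$, so the inclusion $\cF_\lambda(\check\cK)\subset\check\cK^\circ$ is equivalent to strict monotonicity of $\bar\cK(\tau)$ in $\tau$. On the smooth portion of the spacetime track --- the ancient graphical regime of Proposition~\ref{prop:exist-graphical-one-sided} together with the smooth strictly shrinker mean convex region outside $B_{R(t)}(\bOh)$ from Proposition~\ref{prop:exist-ancient-rescaled-MCF-C2-rescaling-argument}(3) --- this is immediate from the strictly positive rescaled normal speed. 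To extend across the potentially singular region inside $B_{R(t)}(\bOh)$, I would apply Ilmanen's localized avoidance principle (Theorem~\ref{theo:ilmanen-avoidance}) to the pair of weak set flows $\cK$ and $\cF_\lambda(\cK)$: in the deep past both are smooth graphical rescaled flows over $\Sigma$ differing only by a rescaled-time shift, and the sub- and super-solution barriers of Lemma~\ref{lemm:sub-super-one-sided} give quantitative initial separation that propagates forward. The disjointness $\supp\cM\cap\cF_\lambda(\check\cK)=\emptyset$ is obtained by the parallel application of avoidance between the Brakke flow $\cM$ and the weak set flow $\cF_\lambda(\cK)$.

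For (1): I would argue by contradiction, supposing $X_j=(x_j,t_j)\in\check\cK$ with $X_j\to(\bOh,0)$. Setting $\lambda_j := d_{\mathrm{par}}(X_j,(\bOh,0))^{-1}\to\infty$, the dilated Brakke flows $\cM_j := \cF_{\lambda_j}(\cM)$ all satisfy $\lambda(\cM_j)\leq F(\Sigma)$, so by Brakke compactness they pass subsequentially to a limit $\cM_\infty$; by Huisken's monotonicity $\cM_\infty$ is the shrinking self-similar flow of a tangent shrinker $\Sigma_\infty$ with $F(\Sigma_\infty)\leq F(\Sigma)$, and $\cM_\infty$ is non-trivial because $Y_j := \cF_{\lambda_j}(X_j)\in\check\cK$ lies at unit parabolic distance from $(\bOh,0)$ by (2). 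Passing (2) to the limit yields $\cF_\lambda(\cK_\infty)\subset\cK_\infty$ for $\lambda>1$, and self-similarity forces $\cF_\lambda(\cK_\infty)=\cK_\infty$. Combined with the one-sidedness ($\cM_\infty$ disjoint from or equal to the $\cF_{\lambda_j}$-invariant flow $t\mapsto\sqrt{-t}\Sigma$) and the entropy bound, $\Sigma_\infty$ is forced either to coincide with $\Sigma$ or to lie strictly interior to $\bar\Omega$: the first possibility is incompatible with the strict separation $v\geq\tfrac12 e^{-\mu\tau}\varphi>0$ from Proposition~\ref{prop:exist-graphical-one-sided}(2), preserved to all $t<0$ by (2) and avoidance, while the second is ruled out by combining the first eigenfunction analysis of Section~\ref{sec:self-shrinkers} with $F(\Sigma_\infty)\leq F(\Sigma)$.

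The main technical obstacle is making the contradiction in (1) fully rigorous in the asymptotically cylindrical setting, where standard tools such as pseudolocality do not directly apply. I expect this to be handled by combining the explicit barriers of Lemma~\ref{lemm:sub-super-one-sided} with the bounded-curvature estimates for the flow outside $B_{R(t)}(\bOh)$ from Proposition~\ref{prop:exist-ancient-rescaled-MCF-C2-rescaling-argument}(3), so that any tangent shrinker $\Sigma_\infty$ at $(\bOh,0)$ would have to be compatible with the ancient graphical structure of $\cM$ --- which is impossible unless $\cM_\infty$ is trivial.
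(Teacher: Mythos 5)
Your proof of (2) is in the same spirit as the paper's: strict shrinker mean convexity of the smooth exterior region gives disjointness of $\supp\cM$ and $\cF_\lambda(\check\cK)$ outside a spatially-compact region (for each compact time interval $[a,b]\subset(-\infty,0)$), and Ilmanen's localized avoidance then propagates this to global disjointness. You phrase it via the time-shift interpretation of $\cF_\lambda$ and via the barriers of Lemma~\ref{lemm:sub-super-one-sided}, which is fine. One caveat: Ilmanen's avoidance gives \emph{disjointness}, not directly the \emph{containment} $\cF_\lambda(\check\cK)\subset\check\cK^\circ$; you still need to combine disjointness with the fact that $\partial\cK\subset\supp\cM\subset\cK$ from Proposition~\ref{prop:exist-ancient-rescaled-MCF-C2-rescaling-argument} to conclude the inclusion. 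The paper leaves this step to the reader as well, but it's worth noting.

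For (1), however, you have chosen a much heavier route than the paper, and it has genuine gaps. The paper's argument is two lines: strict shrinker mean convexity of the exterior flow $\Sigma(t)$ gives $\Sigma(t)\cap\Sigma=\emptyset$ for $t<0$; one application of Ilmanen's localized avoidance upgrades this to $\cM(t)\cap\Sigma=\emptyset$ for all $t<0$; a second application (now to $\cM$ and the static flow $\Sigma$) shows the conformal distance $d_t(\cM(t),\Sigma)$ is nondecreasing in $t$, and since it is bounded below at some fixed $t_0<0$, the support of $\cM$ stays a definite parabolic distance from $(\bOh,0)$. No tangent flows are needed at all.

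Your proposed tangent-flow argument for (1) has several concrete problems beyond the one you acknowledge. First, the subsequential limit of the parabolic dilations $\cM_j=\cF_{\lambda_j}(\cM)$ around $(\bOh,0)$ is only known a priori to be a Brakke flow associated to an $F$-stationary integral varifold; getting smoothness (so that it is $\sqrt{-t}\Sigma_\infty$ for a smooth $\Sigma_\infty\in\cS_2$) requires the cyclic/unit-regular/entropy-bound package and Proposition~\ref{prop:ends.shrinkers.R3}, which you do not invoke. Second, the claimed dichotomy that ``$\Sigma_\infty$ is forced either to coincide with $\Sigma$ or to lie strictly interior to $\overline\Omega$'' is not established: one-sidedness gives $\supp V_\infty\subset\overline\Omega$, but ruling out a nontrivial $\Sigma_\infty$ that touches $\Sigma$ at some boundary point requires the Frankel property (Corollary~\ref{cor:frank.shrink}) followed by the strong maximum principle for stationary varifolds, not a formal eigenfunction argument. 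Third, your appeal to the barrier estimate $v\geq\tfrac12 e^{-\mu\tau}\varphi>0$ to rule out $\Sigma_\infty=\Sigma$ does not work, because that estimate holds only in the ancient graphical regime $\tau\in(-\infty,\tau_0]$ and says nothing about the behavior of $\cM(t)$ as $t\to 0^-$, which is exactly the regime where the contradiction has to land. In short, part (1) of your proposal would need to be replaced by the paper's direct avoidance argument (or by a much more careful tangent flow analysis along the lines of the proof of Lemma~\ref{lemm:Tgen-est-one-sidedR3}, which is substantially longer than what you have sketched).
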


\begin{proof}
Since the flow $t\mapsto \Sigma(t)$ is strictly shrinker mean convex we have that $\Sigma(t) \cap \Sigma = \emptyset$ for all $t<0$. Thus, using Ilmanen's localised avoidance principle, Theorem \ref{theo:ilmanen-avoidance},  we see that $\cM(t) \cap \Sigma = \emptyset$ for all $t<0$. Applying Ilmanen's localised avoidance principle another time, we thus see that $d((\bOh,0), \supp \cM) > 0$, which by Proposition \ref{prop:exist-ancient-rescaled-MCF-C2-rescaling-argument} implies $(1)$.

To prove (2), we note that strict shrinker mean convexity of the exterior flow $\Sigma(t)$ guarantees that for $\lambda>1$, $\supp\cM$ and $\cF_{\lambda}(\check \cK)$ are disjoint outside of a set $D$ in space-time which has $D \cap \mathfrak{t}^{-1}([a,b])$ compact for any $a<b<0$. Thus, we may apply Ilmanen's localized avoidance principle, Theorem \ref{theo:ilmanen-avoidance}, to show that $\supp \cM$ and $\cF_{\lambda}(\check\cK)$ are indeed disjoint. Using again Proposition \ref{prop:exist-ancient-rescaled-MCF-C2-rescaling-argument} this completes the proof of (2). 
\end{proof}

We can replace \cite[Theorem 7.17]{CCMS:generic1} with Proposition \ref{prop:exist-ancient-rescaled-MCF-C2-rescaling-argument} and Proposition \ref{theo:basic-prop-cK-cM} for all the results in \cite[Section 8.1]{CCMS:generic1} to carry over. This yields the following result, replacing \cite[Corollary 8.12]{CCMS:generic1}.\footnote{Property (6) follows from \cite[Corollary 1.4]{ColdingMinicozzi:sing-generic}.}

\begin{proposition}[Basic properties of ancient weak flow II]\label{coro:summary-tleq0-non-rescaled}
For $n\leq 6$, fix $\Sigma \in \cS'''_n$. 
The flows $(\cM,\cK)$ constructed in Proposition \ref{prop:exist-ancient-rescaled-MCF-C2-rescaling-argument} have the following properties for $t<0$:
\begin{enumerate}
\item $\cM(t) = \cH^{n}\lfloor \partial\cK(t)$,
\item $\partial\cK(t) \subset \sqrt{-t}\Omega$ for all $t<0$,
\item $\sing\cM \cap\{t<0\}$ has parabolic Hausdorff dimension $\leq n-1$ and for $t<0$, $\sing\cM(t)$ has spatial Hausdorff dimension $\leq n-1$,
\item any limit flow at $X=(\bx,t)$ with $t<0$ is weakly convex on the regular part and all tangent flows are multiplicity one generalized cylinders, 
\item any singular point has a (strict) mean-convex neighborhood, and
\item if $n=3,4$, $\ft(\sing\cM)\cap\{t<0\}$ has measure zero. 
\end{enumerate}
\end{proposition}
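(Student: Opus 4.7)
The plan is to mirror the argument of \cite[\S 8.1]{CCMS:generic1}, using the new Propositions \ref{prop:exist-ancient-rescaled-MCF-C2-rescaling-argument} and \ref{theo:basic-prop-cK-cM} as drop-in replacements for the analogous statements there (which were restricted to the asymptotically conical setting). The central inputs I will exploit are: unit-regularity of $\cM$; the parabolic star-shapedness $\cF_\lambda(\check\cK)\subset \check\cK^\circ$ for $\lambda>1$ from Proposition \ref{theo:basic-prop-cK-cM}(2); the entropy bound $\lambda(\cM)\leq F(\Sigma)$; and the smooth, strictly shrinker mean convex exterior flow $\Sigma(t)$ from Proposition \ref{prop:exist-ancient-rescaled-MCF-C2-rescaling-argument}(3).

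For (1) and (2): First I would show $\partial\cK(t)\subset \sqrt{-t}\,\Omega$ by combining Proposition \ref{theo:basic-prop-cK-cM}(1) (which gives $d((\bOh,0),\check\cK)>0$ and so prevents the boundary from touching the origin for $t<0$) with Ilmanen's localized avoidance (Theorem \ref{theo:ilmanen-avoidance}) applied between $\partial\cK$ and the shrinking flow $t\mapsto\sqrt{-t}\Sigma$, using that agreement with the smooth exterior flow $\Sigma(t)$ (outside $B_{R(t)}$) forces the correct side. The equality $\cM(t)=\cH^n\lfloor\partial\cK(t)$ is then extracted exactly as in \cite[Theorem 8.2 / Lemma 8.3]{CCMS:generic1}: the parabolic star-shapedness gives the foliation of a one-sided neighborhood by dilates of $\cK$, so any point of $\partial\cK(t)$ has density one with respect to $\cM$ (by sandwiching with dilated flows and using Huisken monotonicity), and unit-regularity upgrades this to smooth coincidence $\partial\cK\setminus\ft^{-1}\{0\}\subset\reg\cM$; combined with $\supp\cM\subset\cK$ from Proposition \ref{prop:exist-ancient-rescaled-MCF-C2-rescaling-argument}(1) and the fact that $\cM$ cannot charge the interior of $\cK$ (again by Ilmanen avoidance against inner dilates), this yields $\cM(t)=\cH^n\lfloor\partial\cK(t)$.

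For (3)-(5): The parabolic star-shapedness makes the spacetime track $\cK\cap\ft^{-1}((-\infty,0))$ a ``shrinker mean convex'' flow in the sense required by White's regularity theory \cite{White:size,White:nature,White:subsequent}. Applied in exactly the manner of \cite[Theorem 8.7, Corollary 8.9]{CCMS:generic1}, this theory yields the parabolic/spatial dimensional bounds on $\sing\cM\cap\{t<0\}$ and implies that every limit flow is weakly convex on its regular part with every tangent flow a multiplicity-one generalized cylinder (using also the cyclic/multiplicity-one structure built into the construction together with the entropy bound $\lambda(\cM)\leq F(\Sigma)$, which rules out higher-multiplicity planes). The mean-convex neighborhood property (5) then follows by invoking the resolution of the mean-convex neighborhood conjecture of Choi--Haslhofer--Hershkovits \cite{ChoiHaslhoferHershkovits} at each singular point, exactly as in \cite[Theorem 8.11]{CCMS:generic1}. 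Finally, for (6) in dimensions $n=3,4$, the Hausdorff dimension bound from (3) combined with \cite[Corollary 1.4]{ColdingMinicozzi:sing-generic} gives that the set of singular times has measure zero.

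The main obstacle is really bookkeeping rather than new content: in the cylindrical-end setting, the curvature decay $|\bx|^k|\nabla^{k-1}A|\leq C$ used in \cite{CCMS:generic1} fails, and one must check at each invocation that the weaker bound $|A|+|\nabla A|+|\nabla^2 A|\leq C$ supplied by the stability of cylinders (Lemma \ref{lem:smooth-stability-cylinder}) and the graphicality in Proposition \ref{prop:exist-ancient-rescaled-MCF-C2-rescaling-argument}(3) is sufficient for the relevant avoidance and regularity arguments. In every application to \cite[\S 8.1]{CCMS:generic1} this degradation is harmless, since the avoidance arguments are applied locally in space on compact subsets of $\ft^{-1}((-\infty,0))$ where the curvature of $\Sigma(t)$ is automatically uniformly bounded.
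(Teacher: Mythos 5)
Your proposal matches the paper's approach exactly: the paper's "proof" consists of the single remark that one may replace \cite[Theorem 7.17]{CCMS:generic1} by Propositions \ref{prop:exist-ancient-rescaled-MCF-C2-rescaling-argument} and \ref{theo:basic-prop-cK-cM} and then run the arguments of \cite[\S 8.1]{CCMS:generic1} unchanged, together with a preceding remark that the stronger polynomially-weighted curvature estimates of \cite[\S 7]{CCMS:generic1} must be weakened to uniform bounds $|A|+|\nabla A|+|\nabla^2A|\leq C$ on the cylindrical ends, which you correctly identify and verify is harmless. Your write-up simply spells out the bookkeeping that the paper leaves implicit by citation.
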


\subsection{Conditional uniqueness results} Proposition \ref{coro:summary-tleq0-non-rescaled} implies the following conditional uniqueness result.

\begin{proposition}[Conditional uniqueness I]\label{prop:cond.unique.1}
 \label{theo:one.sided.uniqueness} For $n\leq 6$, fix $\Sigma \in \cS'''_n$. 
Let $(\cM,\cK)$ be as constructed in Proposition \ref{prop:exist-ancient-rescaled-MCF-C2-rescaling-argument}. Let $(\check \cM(t))_{-\infty < t < \infty}$ be an ancient unit-regular integral Brakke flow such that there is $T<0$ so that
	\begin{equation} \label{eq:one.sided.assumption}
		 \check \cM(t) = \cM(t) \text{ for all } t \leq  T.
	\end{equation}
Then $\check \cM(t) = \cM(t)$ for $t<0$.
\end{proposition}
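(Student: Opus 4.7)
The strategy is to argue by continuity along the time axis. Set
\[
\tau^* := \sup\{\, t \in [T,0) : \check\cM(s) = \cM(s) \text{ for every } s \leq t \,\},
\]
which satisfies $\tau^* \geq T$ by hypothesis. I claim $\tau^* = 0$. Suppose for contradiction that $\tau^* < 0$; the goal is to produce $\delta > 0$ with $\check\cM(t) = \cM(t)$ for all $t \in [\tau^*, \tau^* + \delta)$, contradicting the definition of $\tau^*$.

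First, since $\cM$ and $\check\cM$ are unit-regular integral Brakke flows agreeing on $(-\infty, \tau^*)$, the compactness theorem for integral Brakke flows together with Huisken's monotonicity formula allows us to pass to the limit $s \nearrow \tau^*$ to obtain $\check\cM(\tau^*) = \cM(\tau^*)$; in particular $\supp \check\cM(\tau^*) = \partial \cK(\tau^*)$. By property (5) of Proposition \ref{coro:summary-tleq0-non-rescaled}, every singular point $X = (\bx_0, \tau^*) \in \sing\cM$ admits a strict mean convex neighborhood in space-time, and by property (4) every tangent flow at such $X$ is a multiplicity-one shrinking round sphere or cylinder. Invoking the mean convex neighborhood conjecture established in \cite{ChoiHaslhoferHershkovits} together with the ensuing uniqueness of weak mean curvature flow with cylindrical singularities, any unit-regular integral Brakke flow starting from $\cM(\tau^*)$ must coincide with $\cM$ in a small forward space-time neighborhood of $X$.

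Next, outside the ball $B_{R(\tau^*)}(\bOh)$ provided by Proposition \ref{prop:exist-ancient-rescaled-MCF-C2-rescaling-argument}(3), the flow $\cM$ is smooth, graphical over $\Sigma$, and strictly shrinker mean convex; standard parabolic Schauder theory combined with Brakke's local regularity theorem then forces $\check\cM = \cM$ on a forward parabolic neighborhood of every such regular point of $\supp \cM(\tau^*)$. Since the singular set of $\cM$ at time $\tau^*$ is closed and contained in $\overline{B_{R(\tau^*)}(\bOh)}$, it admits a finite covering by mean-convex-neighborhood patches coming from \cite{ChoiHaslhoferHershkovits}. Combining these finitely many patches with the smooth uniqueness on the regular part produces a uniform $\delta > 0$ with $\check\cM(t) = \cM(t)$ on $[\tau^*, \tau^* + \delta)$, contradicting the maximality of $\tau^*$.

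The principal obstacle is assembling the local uniqueness statements at regular points (smooth parabolic uniqueness) and at singular points (mean-convex-neighborhood uniqueness of \cite{ChoiHaslhoferHershkovits}) into a single uniform statement on a forward time interval; in particular, one must check that weak Brakke convergence as $s \nearrow \tau^*$ genuinely upgrades to the varifold equality $\check\cM(\tau^*) = \cM(\tau^*)$, which in turn rests on unit regularity combined with Huisken's monotonicity to preclude any sudden loss of mass at $\tau^*$.
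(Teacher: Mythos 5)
Your approach diverges fundamentally from the paper's. The paper's proof establishes the global containment $\supp\check\cM \subset \supp\cM$ by applying Ilmanen's localized avoidance principle (Theorem \ref{theo:ilmanen-avoidance}), together with the Ecker--Huisken maximum principle at infinity \cite[Theorem D.1]{CCMS:generic1}, to $\check\cM$ and the parabolic dilates $\cF_\lambda(\supp\cM)$, $\lambda\neq 1$; the geometric input that makes this run is the parabolic star-shapedness of the space-time track recorded in Proposition \ref{theo:basic-prop-cK-cM}(2), which ensures $\cF_\lambda(\supp\cM)$ is disjoint from $\supp\cM$. Since $\check\cM = \cM$ for $t\leq T$, $\supp\check\cM$ starts disjoint from every such dilate, and avoidance preserves the disjointness, yielding $\supp\check\cM \subset \supp\cM$. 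Once the containment is known, connectedness of $\reg\cM$ (\cite[Corollary F.5]{CCMS:generic1}) plus unit-regularity of both flows gives $\check\cM = \cM$ away from the parabolic-measure-zero singular set. You never invoke star-shapedness or avoidance and instead try to run a continuity-in-time argument --- a genuinely different route.

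That route has real gaps. The forward uniqueness you assert at singular points is not a standalone consequence of \cite{ChoiHaslhoferHershkovits}: they produce a mean convex neighborhood, but the ensuing Brakke-flow uniqueness (cf.\ Lemma \ref{lemm:first-generic-time.unique.BF}) requires the competing flows to be \emph{cyclic} and, as formulated, presupposes compact initial data; neither is available here without extra work, since the hypothesis on $\check\cM$ does not include cyclicity and $\cM(\tau^*)$ is non-compact, so you would need to propagate cyclicity from $t\leq T$ and then localize the non-fattening argument. In the smooth region, ``standard parabolic Schauder theory'' does not on its own produce a forward time of agreement that is uniform over the non-compact ends of $\Sigma$; you would need Chen--Yin uniqueness for flows of bounded geometry or a global graphical comparison via Ecker--Huisken, otherwise the forward interval on which your local uniqueness patches hold could degenerate as $|\bx|\to\infty$ and no single $\delta>0$ would survive the patching. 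Finally, the preliminary equality $\check\cM(\tau^*)=\cM(\tau^*)$ is not obtained by ``passing to the limit $s\nearrow\tau^*$'': Brakke flows can drop mass instantaneously. The correct mechanism is that Gaussian densities at $(\bx,\tau^*)$, being computed from strictly earlier slices where the flows coincide, agree for the two flows, equal one at $\cH^n$-a.e.\ $\bx\in\supp\cM(\tau^*)$, and unit-regularity then gives local smoothness and hence agreement, the singular slice being $\cH^n$-null. None of these issues is individually fatal, but collectively they are precisely the forward-time well-posedness questions the paper's avoidance argument was designed to sidestep.
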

\begin{proof} We can assume that
 $T<0$ is sufficiently negative so that for $t \leq T$, $\cM(t) = \cH^{n}\lfloor \Sigma(t)$, where $\Sigma(t)$ is the smooth flow from Proposition \ref{prop:exist-ancient-rescaled-MCF-C2-rescaling-argument} (3).

Below, we consider only times $t<0$. As in \cite[Proposition 7.10]{CCMS:generic1}, Ilmanen's localized avoidance principle (Theorem \ref{theo:ilmanen-avoidance}) combined with Ecker--Huisken's maximum principle at infinity \cite[Theorem D.1]{CCMS:generic1}, we see that $(\supp \check\cM)_{t}$ is disjoint from $\cF_{\lambda}(\supp \cM)$ for $\lambda \not = 1$. This implies that, $(\supp \check\cM)_{t} \subset \supp\cM$. 

Finally, since $\reg\cM$ is connected for by (5) in Proposition \ref{coro:summary-tleq0-non-rescaled} and \cite[Corollary F.5]{CCMS:generic1}, we see that $\check\cM(t) = \cM(t)$ in $(\sing \cM)^{c}$ (using the unit-regularity of $\cM$ and $\check\cM$). This completes the proof. 
\end{proof}

Under a global graphicality assumption, we can also ensure uniqueness. Note that this result holds with just the (non-strict) stable end hypothesis. 
\begin{proposition}[Conditional uniqueness II]\label{prop:cond.unique.2} For $\Sigma \in \cS_n''$, consider two ancient rescaled mean curvature flows $\Sigma_i(\tau)$ for $\tau \in (-\infty, \tau_0), i=1,2$, such that $\Sigma_i(\tau) = \Graph_\Sigma u_i(\cdot, \tau)$. There exists $\eta = \eta(\Sigma)>0$ such that if
\begin{equation}
 \label{eq:estimate-decay}
  \|u_i(\cdot, \tau)\|_{C^3(\Sigma)} \leq C e^{-\mu\tau}
\end{equation}
for $\tau \in (-\infty, \tau_0)$ and $i =1,2$ as well as 
$$ \lim_{\tau \to -\infty} e^{\mu \tau} \| w \|_W =0\, ,$$
where $w =u_1-u_2$, then $w\equiv 0$.
\end{proposition}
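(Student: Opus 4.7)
The strategy is a Merle--Zaag style spectral dynamics analysis applied to $w = u_1 - u_2$. By the expansion for rescaled mean curvature flow over $\Sigma$ (Corollary \ref{coro:expand-rescaled-mcf-app}), each $u_i$ satisfies $\partial_\tau u_i = L u_i + \mathcal{E}_i$ with $\mathcal{E}_i$ quadratic in $(u_i,\nabla u_i)$. Subtracting and using $\|u_j\|_{C^3}\leq C e^{-\mu\tau}$ (with constant $C\leq \eta$) yields $\partial_\tau w = Lw + Q$ with $|Q|\leq C_0\eta\, e^{-\mu\tau}(|w|+|\nabla w|)$ pointwise. I would then decompose $w(\cdot,\tau) = a(\tau)\varphi + w^\perp(\cdot,\tau)$, where $\varphi>0$ is the $L^2_W$-normalised first eigenfunction of $L$ ($L\varphi = -\mu\varphi$, $\mu<0$) and $\langle w^\perp,\varphi\rangle_W = 0$. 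The components obey $a'(\tau) = |\mu|\,a + \langle Q,\varphi\rangle_W$ and $\partial_\tau w^\perp = L w^\perp + Q^\perp$, with spectral-gap bound $\langle w^\perp, L w^\perp\rangle_W \leq -\mu_2\|w^\perp\|_W^2$ for $\mu_2>\mu$. The hypothesis $e^{\mu\tau}\|w\|_W \to 0$ is equivalent to $e^{\mu\tau}|a(\tau)|\to 0$ and $e^{\mu\tau}\|w^\perp(\tau)\|_W\to 0$ as $\tau\to-\infty$.

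The analytical heart is to integrate the unstable ODE for $a$ from $-\infty$: multiplying by $e^{-|\mu|\tau}$ and noting that $e^{-|\mu|\tau_0}a(\tau_0) = e^{\mu\tau_0}a(\tau_0) \to 0$ produces the sharp integral representation
\[
a(\tau) = e^{|\mu|\tau}\int_{-\infty}^{\tau} e^{-|\mu| s}\,\langle Q(s),\varphi\rangle_W\,ds.
\]
The parallel energy inequality $(\|w^\perp\|_W^2)' \leq -2\mu_2\|w^\perp\|_W^2 + 2\|Q\|_W\|w^\perp\|_W$, multiplied by $e^{2\mu_2\tau}$ and integrated from $-\infty$, gives a corresponding bound for $\|w^\perp\|_W$; the boundary term vanishes since $e^{2\mu_2\tau_0}\|w^\perp(\tau_0)\|_W^2 \leq e^{2(\mu_2-\mu)\tau_0}(e^{\mu\tau_0}\|w(\tau_0)\|_W)^2 \to 0$, using the spectral gap together with the hypothesis.

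To close the loop I would control $\|\nabla w\|_W$ via the Gaussian-weighted interpolation
\[
\|\nabla w\|_W^2 = -\langle w,\mathcal{L}w\rangle_W \leq \|w\|_W\|\mathcal{L}w\|_W,
\]
where $\mathcal{L} = \Delta_\Sigma - \tfrac{1}{2}\bx^T\cdot\nabla$ is the self-adjoint drift Laplacian on $(\Sigma,\rho)$, and the a priori $C^3$ hypothesis together with boundedness of $|A_\Sigma|$ on $\Sigma\in\cS_n''$ gives $\|\mathcal{L}w\|_W \leq C_1 e^{-\mu\tau}$. Introducing the monotone quantity $X(\tau):=\sup_{s\leq \tau} e^{\mu s}\|w(s)\|_W$, the two integral representations from the previous paragraph produce a self-improving bound of the form $X(\tau) \leq C_2\,\eta\, X(\tau)^{1/2}\,e^{|\mu|\tau}$; for $\eta = \eta(\Sigma)$ sufficiently small, iterating this (and using parabolic regularity at each step to upgrade improved decay of $\|w\|_W$ to matching decay of $\|\nabla w\|_W$) drives $X(\tau) = 0$ for $\tau$ sufficiently negative, and classical forward uniqueness for the graphical rescaled flow then extends $w\equiv 0$ to all of $(-\infty,\tau_0)$. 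The main obstacle is precisely this final bootstrap: the error $Q$ carries \emph{both} $\|w\|_W$ and $\|\nabla w\|_W$, so a naive Gronwall on $\|w\|_W$ alone does not close; the self-adjointness of $\mathcal L$ on $L^2_W$ (providing the interpolation) together with the smallness of $\eta$ are the essential ingredients that allow the iteration to strictly improve the decay rate at each step and ultimately trivialise $w$.
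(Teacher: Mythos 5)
Your outline has the right shape (Merle--Zaag decomposition into the first-eigenfunction mode and its $L^2_W$-orthogonal complement, spectral gap, coupled ODE/energy inequalities), but the pointwise structure you claim for the error is wrong, and this wrongness is precisely where the real difficulty of the proposition lies.

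You write $\partial_\tau w = Lw + Q$ with $|Q|\leq C_0\eta\,e^{-\mu\tau}(|w|+|\nabla w|)$ pointwise. This is not true. The rescaled graphical equation is a \emph{second order, quasilinear} PDE, so when you difference the nonlinearities $E(u_1)-E(u_2)$ the result inherits a genuine $\nabla^2 w$ dependence: Lemma \ref{lemm:relative-shrinker-mean-curvature} gives
\[
E^w = w\,F + \nabla_\Sigma w\cdot\bF + \nabla^2_\Sigma w\cdot\cF,
\]
with $|F|,|\bF|,|\cF|,|\nabla\cF|\leq C\delta\sim C e^{-\mu\tau}$, and there is no pointwise absorption of the $\nabla^2_\Sigma w\cdot\cF$ term into $|w|+|\nabla w|$. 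Every subsequent step of your argument (the integral representation for $a(\tau)$, the Gronwall on $\|w^\perp\|_W$, the bootstrap on $X(\tau)$) is fed by this incorrect bound, so the chain does not close as written.

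The step you are missing is: after pairing with $w$ in $L^2_W$, the troublesome term $\langle \nabla^2_\Sigma w\cdot\cF,\,w\rangle_W$ must be integrated by parts. That moves one derivative off $w$, but differentiating the Gaussian weight $e^{-|\bx|^2/4}$ produces an \emph{unbounded} factor $\bx^T$ contracted against $\nabla_\Sigma w$ and $\cF$. This is where the cylindrical-end setting actually bites: $\cF$ does not decay along a cylindrical end, so you cannot kill the $|\bx|$ growth by weight-decay of the coefficients. The paper controls this via Ecker's weighted Sobolev inequality, packaged as estimate \eqref{eq:relative-estimate-ibp-hits-gaussian} in Lemma \ref{lemm:relative-shrinker-mean-curvature}, which gives
\[
\left|\int_\Sigma w\,(\bx^T\otimes\nabla_\Sigma w)\cdot\cF\,e^{-|\bx|^2/4}\,d\cH^n\right|\leq C\delta\,\|w\|_{W,1}^2.
\]
Combined with the harmless first-order contributions, this produces the key estimate $|\langle w,E^w\rangle_W|\leq C e^{-\mu\tau}\|w\|_{W,1}^2$, after which the spectral-dynamics bootstrap of \cite[Corollary 5.2]{CCMS:generic1} applies verbatim. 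Your interpolation $\|\nabla w\|_W^2\leq\|w\|_W\|\mathcal L w\|_W$ is a reasonable ingredient for the bootstrap, but it does not substitute for the integration by parts: it tells you nothing about how to estimate the pairing of $\nabla^2_\Sigma w$ against $w$, and without that pairing under control you cannot even write down the inequalities you plan to Gronwall. I would also note that your final self-improving inequality $X(\tau)\leq C_2\eta X(\tau)^{1/2}e^{|\mu|\tau}$ does not obviously iterate to $X\equiv 0$ (the exponent on the right does not improve on substitution), so even granting the false form of $Q$ the closing argument would need more care -- in the paper this is handled by the precise coupled ODE analysis for $\|P_1 w\|_W$ and $\|P_2 w\|_W$ rather than a single scalar quantity.
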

\begin{proof} The proof follows similarly as the proof of \cite[Corollary 5.2]{CCMS:generic1}. Denoting 
$$w: =u_1-u_2,\  E^w:= E(u_1)-E(u_2)$$ 
we have
$$ \big(\tfrac{\partial}{\partial \tau} - L \big) w = E^w\, .$$
From  Lemma \ref{lemm:relative-shrinker-mean-curvature} we see that we can write
$$ E^w= w F + \nabla_\Sigma w \cdot \mathbf{F} + \nabla^2_\Sigma w \cdot \mathcal{F}\, .$$
	We take the $L^2_W$ dot product of \eqref{eq:uniqueness.w.error} with $w$ to find
\begin{align*}
\bangle{E^w,w}_W & =   \bangle{wF,w}_W + \bangle{\nabla_\Sigma w\cdot \bF,w}_W + \bangle{\nabla^2_\Sigma w\cdot \cF,w}_W\\
& \leq C e^{-\mu\tau} \Vert w\Vert_{W,1}^2 + \bangle{\nabla^2_\Sigma w\cdot \cF,w}_W,
\end{align*}
where we recall that $\Vert f\Vert_{W,1}^2 = \Vert f\Vert_W^2 + \Vert \nabla_\Sigma f\Vert_W^2$ is the weighted Sobolev norm. To handle the remaining term, we integrate by parts. We obtain 
\[
 \bangle{\nabla^2_\Sigma w\cdot \cF,w}_W = O(\Vert w\Vert_{W,1}^2 \Vert \cF\Vert_{C^1}) +  \int_\Sigma w\, (\bx^T \otimes \nabla_\Sigma w) \cdot \mathcal{F} \, e^{-\tfrac{|\bx|^2}{4}}\, d\mathcal{H}^n.
 \]
 Thanks to the estimate $\Vert \cF\Vert_{C^1}\leq Ce^{-\mu\tau}$, the first term is as desired. Again by Lemma \ref{lemm:relative-shrinker-mean-curvature} we see that
$$ \left| \int_\Sigma w\, (\bx^T \otimes \nabla_\Sigma w) \cdot \mathcal{F} \,e^{-\tfrac14|\bx|^{2}} d\mathcal{H}^n \right| \leq C e^{-\mu\tau} \| w\|_{W,1}^2 \, ,$$
which combining with the previous estimates implies
	\begin{equation*} \label{eq:one.sided.decay.uniqueness.w.dot.error}
		|\langle w(\cdot, \tau), E^w(\cdot, \tau) \rangle_W| \leq C_2 e^{-\mu \tau} \Vert w(\cdot, \tau) \Vert^2_{W,1}, \; \tau \leq \tau_0,
	\end{equation*}
This replaces \cite[(5.12)]{CCMS:generic1}. The rest of the argument is identical to the remainder of the proof of \cite[Corollary 5.2]{CCMS:generic1}. 
\end{proof}


\section{Barriers along the cylindrical ends} \label{sec:barriers}

In this section we consider $(\Sigma,\Omega)\in \cS_n'$. (Recall that this means that $\Sigma = \partial\Omega$ and the ends of $\Sigma$ are  either asymptotic to smooth cones or cylinders over smooth compact shrinkers.) Angenent--Daskalopoulos--\v{S}e\v{s}um have recently \cite{ADS}, constructed a self-shrinker foliation inside and outside of the end of a round cylinder $\Sigma = \SS^{n-1}(\sqrt{2(n-1)})\times \RR$ using ODE techniques (see also \cite{KleeneMoller}). This foliation plays an important role in the study of ancient solutions with cylindrical tangent flow at $t=-\infty$ in \cite{ADS,ADS2,BrendleChoi:3d,BrendleChoi:nD,CHH:4D,ChoiHaslhoferHershkovits,ChoiHaslhoferHershkovitsWhite}. The goal of this section is to (partially) generalize this to the case of shrinkers with nice ends. 

\subsection{Setup and statement of barrier construction} 
 After a rotation, we will assume that $\Sigma$ has a cylindrical end in the direction $\be = (0,\dots,0,1)$ modeled on $\hat\Sigma \times \RR$. More precisely, we consider $\hat\Sigma^{n-1} \subset \RR^n$ a smoothly embedded closed self-shrinker, so that the surfaces $\Sigma - s\be$ converge smoothly on compact sets to $\hat\Sigma\times \RR$ as $s\to\infty$. We will use $(\bomega,z) \in \RR^n\times \RR$ as coordinates on $\RR^{n+1}$. We assume that $\rho>0$ is chosen such that $\hat\Sigma \subset B_{\rho}(\bOh) \subset \RR^n$.
 
Fix $\hat \Omega \subset \RR^n$ so that $\partial\hat\Omega = \hat\Sigma$ and $\Omega - s\be$ converges to $\hat\Omega\times \RR$ as $s\to\infty$. Set $\Omega_Z : = \Omega \cap \{z\geq Z\}$, and $\Sigma_Z = \Sigma \cap \{z\geq Z\}$. For $\delta>0$, we define
\begin{equation}\label{eq:move-hat-sigma-inside-by-delta-barriers}
\hat\Sigma_\delta : = \{\bomega + \delta \hat\varphi(\bomega) \nu_{\hat\Sigma}(\bomega) : \bomega \in \hat\Sigma \}\subset \RR^n. 
\end{equation}
Let $\hat\Omega_\delta \subset  \hat\Omega$ denote the region (possibly unbounded) to the ``inside'' of $\hat\Sigma_\delta$ (as determined by the inwards pointing unit normal). Note that $\hat\Sigma_\delta$ is smooth and strictly shrinker mean convex (to the inside) and $F(\hat\Sigma_\delta) < F(\hat\Sigma)$ for $\delta \in (0,\delta_0)$, where $\delta_0 = \delta_0(\hat\Sigma)$. 

By Lemma \ref{lem:ends-decomposition}, there is $\bar Z = \bar Z(\Sigma)$ large and a smooth function $\tilde v : \hat\Sigma \times \{z\geq \bar Z\} \to \RR$ so that 
\begin{equation}\label{eq:cyl.end.graph.cyl}
\Sigma \cap (B_{2\rho}(\bOh) \times [\bar Z,\infty)) = \{ (\bomega,z) + \tilde v(\bomega,z) \nu_{\hat\Sigma}(\bomega): (\bomega,z) \in\hat\Sigma \times[ \bar Z,\infty)\}\, .
\end{equation}
We have $|\nabla^k_{\hat\Sigma\times \RR} \tilde v| = o(1)$ as $z\to\infty$ for all $k\geq 0$. Fix $\hat \mu < 0$ the first eigenvalue of $L_{\hat\Sigma\times \RR}$. 

For $Z_{0}\geq \bar Z$ we assume there is $[-\infty,T_{0}] \ni \tau \mapsto \tilde v_{\tau} \in C_\textrm{loc}^{\infty}(\hat\Sigma \times [Z_{0},\infty))$ so that 
\begin{equation}\label{eq:barrier-w-tau-decay-to-end-at-infty}
\Vert \tilde v_\tau\Vert_{C^3;\hat\Sigma\times [Z_0,\infty)} = o(1) \textrm{ as $\tau\to-\infty$},
\end{equation}
and so that 
\[
\Graph_{\hat\Sigma \times [Z_{0},\infty)} \tilde v_{\tau}(\cdot)\textrm{ is a rescaled mean curvature flow}.
\]
(An important special case is $\tilde v_{\tau}\equiv \tilde v$, so $\Graph_{\hat\Sigma \times [Z_{0},\infty)} \tilde v_{\tau}(\cdot)$ is the chosen end of $\Sigma$, but we will also consider the case that $\tilde v_{\tau}$ is the end of the ancient flow constructed in Proposition \ref{prop:exist-graphical-one-sided}.)

Taking $Z_{0}$ larger if necessary we can assume that there is $\vartheta_{0} > 0$ so that 
\begin{equation}\label{eq:sep.cyl.ends.from.other.ends}
\Sigma \cap \{z>Z_{0}\} \cap \{t\bx : \bx \in \partial B_{1}(\bOh), d(\bx,\be) < \vartheta_{0}, t>0\} = \Sigma \cap (B_{2\rho}(\bOh)\times (Z_{0},\infty)).
\end{equation}
We will use this later to separate the chosen cylindrical ends from all other ends.

\begin{theorem}[Barriers along cylindrical ends] \label{theo:barriers-inner-outer}
Consider the setup described above. There is $Z_1 \geq  Z_{0}$, $T_{1}\leq T_{0}$, $\alpha > 2|\hat \mu|$, $\eta_0 > 0 $, and $h_{0}>0$ so that for $\eta \in (0,\eta_0)$, there is $\vartheta = \vartheta(\eta)\in (0,\vartheta_{0})$ with the following properties. For $T_{2}\leq T_{1}$, suppose that that $(-\infty,T_{2}]\ni \tau\mapsto \cM(\tau)$ is a rescaled Brakke flow in $\Omega \cap \{z > Z_{1}\} \subset \RR^{n+1}$ so that
\begin{enumerate}
\item for all $\tau \in (-\infty,T_{2}]$, it holds that
\begin{multline*}
\supp \cM(\tau) \cap (B_{2\rho}(\bOh) \times  [Z_{1},2Z_{1}]) \subset\\
\{ (\bomega,z) + (\tilde v_{\tau}(\bomega,z) + s) \nu_{\hat\Sigma}(\bomega) : z \in [Z_{1},2Z_{1}], s \in (0,\eta z^{\alpha}) \} ,
\end{multline*}
\item there is some $r\geq 0$ so that 
\[
\supp \cM(\tau) \subset B_{r}(\bOh) \cup \{ tx : \bx \in \partial B_{1}(\bOh), d(\be,\bx) < \vartheta, t\geq 0\}
\]
for all $\tau \in (-\infty, T_{2}]$, 
and
\item $\supp \cM(\tau)$ converges in the local Hausdorff sense to $\Sigma \cap (B_{2\rho}(\bOh) \times [Z_{1},\infty))$ as $\tau \to -\infty$. 
\end{enumerate}
Then, for all $\tau \in (-\infty,T_{2}]$ it holds that 
\begin{multline*}
\supp \cM(\tau) \cap  \{Z_{1}<z<\eta^{-\kappa}\} \subset\\
\{ (\bomega,z) + (\tilde v_{\tau}(\bomega,z) + s) \nu_{\hat\Sigma}(\bomega) : z \in [Z_{1},\eta^{-\kappa}], s \in [0,\eta z^{\alpha}) \} ,
\end{multline*}
for $\kappa = (1+\alpha)^{-1} \in (0,1)$. 
\end{theorem}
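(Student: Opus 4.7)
The plan is to build an explicit upper barrier $\Gamma_\eta(\tau)$ over the chosen cylindrical end and apply an avoidance principle to propagate the thin-slab containment of hypothesis (1) to the full range $z \in [Z_1, \eta^{-\kappa}]$. Specifically, I would set
\[
b_\eta(\bomega, z, \tau) := \tilde v_\tau(\bomega, z) + \eta\, z^\alpha\, \hat\varphi(\bomega)
\]
on $\hat\Sigma \times [Z_1, \eta^{-\kappa}]$, and let $\Gamma_\eta(\tau)$ be its normal graph (along $\nu_{\hat\Sigma}$) over this base. Hypothesis (3) then gives that $\supp \cM(\tau)$ lies strictly under $\Gamma_\eta(\tau)$ for $\tau \ll 0$, while hypothesis (1) gives the same on the slab $z \in [Z_1, 2Z_1]$ for all $\tau \in (-\infty, T_2]$.

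The core step is to verify that $\Gamma_\eta(\tau)$ is a strict supersolution of rescaled mean curvature flow on its domain as long as $\alpha > 2|\hat\mu|$, $\eta \leq \eta_0$, and $Z_1$ is sufficiently large. Using Corollary \ref{coro:expand-rescaled-mcf-app} to expand the graphical rescaled mean curvature flow equation over $\hat\Sigma \times \RR$, and the fact that $\tilde v_\tau$ itself solves this equation, the supersolution defect reduces to
\[
(\partial_\tau - L_{\hat\Sigma \times \RR})\, b_\eta - E(b_\eta) = -\eta\, L_{\hat\Sigma\times\RR}(z^\alpha \hat\varphi) + \big(E(\tilde v_\tau) - E(b_\eta)\big).
\]
A direct computation using $L_{\hat\Sigma}\hat\varphi = -\hat\mu\hat\varphi$ and $(\partial_z^2 - \tfrac{z}{2}\partial_z)z^\alpha = \alpha(\alpha-1)z^{\alpha-2} - \tfrac{\alpha}{2}z^\alpha$ gives
\[
L_{\hat\Sigma \times \RR}(z^\alpha \hat\varphi) = -\big(\hat\mu + \tfrac{\alpha}{2}\big)\, z^\alpha \hat\varphi + \alpha(\alpha-1)\, z^{\alpha-2}\hat\varphi,
\]
so the leading term of the defect is $\eta\,(\hat\mu + \alpha/2)\, z^\alpha \hat\varphi$, strictly positive once $\alpha > 2|\hat\mu|$. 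The subleading algebraic term $-\eta\alpha(\alpha-1)z^{\alpha-2}\hat\varphi$ is absorbed by choosing $Z_1$ large, and the nonlinear error $E(b_\eta) - E(\tilde v_\tau)$ from Appendix \ref{app:graph-shrinker} is quadratic in $\eta z^\alpha \hat\varphi$ and its derivatives. The role of $\kappa = 1/(1+\alpha)$ is to keep $\eta z^\alpha$ and all relevant derivatives of $\eta z^\alpha \hat\varphi$ (controlled via the bounds on $\hat\varphi$ from Lemma \ref{lem:decay-lowest-eigenfuncton}) uniformly small on $z \in [Z_1, \eta^{-\kappa}]$ as $\eta \to 0$, so this nonlinear error is dominated by the leading positive term; combined with the decay \eqref{eq:barrier-w-tau-decay-to-end-at-infty} of $\tilde v_\tau$, this yields the strict supersolution property uniformly in $\tau \in (-\infty, T_1]$ for some $T_1 \leq T_0$.

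With the barrier in hand, I would apply an avoidance-type argument. At the lower boundary $z = Z_1$, hypothesis (1) forces $\supp\cM(\tau)$ strictly below $\Gamma_\eta(\tau)$. At the upper boundary $z = \eta^{-\kappa}$, I choose $\vartheta(\eta)$ small (scaling like $\eta^\kappa$ times the width $\rho$ of $\hat\Sigma$) so that the cone bound in (2), combined with the separation property \eqref{eq:sep.cyl.ends.from.other.ends}, prevents any portion of $\supp\cM(\tau)$ in $\{|\bomega| \leq 2\rho\} \times [Z_1, \eta^{-\kappa}]$ from approaching the outer face of $\Gamma_\eta(\tau)$ at that height; any flow support at height $z = \eta^{-\kappa}$ is either already in the fixed ball $B_r(\bOh)$ (which is inaccessible to that height for $r$ fixed and $\eta$ small) or confined to a thin cone about the $\be$-axis disjoint from the barrier's outer face. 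Hypothesis (3) gives the initial-time comparison. Applying Ilmanen's localized avoidance principle (Theorem \ref{theo:ilmanen-avoidance}, or equivalently Proposition \ref{prop:avoidance-noncompact} for the noncompact Brakke setting) to the strict supersolution $\Gamma_\eta(\tau)$ and the Brakke flow $\cM(\tau)$ propagates the containment to all $\tau \in (-\infty, T_2]$, yielding the claimed conclusion.

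The main obstacle is the uniform verification of the strict supersolution inequality on the spatially unbounded (as $\eta \to 0$) range $[Z_1, \eta^{-\kappa}]$: one must simultaneously dominate the algebraic correction $\alpha(\alpha-1)z^{\alpha-2}\hat\varphi$, the fully nonlinear rescaled MCF error from Appendix \ref{app:graph-shrinker}, and the time-dependence of $\tilde v_\tau$, against the single leading positive term $\eta(\hat\mu + \alpha/2) z^\alpha \hat\varphi$, uniformly in $\tau \in (-\infty, T_1]$. A secondary technical hurdle is the $\vartheta$--$\eta$ interplay: one must verify that (2) genuinely separates $\cM(\tau)$ from the outer face of the barrier region at height $z = \eta^{-\kappa}$ using only the asymptotic ``nice ends'' structure \eqref{eq:sep.cyl.ends.from.other.ends}.
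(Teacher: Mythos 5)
Your verification that the long barrier $b_\eta = \tilde v_\tau + \eta z^\alpha \hat\varphi$ is a supersolution for $\alpha > 2|\hat\mu|$ is correct and matches the paper's Proposition \ref{prop:bendy-barrier-good-ends}. However, your plan for closing the comparison argument at the upper boundary of the barrier domain has a genuine gap that you cannot repair with the tools you invoke.

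The problem is what happens where the barrier graph ends. Your barrier is a graph with boundary at $z = \eta^{-\kappa}$, and a Brakke flow can freely pass around the edge of a barrier with boundary; a contact at the boundary of a supersolution yields no contradiction. To exclude this you appeal to hypothesis (2), but this is where the argument breaks down: in (2), the radius $r$ is an \emph{uncontrolled} constant that depends on the flow $\cM$, whereas $\eta$ and hence $\eta^{-\kappa}$ are fixed in the quantifier order of the theorem. A flow with $r \gg \eta^{-\kappa}$ is perfectly admissible, and for such a flow hypothesis (2) imposes no constraint at all at height $z = \eta^{-\kappa}$. There is simply no mechanism in your setup preventing $\supp\cM(\tau)$ from touching $\partial\Xi_\tau \cap \{z = \eta^{-\kappa}\}$ at some time and then spilling into the barrier region. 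A secondary symptom of the same issue: on your truncated domain, $\eta z^\alpha$ is at most $\eta^{1-\kappa\alpha} = \eta^\kappa = o(1)$, so your barrier never separates from $\Sigma$ by a definite amount — precisely the property one would need to cap off the barrier region.

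The paper's construction is engineered specifically to avoid having to argue at a finite-height boundary. The long barrier $\Xi_\tau$ is extended out to $z = R \sim \eta^{-1/\alpha} \gg \eta^{-\kappa}$, chosen so that at $z = R$ the barrier has peeled a \emph{fixed} distance $h_0$ off $\Sigma$ (note $\eta R^\alpha = h_0$). This creates room to weld (Proposition \ref{prop:welding}) the long barrier to a translator barrier $\Gamma^\lambda_\delta$ (Lemma \ref{lemm:translator-barrier}), producing a region $Q_\tau$ whose boundary $\Xi_\tau \cup \Gamma$ (together with a piece of $\Sigma$) is a closed supersolution with no exposed edge at finite height. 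The only place separation from the flow must be established at infinity is along the translator, and that is done with Lemma \ref{lemm:lin-grow-trans}: the translator is asymptotically conical in a direction bounded away from $\be$, so the cone constraint (2) does control the flow there — at spatial infinity, not at a finite height dependent on $\eta$. The barrier-boundary issue you need to address is thus resolved by the welding proposition, which guarantees that points of $\supp\cM(\tau)$ near $\Xi_\tau$ always have a nearest point in the interior of $\Xi_\tau$. Without the translator and the welding, your argument cannot be completed as written.
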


The construction of these barriers will follow by welding a barrier (constructed using the linearized shrinker equation) that covers a long portion of the end of $\Sigma$ to a ``translator'' that approximates a one-sided rescaled mean curvature flow coming out of $\hat\Sigma$. We will prove Theorem \ref{theo:barriers-inner-outer} in Section \ref{subsec:proof-of-theo-barriers} after constructing these pieces. 

\subsection{Translator barriers}\label{subsec:trans-barr}

As we recall in Appendix \ref{app:integral-brakke-X-flows}, Hershkovits--White have recently defined a notion of Brakke flow with an ambient vector field  (cf.\ \cite[Section 13]{HershkovitsWhite:set-theoretic}). In particular, a rescaled mean curvature flow is a $\frac{\bx}{2}$-flow in their framework. Below, we will consider a related object, a $\frac{\bomega}{2}$-flow. These arise in the elliptic regularization construction of rescaled mean curvature flow.

\begin{lemma}[Weighted area estimates for $\bX$-translators] \label{lemm:area.bds.translators}
Consider a compact Riemannian manifold $(M^{n},g)$, a gradient vector field $\bX = \nabla f$ on $M$, and a hypersurface $\Gamma^{n} \subset M\times [0,\infty)$ so that $\bH + \bX^{\perp} + \lambda \be^{\perp} = 0$, where $\be=\partial_{z}$ is the unit vector field on $M\times \RR$ in the $\RR$ direction. Assume that $\Gamma \cap M\times \{0\} = \partial\Gamma$. For any $0\leq a<b$ so that $\Gamma$ intersects $\{z\in\{a,b\}\}$ transversely, we have
\[
\int_{\Gamma\cap\{a\leq z\leq b\}} e^{-f} \leq (b-a+\lambda^{-1}) \int_{\partial\Gamma} e^{-f}. 
\]
and
\[
\int_{\Gamma(a,b)}|\be^{\perp}|_{g}^{2} e^{-f} \leq \lambda^{-1}  \int_{\partial\Gamma} e^{-f}. 
\]
\end{lemma}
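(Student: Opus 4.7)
The proof rests on a single tangential divergence identity:
\[
\Div_\Gamma(\be^\top e^{-f}) = -\lambda\,|\be^\perp|_g^{2}\, e^{-f}.
\]
To derive it, I would use three facts: (i) $\be = \partial_z$ is parallel on $M\times\RR$ with the product metric, so $\Div_\Gamma \be = 0$ and hence $\Div_\Gamma \be^\top = \bH\cdot\be$; (ii) $\bX=\nabla f$ is tangent to the $M$-factor and thus $\be\cdot\bX = 0$, giving $\nabla^\Gamma f = \bX^\top$ together with $\be^\top\cdot\bX^\top = -\bX^\perp\cdot\be$; and (iii) the translator equation $\bH = -\bX^\perp - \lambda\be^\perp$. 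The $\bX^\perp\cdot\be$ contributions then cancel in $\bH\cdot\be - \be^\top\cdot\nabla^\Gamma f$, leaving exactly $-\lambda|\be^\perp|_g^{2}$.

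For the $|\be^\perp|^{2}$ estimate, I would apply the divergence theorem to $\be^\top e^{-f}$ on the slab $\Gamma\cap\{0\le z\le b\}$. Since $\nabla^\Gamma z = \be^\top$, at each transverse level $\{z=c\}$ the unit outward conormal inside $\Gamma$ is $\pm\be^\top/|\be^\top|$, pointing to increasing $z$ at the top of the slab and to decreasing $z$ at $\partial\Gamma$. This yields the flux identity
\[
\int_{\partial\Gamma}|\be^\top|\,e^{-f} \;=\; \int_{\Gamma\cap\{z=b\}}|\be^\top|\,e^{-f} \;+\; \lambda\!\int_{\Gamma\cap\{0\le z\le b\}}\!|\be^\perp|_g^{2}\,e^{-f},
\]
and since $|\be^\top|\le 1$ and $\Gamma(a,b)\subset\Gamma\cap\{0\le z\le b\}$, the second estimate follows at once.

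For the area estimate, I would repeat the argument on $\Gamma\cap\{a\le z\le b\}$ with the modified vector field $(z-b)\be^\top e^{-f}$. The product rule, together with $\nabla^\Gamma z = \be^\top$ and the identity above, expands the tangential divergence as $|\be^\top|^{2}e^{-f} + (b-z)\lambda|\be^\perp|_g^{2}e^{-f}$, both nonnegative on the slab. The boundary contribution at $z=b$ vanishes, while the contribution at $z=a$ equals $(b-a)\int_{\Gamma\cap\{z=a\}}|\be^\top|\,e^{-f}$. Discarding the $|\be^\perp|^{2}$ piece and invoking $\int_{\Gamma\cap\{z=a\}}|\be^\top|\,e^{-f}\le\int_{\partial\Gamma}e^{-f}$ (the previous flux identity with $a$ in place of $b$) gives
\[
\int_{\Gamma(a,b)}|\be^\top|^{2}e^{-f} \;\le\; (b-a)\int_{\partial\Gamma}e^{-f}.
\]
Adding this to the $|\be^\perp|^{2}$ bound and using $|\be^\top|^{2}+|\be^\perp|_g^{2}=1$ on $\Gamma$ produces the first estimate. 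The only delicate point is justifying the divergence theorem at the heat boundary $\partial\Gamma$; this reduces to the implicit transversality of $\Gamma$ with $M\times\{0\}$ (needed for $\partial\Gamma$ to be a smooth submanifold) and the hypothesized transversality at $\{z=a,b\}$, with the compactness of $M$ providing finite integrals on each slab.
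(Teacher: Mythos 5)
Your proof is correct and follows essentially the same route as the paper's, hinging on the divergence identity $\Div_\Gamma(\be^\top e^{-f}) = -\lambda\,|\be^\perp|_g^2\, e^{-f}$ (the paper's first-variation computation for $e^{-f}\be$ is the same thing) and the splitting $1 = |\be^\top|^2 + |\be^\perp|_g^2$. The only deviation is cosmetic: for the $|\be^\top|^2$ contribution the paper invokes the coarea formula together with the monotonicity of $s\mapsto\int_{\Gamma(s)}|\be^\top|e^{-f}$, whereas you package the same integration by parts through the auxiliary tangential field $(z-b)\be^\top e^{-f}$, which sidesteps a separate appeal to coarea and Sard's theorem.
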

\begin{proof}
Set $\Gamma(a,b) : = \Gamma \cap \{a\leq z\leq b\}$ and $\Gamma(s) = \Gamma \cap\{z=s\}$. By Sard's theorem, $\Gamma(a,b)$ is a smooth hypersurface with boundary for a.e. $0<a<b$. We now follow the proof of local mass bounds for the translators obtained in elliptic regularization \cite[\S 5]{Ilmanen:elliptic} (see also \cite[\S 9.3]{White:MCF-lectures}).

We will abuse notation slightly and write $g$ for the product metric on $(M,g)\times \RR$. Then, for $a<b$ with $\Gamma$ transversal to $\{z=a\}\cup\{z=b\}$, we find 
 \begin{align*}
 \int_{\Gamma(a,b)} g(-\bX^{T},\be) e^{-f}
& = \int_{\Gamma(a,b)} \Div_{\Gamma,g}(e^{-f}\be)\\
& = \int_{\Gamma(a,b)} g(-\bH,\be) e^{-f} + \int_{\Gamma(b)} |\be^{T}|_{g} e^{-f} - \int_{\Gamma(a)} |\be^{T}|_{g} e^{-f}\\
& = \int_{\Gamma(a,b)} g(\bX^{\perp} + \lambda \be^{\perp},\be) e^{-f} + \int_{\Gamma(b)} |\be^{T}|_{g} e^{-f} - \int_{\Gamma(a)} |\be^{T}|_{g} e^{-f}.
\end{align*}
Observe that
\[
g(\bX^\perp + \bX^T,e) = g(\bX,e) = 0,
\]
so, we find
\begin{equation}\label{eq:translator-barriers-loc-mass-bounds}
\int_{\Gamma(a)} |\be^T|_g e^{-f} = \int_{\Gamma(a,b)} \lambda |\be^\perp|_g^2 e^{-f}  + \int_{\Gamma(b)} |\be^T|_g e^{-f} . 
\end{equation}
In particular $s\mapsto \int_{\Gamma(s)} |\be^T|_g e^{-f}$ is non-increasing so 
\[
\int_{\Gamma(s)} |\be^T| e^{-f} \leq \int_{\partial \Gamma} e^{-f} 
\]
for $s>0$ so that $\Gamma$ intersects $\{z=s\}$ transversely. Combined with \eqref{eq:translator-barriers-loc-mass-bounds} we find 
\begin{align*}
\int_{\Gamma(a,b)}|\be^{\perp}|_{g}^{2} e^{-f} & = \lambda^{-1} \int_{\Gamma(a,b)}\lambda |\be^{\perp}|^{2}_{g} e^{-f}\\
& \leq  \lambda^{-1}  \int_{\Gamma(a)} |\be^{T}|_{g} e^{-f}\\
& \leq  \lambda^{-1}  \int_{\partial\Gamma} e^{-f}
\end{align*}
proving the second equation. Similarly,
\begin{align*}
\int_{\Gamma(a,b)} e^{-f} & = \int_{\Gamma(a,b)} |\be^\perp|_g^2 e^{-f} + \int_{\Gamma(a,b)} |\be^T|_g^2 e^{-f} \\
& \leq \lambda^{-1} \int_{\partial \Gamma }  e^{-f} + \int_{\Gamma(a,b)} |\be^T|_g^2 e^{-f} \\
& = \lambda^{-1} \int_{\partial \Gamma }   e^{-f} + \int_a^b \int_{\Gamma(s)} |\be^T|_g e^{-f} \\
& \leq (b-a+ \lambda^{-1}) \int_{\partial \Gamma } e^{-f},
\end{align*}
proving the first equation. This completes the proof. 
\end{proof}

\begin{lemma}[Existence of translators]\label{lemm:translator-barrier}
For $\lambda>0$ and $\delta\in (0,\delta_0)$, there exists a smooth hypersurface $\Gamma_\delta^\lambda \subset \RR^{n+1}$ with $\partial\Gamma^{\lambda}_{\delta} = \hat\Sigma_\delta \times\{0\}\subset \RR^{n+1}$ so that $\mathring \Gamma_\delta^\lambda \subset \hat \Omega_\delta \times [0,\infty)$ and the mean curvature of $\Gamma_\delta^\lambda$ satisfies
\[
\bH + \tfrac{\bomega^\perp}{2} + \lambda \be^\perp = 0.
\]
The hypersurfaces  $\mathring\Gamma^\lambda_\delta$ are graphical over $\hat\Omega_\delta\times \{0\}$. Furthermore, the hypersurfaces
\[
\Gamma^\lambda_\delta(t) : = \Gamma_\delta^\lambda - \lambda t \be.
\]
have normal velocity $\bH + \tfrac{\bomega^{\perp}}{2}$ and are thus $\tfrac{\bomega}{2}$-flows. As $\lambda\to\infty$, the flows converge in the sense of Brakke $\tfrac{\bomega}{2}$-flows to $\cH^n \lfloor (\partial^*\Omega_\delta(t)\times \RR)$ where $\Omega_\delta(t)$ is the interior of the rescaled mean curvature flow of $\hat\Sigma_\delta$. 
\end{lemma}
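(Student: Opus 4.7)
The plan is to construct $\Gamma^\lambda_\delta$ via an elliptic regularization in the spirit of Ilmanen \cite{Ilmanen:elliptic}, adapted to the $\tfrac{\bomega}{2}$-flow equation. The translator equation $\bH + \tfrac{\bomega^\perp}{2} + \lambda\be^\perp = 0$ is precisely the Euler--Lagrange equation for the weighted area functional
\[
J^\lambda(\Gamma) := \int_\Gamma e^{-|\bomega|^2/4 - \lambda z}\, d\cH^n
\]
acting on hypersurfaces in $\hat\Omega_\delta \times [0,\infty)$ with boundary $\hat\Sigma_\delta \times \{0\}$. I produce $\Gamma^\lambda_\delta$ as a minimizer of $J^\lambda$: the competitor $\hat\Omega_\delta \times \{0\}$ has finite weighted area by the Gaussian factor, so standard compactness for integral currents with a prescribed boundary yields a limit. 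Interior regularity for stationary $\bX$-Brakke flows with $\bX = \bomega/2 + \lambda\be$ (cf.\ Appendix \ref{app:integral-brakke-X-flows}) then gives smoothness of $\mathring\Gamma^\lambda_\delta$.

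Graphicality of $\mathring\Gamma^\lambda_\delta$ over $\hat\Omega_\delta$ will follow from a cut-and-paste argument exploiting the $e^{-\lambda z}$ factor: if $\Gamma^\lambda_\delta$ had an overhang, then replacing mass using the lower of two vertical translates would strictly decrease $J^\lambda$, contradicting minimality. The strict shrinker mean convexity of $\hat\Sigma_\delta$ (to the inside of $\hat\Omega_\delta$), combined with the strong maximum principle, ensures $\mathring\Gamma^\lambda_\delta$ enters $\hat\Omega_\delta \times (0,\infty)$. In the possibly unbounded case, I will carry out the construction first on the bounded exhausting domains $\hat\Omega_\delta \cap B_R(\bOh)$ and then pass to $R\to\infty$, using Lemma \ref{lemm:area.bds.translators} for uniform mass bounds and the interior translator-graph estimates to preserve graphicality in the limit.

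To check that $\Gamma^\lambda_\delta(t) := \Gamma^\lambda_\delta - \lambda t \be$ is a $\tfrac{\bomega}{2}$-flow, note that its pointwise normal velocity is $-\lambda\be \cdot \nu$, which by the translator equation equals $\bH\cdot\nu + \tfrac{1}{2}\bomega\cdot\nu$, i.e., the normal component of $\bH + \bomega/2$, as required. For the $\lambda\to\infty$ limit, I apply Brakke $\tfrac{\bomega}{2}$-flow compactness from Appendix \ref{app:integral-brakke-X-flows} using the uniform weighted area bound from Lemma \ref{lemm:area.bds.translators}. The decisive input is the second estimate there, $\int_{\Gamma^\lambda_\delta(t) \cap \{a\le z \le b\}} |\be^\perp|^2 e^{-f} \le \lambda^{-1}\int_{\hat\Sigma_\delta} e^{-|\bomega|^2/4}$, which forces $|\be^\perp| \to 0$ in $L^2_{\textnormal{loc}}$ as $\lambda\to\infty$. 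Any subsequential limit is therefore invariant under $\be$-translation and so splits as a product $\cM_\infty(t) \times \RR_\be$, whose cross-section $\cM_\infty(t)$ is a rescaled MCF in $\RR^n$ with initial condition $\cH^{n-1}\lfloor\hat\Sigma_\delta$. Short-time smooth uniqueness of rescaled MCF out of the smooth, strictly shrinker mean convex hypersurface $\hat\Sigma_\delta$ then identifies the limit with $\cH^n \lfloor (\partial^*\Omega_\delta(t)\times\RR)$.

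The main obstacle is establishing graphicality of $\Gamma^\lambda_\delta$ globally over the possibly unbounded domain $\hat\Omega_\delta$, and in particular ruling out both vertical tangents and the formation of extra sheets at spatial infinity in the $\bomega$-variable. This rests on exploiting simultaneously the $e^{-\lambda z}$-weight (which suppresses upward mass), the strict shrinker mean convexity of $\hat\Sigma_\delta$ (which provides lower barriers, e.g.\ scaled offsets $\hat\Sigma_{\delta'}$ for $\delta' > \delta$), and uniform translator-graph estimates on the exhausting domains $\hat\Omega_\delta \cap B_R(\bOh)$.
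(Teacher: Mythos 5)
Your overall strategy --- producing $\Gamma^\lambda_\delta$ as a minimizer of the weighted area $J^\lambda(\Gamma) = \int_\Gamma e^{-|\bomega|^2/4 - \lambda z}$, exploiting the $e^{-\lambda z}$-weight for graphicality, using Lemma~\ref{lemm:area.bds.translators} for uniform mass bounds, and using the $|\be^\perp|^2$-decay to force splitting of the $\lambda\to\infty$ limit --- is the same elliptic regularization route the paper takes, and the computations you single out are the right ones. However, the mechanics of the unbounded case differ, and there are two places where what you wrote is not yet a proof.

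The more substantial issue is how you handle unbounded $\hat\Omega_\delta$. You propose to work on the exhausting domains $\hat\Omega_\delta \cap B_R(\bOh)$ and pass to $R\to\infty$, but you never say what constraint or boundary data you impose on the new piece of boundary $(\partial B_R(\bOh)\cap\hat\Omega_\delta)\times [0,\infty)$. A naive Dirichlet condition (e.g.\ forcing $\Gamma$ to coincide with $\{z=0\}$ there) spoils the $\bX$-translator equation in an annular region, and a free boundary condition means you no longer have a clean minimization problem. The alternative you also float --- direct compactness for integral currents in the unbounded domain --- has the separate issue that minimizing sequences could lose mass to infinity in the $\bomega$-directions. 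The paper avoids both problems by replacing $\RR^n$ with the compact manifold $M=\SS^n$ equipped with metrics $g_j$ that are exactly Euclidean on larger and larger balls $B_{2R_j}(\bOh)$, extending $\tfrac12\bomega$ to a global gradient field $\bX_j$, and cutting off $\hat\Omega_\delta$ to a relatively compact $\hat\Omega_{\delta,j}\subset\SS^n$ still with $\partial\hat\Omega_{\delta,j}=\hat\Sigma_\delta$. Every approximating problem then takes place in a closed manifold with no artificial boundary, and the constrained minimizer automatically satisfies the $\bX_j$-translator equation wherever $g_j$ is Euclidean. The uniform mass bounds from Lemma~\ref{lemm:area.bds.translators} (now with $e^{-f_j}$ weight) then let you pass $j\to\infty$ along the lines of \cite[Appendix]{HIMW:trans}, with the constrained minimality (against the barrier $\hat\Omega_{\delta,j}\times\RR$, which is $e^{-2f_j/n}g_j$-mean convex) and the Frankel property replacing convexity in that reference.

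The second issue is the final identification of the $\lambda\to\infty$ limit. Once you know the limit splits as $\mu(t)\times\RR_\be$ with $\mu(t)$ an integral unit-regular Brakke $\tfrac{\bomega}{2}$-flow on $\RR^n$, and that $\mu(0)=\cH^{n-1}\lfloor\partial^*\hat\Omega_\delta$ (this is a slice-convergence computation as in \cite[8.11]{Ilmanen:elliptic} or \cite[Theorem 9.11]{White:MCF-lectures}, again using the $|\be^\perp|^2$ estimate), you still need to identify $\mu(t)$ with $\cH^{n-1}\lfloor\partial^*\hat\Omega_\delta(t)$ for all $t\ge0$. Appealing to ``short-time smooth uniqueness of rescaled MCF out of $\hat\Sigma_\delta$'' only handles the interval before singularities form; the stated conclusion is for all $t$. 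The paper instead uses that $\hat\Sigma_\delta$ is strictly $\tfrac{\bomega}{2}$-mean convex to the inside: an integral unit-regular Brakke $\tfrac{\bomega}{2}$-flow out of a shrinker mean-convex hypersurface is unique (non-fattening, via the mean-convex theory), which pins down $\mu(t)$ globally. Replace your short-time uniqueness step with this mean-convexity uniqueness and that part of the argument is complete.
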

\begin{proof}
We consider the case that $\hat\Omega$ is unbounded (the bounded case is similar but easier). For $R_{j}\to\infty$, consider $(M,g_{j})$ where $M=\SS^{n}$ and $g_{j}$ contains a region isometric to $B_{2R_{j}}(\bOh) \subset \RR^{n}$. We assume that $\hat\Sigma \subset B_{R_{j}}(\bOh)$. Choose a gradient vector field $\bX_{j}$ on $(M,g)$ so that $\bX_{j} = \tfrac 12 \bomega$ on the region isometric to $B_{R_{j}}(\bOh)$ (recalling that $\bomega$ is the coordinate on $\RR^{n}$). Note that $(M,g_{j},\bX_{j},\bOh)$ converges in a pointed sense to $(\RR^{n},g_{\RR^{n}},\tfrac 12 \bomega,\bOh)$. 

Consider the region $\hat\Omega_{\delta,j} \subset M$ that agrees with $\hat\Omega_{\delta}$ in the $B_{R_{j}}(\bOh)$ region and $\partial \hat\Omega_{\delta,j} = \hat\Sigma_{\delta}$  and currents $\Gamma$ with support in the closure of $\hat\Omega_{\delta,j} \times \RR$ and $\partial \Gamma = \hat\Sigma_{\delta}\times \{0\}$. Note that by construction, $\hat\Omega_{\delta,j}$ is  strictly $\bX_{j}$-mean convex to the inside and thus $\partial \hat\Omega_{\delta,j}\times \RR$ acts as barrier for constrained minimization. Hence, by modifying the arguments used in elliptic regularization to $\bX$-flows (cf.\ \cite{EvansSpruck1,ChenGigaGoto,Ilmanen:levelset,Ilmanen:elliptic,HaslhoferHershkovits,HershkovitsWhite:set-theoretic,HershkovitsWhite:sharp-entropy}) for each $\lambda >0$,  there exists a smooth hypersurface $\Gamma_{j} \subset M\times [0,\infty)$ with (i) $\partial\Gamma_{j} = \hat\Sigma_{\delta}\times \{0\}$, (ii) $\Gamma_{j}$ is graphical over $\hat\Omega_{\delta,j}\times\{0\}$ and (iii) $\Gamma_{j}$ satisfies the $\bX_{j}$-translator equation $\bH + \bX^{\perp}_{j} + \lambda \be^{\perp} = 0$. 

Choose $f_{j} \in C^{\infty}(M)$ so that $\nabla f_{j} =\bX_{j}$ and $f_{j} = \tfrac 1 4 |\bomega|^{2}$ in $B_{R_{j}}(\bOh)$. Lemma \ref{lemm:area.bds.translators} implies that for almost every $a<b$, it holds that 
\[
\int_{\Gamma_{j} \cap \{a\leq z \leq b\}} e^{-f_{j}} d\mu_{g_{j}}\leq (b-a+\lambda^{-1}) \int_{\partial\Gamma} e^{-\frac{|\bomega|^{2}}{4}} d\mu_{\RR^{n}}. 
\]
In particular, for all $R>0$, it holds that
\[
\limsup_{j\to\infty} |\Gamma_{j} \cap (B_{R}(\bOh)\times \{0\leq z \leq R\} )|_{g_{\RR^{n+1}}} < \infty. 
\]
We can now pass to a subsequential limit as $j\to\infty$ by following the arguments in \cite[Appendix]{HIMW:trans} (one should replace convexity in  \cite[Appendix]{HIMW:trans} by the fact that the region $\hat \Omega_{j}$ is mean convex with respect to the metric $e^{-2f_{j}/n}g^{(j)}$ and that $\Gamma_j$ is a constrained minimizer in $\hat\Omega_{\delta,j} \times \RR$) but otherwise the argument is unchanged; the Frankel property for self shrinkers (see for example \cite[Corollary C.4]{CCMS:generic1}) prevents cylinders from forming as in \cite[Theorem 12.2(2)]{HIMW:trans}). This yields the smooth hypersurface $\Gamma^{\lambda}_{\delta}$ graphical over $\hat\Omega_{\delta}\times \{0\}$ with $\bH + \frac{\bomega^{\perp}}{2} + \lambda\be^{\perp} = 0$. 

Using Lemma \ref{lemm:area.bds.translators}, the remaining assertions follow exactly as in the construction of (mean convex) mean curvature flow via elliptic regularization (cf.\ \cite[\S 8]{Ilmanen:elliptic} or \cite[Theorem 9.11]{White:MCF-lectures}). Indeed, $\Gamma^{\lambda}_{\delta}-\lambda t\be$ defines an integral unit regular Brakke $\frac{\bomega}{2}$-flow and using the local area estimates that follow from Lemma \ref{lemm:area.bds.translators}, we can take a subsequential limit as $\lambda\to\infty$. Since $\bomega$ is $z$-translation invariant, it is easy to show that the limit is of the form $\mu(t)\times \RR$ where $\mu(t)$ is an integral unit regular Brakke $\frac{\bomega}{2}$-flow on $\RR^{n}$. 

Finally, writing  $\pi : \SS^{n}\times  \RR \to\SS^n$ for the projection to $\SS^{n}$, we compute 
\begin{align*}
& \int_{\pi(\Gamma^{\lambda}_{\delta,j}\cap \{0<z<b\})}e^{-f_{j}}d\mu_{g^{(j)}}   \leq \int_{\Gamma^{\lambda}_{\delta,j}\cap \{0<z<b\}} |\be^{\perp}|_{g^{(j)}} e^{-f_{j}}d\mu_{g^{(j)}} \\
& \leq \left( \int_{\Gamma^{\lambda}_{\delta,j}\cap \{0<z<b\}} |\be^{\perp}|_{g^{(j)}}^{2} e^{-f_{j}}d\mu_{g^{(j)}}  \right)^{1/2} (b+\lambda^{-1})^{1/2} \left( \int_{\partial\hat\Omega_{\delta,j} } e^{-f_j} d\mu_{g^{(j)}} \right)^{1/2}\\
& \leq \lambda^{-1/2} (b+\lambda^{-1})^{1/2} \left( \int_{\partial\hat\Omega_{\delta,j} } e^{-f_j} d\mu_{g^{(j)}} \right).
\end{align*}
Sending $j\to\infty$ and then $\lambda\to\infty$, we can follow \cite[Theorem 9.11]{White:MCF-lectures} or \cite[8.11 Slice Convergence]{Ilmanen:elliptic} to find that $\mu(0) = \cH^{n-1}\lfloor \partial^{*}\hat\Omega_{\delta}$. Since $\partial^{*}\Omega_{\delta}$ is $\tfrac{\bomega}{2}$-mean convex to the inside, any integral unit regular Brakke $\tfrac{\bomega}{2}$-flow starting at $\partial^{*}\hat\Omega_{\delta}$ agrees with the rescaled mean curvature flow of $\hat\Sigma_{\delta}$. 
\end{proof}

\begin{lemma}[Linear growth of translators]\label{lemm:lin-grow-trans}
Fix $\delta \in (0,\delta_{0})$ and $\lambda > \tfrac 12 Z_{0}$ sufficiently large so that $\hat\Omega_{\delta}\times[2\lambda,\infty) \subset \Omega$. 
Then, there is $\vartheta = \vartheta(\lambda,\delta)>0$, $R = R(\lambda,\delta)>0$ so that 
\[
(\Gamma_{\delta}^{\lambda} + 2 \lambda \be) \cap \{ t\bx : \bx \in \partial B_{1}(\bOh), d(\bx,\be) < \vartheta, t>R\} = \emptyset 
\]
\end{lemma}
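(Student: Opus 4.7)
The plan is to exploit the graphical structure of $\Gamma^\lambda_\delta$ from Lemma~\ref{lemm:translator-barrier} and reduce the statement to a growth estimate on its height function. Writing $\Gamma^\lambda_\delta = \{(\bomega, u_\lambda(\bomega)) : \bomega \in \hat\Omega_\delta\}$, a direct trigonometric calculation converts the condition ``$(\bomega, u_\lambda(\bomega)+2\lambda)$ lies in the cone $\{t\bx : d(\bx,\be)<\vartheta,\, t>R\}$'' into the pair of inequalities $|\bomega| < c(\vartheta)\big(u_\lambda(\bomega)+2\lambda\big)$ (with $c(\vartheta)\to 0$ as $\vartheta\to 0$) together with $\sqrt{|\bomega|^2 + (u_\lambda(\bomega)+2\lambda)^2} > R$. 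Thus it suffices to prove
\[
 u_\lambda(\bomega) + 2\lambda \leq K(\lambda,\delta)\cdot|\bomega| \qquad \text{for } \bomega \in \hat\Omega_\delta \text{ with } |\bomega| \text{ large,}
\]
together with local boundedness of $u_\lambda$ on compact subsets of $\hat\Omega_\delta$ (which is immediate from graphical smoothness).

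If $\hat\Omega$ is bounded, then so is $\hat\Omega_\delta$, and since $\hat\Sigma_\delta$ is strictly shrinker mean convex bounding a compact region, its rescaled mean curvature flow extincts in finite time $T_* < \infty$. The translator--flow correspondence $z = \lambda t$ (from the proof of Lemma~\ref{lemm:translator-barrier}) then forces $u_\lambda \leq \lambda T_*$, making $\Gamma^\lambda_\delta + 2\lambda\be$ compact; the claim follows by choosing $R$ larger than its diameter and any $\vartheta > 0$.

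For the main unbounded case, since $\hat\Sigma$ is compact, I would pick $r_0 > \sqrt{2(n-1)}$ so that $\RR^n\setminus B_{r_0}(\bOh) \subset \hat\Omega_\delta$ and construct a rotationally symmetric comparison translator $\Gamma^\lambda_{r_0}$ over $\RR^n \setminus B_{r_0}(\bOh)$ by the same elliptic-regularization scheme as in Lemma~\ref{lemm:translator-barrier}, with boundary $\partial B_{r_0}(\bOh)\times\{0\}$. The level sets of its height function $u^\lambda_{r_0}$ are concentric spheres whose radii satisfy the rescaled-MCF ODE $\dot r = r/2 - (n-1)/r$, so $r(t) \sim r_0 e^{t/2}$ as $t\to\infty$ and hence $u^\lambda_{r_0}(\bomega) \lesssim 2\lambda \log(|\bomega|/r_0)$ at infinity. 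Applying avoidance for $\tfrac{\bomega}{2}$-Brakke flows to $\Gamma^\lambda_\delta - \lambda t\be$ and a vertical translate of $\Gamma^\lambda_{r_0} - \lambda t\be$, chosen so that the two start strictly separated on the common exterior $(\RR^n\setminus B_{r_0}(\bOh))\times\{0\}$, would yield $u_\lambda \leq u^\lambda_{r_0} + C(\lambda,\delta)$ on the exterior, giving logarithmic---hence certainly sublinear---growth of $u_\lambda$, far stronger than required.

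The main obstacle lies in justifying this avoidance step rigorously, since the two translators live over different regions with different boundaries and the comparison domain is non-compact. Ilmanen's localised avoidance principle (Theorem~\ref{theo:ilmanen-avoidance}), combined with the weighted mass bound $\int_{\Gamma^\lambda_\delta(a,b)} e^{-|\bomega|^2/4} \leq (b-a+\lambda^{-1})F(\hat\Sigma_\delta)$ from Lemma~\ref{lemm:area.bds.translators}, should allow this to be carried out. Alternatively, one can argue by contradiction: failure of $u_\lambda(\bomega)/|\bomega| \to 0$ as $|\bomega|\to\infty$ would force level sets $\{u_\lambda = z\}$ to remain in a fixed cylinder $\{|\bomega|\leq M\}$ for arbitrarily large $z$, contradicting both the escape to infinity of the rescaled MCF of $\hat\Sigma_\delta$ in unbounded $\hat\Omega_\delta$ and the finite weighted mass per unit height coming from Lemma~\ref{lemm:area.bds.translators}.
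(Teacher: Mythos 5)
Your reduction of the lemma to a linear height-growth bound for $u_\lambda$ along the ends (together with local boundedness near $\hat\Sigma_\delta$, which handles the compact part) is correct, and your intended route---comparison with a rotationally symmetric translator---is genuinely different from the paper's, which never estimates $u_\lambda$ at all. But the unbounded case has the gap you flag, and neither of your suggested fixes closes it. The $\tfrac{\bomega}{2}$-flows $\Gamma^\lambda_\delta-\lambda t\be$ and $\Gamma^\lambda_{r_0}+C\be-\lambda t\be$ are rigid time-translations of fixed surfaces, so ``the flows avoid each other'' is literally the same statement as ``the two surfaces are disjoint''; any avoidance principle requires the initial (equivalently, global) separation as a hypothesis, which is exactly the conclusion you want, so the argument is circular. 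A sliding argument (lower the barrier from $+\infty$ to first contact) would have to rule out first contact escaping to spatial infinity, which again needs a growth bound. Neither the weighted mass estimate from Lemma~\ref{lemm:area.bds.translators} nor Ilmanen's localized avoidance supplies the missing initial separation. The fallback contradiction argument is also unsound as written: failure of $u_\lambda(\bomega)/|\bomega|\to 0$ only produces a sequence of far-out points at proportionally large height; it does not force the level sets $\{u_\lambda=z\}$ to remain in a fixed cylinder (you have established no nesting or monotonicity of these level sets), so the asserted contradiction does not materialize. As a smaller point, $z=\lambda t$ is only the $\lambda\to\infty$ limit of the slicing, not an exact correspondence at finite $\lambda$, so the level sets of $u^\lambda_{r_0}$ do not literally solve the RMCF ODE; the logarithmic-growth claim is plausible but would need its own ODE analysis, and the bounded case likewise needs an honest comparison rather than the heuristic.

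The paper takes a structurally different route that sidesteps any quantitative analysis of the translator. It observes that $\Gamma^\lambda_\delta+2\lambda\be$ together with the plane $\{z=2\lambda\}$ bounds a region $Q$ that is weakly shrinker-mean-convex to the inside, and that $\Sigma$ threads through $Q$ as a cylindrical tube. Minimizing perimeter in the Gaussian-conformal metric (following Brendle's construction) between $\Sigma\cap Q$ and $\partial Q$ produces a self-shrinker $\Upsilon\subset Q$ disjoint from $\Sigma$. Ilmanen's localized avoidance applied to the disjoint flows $\sqrt{-t}\,\Upsilon$ and $\sqrt{-t}\,\Sigma$ (whose blow-down contains the ray $\be[0,\infty)$) forces $\sqrt{0}\Upsilon$ to miss a thin cone about $\be$, and since $\Upsilon$ separates that cone from the chimney wall $\Gamma^\lambda_\delta+2\lambda\be$, the lemma follows. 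If you want to salvage your approach, the natural move is to build explicit supersolution barriers (not exact translators) with directly verified linear or logarithmic growth, run the comparison at the level of the compact elliptic-regularization approximants from Lemma~\ref{lemm:translator-barrier}, and pass to the limit; but this is more delicate than the paper's argument.
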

\begin{proof}
We begin by assuming that $\hat\Omega$ is unbounded. (We will consider the easier case when $\hat\Omega$ is bounded below.)

Note that the shrinker mean curvature of $\Gamma_{\delta}^{\lambda} + 2 \lambda \be$ satisfies
\begin{equation}\label{eq:mean.curv.trans.shift}
\bH + \tfrac 12 \bx^\perp  = (\tfrac 12 z - \lambda ) \be^\perp 
\end{equation}
at $(\bomega,z) \in \Gamma_{\delta}^{\lambda} + 2 \lambda \be$ and such a point must have $z\geq 2\lambda$. Consider the open set $Q \subset \{z>2\lambda\}$ with
\[
\partial Q =  (\RR^{n} \setminus \hat\Omega_{\delta}) \times \{2\lambda\}  \cup (\Gamma_{\delta}^{\lambda} + 2 \lambda \be). 
\]
Note that $Q$ is weakly shrinker mean convex to the inside in the sense of Meeks--Yau \cite[\S 1]{MeeksYau}. This is obvious along the planar boundary, while along the translator portion of the boundary we can use \eqref{eq:mean.curv.trans.shift} and the fact that $\Gamma^\lambda_\delta$ is graphical over $\hat\Omega_{\delta}\times\{0\}$ as proven in Lemma \ref{lemm:translator-barrier}. 

Following \cite[Proposition 12]{Brendle:genus0} we can choose a sequence of Riemannian metrics $g_{k}$ on $\RR^{n+1}$ converging in $C^{\infty}_{\textrm{loc}}(\RR^{n+1})$ to $e^{-\frac{|\bx|^{2}}{2n}}g_{\RR^{n+1}}$ so that $Q \cap B_{2k}(\bOh)$ is mean convex with respect to $g_{k}$ for $k$ large and $\Sigma \cap Q \cap B_{2k}(\bOh)$ is a minimal surface with respect to $g_{k}$. Write $A_{k} : = (\RR^{n+1}\setminus\overline{ \Omega}) \cap Q \cap B_{2k}(\bOh)$ for the ``inside'' of $\Sigma \cap Q \cap B_{2k}(\bOh)$. Choose a Caccioppoli set $B_{k}$ in $Q\cap B_{2k}(\bOh)$ with (i) $A_{k}\subset B_{k}$, (ii) $\overline B_{k} \cap \partial(Q \cap B_{2k}(\bOh)) = \overline A_{k} \cap \partial(Q \cap B_{2k}(\bOh))$ and (iii) $B_{k}$ minimizes $g_{k}$-perimeter among Caccioppoli sets satisfying (i) and (ii). Set $\Upsilon_{k} : = \partial^{*} B_{k}$. 

Taking a subsequence, we can thus find $\Upsilon = \partial^{*}B$ for $B \subset Q$ locally finite perimeter with $\overline B \cap \{z=\lambda\} = (\RR^{n+1}\setminus \Omega) \cap\{z=\lambda\}$, so that $\Upsilon$ has at worst a co-dimension $7$ singular set. It is thus straightforward to prove that $t\mapsto \sqrt{-t} \Upsilon$ is a (integral) Brakke flow. Recall that $\sqrt{0}\Upsilon$ is a conical closed set (not necessarily smooth) and satisfies 
\[
\sqrt{0}\Upsilon \cap \be [1,\infty) = \emptyset
\]
by Ilmanen's avoidance principle (Theorem \ref{theo:ilmanen-avoidance}) applied to the disjoint flows $\sqrt{-t}\Upsilon$ and $\sqrt{-t}\Sigma$. In particular, there is $\vartheta_{1}$ so that 
\[
\sqrt{0} \Upsilon \cap \{t \bx : \bx \in \partial B_{1}(\bOh), d(\bx,\be) < \vartheta_{1}, t>0 \} = \emptyset. 
\]
This is easily seen to imply the assertion (when $\hat\Omega$ is unbounded). In the bounded case, a similar proof works with the conclusion that $\Upsilon$ (and thus $\Gamma$) is bounded. This completes the proof. 
\end{proof}

\subsection{Long barriers}  
We continue with the setup of Theorem \ref{theo:barriers-inner-outer}.

For constants $h_0>0$ to be fixed small below and $\alpha > \max\{ 2|\hat \mu|,Z_{0}\}$ to be fixed large below (depending on $\Sigma$ and the function $\tau \mapsto \tilde v_{\tau}(\cdot)$), for any $\eta \in (0,\alpha^{-\alpha} h_0)$ fixed we consider the function 
\[
U(\bomega,z) =  \eta z^\alpha \hat\varphi(\bomega).
\]
Define $R=R(h_0,\eta,\alpha)>\alpha$ so that $\eta R^\alpha = h_0$. Then define
\[
(-\infty,T_{1}]\ni \tau \mapsto \Xi_{\tau} : = \{(\bomega,z) + (\tilde v_{\tau}(\bomega,z) + U(\bomega,z)) \nu_{\hat \Sigma}(\bomega) : (\bomega,z) \in \hat \Sigma \times [\alpha,R]\}.
\]
As long as $T_{1}\leq T_{0}$ is sufficiently negative, $h_0$ is small, and $\alpha$ is large (depending only on $\Sigma$ and $\tilde v_{\tau}$), we see that $\Xi_{\tau}$ is a smooth hypersurface (with boundary) contained in $\Omega$. We will write $\nu_{\Xi_{\tau}}$ for the ``upwards pointing'' unit normal, i.e., $\nu_{\Xi_{\tau}}\cdot\nu_{\hat\Sigma} > 0$. Define
\[
v_{\Xi_{\tau}} = (\nu_{\Xi_{\tau}} \cdot \nu_{\hat\Sigma})^{-1}
\]
along $\hat\Sigma\times [\alpha,R]$, where $\nu_{\Xi_{\tau}}$ is evaluated at $(\bomega,z) + (\tilde v_{\tau}(\bomega,z) + U(\bomega,z)) \nu_{\hat \Sigma}(\bomega)$, for $(\bomega,z) \in \hat \Sigma \times [\alpha,R]$, as usual. Note that the normal speed of $\Xi_{\tau}$ is given by
\[
v_{\Xi_{\tau}}^{-1}(\partial_{\tau} \tilde v_{\tau})
\]
and the scalar (rescaled) mean curvature satisfies
\[
\bH_{\Xi_{\tau}} + \tfrac 12 \bx^\perp = (H_{\Xi_{\tau}} + \tfrac 12 \bx\cdot\nu_{\Xi_{\tau}}) \nu_{\Xi_{\tau}}.
\]
As such, the next proposition says that $\tau \mapsto \Xi_{\tau}$ is a barrier for rescaled mean curvature flows pinched between $\Xi_{\tau}$ and $\Sigma$. 
\begin{proposition}[Verifying long barrier property]\label{prop:bendy-barrier-good-ends}
There is $T_{1}'\leq T_{0}$, $h_0$ small, and $\alpha_0 >\max\{2|\hat \mu|, Z_{0}\}$ large so that for $\alpha \geq \alpha_0$ and $\eta \in (0,\alpha^{-\alpha}h_0)$, it holds that
\[
v_{\Xi_{\tau}}^{-1}(\partial_{\tau} \tilde v_{\tau}) -  (H_{\Xi_{\tau}} + \tfrac 12 \bx\cdot\nu_{\Xi_{\tau}}) > 0. 
\]
\end{proposition}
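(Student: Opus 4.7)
The plan is to apply Corollary \ref{coro:expand-rescaled-mcf-app} to $u = \tilde v_\tau + U$ viewed as a graph over the shrinker $\hat\Sigma \times \RR$. This yields
\[
v_{\Xi_\tau}\big[v_{\Xi_\tau}^{-1}\partial_\tau u - (H_{\Xi_\tau} + \tfrac{1}{2}\bx \cdot \nu_{\Xi_\tau})\big] = (\partial_\tau - L_{\hat\Sigma \times \RR})u + E(u),
\]
where $E$ is the quadratic error. Since $\tau \mapsto \Graph_{\hat\Sigma \times \RR} \tilde v_\tau$ is by hypothesis a rescaled mean curvature flow, the same identity applied to $\tilde v_\tau$ gives $(\partial_\tau - L)\tilde v_\tau = -E(\tilde v_\tau)$. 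Subtracting, and using $\partial_\tau U = 0$ and $v_{\Xi_\tau} > 0$, the claim reduces to proving
\[
-L_{\hat\Sigma \times \RR} U + \big(E(\tilde v_\tau + U) - E(\tilde v_\tau)\big) > 0 \quad \text{on } \hat\Sigma \times [\alpha, R].
\]

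For the linear part, using the product structure ($|A_{\hat\Sigma \times \RR}|^2 = |A_{\hat\Sigma}|^2$, $\Delta_{\hat\Sigma \times \RR} = \Delta_{\hat\Sigma} + \partial_z^2$, and $\bx^T = \bomega^T + z\be$) together with $L_{\hat\Sigma}\hat\varphi = -\hat\mu\hat\varphi$, a direct calculation yields
\[
L_{\hat\Sigma \times \RR}(\eta z^\alpha \hat\varphi) = -\Big(\hat\mu + \tfrac{\alpha}{2}\Big)\eta z^\alpha \hat\varphi + \alpha(\alpha-1)\eta z^{\alpha-2}\hat\varphi.
\]
Since $\alpha > 2|\hat\mu|$ and $\hat\mu < 0$, the coefficient $\hat\mu + \alpha/2$ is strictly positive, and on $z \geq \alpha$ the elementary estimate $\alpha(\alpha-1)z^{\alpha-2} \leq z^\alpha$ ensures the second term is dominated by the first. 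For $\alpha \geq \alpha_0$ large (depending on $\hat\mu$ and $\inf_{\hat\Sigma}\hat\varphi > 0$) one obtains
\[
-L_{\hat\Sigma \times \RR} U \geq c_1\, \alpha\, \eta z^\alpha \quad \text{on } \hat\Sigma \times [\alpha, R]
\]
for some $c_1 = c_1(\hat\Sigma) > 0$. For the error, I will invoke Lemma \ref{lemm:relative-shrinker-mean-curvature} (applied over $\hat\Sigma \times \RR$) to write
\[
E(\tilde v_\tau + U) - E(\tilde v_\tau) = U\, F + \nabla U \cdot \bF + \nabla^2 U \cdot \cF,
\]
with pointwise bounds $|F| + |\bF| + |\cF| \leq C\,(\|\tilde v_\tau\|_{C^2} + \|U\|_{C^2})$, because $E$ vanishes to second order at the origin. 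Direct differentiation of $U = \eta z^\alpha \hat\varphi$, combined with the bounds $\alpha z^{\alpha-1} \leq z^\alpha$ and $\alpha(\alpha-1)z^{\alpha-2} \leq z^\alpha$ valid for $z \geq \alpha$, gives $|U| + |\nabla U| + |\nabla^2 U| \leq C\,\eta z^\alpha$ uniformly in $\alpha$. Using $\eta z^\alpha \leq \eta R^\alpha = h_0$ and the decay hypothesis \eqref{eq:barrier-w-tau-decay-to-end-at-infty}, this yields
\[
|E(\tilde v_\tau + U) - E(\tilde v_\tau)| \leq C\, h_0\, \eta z^\alpha
\]
for all $\tau \leq T_1'$ with $T_1' \leq T_0$ sufficiently negative.

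Choosing $h_0$ small, then $\alpha_0$ large enough that $c_1 \alpha_0 > 2Ch_0$, then $T_1'$ sufficiently negative, the desired positivity follows by combining the two displayed bounds. The main difficulty is bookkeeping the $\alpha$-dependence of the various terms: the second-order contribution $\alpha(\alpha-1)\eta z^{\alpha-2}\hat\varphi$ in $L_{\hat\Sigma\times\RR} U$ has a coefficient quadratic in $\alpha$, and $z$-derivatives of $U$ carry extra factors of $\alpha$. Both are absorbed precisely because the $z$-interval begins at $z = \alpha$, giving $\alpha/z \leq 1$; this explains why the lower endpoint in the definition of $\Xi_\tau$ is $z = \alpha$ rather than $z = Z_0$, and why $\alpha_0$ must be taken large.
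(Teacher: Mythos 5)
Your proof is correct and follows essentially the same route as the paper's: both compute $-L_{\hat\Sigma\times\RR}(\eta z^\alpha\hat\varphi) = (\tfrac{\alpha}{2} - |\hat\mu| - \alpha(\alpha-1)z^{-2})\eta z^\alpha\hat\varphi$, observe that on $z\geq\alpha$ the quadratic-in-$\alpha$ correction is dominated, control the nonlinear error by $C(|U|+|\nabla U|+|\nabla^2 U|)\leq C\eta z^\alpha\hat\varphi$ via Lemma \ref{lemm:relative-shrinker-mean-curvature}, and conclude by taking $\alpha_0$ large. The only differences are presentational: you arrive at the relative identity by subtracting two applications of Corollary \ref{coro:expand-rescaled-mcf-app} rather than citing Lemma \ref{lemm:relative-shrinker-mean-curvature} directly, and you retain the smallness factor $\delta\lesssim h_0$ in the error bound, which is harmless but not needed since taking $\alpha_0$ large already dominates the $\alpha$-independent constant $C$.
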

\begin{proof}
We begin by observing that, 
\begin{equation}\label{eq:bendy-barrer-function-higher-derivatives}
|\nabla_{\hat\Sigma\times \RR}^k U|(\bomega,z) \leq C_k \eta z^\alpha \hat\varphi(\bomega)
\end{equation}
for $z \geq \alpha$ and $k\geq 0$, (recall that $|\hat \mu|\geq 1$). This will be used in several places below to control various error terms. 

Combining \eqref{eq:barrier-w-tau-decay-to-end-at-infty} and \eqref{eq:bendy-barrer-function-higher-derivatives} we see that Lemma \ref{lemm:relative-shrinker-mean-curvature} applies,\footnote{Here we just use the part of Lemma \ref{lemm:relative-shrinker-mean-curvature} estimating $F,\bF,\mathcal{F}$, not the second half of the lemma.} yielding
\begin{equation}\label{eq:application-relative-shrinker-mean-curvature-to-long-barrier}
v_{\Xi_{\tau}} (H_{\Xi_{\tau}} + \tfrac 12 \bx\cdot\nu_{\Xi_{\tau}}) - v_{\Graph \tilde v_{\tau}} (H_{\Graph \tilde v_{\tau}} + \tfrac 12 \bx\cdot\nu_{\Graph \tilde v_{\tau}}) = L_{\hat\Sigma\times \RR} U + E
\end{equation}
where
\begin{equation}\label{eq:application-relative-shrinker-mean-curvature-to-long-barrier-ERROR}
|E(\bx)| \leq C (|U(\bx)| + |\nabla_{\Sigma\times\RR} U(\bx)| + |\nabla^{2}_{\Sigma\times\RR} U(\bx)|)
\end{equation}
and $v_{\Graph \tilde v_{\tau}} = (\nu_{\Graph \tilde v_{\tau}}\cdot\nu_{\hat\Sigma})^{-1}$. Because $\tau \mapsto \Graph \tilde v_{\tau}$ is a rescaled mean curvature flow, we find
\begin{equation}\label{eq:long-barrier-w-tau-is-rescaled-MCF}
\partial_{\tau} \tilde v_{\tau} = v_{\Graph \tilde v_{\tau}} (H_{\Graph \tilde v_{\tau}} + \tfrac 12 \bx\cdot\nu_{\Graph \tilde v_{\tau}}), 
\end{equation}
so combining \eqref{eq:application-relative-shrinker-mean-curvature-to-long-barrier} and \eqref{eq:long-barrier-w-tau-is-rescaled-MCF} we find
\[
\partial_{\tau} \tilde v_{\tau} - v_{\Xi_{\tau}}  (H_{\Xi_{\tau}} + \tfrac 12 \bx\cdot\nu_{\Xi_{\tau}}) = - L_{\hat\Sigma\times \RR} U - E. 
\]
We now compute
\[
- L_{\hat\Sigma\times \RR}U =  \left( \tfrac \alpha 2 -  \alpha(\alpha - 1) z^{-2} - |\hat\mu| \right)\eta z^\alpha \hat\varphi(\bomega),
\]
so when combined with \eqref{eq:application-relative-shrinker-mean-curvature-to-long-barrier-ERROR} and \eqref{eq:bendy-barrer-function-higher-derivatives} we find
\[
- L_{\hat\Sigma\times \RR} U - E \geq \left( \tfrac \alpha 2 -  \alpha(\alpha - 1) z^{-2} - |\hat\mu| - C\right)\eta z^\alpha \hat\varphi(\bomega) 
\]
for $C$ independent of $z,\alpha,\tau$. Recalling that $z \geq \alpha$ on the region defining $\Xi_{\tau}$, we can thus take $\alpha$ sufficiently large to complete the proof. 
\end{proof}

\subsection{The proof of Theorem \ref{theo:barriers-inner-outer}}\label{subsec:proof-of-theo-barriers}

We continue with the setup of Theorem \ref{theo:barriers-inner-outer}.

We fix $\alpha_0,h_0>0$, $T_{1}'\leq T_{0}$ using Proposition \ref{prop:bendy-barrier-good-ends}. It follows that for any $\alpha \geq\alpha_0$ and $\eta \in (0,\alpha^{-\alpha}h_0)$, defining $R$ by $\eta R^\alpha = h_0$, the family of hypersurfaces 
\[
(-\infty,T_{1}']\ni\tau \mapsto \Xi_{\tau} : = \{(\bomega,z) + (\tilde v_{\tau}(\bomega,z) + U(\bomega,z)) \nu_{\hat \Sigma}(\bomega) : (\bomega,z) \in \hat \Sigma \times [\alpha,R]\} \subset \Omega. 
\]
is a supersolution to mean curvature flow. 

Define $\delta>0$ so that if $\hat \Sigma_\delta(t)$ is the rescaled mean curvature flow in $\hat\Omega\subset\RR^n$ with $\hat \Sigma_\delta(0) = \hat\Sigma_\delta$ (defined in \eqref{eq:move-hat-sigma-inside-by-delta-barriers}) then $[0,3] \ni t \mapsto \Sigma_\delta(t)$ is smooth and
\begin{equation}\label{eq:barr.defn.delta.small.welding}
\hat\Sigma_\delta(t) \cap \hat \Omega_{h_0} = \emptyset
\end{equation}
The existence of such a $\delta>0$ follows from the fact that as $\delta\to 0$, the flows $t\mapsto \hat \Sigma_\delta(t)$ converge smoothly (on compact subsets of space-time) to the static flow given by $\hat\Sigma$ (since $\hat \Sigma$ is a shrinker). We now fix $\alpha\geq \alpha_0$ so that 
\begin{equation}\label{eq:barr.defn.alpha.big.welding}
h_0 2^{-\alpha} < \delta/2
\end{equation}
and $\hat \Omega_\delta \times \{\alpha\} \subset \Omega$. 

Now recall the surfaces $\Gamma^{\lambda}_\delta$ as defined in Lemma \ref{lemm:translator-barrier}. We set
\begin{equation}
\Gamma : = \Gamma^{R/4}_\delta + \tfrac R 2 \be. 
\end{equation}
We choose the unit normal to $\Gamma$ so that $\nu_\Gamma \cdot \be > 0$. Note that the shrinker mean curvature of $\Gamma$ satisfies
\begin{align*}
\bH_\Gamma + \tfrac 12 \bx^\perp 
& = (\tfrac 12 z - \tfrac R 4 ) \be^\perp 
\end{align*}
so
\[
H_\Gamma + \tfrac 12 \bx \cdot \nu_\Gamma = (\tfrac 12 z - \tfrac R 4 ) \be \cdot \nu_\Gamma \geq 0.
\]
\begin{remark}
Below, we will choose a family of regions $Q_{\tau}$ so that parts of $\Xi_{\tau}$ and $\Gamma$ are contained in $\partial Q_{\tau}$. The unit normal that $\Xi_{\tau}$ inherits from its graphical nature will be \emph{outwards pointing} relative to $Q_{\tau}$. 
\end{remark}

There is a unique choice of open set $Q_\Gamma$ so that 
\begin{equation*}
\partial Q_\Gamma = ( (\Omega \cap \{z=\tfrac R 2\}) \setminus (\hat\Omega_\delta \times \{\tfrac R 2\}))
\cup (\Sigma \cap (B_{2\rho}(\bOh)\times [\tfrac R2 ,\infty))) \cup \Gamma 
\end{equation*}
and so that $Q_\Gamma$ is shrinker mean convex (to the inside) in the generalized sense allowing for the corners of $\partial Q_\Gamma$ (cf.\  \cite{MeeksYau}). Similarly, there is a unique choice of families of open sets $\tau \mapsto Q_{\Xi_{\tau}}$ so that 
\begin{multline*}
\partial Q_{\Xi_{\tau}} = \{(\bomega,z) + (\tilde v_{\tau}(\bomega,z)+sU(\bomega,z)) \nu_{\hat \Sigma}(\bomega) : \bomega \in \hat \Sigma, z \in \{\alpha,R\}, s \in [0,1]\} \\
\cup  (\Sigma \cap (B_{2\rho}(\bOh)\times [\alpha ,R]))  \cup \Xi_{\tau}.
\end{multline*}

We now consider the boundaries of $\Xi_{\tau},\Gamma$ at $z=R,z=R/2$ respectively and show that they do not affect the barrier argument if $\Xi,\Gamma$ are welded together appropriately. Define 
\[
\tau \mapsto Q_{\tau} = (Q_{\Xi_{\tau}} \cap (Q_\Gamma \cup \{z \leq \tfrac R2\})) \cup (Q_\Gamma \cap (Q_{\Xi_{\tau}} \cup \{z \geq R\})). 
\] 
This will be the fundamental barrier region. The following proposition will be used to verify that as long as the parameter $\eta$ is chosen small in the definition of $\Xi_{\tau}$ then $\partial Q_{\tau}$ will serve as a barrier for a rescaled mean curvature flow in $Q_{\tau}$. In the next lemma it is important to recall that $\nu_{\Xi_{\tau}}$ was chosen to be the graphical unit normal, so it points \emph{out} of $Q_{\tau}$.

\begin{proposition}[Geometric properties used to weld the translator to the long barrier]\label{prop:welding}
There is $\eta_{0} \in (0,\alpha^{-\alpha}h_0)$ sufficiently small, $T_1 \leq T_1'$ sufficiently negative, and $\gamma_{0} > 0$ so that if $\eta \in (0,\eta_0)$, $\tau \in (-\infty,T_{1}]$ and $\bx \in B_{\gamma_{0}}(\Xi_{\tau}) \cap Q_{\tau}$, writing $\by \in \Xi_{\tau}$ a closest point to $\bx$, then 
\begin{enumerate}
\item[(1a)] $\by \not \in \partial\Xi_{\tau} \cap \{z>\tfrac R 2 \}$, and
\item [(1b)] $(\by - \bx) \cdot  \nu_{\Xi_{\tau}}(\by) >  0$. 
\end{enumerate}
Similarly, if $\bx \in B_{\gamma_{0}}(\Gamma) \cap Q_{\tau}$ and $\by \in \Gamma$ is a closest point, then 
\begin{enumerate}
\item[(2a)] $\by \not \in \partial \Gamma$, and 
\item[(2b)] $(\by - \bx) \cdot \be < 0$. 
\end{enumerate}
\end{proposition}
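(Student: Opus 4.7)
The overall strategy is to exploit the explicit geometry of $\Xi_\tau$ and $\Gamma$ near the welding region. The key geometric facts are: (i) the top edge $\partial\Xi_\tau\cap\{z=R\}$ sits deeper inside $\hat\Omega$ than $\Gamma\cap\{z\in[R,5R/4]\}$ by a uniform positive margin (coming from \eqref{eq:barr.defn.delta.small.welding} plus the translator-to-MCF convergence in Lemma \ref{lemm:translator-barrier}); and (ii) over the strip $\{z\leq R/2+\gamma_0\}$, the graphical displacement $\tilde v_\tau+\eta z^\alpha\hat\varphi$ of $\Xi_\tau$ from $\hat\Sigma\times\RR$ is strictly less than $\tfrac{3}{4}\delta\hat\varphi$, in particular less than the displacement $\delta\hat\varphi$ of $\partial\Gamma=\hat\Sigma_\delta\times\{R/2\}$ (via \eqref{eq:barr.defn.alpha.big.welding}). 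Given these separations, straightforward distance comparisons force the closest-point projections to land away from the excluded boundary components.

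Concretely, I fix $d_0=d_0(\delta,h_0,\hat\Sigma)>0$ with $d(\hat\Sigma_\delta(t),\hat\Sigma_{h_0})\geq 2d_0$ for $t\in[0,3]$ via continuity and \eqref{eq:barr.defn.delta.small.welding}. Upgrading Lemma \ref{lemm:translator-barrier}'s Brakke convergence to smooth local convergence on the smooth region of $\hat\Sigma_\delta(\cdot)$ using Brakke regularity, and taking $\eta_0$ small so that $\lambda=R/4=\tfrac14(h_0/\eta)^{1/\alpha}$ is large, the slice $\Gamma\cap\{z'=z\}$ lies within Hausdorff distance $d_0/2$ of $\hat\Sigma_\delta(4(z-R/2)/R)\times\{z\}$ for $z\in[R/2,5R/4]$. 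Choosing $T_1\leq T_1'$ so that $\|\tilde v_\tau\|_{C^0}\leq d_0/4$, and writing $\hat\varphi_*:=\min_{\hat\Sigma}\hat\varphi>0$, I take $\gamma_0$ to be a small multiple of $\min\{d_0,\delta\hat\varphi_*,R\}$ (with the implicit constant absorbing the Lipschitz constants of $\hat\varphi$, $\tilde v_\tau$, and the exponent $\alpha$).

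For (1a)--(1b): given $\bx\in Q_\tau\cap B_{\gamma_0}(\Xi_\tau)$ with closest point $\by\in\Xi_\tau$, the case $z(\bx)\geq R$ is excluded because $\bx\in Q_\Gamma$ then lies on the $+\nu_\Gamma$-side of $\Gamma$ while any $\by'\in\partial\Xi_\tau\cap\{z=R\}$ lies within $d_0/4$ of $\hat\Sigma_{h_0}\times\{R\}$ and hence on the opposite side of $\Gamma$; the segment $\overline{\bx\by'}$ must then cross $\Gamma$, giving $|\bx-\by'|\geq 3d_0/2-d_0/4=5d_0/4>\gamma_0$. So $z(\bx)<R$, and tracing through the definition of $Q_\tau$ gives $\bx\in Q_{\Xi_\tau}$. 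For $\gamma_0$ below the uniform tubular neighborhood size of $\Xi_\tau$ (controlled by the $C^3$-bounds from \eqref{eq:barrier-w-tau-decay-to-end-at-infty}), $\by$ is the graphical normal projection, interior since $z(\by)\approx z(\bx)<R$ strictly; this is (1a). Since $Q_{\Xi_\tau}$ sits on the $-\nu_{\Xi_\tau}$-side of $\Xi_\tau$ by construction, (1b) is immediate. A mirror argument handles (2a)--(2b): if $z(\bx)\leq R/2+\gamma_0$ and $\bx\in Q_\tau$ then $\bx\in Q_{\Xi_\tau}$, so $\bx$'s displacement from $\hat\Sigma\times\RR$ at its cylindrical projection $\bomega_\bx$ is at most $\tfrac{3}{4}\delta\hat\varphi(\bomega_\bx)+o(1)$ by \eqref{eq:barr.defn.alpha.big.welding}; closeness to $\by\in\partial\Gamma=\hat\Sigma_\delta\times\{R/2\}$ (which sits at displacement exactly $\delta\hat\varphi$) forces displacement $\geq\delta\hat\varphi(\bomega_\bx)-C\gamma_0$, contradicting the prior bound once $\gamma_0\ll\delta\hat\varphi_*/C$. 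For $z(\bx)>R/2+\gamma_0$ the bottom edge is already at $z$-distance exceeding $\gamma_0$. This yields (2a) and also $z(\bx)>R/2$, putting $\bx\in Q_\Gamma$ on the $+\nu_\Gamma$-side; writing $\bx-\by=c\,\nu_\Gamma(\by)$ with $c>0$, (2b) follows from the normalization $\nu_\Gamma\cdot\be>0$.

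The main obstacle is the quantitative translator-to-MCF comparison: Lemma \ref{lemm:translator-barrier} provides only weak Brakke convergence of $\Gamma^\lambda_\delta(t)$ to $\partial^*\hat\Omega_\delta(t)\times\RR$ as $\lambda\to\infty$, while the above argument uses slice-wise Hausdorff closeness for $\lambda=R/4$ fixed but large. I would upgrade by noting that $\hat\Sigma_\delta(t)$ is smooth on $t\in[0,3]$ (being the rescaled flow starting at the strictly shrinker-mean-convex graph close to the static $\hat\Sigma$, with singularity-free evolution guaranteed by the defining margin in \eqref{eq:barr.defn.delta.small.welding}), invoking Brakke's local regularity to get smooth convergence on compact subsets of the smooth region, which specializes to Hausdorff convergence of slices. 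A minor subsidiary point is verifying that the graphical normal projection returns the honest closest point on $\Xi_\tau$, a standard tubular-neighborhood argument for $\gamma_0$ small relative to the $C^3$-bounds on $\Xi_\tau$ (uniform in $\tau\to-\infty$ via \eqref{eq:barrier-w-tau-decay-to-end-at-infty}).
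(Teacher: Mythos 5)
Your overall strategy is the same as the paper's: exploit the definite gap forced by \eqref{eq:barr.defn.delta.small.welding} between $\hat\Sigma_{h_0}$ (where the top edge of $\Xi_\tau$ lands) and $\hat\Omega\setminus\overline{\hat\Omega_\delta(2)}$ (where $Q_\tau$ near the weld lands), together with the $z=R/2$ separation from \eqref{eq:barr.defn.alpha.big.welding}. The difference is that the paper phrases this as a soft compactness statement (take $\eta\to0$, $T_1\to-\infty$, translate by $-R\be$, observe the limit sets are disjoint, then deduce the disjointness of $\gamma_0$-neighborhoods for $\eta$ small and $\tau$ negative), while you attempt explicit quantitative distance estimates with a fixed margin $d_0$. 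Both require the same upgrade of Lemma \ref{lemm:translator-barrier} from Brakke convergence to smooth local convergence, which you correctly identify and which the paper uses implicitly when it asserts $C^\infty_{\textrm{loc}}$ convergence of $\Gamma-R\be$.

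However, your case analysis for (1a) has a gap. You exclude $z(\bx)\geq R$ via the crossing argument, and for $z(\bx)<R$ you conclude $\by$ is interior ``since $z(\by)\approx z(\bx)<R$ strictly.'' This fails when $z(\bx)\in(R-\gamma_0,R)$: there $z(\by)$ can be as large as $z(\bx)+\gamma_0\geq R$, so $\by$ could lie on $\partial\Xi_\tau\cap\{z=R\}$ and nothing in the argument as written rules it out. The fix is to note that the crossing argument you invoke for $z(\bx)\geq R$ only used $\bx\in Q_\Gamma$, which actually holds for every $\bx\in Q_\tau$ with $z(\bx)>R/2$ (inspect the definition of $Q_\tau$); for $z(\bx)\leq R/2$ the boundary slice $\{z=R\}$ is at $z$-distance $\geq R/2\gg\gamma_0$, so the case split should be at $R/2$, not $R$. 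A similar wrinkle occurs for (2a)--(2b): your displacement argument shows $\by\notin\partial\Gamma$ when $z(\bx)\leq R/2+\gamma_0$, and from this you assert ``also $z(\bx)>R/2$,'' but this does not follow --- you need instead to show that for such $\bx$ one in fact has $\bx\notin B_{\gamma_0}(\Gamma)$ at all, so that (2a)--(2b) hold vacuously. This does follow from the same displacement bound (all of $\Gamma$, not just $\partial\Gamma$, sits at displacement $\geq\delta\hat\varphi-o(1)$, and points of $\Gamma$ with $z$ much larger than $R/2$ are excluded by $z$-distance), but it needs to be stated. The paper's compactness formulation handles all of $\bx\in Q_\tau$ uniformly at once (showing $B_{\gamma_0}(\partial\Xi_\tau\cap\{z>R/2\})\cap Q_\tau=\emptyset$ and $B_{\gamma_0}(\Gamma)\cap(Q_\tau\setminus Q_\Gamma)=\emptyset$ directly), avoiding this bookkeeping. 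Finally, be careful that you choose $\gamma_0$ independently of $\eta$ (as the statement requires): your phrase ``small multiple of $\min\{d_0,\delta\hat\varphi_*,R\}$'' is misleading since $R$ depends on $\eta$; the correct order is to fix $\gamma_0$ from $d_0,\delta,\hat\varphi_*$ alone and then shrink $\eta_0$ so that $R\gg\gamma_0$ and $R/\alpha\gg\gamma_0$.
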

\begin{proof}
Consider $\eta\to 0$ and $T_1 \to -\infty$ along some sequence (not labeled). The corresponding parameter $R\to \infty$. Hence $\partial \Xi_{\tau} -  R\be$ converges in $C^{\infty}_\textrm{loc}$ to $\hat\Sigma_{h_0} \times \{0\}$ uniformly with respect to $\tau \in (-\infty,T_1]$ (thanks to  \eqref{eq:barrier-w-tau-decay-to-end-at-infty}). On the other hand, recall that $\Gamma^\lambda_\delta - \lambda t \be$ converges weakly to the flow defined by $\partial^*\hat \Omega_\delta(t)\times \RR$ and $[0,3] \to \hat\Sigma_\delta(t)$ is smooth we find that 
\[
\Gamma - R\be = \Gamma^{R/2}_\delta - \tfrac R 2 2\be \to \partial^*\hat \Omega_\delta(2) \times \RR
\]
again in $C^{\infty}_{\textrm{loc}}$ (and $\Gamma$ is independent of $\tau$) so
\[
Q_{\tau} -  R\be \to ((\hat\Omega \setminus {\overline{ \hat \Omega_{\delta}(2)}}) \times \RR)
\]
in the local Hausdorff sense (along with $C^{\infty}_{\textrm{loc}}$ convergence of the boundary). By \eqref{eq:barr.defn.delta.small.welding} we can thus choose $T_1 \ll 0$, $\gamma_{0}>0$, and $\eta_{0}>0$ small so that
\[
B_{\gamma_{0}}(\partial\Xi_{\tau} \cap \{z>\tfrac R2 \}) \cap Q_{\tau} = \emptyset
\]
for all $\eta \in (0,\eta_0)$, $\tau \in (-\infty,T_{1}]$. This ensures (1a) holds. Similarly, we note that as $T_1\to-\infty$, $\eta \to 0$, $\Xi_{\tau} - \tfrac R 2 \be$ converges to $\hat\Sigma_{2^{-\alpha}h_0} \times \RR$ uniformly with respect to $\tau$. Thus, after taking $T_1\ll0$ more negative and $\gamma_{0}>0,\eta_{0}>0$ smaller, condition (2a) holds (thanks to \eqref{eq:barr.defn.alpha.big.welding}). 

We now consider condition (1b). Observe that if $\bx \in Q_{\Xi_{\tau}}$ then (1b) holds by the graphicality of $\Xi_{\tau}$ over (a portion of) $\hat\Sigma\times \RR$. Observe that
\[
Q_{\tau} \setminus  Q_{\Xi_{\tau}} = Q_{\Gamma} \cap \{z \geq R\}. 
\]
On the other hand, as $\eta\to 0$, $T_1\to-\infty$, we find that $B_{\gamma_{0}}(\Xi_{\tau}) - R\be$ converges to $B_{\gamma_{0}}(\hat\Sigma_{h_{0}} \times (-\infty,0])$ (uniformly in $\tau$) and $Q_{\Gamma} \cap \{z\geq R\} - R\be$ converges to $((\hat\Omega \setminus {\overline{\hat\Omega_{\delta}(2)}}) \times [0,\infty))$ (where both convergence statements hold in the local Hausdorff sense). These sets are disjoint by choice of $\gamma_{0}$ above, so we have verified (1b). 

We finally consider (2b). As above, $\be$-graphicality of $\Gamma$ over $\hat\Omega_{\delta} \times \{\tfrac R2\}$ allows us to consider only the case of $\bx \in B_{\gamma_{0}}(\Gamma) \cap (Q_{\tau} \setminus Q_{\Gamma})$. As before, we have already guaranteed this intersection is empty. This completes the proof.
\end{proof}

We can now complete the proof of the main barrier construction. 

\begin{proof}[Proof of Theorem \ref{theo:barriers-inner-outer}]
Fix $\eta_{0},\gamma_{0},T_1$ as in Proposition \ref{prop:welding}.

Suppose that for $T_{2}\leq T_{1}$, $(-\infty,T_{2}]\ni \tau\mapsto \cM(\tau)$ is a rescaled Brakke flow in $\Omega \cap \{z > Z_{1}\} \subset \RR^{n+1}$ so that for $\Xi_{\tau},Q_{\tau}$ defined based on some $\eta\in (0,\eta_{0})$, it holds that
\begin{enumerate}
\item for all $\tau \in (-\infty,T_{2}]$, it holds that
\begin{multline*}
\supp \cM(\tau) \cap (B_{2\rho}(\bOh) \times  (Z_{1},2Z_{1})) \subset\\
\{ (\bomega,z) + (\tilde v_{\tau}(\bomega,z) + s) \nu_{\hat\Sigma}(\bomega) : z \in [Z_{1},2Z_{1}], s \in (0,\eta z^{\alpha}) \} ,
\end{multline*}
\item there is some $r\geq 0$ so that 
\[
\supp \cM(\tau) \subset B_{r}(\bOh) \cup \{ tx : \bx \in \partial B_{1}(\bOh), d(\be,\bx) < \theta, t\geq 0\}
\]
for all $\tau \in (-\infty, T_{2}]$, 
and
\item $\supp \cM(\tau)$ converges in the local Hausdorff sense to $\Sigma \cap (B_{2\rho}(\bOh) \times [Z_{1},\infty))$ as $\tau \to -\infty$. 
\end{enumerate}
We claim that $\supp \cM(\tau) \subset Q_{\tau}$ for all $\tau \in (-\infty,T_{2}]$. Note that since $\eta^{-\kappa} \ll R$, it holds that
\begin{equation*}
Q_{\tau} \cap \{Z_{1}<z<\eta^{-\kappa}\} = 
\{ (\bomega,z) + (\tilde v_{\tau}(\bomega,z) + s) \nu_{\hat\Sigma}(\bomega) : z \in [Z_{1},\eta^{-\kappa}], s \in (0,\eta z^{\alpha}) \} ,
\end{equation*}
so this will suffice to prove Theorem \ref{theo:barriers-inner-outer}. 

To prove that $\supp \cM(\tau) \subset Q_{\tau}$ for all $\tau \in (-\infty,T_{2}]$ we can modify the argument in \cite[10.5]{Ilmanen:elliptic} as follows. First, by combining properties (2) and (3) with Lemma \ref{lemm:lin-grow-trans} (linear growth of translators) we find that there is some {maximal }$\tau_{2} \in (-\infty,T_{2}]$ so that $\supp \cM(\tau) \subset Q_{\tau}$ for all {$\tau \in (-\infty,\tau_{2})$} and moreover, if $\tau_{2} < T_{2}$ {there exists}
\[
\bx_{2} \in \supp\cM(\tau_{2}) \cap (\partial Q_{\tau_{2}} \cap \Omega \cap \{z > Z_{1}\}). 
\]
(Note that small spherical barriers rule out points of $\supp\cM(\tau_{2})$ lying in the interior of the complement of $Q_{\tau_{1}}$). Note that $\be \cdot \bx_{2} > 2Z_{1}$ by (1). 

It must hold that $\bx_{2} \in \Xi_{\tau_{2}}$ or $\Gamma$. In each case we reach a contradiction using a similar argument. We first consider $\bx_{2} \in \Xi_{\tau_{2}}$. Consider
\[
\phi(\bx,\tau) = \begin{cases}
(\gamma - d(\bx,\Xi_{\tau}))^{\beta} & d(\bx,\Xi_{\tau}) < \gamma \\
0 & d(\bx,\Xi_{\tau}) \geq \gamma 
\end{cases}
\]
for $\gamma \in (0,\gamma_{0})$ and $\beta>2$. Taking $\gamma \in (0,\gamma_{0})$ sufficiently small, we can arrange that for $\tau \in [\tau_{2}-1,\tau_{2}]$, $\supp \phi(\cdot,\tau) \cap \supp \cM(\tau) \cap \{z \in [Z_{1},2Z_{1}]\} = \emptyset$ by property (1). Thus, even though $\phi$ is not compactly supported in $\Omega$, it is an allowable test function in the Brakke flow condition for $\cM(\tau)$. Furthermore, (after passing to the non-rescaled flow) the calculations in \cite[10.5]{Ilmanen:elliptic} carry over essentially without change. The only potential problem is that $\Xi_{\tau}$ has nontrivial boundary and is a supersolution to (rescaled) mean curvature flow (see Proposition \ref{prop:bendy-barrier-good-ends}) as opposed to being a solution. To handle this, we can rely on Proposition \ref{prop:welding}. Indeed, for $\tau \in [\tau_{2}-1,\tau_{2}]$, at any point of 
\[
\bx \in \supp \phi(\cdot,\tau) \cap \supp\cM(\tau),
\]
the closest point $\by \in \Xi_{\tau}$ is not in $\partial\Xi_{\tau}$ and $\bx$ lies ``below'' $y$ with respect to $\nu_{\Xi_{\tau}}$ (in the sense of Proposition \ref{prop:welding}). Using this, we readily check that the remainder of the argument in \cite[10.5]{Ilmanen:elliptic} yields a contradiction. Similarly, when $\bx_{2} \in \Gamma$, we can use (2) and Lemma \ref{lemm:lin-grow-trans} to show $\Gamma$ and $\supp\cM(\tau)$ are well separated at infinity. The argument is then essentially the same as before (replacing $\Xi_{\tau}$ by $\Gamma$ in the definition of $\phi$). This completes the proof. 
\end{proof}


\section{Uniqueness in the ancient past}\label{sec:estimates-ancient-general}
The goal of this section is to show that if we are given $(\Sigma,\Omega)\in\cS_{n}''$ and an arbitrary one-sided ancient Brakke flow converging to $\Sigma$ with multiplicity one as $\tau \to -\infty$, then the Brakke flow satisfies the hypothesis of Theorem \ref{prop:cond.unique.1}, thus proving an unconditional uniqueness result for such flows. 

Because certain parts of the argument below hold without the nice, stable end hypothesis on $\Sigma$, we will specify the specific situation considered at the beginning of each subsection. However, we will never weaken a hypothesis in later sections after imposing it earlier, so any result derived earlier in the section will always be applicable.

\subsection{Nearly sharp upper bound in the compact region} In this section consider a smooth shrinker $(\Sigma, \Omega) \in \cS_{n}$ (with no condition at infinity). Consider $(\check \cM(\tau))_{\tau <T}$ an ancient (integral unit regular) rescaled Brakke flow with $(\supp \check \cM)_{\tau} \subset \Omega$ for all $\tau <T$. Assume that $\check \cM(\tau)$ limits smoothly to $\Sigma$ with multiplicity one as $\tau\to -\infty$. 

Choose $\Sigma_{0} \subset \Sigma$ compact domain with smooth boundary. Let $\mu_{0}$ denote the first Dirichlet eigenvalue of $L_{\Sigma}$ on $\Sigma_{0}$. Note that $\inf \mu_{0} = \mu \leq -1$, where $\inf \mu_{0}$ denotes the the infimum of $\mu_0$ over all such $\Sigma_{0}$ and $\mu$ is the first eigenvalue of $L_{\Sigma}$ on the entire $\Sigma$.

By time translation we can assume that $\check \cM(\tau)$ is the normal graph of a positive function $\check u : \Sigma' \times (-\infty,0] \to\RR$ for some $\Sigma'\Supset \Sigma_{0}$ with $\Sigma ' \supset \Sigma \cap B_{2R}(\bOh)$, where $R$ is sufficiently large depending on $\Sigma$. To be precise, $R$ should satisfy \eqref{eq:app_Li-Yau.2}, \eqref{eq:app_Li-Yau.2.1}, and \eqref{eq:app_Li-Yau.3}.  

Our goal is to prove the following nearly sharp upper bound for the graphical function. We expect this result and the method of proof to be useful in other contexts. 
\begin{theorem}[Nearly sharp upper bound] \label{thm:interior decay}
There are $C>0, \bar{\tau}<0$ depending on $\Sigma,\Sigma_{0},R,\check \cM$ satisfying
\[
\check u(\bx,\tau) \leq C e^{-\mu_{0} \tau }
\]
for $(\bx,\tau) \in \Sigma_{0} \times (-\infty, \bar{\tau})$. 
\end{theorem}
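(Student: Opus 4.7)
The plan is a sliding-barrier argument by contradiction, constructed around the ansatz
\begin{equation*}
w_a(\bx,\tau) := a\,e^{-\mu_0\tau}\bigl(1 - aMe^{-\mu_0\tau}\bigr)\varphi_0(\bx),
\end{equation*}
where $\varphi_0>0$ denotes the first Dirichlet eigenfunction of $L$ on $\Sigma_0$ (so $L\varphi_0 = -\mu_0\varphi_0$ with $\varphi_0|_{\partial\Sigma_0}=0$). A direct computation gives $(\partial_\tau - L)w_a = -|\mu_0|\,a^2Me^{-2\mu_0\tau}\varphi_0$, providing a strictly negative cushion of size $\sim a^2 e^{-2\mu_0\tau}\varphi_0$ meant to absorb the quadratic error in the nonlinear graph equation $(\partial_\tau-L)\check u = E(\check u,\nabla\check u,\nabla^2\check u)$ of Appendix~\ref{app:graph-shrinker}. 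On any strict subdomain $\Sigma_0'\Subset\Sigma_0^\circ$, where $\varphi_0\ge c(\Sigma_0')>0$ and $|\nabla^k\varphi_0|\le C_k(\Sigma_0')$, one may then pick $M=M(\Sigma_0')$ large and $\tau$ sufficiently negative to make $w_a$ a genuine subsolution of the $\check u$-equation on $\Sigma_0'$. A crucial observation is that the time-envelope of $w_a$, namely $\max_\tau w_a(\bx,\tau)=\varphi_0(\bx)/(4M)$, is independent of $a$ and is achieved at $\tau^\ast_a := -|\mu_0|^{-1}\log(2aM)\to-\infty$ as $a\to\infty$.

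Suppose the theorem fails. Then there exist $\tau_j\to-\infty$ and $\bx_j\in\Sigma_0$ with $\check u(\bx_j,\tau_j)\,e^{\mu_0\tau_j}\to\infty$. Since $\check u(\cdot,\tau)\to 0$ smoothly, the $\check u$-equation is uniformly parabolic for $\tau\ll 0$, so the classical parabolic Harnack inequality spreads this pointwise lower bound into a large lower bound for $\check u(\cdot,\tau_j+1)$ on a fixed $\Sigma_0'\Subset\Sigma_0^\circ$; this lets me choose $a_j\to\infty$ so that $w_{a_j}(\cdot,\tau_j+1)\le\check u(\cdot,\tau_j+1)$ strictly on $\Sigma_0'$. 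If this ordering can be propagated forward in $\tau$ up to time $\tau^\ast_{a_j}$, then evaluating at $\tau^\ast_{a_j}$ yields $\check u(\cdot,\tau^\ast_{a_j})\ge \varphi_0/(4M)$ on $\Sigma_0'$ — a fixed positive lower bound at a sequence of times $\tau^\ast_{a_j}\to-\infty$ — contradicting the smooth convergence $\check u\to 0$.

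The main obstacle is the forward-sliding step, because $w_a$ fails to be a subsolution near $\partial\Sigma_0$: the cushion $\sim \varphi_0$ vanishes linearly by Hopf, whereas the error terms $|\nabla w_a|^2\sim|\nabla\varphi_0|^2$ remain bounded below, so the comparison principle is only available on the strict subdomain $\Sigma_0'$. Initial contact at $\tau=\tau_j+1$ is ruled out by strict construction and interior contact by the subsolution property, but first contact could \emph{a priori} occur on the lateral boundary $\partial\Sigma_0'\times(\tau_j+1,\tau^\ast_{a_j}]$, where $w_a>0$ and $\check u$ has no imposed boundary condition. Precluding this lateral contact is the heart of the argument and the place where a new estimate is needed: at a hypothetical first contact $(\by,\tau)$, the Hopf boundary point lemma applied to $\varphi_0$ on $\Sigma_0$ gives the linear lower bound $\varphi_0(\bx)\ge c_1\,d(\bx,\partial\Sigma_0)$, so $w_{a_j}(\bx,\tau)$ decays \emph{linearly} as $\bx$ moves from $\by$ further toward $\partial\Sigma_0$, whereas the Li--Yau type differential Harnack estimate for positive solutions of the nonlinear graph equation — which is precisely the content of Appendix~\ref{app:graph-shrinker-pos} — delivers $|\nabla\log\check u|\le C$ on compact subsets of $\Sigma'$ for $\tau\ll 0$, so $\check u$ decays at most exponentially-of-order-one in this direction. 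Choosing $\Sigma_0'$ appropriately (e.g.\ as a sublevel complement $\{\varphi_0>\delta\}$ with $\delta$ small enough that the Hopf linear decay rate $c_1/\varphi_0(\by)$ dominates the Li--Yau constant $C$) then yields points $\bx$ just outside $\Sigma_0'$ with $\check u(\bx,\tau)>w_{a_j}(\bx,\tau)$, contradicting $(\by,\tau)$ being a first contact. This Hopf-vs.-Li--Yau mechanism is the key technical step particular to the nonlinear setting; the remaining pieces (the computation of $(\partial_\tau-L)w_a$, the Harnack initialization, and the comparison principle on $\Sigma_0'$) are largely classical, and once the lateral exclusion is in place the contradiction is reached, proving the theorem with $C$ depending on $\Sigma,\Sigma_0,R,\check\cM$.
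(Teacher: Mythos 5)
Your strategy is essentially the paper's: the same sliding subsolution $w_a = a e^{-\mu_0\tau}(1 - aMe^{-\mu_0\tau})\varphi_0$, the same Harnack initialization, the same envelope contradiction as $a\to\infty$, and the same recognition that the boundary must be handled by playing the Hopf lemma (which forces $|\nabla\log\varphi_0|\to\infty$ toward $\partial\Sigma_0$) against a Li--Yau-type differential Harnack inequality for $\check u$. But your execution of the boundary step has two genuine gaps. First, what Proposition \ref{prop:Li-Yau} (Appendix \ref{app:graph-shrinker-pos}) gives is the \emph{space-time} inequality $|\nabla\log\check u|^2 \leq C_2 + 2\,\partial_\tau\log\check u$, not the pure gradient bound $|\nabla\log\check u|\leq C$ that you quote; the $\partial_\tau$ term cannot be discarded, so your claim that $\check u$ ``decays at most exponentially-of-order-one'' toward $\partial\Sigma_0$ does not follow. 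Second, even granting that decay comparison, showing $\check u > w_{a_j}$ at points \emph{outside} $\Sigma_0'$ is logically irrelevant to excluding a first contact \emph{on} $\partial\Sigma_0'$: your comparison lives in $\Sigma_0'$, and nothing prevents the ordering from being violated there first while the exterior ordering holds.

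The paper avoids both problems by comparing on all of $\Sigma_0$, where the barrier $v_-^a = a e^{-\mu_0\tau}(1 - aMe^{-\mu_0\tau})\varphi_0$ vanishes on $\partial\Sigma_0$ while $\check u > 0$; thus any first contact is automatically interior, so at the contact point $\nabla\check u = \nabla v_-^a$ and $\partial_\tau\check u \leq \partial_\tau v_-^a$. If the contact lies in a thin collar $(\partial\Sigma_0)_{2\delta}\cap\Sigma_0$ --- exactly where the barrier fails to be a subsolution --- these contact relations yield $|\nabla\log\check u|^2 - 2\partial_\tau\log\check u \geq |\nabla\log v_-^a|^2 - 2\partial_\tau\log v_-^a > C_2$, contradicting the Li--Yau inequality; contact away from $\partial\Sigma_0$ is ruled out by the subsolution property. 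In short: you have the right mechanism, but to make it work you should drop the strict subdomain $\Sigma_0'$, keep the barrier vanishing at $\partial\Sigma_0$ so there is no lateral contact at all, and invoke the Li--Yau estimate in its genuine space-time form together with the interior-contact conditions.
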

Denote by $\varphi_{0}>0$ the first Dirichlet eigenfunction of $L_{\Sigma}$ on $\Sigma_{0}$ with $\Vert \varphi_{0}\Vert_{\infty} = 1$. Note that the Hopf boundary lemma implies that 
\begin{equation}\label{eq:Hopf_lemma}
\inf_{\partial\Sigma_{0}} |\nabla \varphi_{0}| : = \delta_{0} > 0.
\end{equation}
Consider $\Sigma^a_{-}(\tau)$ defined to be the normal graph of
\[
v_{-}^a(\bx,\tau)=   ae^{-\mu_{0}\tau} (1 - aM  e^{-\mu_{0}\tau}) \varphi_{0}(\bx)
\]
over $\Sigma_0$, where  $M=M(\Sigma_{0})$ is the  constant given in the next proposition.
\begin{proposition}[Barrier verification]\label{prop:lower.barrier.compact.bc}
There are constants $\eta = \eta(\Sigma_{0}) \in (0,\tfrac 12)$ and $M=M(\Sigma_{0})\geq 1$ with the following significance. Suppose $a>0$ and $ \tau_0\leq 0$ are chosen to satisfy 
\begin{enumerate}
\item $a M e^{-\mu_{0}\tau_{0}} \leq \eta$, and
\item $v_{-}^a(\bx,\tau_{0}) < \check u(\bx,\tau_{0})$ for $\bx \in \Sigma_{0}$.
\end{enumerate}
Then, for $\tau_0 \leq \tau \leq 0$ with $a M e^{-\mu_{0}\tau} \leq \eta$, it holds that $v_{-}(\bx,\tau) \leq \check u(\bx,\tau)$ for $\bx \in \Sigma_{0}$.
\end{proposition}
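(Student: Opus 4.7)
The plan is a first-contact argument, with one complication: $v_-^a$ is a strict subsolution of the rescaled mean curvature flow equation only away from $\partial \Sigma_0$, since the quadratic-in-gradient piece of the error term cannot be absorbed where $\varphi_0$ vanishes but $|\nabla\varphi_0|$ does not (by Hopf, \eqref{eq:Hopf_lemma}). At the linear level, using $L\varphi_0=-\mu_0\varphi_0$ and setting $w:=ae^{-\mu_0\tau}$, a direct computation gives
\[
(\partial_\tau-L)v_-^a \;=\; \mu_0 M w^2 \varphi_0 \;=\; -|\mu_0|Mw^2\varphi_0.
\]
Writing the rescaled MCF equation for a graph $u$ over $\Sigma$ as $\partial_\tau u = L u + E_u$ via Corollary~\ref{coro:expand-rescaled-mcf-app} and Lemma~\ref{lemm:relative-shrinker-mean-curvature}, one has
\[
|E_{v_-^a}| \;\leq\; C\bigl(|v_-^a|^2+|v_-^a|\,|\nabla v_-^a|+|\nabla v_-^a|^2+|v_-^a|\,|\nabla^2 v_-^a|\bigr)
\]
whenever $Mw\leq \eta$ is small. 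Substituting $v_-^a=w(1-Mw)\varphi_0$, every term on the right is controlled by $C(\Sigma_0)\,w^2\bigl(\varphi_0+|\nabla\varphi_0|^2\bigr)$. Let $\bar K$ be the constant from the differential Harnack (Li--Yau) estimate of Appendix~\ref{app:graph-shrinker-pos}, and set $G:=\{\bx\in\Sigma_0:|\nabla\varphi_0(\bx)|^2\leq 2\bar K\varphi_0(\bx)^2\}$. On $G$ the bound above is $\leq C'(\Sigma_0)\,w^2\varphi_0$, so for $M$ large (depending on $\Sigma_0,\bar K$) and $\eta$ small,
\[
(\partial_\tau-L)v_-^a+|E_{v_-^a}| \;\leq\; -\tfrac12|\mu_0|Mw^2\varphi_0 \,<\, 0,
\]
i.e.\ $v_-^a$ is a strict subsolution of $\partial_\tau u = Lu+E_u$ on $G$. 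By Hopf, $\Sigma_0\setminus G$ is a neighborhood of $\partial\Sigma_0$, so $G$ is a closed subset of $\mathrm{int}(\Sigma_0)$.

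Assume for contradiction that the conclusion fails, and let $\tau_*$ be the infimum of times $\tau\in(\tau_0,0]$ with $aMe^{-\mu_0\tau}\leq\eta$ at which $v_-^a(\cdot,\tau)\leq \check u(\cdot,\tau)$ ceases to hold on $\Sigma_0$. By continuity and hypothesis~(2), $\tau_*>\tau_0$, and there is a point $\bx_*\in\Sigma_0$ with $\check u(\bx_*,\tau_*)=v_-^a(\bx_*,\tau_*)$ and $\check u\geq v_-^a$ on $\Sigma_0\times[\tau_0,\tau_*]$. Since $v_-^a=0<\check u$ on $\partial\Sigma_0$, the point $\bx_*$ lies in $\mathrm{int}(\Sigma_0)$ and is a local minimum of $\check u-v_-^a$ at time $\tau_*$; in particular $\nabla \check u(\bx_*,\tau_*)=\nabla v_-^a(\bx_*,\tau_*)$. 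The Li--Yau estimate $|\nabla \check u|^2\leq \bar K\,\check u^2$ from Appendix~\ref{app:graph-shrinker-pos} then yields $|\nabla v_-^a(\bx_*,\tau_*)|^2\leq \bar K\,v_-^a(\bx_*,\tau_*)^2$, which by the explicit formula for $v_-^a$ reduces to $|\nabla\varphi_0(\bx_*)|^2\leq \bar K\varphi_0(\bx_*)^2$. Hence $\bx_*\in G$.

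It remains to run the strong maximum principle for $W:=\check u-v_-^a$ on a neighborhood $G'\Subset G$ of $\bx_*$. Subtracting the equations for $\check u$ and $v_-^a$ and linearizing the nonlinear error in the small parameters $v_-^a,\nabla v_-^a$, the difference $W\geq 0$ on $\Sigma_0\times[\tau_0,\tau_*]$ satisfies a linear parabolic inequality with bounded coefficients, uniformly elliptic principal symbol (taking $\eta$ small), and strict positive source $\tfrac12|\mu_0|Mw^2\varphi_0$ on $G$. Since $\partial G'\subset\mathrm{int}(\Sigma_0)$ and $W\geq0$ on the parabolic boundary of $G'\times[\tau_0,\tau_*]$, the strict positive source rules out the interior zero $W(\bx_*,\tau_*)=0$, a contradiction. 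The main obstacle is the failure of $v_-^a$ to be a subsolution near $\partial\Sigma_0$, caused by Hopf forcing $|\nabla v_-^a|^2$ to remain large there; the positivity-based Li--Yau estimate of Appendix~\ref{app:graph-shrinker-pos} is precisely what forces the contact point into $G$ and completes the argument.
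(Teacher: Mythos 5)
Your overall strategy mirrors the paper's: verify a strict subsolution inequality for $v_-^a$ away from $\partial\Sigma_0$ (where the quadratic-in-$\nabla\varphi_0$ error can be absorbed because $\varphi_0$ is bounded below), and rule out first contact near $\partial\Sigma_0$ by playing the Hopf boundary point lemma against the Li--Yau estimate of Appendix~\ref{app:graph-shrinker-pos}. The way you parameterize the "good" region $G$ and invoke a strong maximum principle there is a harmless repackaging of the paper's pointwise first-contact argument, and the error-term bound you use to fix $M$ is the right one.

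However, there is a genuine gap in the boundary step. You invoke a Li--Yau estimate of the static form $|\nabla \check u|^2 \leq \bar K\,\check u^2$, but no such unconditional gradient bound is proven in Appendix~\ref{app:graph-shrinker-pos}. Proposition~\ref{prop:Li-Yau} is a \emph{parabolic differential Harnack}: $|\nabla\log\check u|^2 \leq 20nR^2 + 2\,\partial_\tau\log\check u$. Since $\check u$ is expected to grow like $e^{-\mu_0\tau}$, $\partial_\tau\log\check u$ is genuinely nonzero and the inequality does not, by itself, give any pointwise control of $|\nabla\log\check u|$. Your argument therefore does not (as written) force the contact point $\bx_*$ into $G$. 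To close the gap you must use the third first-contact constraint, $\partial_\tau\check u(\bx_*,\tau_*)\leq\partial_\tau v_-^a(\bx_*,\tau_*)$, which you set up but never invoke. Combined with $\check u=v_-^a$ at the contact point, this gives $\partial_\tau\log\check u\leq\partial_\tau\log v_-^a = -\mu_0\bigl(1-\tfrac{Mw}{1-Mw}\bigr)\leq|\mu_0|$ whenever $aMe^{-\mu_0\tau}\leq\eta<\tfrac12$. Substituting this into the differential Harnack yields $\varphi_0^{-2}|\nabla\varphi_0(\bx_*)|^2 = |\nabla\log\check u(\bx_*,\tau_*)|^2 \leq 20nR^2 + 2|\mu_0|$, which by Hopf and $\varphi_0\to 0$ at $\partial\Sigma_0$ does force $\bx_*$ into a fixed compact interior set; your proof then proceeds (with the set $G$ redefined by the threshold $20nR^2+2|\mu_0|$ rather than $2\bar K$). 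This is exactly the combination the paper works with by comparing $|\nabla\log\check u|^2 - 2\partial_\tau\log\check u$ directly at the contact point, and without it the near-boundary exclusion fails.
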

\begin{proof}
Let $C_{2} = 20nR^2$ denote the constant in Proposition \ref{prop:Li-Yau}. We claim there is $\delta = \delta(\Sigma,\Sigma_{0},\Sigma')$ so that
\begin{equation}\label{eq:interior-decay.1}
|\nabla \log v_{-}^a|^{2} - 2 \partial_{\tau} \log v_{-}^a > C_{2}
\end{equation}
on $(\partial\Sigma_{0})_{2\delta} \cap \Sigma_{0}$, where $(\partial\Sigma_{0})_{2\delta}$ denotes the $2\delta$-neighborhood of $\partial \Sigma_0$, for all times $\tau$ satisfying $a M e^{-\mu_{0}\tau} \leq \eta$. Indeed, remembering  $a M e^{-\mu_{0}\tau} \leq \eta < \tfrac 12$, we can compute
\[
|\nabla \log v_{-}^a|^{2} - 2 \partial_{\tau} \log v_{-}^a = \varphi_{0}^{-2} |\nabla \varphi_{0}|^{2} + 2\mu_{0} \left( 1 - \frac{a M e^{-\mu_{0}\tau}}{1- a M e^{-\mu_{0}\tau}} \right)>\varphi_{0}^{-2} |\nabla \varphi_{0}|^{2}.
\]
In addition, $|\nabla \varphi_{0}|$ is bounded away from zero at $\partial\Sigma_{0}$ by the Hopf boundary point lemma \eqref{eq:Hopf_lemma}, but $\varphi_{0}\to 0$ as $\bx \to \partial\Sigma_0$. Thus, we can take $\delta$ small enough (independent of $M$) so that \eqref{eq:interior-decay.1} holds. This fixes $\delta$. 

Next, we can observe
\[
(\partial_{\tau} - L) v_{-}^a =  a^2 M \mu_{0} e^{-2\mu_{0}\tau} \varphi_{0}.
\]
Thus, by Corollary \ref{coro:expand-rescaled-mcf-app}, the graph $\Sigma^a_-(\tau)$ satisfies
\[
(\nu_{-}^a\cdot\nu_{\Sigma})^{-1}(\partial_{\tau}\bx  - \bH - \tfrac 12 \bx )\cdot\nu_{-}^a  = (\partial_{\tau} - L) v_{-}^a  + E =  a^2 M e^{-2\mu_{0}\tau}\mu_{0} \varphi_{0} + E,
\]
where $\nu_{-}^a$ is a unit normal vector to $\Sigma^a_-(\tau)$ and
\[
|E(\bx)| \leq C a^2 e^{-2\mu_{0}\tau}(1 + a^2 M^{2}e^{-2\mu_{0}\tau}) (\varphi_{0}(\bx)^{2} + |\nabla \varphi_{0}(\bx)|^{2} + |\nabla^{2} \varphi_{0}(\bx)|^{2})
\]
as long as $v_{-}^a \leq \eta' $ for small $\eta' = \eta'(\Sigma_{0})$ and large $C=C(\Sigma_0)$. Hence, there is $N=N(\Sigma_0)$ so that 
\[
|E(\bx)| \leq N  a^2 e^{-2\mu_{0}\tau}(1 + a^2 M^{2}e^{-2\mu_{0}\tau}).
\]
Therefore,
\[
(\nu_{-}^a\cdot\nu_{\Sigma})^{-1}(\partial_{\tau}\bx  - \bH - \tfrac 12 \bx )\cdot\nu_{-}^a\leq ( \mu_{0}   M  \varphi_{0} + N   + N a^2 M^{2} e^{-2\mu_{0}\tau})a^2 e^{-2\mu_{0}\tau}.
\]
We now fix $M=M(\Sigma_{0})\geq 1$ by the requirement that\footnote{Note that we make essential use of the fact that $\delta>0$ here, since $\varphi_{0}\to 0$ as  $\bx\to \partial\Sigma_{0}$. } 
\[
 \mu_{0} M \varphi_{0} + N \leq -1
\]
on $\Sigma_{0}\setminus (\partial\Sigma_{0})_{\delta}$. Finally we fix $\eta = \eta(\Sigma_{0})$ by 
\[
\eta = \min\{ N^{-\frac12}, \tfrac{1}{2}, \eta'\}.
\]
Note that $ a M e^{-\mu_{0}\tau} \leq \eta$  and $M\geq 1$ imply that $v_{-}^a\leq \eta \leq \eta'$, so the above estimates are valid. Combining the above inequalities yields
\begin{equation}\label{eq:interior-decay.2}
(\nu_{-}^a\cdot\nu_{\Sigma})^{-1}(\partial_{\tau}\bx  - \bH - \tfrac 12 \bx )\cdot\nu_{-}^a=\partial_{\tau} - L) v_{-}^a  + E\leq (-1  + N\eta^2)a^2 e^{-2\mu_{0}\tau}\leq 0
\end{equation}
over $\Sigma_{0}\setminus (\partial\Sigma_{0})_\delta$. 

Now consider $(\bx,\tau)$ the first time of contact with $a M e^{-\lambda_{0}\tau} \leq \eta$ towards a contradiction. We have
\[
\check u = v_{-}^a, \qquad \nabla \check u = \nabla v_{-}^a, \qquad \partial_{\tau} \check u \leq \partial_{\tau} v_{-}^a
\]
at $(\bx,\tau)$. This implies that  $\bx \in \Sigma_{0}\setminus (\partial\Sigma_{0})_{\delta}$. Indeed, if $\bx \in \Sigma_{0} \cap (\partial\Sigma_{0})_{2\delta}$, then thanks to \eqref{eq:interior-decay.1} we find that 
\[
|\nabla \log \check u|^{2} - 2 \partial_{\tau} \log \check u \geq |\nabla \log v_{-}^a|^{2} - 2 \partial_{\tau} \log v_{-}^a > C_{2}
\]
at $(\bx,\tau)$. This contradicts Proposition \ref{prop:Li-Yau}. 

On the other hand, by \eqref{eq:interior-decay.2}, we see that at the point of contact $\bx \in \Sigma_{0}\setminus (\partial\Sigma_{0})_{\delta}$, the graph of $v_{-}^a$ is a subsolution. This is a contradiction. 
\end{proof}

\begin{proof}[Proof of Theorem \ref{thm:interior decay}]
We first note that by the parabolic Harnack inequality it suffices to prove that there is $C>0,\tau_{0}<0$ so that
\[
\inf_{\bx \in \Sigma_{0}} \check u(\bx,\tau) \leq C e^{-\mu_{0} \tau }
\]
for $\tau \leq \tau_{0}$. 

Assume this fails. Then, there are $\tau_{k}\to-\infty,C_{k}\to\infty$ so that
\begin{equation}\label{eq:interior-decay.3}
\inf_{\bx \in\Sigma_{0}} \check u(\bx,\tau_{k}) \geq C_{k} e^{-\mu_{0} \tau_{k}}.
\end{equation}
Set
\[
a_{k} :=\min\{ \tfrac 12  C_{k} ,  \eta M^{-1} e^{\mu_0\tau_k}\} \to \infty,
\]
where $C_{k}$ is defined as in \eqref{eq:interior-decay.3} and $M,\eta$ are fixed in Proposition \ref{prop:lower.barrier.compact.bc}.  Consider $v_{k}(\bx,\tau): = v_{-}^{a_k}(\bx,\tau)$. Note that $v_{k}(\bx,\tau_k) \leq \tfrac 12 C_{k} e^{-\mu_{0}\tau_{k}} < \inf_{\bx\in\Sigma_{0}} \check u(\bx,\tau_{k})$ by \eqref{eq:interior-decay.3}. Using Proposition \ref{prop:lower.barrier.compact.bc}, we thus find that $v_{k}(\bx,\tau) < u(\bx,\tau)$ as long as $\tau_k \leq \tau \leq 0$ and $a_{k} M e^{-\mu_{0}\tau} \leq \eta$. Note that
\[
a_{k} M e^{-\mu_{0}\tau} \leq \eta \qquad  \Leftrightarrow \qquad \tau \leq \tfrac{1}{\mu_{0}} (\log M + \log a_{k} - \log \eta). 
\]
Set
\[
\tilde \tau_{k} = \tfrac{1}{\mu_{0}} (\log M + \log a _{k} - \log \eta) \to -\infty,
\]
namely $a_k M e^{-\mu_0\tilde \tau_k} = \eta$. Note that $\tilde\tau_k \geq \tau_k$ by definition of $a_k$.

As explained above, Proposition \ref{prop:lower.barrier.compact.bc} thus yields
\[
M^{-1} \eta (1-  \eta) \varphi_0(\bx) = v_{k}(\bx,\tilde\tau_{k}) \leq \check u(\bx,\tilde\tau_k) 
\]
On the other hand, since $\tau_k\to-\infty$, we have
\[
\sup_{\bx\in\Sigma_0} \check u(\bx,\tilde\tau_k)  \to 0
\]
as $k\to\infty$. This is a contradiction, completing the proof. 
\end{proof}

\subsection{Spatial sub-linear growth of ancient solutions} 
In this section  we consider $(\Sigma,\Omega)\in \cS_n'$. We will prove that an ancient one-sided Brakke flow has a weak sublinear growth property along the ends of $\Sigma$. (Compare with the stronger conclusion \cite[Lemma 7.18]{CCMS:generic1} in the conical case.)

By Lemma \ref{lem:ends-decomposition}, we can fix $R>0$ sufficiently large so that $\Sigma \setminus B_R(\bOh) = \Sigma_\textrm{cyl}\cup\Sigma_\textrm{con}$. For $\vartheta$ small, set
\[
U_\textrm{con}^{\vartheta} : = \{ t \bx : \bx \in \partial B_1(\bOh), d(\bx, \sqrt{0} \Sigma_\textrm{con})<\vartheta, t\geq 0 \}
\]
and
\[
U_\textrm{cyl}^{\vartheta} : = \cup_{j=1}^{N_\textrm{cyl}} \{ t\bx : \bx \in \partial B_1(\bOh), d(\bx, \bx_j)<\vartheta, t\geq 0\}
\]
where $\bx_j$ are the asymptotic directions of the cylindrical ends.  

\begin{proposition}[Sublinear growth estimates]\label{prop:sublinear-growth}
Fix an ancient unit regular integral Brakke flow $\check \cM(t)$ so that $(\supp \check \cM)_t \subset \sqrt{-t}\Omega$ for all $t<0$ and $\Theta(\check\cM,-\infty)< \infty$. Then, for any $\vartheta>0$, there is $R = R(\vartheta) \geq R_0$ so that for $t \leq -1$,
\[
\frac{1}{\sqrt{-t}} (\supp\check\cM)_t \subset B_R(\bOh) \cup U_\textnormal{con}^{\vartheta} \cup U_\textnormal{cyl}^{\vartheta}. 
\]
\end{proposition}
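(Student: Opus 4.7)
The plan is to argue by contradiction via a parabolic blow-down at the origin, combined with the Frankel property for self-shrinkers. Suppose the conclusion fails for some $\vartheta>0$. Then there exist sequences $t_{j} \leq -1$ and $\bx_{j} \in (\supp\check\cM)_{t_{j}}$ with $|\bx_{j}|/\sqrt{-t_{j}} \to \infty$ and, after passing to a subsequence, $\by_{j} := \bx_{j}/|\bx_{j}| \to \by_{\infty} \in \partial B_{1}(\bOh)$ with $d(\by_{\infty}, \sqrt{0}\Sigma\cap\partial B_{1}(\bOh)) \geq \vartheta$.

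First I perform the parabolic rescaling $\cN_{j}(s) := |\bx_{j}|^{-1}\check\cM(|\bx_{j}|^{2} s)$ for $s<0$. These are ancient integral unit-regular Brakke flows with entropy bounded by $\Theta(\check\cM,-\infty) =: F_{\infty}$. The key observation is that the one-sidedness rescales cleanly: since $\supp\check\cM(t) \subset \sqrt{-t}\,\Omega$ and the spatial and temporal scaling factors balance, one checks $\supp\cN_{j}(s) \subset \sqrt{-s}\,\Omega$ for all $s<0$ (the fixed domain $\Omega$, not a rescaled copy). Moreover $(\by_{j}, s_{j}) \in \supp\cN_{j}$ where $s_{j} = -(\sqrt{-t_{j}}/|\bx_{j}|)^{2} \to 0^{-}$.

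Next, by Ilmanen's compactness, a subsequence converges to an ancient integral Brakke flow $\cN_{\infty}$ on $s<0$ still satisfying $\supp\cN_{\infty}(s) \subset \sqrt{-s}\,\overline{\Omega}$. For each fixed $r>0$,
\[
\Theta_{\cN_{j}}((\bOh,0),r) = \Theta_{\check\cM}((\bOh,0),r|\bx_{j}|) \longrightarrow F_{\infty}
\]
as $j\to\infty$, so $\Theta_{\cN_{\infty}}((\bOh,0),r) \equiv F_{\infty}$ for every $r>0$. The rigidity case of Huisken's monotonicity then forces $\cN_{\infty}(s) = \sqrt{-s}\,\cV$ for an integral varifold $\cV$ stationary for the $F$-functional, with $F(\cV) = F_{\infty}$ and $\supp\cV \subset \overline{\Omega}$. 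Passing $(\by_{j},s_{j}) \to (\by_{\infty},0)$ to the limit together with the self-similar structure of $\cN_{\infty}$ shows $\by_{\infty} \in \sqrt{0}\supp\cV$; in particular $\supp\cV$ is non-compact.

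The main obstacle is now to identify $\supp\cV = \Sigma$. By the entropy bound and Brakke/Allard regularity, $\supp\cV$ is a smoothly embedded self-shrinker off a singular set of codimension at least $7$. Since $\supp\cV \subset \overline{\Omega}$ with $\partial\Omega = \Sigma$, the Frankel property for complete self-shrinkers (cf.\ \cite[Corollary C.4]{CCMS:generic1}) forces $\supp\cV \cap \Sigma \neq \emptyset$. At any point of contact $\supp\cV$ lies on one side of $\Sigma$, and the strong maximum principle for the shrinker equation (applied on the smooth loci) together with unique continuation gives $\supp\cV = \Sigma$. Hence $\by_{\infty} \in \sqrt{0}\supp\cV = \sqrt{0}\Sigma$, which contradicts $d(\by_{\infty}, \sqrt{0}\Sigma \cap \partial B_{1}(\bOh)) \geq \vartheta$. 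I expect the most delicate point to be this last identification step, namely handling possible higher multiplicity of $\cV$ and the regularity at its singular points before invoking Frankel and the strong maximum principle.
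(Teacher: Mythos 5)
Your proposal is correct and follows essentially the same route as the paper's proof: blow down at $t\to-\infty$, use Huisken's monotonicity to produce a self-similar limit $\cN_\infty$ supported on an $F$-stationary varifold $\cV$ in $\overline\Omega$, identify $\supp\cV = \Sigma$ via the Frankel property together with the strong maximum principle, and contradict the choice of $\by_\infty$. The paper simply states up front that Frankel forces every tangent flow at $-\infty$ to be $k\cH^n\lfloor\sqrt{-t}\Sigma$ and then runs the same blow-down; your spelling out of the Frankel plus strong-maximum-principle plus constancy step (which you loosely call "unique continuation," though "constancy theorem" is the precise tool once $\supp\cV=\Sigma$ is known) fills in what the paper compresses. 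One cosmetic imprecision: from $\bx_j\notin U^\vartheta_\textnormal{con}\cup U^\vartheta_\textnormal{cyl}$ you only get $\by_\infty\notin U^\vartheta_\textnormal{con}\cup U^\vartheta_\textnormal{cyl}\supset\sqrt{0}\Sigma\setminus\{\bOh\}$, i.e.\ $\by_\infty\notin\sqrt{0}\Sigma$, not literally $d(\by_\infty,\sqrt{0}\Sigma\cap\partial B_1)\geq\vartheta$; this does not affect the contradiction.
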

\begin{proof}
By the Frankel property for self-shrinkers, all tangent flows $\tilde \cM$ at $t=-\infty$ to $\check \cM$ satisfy
\[
\tilde \cM(t) =  k \cH^n\lfloor \sqrt{-t} \Sigma
\]
for all $t<0$, where\footnote{Density considerations imply that $k$ is independent of the tangent flow, implying uniqueness of the tangent flow, although this will not be relevant in the sequel.} $k\in \NN$. In particular, if $\bx \in \RR^{n+1}$ has $\Theta((\bx,0),\tilde \cM) > 0$ then $\bx \in \sqrt{0}\Sigma$. 

Now, we assume that the assertion fails. That is, there is $(\bx_k,t_k) \in \supp\check \cM$ with $t_{k}\leq -1$, so that $\bx_k \not \in U_\textrm{con}^\vartheta\cup U_\textrm{cyl}^\vartheta$ and $|\bx_k|/\sqrt{-t_k}\to\infty$. Note that $|\bx_{k}|\to\infty$, since $t_{k}\leq -1$. Define $\tilde \cM_k(t)$ and $(\tilde \bx_k,\tilde t_k)$ by dilating around $(\bOh,0)$ so that $|\tilde\bx_k|=1$. Passing to a subsequence, $\tilde \cM_k(t) \rightharpoonup \tilde\cM(t)$ a tangent flow to $\check\cM$ at $t=-\infty$ and $\tilde \bx_{k}\to\tilde \bx$. Note that $\tilde t_{k}\to 0$ by construction. In particular, we find that
\[
\Theta((\bx,0),\tilde \cM) > 0,
\]
so $\bx \in \sqrt{0}\Sigma$. However, by construction, 
\[
\sqrt{0}\Sigma \setminus\{\bOh\} \subset U_\textnormal{con}^{\vartheta} \cup U_\textnormal{cyl}.
\]
This is a contradiction. 
\end{proof}

\subsection{Graphicality and estimates along the conical ends}
In this section we consider $(\Sigma,\Omega)\in \cS_n'$ and  $(\check \cM(\tau))_{\tau}$ an ancient (integral unit regular) rescaled Brakke flow with $(\supp \check \cM)_{\tau} \in \Omega$ for all $t<0$. Assume that $ \check \cM(\tau)$ limits in $C^{\infty}_{\textrm{loc}}$ to $\Sigma$ with multiplicity one as $\tau \to -\infty$. 

By Lemma \ref{lem:ends-decomposition}, we can fix $R_0>0$ sufficiently large so that for $R\geq R_0$, $\Sigma \setminus B_R(\bOh) = \Sigma_\textrm{cyl}\cup\Sigma_\textrm{con}$. For $\vartheta$ small, set
\[
U_\textrm{con}^{\vartheta} : = \{ s \bx : \bx \in \partial B_1(\bOh), d(\bx, \sqrt{0} \Sigma_\textrm{con})<\vartheta, s\geq 0 \}
\]
and
\[
U_\textrm{cyl}^{\vartheta} : = \cup_{j=1}^{N_\textrm{cyl}} \{ s\bx : \bx \in \partial B_1(\bOh), d(\bx, \bx_j)<\vartheta, s\geq 0\}
\]
where $\bx_j$ are the asymptotic directions of the cylindrical ends. Assume that $\vartheta$ is chosen sufficiently small and $R$ sufficiently large so that (i) there is exactly one component of $\Sigma_{\textrm{con}}$ in each component of $U^{\vartheta}_{\textrm{con}}\setminus B_{R}(\bOh)$ and (ii) for $\bx \in \Sigma_{\textrm{con}}$ it holds that
\[
\bx \pm \tfrac 12 |A_{\Sigma}| \nu_{\Sigma}(\bx) \not \in U_{\textrm{con}}. 
\]
(This second condition can be verified by considering the blow-down of $\Sigma_{\textrm{con}}$, a smooth cone by assumption, and it will guarantee that graphs over $\Sigma_{\textrm{con}}$ that remain in $U_{\textrm{con}}^{\theta}$ lie within a tubular neighborhood of $\Sigma_{\textrm{con}}$). 

We first recall the weighted norms from \cite{CCMS:generic1} along the conical ends are defined as follows:
\begin{align*}
\Vert f \Vert^{(d)}_{k;\Sigma_{\textrm{con}}} & : = \sum_{i=0}^{k} \sup_{\bx \in\Sigma_{\textrm{con}}} |\bx|^{-d+i} |\nabla^{i}_{\Sigma} f(\bx)| \\
[f]^{(d)}_{\alpha;\Sigma_{\textrm{con}}} & : = \sup_{\bx\neq \by \in \Sigma_{\textrm{cyl}}} \frac{1}{|\bx|^{d-\alpha}+|\by|^{d-\alpha}} \frac{|f(\bx)-f(\by)|}{d_{\Sigma}(\bx,\by)^{\alpha}}\\
\Vert f \Vert^{(d)}_{k,\alpha;\Sigma_{\textrm{con}}} & : = \Vert f \Vert^{(d)}_{k;\Sigma_{\textrm{cyl}}} + [\nabla^{k}_{\Sigma}f]^{(d-k)}_{\alpha;\Sigma_{\textrm{cyl}}}. 
\end{align*}

\begin{lemma}[Decay along conical ends]\label{lem:decay-conical-ends}
Fix $\vartheta$ satisfying (i) and (ii) above. For $R \geq R_0$, there is $T<0$ so that for $\tau<T$, 
\[
\check\cM(\tau) \lfloor (U_{\textnormal{con}}^{\vartheta} \cup B_{R})
\]
agrees with the multiplicity-one graph of $\check u(\cdot,\tau)$ defined on some subset of $\Sigma$. Furthermore, for any integer $k\geq 1$,
\[
\lim_{\tau\to-\infty} (\Vert \check u(\cdot,\tau)\Vert_{k;\Sigma_{\textnormal{con}}}^{(1)} + \Vert \check u (\cdot,\tau)\Vert_{C^{k}(B_{2R}(\bOh))}) = 0 
\]
\end{lemma}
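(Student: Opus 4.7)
The statement on $B_{2R}(\bOh)$ is immediate from the hypothesis that $\check\cM(\tau)\to\Sigma$ in $C^\infty_{\textrm{loc}}$ as $\tau\to-\infty$: for $\tau$ sufficiently negative, $\check\cM(\tau)\cap B_{2R+1}(\bOh)$ is the smooth normal graph of $\check u(\cdot,\tau)$ with $\Vert\check u(\cdot,\tau)\Vert_{C^k(B_{2R}(\bOh))}\to 0$, and in particular the multiplicity-one graphicality on $B_{2R}(\bOh)$ follows.

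For the conical region my plan parallels \cite[Lemma 7.18]{CCMS:generic1}, where the analogous statement is proved for purely asymptotically conical shrinkers. First, given any $\epsilon>0$, the sublinear growth estimate of Proposition \ref{prop:sublinear-growth} provides $R_1=R_1(\epsilon)$ and $T_1=T_1(\epsilon)$ with $(\supp\check\cM)_\tau\subset B_{R_1}\cup U_\textnormal{con}^{\epsilon}\cup U_\textnormal{cyl}^{\epsilon}$ for $\tau<T_1$. Because $\Sigma_\textnormal{con}$ is smoothly asymptotic, with multiplicity one, to a smooth cone, $U_\textnormal{con}^{\epsilon}\setminus B_{R_1}$ is a tubular neighborhood of $\Sigma_\textnormal{con}$ of relative angular size $O(\epsilon)$. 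This yields the $C^0$ part of the weighted estimate, $|\check u(\bx,\tau)|\leq C\epsilon|\bx|$ uniformly on $\Sigma_\textnormal{con}$ for $\tau<T_1$, and in particular the desired graphicality of $\check\cM(\tau)$ over $\Sigma_\textnormal{con}$ on $U_\textnormal{con}^\vartheta$ once $\epsilon$ is small enough that the flow lies inside the tubular neighborhood where the normal graph representation is well-defined.

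To upgrade to higher-order estimates I would use parabolic rescaling. Fix $\bx_0\in\Sigma_\textnormal{con}$ with $r_0=|\bx_0|$ large, $\tau_0<T_1$, set $t_0=-e^{-\tau_0}$, and pass to the non-rescaled flow $\cM(t)=\sqrt{-t}\,\check\cM(-\log(-t))$. Applying the parabolic dilation $\cF_\lambda$ with $\lambda=(\sqrt{-t_0}\,r_0)^{-1}$ produces a mean curvature flow $\tilde\cM=\cF_\lambda(\cM)$ satisfying $\tilde\cM(-1/r_0^2)=r_0^{-1}\check\cM(\tau_0)$ and containing the unit vector $\bx_0/r_0$. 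As $r_0\to\infty$, the dilated shrinker $r_0^{-1}\Sigma$ near $\bx_0/r_0$ converges smoothly to a piece of the asymptotic cone $\sqrt{0}\Sigma_\textnormal{con}$, which is smooth away from $\bOh$ with Gaussian density one at $\bx_0/|\bx_0|$. Combined with the $C^0$ control above, Brakke regularity \cite{White:Brakke} forces $\tilde\cM$ to be a smooth graph of small gradient over this piece of the cone in a fixed-size parabolic neighborhood of $(\bx_0/r_0,-1/r_0^2)$, together with uniform bounds $|\nabla^{k-1}A_{\tilde\cM}|\leq C_k$. Scaling back, these become $|\nabla^{k-1}A_{\check\cM(\tau_0)}|\lesssim r_0^{-k}$ on $B_{cr_0}(\bx_0)$, and the graphical expansion of the rescaled shrinker mean curvature (Appendix \ref{app:graph-shrinker}) converts these curvature bounds into $|\nabla^k\check u(\bx_0,\tau_0)|\lesssim \epsilon\cdot r_0^{-(k-1)}$, which is exactly the scaling required by $\Vert\cdot\Vert^{(1)}_{k;\Sigma_\textnormal{con}}$. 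Letting $\epsilon\to 0$ as $\tau_0\to-\infty$ yields the claimed limit.

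The main obstacle is establishing graphicality and the scale-invariant $C^k$ estimates uniformly along the unbounded conical end, where the $C^\infty_{\textrm{loc}}$ hypothesis does not apply directly. This is resolved via the scale-invariance of the parabolic dilation together with the fact that $\sqrt{0}\Sigma_\textnormal{con}$ is a smooth multiplicity-one cone, which reduces the large-radius regularity problem to the standard local regularity of a smooth cone away from its tip.
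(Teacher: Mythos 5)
Your approach matches the paper's own proof, which is essentially a one-line deferral to \cite[Lemma~7.18]{CCMS:generic1} using "rescaling the fact that the tangent cone to the un-rescaled flow at $-\infty$ is $t\mapsto\sqrt{-t}\Sigma$ (with multiplicity one)." Your fleshed-out version has the right structure and the correct scaling, but there is one link in the chain that you should state more carefully: the sentence "Combined with the $C^0$ control above, Brakke regularity forces $\tilde\cM$ to be a smooth graph" is not literally justified as written. The hypothesis of White's local regularity theorem is a Gaussian density ratio bound close to~$1$, and $C^0$-closeness of $\supp\check\cM(\tau_0)$ to $\Sigma_\textnormal{con}$ (coming from Proposition~\ref{prop:sublinear-growth}) does \emph{not} by itself rule out multi-sheeted or singular behavior within the thin tube, so it does not supply the needed density bound. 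What does supply it is exactly the multiplicity-one tangent flow at $-\infty$: since $\cF_\lambda(\cM)\rightharpoonup t\mapsto\cH^2\lfloor\sqrt{-t}\Sigma$ as $\lambda\to0$ (with local mass bounds coming from the entropy bound), Huisken monotonicity together with upper semicontinuity of Gaussian density under Brakke flow convergence gives $\Theta_{\cF_\lambda(\cM)}\big((\bx_0/r_0,\,-1/r_0^2),\,\sigma\big)\to 1$ for fixed small $\sigma>0$ as $r_0\to\infty$, $\tau_0\to-\infty$, precisely because $\sqrt{0}\Sigma_\textnormal{con}$ has Gaussian density one at $\bx_0/|\bx_0|\ne\bOh$. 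Only then can you apply Brakke/White regularity, and the $C^0$ tube containment is used afterwards only to ensure the resulting smooth piece is graphical over $\Sigma_\textnormal{con}$ rather than over some other nearby surface. You gesture at this when you mention "Gaussian density one at $\bx_0/|\bx_0|$," but the logical thread—density of the limit cone, hence density of the dilated flow via monotonicity and weak convergence, hence regularity—should be made explicit, since this is the place where the multiplicity-one hypothesis is genuinely used (and where a multiplicity-two tangent flow would cause the entire lemma to fail). With that correction your argument is sound; the conversion of the rescaled curvature bounds $|\nabla^{k-1}A_{\check\cM(\tau_0)}|\lesssim r_0^{-k}$ plus the $C^0$ smallness into the weighted graph norms via interpolation and the graphical expansion of Appendix~\ref{app:graph-shrinker} is routine.
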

\begin{proof}
As in \cite[Lemma 7.18]{CCMS:generic1}, this follows by rescaling the fact that the tangent cone to the un-rescaled flow at $-\infty$ is $t\mapsto\sqrt{-t}\Sigma$ (with multiplicity-one). 
\end{proof}

\begin{proposition}[Controlling decay along conical ends by the interior]\label{prop:estimates-conical-ends}
There exists $R_1\geq R_0$ and $C>0$ such that for all $R\geq R_1$ and $\tau_0\leq T$,
\[ \Vert \check u(\cdot,\tau_0)\Vert_{2,\alpha;\Sigma_{\textnormal{con}}}^{(1)} \leq C \sup_{\tau\leq \tau_0} \Vert \check u(\cdot, t)\Vert_{C^0(\Sigma_\textnormal{con}\cap B_{2R}(\bOh))}\, .\]

\end{proposition}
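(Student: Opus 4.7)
The plan is a standard scaling/Schauder argument adapted to the asymptotically conical ends, coupled with a maximum principle / barrier argument to transport $C^0$ bounds from the annular region out along the ends. By Lemma \ref{lem:decay-conical-ends}, after taking $T$ sufficiently negative we may assume that $\check \cM(\tau)$ is a smooth multiplicity-one graph over $\Sigma$ in the conical region for $\tau \leq T$, and that $\check u(\cdot,\tau) \to 0$ in all the weighted norms $\Vert\cdot\Vert^{(1)}_{k;\Sigma_{\textnormal{con}}}$ as $\tau\to-\infty$. In particular, $\check u$ satisfies the quasilinear parabolic equation $\partial_\tau \check u = L\check u + Q(\check u,\nabla\check u,\nabla^2\check u)$ from Corollary \ref{coro:expand-rescaled-mcf-app} throughout the region of interest.

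\emph{Step 1: Scale-invariant Schauder estimate.} Fix $\bx_0 \in \Sigma_{\textnormal{con}}$ with $r := |\bx_0|\geq R_1$ and $\tau_0 \leq T$. Pass to the un-rescaled flow $\cM(t) = \sqrt{-t}\,\check \cM(-\log(-t))$, so that $(\bx_0,\tau_0)$ corresponds to $(\by_0,t_0) := (\bx_0 e^{-\tau_0/2},-e^{-\tau_0})$ in $\cM$. Parabolically dilate $\cM$ around $(\by_0,t_0)$ by $|\by_0|^{-1}$. Because $\Sigma_{\textnormal{con}}$ is smoothly asymptotic to the smooth cone $\sqrt{0}\Sigma_{\textnormal{con}}$, the dilated flow on a unit-size parabolic cylinder is a smooth mean curvature flow that is $C^{k,\alpha}$-close to a fixed piece of this cone (which is smooth and stationary away from the apex). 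Standard parabolic Schauder estimates (cf.\ \cite{EckerHuisken:interior}) applied to the graph function of the dilated flow, combined with the scale-invariance of the norms $\Vert\cdot\Vert^{(d)}_{k,\alpha;\Sigma_{\textnormal{con}}}$ and the conversion between $u$ and $\check u$, yield
\[
\Vert \check u(\cdot,\tau_0)\Vert^{(1)}_{2,\alpha;\Sigma_{\textnormal{con}}} \leq C \sup_{(\bx,\tau) \in \cQ} |\bx|^{-1}|\check u(\bx,\tau)|,
\]
where $\cQ$ is a parabolic neighborhood in the rescaled frame of spatial size comparable to $r$ around $\bx_0$, extending $O(1)$ units backward in rescaled time.

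\emph{Step 2: Propagating $C^0$ bounds to infinity.} It remains to bound $\sup_{(\bx,\tau)\in\cQ}|\bx|^{-1}|\check u(\bx,\tau)|$ for all $\bx_0 \in \Sigma_{\textnormal{con}}$ by $\sup_{\tau\leq\tau_0}\Vert \check u(\cdot,\tau)\Vert_{C^0(\Sigma_{\textnormal{con}}\cap B_{2R}(\bOh))}$. Along the conical end we have $|A_\Sigma|^2 = O(|\bx|^{-2})$ and $L = \Delta_\Sigma - \tfrac12 \bx^T\cdot\nabla + \tfrac12 + O(|\bx|^{-2})$, which reflects that $|\bx|$ itself is (to leading order) an eigenfunction of $L$ corresponding to the dilation mode. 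Construct barriers of the form $\pm\beta|\bx|$ and combine them with the nonlinear error estimate from Lemma \ref{lemm:relative-shrinker-mean-curvature} to produce a supersolution and subsolution for the equation satisfied by $\check u$ on $\Sigma_{\textnormal{con}}$ for $\tau \leq \tau_0$ once $\beta \geq C\sup_{\tau\leq\tau_0}\Vert\check u(\cdot,\tau)\Vert_{C^0(\Sigma_{\textnormal{con}}\cap B_{2R}(\bOh))}/R$. Applying the parabolic maximum principle on the (non-compact) cylindrical domain $\{\tau\leq\tau_0\}\times\Sigma_{\textnormal{con}}$, using the inner boundary data on $\Sigma_{\textnormal{con}}\cap B_{2R}(\bOh)$ and the decay $\check u\to 0$ from Lemma \ref{lem:decay-conical-ends} at $\tau=-\infty$ in the weighted norm, pins $\check u$ between these barriers. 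Combining with Step 1 finishes the proof.

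\emph{Main obstacle.} The delicate point is Step 2, carrying out the maximum principle on the non-compact conical region: the drift $-\tfrac12\bx^T\cdot\nabla$ has characteristics that escape to infinity, so one must honestly use the ancient smooth convergence $\check u\to 0$ from Lemma \ref{lem:decay-conical-ends} as the ``initial condition'' at $\tau=-\infty$ together with the linearly-growing barriers $\pm\beta|\bx|$. The choice of these barriers, which are approximate null-eigenfunctions of $L$ on the cone thanks to the scaling symmetry, is precisely what makes the weighted norm $\Vert\cdot\Vert^{(1)}_{2,\alpha;\Sigma_{\textnormal{con}}}$ the correct object to estimate.
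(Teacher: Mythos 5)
Your overall two-step strategy — barrier/maximum-principle to get the weighted $C^0$ bound, then scale-invariant Schauder to upgrade to $C^{2,\alpha}$ — matches the paper's plan, and Step 1 is essentially the content of Proposition \ref{prop:schauder.graphical.mcf.conical}. However, Step 2 as written has a genuine gap.

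The barriers $\pm\beta|\bx|$ are not super/subsolutions. Along the conical end, a direct computation gives
\[
L|\bx| \;=\; \frac{n-1}{|\bx|} \;+\; |A_\Sigma|^2\,|\bx| \;+\; O(|\bx|^{-3}) \;>\; 0,
\]
so $\beta|\bx|$ is a \emph{sub}solution of the linearized equation, not a supersolution, and cannot be used to bound $\check u$ from above. The function $|\bx|$ is an approximate null-eigenfunction only to the extent that $L|\bx| = O(|\bx|^{-1})$ rather than $O(|\bx|)$, but the sign of that residual is definitely positive and matters. The fix — which is what the paper's proof does — is to use the affine barrier $\varphi = \alpha|\bx| - \beta$ with a large constant shift (the paper takes $\beta = \alpha(R-1)$). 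The shift contributes $L(-\beta) = -\tfrac12\beta(1+2|A_\Sigma|^2) < 0$, which is the term that beats both $\alpha L|\bx| = O(\alpha|\bx|^{-1})$ and the quadratic error $E(\varphi) = O(\alpha^2|\bx|^{-1})$ once $R$ is large. This gives \eqref{eq: barrier asym.2}-type inequalities and a genuine supersolution, and the shift is harmless because $\varphi \geq \alpha > 0$ still holds on all of $\Sigma_\textnormal{con}\setminus B_R$.

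There is also a second, related gap: you invoke the maximum principle on the non-compact region $\{\tau\le\tau_0\}\times\Sigma_\textnormal{con}$ using the ancient smooth convergence at $\tau=-\infty$ as ``initial data,'' but you never control the \emph{spatial} boundary at $|\bx|\to\infty$ for fixed $\tau\le\tau_0$. Lemma \ref{lem:decay-conical-ends} only gives decay as $\tau\to-\infty$, not uniform spatial control at later times. This is precisely where the paper uses Proposition \ref{prop:sublinear-growth}: the flow stays inside the cone $U^\vartheta_{\textnormal{con}}$, so $\check u$ grows at most sublinearly, and is thus eventually dominated by the linearly growing barrier at each time slice. Without that ingredient the avoidance/maximum-principle argument at spatial infinity does not close.
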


\begin{proof}
We construct barriers for the rescaled flow. To do this, we consider a smooth function $\varphi$ defined on $\Sigma_\textrm{con} \cap \{|\bx| \geq R\}$ to be chosen below. Set
\[\Gamma_\varphi = \{ \bx + \varphi(\bx)  \nu_{\Sigma'}(\bx) : \bx \in \Sigma_\textrm{con}, |\bx| \geq R\}\, .\]
Note that if $|\varphi(\bx)| |A_\Sigma|(\bx) < 1$ for all $\bx \in \Sigma_\textrm{con} \cap \{|\bx| \geq R\}$, then $\Gamma_\varphi$ will be a smooth hypersurface. 

We would like to choose $\varphi$ so that $\bH + \bx^\perp/2$ points towards $\Sigma$, or equivalently, we would like that $H + \tfrac{1}{2} \bx \cdot \nu \leq 0$. Expanding this equation as in Appendix \ref{app:graph-shrinker}, we can write this as
\[
 L\varphi + E(\varphi) \leq 0, \,
 \]
where $E$ is as in Lemma \ref{lemm:conical-end-decomp-error-lin} (cf.\ Corollary \ref{coro:expand-rescaled-mcf-app} and \cite[Corollary 3.7]{CCMS:generic1}). Choosing $\varphi:= \alpha |\bx| -\beta$ as in \cite[Lemma 3.15]{ChodoshSchulze} we have
\begin{equation}\label{eq:est.barr.est.lin.term.con.ends}
L\varphi \leq -\tfrac{1}{2} (1 + O(r^{-2}))\beta + O(r^{-1}) \alpha\, .
\end{equation}
Using Lemma \ref{lemm:conical-end-decomp-error-lin}, we can write
\begin{align*}
|E(\varphi)(\bx)| & \leq C(|\bx|^{-2}|\varphi(\bx)| + |\bx|^{-1}|\nabla \varphi(\bx)|)(|\bx|^{-1}|\varphi(\bx)| + |\nabla \varphi(\bx)| + |\bx| |\nabla^2\varphi(\bx)|)\\
& \leq C |\bx|^{-1} (\alpha + \beta |\bx|^{-1})^2
\end{align*}
For $R >0$ large we choose 
$\beta = \alpha (R-1)$. Thus $\varphi(\bx) = \alpha$ on $\Sigma_\textnormal{con} \cap \{|\bx| =R \}$, and $|\varphi(\bx)| \leq \alpha (|\bx| +1 - R)$ on $\Sigma_\textnormal{con} \cap \{|\bx|  \geq R \}$. In particular, since $|A_{\Sigma_\textrm{con}}| = O(|\bx|^{-1})$ (by scaling), we see that $\Gamma_\varphi$ is indeed a smooth hypersurface, for $\alpha \leq \alpha_0$ small. 

In particular, we find that
\begin{equation}\label{eq:est.barr.est.err.term.con.ends}
|E(\varphi)(\bx)| = O(\alpha^2|\bx|^{-1})
\end{equation}
on $\Sigma_\textnormal{con} \cap \{|\bx|  \geq R \}$. Combining \eqref{eq:est.barr.est.lin.term.con.ends} and \eqref{eq:est.barr.est.err.term.con.ends} we find that for $R$ sufficiently large (independent of $\alpha \leq \alpha_0$), 
\[
L\varphi + E(\varphi) \leq 0
\]
on $\Sigma_\textnormal{con} \cap \{|\bx|  \geq R \}$, as desired. 
Combining the fact that $  \check \cM(\tau)$ limits smoothly to $\Sigma$ together with Proposition \ref{prop:sublinear-growth} this implies 
\[ \Vert \check u(\cdot,\tau_0)\Vert_{0;\Sigma_{\textnormal{con}}}^{(1)} \leq C \sup_{\tau\leq \tau_0} \Vert \check u(\cdot, \tau)\Vert_{C^0(\Sigma_\textnormal{con}\cap B_{2R}(\bOh))}\, .\]
Finally, the assertion follows from the Schauder estimates stated in Proposition \ref{prop:schauder.graphical.mcf.conical} below. 
\end{proof}

The following estimates are proven in \cite[\S 3]{CCMS:generic1}.
\begin{proposition}[Schauder estimates along conical ends] \label{prop:schauder.graphical.mcf.conical}
For $\check u(\cdot,\tau)$ as defined in Lemma \ref{lem:decay-conical-ends}, it holds that 
\[
\Vert \check u(\cdot, \tau) \Vert_{2,\alpha;\Sigma_\textnormal{con}}^{(1)} \leq  \sup_{\tau\leq \tau_0}  C\left(  \Vert \check u(\cdot, \tau)\Vert_{C^0(\Sigma_\textnormal{con}\cap B_{2R}(\bOh))} + \Vert \check u(\cdot,\tau_0)\Vert_{0;\Sigma_{\textnormal{con}}}^{(1)} \right)
\]
\end{proposition}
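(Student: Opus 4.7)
The plan is to reduce to standard interior parabolic Schauder estimates by exploiting the scale invariance built into the weighted norm $\Vert \cdot \Vert^{(1)}_{2,\alpha;\Sigma_\textnormal{con}}$, exactly as in the treatment of the purely asymptotically conical case in \cite[\S 3]{CCMS:generic1}. First I would convert from the rescaled flow to the non-rescaled mean curvature flow $t \mapsto \sqrt{-t}\,\Gamma(\tau)$ via $t = -e^{-\tau}$, using Corollary \ref{coro:expand-rescaled-mcf-app} to rewrite the graphical equation for $\check u$ as a quasilinear parabolic PDE whose principal part is the linearised shrinker operator $L_\Sigma$ and whose nonlinearity $E$ is quadratically controlled in terms of $\check u$, $\nabla\check u$, and $\nabla^2\check u$ (with coefficients decaying in $|\bx|$ along the conical end; cf.\ Lemma \ref{lemm:conical-end-decomp-error-lin}).

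Next I would pick $\bx_0 \in \Sigma_\textnormal{con}$ with $r := |\bx_0| \gg R$ and parabolically dilate the non-rescaled flow by the factor $r^{-1}$ around $(\bx_0, t_0)$ for an appropriate $t_0$. Because $\Sigma_\textnormal{con}$ is smoothly asymptotic to the cone $\sqrt{0}\Sigma_\textnormal{con}$, the rescaled ambient surface is $C^k_\textnormal{loc}$-close to a piece of the smooth cone on a unit-size parabolic neighborhood, and in particular has bounded geometry uniformly in $r$. Translating this to the graphical picture, the rescaled graph function $\tilde u(\by,s) := r^{-1}\check u(r\by, \tau_0 + s)$ (with the time reparametrisation dictated by the rescaling) satisfies a uniformly parabolic quasilinear equation on the unit-scale parabolic neighborhood.

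Standard interior parabolic Schauder estimates then yield
\[
\Vert \tilde u \Vert_{C^{2,\alpha}(P_{1/2})} \leq C \Vert \tilde u \Vert_{C^0(P_1)},
\]
provided $\tilde u$ is a priori small in $C^0(P_1)$, which is guaranteed by Lemma \ref{lem:decay-conical-ends} together with the barrier construction in the proof of Proposition \ref{prop:estimates-conical-ends}. Unraveling the scaling gives $|\nabla^k \check u(\bx_0,\tau_0)| \lesssim r^{1-k} \Vert \tilde u \Vert_{C^k}$ and an analogous estimate for the $\alpha$-Hölder seminorm of $\nabla^2 \check u$, which is precisely the weighting defining $\Vert \cdot \Vert^{(1)}_{2,\alpha;\Sigma_\textnormal{con}}$. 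The $C^0$-norm of $\tilde u$ on the rescaled parabolic neighborhood is in turn controlled, via the sublinear growth from Proposition \ref{prop:sublinear-growth} and the barrier argument just noted, by $\sup_{\tau \leq \tau_0} \Vert \check u(\cdot,\tau)\Vert_{C^0(\Sigma_\textnormal{con} \cap B_{2R}(\bOh))} + \Vert \check u(\cdot,\tau_0)\Vert^{(1)}_{0;\Sigma_\textnormal{con}}$. Taking the supremum over $\bx_0 \in \Sigma_\textnormal{con}$ (and combining with the interior estimate on $\Sigma_\textnormal{con} \cap B_{2R}(\bOh)$) produces the claimed bound.

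The main technical obstacle is ensuring that the quadratic error $E$ does not destroy the Schauder bootstrap uniformly in $r \to \infty$. This is precisely the issue addressed in \cite[\S 3]{CCMS:generic1}: once $\check u$ is small in the weighted $C^0$-norm, the coefficients of $E$ become uniformly small (with the correct weights) on each rescaled parabolic neighborhood, so $L_\Sigma + E$ is a uniformly small perturbation of the linear operator $L_\Sigma$ after rescaling, and a standard continuity/perturbation argument closes the Schauder estimate without further modification.
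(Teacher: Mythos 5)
Your proof is correct in its main ideas, but it takes a genuinely different (and longer) route than the paper. The paper's actual proof is essentially a three-line citation argument: it directly invokes the \emph{global} weighted parabolic Schauder estimate for $L_\Sigma$ on the asymptotically conical end from \cite[Lemma~3.5]{CCMS:generic1}, which already gives
\[
\Vert \check u\Vert^{(1)}_{2,\alpha;\Sigma_\textnormal{con}} \lesssim \sup_{\tau\leq\tau_0}\Vert\check u\Vert_{C^0(\Sigma_\textnormal{con}\cap B_{2R})} + \Vert E(\check u)\Vert^{(-1)}_{0,\alpha;\Sigma_\textnormal{con}},
\]
then bounds the nonlinear error via the weighted Hölder estimate $\Vert E(\check u)\Vert^{(-1)}_{0,\alpha}\lesssim\Vert\check u\Vert^{(1)}_{1,\alpha}\Vert\check u\Vert^{(1)}_{2,\alpha}$ from \cite[Corollary~3.7]{CCMS:generic1}, and finally absorbs the quadratic error into the left-hand side using the smallness $\Vert\check u\Vert^{(1)}_{1,\alpha}\to 0$ supplied by Lemma~\ref{lem:decay-conical-ends}. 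You, by contrast, sketch a proof-from-scratch of the weighted estimate by rescaling the (unrescaled) flow around far-away points $\bx_0\in\Sigma_\textnormal{con}$ by the factor $|\bx_0|^{-1}$, applying interior parabolic Schauder estimates at unit scale, and unraveling the scaling to reconstruct the $|\bx|^{1-k}$ weights; the quadratic error is then handled by a perturbation/continuity argument, which plays the same role as the absorption step in the paper. This is the mechanism that underlies \cite[Lemma~3.5]{CCMS:generic1}, so the two approaches are mathematically equivalent; the paper's version is shorter because that mechanism was already packaged into a black-box lemma in the prequel, while yours is more self-contained but has to verify by hand that the pointwise estimates assemble correctly into the global weighted $C^{2,\alpha}$ norm (including the cross-term estimates between widely separated points and the interface with the compact part at $|\bx|\sim R$, which is where the $C^0(B_{2R})$ term on the right-hand side comes in). One minor organizational confusion in your write-up: you say you first convert to the non-rescaled flow $t\mapsto\sqrt{-t}\Gamma(\tau)$ and then invoke Corollary~\ref{coro:expand-rescaled-mcf-app}, but that corollary is stated in the \emph{rescaled} picture ($\partial_\tau u = Lu + E$); you should decide which picture you work in and stick with it (both work, but the order of the two steps as written is reversed).
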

\begin{proof}
We give the references to the relevant results from \cite[\S 3]{CCMS:generic1}. By \cite[Lemma 3.5]{CCMS:generic1} (Schauder estimates for the linearized equation $Lu=h$), we find that 
\begin{equation}\label{eq:schauder.nonlinearity.from.gen.1}
\Vert \check u(\cdot, \tau) \Vert_{2,\alpha;\Sigma_\textnormal{con}}^{(1)} \leq C\sup_{\tau\leq \tau_0} \left(    \Vert \check u(\cdot, \tau)\Vert_{C^0(\Sigma_\textnormal{con}\cap B_{2R}(\bOh))} + \Vert E(\check u)(\cdot,\tau_0) \Vert_{0,\alpha;\Sigma_{\textnormal{con}}}^{(-1)} \right)
\end{equation}
where $E(\check u)$ is the nonlinear error term satisfying the following weighted H\"older estimates proven in \cite[Corollary 3.7]{CCMS:generic1}:
\[
\Vert E(\check u)(\cdot,\tau_0) \Vert_{0,\alpha;\Sigma_{\textnormal{con}}}^{(-1)} \leq C \Vert \check u(\cdot,\tau_0) \Vert_{1,\alpha;\Sigma_{\textnormal{con}}}^{(1)} \Vert \check u(\cdot,\tau_0) \Vert_{2,\alpha;\Sigma_{\textnormal{con}}}^{(1)}.
\]
Using Lemma \ref{lem:decay-conical-ends}, we can thus absorb this term into the left-hand side of \eqref{eq:schauder.nonlinearity.from.gen.1}, completing the proof. 
\end{proof}

\subsection{Upgrading closeness to graphicality along the cylindrical ends} 

In this section we consider $(\Sigma,\Omega)\in \cS_n'$ and  $(\check \cM(t))_{t}$ an ancient (integral unit regular) non-rescaled Brakke flow with $(\supp \check \cM)_{t} \in \sqrt{-t} \Omega$ for all $t<0$. Assume that $\frac{1}{\sqrt{-t}} \check \cM(t)$ limits in $C^{\infty}_{\textrm{loc}}$ to $\Sigma$ with multiplicity one as $t\to -\infty$. 

\begin{definition}[Barrier estimate radius along cylindrical ends]\label{defi:barrier.est} Fix $\vartheta$ as in Lemma \ref{lem:decay-conical-ends}.
For $T<0$ fixed, consider smooth positive strictly monotone functions $b,\eps:(-\infty,T) \to (0,\infty)$ with $\lim_{t\to-\infty} b(t) = \infty$ and $\lim_{t\to-\infty} \eps(t) = 0$. We say that $(b(t),\eps(t))$ forms a \emph{barrier estimate} for $\check \cM$ along the cylindrical ends if 
\[
\tfrac{1}{\sqrt{-t}} (\supp\check \cM)_t\cap \big(U^\vartheta_\text{cyl}\setminus B_{2R_0}(\bOh)\big) \cap B_{b(t)}(\bOh) \subset U_{\eps(t)}(\Sigma_\text{cyl}),
\]
for $t \in (-\infty,T)$. Here, $U_r(\Sigma_\text{cyl})$ is the $r$-neighborhood of $\Sigma_\text{cyl}$. 
\end{definition}

\begin{definition}[Graphical radius along cylindrical ends]
For $\eps_0>0$ to be fixed in Proposition \ref{prop:barrier.to.graph}, define the \emph{graphical radius} $\rho(t)$ to be supremum of $\rho>0$ so that $(\frac{1}{\sqrt{-t}} \check \cM(t)) \lfloor \big((U^\vartheta_\text{cyl}\setminus B_{2R_0}(\bOh)) \cap B_{\rho}(\bOh)\big)$ is the graph of some function defined on a subset of $\Sigma_\text{cyl}$, with $C^5$-norm $\leq \eps_0$. 
\end{definition}

Note that we do not claim that $\rho(t)$ is continuous (it would be possible to prove that $\rho(t)$ has certain partial continuity properties, but we will not need that below). However, it is clear that in the setting considered here $\lim_{t\to-\infty}\rho(t) = \infty$.

\begin{proposition}[Barrier estimates imply graphical estimates]\label{prop:barrier.to.graph}
Suppose that $(b(t),\eps(t))$ forms a barrier estimate for $\check \cM$. Then, we can take $\eps_0=\eps_0(\Sigma)>0$ small and $T=T(\Sigma,\check \cM,b(t),\eps(t))<0$ sufficiently negative so that $\rho(t) \geq b(t)$ for $t<T$. 
\end{proposition}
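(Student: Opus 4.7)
The proof is a continuity/bootstrap argument that upgrades the Hausdorff closeness supplied by the barrier estimate into $C^5$-graphicality, using Brakke's $\eps$-regularity together with the cylindrical stability Lemma~\ref{lem:smooth-stability-cylinder}.

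Fix $\eps_0=\eps_0(\Sigma)$ small enough that (i) any $C^5$-graph over $\Sigma_\text{cyl}$ with $C^5$-norm at most $2\eps_0$ lies in a fixed tubular neighborhood of $\Sigma_\text{cyl}$ on which normal projection is a smooth diffeomorphism, and (ii) Brakke's $\eps$-regularity theorem applies to integral unit-regular Brakke flows contained in a $2\eps_0$-tubular neighborhood of a multiplicity-one portion of $\Sigma_\text{cyl}$. Define the improved graphical radius
\[
\tilde\rho(t):=\sup\bigl\{\rho\in[2R_0,b(t)]:\tfrac{1}{\sqrt{-t}}\check\cM(t)\lfloor(U_\text{cyl}^\vartheta\cap B_\rho(\bOh))\text{ is a }C^5\text{-graph over }\Sigma_\text{cyl}\text{ with }C^5\text{-norm}\leq \eps_0/2\bigr\}.
\]
Then $\rho(t)\geq\tilde\rho(t)$, so it suffices to prove $\tilde\rho(t)=b(t)$ for all $t<T$. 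The smooth-convergence hypothesis $\tfrac{1}{\sqrt{-t}}\check\cM(t)\to\Sigma$ in $C^\infty_\text{loc}$ supplies $\tilde\rho(t)\geq\rho_0$ for any fixed $\rho_0$ once $|t|$ is large, providing the base case.

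Suppose for contradiction that $\tilde\rho(t_0)<b(t_0)$ at some $t_0<T$. Write $\rho_*:=\tilde\rho(t_0)$ and choose $p\in\Sigma_\text{cyl}$ with $|p|$ just past $\rho_*$ but still well inside $B_{b(t_0)}$. The barrier estimate places the rescaled flow within Hausdorff distance $\eps(t_0)$ of $\Sigma_\text{cyl}$ on a neighborhood of $p$; by taking $T$ sufficiently negative we may assume $\eps(t_0)$ is arbitrarily small. This thin trapping, combined with the entropy bound $\lambda(\check\cM)<\infty$ and upper semicontinuity of Gaussian density, forces every tangent flow at space-time points near $(p\sqrt{-t_0},t_0)$ to be the multiplicity-one cylinder $\hat\Sigma_j\times\RR$ associated with the cylindrical end through $p$. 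Brakke's $\eps$-regularity then promotes the Hausdorff trapping into smooth closeness with $C^5$-norm $\ll\eps_0/4$ on a parabolic neighborhood of $p$. To convert this local smoothness into an extension of the graph past $\rho_*$, apply Lemma~\ref{lem:smooth-stability-cylinder} at a slightly earlier rescaled time at which graphicality with small $C^3$-norm is known on a ball of size $\rho_*-\sigma$ by definition of $\tilde\rho$: the lemma propagates this control forward to time $t_0$ on a slightly smaller ball, and by the tubular-neighborhood property built into $\eps_0$, the propagated graph and the Brakke-regular smoothing near $p$ must coincide and extend through $|p|$. This yields a $C^5$-graph on $B_{|p|}$ at time $t_0$ with $C^5$-norm below $\eps_0/2$, contradicting the maximality of $\rho_*$.

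The principal obstacle is that the barrier estimate only supplies Hausdorff trapping, which \emph{a priori} permits several closely-stacked sheets inside the $\eps(t_0)$-tube; reducing this to multiplicity one via the entropy/density argument described above---so that unit-regularity actually yields local smoothness---is the technical heart of the step. Once smoothness is in hand, standard parabolic continuity via the cylindrical stability lemma, combined with interior Schauder estimates to pass from $C^0$-closeness to $C^5$-closeness, closes the bootstrap.
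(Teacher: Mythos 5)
Your outline captures the right ingredients---unit-regularity, the stability Lemma~\ref{lem:smooth-stability-cylinder}, and the local Hausdorff trapping from the barrier estimate---but the two load-bearing steps are not actually established by the arguments you give.

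First, the density argument. You assert that the entropy bound $\lambda(\check\cM)<\infty$ together with upper semicontinuity of Gaussian density forces tangent flows at space-time points near $(p\sqrt{-t_0},t_0)$ to be the \emph{multiplicity-one} cylinder $\hat\Sigma_j\times\RR$. Ruling out multiplicity is exactly what Huisken monotonicity combined with $\lambda(\check\cM)\leq F(\Sigma)$ does \emph{not} give you for free: there is no reason $F(\Sigma) < 2F(\hat\Sigma_j)$, so a density bound at a fixed space-time point does not preclude a multiplicity-two cylindrical tangent flow, and then Brakke/Allard regularity is not applicable. The paper does not try to run a density argument at a single point. Instead it takes a sequence $(\bx_i,t_i')$ of putative failures with $t_i'\to-\infty$, translates along the cylindrical axis, rescales by $|t_i'|^{1/2}$, and passes to a limit Brakke flow $\tilde\cM$. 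The multiplicity-one conclusion for $\tilde\cM$ then comes \emph{from the graphicality at earlier rescaled times}: by the careful near-minimizing choice of $t_i'$, one has $\rho(s)\geq\rho(t_i')$ for all $s\leq t_i'-1$, so after rescaling, the flow $\check\cM_i(t)$ is graphical on a ball of growing radius for each $t<-1$, whence $\tilde\cM(t)$ is an entire graph over the exact cylinder for $t<-1$. Only then does the barrier estimate confine $\supp\tilde\cM$ to the cylinder, and only then does unit-regularity identify $\tilde\cM$ with the multiplicity-one cylinder flow. This chain of deductions is what you have replaced by an unjustified appeal to entropy plus density semicontinuity.

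Second, the propagation step. You invoke Lemma~\ref{lem:smooth-stability-cylinder} ``at a slightly earlier rescaled time at which graphicality with small $C^3$-norm is known on a ball of size $\rho_*-\sigma$'' and say the lemma propagates this to time $t_0$ on a slightly smaller ball, which you then glue to the Brakke-regularized patch near $p$. But that lemma trades a \emph{large} graphicality radius $D$ at the initial time for a \emph{smaller} radius $\delta^{-1}\leq D$ at later times; applied at a slightly earlier time it does not create any new graphical radius past $\rho_*$. The gain in radius in the paper's argument comes from the rescaling by $|t_i'|^{1/2}$, which blows up the earlier-time graphical ball to cover all of $\RR^{n+1}$ in the limit; the stability lemma is then applied to the limiting cylinder flow on fixed compact sets, not at a ``slightly earlier time'' of the original flow. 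Finally, your maximal-radius continuity argument also glosses over the fact that $\rho(t)$ (hence $\tilde\rho(t)$) is not known to be continuous; the paper explicitly circumvents this by choosing $t_i'$ within distance $1$ of the infimum rather than assuming the infimum is attained.
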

\begin{proof}
Assume (for contradiction) that there is $t_i \to -\infty$ so that $\rho(t_i) \leq b(t_i)$. Because $\rho(t) \to \infty$ as $t\to-\infty$, 
\[
 \inf\{ s \leq t_i : \rho(s) \leq \rho(t_i) \}  > -\infty. 
\]
Without some continuity property of $\rho(t)$ we cannot ensure that the infimum is attained, but we can find
\[
t_i' \in \{ s \leq t_i : \rho(s) \leq \rho(t_i) \}
\] 
with
\[
t_i' \leq \inf \{ s \leq t_i : \rho(s) \leq \rho(t_i) \} + 1. 
\]
Note that for $s \leq t_i' -1$, we have $\rho(s) \geq \rho(t_i) \geq \rho(t_i')$. Furthermore, we have $\rho(t_i') \leq \rho(t_i) \leq b(t_i) \leq b(t_i')$ (since $b(t)$ is strictly decreasing) and $t_i'\to-\infty$ (since $\rho(t) \to \infty$ as $t\to-\infty$). 

By assumption, there are points $\bx_i \in \supp\check\cM \cap \overline{B_{\rho(t_i')}(\bOh)}$ so that $\check \cM(t_i')$ is not locally written as a graph over a subset of $\Sigma$ with $C^5$-norm $\leq \eps_0$ (if this held at each point in $\supp\check\cM \cap \overline{B_{\rho(t_i')}(\bOh)}$ we could patch these graphs together to write $\supp\check\cM \cap B_{\rho(t_i')+\delta_0}(\bOh)$---for some $\delta_0>0$ small---as a graph over a subset of $\Sigma$ with $C^5$-norm $\leq \eps_0$; note that there can only be a single sheet by the multiplicity one covergence). It is clear that $\bx_i\to\infty$ as $i\to\infty$. 

We first suppose that (after passing to a subsequence), $\bx_i \in U_\textrm{con}^{\vartheta}$, where $\vartheta$ is fixed as in Lemma \ref{lem:decay-conical-ends}. Applying Lemma \ref{lem:decay-conical-ends} with $k=5$, we find a contradiction. 

Thus, we may pass to a subsequence and assume that $\bx_i \in U_\textrm{cyl}^{\vartheta}$. Up to passing to a subsequence and rotating we assume that $\bx_i/|\bx_i |\to \be$ and $\Sigma$ has a cylindrical end (modeled on $\hat \Sigma \times \RR$) in the $\be$ direction. 

Write $\bx_i = (\bomega_i,z_i)$. Consider the flows
\[
\check\cM_i(t):= \tfrac{1}{\sqrt{|t_i'|}}\check \cM(|t_i'| t) - z_i \be. 
\]
Passing to a subsequence, these flows converge to a Brakke flow $\tilde\cM(t)$ with 
\[
\tfrac{1}{\sqrt{-t}} (\supp\tilde\cM)_t\subset \overline{\hat \Omega\times \RR}. 
\]
Now, fix $t_0 < -1$ and consider $t \leq t_0$. Observe that by definition of the graphical radius, 
\[
\Big(\tfrac{1}{\sqrt{-|t_i'|t}} \check\cM(|t_i'|t)\Big) \lfloor B_{\rho(|t_i'|t)} (\bOh)
\]
is graphical over some subset of $\Sigma$ (with $C^5$ norm $\leq \eps_0$). Note that (since $t\leq t_0<-1$)
\[
|t_i'| t < t_i' - 1
\]
for $i$ sufficiently large (depending on $t_0$), so for such $i$, we find that 
\[
\rho(|t_i'|t) \geq \rho(t_i'). 
\]
Hence, 
\[
\Big(\tfrac{1}{\sqrt{-|t_i'|t}} \check\cM(|t_i'|t)\Big) \lfloor B_{\rho(t_i')} (\bOh)
\]
is graphical over some subset of $\Sigma$ (with $C^5$ norm $\leq \eps_0$). Translating in the $\be$ direction, we find 
\[
\tfrac{1}{\sqrt{-t}} \check \cM_{i}(t) \lfloor B_{\rho(t_i')} (-\tfrac{1}{\sqrt{-t}}z_i\be)
\]
is graphical over some subset of $\Sigma-\tfrac{1}{\sqrt{-t}}z_i\be$ (with $C^5$ norm $\leq \eps_0$). Because $t<-1$, we note that $B_{\rho(t_i')} (-\tfrac{1}{\sqrt{-t}}z_i\be)$ limits to $\RR^{n+1}$ as $i\to\infty$. Passing this information to the limit (taking $\eps_0$ sufficiently small so that we can use interior estimates) and then sending $t_0\nearrow -1$, we thus find that $\frac{1}{\sqrt{-t}} \tilde\cM(t)$ is graphical over $\hat \Sigma \times \RR$ with $C^5$ norm $\leq \eps_0$ for all $t < -1$. 

On the other hand, we have that 
\[
\Big(\tfrac{1}{\sqrt{-t_i'}} (\supp \cM)_{t_i'}\Big) \cap B_{b(t_i')}(\bOh) \subset U_{\eps(t_i')}(\Sigma),
\]
because $(b(t),\eps(t))$ forms a barrier estimate. Translating this as above, we find $t<-1$,
\[
\tfrac{1}{\sqrt{-t}}(\supp \check\cM_i)_t \cap B_{b(t_i')}(-\tfrac{1}{\sqrt{-t}}z_{i}\be) \subset U_{\eps(t_i')} (\Sigma -\tfrac{1}{\sqrt{-t}}z_{i}\be).
\]
Since $b(t_i') \geq \rho(t_i') \geq z_{i} $ and $\eps(t_i')\to 0$, we can conclude as above that $\frac{1}{\sqrt{-t}}(\supp \tilde \cM)_t \subset \hat\Sigma\times \RR$ for all $t<-1$. Using the integral unit regular property of $\tilde \cM$, we thus find that 
\[
\tilde \cM(t) = \cH^n \lfloor \sqrt{-t} (\hat\Sigma\times \RR)
\]
for all $t<0$. Note that we have unit multiplicity here, due to the fact that $\frac{1}{\sqrt{-t}} \tilde\cM(t)$ is graphical over $\hat \Sigma \times \RR$ with $C^5$ norm $\leq \eps_0$ for all $t < -1$. 

By unit regularity, we thus find that for $i$ sufficiently large, $\check\cM_i(-1) = \frac{1}{\sqrt{|t_i'|}} \check \cM(t_i') - z_i\be$ is graphical near $(\bomega_i,0)$ (with $C^5$ norm $\leq o(1)$). This contradicts our assumption about $\bx_i$, completing the proof. 
\end{proof}

\subsection{Global decay estimates}
We now impose the stable end hypothesis on $\Sigma$. More precisely, in this section we consider $(\Sigma,\Omega)\in \cS_n''$ and  $(\check \cM(\tau))_{\tau}$ an ancient (integral unit regular) rescaled Brakke flow with $(\supp \check \cM)_{\tau} \in \Omega$ for all $\tau < T$. Assume that $\check \cM(\tau)$ limits in $C^{\infty}_{\textrm{loc}}$ to $\Sigma$ with multiplicity one as $\tau\to -\infty$. 

We define the following norm, adapted to the graphical radius along the cylindrical ends
\begin{equation}\label{eq:defi.partial.seminorm.1} 
\Vert \cdot \Vert_{k,\alpha; \text{cyl}}(\rho)=   \Vert  \cdot \Vert_{k,\alpha;\Sigma \cap B_{2R_0}(\bOh)} + \Vert \cdot \Vert_{k,\alpha;\Sigma_\textnormal{cyl} \cap B_{\rho}(\bOh)}\, .\end{equation}

We now assume there is $(-\infty,\tau_{0}] \ni \tau \mapsto \tilde v_{\tau} \in C^{\infty}_\textrm{loc}(\Sigma)$ so that
\[
\Vert \tilde v_\tau\Vert_{C^3;\hat\Sigma\times [Z_0,\infty)} = o(1) \textrm{ as $\tau\to-\infty$},
\]
and so that 
\[
\tilde \Sigma_\tau = \Graph_{\Sigma} \tilde v_{\tau}(\cdot)\textrm{ is a rescaled mean curvature flow}.
\]
(An important special case is $\tilde v_{\tau}\equiv 0$, so that $\tilde \Sigma_\tau \equiv \Sigma$, but we will also consider the case that $\tilde v_{\tau}$ is the ancient flow constructed in Proposition \ref{prop:short-time-exist-Sigma-eps}). 

Let $\tilde \Omega_\tau \subset \Omega$ be the open set such that $\partial \tilde \Omega_\tau = \Sigma_\tau$. We assume that $\check \cM$ lies on one side of $\tilde \Sigma_\tau$, i.e. $\check \cM(\tau) \subset \tilde \Omega_\tau$ for $\tau \leq \tau_0$. We have the following graphicality and decay estimate. Fix $\alpha>0$ as in Theorem \ref{theo:barriers-inner-outer}. 

\begin{proposition}[Applying barriers to obtain decay along cylindrical ends] \label{prop:global-decay} 
There exists $\hat\delta>0, \vartheta>0, R_1\geq 2R_0$ and depending only $\tilde \Sigma_\tau, \Sigma$ with the following property: for every $R\geq R_1$ there exists $\tau_1\leq \tau_0$ such that for $\tau \leq \tau_1$, $\check\cM(\tau) \lfloor (B_{8R}(\bOh))$ is the graph of $\check u(\cdot,\tau)$ defined on some subset of $\Sigma$ (including $\Sigma\cap B_{4R}(\bOh)$) with
$$\delta(\tau) := \sup_{\tau' \leq \tau} \|\check u(\cdot, \tau')- \tilde v(\cdot, \tau') \|_{4,\alpha; \Sigma\cap B_{4R}(\bOh)} \leq \hat\delta\, .$$
Then either $\delta(\tau') = 0$ for some $\tau'<\tau_1$ and $\check{M}(\tau) = \tilde\Sigma_\tau$ with multiplicity one for all $\tau \leq \tau_1$, or
$\delta(\tau)>0$ for all $\tau<\tau_1$ and there exists $\bar\tau\leq \tau_1$ such that for $\tau \leq \bar\tau$
\[
\check\cM(\tau) \lfloor \big((U^\vartheta_\text{cyl}\setminus B_{2R_0}(\bOh))\cap B_{\tfrac{1}{2}\delta(\tau)^{-\frac{1}{1+\alpha}}}(\bOh)\big) 
\]
is the graph of $\check u(\cdot,\tau)$ defined on some subset of $\Sigma_{\text{cyl}}$ such that
\begin{enumerate}
\item $\Vert \check u(\cdot, \tau) - \tilde v(\cdot, \tau) \Vert_{4,\alpha; \textnormal{cyl}}( \tfrac{1}{2}\delta(\tau)^{-\frac{1}{1+\alpha}}) \leq C\delta(\tau)^{\theta}$ for $\tau\leq \bar\tau$ and any $\theta \in (0,\frac{1}{1+\alpha})$ where $C$ depends on $\theta$, and 
\item rotating $\Sigma$ so that some cylindrical end $\Sigma'$ points in the $\be$ direction, then 
$$\check u(\cdot,\tau) - \tilde v(\cdot, \tau) \leq C  \delta(\tau) z^\alpha$$ 
for $\tau \leq \bar \tau, R\leq z \leq \tfrac{1}{2}\delta(\tau)^{-\frac{1}{1+\alpha}}$. 
\end{enumerate}
\end{proposition}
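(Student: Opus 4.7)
The initial graphical description of $\check\cM(\tau)$ over $B_{8R}(\bOh)$ and the definition of $\delta(\tau)$ follow from Theorem \ref{thm:interior decay} applied on a slightly larger compact domain in $\Sigma$, combined with parabolic Schauder estimates and the hypothesized $C^\infty_\textnormal{loc}$ convergence to $\Sigma$ as $\tau\to-\infty$. The degenerate case $\delta(\tau') = 0$ for some $\tau' < \tau_1$ is handled by a unique continuation argument: $\check u \equiv \tilde v$ on $\Sigma \cap B_{4R}(\bOh)$ for every $s \leq \tau'$, and writing $w = \check u - \tilde v$, which satisfies a linear parabolic equation $(\partial_\tau - L)w = \mathcal{E}(w)$ vanishing linearly in $w$ (cf.\ Appendix \ref{app:graph-shrinker} and Lemma \ref{lemm:relative-shrinker-mean-curvature}), together with the one-sided assumption $w \geq 0$ and the strong maximum principle, forces $w \equiv 0$ on the entire graphical region. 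Conditional uniqueness (Proposition \ref{prop:cond.unique.1}) and unit regularity then yield $\check\cM(\tau) = \tilde\Sigma_\tau$ for all $\tau \leq \tau_1$.

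\textbf{Barrier application on each cylindrical end.} Assume henceforth $\delta(\tau) > 0$ for all $\tau < \tau_1$. Fix one cylindrical end of $\Sigma$, rotated so it points along $\be$ with asymptotic cross-section $\hat\Sigma$, and adopt the setup of Theorem \ref{theo:barriers-inner-outer}. For each $\tau_* \leq \tau_1$ we apply Theorem \ref{theo:barriers-inner-outer} with parameter
\[ \eta(\tau_*) := C_0\,\delta(\tau_*), \]
where $C_0 = C_0(\Sigma, R, Z_1)$ is chosen once and for all so that hypothesis (1) of Theorem \ref{theo:barriers-inner-outer} is implied by $|\check u(\cdot, s) - \tilde v(\cdot, s)| \leq \delta(s)$ on the starting strip $B_{2\rho}(\bOh) \times [Z_1, 2Z_1]$ (we take $R_1 \geq Z_1$ from the start). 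The monotonicity $\delta(s) \leq \delta(\tau_*)$ for $s \leq \tau_*$ makes the strip inclusion uniform over $s \leq \tau_*$. Hypothesis (2) is secured by the sublinear-growth estimate of Proposition \ref{prop:sublinear-growth} after taking $\vartheta = \vartheta(\eta)$ from Theorem \ref{theo:barriers-inner-outer} small and $\bar\tau$ sufficiently negative; hypothesis (3) is the ancient past convergence provided to us.

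\textbf{From barrier to pointwise and graphical estimates.} With $\kappa = 1/(1+\alpha)$, Theorem \ref{theo:barriers-inner-outer} applied at time $\tau_*$ yields
\[ 0 \leq \check u(\cdot, \tau_*) - \tilde v(\cdot, \tau_*) \leq C_0\,\delta(\tau_*)\,z^\alpha \qquad \text{for } Z_1 \leq z \leq \tfrac{1}{2}\delta(\tau_*)^{-1/(1+\alpha)}, \]
which is exactly assertion (2). Running this argument simultaneously for each cylindrical end, together with the analogous conical-end control from Proposition \ref{prop:estimates-conical-ends}, produces a barrier estimate $(b(\tau), \varepsilon(\tau))$ in the sense of Definition \ref{defi:barrier.est}, with
\[ b(\tau) = \tfrac{1}{2}\delta(\tau)^{-1/(1+\alpha)}, \qquad \varepsilon(\tau) = C\,\delta(\tau)^{1/(1+\alpha)}, \]
both monotone with $b(\tau) \to \infty$ and $\varepsilon(\tau) \to 0$ as $\tau \to -\infty$. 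Proposition \ref{prop:barrier.to.graph} then upgrades this barrier control to graphicality of $\check\cM(\tau) \lfloor \big((U_\textnormal{cyl}^\vartheta \setminus B_{2R_0}(\bOh)) \cap B_{b(\tau)}(\bOh)\big)$ over a subset of $\Sigma_\textnormal{cyl}$ with small $C^5$-norm (after passing between rescaled and non-rescaled formulations).

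\textbf{The $C^{4,\alpha}$ bound and main obstacle.} To prove assertion (1), apply interior parabolic Schauder estimates to the equation $(\partial_\tau - L)w = \mathcal{E}(w)$ for $w = \check u - \tilde v$ on unit-scale translates $\hat\Sigma \times [z_0, z_0 + 1]$ of the cylindrical end, up to the cutoff $z \leq \tfrac{1}{2}\delta(\tau)^{-1/(1+\alpha)}$: the pointwise bound from (2) gives a $C^0$-bound of order $\delta(\tau)\,z^\alpha \leq C\,\delta(\tau)^{1/(1+\alpha)}$ at the outer edge, which Schauder converts into a $C^{4,\alpha}$-bound of the same order on a slightly interior region (the nonlinearity $\mathcal{E}$ is absorbed as in the proof of Proposition \ref{prop:schauder.graphical.mcf.conical}, using that $\delta(\tau)$ is small). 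The strict inequality $\theta < 1/(1+\alpha)$ provides the slack to absorb constants and to leave room for the Schauder interior step, giving the stated bound $\|w\|_{4,\alpha;\,\textnormal{cyl}}(\tfrac{1}{2}\delta^{-1/(1+\alpha)}) \leq C_\theta \,\delta(\tau)^\theta$. The principal technical obstacle is the simultaneous calibration of three scales — the barrier strip width $\delta z^\alpha$, the graphical radius $\delta^{-1/(1+\alpha)}$, and the Schauder interpolation loss — together with the bootstrap structure, in which Schauder requires the graphicality being produced by Proposition \ref{prop:barrier.to.graph}, whose hypothesis is the barrier estimate extracted via Theorem \ref{theo:barriers-inner-outer} from $\delta(\tau)$ itself. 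Closing this loop consistently over the non-compact cylindrical end, uniformly in $\tau$, is the central technical content of the proposition.
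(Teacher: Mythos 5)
Your main argument (the non-degenerate case) follows essentially the paper's route: apply Theorem \ref{theo:barriers-inner-outer} with $\eta$ proportional to $\delta(\tau)$, extract a barrier estimate in the sense of Definition \ref{defi:barrier.est}, upgrade to graphicality via Proposition \ref{prop:barrier.to.graph}, read off assertion (2) from the barrier height bound, and then deduce (1) by Schauder/interpolation between the $C^0$ barrier bound and the $C^5$ graphicality bound. One cosmetic slip: in the step ``From barrier to pointwise and graphical estimates'' you write $0 \leq \check u(\cdot,\tau_*) - \tilde v(\cdot,\tau_*) \leq C_0\delta(\tau_*)z^\alpha$ immediately after applying Theorem \ref{theo:barriers-inner-outer}, but at that point $\check u$ is not yet known to exist on the cylindrical region outside $B_{8R}(\bOh)$ --- the barrier theorem only constrains $\supp\check\cM$. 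Logically one must first pass through Proposition \ref{prop:barrier.to.graph} to define $\check u$ on the larger region, then reinterpret the support bound as a height bound on the graph.

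Your treatment of the degenerate case $\delta(\tau') = 0$ diverges from the paper and has a genuine gap. You argue: the strong maximum principle forces $w = \check u - \tilde v \equiv 0$ on the entire graphical region, and then Proposition \ref{prop:cond.unique.1} plus unit regularity give $\check\cM(\tau) = \tilde\Sigma_\tau$. The problem is that the strong maximum principle only propagates $w \equiv 0$ within the region where $\check\cM$ is already known to be a graph over $\Sigma$, which \emph{a priori} is just $\Sigma\cap B_{8R}(\bOh)$ (the cylindrical graphicality is precisely what this proposition is trying to establish, and the main non-degenerate argument doesn't apply when $\delta = 0$). Consequently you do not obtain agreement of the two flows on the whole time slice at any time $T$, which is the hypothesis Proposition \ref{prop:cond.unique.1} needs; that proposition also concerns the specific constructed ancient flow $\cM$ rather than a general $\tilde\Sigma_\tau$. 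The paper instead applies the barrier argument itself with a sequence $\eta_i \to 0$ to conclude that the \emph{support} of $\check\cM(\tau)$ lies in $\tilde\Sigma_\tau$ --- a statement about the varifold, not the graph --- and then invokes the constancy lemma for varifolds, followed by unit regularity, to promote this to $\check\cM(\tau) = \cH^n\lfloor\tilde\Sigma_\tau$ with multiplicity one. The barrier route is essential here precisely because it controls the support where graphicality is not yet established.
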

\begin{proof}
By taking $R$ sufficiently large, we can assume that the following property holds. For any cylindrical end $\Sigma'$, after rotating to point in the $\be$-direction, applying Theorem \ref{theo:barriers-inner-outer} (with $w_{\tau}\equiv w$), then for $Z_{1}$ as defined in Theorem \ref{theo:barriers-inner-outer}, it holds that $\Sigma' \cap \{z=Z_1\} \subset B_{R/2}(\bOh)$. 

We assume first that $\delta(\tau)>0$ for all $\tau<\tau_1$. We claim that by increasing $R$ if necessary, for $\tau_1$ sufficiently negative and $\delta_0$ sufficiently small, Theorem \ref{theo:barriers-inner-outer} (with $\overline v_{\tau}\equiv \overline v$ the graphical function of the end $\Sigma'$ over $\hat\Sigma\times \RR$, cf.\ \eqref{eq:cyl.end.graph.cyl}, or $\overline v_{\tau}$ the graphical function of the end of $\Sigma_\tau$ over $\hat\Sigma\times \RR$), with $\eta' = 2\delta(\tau')$ and $T_{2} = \tau'$ applies to 
\[
\check\cM'(\tau) : = \check\cM(\tau) \lfloor (\Omega \cap \{z > Z_{1}\} \cap \{t\bx : \bx \in \partial B_{1}(\bOh), d(\bx,\be) < \vartheta_{0}, t>0\})
\]
for $\vartheta_{0}$ defined as in \eqref{eq:sep.cyl.ends.from.other.ends}. Note that \eqref{eq:sep.cyl.ends.from.other.ends} and Proposition \ref{prop:sublinear-growth} guarantee that after taking $\tau_1$ even more negative, $\check\cM'(\tau) $ is\footnote{The point here being that there might be other components of $\check\cM(\tau)$ in $\Omega \cap \{z > Z_{1}\}$ corresponding to some other conical/cylindrical end that extends into $\{z>Z_{1}\}$, but for sufficiently negative times these components are disjoint and can be dropped without affecting the Brakke flow property.} a (rescaled) Brakke flow in $\Omega \cap \{z > Z_{1}\}$. 

Property (1) in Theorem \ref{theo:barriers-inner-outer} follows from the choice of $\eta'$ by taking $Z_1$ sufficiently large (i.e.~increasing $R$) and choosing $\delta_0>0$ sufficiently small. Property (2) in Theorem \ref{theo:barriers-inner-outer} follows from \eqref{eq:sep.cyl.ends.from.other.ends} and Proposition \ref{prop:sublinear-growth}. Finally, (3) in Theorem \ref{theo:barriers-inner-outer}  holds by assumption. Thus, we find that
\begin{multline}\label{eq:barrier.app.semi.globa.est}
\supp \check\cM'(\tau') \cap \{Z_{1} < z < (\eta')^{-\kappa}\}
 \subset\\
\{ (\bomega,z) + (w_{\tau'}(\bomega,z) + s) \nu_{\hat\Sigma}(\bomega) : z \in [Z_{1},(\eta')^{-\kappa}], s \in [0,(\eta') z^{\alpha}) \} ,
\end{multline}
where $\kappa = (1+\alpha)^{-1}$ is as in Theorem \ref{theo:barriers-inner-outer} and this holds for all $\tau' \leq \bar \tau$. 

For 
\[
b(-e^{-\tau}) = (2\delta(\tau))^{-\frac{1}{1+\alpha}}, \qquad \eps(-e^{-\tau}) = (2\delta(\tau))^{\frac{1}{1+\alpha}},
\]
we claim that $(b(t),\eps(t))$ forms a barrier estimate for $\check \cM$ in the sense of Definition \ref{defi:barrier.est}. Indeed, we observe that as $t\to -\infty$, $b(t) \to \infty$ and $\eps(t)\to 0$ since $\delta(\tau)\to 0$ as $\tau\to -\infty$. Finally, the assumption
\[
\supp\check\cM(\tau) \cap \big(U^\vartheta_\text{cyl}\setminus B_{2R_0}(\bOh)\big) \cap B_{b(t)}(\bOh) \subset U_{\eps(t)}(\Sigma_\text{cyl})
\]
follows from \eqref{eq:barrier.app.semi.globa.est}.  As such, Proposition \ref{prop:barrier.to.graph} yields $\bar\tau< 0$ so that along the cylindrical ends $\check\cM(\tau) \cap B_{\tfrac{1}{2} \delta(\tau)^{-\frac{1}{1+\alpha}}}(\bOh)$ is graphical with $C^{5}$ norm bounded by $\eps_{0}$ (fixed in Proposition \ref{prop:barrier.to.graph}) for all $\tau \leq \bar\tau$. 

The estimate $\check u (\cdot,\tau) - \tilde v(\cdot, \tau) \leq C \delta(\tau) z^{\alpha}$ then follows from the height bounds from the barrier \eqref{eq:barrier.app.semi.globa.est}. This proves (2). Interpolation yields (1). 

Now consider the case $\delta(\tau') = 0$ for some $\tau'<\tau_1$. Note that we  can repeat the first steps of the previous barrier arguments with a sequence $\eta_i>0$, $\eta_i \to 0$ to see that the support of $\check \cM(\tau)$ is contained in $\tilde\Sigma_\tau$ for all $\tau \leq \tau'$. Since $\check \cM(\tau)$ agrees with $\tilde\Sigma_\tau$ on $B_{2R}(\bOh)$ for all $\tau \leq \tau'$, the constancy Lemma implies that $\check \cM(\tau) = \tilde\Sigma_\tau$ with multiplicity one for a.e.~$\tau\leq \tau'$. Unit regularity then implies that $\check{M}(\tau) = \tilde\Sigma_\tau$ with multiplicity one for all $\tau \leq \tau_1$. 
\end{proof}

We extend \eqref{eq:defi.partial.seminorm.1} to the following semi-global norm
\begin{equation}\label{eq:defi.partial.seminorm.2} \Vert \cdot \Vert_{k,\alpha}(\rho)=   \Vert  \cdot \Vert_{k,\alpha; \text{cyl}}(\rho) + \Vert  \cdot \Vert_{k,\alpha;\Sigma_{\textnormal{con}}}^{(1)}\, .\end{equation}
The previous proposition and the decay along the core yields the following graphicality and decay estimate. Recall that $\mu$ is the first eigenvalue of $L_{\Sigma}$. 

\begin{corollary}[Semi-global decay estimate] \label{cor:global-decay.2}
For $\eps\in (0,|\mu|)$, there is $\delta_0>0$, $\vartheta>0$, $\tau_1\ll0$ depending on $\Sigma,\check\cM,\eps$ so that for $\tau \leq \tau_1$, 
\[
\check\cM(\tau) \lfloor (B_{3e^{-2\delta_0 \tau}}(\bOh) \cup U^\vartheta_\textnormal{con}) 
\]
is the graph of $\check u(\cdot,\tau)$ defined on some subset of $\Sigma$ with:
\begin{enumerate}
\item $\Vert \check u(\cdot, \tau) \Vert_{4,\alpha}(2 e^{-\delta_0 \tau}) \leq e^{2\delta_0\tau}$, and 
\item rotating $\Sigma$ so that some cylindrical end $\Sigma'$ points in the $\be$ direction, then $\check u(\cdot,\tau) \leq C e^{-(\mu-\eps)\tau} z^\alpha$ for $\tau \leq \tau_1, R\leq z \leq 2e^{-\delta_0\tau}$. 
\end{enumerate}
\end{corollary}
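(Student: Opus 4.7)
The plan is to combine the nearly sharp interior estimate of Theorem~\ref{thm:interior decay}, the barrier-to-graphicality upgrade of Proposition~\ref{prop:global-decay} (applied with reference flow $\tilde v_\tau \equiv 0$, so $\tilde\Sigma_\tau \equiv \Sigma$), and the conical-end Schauder estimate of Proposition~\ref{prop:estimates-conical-ends}. The one-sidedness hypothesis $\check\cM(\tau)\subset\Omega$ is given and is exactly what Proposition~\ref{prop:global-decay} requires in this setup.

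First, I would fix $R\geq R_1$ (from Proposition~\ref{prop:global-decay}) and a compact domain $\Sigma_0\Subset\Sigma$ with smooth boundary such that $\Sigma\cap\overline{B}_{4R}(\bOh)\Subset\Sigma_0$ and the first Dirichlet eigenvalue $\mu_0$ of $L_\Sigma$ on $\Sigma_0$ satisfies $\mu_0 > \mu - \eps/2$; this is possible since $\mu_0\to\mu$ as $\Sigma_0$ exhausts $\Sigma$. Theorem~\ref{thm:interior decay} then gives $\check u(\bx,\tau)\leq Ce^{-\mu_0\tau}$ on $\Sigma_0$ for $\tau\leq\bar\tau$. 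Since $\check u$ is $C^\infty$-small on compact sets as $\tau\to-\infty$ and satisfies $\partial_\tau u = Lu + E(u)$ with $E(u)$ quadratic in $(u,\nabla u,\nabla^2 u)$ (Corollary~\ref{coro:expand-rescaled-mcf-app}), interior parabolic Schauder estimates upgrade this to
\[
\|\check u(\cdot,\tau)\|_{C^{4,\alpha}(\Sigma\cap B_{4R}(\bOh))}\leq Ce^{-\mu_0\tau},
\]
and hence $\delta(\tau):=\sup_{\tau'\leq\tau}\|\check u(\cdot,\tau')\|_{4,\alpha;\Sigma\cap B_{4R}(\bOh)}\leq Ce^{-\mu_0\tau}\leq\hat\delta$ for $\tau$ sufficiently negative.

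I then apply Proposition~\ref{prop:global-decay}. In the trivial case where $\delta(\tau')=0$ for some $\tau'$, one concludes $\check\cM(\tau)=\Sigma$ for all $\tau\leq\tau_1$ and the conclusions hold vacuously. Otherwise, for $\tau\leq\bar\tau$ the flow is graphical on the cylindrical part up to radius $\tfrac12\delta(\tau)^{-1/(1+\alpha)}$, with
\[
\|\check u(\cdot,\tau)\|_{4,\alpha;\mathrm{cyl}}\!\bigl(\tfrac12\delta(\tau)^{-1/(1+\alpha)}\bigr)\leq C\delta(\tau)^\theta\leq Ce^{-\mu_0\theta\tau}
\]
for any fixed $\theta\in(0,1/(1+\alpha))$, and $\check u(\cdot,\tau)\leq C\delta(\tau)z^\alpha\leq Ce^{-\mu_0\tau}z^\alpha$ on the cylindrical end. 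Since $\mu_0 > \mu-\eps/2 > \mu-\eps$, the last estimate is precisely the pointwise bound asserted in (2). The conical contribution is controlled by Proposition~\ref{prop:estimates-conical-ends} (bootstrapping to $C^{4,\alpha}$ via one more round of interior Schauder), which feeds back the compact $C^0$-decay to give $\|\check u(\cdot,\tau)\|^{(1)}_{4,\alpha;\Sigma_{\mathrm{con}}}\leq Ce^{-\mu_0\tau}$. Assembling these three pieces yields a bound $\leq Ce^{-\mu_0\theta\tau}$ on the full semi-global norm \eqref{eq:defi.partial.seminorm.2}.

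The last step is calibrating parameters: choose $\theta\in(0,1/(1+\alpha))$, then $\delta_0>0$ small enough that $2\delta_0 < \min\{|\mu_0|/(1+\alpha),\,|\mu_0|\theta\}$ (and $\delta_0<\eps/2$). One then checks, for $\tau\leq\tau_1$ sufficiently negative, that $3e^{-2\delta_0\tau}\leq\tfrac12\delta(\tau)^{-1/(1+\alpha)}$ (so graphicality covers $B_{3e^{-2\delta_0\tau}}(\bOh)\cup U^\vartheta_{\mathrm{con}}$) and $Ce^{-\mu_0\theta\tau}\leq e^{2\delta_0\tau}$ (so the semi-global norm is bounded by $e^{2\delta_0\tau}$). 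There is no real obstacle here since Theorem~\ref{thm:interior decay} already gives a nearly sharp decay rate and Proposition~\ref{prop:global-decay} is flexible enough to absorb the small exponent $\eps$-loss; the only delicate bookkeeping is the simultaneous balancing of $|\mu_0|$, $\alpha$, $\theta$, and $\delta_0$ described above.
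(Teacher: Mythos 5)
Your choice of ingredients is the same as the paper's: Theorem~\ref{thm:interior decay} for nearly sharp decay on a large compact region, upgrade to $C^{4,\alpha}$ via Schauder (the paper phrases this as interpolation and first invokes Lemma~\ref{lem:decay-conical-ends} to set up the compact/conical graphicality), Proposition~\ref{prop:estimates-conical-ends} for the conical ends, and Proposition~\ref{prop:global-decay} with $\tilde v_\tau \equiv 0$ for the cylindrical ends. The architecture is correct.

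However, there is a sign slip in the eigenvalue calibration that breaks the argument as written. You impose ``$\mu_0 > \mu - \eps/2$'' and cite $\mu_0 \to \mu$ as $\Sigma_0$ exhausts $\Sigma$ as the reason this is achievable. But the first Dirichlet eigenvalue of a subdomain satisfies $\mu_0 > \mu$ for \emph{every} $\Sigma_0 \Subset \Sigma$, and $\mu_0 \searrow \mu$ as $\Sigma_0 \nearrow \Sigma$; thus ``$\mu_0 > \mu - \eps/2$'' encodes no constraint at all. The constraint you actually need is the opposite bound $\mu_0 < \mu + \eps$ (say $\mu_0 - \mu < \eps/2$), and \emph{that} is what requires $\Sigma_0$ large. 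Consequently, the concluding sentence ``Since $\mu_0 > \mu - \eps/2 > \mu - \eps$, the last estimate is precisely the pointwise bound asserted in (2)'' is not a valid implication: from $\check u \leq C e^{-\mu_0 \tau} z^\alpha$ one obtains $\check u \leq C e^{(-\mu - \eps)\tau} z^\alpha$ (the bound used when the corollary is invoked in Proposition~\ref{prop:nearly-optimal-decay}; the exponent $e^{-(\mu-\eps)\tau}$ printed in the statement of item (2) appears to be a typo for $e^{(-\mu-\eps)\tau}$) only when $-\mu_0 \geq -\mu - \eps$, i.e.\ $\mu_0 \leq \mu + \eps$; with the typo'd reading $e^{(-\mu+\eps)\tau}$ one would need $\mu_0 \leq \mu - \eps$, which is unattainable. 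Fix the direction of the inequality on $\mu_0$ and the rest of your argument goes through unchanged.
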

\begin{proof}
By Lemma \ref{lem:decay-conical-ends}, for any $R>0$ we can find $\tau_1\leq \tau_0$ sufficiently negative and a function $\check u$ defined on $\Sigma_\textrm{con} \cup B_{4R}(\bOh)$ for $\tau \leq \tau_1 < 0$ so that
\begin{equation}\label{eq:con-weak-decay-graphicality-first-lemma}
\lim_{\tau\to-\infty} \big(\Vert \check u(\cdot,\tau)\Vert_{k;\Sigma_{\textnormal{con}}}^{(1)} + \Vert \check u (\cdot,\tau)\Vert_{C^{k}(B_{5R}(\bOh))}\big) = 0 
\end{equation}
for all $k \in \NN$. 

On the other hand, taking $R$ even larger and $\tau_1$ even more negative, Theorem \ref{thm:interior decay}, together with interpolation, yields
 \begin{equation}\label{eq:inner-decay.2}
 \delta(\tau) \leq e^{(-\mu - \tfrac \varepsilon 8) \tau}  
 \end{equation}
 on $\Sigma \cap B_{4R}(\bOh)$ for $\tau \leq \tau_1$.  We combine \eqref{eq:inner-decay.2} with Proposition \ref{prop:estimates-conical-ends} to see that (taking $ \tau_1$ even more negative) for $\tau \leq \tau_1$
 \[
  \Vert \check u(\cdot,\tau)\Vert_{2,\alpha;\Sigma_{\textnormal{con}}}^{(1)}   \leq  e^{(-\mu - \tfrac \varepsilon 4) \tau} \, .\]
Interpolating with \eqref{eq:con-weak-decay-graphicality-first-lemma} we thus find (taking $\bar \tau$ even more negative) for $\tau \leq \bar \tau$
 \begin{equation}\label{eq:semi.global.decay.interior}
  \Vert \check u(\cdot,\tau)\Vert_{4,\alpha;\Sigma \cap B_{2R}(\bOh)} + \Vert \check u(\cdot,\tau)\Vert_{4,\alpha;\Sigma_{\textnormal{con}}}^{(1)}   \leq e^{(-\mu - \tfrac \varepsilon 2) \tau} \, .
 \end{equation}
After increasing $R$ such that $R\geq R_1$ (where $R_1$ is from Proposition \ref{prop:global-decay}) we can decrease $\tau_1$ further such that $\tau_1 \leq \bar{\tau}$ where $\bar\tau$ is as in Proposition \ref{prop:global-decay}. The estimate \eqref{eq:semi.global.decay.interior} combined with Proposition \ref{prop:global-decay} (applied with $\tilde v_\tau \equiv 0$) yield the asserted bounds. This completes the proof. 
\end{proof}

We furthermore record the following decay estimate for the weighted $L^2$-norm. We denote 
$$\Sigma_\rho = \Sigma_\textnormal{con} \cup (\Sigma \cap B_{2R}(\bOh)) \cup (\Sigma_\textnormal{cyl} \cap B_{\rho}(\bOh))\, .$$

\begin{proposition}[Integral decay estimate] \label{prop:nearly-optimal-decay} For every $\varepsilon >0$ there is $C>0$ and $\bar \tau \ll 0$ such that
$$ \int _{\Sigma_{2e^{-\delta_0\tau}}} \check u^2 e^{-\tfrac{1}{4} |\bx|^2} \leq C e^{2(-\mu - \varepsilon)\tau}\, .$$
 \end{proposition}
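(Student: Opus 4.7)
The plan is to decompose the integration domain as
\[
\Sigma_{2e^{-\delta_0\tau}} = \big(\Sigma \cap B_{2R}(\bOh)\big) \cup \Sigma_\textnormal{con} \cup \big(\Sigma_\textnormal{cyl} \cap B_{2e^{-\delta_0\tau}}(\bOh)\big)
\]
and to bound the weighted $L^2$-integral on each piece via the pointwise estimates already established, exploiting the Gaussian weight to absorb polynomial factors in $|\bx|$. In all three cases the strategy is the same: combine an exponentially decaying pointwise estimate for $\check u$ with the convergence of an $\bx$-weighted Gaussian integral.

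First, on the compact core, I would apply Theorem \ref{thm:interior decay} with a compact subdomain $\Sigma_0 \Subset \Sigma$ chosen so that $\Sigma \cap B_{2R}(\bOh) \Subset \Sigma_0$ and the first Dirichlet eigenvalue $\mu_0$ of $L_\Sigma$ on $\Sigma_0$ satisfies $\mu_0 \leq \mu + \eps/2$; this is possible since $\inf \mu_0 = \mu$. The theorem then yields $\check u(\cdot,\tau) \leq C e^{-\mu_0 \tau}$ on $\Sigma \cap B_{2R}(\bOh)$ for $\tau$ sufficiently negative, so that
\[
\int_{\Sigma \cap B_{2R}(\bOh)} \check u^2 e^{-|\bx|^2/4} \leq C e^{-2\mu_0 \tau} \leq C e^{2(-\mu - \eps)\tau}.
\]

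Second, on the conical ends, I would combine this same pointwise interior estimate with the weighted Schauder estimate from Proposition \ref{prop:estimates-conical-ends} to obtain $\|\check u(\cdot,\tau)\|^{(1)}_{2,\alpha;\Sigma_\textnormal{con}} \leq C e^{(-\mu - \eps/2)\tau}$—this is exactly the intermediate bound appearing within the proof of Corollary \ref{cor:global-decay.2}. It implies $|\check u(\bx)| \leq C e^{(-\mu-\eps/2)\tau} |\bx|$ on $\Sigma_\textnormal{con}$, and since $\int_{\Sigma_\textnormal{con}} |\bx|^2 e^{-|\bx|^2/4} < \infty$, the conical contribution is bounded by
\[
C e^{2(-\mu - \eps/2)\tau} \leq C e^{2(-\mu - \eps)\tau},
\]
using $\tau < 0$. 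For the cylindrical piece, Corollary \ref{cor:global-decay.2}(2) gives $\check u(\cdot,\tau) \leq C e^{-(\mu-\eps)\tau} z^\alpha$ along each asymptotic cylinder for $R \leq z \leq 2 e^{-\delta_0\tau}$. Parametrizing each cylindrical end by $(\bomega, z) \in \hat\Sigma \times [R,\infty)$ (modulo the negligible graphical correction $\tilde v$), one has $|\bx|^2 = |\bomega|^2 + z^2 + o(1)$, and the compactness of $\hat\Sigma$ renders $\int_{\Sigma_\textnormal{cyl}} z^{2\alpha} e^{-|\bx|^2/4}$ finite, so
\[
\int_{\Sigma_\textnormal{cyl} \cap B_{2e^{-\delta_0\tau}}(\bOh)} \check u^2 e^{-|\bx|^2/4} \leq C e^{2(-\mu + \eps)\tau} \leq C e^{2(-\mu - \eps)\tau}
\]
since $\tau < 0$. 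Summing the three pieces gives the claim.

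There is no serious obstacle: every pointwise estimate is already in place. The essential observation is that the Gaussian weight absorbs the polynomial-in-$|\bx|$ growth that arises on either type of end, so the sharpness of the exponential rate $e^{2(-\mu-\eps)\tau}$ is controlled entirely by Theorem \ref{thm:interior decay} on the compact core; the ends only need (and admit) estimates strictly stronger than $e^{(-\mu-\eps)\tau}$ in the exponent (after accounting for the sign of $\tau$).
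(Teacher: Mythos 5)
Your proposal is correct and follows essentially the same route as the paper's proof: decompose $\Sigma_{2e^{-\delta_0\tau}}$ into the compact core, conical ends, and truncated cylindrical ends, then feed the pointwise bounds from Theorem \ref{thm:interior decay}, Proposition \ref{prop:estimates-conical-ends}, and Corollary \ref{cor:global-decay.2}(2) into the Gaussian-weighted $L^2$ integral, using that $|\bx|^2$ and $z^{2\alpha}$ are absorbed by $e^{-|\bx|^2/4}$. The paper simply states ``combining these estimates yields the stated integral estimate,'' so you have merely made explicit the calculation the authors leave to the reader.
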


\begin{proof}
  Using Theorem \ref{thm:interior decay} and Proposition \ref{prop:estimates-conical-ends} (decreasing $\bar\tau$ if necessary) we see that for $\tau \leq \bar \tau$
 \[ \Vert \check u(\cdot,\tau)\Vert_{0;\Sigma \cap B_{2R}(\bOh)} + \Vert \check u(\cdot,\tau)\Vert_{0;\Sigma_{\textnormal{con}}}^{(1)}   \leq C e^{(-\mu -\varepsilon) \tau}\, .\]
 By Corollary \ref{cor:global-decay.2} we have along each cylindrical end $\Sigma'$ that (after rotating to point in the $\be$-direction),
 $$ \check u(\cdot,\tau) \leq C e^{(-\mu -\varepsilon) \tau} z^\alpha $$
 for $\tau \leq \bar \tau$, $R\leq z \leq 2 e^{-\delta_0\tau}$. Combining these estimates yields the stated integral estimate.
\end{proof}

\subsection{Spectral dynamics} 
In this section we continue to consider $(\Sigma,\Omega)\in \cS_n''$ and  $(\check \cM(\tau))_{\tau}$ an ancient integral unit regular rescaled Brakke flow with $(\supp \check \cM)_{\tau} \subset \Omega$ for all $\tau < T<0$. Assume that $\check \cM(\tau)$ limits in $C^{\infty}_{\textrm{loc}}$ to $\Sigma$ with multiplicity one as $\tau\to -\infty$.

Recall that we denote with $\varphi>0$ the first eigenfunction (corresponding to the eigenvalue $\mu$) of the operator $L$ along $\Sigma$, normalized such that $\|\varphi \|_{W} =1$. The following is the main result of this section.

\begin{theorem}[Pointwise dynamics on the compact part]\label{thm:decaying-mode}
There exists a unique constant $\check a=\check a(\check\cM) \geq 0$ with the following significance. Given any $R\gg 1$, there exist $C=C(\check\cM,R) >0$ and $\bar\tau=\bar \tau(\check\cM,R)  \ll-1$ such that
\[
\sup_{\Sigma \cap B_R(\bOh)} \left| \check u(\cdot, \tau) - \check a e^{-\mu \tau}\varphi(\cdot) \right| \leq C e^{\big(-\mu + \tfrac{\delta_0}{4}\big)\tau}
\]
holds for all $\tau \leq \bar\tau$, where $\check u$ and $\delta_0>0$ are as in Corollary \ref{cor:global-decay.2}.
\end{theorem}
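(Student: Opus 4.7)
The approach is a Merle--Zaag-type spectral analysis relative to the operator $L$. Because $\check u$ is only defined on the expanding region $\Sigma_{2e^{-\delta_0\tau}}$ (Corollary \ref{cor:global-decay.2}), not all of $\Sigma$, we work with a smooth spatial cutoff $\chi_\tau$ equal to $1$ on $\Sigma_{e^{-\delta_0\tau}}$ and $0$ outside $\Sigma_{2e^{-\delta_0\tau}}$, with $|\nabla^k\chi_\tau|\leq C_k e^{k\delta_0\tau}$ and $|\partial_\tau \chi_\tau|\leq C\delta_0 e^{-\delta_0\tau}$ supported on the annular region $\{e^{-\delta_0\tau}\leq |\bx|\leq 2e^{-\delta_0\tau}\}$. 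Set $\tilde u := \chi_\tau \check u$ and define the Rayleigh coefficient $a(\tau) := \langle \tilde u(\cdot,\tau),\varphi\rangle_W$ and the orthogonal remainder $w(\cdot,\tau) := \tilde u(\cdot,\tau) - a(\tau)\varphi$. By Corollary \ref{coro:expand-rescaled-mcf-app}, on the graphical region $\check u$ satisfies the quasilinear parabolic equation
\[
\partial_\tau \check u = L\check u + E(\check u),\qquad E(\check u)=\check u F +\nabla_\Sigma \check u\cdot \mathbf{F}+\nabla^2_\Sigma \check u \cdot \mathcal{F},
\]
where the coefficients $F,\mathbf{F},\mathcal{F}$ vanish at $\check u=0$ and are linear to leading order in $(\check u,\nabla \check u)$ (Lemma \ref{lemm:relative-shrinker-mean-curvature}).

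The two sources of error in the evolution of $(\tilde u, a, w)$ are the commutator contributions $[\partial_\tau -L,\chi_\tau]\check u$ and the nonlinearity $\chi_\tau E(\check u)$. Both need to be estimated in the Gaussian weighted $L^2$ norm. For the commutator: the relevant derivatives of $\chi_\tau$ are supported where $|\bx|\sim e^{-\delta_0\tau}$, where the Gaussian factor $e^{-|\bx|^2/4}\leq \exp(-\tfrac14 e^{-2\delta_0\tau})$ is super-exponentially small in $\tau$, while $|\check u|$ and $|\nabla \check u|$ grow only polynomially along the cylindrical region by part (2) of Corollary \ref{cor:global-decay.2}; these contributions are therefore negligible at any algebraic rate in $\tau$. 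For the nonlinearity: combining the weighted $L^2$ bound $\int_{\Sigma_{2e^{-\delta_0\tau}}}\check u^2 e^{-|\bx|^2/4}\leq Ce^{2(-\mu-\varepsilon)\tau}$ from Proposition \ref{prop:nearly-optimal-decay} with the pointwise $C^4$ bound from Corollary \ref{cor:global-decay.2}(1), one gets $\Vert \chi_\tau E(\check u)\Vert_W\leq Ce^{2(-\mu-\varepsilon)\tau}$ for any $\varepsilon>0$.

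Next, derive ODE inequalities. Projecting onto $\varphi$ and using $L\varphi=-\mu\varphi$:
\[
a'(\tau)+\mu a(\tau)=\langle \chi_\tau E(\check u),\varphi\rangle_W + \text{(cutoff errors)} = \mathcal{O}\!\left(e^{2(-\mu-\varepsilon)\tau}\right).
\]
Integrating the factor $e^{\mu\tau}a(\tau)$ and using that $a(\tau)\to 0$ shows $e^{\mu\tau}a(\tau)\to \check a$ for some limit $\check a\in\mathbb{R}$, with $|e^{\mu\tau}a(\tau)-\check a|\leq Ce^{(-\mu-2\varepsilon)\tau}$. Nonnegativity $\check a\geq 0$ follows from $\check u>0$, so $a(\tau)>0$ for $\tau\ll 0$. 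For the orthogonal component, integration by parts and the spectral gap $\mu<\mu_2$ (where $\mu_2$ is the second eigenvalue of $L$) yield
\[
\tfrac{d}{d\tau}\Vert w\Vert_W^2 \leq -2\mu_2\Vert w\Vert_W^2 + 2\Vert w\Vert_W \Vert\chi_\tau E(\check u)\Vert_W + \text{(cutoff errors)},
\]
so $\Vert w(\cdot,\tau)\Vert_W\leq Ce^{-(\mu_2-\varepsilon)\tau}$ by a standard Gronwall-type argument backward in $\tau$; choosing $\varepsilon$ small enough (depending on the spectral gap $\mu_2-\mu$) makes this bound stronger than $e^{-(\mu-\delta_0/4)\tau}$.

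Combining the two bounds, $\Vert\tilde u(\cdot,\tau)-\check a e^{-\mu\tau}\varphi\Vert_W\leq Ce^{-(\mu-\delta_0/4)\tau}$ after shrinking $\delta_0$ if necessary. To upgrade this weighted $L^2$ control on compact subsets to pointwise $L^\infty$ control on $\Sigma\cap B_R(\bOh)$, use interior parabolic Schauder estimates for the equation $\partial_\tau(\check u-\check a e^{-\mu\tau}\varphi)=L(\check u-\check a e^{-\mu\tau}\varphi)+E(\check u)$ together with the uniform $C^4$ bound on $\Sigma\cap B_{4R}(\bOh)$ from Corollary \ref{cor:global-decay.2}(1), followed by interpolation between the weighted $L^2$ estimate and the $C^4$ bound. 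Uniqueness of $\check a$ is immediate since the asserted estimate forces $\check a=\lim_{\tau\to-\infty}e^{\mu\tau}\langle \check u(\cdot,\tau),\varphi\rangle_{W,\Sigma\cap B_R}$.

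The main obstacle is handling the cutoff commutator and the nonlinear error simultaneously: one must verify that the gain from the Gaussian weight at the cutoff scale $|\bx|\sim e^{-\delta_0\tau}$ dominates the growth of $\check u$ along the cylindrical ends (controlled by Corollary \ref{cor:global-decay.2}(2)), and that the quadratic error has strictly better decay than the first-order mode $e^{-\mu\tau}$. Both are ensured by the near-optimal $L^2$ decay of Proposition \ref{prop:nearly-optimal-decay} combined with the sublinear growth of the graphical radius, once $\delta_0$ is chosen small relative to both $|\mu|$ and the spectral gap $\mu_2-\mu$.
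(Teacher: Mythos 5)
Your plan follows the same Merle--Zaag strategy that the paper uses for this theorem: cut off $\check u$ at scale $|\bx|\sim e^{-\delta_0\tau}$, project onto the first eigenmode and its $L^2_W$-orthogonal complement, derive ODE inequalities, and integrate. Your treatment of the cutoff commutator (Gaussian weight at the cutoff scale dominates the polynomial growth from Corollary~\ref{cor:global-decay.2}(2)) is exactly Lemma~\ref{prop:estimate-error-hat-u}. However, your handling of the nonlinear error is where the proposal papers over the real technical content of this step.

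You assert the norm bound $\Vert \chi_\tau E(\check u)\Vert_W\leq Ce^{2(-\mu-\varepsilon)\tau}$ by ``combining'' Proposition~\ref{prop:nearly-optimal-decay} with the $C^4$ bound from Corollary~\ref{cor:global-decay.2}(1). This is too strong, for two separate reasons. First, Corollary~\ref{cor:global-decay.2}(1) only gives the uniform bound $\Vert\check u\Vert_{4,\alpha}(2e^{-\delta_0\tau})\leq e^{2\delta_0\tau}$, not $e^{(-\mu-\varepsilon)\tau}$; multiplying this against $\Vert\check u\Vert_W\lesssim e^{(-\mu-\varepsilon)\tau}$ yields at best $\lesssim e^{(2\delta_0-\mu-\varepsilon)\tau}$, which exceeds $e^{2(-\mu-\varepsilon)\tau}$ by a factor $\sim e^{(-|\mu|+2\delta_0+\varepsilon)\tau}\to\infty$. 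Second, the $E_2(\nabla\check u,\nabla\check u)$ contribution requires something like $\Vert\chi_\tau\nabla\check u\Vert_W$, for which there is no direct estimate in the available hypotheses. The paper circumvents this entirely: it never estimates $\Vert\hat E\Vert_W$, only the duality pairings $\langle P_i\hat u,\hat E\rangle_W$, by integrating by parts to move the derivative off $\nabla\hat u$ (Lemmas~\ref{lem:spectral-control-error.1}, \ref{lem:spectral-control-error.2}), controlling the resulting $\bx^T$ factor from the Gaussian weight by $|\bx^T|\leq 2e^{-\delta_0\tau}$ on the cutoff support together with Proposition~\ref{prop:global-decay}(2). If you prefer to keep a norm bound, you can recover one of the form $\Vert\chi_\tau E\Vert_W\lesssim e^{(2\delta_0-\mu-\varepsilon)\tau}$ by interpolating between the pointwise $C^4$ bound and the polynomial-growth bound $\check u\lesssim e^{(-\mu-\varepsilon)\tau}z^\alpha$ along the cylindrical ends, with the Gaussian weight absorbing the $z^\alpha$ growth --- but this needs to be done, not just asserted.

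Fortunately, the corrected exponent $e^{(2\delta_0-\mu-\varepsilon)\tau}$ is still strictly faster than $e^{-\mu\tau}$, so your ODE integrations go through with the rates adjusted: the error in $e^{\mu\tau}a(\tau)$ becomes $O(e^{(2\delta_0-\varepsilon)\tau})$, and the Gronwall step gives $\Vert w\Vert_W\lesssim e^{(2\delta_0-\mu-\varepsilon)\tau}$ rather than $e^{-(\mu_2-\varepsilon)\tau}$ --- the spectral gap $\mu_2-\mu>0$ enters only to kill the boundary term at $\tau\to-\infty$, not to set the final rate. One further organizational difference: the paper tracks $e^{2\mu\tau}\Vert P_1\hat u\Vert_W^2$ (Theorem~\ref{thm:L^2-decay}) and introduces the regularizer $\exp(-\tfrac14 e^{-2\delta_0\tau})$ into $\hat U_1$, which sidesteps any concern about $a(\tau)$ being small or vanishing; your direct integration of $a(\tau)$ is cleaner and also works because you never divide by $a$.
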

Recall that from Corollary \ref{coro:expand-rescaled-mcf-app}  we have that $\check u$ satisfies 
\begin{equation}\label{eq:evolution-check-u}
 \partial_\tau \check u = L\check u + E
 \end{equation}
where $E = \check u E_1 + E_2(\nabla \check u, \nabla \check u)$ satisfy the estimates given in Corollary \ref{coro:expand-rescaled-mcf-app} and in Lemma \ref{lemm:conical-end-decomp-error-lin} along the conical ends. 

We consider a smooth cut-off function $\eta\geq0$ satisfying $\eta(r) =1 $ for $|r|\leq 1, \eta(r) =0$ for $|r|\geq 2, |\eta'|\leq 2$, and $|\eta''| \leq 2$. We define on $\Sigma$ 
\begin{equation}\label{eq:defn.hat.u}
\hat u(\bx,\tau) = \check u (\bx, \tau) \,\hat\eta(\bx,\tau)\, ,
\end{equation}
where $\hat \eta =1$ on $\Sigma \setminus \Sigma_\textnormal{cyl}$, and $\hat \eta(\bx,\tau) = \eta(e^{\delta_0 \tau} |\bx|)$ on $\Sigma_\textnormal{cyl}$. Then we have
\begin{equation}\label{eq:evolution-hat-u}
\partial_\tau \hat u = L\hat u + \hat E =  L\hat u + \hat u E_1 + E_2(\nabla \hat u, \nabla \check u) + F
\end{equation}
where
\begin{equation*}\label{eq:error-hat-u}
 F = - 2 \nabla \hat \eta \cdot \nabla \check u - \check u\, \Delta \hat \eta + \tfrac{1}{2} \check u\, (\bx^T\cdot \nabla \hat \eta) - \check u\, E_2(\nabla \hat \eta, \nabla \check u) + \check u\, \partial_\tau \hat \eta\, .
 \end{equation*}
 
\begin{lemma}[Cutoff error estimate I] \label{prop:estimate-error-hat-u} There exists $\bar\tau = \bar \tau(\check\cM) \ll -1$ so that for  $\tau \leq \bar\tau$ 
$$ \|F(\cdot,\tau)\|_W \leq C e^{5\delta_0 \tau} \exp \left( -\tfrac{1}{4} e^{-2\delta_0 \tau} \right) \, .$$
 \end{lemma}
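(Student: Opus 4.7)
The plan is to exploit that $F$ is supported in the narrow annular region where $\hat\eta$ transitions from $1$ to $0$ and to use the super-exponentially small Gaussian weight on that region. The various polynomial-in-$e^{\delta_0\tau}$ factors will be swamped by the Gaussian decay.

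First I would identify the support of $F$. Since $\hat\eta\equiv 1$ on $\Sigma\setminus\Sigma_\textnormal{cyl}$, the error $F$ vanishes there. On $\Sigma_\textnormal{cyl}$, all the cutoff derivatives entering $F$ are supported on the annular region
$$A_\tau := \{\bx\in\Sigma_\textnormal{cyl} : e^{-\delta_0\tau}\leq |\bx|\leq 2e^{-\delta_0\tau}\}.$$
Writing $s = e^{\delta_0\tau}|\bx|$ and using the uniform bounds $|\eta'|,|\eta''|\leq 2$, a direct computation (noting that $\bx^T\cdot\nabla|\bx|$ and $\Delta_\Sigma|\bx|$ are uniformly bounded on $\Sigma_\textnormal{cyl}$) yields on $A_\tau$
$$|\nabla\hat\eta|\leq C e^{\delta_0\tau},\quad |\Delta\hat\eta|\leq C e^{2\delta_0\tau},\quad |\bx^T\!\cdot\!\nabla\hat\eta|\leq C,\quad |\partial_\tau\hat\eta|\leq C.$$

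Next I would invoke Corollary~\ref{cor:global-decay.2}(1) which, for $\bar\tau$ chosen negative enough, gives graphicality of $\check\cM(\tau)$ on a region containing $A_\tau$ together with the bound $\|\check u(\cdot,\tau)\|_{4,\alpha}(2e^{-\delta_0\tau})\leq e^{2\delta_0\tau}$; in particular $|\check u|+|\nabla\check u|\leq C e^{2\delta_0\tau}$ on $A_\tau$. Combined with uniform boundedness of the coefficient tensor $E_2(\cdot,\cdot)$ from Corollary~\ref{coro:expand-rescaled-mcf-app}, each of the five terms making up $F$ admits a pointwise bound on $A_\tau$ of the form $C e^{K\delta_0\tau}$ with $K\geq 2$; keeping the weakest exponent yields $|F|\leq C e^{2\delta_0\tau}$ on $A_\tau$ (the slowest-decaying terms being those involving $\bx^T\!\cdot\!\nabla\hat\eta$ and $\partial_\tau\hat\eta$).

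Finally I would integrate. Since $A_\tau$ is an annular piece of a cylindrical end of $\Sigma$, its $n$-dimensional measure grows at most linearly in $e^{-\delta_0\tau}$, i.e.\ $\cH^n(A_\tau)\leq C e^{-\delta_0\tau}$; and the Gaussian weight satisfies $e^{-|\bx|^2/4}\leq \exp(-\tfrac14 e^{-2\delta_0\tau})$ pointwise on $A_\tau$. Thus
$$\|F\|_W^2 = (4\pi)^{-n/2}\!\int_{A_\tau}|F|^2 e^{-|\bx|^2/4}\,d\cH^n \leq C e^{3\delta_0\tau}\exp\!\bigl(-\tfrac14 e^{-2\delta_0\tau}\bigr).$$
Taking square roots and absorbing the remaining polynomial factor into the super-exponentially decaying exponential (which dominates any power of $e^{\delta_0\tau}$ as $\tau\to-\infty$) gives the stated estimate.

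The only point requiring genuine care is that Corollary~\ref{cor:global-decay.2} applies throughout all of $A_\tau$, which forces $\bar\tau$ to be chosen negative enough that the graphical radius established there exceeds $2e^{-\delta_0\tau}$; once this is arranged the argument is a routine book-keeping of exponents, since the Gaussian decay crushes everything else.
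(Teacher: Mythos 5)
Your approach is essentially the same as the paper's: identify the support of $F$ as the annulus $A_\tau=\{e^{-\delta_0\tau}\le|\bx|\le 2e^{-\delta_0\tau}\}$ in $\Sigma_\textnormal{cyl}$, bound the cutoff derivatives, use Corollary~\ref{cor:global-decay.2} to get $|\check u|+|\nabla\check u|\le Ce^{2\delta_0\tau}$ there, and integrate against the Gaussian weight. The pointwise bound $|F|\le Ce^{2\delta_0\tau}$ on $A_\tau$ is correct, and you are right that the super-exponential Gaussian factor is what drives the estimate. The paper does exactly this, but it sharpens the measure-theoretic step: rather than bounding $\int_{A_\tau}e^{-|\bx|^2/4}$ by $\sup_{A_\tau}e^{-|\bx|^2/4}\cdot\cH^n(A_\tau)\le Ce^{-\delta_0\tau}e^{-\tfrac14 e^{-2\delta_0\tau}}$, it factors the cylinder integral and uses the Gaussian-tail asymptotic $\int_{R}^\infty e^{-r^2/4}\,dr\le CR^{-1}e^{-R^2/4}$, which replaces your $e^{-\delta_0\tau}$ by $e^{\delta_0\tau}$ and hence yields the sharper prefactor $e^{5\delta_0\tau}$ in front of the Gaussian.

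The genuine error is your final "absorption" step. From $\|F\|_W^2\le Ce^{3\delta_0\tau}\exp(-\tfrac14 e^{-2\delta_0\tau})$ you get $\|F\|_W\le Ce^{3\delta_0\tau/2}\exp(-\tfrac18 e^{-2\delta_0\tau})$ — the exponential coefficient \emph{halves} under the square root. You then claim this can be upgraded to $Ce^{5\delta_0\tau}\exp(-\tfrac14 e^{-2\delta_0\tau})$ by "absorbing the polynomial factor into the exponential"; but both the polynomial prefactor ($e^{3\delta_0\tau/2}\gg e^{5\delta_0\tau}$ as $\tau\to-\infty$) and the exponential ($\exp(-\tfrac18 e^{-2\delta_0\tau})\gg\exp(-\tfrac14 e^{-2\delta_0\tau})$) are on the wrong side, and the exponential mismatch is itself super-exponential, so no polynomial factor can compensate. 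The correct absorption principle runs the other way: $e^{a\tau}\exp(-ce^{-2\delta_0\tau})\le Ce^{b\tau}\exp(-c'e^{-2\delta_0\tau})$ for $\tau\ll 0$ requires $c'<c$ (or $c'=c$ and $b\ge a$). What your calculation — and in fact the paper's — honestly produces is a bound with $\exp(-\tfrac18 e^{-2\delta_0\tau})$ after the square root; the displayed bound in the lemma is really a bound on $\|F\|_W^2$, and the paper's proof is computing $\|F\|_W^2$ throughout without writing the square. This is harmless for the downstream applications (Lemmas~\ref{lem:spectral-control-error.1}--\ref{lem:L^2-optimal-decay} only need some super-exponential decay of the form $e^{-c\,e^{-2\delta_0\tau}}$), but you should have flagged that the conclusion as literally stated does not follow from your estimate rather than asserting it can be absorbed.
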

 
 \begin{proof} Combining Corollary \ref{coro:expand-rescaled-mcf-app} and Proposition \ref{prop:global-decay} we have
 $ E_2(\nabla \hat \eta, \nabla \check u) \leq C |\nabla \hat \eta| |\nabla \check u|$ and thus
 $$ |F(\bx,\tau)| \leq C \|\check u(\cdot, \tau) \|_{C^1(\Sigma_\textnormal{cyl} \cap \{ e^{-\delta_0 \tau} \leq |\bx| \leq 2 e^{-\delta_0 \tau}\})} \chi(\bx, \tau)\, ,$$
 where $\chi =1$ for $\Sigma_\textnormal{cyl} \cap \{ e^{-\delta_0 \tau} \leq |\bx| \leq 2 e^{-\delta_0 \tau}\}$ and $\chi = 0$ otherwise. Thus Proposition \ref{prop:global-decay} yields
 $$ \|F(\cdot, \tau)\|_W \leq C e^{4\delta_0 \tau} \int_\Sigma  \chi^2 e^{-\tfrac{1}{4} |\bx|^2} \leq C   e^{4\delta_0 \tau} \int_{e^{-\delta_0\tau}}^\infty e^{-\tfrac{1}{4} r^2}\, dr \leq C e^{5\delta_0 \tau} \exp \left( -\tfrac{1}{4} e^{-2\delta_0 \tau} \right)\, .$$
This completes the proof. \end{proof}

\begin{lemma}[Cutoff error estimate II]\label{lem:spectral-control-error.1}
There exists $\bar\tau = \bar \tau(\check\cM) \ll -1$ so that for  $\tau \leq \bar\tau$ 
$$ \big| \langle \hat u(\cdot,\tau), \hat E(\cdot,\tau) \rangle_W\big| \leq C e^{\delta_0 \tau} \| \hat u(\cdot,\tau)\|_W^2 +  C e^{5\delta_0 \tau} \exp \left( -\tfrac{1}{4} e^{-2\delta_0 \tau} \right) \|\hat u(\cdot, \tau)\|_W\, .$$
\end{lemma}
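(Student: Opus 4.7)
The plan is to expand $\hat E = \hat u E_1 + E_2(\nabla \hat u, \nabla \check u) + F$ and estimate $\langle \hat u, \cdot\rangle_W$ for each of the three pieces separately. The contribution from $F$ is immediate: by Cauchy--Schwarz and Lemma \ref{prop:estimate-error-hat-u},
\[
|\langle \hat u, F\rangle_W| \leq \|\hat u\|_W\|F\|_W \leq Ce^{5\delta_0\tau}\exp\!\left(-\tfrac{1}{4}e^{-2\delta_0\tau}\right)\|\hat u\|_W,
\]
which accounts for the second term in the claimed bound.

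For the $\hat u E_1$ contribution I will use the pointwise structure of $E_1$ coming from Corollary \ref{coro:expand-rescaled-mcf-app} (together with its refinement along the conical ends in Lemma \ref{lemm:conical-end-decomp-error-lin}): $|E_1| \leq C(|\check u|+|\nabla\check u|)$ up to a tame weight in $|\bx|$. On the cylindrical part of $\supp\hat u \subset \Sigma \cap B_{2e^{-\delta_0\tau}}(\bOh) \cup \Sigma_\textnormal{con}$, Corollary \ref{cor:global-decay.2} gives $|\check u|+|\nabla\check u|\leq Ce^{2\delta_0\tau}$, while along $\Sigma_\textnormal{con}$ the same corollary controls the weighted conical norm. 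In either case one gets
\[
|\langle \hat u, \hat u E_1\rangle_W| \leq Ce^{2\delta_0\tau}\|\hat u\|_W^2 \leq Ce^{\delta_0\tau}\|\hat u\|_W^2,
\]
absorbing $|\bx|$-factors against the Gaussian weight when necessary.

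The genuinely delicate term is the middle one. Write $E_2(\nabla \hat u, \nabla \check u) = G^{ij}\,\partial_i \hat u\,\partial_j \check u$ where, by the explicit bilinear structure from Appendix \ref{app:graph-shrinker}, $|G^{ij}|\leq C$ and $|\partial_k G^{ij}| \leq C(|\nabla\check u|+|\nabla^2\check u|)$. A naive Cauchy--Schwarz produces $\|\nabla \hat u\|_W$, which we cannot control by $\|\hat u\|_W$ alone; a direct $L^\infty$ bound $|\nabla \hat u|\leq Ce^{2\delta_0\tau}$ only gives $Ce^{4\delta_0\tau}\|\hat u\|_W$, and Young's inequality then leaves a residue $Ce^{7\delta_0\tau}$ that does not fit the super-exponentially small error term of the statement. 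I therefore integrate by parts using $\hat u\,\partial_i \hat u = \tfrac 12 \partial_i(\hat u^2)$:
\begin{align*}
\langle \hat u, E_2(\nabla \hat u, \nabla \check u)\rangle_W
&= \int \tfrac12 G^{ij}\,\partial_i(\hat u^2)\,\partial_j\check u\, e^{-\frac14|\bx|^2}\\
&= -\tfrac12\int \hat u^2\Big[(\partial_i G^{ij})\partial_j\check u + G^{ij}\partial_i\partial_j\check u - \tfrac12 G^{ij}\partial_j\check u\,\bx^T_i\Big] e^{-\frac14|\bx|^2}.
\end{align*}
No boundary terms appear because of the Gaussian weight and because $\hat u$ has compact support in the cylindrical direction. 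Each of the three integrands is pointwise bounded by $C(1+|\bx|)\|\check u\|_{C^2(\supp\hat u)}\,\hat u^2$, and $\|\check u\|_{C^2(\supp\hat u)}\leq Ce^{2\delta_0\tau}$ by Corollary \ref{cor:global-decay.2}; since $(1+|\bx|)e^{-|\bx|^2/4}$ is dominated by a constant multiple of $e^{-|\bx|^2/8}$ (and hence by a tame multiple of $e^{-|\bx|^2/4}$ after absorbing into the Gaussian), we obtain
\[
|\langle \hat u, E_2(\nabla \hat u, \nabla \check u)\rangle_W|\leq Ce^{2\delta_0\tau}\|\hat u\|_W^2 \leq Ce^{\delta_0\tau}\|\hat u\|_W^2.
\]
Summing the three estimates yields the lemma. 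The main obstacle is precisely step~3: avoiding the appearance of $\|\nabla\hat u\|_W$, which the above integration by parts circumvents by transferring the derivative onto $\check u$, whose $C^2$-norm on the graphical region decays like $e^{2\delta_0\tau}$ by Corollary \ref{cor:global-decay.2}.
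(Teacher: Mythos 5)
Your overall plan matches the paper's proof: split $\hat E$ into the three pieces, dispose of $F$ by Cauchy--Schwarz and Lemma \ref{prop:estimate-error-hat-u}, bound the $\hat u E_1$ term pointwise using the semi-global decay from Corollary \ref{cor:global-decay.2}, and handle $E_2(\nabla\hat u,\nabla\check u)$ by writing $\hat u\,\partial_i\hat u=\tfrac12\partial_i(\hat u^2)$ and integrating by parts so that no $\|\nabla\hat u\|_W$ appears. The integration by parts is exactly the paper's move.

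However, the last step of your $E_2$ estimate contains a genuine error. You need to bound
\[
\int_\Sigma \hat u^2\,|\bx^T|\,|\nabla\check u|\,e^{-\frac14|\bx|^2}
\]
by a tame multiple of $\|\hat u\|_W^2=\int_\Sigma\hat u^2\,e^{-\frac14|\bx|^2}$, and you propose to do this by writing $(1+|\bx|)e^{-|\bx|^2/4}\leq C e^{-|\bx|^2/8}$ and then ``absorbing into the Gaussian''. But $e^{-|\bx|^2/8}$ is not bounded by a constant times $e^{-|\bx|^2/4}$; the ratio $e^{|\bx|^2/8}$ is unbounded. Since $\hat u$ is supported out to radius $\sim e^{-\delta_0\tau}$ on the cylindrical ends, replacing $e^{-|\bx|^2/4}$ by $e^{-|\bx|^2/8}$ would cost a super-exponential factor $\exp(\tfrac12 e^{-2\delta_0\tau})$ and the estimate collapses. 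There is no way to absorb a growing polynomial weight into the Gaussian on the left and still come out with $\|\hat u\|_W^2$ on the right.

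What actually closes this term is region-dependent, and uses the cutoff rather than the Gaussian: on $\Sigma_\textnormal{cyl}\cap\supp\hat u$ one has $|\bx^T|\leq 2e^{-\delta_0\tau}$ by construction of $\hat\eta$, so together with $|\nabla\check u|\leq Ce^{2\delta_0\tau}$ from Corollary \ref{cor:global-decay.2} (recalling $\delta_0<\tfrac{|\mu|-\eps}{\alpha}$, so $\delta(\tau)z^\alpha$ stays $\leq Ce^{2\delta_0\tau}$ on that support) the pointwise bound is $|E_2(\bx^T,\nabla\check u)|\leq Ce^{-\delta_0\tau}\cdot e^{2\delta_0\tau}=Ce^{\delta_0\tau}$; on $\Sigma_\textnormal{con}$ the cutoff is absent, but the weighted structure of $\mathbf{E}_2$ from Lemma \ref{lemm:conical-end-decomp-error-lin} carries an $r^{-1}$ factor that exactly cancels $|\bx^T|\sim r$, giving $|E_2(\bx^T,\nabla\check u)|\leq C|\nabla\check u|\leq Ce^{2\delta_0\tau}$. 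Either way the integrand is $\leq Ce^{\delta_0\tau}\hat u^2\,e^{-\frac14|\bx|^2}$ and the lemma follows. You should replace the faulty Gaussian-absorption claim with this two-region argument.
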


\begin{proof}
  Combining Corollary \ref{coro:expand-rescaled-mcf-app} and Proposition \ref{prop:global-decay} we find
  $$ \big| \langle \hat u(\cdot,\tau), \hat u(\cdot,\tau)\, E_1(\cdot,\tau) \rangle_W\big| \leq C \| \check u(\cdot, \tau) \|_{4}(2 e^{-\delta_0 \tau}) \|\hat u(\cdot, \tau)\|^2_W \leq C e^{\delta_0\tau}  \|\hat u(\cdot, \tau)\|^2_W\, .$$
 To control the second error term, we note that using again Corollary \ref{coro:expand-rescaled-mcf-app}, and Proposition \ref{prop:global-decay}
 \begin{equation*}
 \begin{split}
 \big| \langle \hat u(\cdot,\tau),\,   &E_2(\nabla \hat u(\cdot,\tau), \nabla \check u(\cdot,\tau)) \rangle_W\big| = \frac{1}{2} \left| \int_\Sigma E_2(\nabla \hat u^2(\cdot,\tau), \nabla \check u(\cdot,\tau))\ e^{-\frac{1}{4} |\bx|^2}\right|\\
 & = \frac{1}{2} \left| \int_\Sigma \hat u^2\Div \!\Big({E}^\#_2(\nabla \check u(\cdot,\tau)) \, e^{-\frac{1}{4} |\bx|^2}\Big)\right|\\
 & \leq \frac{1}{2}  \left| \int_\Sigma \hat u^2\Div \!\big({E}^\#_2(\nabla \check u(\cdot,\tau))\big) \, e^{-\frac{1}{4} |\bx|^2}\right|
 + \frac{1}{4}  \left| \int_\Sigma \hat u^2 {E}_2(\bx^T,\nabla \check u(\cdot,\tau))\big) \, e^{-\frac{1}{4} |\bx|^2}\right|\\
 & \leq C e^{2\delta_0 \tau} \|\hat u\|_W^2 + \frac{1}{4}  \left| \int_\Sigma \hat u^2 {E}_2(\bx^T,\nabla \check u(\cdot,\tau))\big) \, e^{-\frac{1}{4} |\bx|^2}\right| 
 \end{split}
\end{equation*} 
where ${E}^\#_2$ is the $(1,1)$-tensor obtained via the natural isomorphism $\#:T^*\Sigma \to T\Sigma$ induced via $g$, acting on the first entry of $E_2(\cdot,\cdot)$. Note that along the conical ends we have from Lemma \ref{lemm:conical-end-decomp-error-lin} and Proposition \ref{prop:global-decay} that
$$ |E_2(\bx^T, \nabla \check u(\cdot,\tau))| \leq C |\nabla \check u(\cdot,\tau)| \leq C e^{2\delta_0 \tau}\, , $$
whereas along the cylindrical ends, since $|\bx^T|\leq 2 e^{-\delta_0\tau}$ on the support of $\hat u$, we have with Corollary \ref{coro:expand-rescaled-mcf-app} and (2) in Proposition \ref{prop:global-decay} (recalling that $\delta_{0} < \tfrac{|\mu|-\eps}{\alpha}$).
$$ |E_2(\bx^T, \nabla \check u(\cdot,\tau))| \leq C e^{-\delta_0 \tau} |\nabla \check u(\cdot,\tau)| \leq C e^{\delta_0 \tau}\, .$$
This implies
$$\big| \langle \hat u(\cdot,\tau),\,   E_2(\nabla \hat u(\cdot,\tau), \nabla \check u(\cdot,\tau)) \rangle_W\big|  \leq C e^{\delta_0 \tau} \|\hat u\|_W^2\, .$$
Using Lemma \ref{prop:estimate-error-hat-u} together with 
$$\big| \langle \hat u(\cdot,\tau), F(\cdot,\tau) \rangle_W\big| \leq \|\hat u(\cdot,\tau)\|_W \|F(\cdot,\tau)\|_W$$
yields the desired result.
\end{proof}

We define orthogonal projections $P_i:L^2_W(\Sigma) \to L^2_W(\Sigma)$ via
$$ P_1(v) := \langle v, \varphi \rangle_W \varphi \qquad \text{and} \qquad P_2(v) := v - P_1(v)\, .$$

\begin{lemma}[Cutoff error estimate III]\label{lem:spectral-control-error.2}
There exists $\bar\tau=\bar\tau(\check\cM) \ll -1$ such that for  $\tau \leq \bar\tau$ and $i=1,2$
\begin{equation}\label{eq:spectral-control-error.2}
\big| \langle P_i\hat u(\cdot,\tau), \hat E(\cdot,\tau) \rangle_W\big|  \leq C e^{\delta_0 \tau} \| \hat u(\cdot,\tau)\|_W^2 +  C e^{5\delta_0 \tau} \exp \left( -\tfrac{1}{4} e^{-2\delta_0 \tau} \right) \|\hat u(\cdot, \tau)\|_W\, .
 \end{equation}
 \end{lemma}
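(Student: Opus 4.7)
The plan is to reduce the two cases $i=1,2$ to a single estimate on $P_1$, and then handle the three pieces of the decomposition $\hat E = \hat u E_1 + E_2(\nabla \hat u, \nabla \check u) + F$ term by term. The key reduction is that, since $P_2 = I - P_1$ as orthogonal projections,
\[
\langle P_2 \hat u(\cdot,\tau), \hat E(\cdot,\tau)\rangle_W = \langle \hat u(\cdot,\tau), \hat E(\cdot,\tau)\rangle_W - \langle P_1 \hat u(\cdot,\tau), \hat E(\cdot,\tau)\rangle_W,
\]
so combining the $P_1$-estimate below with Lemma \ref{lem:spectral-control-error.1} immediately gives the $P_2$-estimate. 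Writing $P_1 \hat u = \langle \hat u, \varphi\rangle_W \varphi$ and using Cauchy--Schwarz together with $\|\varphi\|_W = 1$ gives $|\langle \hat u, \varphi\rangle_W| \leq \|\hat u\|_W$, so it suffices to show
\[
|\langle \varphi, \hat E(\cdot,\tau)\rangle_W| \leq C e^{\delta_0 \tau} \|\hat u(\cdot,\tau)\|_W + C e^{5\delta_0 \tau} \exp\!\big(-\tfrac{1}{4} e^{-2\delta_0 \tau}\big).
\]

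The two easy terms in the decomposition of $\hat E$ are $\hat u E_1$ and $F$. For the first, $|\langle \varphi, \hat u E_1\rangle_W| \leq \|E_1\|_{L^\infty(\supp \hat u)} \cdot |\langle \varphi, \hat u\rangle_W| \leq C e^{\delta_0\tau}\|\hat u\|_W$, using the $C^4$-bound on $\check u$ on the support of $\hat u$ provided by Proposition \ref{prop:global-decay} and the structure of $E_1$ from Corollary \ref{coro:expand-rescaled-mcf-app}. For $F$, Cauchy--Schwarz and Lemma \ref{prop:estimate-error-hat-u} give $|\langle \varphi, F\rangle_W| \leq \|\varphi\|_W\|F\|_W \leq C e^{5\delta_0 \tau} \exp(-\tfrac{1}{4} e^{-2\delta_0 \tau})$, which already matches the second term on the right-hand side.

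The main obstacle is the quadratic term $E_2(\nabla \hat u, \nabla \check u)$. In Lemma \ref{lem:spectral-control-error.1} one exploits the divergence identity $\nabla \hat u^2 = 2\hat u\nabla \hat u$ to integrate by parts against the Gaussian weight; this trick is unavailable here because $\varphi$ is not proportional to $\hat u$. My replacement is to integrate by parts once directly: with $V := E_2^\#(\nabla \check u)$ (the vector field obtained via the musical isomorphism from the first slot of $E_2$),
\[
\langle \varphi, E_2(\nabla \hat u, \nabla \check u)\rangle_W = -\int_\Sigma \hat u\,\big[\varphi\,\Div V + \langle \nabla \varphi, V\rangle - \tfrac{1}{2}\,\varphi\,\langle V, \bx^T\rangle\big]\,e^{-|\bx|^2/4}.
\]
Each bracketed piece is bounded pointwise on $\supp \hat u$ by a constant multiple of $e^{\delta_0 \tau}$: the uniform bounds $\|\varphi\|_\infty + \|\nabla \varphi\|_\infty \leq C$ follow from the stable-ends hypothesis via part (3) of Lemma \ref{lem:decay-lowest-eigenfuncton}; the bounds $|V| + |\Div V| \leq C e^{2\delta_0 \tau}$ on $\supp \hat u$ follow from Corollary \ref{coro:expand-rescaled-mcf-app} combined with the $C^4$-estimates in Proposition \ref{prop:global-decay}; and along the cylindrical part of $\supp \hat u$ the factor $|\bx^T| \leq 2 e^{-\delta_0\tau}$ combines with $|V|\leq Ce^{2\delta_0\tau}$ to yield $|\varphi\langle V, \bx^T\rangle| \leq C e^{\delta_0\tau}$, while on the conical ends the polynomial growth of $|\bx^T|$ is absorbed by the Gaussian weight. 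A final application of Cauchy--Schwarz (with respect to the Gaussian weight) then gives $|\langle \varphi, E_2(\nabla \hat u,\nabla \check u)\rangle_W| \leq C e^{\delta_0\tau}\|\hat u\|_W$, and combining the three estimates completes the proof.
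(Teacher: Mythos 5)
Your approach is essentially the same as the paper's, with one clean reorganization: since $P_1\hat u = \langle\hat u,\varphi\rangle_W\,\varphi$, you bound $\langle\varphi,\hat E\rangle_W$ and multiply by $|\langle\hat u,\varphi\rangle_W|\leq\|\hat u\|_W$, whereas the paper works with $\langle P_1\hat u,\hat E\rangle_W$ throughout. Once you recombine $\varphi\Div V$ and $-\tfrac12\varphi\langle V,\bx^T\rangle$ into $\varphi\, e^{|\bx|^2/4}\Div\!\big(V e^{-|\bx|^2/4}\big)$, your integration-by-parts identity reproduces the paper's first displayed integral, and your $\langle\nabla\varphi,V\rangle$ term is (up to the scalar $\langle\hat u,\varphi\rangle_W$) the paper's $\hat u\,E_2(\nabla P_1\hat u,\nabla\check u)$ term since $\nabla P_1\hat u = \langle\hat u,\varphi\rangle_W\nabla\varphi$. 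The reduction of the $P_2$ case to $P_1$ via $P_2 = \mathrm{Id} - P_1$ and Lemma \ref{lem:spectral-control-error.1} is likewise what the paper does, so the argument is sound.

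Two small points worth tightening. First, the intermediate inequality $|\langle\varphi,\hat u E_1\rangle_W|\leq\|E_1\|_{L^\infty(\supp\hat u)}\cdot|\langle\varphi,\hat u\rangle_W|$ is false as written, since $E_1$ can change sign; the correct route is Cauchy--Schwarz, $|\langle\varphi,\hat u E_1\rangle_W|\leq\|\hat u E_1\|_W\leq\|E_1\|_\infty\|\hat u\|_W$, which produces the same final bound. Second, the claim that on $\Sigma_\textnormal{con}$ ``the polynomial growth of $|\bx^T|$ is absorbed by the Gaussian weight'' is not a pointwise estimate, so it breaks the stated uniformity of ``each bracketed piece is bounded pointwise by $Ce^{\delta_0\tau}$.'' You should either repackage this term separately via Cauchy--Schwarz using $\||\bx|\|_W<\infty$ (which works, and is consistent with your parenthetical remark), or follow the paper's route: by Lemma \ref{lemm:conical-end-decomp-error-lin} the tensor $E_2$ carries an extra $|\bx|^{-1}$ weight in its first slot along $\Sigma_\textnormal{con}$, so $|E_2(\bx^T,\nabla\check u)|\leq C|\nabla\check u|\leq Ce^{2\delta_0\tau}$ pointwise, which restores the uniform estimate you wanted.
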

\begin{proof} Due to Lemma \ref{lem:spectral-control-error.1} it is sufficient to prove \eqref{eq:spectral-control-error.2} for $i=1$. Note that as in the proof of Lemma \ref{lem:spectral-control-error.1} we have 
$$ \big| \langle P_1\hat u(\cdot,\tau), \hat u(\cdot,\tau)\, E_1(\cdot,\tau) \rangle_W\big| \leq C e^{\delta_0\tau}  \|P_1 \hat u(\cdot, \tau)\|^2_W $$
and 
 $$\big| \langle P_1\hat u(\cdot,\tau), F(\cdot,\tau) \rangle_W\big| \leq C e^{5\delta_0 \tau} \exp \left( -\tfrac{1}{4} e^{-2\delta_0 \tau} \right) \|\hat P_1u(\cdot, \tau)\|_W\, .$$
 Furthermore, we have 
 \begin{equation*}
 \begin{split}
  \langle P_1\hat u(\cdot,\tau), E_2(\nabla \hat u(\cdot,\tau), \nabla \check u(\cdot,\tau)) \rangle_W &= - \int_\Sigma \hat u( \cdot,\tau) \Div\!\Big(P_1\hat u( \cdot,\tau) \, E_2^\#(\nabla \check u(\cdot,\tau))\ e^{-\frac{1}{4} |\bx|^2}\Big)\\
  &=  - \int_\Sigma \hat u( \cdot,\tau) P_1\hat u(\cdot,\tau) \, \Div\!\Big(E_2^\#(\nabla \check u(\cdot,\tau))\ e^{-\frac{1}{4} |\bx|^2}\Big)\\
  &\quad - \int_\Sigma \hat u( \cdot,\tau) E_2(\nabla P_1\hat u(\cdot,\tau), \nabla \check u( \cdot,\tau))e^{-\frac{1}{4} |\bx|^2}
\end{split}
\end{equation*}
The first term on the right hand side can be estimates as in the proof of Lemma \ref{lem:spectral-control-error.1}
$$\left|\int_\Sigma \hat u( \cdot,\tau) P_1\hat u \, \Div\!\Big(E_2^\#(\nabla \check u(\cdot,\tau))\ e^{-\frac{1}{4} |\bx|^2}\Big)\right| \leq Ce^{\delta_0\tau} \|P_1\hat u(\cdot, \tau)\|_W  \|\hat u(\cdot, \tau)\|_W \, .$$
The second term satisfies
$$ \left|\int_\Sigma \hat u( \cdot,\tau) E_2(\nabla P_1\hat u(\cdot,\tau), \nabla \check u( \cdot,\tau))e^{-\frac{1}{4} |\bx|^2}\right| \leq C e^{\delta_0\tau} \|\nabla P_1 \hat u(\cdot,\tau)\|_W \|\hat u(\cdot,\tau)\|_W\, .$$
Since
$$ \|\nabla P_1 \hat u(\cdot,\tau)\|_W \leq \| \nabla \varphi\|_W   \|P_1 \hat u(\cdot,\tau)\|_W\leq C \|\hat u(\cdot,\tau)\|_W$$
for some constant $C$ depending on $\varphi$, combining these inequalities yields the stated estimate.
\end{proof}
We recall that the operator $L$ has a discrete spectrum on $H^1_W(\Sigma)$ and that we denote by $\mu = \mu_1< -1$ the bottom of the spectrum. We write $-1\geq \mu_2>\mu$ for the next eigenvalue.  

\begin{proposition}[Projected spectral dynamics]\label{prop:evolution-projections}
There exists $\bar\tau = \bar\tau(\check \cM) \ll -1$ so that for  $\tau \leq \bar\tau$ the quantities $\hat U_1:= \|P_1\hat u\|_W^2 + \exp(-\tfrac{1}{4}e^{-2\delta_0\tau})$ and $\hat U_2:= \|P_2\hat u\|_W^2$ satisfy
\begin{align*}
\tfrac{d}{d\tau} \hat U_1 &\geq -2 \mu \hat U_1 - C e^{\delta_0 \tau} (\hat U_1 +\hat U_2)\\
\tfrac{d}{d\tau} \hat U_2 &\leq -2 \mu_2 \hat U_2 + C e^{\delta_0 \tau} (\hat U_1 +\hat U_2)\, .
\end{align*}
\end{proposition}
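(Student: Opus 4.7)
The plan is a direct spectral computation using $\partial_\tau \hat u = L\hat u + \hat E$ together with the self-adjointness of $L$ in $L^2_W$, controlling the error via Lemma \ref{lem:spectral-control-error.2}. Since $P_1, P_2$ are orthogonal projections that commute with $L$, I first differentiate to obtain
\[
\tfrac{d}{d\tau}\|P_i\hat u\|_W^2 \;=\; 2\langle P_i\hat u,\, L\hat u\rangle_W + 2\langle P_i\hat u,\, \hat E\rangle_W,\qquad i=1,2.
\]
For $i=1$, writing $P_1\hat u = \langle\hat u,\varphi\rangle_W\,\varphi$ and using $L\varphi=-\mu\varphi$ along with the fact that $P_2\hat u \perp \varphi$ kills the cross term, gives the identity $\langle P_1\hat u, L\hat u\rangle_W = -\mu \|P_1\hat u\|_W^2$. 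For $i=2$, the cross term again vanishes and the standard variational characterization of $L$ on the orthogonal complement of $\varphi$ in $H^1_W$ yields $\langle P_2\hat u, L P_2\hat u\rangle_W \leq -\mu_2 \|P_2\hat u\|_W^2$.

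Next I estimate the errors. Using $\|\hat u\|_W \leq (\hat U_1+\hat U_2)^{1/2}$ and $\exp(-\tfrac14 e^{-2\delta_0\tau})\leq \hat U_1^{1/2}\exp(-\tfrac18 e^{-2\delta_0\tau}) \leq \hat U_1^{1/2}$, a Cauchy--Schwarz bound gives
\[
\exp(-\tfrac14 e^{-2\delta_0\tau})\,\|\hat u\|_W \;\leq\; \hat U_1^{1/2}(\hat U_1+\hat U_2)^{1/2} \;\leq\; \hat U_1 + \hat U_2.
\]
Combined with Lemma \ref{lem:spectral-control-error.2} and the fact that $e^{5\delta_0\tau}\leq e^{\delta_0\tau}$ for $\tau$ sufficiently negative, this gives
\[
|2\langle P_i\hat u, \hat E\rangle_W| \;\leq\; C e^{\delta_0\tau}(\hat U_1+\hat U_2),\qquad i=1,2.
\]
The $\hat U_2$ bound then follows immediately by combining the displayed inequalities.

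For $\hat U_1$, the extra term contributes
\[
\tfrac{d}{d\tau}\exp(-\tfrac14 e^{-2\delta_0\tau}) \;=\; \tfrac{\delta_0}{2}\,e^{-2\delta_0\tau}\exp(-\tfrac14 e^{-2\delta_0\tau}) \;\geq\; 0,
\]
so dropping this and writing $\|P_1\hat u\|_W^2 = \hat U_1 - \exp(-\tfrac14 e^{-2\delta_0\tau})$, I obtain
\[
\tfrac{d}{d\tau}\hat U_1 \;\geq\; -2\mu\,\hat U_1 + 2\mu\exp(-\tfrac14 e^{-2\delta_0\tau}) + \tfrac{\delta_0}{2}e^{-2\delta_0\tau}\exp(-\tfrac14 e^{-2\delta_0\tau}) - Ce^{\delta_0\tau}(\hat U_1+\hat U_2).
\]
For $\tau\leq\bar\tau$ sufficiently negative, $\tfrac{\delta_0}{2}e^{-2\delta_0\tau}\geq 2|\mu|$, and the two middle terms combine to a nonnegative quantity, which I discard to obtain the stated lower bound.

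The main (modest) obstacle is the bookkeeping around the super-exponentially small quantity $\exp(-\tfrac14 e^{-2\delta_0\tau})$: this term is precisely why it was added to $\hat U_1$ in the first place, and the key observation is that the positive contribution $\tfrac{\delta_0}{2}e^{-2\delta_0\tau}\exp(-\tfrac14 e^{-2\delta_0\tau})$ coming from its time derivative dominates the negative contribution $-2|\mu|\exp(-\tfrac14 e^{-2\delta_0\tau})$ that arises when one rewrites $\|P_1\hat u\|_W^2$ in terms of $\hat U_1$. Once this cancellation is in place, the remainder of the argument is routine.
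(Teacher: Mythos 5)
Your proof is correct and follows essentially the same approach as the paper's: differentiate $\|P_i\hat u\|_W^2$ using $\partial_\tau\hat u = L\hat u + \hat E$, exploit $LP_1 = -\mu P_1$ and the variational characterization on $\varphi^\perp$ for $P_2$, and absorb the error via Lemma \ref{lem:spectral-control-error.2}. The paper's proof is terser (it simply states that Lemma \ref{lem:spectral-control-error.2} ``yields the first inequality''), whereas you spell out the cancellation between $\tfrac{\delta_0}{2}e^{-2\delta_0\tau}\exp(-\tfrac14 e^{-2\delta_0\tau})$ and the $-2|\mu|\exp(-\tfrac14 e^{-2\delta_0\tau})$ term produced by trading $\|P_1\hat u\|_W^2$ for $\hat U_1$, and you make explicit the bound $\exp(-\tfrac14 e^{-2\delta_0\tau})\|\hat u\|_W \leq \hat U_1 + \hat U_2$ needed to control the second error term — these are the details the paper leaves implicit, and your treatment is the more transparent one.
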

\begin{proof}
 We compute
  \begin{equation}\label{eq:evol_U1-1}
\tfrac{1}{2} \tfrac{d}{d\tau} \hat U_1 = \langle P_1 \hat u,\partial_\tau \hat u\rangle_W + \tfrac{\delta_0}{2}e^{-2\delta_0\tau} \exp(-\tfrac{1}{4}e^{-2\delta_0\tau})\, , 
 \end{equation}
 and 
 \begin{equation}\label{eq:evol_U1-2}
 \langle P_1 \hat u,\partial_\tau \hat u\rangle_W = \langle P_1 \hat u, L \hat u + \hat E\rangle_W = -\mu \|P_1 \hat u\|_W^2 + \langle P_1 \hat u,\hat E\rangle_W\, .
\end{equation}
  Applying Lemma \ref{lem:spectral-control-error.2} yields the first inequality. Similarly we can obtain
 $$ \tfrac{1}{2} \tfrac{d}{d\tau} \hat U_2 = \langle P_2 \hat u,\partial_\tau \hat u\rangle_W =  \langle P_2 \hat u, L \hat u + \hat E\rangle_W  \leq -\mu_2 \|P_2 \hat u\|_W^2 + \langle P_2 \hat u,\hat E\rangle_W\, . $$
 Hence Lemma \ref{lem:spectral-control-error.2} also yields the second inequality.
\end{proof}

\begin{lemma}[Integrated spectral dynamics I] \label{lem:L^2-optimal-decay}
There exists $\bar\tau =\bar\tau(\check\cM) \ll -1$ so that for  $\tau \leq \bar\tau$ it holds that
$$\| P_2 \hat u\|^2_W \leq Ce^{\delta_0 \tau} \left( \|P_1\hat u\|^2_W + \exp( -\tfrac{1}{4} e^{-2\delta_0 \tau})\right)\, $$
and $\|\hat u\|_W \leq Ce^{-\mu \tau}$.
\end{lemma}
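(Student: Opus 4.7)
The plan is to combine a Merle--Zaag-type ODE argument with a direct spectral analysis of the first Fourier coefficient. Since $\hat U_1 \geq \exp(-\tfrac{1}{4}e^{-2\delta_0\tau}) > 0$, the ratio $f := \hat U_2/\hat U_1$ is well-defined, and manipulating the two inequalities of Proposition \ref{prop:evolution-projections} yields
\[
 f' \leq -\gamma\, f + C e^{\delta_0 \tau} (1+f)^2, \qquad \gamma := 2(\mu_2 - \mu) > 0.
\]

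For the first assertion, I aim to show $f(\tau) \leq A\, e^{\delta_0 \tau}$ for $\tau$ sufficiently negative. The argument is Merle--Zaag in spirit: introduce the barrier $V := f - A e^{\delta_0\tau}$ with $A = A(\gamma,\delta_0,C)$ large; as long as $f$ stays bounded in the past, $V$ satisfies a linear inequality of the form $V' \leq -c V + (\text{lower order})$, so any $\tau_0$ with $V(\tau_0) > 0$ would force $V \to +\infty$ as $\tau \to -\infty$, contradicting boundedness. The required boundedness of $f$ is extracted by combining the absolute $L^2$-bound $\|\hat u\|_W^2 \leq C e^{2(-\mu-\eps)\tau}$ from Proposition \ref{prop:nearly-optimal-decay} (which controls $\hat U_2$) with the lower ODI for $\hat U_1$ (whose backwards integration controls $\hat U_1$ at a matching rate); together these yield $f \leq A e^{\delta_0\tau}$, i.e., $\|P_2\hat u\|_W^2 \leq C e^{\delta_0\tau}\,\hat U_1$.

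For the second assertion, once the Merle--Zaag bound is established we have $\|\hat u\|_W^2 \leq 2\hat U_1$ modulo super-exponentially small terms. Setting $a(\tau) := \|P_1\hat u\|_W$ and computing $(a^2)' = -2\mu a^2 + 2\langle P_1 \hat u,\hat E\rangle_W$ via $\partial_\tau \hat u = L\hat u + \hat E$, then applying Lemma \ref{lem:spectral-control-error.2} with the Merle--Zaag bound used to replace $\|\hat u\|_W^2$ by $\lesssim a^2$ inside the error, one obtains
\[
 \left|\tfrac{d}{d\tau}\log\!\bigl(e^{2\mu\tau}\, a^2\bigr)\right| \leq C e^{\delta_0 \tau} + (\text{integrable correction}).
\]
Since the right-hand side is integrable on $(-\infty,\tau_0]$, integrating from $\tau$ to $\tau_0$ shows that $e^{2\mu\tau}a^2(\tau)$ is comparable to $e^{2\mu\tau_0}a^2(\tau_0)$, whence $a^2(\tau) \leq C e^{-2\mu\tau}$, and therefore $\|\hat u\|_W \leq C e^{-\mu\tau}$.

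The main obstacle is verifying boundedness of $f$ in the past. Proposition \ref{prop:nearly-optimal-decay} bounds $\hat U_2$ absolutely, but the only universal lower bound on $\hat U_1$ is the super-exponentially small regularization $r(\tau) = \exp(-\tfrac{1}{4}e^{-2\delta_0\tau})$, so a priori the ratio $f \leq \hat U_2/r(\tau)$ can diverge along sequences where $\|P_1\hat u\|^2$ dips near $r(\tau)$. The delicate point will be to show that in any such ``degenerate'' regime $\hat U_2$ must also be super-exponentially small, so that the comparison $\hat U_2 \leq C e^{\delta_0\tau}\hat U_1$ remains valid uniformly; this requires careful use of the coupling in the ODI system of Proposition \ref{prop:evolution-projections} together with the integrated $L^2$-control from Proposition \ref{prop:nearly-optimal-decay}.
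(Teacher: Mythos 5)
You have correctly identified the obstruction, but it is not a technicality you can hope to paper over within the ratio framework; it is fatal to that approach. The super-exponential regularization $\exp(-\tfrac14 e^{-2\delta_0\tau})$ is placed inside $\hat U_1$ precisely so that $\hat U_1 > 0$, but it gives no usable \emph{lower} bound: $\hat U_1$ can plausibly dip to this floor, in which case $f = \hat U_2/\hat U_1$ would have to be super-exponentially large unless $\hat U_2$ is itself super-exponentially small --- but the latter is essentially the conclusion you are trying to prove. Backwards integration of the ODI for $\hat U_1$ from Proposition \ref{prop:evolution-projections} produces an \emph{upper} bound $\hat U_1 \leq C e^{-2\mu\tau}$ (once the first estimate is available), not a lower bound, so ``controls $\hat U_1$ at a matching rate'' does not go through. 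Without an a priori bound on $f$ in the ancient past, the quadratic term $C e^{\delta_0\tau}(1+f)^2$ in your ODI is too strong, and the Merle--Zaag barrier argument cannot close.

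The paper avoids division altogether. In place of the ratio it runs the same dichotomy on the weighted \emph{difference}
\[
 f(\tau) := e^{(\mu+\mu_2)\tau}\hat U_2 - M\, e^{(\mu+\mu_2+\delta_0)\tau}\hat U_1 ,
\]
for suitably large $M$. Using Proposition \ref{prop:evolution-projections} directly (no division, so no $(1+f)^2$ term) one finds $f'(\tau) \leq 0$ for $\tau \leq \bar\tau$ once $2Ce^{\delta_0\tau} \leq \mu_2-\mu$ and $M = 2C(\mu_2-\mu)^{-1}$. The boundary condition $\lim_{\tau\to-\infty} f(\tau) = 0$ is then immediate from Proposition \ref{prop:nearly-optimal-decay}: since $\hat U_1, \hat U_2 \lesssim e^{2(-\mu-\eps)\tau}$, both terms in $f$ are bounded by $C e^{(\mu_2-\mu-2\eps)\tau} \to 0$ because $\mu_2 > \mu$. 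Monotonicity plus vanishing at $-\infty$ gives $f \leq 0$, which is exactly $\hat U_2 \leq M e^{\delta_0\tau}\hat U_1$. The degenerate regime that blocked the ratio argument never appears. Your second step (integrating $\frac{d}{d\tau}\log(e^{2\mu\tau}\hat U_1)$ against an integrable right-hand side) is then correct and matches the paper's conclusion, but it presupposes the first estimate, so the gap in step one is the substantive one.
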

\begin{proof}
Consider again $\hat U_1:= \|P_1\hat u\|_W^2 + \exp(-\tfrac{1}{4}e^{-2\delta_0\tau})$ and $\hat U_2:= \|P_2\hat u\|_W^2$ and define
$$ f(\tau) = e^{(\mu + \mu_2)\tau} \hat U_2 - M e^{(\mu + \mu_2 + \delta_0)\tau} \hat{U}_1\, ,$$
where $M$ is a positive constant to be determined. Proposition \ref{prop:evolution-projections} implies that
\begin{equation*}
 \begin{split}
 f'(\tau) &\leq -(\mu_2-\mu) e^{(\mu+\mu_2)\tau} \hat U_2 - M (\delta_0+\mu_2-\mu) e^{(\mu+\mu_2 +\delta_0)\tau} \hat U_1\\
 &\quad + C(1+M e^{\delta_0 \tau}) e^{(\mu+\mu_2+\delta_0)\tau} (\hat U_1 +\hat U_2)\, .
\end{split}
\end{equation*}
We now choose $\bar \tau$ such that
$$ 2Ce^{\delta_0\tau} \leq \mu_2-\mu $$
for $\tau\leq \bar\tau$ and let $M = 2C(\mu_2-\mu)^{-1}$. This implies
$$ f'(\tau) \leq -(\mu_2-\mu) e^{(\mu+\mu_2)\tau} \hat U_2 - 2C e^{(\mu+\mu_2 +\delta_0)\tau} \hat U_1 + 2C e^{(\mu+\mu_2+\delta_0)\tau} (\hat U_1 +\hat U_2) \leq 0\, .$$
By Proposition \ref{prop:nearly-optimal-decay} we have $\lim_{\tau \to -\infty} f(\tau) =0$ and thus $f(\tau) \leq 0$ for $\tau \leq \bar\tau$, which implies
$$ \hat U_2 \leq 2C (\mu_2-\mu)^{-1} e^{\delta_0 \tau} \hat U_1 $$
for $\tau \leq \bar\tau$. This yields the first estimate. 

Returning to Proposition \ref{prop:evolution-projections} we thus find
\[
\tfrac{d}{d\tau} \log e^{2\mu \tau}\hat U_1 \geq  - (C+1) e^{\delta_0 \tau} 
\]
Because the right hand side is integrable on $(-\infty,\bar\tau]$, this implies that $\hat U_{1}\leq C' e^{-2\mu \tau}$. Combining this with the estimate for $\hat U_{2}$ yields the second estimate. 
\end{proof}

\begin{theorem}[Integrated spectral dynamics II] \label{thm:L^2-decay}
There exists $\check a \geq 0$, $C>0$, and $\bar{\tau}=\bar \tau(\check\cM) \ll -1$ such that for $\tau \leq \bar{\tau}$
$$\|\hat u - \check a e^{-\mu \tau} \varphi\|_W^2 \leq C e^{(2|\mu| + \delta_0)\tau}\, .$$
 \end{theorem}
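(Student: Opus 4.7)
The plan is to decompose $\hat u$ into its projections on the first eigenspace and its complement, control the complement directly via Lemma \ref{lem:L^2-optimal-decay}, and extract the constant $\check a$ as the limit of a suitably renormalised projection onto $\varphi$.  By Lemma \ref{lem:L^2-optimal-decay}, together with $\|\hat u\|_W \leq Ce^{-\mu\tau}$, we immediately obtain
\[
\|P_2\hat u(\cdot,\tau)\|_W^2 \leq Ce^{\delta_0\tau}\bigl(\|P_1\hat u(\cdot,\tau)\|_W^2 + e^{-\frac14 e^{-2\delta_0\tau}}\bigr) \leq Ce^{(2|\mu|+\delta_0)\tau},
\]
which already yields the desired $(2|\mu|+\delta_0)\tau$ exponent on the $P_2$ component.

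For the $P_1$ component, I will write $P_1\hat u(\cdot,\tau) = g(\tau)\varphi$ with $g(\tau) := \langle \hat u(\cdot,\tau),\varphi\rangle_W$.  Pairing the evolution equation $\partial_\tau \hat u = L\hat u + \hat E$ with $\varphi$ and using $L\varphi = -\mu\varphi$ gives
\[
g'(\tau) = -\mu g(\tau) + \langle \hat E(\cdot,\tau),\varphi\rangle_W.
\]
Setting $h(\tau) := e^{\mu\tau}g(\tau)$, this becomes $h'(\tau) = e^{\mu\tau}\langle \hat E(\cdot,\tau),\varphi\rangle_W$.  The key step, and the main analytic work, is to prove the pointwise bound
\begin{equation}\label{eq:proposal-E-varphi-bound}
|\langle \hat E(\cdot,\tau),\varphi\rangle_W| \leq Ce^{(\delta_0-\mu)\tau}
\end{equation}
for $\tau\leq\bar\tau$, which translates into $|h'(\tau)|\leq Ce^{\delta_0\tau}$.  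To prove \eqref{eq:proposal-E-varphi-bound}, I will follow the structure of the proof of Lemma \ref{lem:spectral-control-error.2}, but with $\varphi$ in place of $P_1\hat u$.  The term $\langle \varphi,\hat u E_1\rangle_W$ is bounded by $Ce^{\delta_0\tau}\|\hat u\|_W \leq Ce^{(\delta_0-\mu)\tau}$ using $\|E_1\|_\infty \leq Ce^{\delta_0\tau}$.  For the quadratic term $\langle \varphi,E_2(\nabla \hat u,\nabla \check u)\rangle_W$, I will integrate by parts moving a derivative off $\hat u$, exactly as in the proof of Lemma \ref{lem:spectral-control-error.2}; the distinction is that $|\nabla\varphi|$ is globally bounded by Lemma \ref{lem:decay-lowest-eigenfuncton}(3), and the dangerous $\bx^T$ factor on the cylindrical end is at most $Ce^{-\delta_0\tau}$ on the support of $\hat u$ but multiplied by $|\nabla\check u|\leq Ce^{2\delta_0\tau}$ from Corollary \ref{cor:global-decay.2}, yielding an overall $e^{\delta_0\tau}$ bound on the resulting kernel and hence a contribution $\leq Ce^{\delta_0\tau}\|\hat u\|_W \leq Ce^{(\delta_0-\mu)\tau}$.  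The cut-off term $\langle\varphi,F\rangle_W$ is bounded by $\|F\|_W$, which decays super-exponentially by Lemma \ref{prop:estimate-error-hat-u} and is thus negligible.

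Once \eqref{eq:proposal-E-varphi-bound} is established, $|h'(\tau)|\leq Ce^{\delta_0\tau}$ is integrable near $-\infty$, so $\check a := \lim_{\tau\to-\infty}h(\tau)$ exists and
\[
|h(\tau)-\check a| \leq \int_{-\infty}^\tau |h'(s)|\,ds \leq \tfrac{C}{\delta_0}e^{\delta_0\tau}.
\]
Since $\|\varphi\|_W=1$, this gives $\|P_1\hat u(\cdot,\tau)-\check a e^{-\mu\tau}\varphi\|_W = e^{-\mu\tau}|h(\tau)-\check a| \leq Ce^{(\delta_0-\mu)\tau}$, and squaring and combining with the $P_2$ bound above yields $\|\hat u - \check a e^{-\mu\tau}\varphi\|_W^2 \leq Ce^{(2|\mu|+\delta_0)\tau}$.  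Non-negativity $\check a\geq 0$ follows because $\check u\geq 0$ (one-sidedness), hence $\hat u\geq 0$ and $g\geq 0$ since $\varphi>0$; uniqueness of $\check a$ is immediate from the estimate by taking $\tau\to-\infty$.  The main obstacle is the careful verification of \eqref{eq:proposal-E-varphi-bound} on the expanding cylindrical region, where the $\bx^T$ factor in the integration by parts grows like $e^{-\delta_0\tau}$ and must be balanced against the decay of $\nabla\check u$ from Corollary \ref{cor:global-decay.2}; everything else is a direct consequence of bounds already established in the section.
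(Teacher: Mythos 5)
Your proposal is correct and closely parallels the paper's argument, but it reparametrizes the $P_1$-evolution in a way that streamlines the final step. The paper tracks $\|P_1\hat u\|_W^2 = g(\tau)^2$ with $g = \langle\hat u,\varphi\rangle_W$, bounds $\bigl|\tfrac{d}{d\tau}(e^{2\mu\tau}g^2)\bigr| \leq Ce^{\delta_0\tau}$ by combining Lemma \ref{lem:spectral-control-error.2} with the decay $\|\hat u\|_W \leq Ce^{-\mu\tau}$ from Lemma \ref{lem:L^2-optimal-decay}, and extracts $\check a^2 = \lim_{\tau\to-\infty} e^{2\mu\tau}g^2$; the passage from $|g^2 - \check a^2 e^{-2\mu\tau}| \leq Ce^{(\delta_0 - 2\mu)\tau}$ to the asserted bound on $(g - \check a e^{-\mu\tau})^2$ is then done (implicitly) via the conjugate identity $(g - \check a e^{-\mu\tau}) = (g^2 - \check a^2 e^{-2\mu\tau})/(g + \check a e^{-\mu\tau})$, together with the one-sidedness fact $g \geq 0$. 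You instead track $g$ itself via the ODE $h' = e^{\mu\tau}\langle\hat E,\varphi\rangle_W$ for $h = e^{\mu\tau} g$, extract $\check a = \lim h$ directly, and thereby avoid the conjugate step. The price is that you must establish $|\langle\hat E,\varphi\rangle_W| \leq Ce^{(\delta_0 - \mu)\tau}$, which is genuinely new content: since $\langle P_1\hat u,\hat E\rangle_W = g\,\langle\varphi,\hat E\rangle_W$, one cannot simply divide the paper's bound by $g$ (which could a priori vanish). Your re-derivation along the lines of Lemma \ref{lem:spectral-control-error.2}, using the boundedness of $\nabla\varphi$ from Lemma \ref{lem:decay-lowest-eigenfuncton}(3) and the $|\bx^T|\cdot|\nabla\check u| \lesssim e^{\delta_0\tau}$ balance on the support of $\hat u$, is sound and delivers the stated estimate. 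Both routes rest on the same ingredients and give the same conclusion; your version makes the existence, sign, and uniqueness of $\check a$ (rather than $\check a^2$) somewhat more transparent.
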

\begin{proof}
As in \eqref{eq:evol_U1-1}, \eqref{eq:evol_U1-2}, we find
\[
 \left | \tfrac{d}{d\tau} \left(e^{2\mu \tau} \|P_1 \hat u\|^2_W \right)\right| \leq 2 e^{2\mu\tau} \langle P_1 \hat u,\hat E\rangle_W. 
\] 
Combining Lemma \ref{lem:spectral-control-error.2} with Lemma \ref{lem:L^2-optimal-decay}, we thus find that
$$ \left | \tfrac{d}{d\tau} \left(e^{2\mu \tau} \|P_1 \hat u\|^2_W \right)\right| \leq C e^{\delta_0\tau}\, $$
for $\tau\leq \bar\tau\ll0$. Thus, the limit
$$ \lim_{\tau \to -\infty} e^{2\mu \tau} \|P_1 \hat u\|^2_W = \check a^2$$
exists, and
$$ \left| e^{2\mu \tau} \|P_1 \hat u\|^2_W - \check a^2 \right| \leq C e^{\delta_0 \tau}\, $$
for $\tau\leq \bar\tau\ll0$. Combined with Lemma \ref{lem:L^2-optimal-decay}, this proves the assertion. 
\end{proof}

\begin{proof}[Proof of Theorem \ref{thm:decaying-mode}] This follows from Theorem \ref{thm:L^2-decay} and interpolation. 
 \end{proof}
 
\begin{remark}We emphasize that for fixed $\check \cM$, the constant $\check a$ is the \emph{same} in both Theorem \ref{thm:L^2-decay} and Theorem \ref{thm:decaying-mode}. 
 \end{remark}
 
 \subsection{Global lower barrier} In this section we continue to consider $(\Sigma,\Omega)\in \cS_n''$ and  $(\check \cM(\tau))_{\tau}$ an ancient (integral unit regular) rescaled Brakke flow with $(\supp \check \cM)_{\tau} \subset \Omega$ for all $\tau < T<0$. Assume that $\check \cM(\tau)$ limits in $C^{\infty}_{\textrm{loc}}$ to $\Sigma$ with multiplicity one as $\tau\to -\infty$.  
 
 In this section, we additionally impose the requirement that Theorem \ref{thm:L^2-decay} holds with constant $\check a >0$. Because we are concerned with an optimal \emph{lower barrier}, the case that $\check a = 0$ will be trivially handled by choosing $\bar M(\tau) \equiv \Sigma$ for all $\tau$. 
 
 We denote by $\bar v:\Sigma \times (-\infty, \tau_0)\to \RR_+$ the function such that 
 $$\tau \mapsto \bar M(\tau)=\Graph_\Sigma(\bar v(\cdot,\tau))$$
  is the smooth ancient rescaled mean curvature flow on one side of  $\Sigma$ constructed in Proposition \ref{prop:exist-ancient-rescaled-MCF-C2-rescaling-argument}. By shifting time, we can assume that
 \begin{equation}\label{eq:entire-sol-cal}
  \lim_{\tau \to -\infty} e^{\mu \tau} \langle \bar v(\cdot,\tau),\varphi \rangle_W =1\, .
\end{equation}
 Note that this condition together with Proposition \ref{prop:cond.unique.2} implies uniqueness under the decay assumption \eqref{eq:estimate-decay}. We also consider the general solution $\cM$ on one side of $\cM_\Sigma$, and denote the corresponding rescaled flow by $\tau \mapsto M(\tau)$. 
 
Since we assumed that $\check a> 0$, we can shift (rescaled) time in the given flow $\check\cM$ to ensure that $\check a = 1$. In other words, we arrange that
 \begin{equation}\label{eq:sol-cal}
  \lim_{\tau \to -\infty} e^{\mu \tau} \langle \hat u,\varphi \rangle_W =1\, ,
\end{equation} 
where $\hat u$ is the cutoff graphical function defined in \eqref{eq:defn.hat.u}. 

Applying Theorem \ref{thm:decaying-mode} to both\footnote{We use that the constant $\check a$ is the same in the point-wise estimate and integrated estimate.} the unknown flow $\check\cM$ and the graphical flow $\bar M$, we can choose a sequence $\tau_m \to -\infty$ such that 
 \begin{equation}\label{eq:lower-barrier}
 (1-\tfrac{1}{m}) e^{-\mu\tau_m}\varphi \leq \min\{\bar v(\cdot, \tau_m), \hat u(\cdot, \tau_m)\} 
 \end{equation} 
on $B_{2m}(\bOh) \cap \Sigma$. We consider a smooth cut-off function $\eta$ with values in $[0,1]$ such that
$$ \eta(r) = 1 \quad\text{for}\ r\leq 1, \qquad \eta(r) = 0\quad\text{for}\ r\geq 2\, ,$$
and define $\bar w^m \in C_c^\infty(\Sigma)$ by
$$ \bar w^m(\bx) = (1-\tfrac{1}{m})  e^{-\mu\tau_m} \eta(|\bx|/m)  \varphi(\bx)\, .$$
Recall (cf.\ Proposition \ref{prop:exist-ancient-rescaled-MCF-C2-rescaling-argument}) that we may assume that $\bar v(\cdot,\tau) \leq C e^{-\mu\tau}$ for some $C>0$ and $\tau \in  (-\infty, \tau_0)$.

\begin{proposition}[Partial lower barrier]\label{prop:est-vm} There exists $\bar\tau = \bar \tau(\check\cM) \in (-\infty, \tau_0)$ and $C>0$ such that for all $m$ sufficiently large there exists a smooth rescaled ancient mean curvature flow  
 $$[\tau_m, \bar \tau]\ni \tau \mapsto \bar M^m(\tau)=\Graph_\Sigma(\bar v^m(\cdot,\tau))$$ such that\\[-2ex]
\begin{itemize}
\item[(1)]  $\bar v^m(\cdot,\tau) \in C^\infty(\Sigma)$ and $\bar v^m(\cdot,\tau)\geq 0$ for all $\tau \in [\tau_m, \bar \tau]$,\\[-2ex]
\item[(2)] $\bar v^m(\cdot,\tau_m) = \bar w^m$,\\[-2ex]
\item[(3)] $\bar v^m \leq v$ holds for $\tau \in [\tau_m, \bar \tau]$\, ,\\[-2ex]
\item[(4)] $(\supp\check\cM)_{\tau}$ avoids $\bar M^m(\tau)$ in the sense that
\[
(\supp\check\cM)_{\tau} \subset \Omega\setminus\{\bx + s \bar v^{m}(\bx,\tau): \bx \in \Sigma,s\in [0,1]\}
\]
for $\tau \in [\tau_m, \bar \tau]$,\\[-2ex]
\item[(5)] $\|\bar v^m(\cdot, \tau)\|_{C^4(\Sigma)} \leq C e^{-\mu\tau}$ for $\tau \in [\tau_m, \bar \tau]$.
\end{itemize}
\end{proposition}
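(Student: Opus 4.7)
The plan is to construct $\bar v^m$ as the graphical rescaled mean curvature flow starting at the compactly supported initial datum $\bar w^m$, and to pinch it between $\Sigma$ (from below) and the ancient one-sided graphical flow $\bar v$ (from above) in order to obtain both persistence on a uniform time interval and the stated estimates. Since $\bar w^m\in C_c^\infty(\Sigma)$ is non-negative with $\Vert \bar w^m\Vert_{C^k(\Sigma)}\leq C_k e^{-\mu\tau_m}$ (derivatives of the cutoff $\eta(|\cdot|/m)$ contributing at most a factor polynomial in $m$, dominated by $e^{-\mu\tau_m}$ as $\tau_m \to -\infty$), for $m$ large we have $\Vert \bar w^m\Vert_{C^2}\leq \psi^*$, and Corollary \ref{coro:exist.graph.bootstrap}, with initial time shifted to $\tau_m$, yields a smooth graphical rescaled MCF $\bar v^m(\cdot,\tau)$ on a maximal interval $[\tau_m,T^m]$, verifying (2).

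Properties (1), (3), and the extension to a uniform $\bar\tau$ are obtained via the Ecker--Huisken maximum principle. By \eqref{eq:lower-barrier}, $\bar w^m\leq (1-\tfrac{1}{m})e^{-\mu\tau_m}\varphi\leq \bar v(\cdot,\tau_m)$ on $B_{2m}(\bOh)\cap\Sigma$, while $\bar w^m\equiv 0\leq \bar v(\cdot,\tau_m)$ off $B_{2m}(\bOh)$. Comparing $\bar v^m$ with both $\Sigma$ and $\bar v$ then gives $0\leq \bar v^m\leq \bar v$ on $[\tau_m,\min\{T^m,\tau_0\}]$. Fixing $\bar\tau<\tau_0$ so that $2e^{-\mu\bar\tau}\Vert\varphi\Vert_\infty\leq \psi_0/2$ (with $\psi_0$ from Proposition \ref{prop:bootstrap.graphical}) and using the bound $\bar v\leq 2e^{-\mu\tau}\varphi$ from Proposition \ref{prop:exist-graphical-one-sided}(2), we have $\sup_\Sigma\bar v^m(\cdot,\tau)\leq \psi_0/2$ on $[\tau_m,\min\{T^m,\bar\tau\}]$. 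The extension criterion of Proposition \ref{prop:bootstrap.graphical} then forces $T^m\geq \bar\tau$ for all large $m$, simultaneously establishing (1), (3), and the uniform lifespan.

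For avoidance (4), we apply Proposition \ref{prop:avoidance-noncompact} to $\check \cM$ and the smooth, uniformly bounded curvature flow given by the graph of $\bar v^m$. Provided $m$ is large enough that $2m\leq 3e^{-2\delta_0\tau_m}$, so that $\check u$ is defined on $B_{2m}(\bOh)\cap\Sigma$ by Corollary \ref{cor:global-decay.2}, we have $\bar w^m\leq \hat u(\cdot,\tau_m) = \check u(\cdot,\tau_m)$ on $\supp\bar w^m$, while off $B_{2m}(\bOh)$, $\bar w^m\equiv 0$ and $\supp\check\cM(\tau_m)\subset\Omega$; this provides the required initial separation. Finally, property (5) follows from interior parabolic Schauder estimates applied to $\partial_\tau v = Lv + E(v)$ (Corollary \ref{coro:expand-rescaled-mcf-app}): the a priori bootstrap bounds from Proposition \ref{prop:bootstrap.graphical} control derivatives of $\bar v^m$ uniformly in $m$, allowing the quadratic nonlinearity $E(v)$ to be absorbed, and Schauder then yields $\Vert \bar v^m(\cdot,\tau)\Vert_{C^4(\Sigma)}\leq C\Vert \bar v^m\Vert_{C^0(\Sigma\times [\tau-1,\tau])}\leq Ce^{-\mu\tau}$.

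The principal difficulty is ensuring that the existence time $\bar\tau$ can be chosen independently of $m$: this is precisely where the two-sided sandwich by $\Sigma$ and $\bar v$ is essential, since the upper barrier $\bar v$ provides a uniform $C^0$ bound that prevents the blow-up criterion in Proposition \ref{prop:bootstrap.graphical} from ever being triggered on $[\tau_m,\bar\tau]$, regardless of how far back $\tau_m$ is taken.
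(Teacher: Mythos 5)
Your proof is correct and follows essentially the same route as the paper: apply Corollary \ref{coro:exist.graph.bootstrap} to the compactly supported initial datum $\bar w^m$, sandwich $\bar v^m$ between $0$ and $\bar v$ by Ecker--Huisken to get positivity and a uniform lifespan independent of $m$, invoke Proposition \ref{prop:avoidance-noncompact} for (4), and use parabolic Schauder for (5). The only cosmetic slip is the claim $\hat u(\cdot,\tau_m)=\check u(\cdot,\tau_m)$ on $\supp\bar w^m$ --- the cutoff $\hat\eta$ equals $1$ only on $\{|\bx|\le e^{-\delta_0\tau_m}\}$, which need not contain $B_{2m}(\bOh)$ --- but since $\hat\eta\le 1$ and $\check u\ge 0$ one always has $\hat u\le \check u$, so the required inequality $\bar w^m\le\check u(\cdot,\tau_m)$ holds and the avoidance argument goes through unchanged.
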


\begin{proof} Note that we can apply Corollary \ref{coro:exist.graph.bootstrap} with initial datum $\bar w^m$ to obtain a solution $\tau \mapsto \bar v^m(\cdot, \tau)$ to graphical rescaled mean curvature flow for $\tau \in [\tau_m,\bar \tau_m]$. We note that the Ecker-Huisken maximum principle implies that 
\begin{equation}\label{eq:upper-bound-vm}
v^m(\cdot, \tau) \leq v(\cdot, \tau)\leq   C e^{-\mu\tau}
\end{equation}
for some $C>0$ for $\tau \in [\tau_m,\bar \tau_m]$. Corollary \ref{coro:exist.graph.bootstrap} then ensures the existence of a uniform lower bound $\bar \tau\leq \bar\tau_m$, when $m$ is sufficiently large. This establishes points (1) through (3). Point (4) follows from Proposition \ref{prop:avoidance-noncompact}. Note that Proposition \ref{coro:exist.graph.bootstrap} implies that $\|\bar v^m(\cdot,\tau)\|_{C^3(\Sigma)}$ is uniformly controlled for all $\tau \in [\tau_n,\bar\tau]$. Point (5) then follows by considering the unrescaled mean curvature flow and applying standard interior parabolic Schauder estimates to the mean curvature flow equation for graphs with controlled, small $C^{1,\alpha}$-norm over $\Sigma$, combined with \eqref{eq:upper-bound-vm}.
 \end{proof}

\begin{proposition}[Global lower barrier] \label{prop:entire-sol-below} Under the assumptions \eqref{eq:entire-sol-cal} and \eqref{eq:sol-cal} it holds that $\check\cM(\tau)$ lies above $\tau \mapsto \bar M(\tau)$ for $\tau \in (-\infty, \bar\tau)$ in the sense that
\[
(\supp\check\cM)_{\tau} \subset \Omega \setminus \{\bx + s \bar v(\bx,\tau)\nu_{\Sigma} : \bx \in \Sigma, s \in [0,1)\}. 
\]
\end{proposition}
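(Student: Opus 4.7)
The plan is to take the limit $m\to\infty$ of the partial lower barriers $\bar M^m$ constructed in Proposition \ref{prop:est-vm}, identify the limiting flow with $\bar M$ itself via the conditional uniqueness statement of Proposition \ref{prop:cond.unique.2}, and then pass the avoidance property (4) of Proposition \ref{prop:est-vm} to the limit. The first step is to extract, using the uniform bound $\|\bar v^m(\cdot,\tau)\|_{C^4(\Sigma)} \leq Ce^{-\mu\tau}$ from (5) of Proposition \ref{prop:est-vm} together with parabolic interior Schauder estimates, a subsequential limit $\bar v^\infty := \lim_m \bar v^m$ in $C^3_{\mathrm{loc}}(\Sigma\times(-\infty,\bar\tau])$. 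By construction $0\leq \bar v^\infty \leq \bar v$, the family $\tau \mapsto \Graph_\Sigma \bar v^\infty(\cdot,\tau)$ is a smooth ancient rescaled mean curvature flow, and $\bar v^\infty$ inherits the pointwise bound $\|\bar v^\infty(\cdot,\tau)\|_{C^3(\Sigma)} \leq Ce^{-\mu\tau}$.

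To match $\bar v^\infty$ with $\bar v$ via Proposition \ref{prop:cond.unique.2}, I will verify that their leading-order spectral data agree. From Corollary \ref{coro:expand-rescaled-mcf-app} one has $\partial_\tau \bar v^m = L\bar v^m + E^m$, and the quadratic structure of $E^m$ combined with the $C^3$ bound yields $\|E^m(\cdot,\tau)\|_W \leq Ce^{-2\mu\tau}$. Pairing with $\varphi$ and integrating
\[
\tfrac{d}{d\tau}\bigl(e^{\mu\tau}\langle \bar v^m,\varphi\rangle_W\bigr) = e^{\mu\tau}\langle E^m,\varphi\rangle_W
\]
from $\tau_m$ to $\tau$ gives
\[
\bigl|\,e^{\mu\tau}\langle \bar v^m(\cdot,\tau),\varphi\rangle_W - e^{\mu\tau_m}\langle \bar w^m,\varphi\rangle_W\,\bigr| \leq C\bigl(e^{-\mu\tau}+e^{-\mu\tau_m}\bigr).
\]
The initial term equals $(1-\tfrac{1}{m})\langle \eta(|\cdot|/m)\varphi,\varphi\rangle_W$, which tends to $1$ as $m\to\infty$; sending $m\to\infty$ (so $\tau_m\to-\infty$) and then $\tau\to-\infty$ gives $\lim_{\tau\to-\infty} e^{\mu\tau}\langle \bar v^\infty(\cdot,\tau),\varphi\rangle_W = 1$, which coincides with the normalization \eqref{eq:entire-sol-cal} of $\bar v$.

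Both $\bar v$ and $\bar v^\infty$ now qualify as graphs of ancient one-sided rescaled Brakke flows converging smoothly to $\Sigma$ with multiplicity one, so Theorem \ref{thm:L^2-decay} applies with spectral constant $1$ to each; the difference between each entire graph and its cutoff $\hat{(\cdot)}$ is super-exponentially small, as both functions have $C^0$-bound $Ce^{-\mu\tau}$ and the Gaussian weight kills the tail on $\{|\bx|\geq e^{-\delta_0\tau}\}$. The triangle inequality then yields $\|\bar v - \bar v^\infty\|_W \leq Ce^{(|\mu|+\delta_0/2)\tau}$, so $e^{\mu\tau}\|\bar v - \bar v^\infty\|_W \to 0$. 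Proposition \ref{prop:cond.unique.2} forces $\bar v^\infty \equiv \bar v$ on $(-\infty,\bar\tau]$. To finish, fix $\tau \in (-\infty,\bar\tau)$, $\bx_0\in\Sigma$, and $s\in[0,1)$. Since $\bar v(\bx_0,\tau)>0$ by Proposition \ref{prop:exist-graphical-one-sided} and $\bar v^m(\bx_0,\tau)\to \bar v(\bx_0,\tau)$, for $m$ large we have $s\bar v(\bx_0,\tau)/\bar v^m(\bx_0,\tau) \in [0,1]$; applying (4) of Proposition \ref{prop:est-vm} at this ratio places $\bx_0 + s\bar v(\bx_0,\tau)\nu_\Sigma(\bx_0)$ outside $\supp\check\cM(\tau)$.

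The main obstacle is the identification of the leading spectral coefficient of $\bar v^\infty$: Theorem \ref{thm:L^2-decay} only constrains each flow to lie within $Ce^{(|\mu|+\delta_0/2)\tau}$ of \emph{some} multiple of $e^{-\mu\tau}\varphi$, and it is essential that this multiple be shown to be the same $1$ on both sides before Proposition \ref{prop:cond.unique.2} can be invoked. The ODE integration above is what supplies this matching, and the specific choice of initial datum $\bar w^m$ as the cutoff of $(1-\tfrac{1}{m})e^{-\mu\tau_m}\varphi$ is precisely tuned so that the initial spectral projection converges to the same normalization as that imposed on $\bar v$ in \eqref{eq:entire-sol-cal}.
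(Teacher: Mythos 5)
Your proof is correct and follows essentially the same route as the paper: integrate the ODE for $e^{\mu\tau}\langle\bar v^m,\varphi\rangle_W$ from $\tau_m$ to $\tau$ using the error bound from Corollary \ref{coro:expand-rescaled-mcf-app} and Proposition \ref{prop:est-vm}(5), pass to a subsequential limit, identify the limit with $\bar v$ via Proposition \ref{prop:cond.unique.2}, and pass the avoidance property (4) of Proposition \ref{prop:est-vm} to the limit. The only place you are more explicit than the paper is in verifying the $L^2_W$-smallness hypothesis of Proposition \ref{prop:cond.unique.2} by appealing to Theorem \ref{thm:L^2-decay} together with the super-exponential smallness of the cutoff tail; the paper leaves this step implicit, so your spelling it out is a faithful expansion rather than a different argument.
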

\begin{proof}
 Recall that from Corollary \ref{coro:expand-rescaled-mcf-app}  we have that $\bar v^m$ satisfies 
\begin{equation}\label{eq:evolution-vm}
 \partial_\tau \bar v^m = L\bar v^m + E
 \end{equation}
where $E = \bar v^m E_1 + E_2(\nabla \bar v^m, \nabla \bar v^m)$ satisfy the estimates given in Corollary \ref{coro:expand-rescaled-mcf-app}. Proposition \ref{prop:est-vm}, (5) then implies
$$ | \partial_\tau \bar v^m - L\bar v^m| \leq C e^{-2\mu\tau}\, .$$
This yields
$$\big| \tfrac{d}{d\tau} \langle \bar v^m, \varphi\rangle_W + \mu  \langle \bar v^m, \varphi\rangle_W \big| \leq Ce^{-2\mu\tau}\, ,$$
and thus for $\tau \in [\tau_m,\bar \tau]$
$$ \left| e^{\mu \tau} \langle \bar v^m(\cdot,\tau), \varphi\rangle_W - e^{\mu \tau_m}\langle \bar w^m, \varphi\rangle_W\right| \leq \int_{\tau_m}^\tau e^{-\mu s}\, ds \leq \int_{-\infty}^\tau e^{-\mu s}\, ds \leq C e^{-\mu \tau}\, .$$
By \eqref{eq:entire-sol-cal}, \eqref{eq:sol-cal} and \eqref{eq:lower-barrier} we have
$$ \lim_{m\to\infty} e^{\mu \tau_m}\langle \bar w^m, \varphi\rangle_W = 1.$$
Let $\tilde v$ be any subsequential limit of the sequence $\bar v^m$. Note that $(-\infty, \bar\tau) \ni \tau \mapsto \tilde v(\cdot, \tau)$ is a entire graphical solution to rescaled mean curvature flow over $\Sigma$, lying below $\bar v$, which satisfies
$$ \left| e^{\mu\tau}  \langle \tilde v, \varphi\rangle_W  - 1 \right| \leq Ce^{-\mu\tau}$$ 
as well as
$$ \| \tilde v(\cdot\, \tau)\|_{C^3(\Sigma)} \leq C e^{-\mu\tau}\, .$$
Proposition \ref{prop:cond.unique.2} thus implies that $\tilde v \equiv \bar v$. The assertion then follows by combining this with (4) in Proposition \ref{prop:est-vm}. 
\end{proof}

\subsection{Global uniqueness}  In this section we consider\footnote{If one is content with uniqueness in the ancient past, the condition $\Sigma\in\cS_n''$ suffices. The strictly stable end (as opposed to stable end) condition is only used to apply Proposition \ref{prop:cond.unique.1} to upgrade uniqueness in the ancient past to uniqueness for all $t<0$. }   $(\Sigma,\Omega)\in \cS_n'''$ and  $(\check \cM(\tau))_{\tau}$ an ancient (integral unit regular) rescaled Brakke flow with $(\supp \check \cM)_{\tau} \subset \overline \Omega$ for all $\tau < T$. Assume that $\check \cM(\tau)$ limits in $C^{\infty}_{\textrm{loc}}$ to $\Sigma$ with multiplicity one as $\tau\to -\infty$.

Let $\check a$ be given by Theorem \ref{thm:L^2-decay}. 
 
We establish the following global uniqueness theorem.

\begin{theorem}[Global uniqueness result]\label{thm:uniqueness-global}\item
 \begin{itemize}
 \item[(1)]  If $\check a >0 $ then up to a shift in time, $\check \cM$ coincides with the rescaling of the flow constructed in Proposition  \ref{prop:exist-ancient-rescaled-MCF-C2-rescaling-argument} for all $t<0$.
 \item[(2)]  If $\check a =0 $ then $\check \cM$ coincides with $\cM_\Sigma$ for all $t<0$.
\end{itemize}
\end{theorem}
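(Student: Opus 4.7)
The plan is to reduce both cases to an application of Proposition \ref{prop:cond.unique.1}: once we establish that $\check\cM$ coincides with the canonical ancient flow (respectively with the static shrinker $\cM_\Sigma$) for all sufficiently negative times, Proposition \ref{prop:cond.unique.1} propagates the agreement to all $t<0$. So the work is entirely in the ancient past.

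Case (2), $\check a = 0$, I would dispatch first. Here Theorem \ref{thm:decaying-mode} gives $|\check u(\cdot,\tau)| \leq C e^{(-\mu+\delta_0/4)\tau}$ on compacts, strictly better than $e^{-\mu\tau}$. I would run Proposition \ref{prop:global-decay} with $\tilde v_\tau \equiv 0$ to upgrade this into graphicality with pointwise control on an exponentially growing region, then rerun the spectral dynamics argument of Section \ref{sec:estimates-ancient-general} (with the improved decay rate as input) to sharpen the estimate by a further $e^{\delta_0/4 \cdot \tau}$. Iterating shows $\|\check u(\cdot,\tau)\|_W$ decays faster than any exponential, and then Proposition \ref{prop:cond.unique.2} applied to $\check u$ and $0$ forces $\check u \equiv 0$ on the domain of graphicality. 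Constancy of density and unit regularity then yield $\check\cM = \cM_\Sigma$ for all sufficiently negative $\tau$.

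For case (1), $\check a > 0$, after a (rescaled) time translation we can assume $\check a = 1$, and after a time translation in $\bar M$ we may assume \eqref{eq:entire-sol-cal}. Proposition \ref{prop:entire-sol-below} then gives the one-sided bound $\bar v \leq \check u$ (on the region where $\check u$ is defined). The central quantity is
\[
W(\tau)^2 := \sup_{s\leq\tau}\int_{\Sigma \cap B_{5R}(\bOh)} (\check u(\cdot,s) - \bar v(\cdot,s))^2 e^{-\frac{1}{4}|\bx|^2},
\]
and I aim to show $W(\tau) \equiv 0$ for $\tau \ll 0$ by a self-improving bootstrap. Given a small a priori value of $W(\tau_0)$, Schauder interior estimates upgrade the $L^2$-smallness to $C^{4,\alpha}$-smallness of $\check u - \bar v$ on $\Sigma \cap B_{4R}(\bOh)$. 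Then Proposition \ref{prop:global-decay}, applied now with $\tilde v_\tau = \bar v(\cdot,\tau)$ (so that the long barriers of Section \ref{sec:barriers} are viewed as graphs over $\bar M(\tau)$ rather than over $\Sigma$), yields graphicality of $\check\cM(\tau)$ over $\bar M(\tau)$ on an exponentially growing ball of radius $\gtrsim W(\tau)^{-1/(1+\alpha)}$, together with the pointwise bound $\check u - \bar v \lesssim W(\tau) z^\alpha$ along each cylindrical end.

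On this enlarged domain the difference $w := \check u - \bar v$ satisfies a parabolic equation $\partial_\tau w = L w + E^w$ with $E^w$ of the schematic form $wF + \nabla w \cdot \bF + \nabla^2 w \cdot \cF$ whose coefficients are controlled by $e^{-\mu\tau}$ via Lemma \ref{lemm:relative-shrinker-mean-curvature}. I would then rerun the spectral dynamics of Section \ref{sec:estimates-ancient-general} on $w$: because the graphical radius is now $\sim W(\tau)^{-1/(1+\alpha)}$, the cutoff error term $F$ appearing in \eqref{eq:evolution-hat-u} becomes super-polynomially small in $W(\tau)$, and the $P_1$ projection $\langle w,\varphi\rangle_W$ has vanishing leading coefficient because $\check u$ and $\bar v$ share the same $\check a = 1$. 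The Merle--Zaag / integrated-projection analysis analogous to Lemma \ref{lem:L^2-optimal-decay} then yields $W(\tau) \leq e^{\delta_0 \tau} W(\tau)$ for $\tau \ll 0$, i.e.\ $W(\tau) = 0$. The main obstacle is the last step: one must set up the bootstrap so that the quantitative gain extracted from the spectral projection (using $\check a_{\bar M} = \check a_{\check \cM}$) strictly outweighs the quantitative losses incurred in the barrier pinching and the $L^2 \to L^\infty$ Schauder upgrade, uniformly as $\tau \to -\infty$; this balance is precisely what drives the choice of the weight $|\bx|^2/4$ and the radius exponent $1/(1+\alpha)$ in the definition of $W$.
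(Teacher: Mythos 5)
Your proposal follows the paper's route in its essential structure, and most of what you write is correct: Case (1) runs exactly as in the paper (normalize $\check a = 1$, invoke Proposition~\ref{prop:entire-sol-below} for the one--sided bound, define $W(\tau)$ on $\Sigma \cap B_{5R}$, upgrade $L^2$ smallness to $C^{4,\alpha}$ smallness by Schauder, apply Proposition~\ref{prop:global-decay} with $\tilde v_\tau = \bar v(\cdot,\tau)$ to push graphicality out to radius $\sim W(\tau)^{-1/(1+\alpha)}$, then cutoff and spectral dynamics). And the final reduction to $t<0$ via Proposition~\ref{prop:cond.unique.1} is correct.

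There are, however, two points where you are imprecise in a way the paper is not. First, the self--improvement at the end of Case (1) does not take the form $W(\tau) \leq e^{\delta_0 \tau} W(\tau)$: what the spectral dynamics produces is an improvement in the decay \emph{exponent} of $W$, not a multiplicative bound of $W$ by itself. The paper handles this by defining $\gamma := \sup\{\gamma' > 0 : W(\tau) \leq e^{(1+2\alpha)\gamma'\tau}\}$, assuming $\gamma < \infty$, running the argument at rate $\gamma$, and obtaining the strictly better decay $W(\tau) \leq e^{((1+2\alpha)\gamma + \gamma_\mu/4)\tau}$ to contradict extremality. This bootstrap only works because the constant in the final ODE estimate is uniform in $\gamma$; the paper flags this explicitly (the cutoff error term $G$ must be shown bounded by constants independent of $\gamma$, since terms like $\gamma e^{\gamma\tau}$ appear). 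Your description correctly identifies that the ``balance'' between gain and loss is the crux, but you have not articulated the mechanism that makes that balance provable, namely this uniformity of constants.

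Second, for Case (2) the paper simply says: run the identical argument with $\tau \mapsto \bar M(\tau)$ replaced by the static flow $\tau \mapsto \Sigma$. Your alternative of iterating Theorem~\ref{thm:decaying-mode}'s decay to get super-exponential decay and then applying Proposition~\ref{prop:cond.unique.2} is not directly available: Proposition~\ref{prop:cond.unique.2} demands $u_i$ be \emph{entire} graphs over $\Sigma$ with $\|u_i(\cdot,\tau)\|_{C^3(\Sigma)} \leq Ce^{-\mu\tau}$, whereas at any finite $\tau$ you only control $\check u$ on an exponentially growing but proper subset of $\Sigma$. One would have to re-prove an analogue of Proposition~\ref{prop:cond.unique.2} that tolerates cutoffs, which is essentially what the paper's Case-(1) machinery does, so there is no advantage to the detour. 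The cleaner path is the paper's: just set $\bar v \equiv 0$ and repeat.
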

\begin{proof}
We first consider case (1). By shifting time we can assume that \eqref{eq:sol-cal} holds. Thus by Proposition \ref{prop:entire-sol-below} there exists $\bar \tau$, such that for $\tau \in (-\infty,\bar{\tau})$, $\check\cM(\tau)$ lies above\footnote{Here, we mean in the sense of Proposition \ref{prop:entire-sol-below}.} $\bar M(\tau)$, where $\tau \mapsto \bar M(\tau)$ is the  smooth ancient rescaled mean curvature flow on one side of  $\Sigma$ constructed in Proposition \ref{prop:exist-graphical-one-sided}, satisfying \eqref{eq:entire-sol-cal}. We denote with $\bar v:\Sigma \times (-\infty, \tau_0')\to \RR_+$ the function such that  $$\tau \mapsto \bar M(\tau)=\Graph_\Sigma(\bar v(\cdot,\tau))\, .$$
We can assume that $\tau_{0}'\leq \bar \tau$.

Our goal is to prove that there is some $\tau_{0}\in (-\infty,\tau_{0}']$ with $\check \cM(\tau) = \bar M(\tau)$ for all $\tau \in (-\infty,\tau_{0})$. Given this, the uniqueness for all (rescaled) time\footnote{and thus all negative non-rescaled time} follows from Proposition \ref{prop:cond.unique.1}. Below, we will repeatedly (but only finitely many times) take $\tau_{0}$ more negative until we achieve this goal.

Fix $R=R_{1}$ as in Proposition \ref{prop:global-decay} . Note that Theorem \ref{thm:decaying-mode} together with \eqref{eq:entire-sol-cal} and \eqref{eq:sol-cal} implies that for $\tau_0\ll 0$,
\begin{equation}\label{eq:first-decay}
 \sup_{\Sigma \cap B_{5R}(\bOh)}|\check u(\cdot,\tau) - \bar v(\cdot, \tau)|\leq e^{\big(-\mu+\frac{\delta_0}{4}\big)\tau}
\end{equation}
for $\tau \leq \tau_0$. For $\tau\leq \tau_{0}\ll0$, we find
\begin{equation}\label{eq:ancient-past-defn-delta-tau}
\delta(\tau) := \sup_{\tau' \leq \tau} \|\check u(\cdot, \tau')- \bar v(\cdot, \tau') \|_{4,\alpha; \Sigma\cap B_{4R}(\bOh)} \leq \hat\delta\, ,
\end{equation}
where $\hat \delta$ is as in Proposition \ref{prop:global-decay}.
By Proposition \ref{prop:global-decay} we can assume that $\delta(\tau)>0$ for all $\tau \leq \tau_0$.

We now define $w = \check u - \bar v$ (on the domain of definition of $\check u$). Observe that the fact that $\check\cM(\tau)$ lies above $\bar M(\tau)$ implies that $w\geq 0$. We set
\[
W(\tau)^2 : = \sup_{s\leq \tau} \int_{\Sigma\cap B_{5R}(\bOh)} w(\cdot,s)^{2} e^{-\tfrac 14 |\bx|^{2}}.
\]
 We define
\begin{equation}\label{eq:definiton-gamma}
 \gamma:=\sup\{ \gamma'>0\, |\, W(\tau) \leq e^{(1+2\alpha)\gamma' \tau}\ \text{for}\ \tau\leq \tau_0\}\, .
\end{equation}
We assume (for contradiction) that $\gamma < \infty$ for all $\tau_{0}\ll0$. Note that if for some $\tau_{0}$ fixed, $\gamma=\infty$, then $W(\tau)=0$ for all $\tau\leq \tau_0$. Thus, \eqref{eq:control-delta} together with Proposition \ref{prop:global-decay} implies that $\check \cM(\tau) = \bar{M}(\tau)$ for all $\tau \leq \tau_0$, proving the assertion.

To begin, we note that \eqref{eq:first-decay} implies that
\begin{equation}\label{eq:first-decay-W}
 W(\tau) \leq C e^{\big(-\mu+\frac{\delta_0}{4}\big)\tau}
\end{equation}
 for $\tau \leq \tau_0$. This implies that
\begin{equation}\label{eq:first-estimate-gamma}
\gamma\geq \gamma_0:= (1+2\alpha)^{-1}\left( -\mu+\tfrac{\delta_0}{8}\right) > 0\, 
\end{equation}
for $\tau_{0}\ll0$. 

We now show derive pointwise estimates on $w$ from the given estimates on $W$. Interior parabolic Schauder estimates imply that there is a constant $C>0$,  such that 
\begin{equation}\label{eq:control-delta}
 \delta(\tau) \leq C W(\tau)  
\end{equation}
for $\tau\leq \tau_0$. Recall the definition of the semi-global norm as in \eqref{eq:defi.partial.seminorm.2} 
\[ \Vert \cdot \Vert_{k,\alpha}(\rho)=   \Vert  \cdot \Vert_{k,\alpha;\, \text{cyl}}(\rho) + \Vert  \cdot \Vert_{k,\alpha;\Sigma_{\textnormal{con}}}^{(1)}\, .\]
Proposition \ref{prop:global-decay} implies that there is a constant $C_0\geq 2$ such that we can choose
\begin{equation}\label{eq:definition-graphical-radius}
 \rho(\tau):= C_0^{-1} W(\tau)^{-\frac{1}{1+\alpha}}
\end{equation}
with
\[ \|w(\cdot, \tau)\|_{4,\alpha;\,\text{cyl}}(\rho(\tau)) \leq C W(\tau)^\theta\]
for any $\theta \in (0,\frac{1}{1+\alpha})$. In particular, by interpolating Lemma \ref{lem:decay-conical-ends} with the estimate that follows by using \eqref{eq:control-delta} in Proposition \ref{prop:estimates-conical-ends}, 
\begin{equation}\label{eq:control-semi-global-decay}
 \|w(\cdot, \tau)\|_{4,\alpha}(\rho(\tau)) \leq W(\tau)^{\frac{1}{1+2\alpha}}
\end{equation}
for $\tau\leq \tau_0 \ll0$. We note that
\begin{equation}\label{eq:lower-estimate-rho}
 \rho(\tau)\geq 3 e^{-\gamma\tau}\, 
\end{equation}
for $\tau_0$ sufficiently negative. This will be useful below.

In the following part of the argument, we will keep decreasing $\tau_0$, but will emphasize that all constants $C$ will be uniformly bounded independently of $\tau_0\to -\infty$ (and in particular will not depend on the value of $\gamma$). This will allow us to derive a contradiction to the assumption that $\gamma < \infty$ (as long as we take $\tau_{0}$ sufficiently negative).

We now show how to convert estimates on $W(\tau)$ into (potentially) optimal estimates on $\check u$. Propostion \ref{prop:exist-ancient-rescaled-MCF-C2-rescaling-argument} ensures that for $\tau\leq \tau_0$, we have
$$ \|\bar v\|_{C^5(\Sigma)}\leq C e^{-\mu\tau}\, ,$$
and combining this with \eqref{eq:control-semi-global-decay} we find
$$ \| \check u\|_{4,\alpha}(\rho(\tau))\leq C e^{\gamma_\mu\tau}\, ,$$
where we set
$$\gamma_\mu:= \min\{-\mu, \gamma\}\, .$$
Using these estimates in Corollary \ref{lemm:relative-shrinker-mean-curvature},  we find that $w$ satisfies 
\begin{equation}\label{eq:evolution-w}
 \partial_\tau w = L_\Sigma w + E^w
 \end{equation}
where 
$$ E^w(\bx) = w(\bx) F(\bx) + \nabla w(\bx) \cdot \bF(\bx) + \nabla^2 w(\bx) \cdot \mathcal{F}(\bx) \, ,$$
  with the estimate  
 $$|F| + |\bF| + |\cF| + |\nabla \cF| \leq C e^{\gamma_\mu\tau}$$
on $\Sigma_\text{con}\cup(\Sigma_\text{cyl}\cap B_{\rho(\tau)}(\bOh))$ for $\tau\leq \tau_0$.

We consider a smooth cut-off function $\eta\geq0$ satisfying $\eta(r) =1 $ for $|r|\leq 1, \eta(r) =0$ for $|r|\geq 2, |\eta'|\leq 2$, and $|\eta''| \leq 2$. Using \eqref{eq:lower-estimate-rho}, we can define on $\Sigma$ 
\begin{equation}\label{eq:defn.hat.w}
\hat w(\bx,\tau) = w(\bx, \tau) \,\hat\eta(\bx,\tau)\, ,
\end{equation}
where $\hat \eta =1$ on $\Sigma \setminus \Sigma_\textnormal{cyl}$, and $\hat \eta(\bx,\tau) = \eta(10^{-1} e^{\gamma \tau} |\bx|)$ on $\Sigma_\textnormal{cyl}$. We introduce
\begin{equation}\label{eq:def-hat-w}
 \hat W(\tau) := \sup_{s\leq \tau} \|\hat w(\cdot, s)\|_W
\end{equation}
and note that for $\tau_0$ sufficiently negative Proposition \ref{prop:global-decay} (as in the proof of Proposition \ref{prop:nearly-optimal-decay}) combined with \eqref{eq:control-delta} implies that
\begin{equation}\label{eq:hat-W-first-decay}
 \hat W(\tau) \leq C e^{(1+2\alpha)\gamma\tau}\leq C e^{\big(-\mu +\frac{\delta_0}{8}\big)\tau}
\end{equation}
 and we have 
$$W(\tau) \leq \hat W(\tau)\ .$$
The evolution of $\hat w$ is given by
\begin{equation}\label{eq:evolution-hat-w}
\partial_\tau \hat w = L_\Sigma \hat w +  \hat E^w =  L_\Sigma \hat w + \hat w F +  \nabla \hat w \cdot \bF + \nabla^2 \hat w \cdot \mathcal{F} + G
\end{equation}
where
\begin{equation*}\label{eq:error-hat-w}
\begin{split}
 G &= - 2 \nabla \hat \eta \cdot \nabla w - w\, \Delta \hat \eta + \tfrac{1}{2} w \, (\bx^T\cdot \nabla \hat \eta) - w \nabla \hat \eta \cdot \bF\,  - (\nabla w \otimes \nabla \hat \eta + \nabla \hat \eta \otimes \nabla w )\cdot \mathcal{F}\\
 &\quad - w  \nabla^2 \hat \eta \cdot \mathcal{F} + w\, \partial_\tau \hat \eta\, .
\end{split}
 \end{equation*}
Note that $|\nabla \hat \eta|, |\nabla^2\hat\eta| \leq C \chi_A e^{\gamma \tau}$ where $\chi_A$ is the characteristic function of the annulus $A=\{ \bx\, |\,  e^{-\gamma \tau} \leq |\bx| \leq 2 e^{-\gamma \tau}\}$. Note that by \eqref{eq:lower-estimate-rho}, $2e^{-2\gamma\tau} < \rho(\tau)$ for $\tau_0$ sufficiently negative. Thus we can estimate using \eqref{eq:control-semi-global-decay} and \eqref{eq:definiton-gamma}
$$|G| \leq C \chi_A e^{\gamma \tau} \big((|w| + |\nabla w|) + (1 + \gamma) |\bx| |w|)\big)\leq C  \chi_A .$$
We emphasize that the final constant $C$ is independent of $\gamma$ (which depends on $\tau_0$). Indeed,
\[
\chi_A  e^{\gamma \tau}\gamma  |\bx| |w| = \chi_A ((|\bx| e^{\gamma \tau})(\gamma |w|))\leq \chi_A (2\gamma e^{\gamma \tau})
\]
Since $\tau_0 < 0$, $\gamma e^{\gamma \tau}$ is bounded independently of $\gamma>0$, so $C$ can be chosen as claimed. 

Because $e^{-\frac{|\bx|^{2}}{4}}\leq \exp\big(-\tfrac{1}{4}e^{-2\gamma \tau}\big)$ on $A$, we can derive the following coarse estimates
\begin{align}
|\langle \hat w, G \rangle_W| & \leq e^{(2(1+2\alpha)\gamma + \gamma_{\mu})\tau} \label{eq:estimate-error.1} \\
\|G\|_W^2 & \leq e^{(2(1+2\alpha)\gamma + \gamma_{\mu})\tau}\,,\label{eq:estimate-error.2}
\end{align}
valid for $\tau\leq \tau_{0}\ll0$.

Combining \eqref{eq:evolution-hat-w}, \eqref{eq:estimate-error.1}, \eqref{eq:estimate-error.2} with Corollary \ref{lemm:relative-shrinker-mean-curvature} and with integration by parts (off of the $\nabla^{2}\hat w$ term),  we see that\footnote{To handle the error terms arising after integrating by parts, we can use the estimate for $\nabla \cF$ as well as \eqref{eq:relative-estimate-ibp-hits-gaussian} in Corollary \ref{lemm:relative-shrinker-mean-curvature} (to control the terms when the derivative hits the Gaussian weight).  }
\begin{equation}\label{eq:estimate-error.3}
 |\langle \hat w, \hat E^w \rangle_W| \leq C e^{\gamma_\mu\tau} \|\hat w\|^2_{W,1} + e^{(2(1+2\alpha)\gamma + \gamma_{\mu})\tau}\, .
\end{equation}

Thus we can compute
\begin{equation}\label{eq:estimate-evol-hat-w.0}
\tfrac{1}{2}\tfrac{d}{d\tau}\|\hat w\|^2_W = \langle \hat w, L_\Sigma \hat w  + \hat E^w\rangle_W \leq -\mu \|\hat w\|^2_W +  C e^{\gamma_\mu\tau} \|\hat w\|^2_{W,1} + e^{(2(1+2\alpha)\gamma + \gamma_{\mu})\tau}\, ,
\end{equation}
  and therefore
  \begin{equation}\label{eq:estimate-evol-hat-w.1}
\tfrac{1}{2}\tfrac{d}{d\tau}\big(e^{2\mu\tau}\|\hat w \|^2_W\big)  \leq   C e^{(2\mu + \gamma_\mu)\tau} \|\hat w\|^2_{W,1} + e^{(2\mu + 2(1+2\alpha)\gamma + \gamma_{\mu})\tau}\, .
\end{equation}

We recall from \cite[(5.17)]{CCMS:generic1} the identity
\begin{multline}\label{eq:integration-by-parts-hessian-laplacian}
\int_\Sigma (\Delta_\Sigma \hat w -\tfrac{1}{2} \bx \cdot \nabla_\Sigma \hat w)^2 e^{-\tfrac 14 |\bx|^{2}} d\cH^n \\ = \int_\Sigma (|\nabla^2_\Sigma \hat w|^2 - A^2(\nabla_\Sigma \hat w, \nabla_\Sigma \hat w) + \tfrac{1}{2}|\nabla_\Sigma\hat w|^2)\, e^{-\tfrac 14 |\bx|^{2}} d\cH^n
\end{multline}
and the identity following on that, which in the present situation can be written as
\begin{equation*}
 \begin{split}
 \tfrac{1}{2}\tfrac{d}{d\tau}\|\nabla_\Sigma \hat w\|_W^2 &=  - \Vert \Delta_\Sigma \hat w - \tfrac12 \mathbf{x} \cdot \nabla_\Sigma \hat w \Vert_W^2 + \Vert (\tfrac12 + |A_\Sigma|^2)^{\frac12} \nabla_\Sigma \hat w \Vert_W^2\\ & \qquad  + \langle \nabla_\Sigma \hat w, \hat w \, \nabla_\Sigma |A_\Sigma|^2 \rangle_W 
			- \langle \Delta_\Sigma \hat w - \tfrac12 \mathbf{x} \cdot \nabla_\Sigma \hat w, \hat E^w \rangle_W.
\end{split}
\end{equation*}
We can thus estimate, using \eqref{eq:evolution-hat-w}, \eqref{eq:estimate-error.2}, \eqref{eq:integration-by-parts-hessian-laplacian}, as follows
\begin{equation}\label{eq:estimate-evol-hat-w.2}
 \begin{split}
 \tfrac{1}{2}\tfrac{d}{d\tau}\|\nabla_\Sigma \hat w\|_W^2 &\leq  - \tfrac{1}{2} \Vert \Delta_\Sigma \hat w - \tfrac12 \mathbf{x} \cdot \nabla_\Sigma \hat w \Vert_W^2 + \tfrac{1}{2}\|\hat E^w\|_W^2 + C \|w\|_{W,1}^2\\
 &\leq  -\tfrac{1}{2} \Vert \nabla^2_\Sigma \hat w\Vert_W^2 + 2\| \nabla^2 \hat w \cdot \mathcal{F}\|_W^2 + C \|w\|_{W,1}^2 + e^{(2(1+2\alpha)\gamma + \gamma_{\mu})\tau}\\
 &\leq C \|w\|_{W,1}^2 + e^{(2(1+2\alpha)\gamma + \gamma_{\mu})\tau}
 \end{split}
\end{equation}
Alternatively, we can integrate by parts rather than using the spectral estimate in \eqref{eq:estimate-evol-hat-w.0} to find (decreasing $\tau_0$ if necessary and using \eqref{eq:estimate-error.3})
\begin{equation}\label{eq:estimate-evol-hat-w.3}
\begin{split}
 \tfrac{1}{2}\tfrac{d}{d\tau}\|\hat w\|^2_W &= - \Vert \nabla_\Sigma \hat w \Vert_W^2 + \langle \hat w, (\tfrac12 + |A_\Sigma|^2)\hat w + \hat E^w \rangle_W \\
& \leq -  \Vert \nabla_\Sigma \hat w \Vert_W^2 + C \Vert \hat w \Vert_W^2 + C e^{\gamma_\mu\tau} \|\hat w\|^2_{W,1} + e^{(2(1+2\alpha)\gamma + \gamma_{\mu})\tau}\\
&\leq - \tfrac12 \Vert \nabla_\Sigma \hat w \Vert_W^2 + C \Vert \hat w \Vert_W^2 + e^{(2(1+2\alpha)\gamma + \gamma_{\mu})\tau}\, .
 \end{split}
\end{equation}

Combining \eqref{eq:estimate-evol-hat-w.2} and \eqref{eq:estimate-evol-hat-w.3} yields
\begin{equation}\label{eq:final-unique-W1-ode}
 \tfrac{d}{d\tau} (\|\nabla_\Sigma \hat w\|_W^2 + C \|\hat w\|^2_W)\leq C \Vert \hat w \Vert_W^2 + C e^{(2(1+2\alpha)\gamma + \gamma_{\mu})\tau}\leq C \hat W(\tau)^2 + C e^{(2(1+2\alpha)\gamma + \gamma_{\mu})\tau}
 \end{equation}
By \eqref{eq:hat-W-first-decay} find that
\[ \hat W(\tau)^2 \leq C e^{2(1+2\alpha) \gamma \tau} \, ,\]
for $\tau\leq \tau_0 \ll0$, so by integrating \eqref{eq:final-unique-W1-ode}, we find
\[ \|\hat w(\cdot, \tau)\|^2_{W,1} \leq C \gamma^{-1} e^{2(1+2\alpha)\gamma\tau} + C \gamma^{-1} e^{(2(1+2\alpha)\gamma + \gamma_{\mu})\tau} \leq C e^{2(1+2\alpha)\gamma\tau} \, \]
for $\tau\leq \tau_{0}$. In the final inequality we used $\gamma_{\mu}>0$ and $\gamma^{-1} \leq \gamma_{0}^{-1}<\infty$ (where we emphasize that $\gamma_{0}$ is fixed in \eqref{eq:first-estimate-gamma} and thus is independent of $\tau_{0}$). 

We can now return to \eqref{eq:estimate-evol-hat-w.1} to find
\[
\tfrac{1}{2}\tfrac{d}{d\tau}\big(e^{2\mu\tau}\|\hat w \|^2_W\big)  \leq C e^{(2\mu + 2(1+2\alpha)\gamma + \gamma_{\mu})\tau}\, .
\]
Note that $2\mu + 2(1+2\alpha)\gamma \geq \tfrac{\delta_0}{4} > 0$, so we can integrate this to find 
\[ \|\hat w(\cdot, \tau)\|^2_{W} \leq  e^{\big(2(1+2\alpha)\gamma +\frac{\gamma_\mu}{2}\big)\tau} \]
for $\tau \leq \tau_{0}\ll0$. Thus,
\[W(\tau) \leq   e^{\big((1+2\alpha)\gamma +\frac{\gamma_\mu}{4}\big)\tau}\]
for $\tau \leq \tau_0$. But this contradicts the extremality of $\gamma$ in \eqref{eq:definiton-gamma} (for $\tau_{0}$ fixed sufficiently negative so that each step above is justified) and thus $\gamma = \infty$ and $W(\tau) \equiv 0$ for $\tau \leq \tau_0$.

In case (2) we can argue as above, replacing $\tau \mapsto \bar{M}(\tau)$ by the static flow $\tau \mapsto \Sigma$.
 \end{proof}


\part{Application to mean curvature flow in $\RR^3$}

\section{Genus drop for ancient one-sided flows in $\RR^{3}$}\label{sec:genus-drop-ancient}

In this section we consider $(\Sigma,\Omega) \in \cS_{2}^*\setminus \cS^\textrm{gen}_2$. In other words, we consider $\Sigma$ a non-flat self-shrinker in $\RR^3$ that is not a sphere or cylinder, and $\Omega$ a chosen side of $\Sigma$. Recall that by Proposition \ref{prop:ends.shrinkers.R3} it holds that $\Sigma \in \cS_{2}'''$. 

Consider the (one-sided) ancient weak set flow $\cK$ and associated Brakke flow $\cM$ as considered in Proposition \ref{prop:exist-ancient-rescaled-MCF-C2-rescaling-argument} (cf. Corollary \ref{coro:summary-tleq0-non-rescaled}). The next lemma follows by combining \cite[Corollary 1.4]{ColdingMinicozzi:sing-generic} with Proposition \ref{prop:exist-ancient-rescaled-MCF-C2-rescaling-argument}.
\begin{lemma}
$\partial\cK(t)$ is smooth for a.e., $t<0$. 
\end{lemma}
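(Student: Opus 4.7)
The plan is to reduce the claim to the dimensional estimate of Colding--Minicozzi for the singular set of a mean-convex flow, once we have arranged that $\partial\cK(t)$ is controlled by the singular set of the Brakke flow $\cM$. The key observation is that since $\cM(t) = \cH^{2}\lfloor\partial\cK(t)$ for all $t<0$ by Proposition \ref{coro:summary-tleq0-non-rescaled}(1), and $\cM$ is unit-regular, a point $\bx\in\partial\cK(t)$ fails to be a smooth point of $\partial\cK(t)$ if and only if $(\bx,t)\in\sing\cM$. Therefore the set of ``bad'' times is exactly $\ft(\sing\cM)\cap\{t<0\}$, and it suffices to show this set has Lebesgue measure zero in $\RR$.

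To estimate this set, I would invoke property (5) of Proposition \ref{coro:summary-tleq0-non-rescaled}: each singular point of $\cM\cap\{t<0\}$ has a strictly mean-convex parabolic neighborhood. This is the hypothesis required to apply \cite[Corollary 1.4]{ColdingMinicozzi:sing-generic}, which in the case $n=2$ states that the space-time singular set of a mean-convex mean curvature flow in $\RR^{3}$ is locally contained in finitely many Lipschitz curves parametrized by time (indeed it is discrete, apart from at most a locally finite union of ``arcs of cylindrical singularities''). In particular the time projection $\ft(\sing\cM)\cap\{t<0\}$ has Hausdorff dimension zero, hence Lebesgue measure zero.

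Combining the two observations gives the conclusion: for a.e.\ $t<0$ no point of $\partial\cK(t)$ lies in $\sing\cM$, so $\partial\cK(t)$ is smooth. The only subtlety I anticipate is making sure that the local mean-convex hypothesis coming from Proposition \ref{coro:summary-tleq0-non-rescaled}(5) is of the form required to quote \cite[Corollary 1.4]{ColdingMinicozzi:sing-generic} verbatim; but this is precisely how the reference is set up in the parallel case $n=3,4$ recorded as property (6) of Proposition \ref{coro:summary-tleq0-non-rescaled}, so no extra work is needed here beyond citing the $n=2$ case of the same corollary.
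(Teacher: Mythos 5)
Your argument is correct and matches the paper's intended proof, which is recorded as a one-line remark ("follows by combining \cite[Corollary 1.4]{ColdingMinicozzi:sing-generic} with Proposition \ref{prop:exist-ancient-rescaled-MCF-C2-rescaling-argument}"), with the relevant structural facts recorded in Proposition \ref{coro:summary-tleq0-non-rescaled}. One small imprecision: the hypothesis actually quoted by \cite[Corollary 1.4]{ColdingMinicozzi:sing-generic} is that every tangent flow at $t<0$ is a multiplicity-one shrinking sphere/cylinder, i.e.\ item (4) of Proposition \ref{coro:summary-tleq0-non-rescaled}, rather than the strict mean-convex-neighborhood property of item (5); both are available here, so the application goes through unchanged.
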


In this section we will show that $\partial\cK(t)$ loses genus as $t\to 0$ (recall that Brendle has shown \cite{Brendle:genus0} that $\Sigma$ has positive genus). We will later use this in the global perturbation argument to ensure that we only need to perturb the initial conditions finitely many times. 

We recall the following definitions:
\begin{definition}
If $M$ is an oriented compact connected surface without boundary, decomposing $M = \cup_{\alpha} M _{\alpha}$ into connected components, then we set
\[
\genus(M) : = \sum_{\alpha} \genus(M_{\alpha}) = \tfrac 12 b_{1}(M). 
\]
\end{definition}
\begin{definition}
If $M^2$ is an oriented compact surface with boundary, the genus of $M$ is defined to be the genus of the oriented surface $\tilde M$ formed by capping off each  circle in $\partial M$ by a disk. 
\end{definition}

\begin{definition}[Simple flow {\cite[Appendix G]{CCMS:generic1}}] For $U\subset \RR^3$ an open set with smooth boundary, a \emph{simple flow} in $U\times I$ is a closed subset of space-time $\cM \subset \RR^{3}\times \RR$ so that there is a compact $2$-manifold with boundary and a continuous map $f: M \times I \to \RR^3$ so that 
\begin{enumerate}
\item $\cM(t) \cap \overline U= f(M\times \{t\})$ (where $\cM(t) = \{\bx \in \RR^3 : (\bx,t) \in \cM\}$) 
\item $f$ is smooth on $M^\circ \times I$ where $M^\circ = M\setminus\partial M$ 
\item $f(\cdot,t)$, $t\in I$ is an embedding of $M^\circ$ into $U$
\item $t\mapsto f(M^\circ\times \{t\})$ is a smooth mean curvature flow
\item $f|_{\partial M\times I}$ is a smooth family of embeddings of $\partial M$ into $\partial U$. 
\end{enumerate}
\end{definition}

We can now state the main result. 
\begin{proposition}[Genus drop for one-sided flows in $\RR^3$]\label{prop:genus-drop-one-sided-flow}
Let $(\Sigma,\Omega)$, $\cK$ be as above. There is $t_{0}<t_{1}<0$, $\kappa_{0} \in (0,t_{1}-t_{0})$, $\kappa_{1} \in (0,-t_{1})$ and $R>0$ with the following properties. 
\begin{enumerate}
\item for any $t \in [t_{0},t_{0}+\kappa_{0}]$ so that $\partial\cK(t)$ is smooth, $\genus(\partial\cK(t) \cap B_{R}(\bOh)) > 0$,
\item  for any $t \in [t_{1},t_{1}+\kappa_{1}]$ so that $\partial\cK(t)$ is smooth, $\genus(\partial\cK(t) \cap B_{R}(\bOh)) = 0$,
\item  $t\mapsto \partial\cK(t)$ is a simple flow in $\{R/2 <  |\bx| < 3R\} \times [t_0,t_1+\kappa_{1}]$,
\item  and
for $(\bx,t) \in \partial\cK(t) \cap \{R/2 < |\bx| < 3R\}\times  [t_0,t_1+\kappa_{1}]$, we have $|\bx^T| \geq \tfrac 12 |\bx|$, for $\bx^T$ the projection of $\bx$ to $T_\bx\partial\cK(t)$. 
\end{enumerate}
\end{proposition}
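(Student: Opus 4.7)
The plan is to obtain parts (1), (3), (4) from the smooth convergence of the rescaled flow $M(\tau):=(-t)^{-1/2}\partial\cK(t)$ (with $\tau=-\log(-t)$) to $\Sigma$ combined with the end structure of $\Sigma$, and to prove part (2) by a contradiction argument using White's topological monotonicity, following the sketch in Section~\ref{subsec:sketch-genus-drop}.

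First I would fix $r_0>0$ such that all the genus of $\Sigma$---positive by Brendle \cite{Brendle:genus0} since $(\Sigma,\Omega)\in \cS_2^*\setminus\cS^\textrm{gen}_2$---is contained in $\Sigma\cap B_{r_0}(\bOh)$, with $\Sigma\setminus B_{r_0}$ splitting into cylindrical and conical ends by Proposition~\ref{prop:ends.shrinkers.R3}. Pick $t_1<0$ and $\kappa_1>0$ with $t_1+\kappa_1<0$; by monotonicity the ``smooth outside'' radius $R(t)$ from Proposition~\ref{prop:exist-ancient-rescaled-MCF-C2-rescaling-argument}(3) satisfies $R(t)\leq R(t_1+\kappa_1)<\infty$ on $(-\infty,t_1+\kappa_1]$. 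Then choose $t_0\ll 0$ sufficiently negative that $(-t_0)^{-1/2}\partial\cK(t_0)$ is a small $C^3$-graph over $\Sigma\cap B_{10r_0}(\bOh)$ and $R(t_1+\kappa_1)<r_0\sqrt{-t_0}$, and set $R=2r_0\sqrt{-t_0}$. For every $t\in[t_0,t_1+\kappa_1]$, $\partial\cK(t)$ is then a smooth $C^3$-small graph over $\sqrt{-t}\Sigma$ on $\{|\bx|>R(t)\}\supset \{|\bx|>R/2\}$, and the annular intersection has constant topology because it sees only the ends of $\Sigma$; this gives (3). Part (1) follows since $\partial\cK(t_0)\cap B_R(\bOh)$ is a smooth graph over the genus-carrying piece of $\sqrt{-t_0}\Sigma$, persisting on a short $[t_0,t_0+\kappa_0]$ by smoothness. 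Part (4) is a routine estimate: on conical ends $|\bx^\perp|=O(|\bx|^{-1})$, while on cylindrical ends $|\bx^\perp|$ is bounded, so $|\bx^T|\geq|\bx|/2$ once $R$ is large.

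Part (2) is the main content, which I would prove by contradiction. Assuming it fails, the generic smoothness in Proposition~\ref{coro:summary-tleq0-non-rescaled}(6) produces $t'\in(t_0,0)$ arbitrarily close to $0$ with $\partial\cK(t')$ smooth and $\genus(\partial\cK(t')\cap B_R(\bOh))>0$. Choose a simple closed curve $\gamma$ on $\partial\cK(t')\cap B_R$ representing a genuine ``handle'' class of $\Sigma$'s compact core (modifying $\gamma$ to avoid classes that merely wind around a cylindrical end), and push $\gamma$ slightly onto the component $U$ of $\RR^3\setminus\partial\cK(t')$ into which the flow is moving to obtain $\tilde\gamma$ non-contractible in $U$. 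The parabolic star-shapedness $\cF_\lambda(\check\cK)\subset\check\cK^\circ$ from Proposition~\ref{theo:basic-prop-cK-cM}(2) shows that this ``front'' region only grows as $t$ decreases, so $\tilde\gamma$ remains in $\RR^3\setminus\partial\cK(t)$ for all $t\in(-\infty,t']$. Now let $t\to -\infty$: $\partial\cK(t)$ is $C^\infty$-close to $\sqrt{-t}\Sigma$, which dilates to infinity while $\tilde\gamma$ stays fixed and bounded. In particular $\partial\cK(t)\cap \overline{B_C(\bOh)}$ is empty for $C$ containing $\tilde\gamma$ once $t$ is sufficiently negative (or, if $\bOh\in\Sigma$, a blow-up of a tangent cone through $\bOh$---which by the no-winding choice of $\gamma$ does not link $\tilde\gamma$), so $\tilde\gamma$ bounds a disk in $\RR^3\setminus\partial\cK(t)$ for $t\ll 0$. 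White's topological monotonicity \cite{White:topology-weak}, combined with the localized version from Appendix~\ref{app:loc-top-mon} to pass through any singularities of $\partial\cK$ in $(t,t')$, then forces $\tilde\gamma$ to bound a disk in $\RR^3\setminus\partial\cK(t')$, contradicting its non-triviality in $U$.

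The hard part will be the topological bookkeeping: precisely separating homology classes on $\partial\cK(t')\cap B_R$ arising from ``true handles'' of $\Sigma$'s compact core from those arising from winding around a cylindrical end; ensuring that the corresponding handle-class curve, when pushed to the flow-into side, becomes non-trivial there (which may require choosing $\gamma$ in a specific generator set via the handlebody decomposition of the two sides of $\partial\cK(t')$); and verifying that the fixed bounded curve $\tilde\gamma$ is unlinked from the far-away $\partial\cK(t)$ at $t\ll 0$. A secondary technical issue is applying White's topological monotonicity across the singular set of $\partial\cK$ in $(t,t')$, which is handled via the localized topological monotonicity results from Appendix~\ref{app:loc-top-mon}.
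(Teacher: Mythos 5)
Your treatment of parts (1), (3), (4) is broadly in line with the paper (which proves them via Lemmas 7.4–7.7 using the smooth-outside-a-ball structure from Proposition \ref{prop:exist-ancient-rescaled-MCF-C2-rescaling-argument}(3), stability of cylinders, and pseudolocality). The real discrepancy is in part (2), where your proposal follows the informal sketch in Section \ref{subsec:sketch-genus-drop} rather than the argument the paper actually runs.

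The paper's proof of (2) does \emph{not} invoke White's topological monotonicity. It goes via a static minimizing-surface construction: Lemma \ref{lemm:one.sided.genus.drop.inding.loop} produces a loop $\sigma\subset (-t)^{-1/2}\cK(t)\cap B_R$ that is \emph{not} homologous (rel.\ $\partial B_R$-cycles) to a boundary cycle at time $t$, but \emph{is} so homologous at time $t_0\ll 0$; Lemma \ref{lemm:genus.drop.minimize} then minimizes in a conformal metric to produce a smooth embedded shrinker $\Gamma\subset\Omega$ with $\partial\Gamma=\sigma$; Corollary \ref{coro:genus.drop.avoidance} keeps $\Gamma$ inside $(-t)^{-1/2}\cK(t)$ by Ilmanen's avoidance (using Proposition \ref{prop:sublinear-growth}); and $\Gamma\cap B_R$ then exhibits the forbidden homology, yielding the contradiction. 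The remark just below the statement explains exactly why: this route avoids analysing star-shapedness of the flow along the cylindrical ends near $t=0$.

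Your route has two concrete gaps. First, the claim that the parabolic star-shapedness $\cF_\lambda(\check\cK)\subset\check\cK^\circ$ shows the (unrescaled) front region only grows as $t$ decreases, so a fixed $\tilde\gamma$ stays in $\RR^3\setminus\partial\cK(t)$, is incorrect: Proposition \ref{theo:basic-prop-cK-cM}(2) gives $\sqrt{t/t'}\cK(t')\subset\cK(t)^\circ$ for $t<t'<0$, i.e.\ a \emph{scaled} inclusion and hence monotonicity only in the rescaled picture. You would have to track a scaling homotopy $t\mapsto\sqrt{t/t'}\tilde\gamma$ through the space-time complement and argue at the level of $H_1$ of the space-time track. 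Second, the claim that $\tilde\gamma$ ``bounds a disk'' in the complement for $t\ll 0$ is too coarse: in the rescaled picture at early times the loop sits at a fixed relative position inside $\Omega$, whose $H_1$ is $\ZZ^g\neq 0$, so nontriviality in $\Omega$ precisely reflects the genus you started from and the loop need \emph{not} bound. The correct statement, which the paper encodes in Lemma \ref{lemm:one.sided.genus.drop.inding.loop} using Lemmas \ref{lemm:H1.subset.R3.bdry} and \ref{lemma:genus.loops.inside.first}, is that $[\sigma]$ becomes homologous to a $\partial B_R$-cycle, not null-homologous, and turning that into a contradiction requires a concrete $2$-chain rather than an appeal to contractibility. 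Finally, even if both gaps were patched, applying \cite{White:topology-weak} or Theorem \ref{theo:homotope-time-zero-MCF-complement-simple-flow} to the ancient, non-compactly-supported one-sided flow $\cK$ (with no initial time slice) would require a nontrivial extension of those results; this is exactly what the minimal surface argument sidesteps.
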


\begin{remark}
We first sketch the idea of the proof. For simplicity let us assume that $\Sigma$ is compact (in reality one must be somewhat careful with the cylindrical ends, but the main idea  is the same). Write $\Omega(\tau)$ for the inside of the rescaled one-sided flow (so $\Omega(\tau) \subset \Omega(\tau') \subset \Omega$ for $\tau > \tau'$). Suppose that $\partial\Omega(\tau)$ has non-zero genus for all $\tau$ large. Note that $d(\Sigma, \Omega(\tau))$ is exponentially increasing as $\tau\to\infty$ (since the distance between the non-rescaled flows is non-decreasing). 

Thus, the non-trivial situation is when $\Omega$ is the unbounded component of $\RR^3\setminus\Sigma$. If $\partial\Omega(\tau)$ has positive genus, then by a result in algebraic topology (cf.\ Lemma \ref{lemm:H1.subset.R3.bdry} below), there is a non-zero element $[\sigma] \in H_1(\Omega(\tau))$. Because the flows are shrinker mean convex, we find $\sigma \subset \Omega$ and $d(\sigma,\Sigma)$ is arbitrarily large. However, it is easy to see that when $d(\sigma,\Sigma)$ is sufficiently large, then $[\sigma] = 0 \in H_1(\Omega)$. This contradicts White's topological monotonicity result \cite[Theorem 1(iii)]{White:topology-weak}, which says that a $1$-cycle in the complement of a ($2$-dimensional) weak set flow cannot suddenly cease to bound a $2$-chain. 
\end{remark}

\begin{remark}
The argument used here avoids the need for a potentially delicate analysis of the star-shaped properties of the flow (along cylindrical ends) near $t=0$ (cf.\ \cite[Theorem 9.1]{CCMS:generic1}).  
\end{remark}


The following is a direct consequence of Lemma \ref{lem:ends-decomposition}.

\begin{lemma}\label{lemm:ends-decomp-specific}
We can take $R_0$ sufficiently large so that for $R\geq R_0$, $\Sigma$ intersects $\partial B_R(\bOh)$ transversely and $\Sigma \setminus B_R(\bOh)$ (resp.\ $\Omega \setminus B_R(\bOh)$) is diffeomorphic to $\Sigma \cap \partial B_R(\bOh) \times [R,\infty)$ (resp.\ $\Omega \cap \partial B_R(\bOh)\times [R,\infty)$). 
\end{lemma}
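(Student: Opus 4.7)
The strategy is to apply Ehresmann's fibration theorem to the radial distance $g(\bx)=|\bx|$, viewed as a proper submersion from $\Sigma\setminus B_{R_0}(\bOh)$ (respectively $\overline\Omega\setminus B_{R_0}(\bOh)$) to $[R_0,\infty)$. The key quantitative ingredient, from which transversality of $\partial B_R(\bOh)\cap\Sigma$ and the submersion property both follow, is the uniform estimate
\[
|\bx^T|\geq \tfrac{1}{2}|\bx|\quad\text{on }\Sigma\setminus B_{R_0}(\bOh)
\]
for $R_0$ large, where $\bx^T$ denotes the projection of $\bx$ onto $T_\bx\Sigma$. Note that transversality of the intersection $\Sigma\cap\partial B_R(\bOh)$ is equivalent to $\bx/|\bx|$ not being normal to $\Sigma$, i.e., to $\bx^T\ne 0$, so the displayed estimate implies transversality for every $R\ge R_0$.

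To prove the estimate, I would apply Lemma \ref{lem:ends-decomposition} to write $\Sigma\setminus B_{R_0}(\bOh)=\Sigma_\textnormal{con}\cup\Sigma_\textnormal{cyl}$ and treat each type of end separately. On a conical end, the rescalings $\lambda\Sigma_\textnormal{con}$ converge in $C^\infty_\textrm{loc}(\RR^3\setminus\{\bOh\})$ to the asymptotic cone, which is dilation-invariant so that $\bx\in T_\bx\sqrt{0}\Sigma_\textnormal{con}$ at every nonzero point; the smooth convergence then yields $|\bx^T|/|\bx|\to 1$ along $\Sigma_\textnormal{con}$ as $|\bx|\to\infty$. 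On a cylindrical end modeled on $\hat\Sigma\times\RR_\be$ (here $\hat\Sigma$ is the shrinking circle by Wang's theorem, though only the compactness of $\hat\Sigma$ is used), choose coordinates $\bx=\bomega+z\be$ with $\bomega\in\RR^2_{\be^\perp}$ and compute on the exact cylinder
\[
|\bx^T|^2=|\bomega|^2-(\bomega\cdot\nu_{\hat\Sigma})^2+z^2\geq z^2,
\]
while $|\bx|^2=|\bomega|^2+z^2$ with $|\bomega|$ uniformly bounded. Hence $|\bx^T|/|\bx|\to 1$ on the cylinder, and by the smooth graphical convergence in Lemma \ref{lem:ends-decomposition} the same is true along $\Sigma_\textnormal{cyl}$. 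Combining the two cases and enlarging $R_0$ yields the displayed estimate.

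With the estimate in hand, the map $g|_\Sigma\colon\Sigma\setminus B_{R_0}(\bOh)\to[R_0,\infty)$ is smooth with $|\nabla_\Sigma g|=|\bx^T|/|\bx|\geq\tfrac12$, hence a submersion; properness follows because $\Sigma$ is properly embedded, so $g^{-1}([R_0,K])=\Sigma\cap\{R_0\le|\bx|\le K\}$ is compact for each $K$. Ehresmann's theorem then realizes $g|_\Sigma$ as a smooth fiber bundle over $[R_0,\infty)$; since $[R,\infty)$ is contractible for each $R\ge R_0$, the restriction of this bundle to $[R,\infty)$ is trivial, providing the asserted diffeomorphism $\Sigma\setminus B_R(\bOh)\cong(\Sigma\cap\partial B_R(\bOh))\times[R,\infty)$. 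For $\Omega$ I would apply the version of Ehresmann's theorem valid for proper submersions of manifolds with boundary to $g|_{\overline\Omega}\colon\overline\Omega\setminus B_{R_0}(\bOh)\to[R_0,\infty)$: on the interior $\nabla g=\bx/|\bx|$ is nonzero, on the boundary the submersion property has just been verified, and properness holds for the same reason as before. Concretely the resulting trivialization can be realized as the flow of a vector field on $\overline\Omega\setminus B_{R_0}(\bOh)$ that is tangent to $\Sigma$ (equal to $\bx^T/|\bx^T|^2$ there) and transverse to the spheres in the interior, ensuring the two product structures are compatible along $\Sigma$. The main obstacle is the uniform transversality estimate on the cylindrical ends, but once it is extracted from Lemma \ref{lem:ends-decomposition} as above, Ehresmann's theorem applies as a black box.
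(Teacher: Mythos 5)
The paper leaves this as ``a direct consequence of Lemma~\ref{lem:ends-decomposition}'' with no further argument, so there is no explicit proof to compare against; your job was to supply the details, and you do so correctly. The core of your argument -- the quantitative transversality estimate $|\bx^T| \geq \tfrac12|\bx|$ for $|\bx|$ large, derived separately on the conical ends (using dilation-invariance of the blowdown cone, on which $\bx$ is exactly tangent) and on the cylindrical ends (using the computation $|\bx^T|^2 = |\bomega|^2 - (\bomega\cdot\nu_{\hat\Sigma})^2 + z^2 \geq z^2$ together with the boundedness of $\hat\Sigma$ and the $C^\infty_{\textrm{loc}}$ graphical convergence) -- is exactly the input needed, and it is consistent with the version of this estimate the authors invoke for the one-sided flow in part~(4) of Proposition~\ref{prop:genus-drop-one-sided-flow}. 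The passage from the gradient lower bound $|\nabla_\Sigma g| = |\bx^T|/|\bx| \geq \tfrac12$ plus properness of $\Sigma$ to a trivial fiber bundle over $[R,\infty)$ via Ehresmann is standard, and your remark that the trivializing vector field should be chosen tangent to $\Sigma$ (equal to $\bx^T/|\bx^T|^2$ on $\Sigma$, extended transversally to the spheres inside $\overline\Omega$) is precisely what makes the two product structures -- on $\Sigma\setminus B_R$ and on $\overline\Omega\setminus B_R$ -- compatible along the common boundary, so that restricting to the interior yields the stated diffeomorphism for $\Omega\setminus B_R$. The argument is complete and correct.
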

Note that $\Sigma \cap \partial B_R(\bOh)$ is a finite union of pairwise disjoint embedded loops in $\partial B_R(\bOh) \approx \SS^2$ and $\Omega\cap\partial B_R(\bOh)$ is a finite union of regions in $\partial B_R(\bOh)$ bounded by said loops. 
\begin{lemma}
There is $r >0$ and $s_{1}<0$ so that $\cK(t) \cap B_{r}(\bOh) = \emptyset$ for $t \in [s_{1},0)$.
\end{lemma}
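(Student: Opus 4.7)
This is essentially a direct consequence of Proposition \ref{theo:basic-prop-cK-cM}(1). The plan is as follows.

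Set $\check\cK = \cK \cap \ft^{-1}((-\infty,0))$ as in Proposition \ref{theo:basic-prop-cK-cM}. That proposition gives $\delta := d((\bOh,0), \check\cK) > 0$, where $d$ denotes parabolic distance in space-time. Unpacking the definition, this means that for every $(\bx,t) \in \check\cK$ we have $\max\{|\bx|, \sqrt{-t}\} \geq \delta$ (or equivalently $|\bx|^{2} + (-t) \geq \delta^{2}$, depending on the convention—either way the conclusion is the same).

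Now simply choose $r := \delta/2$ and $s_1 := -\delta^2/4$. For any $t \in [s_1, 0)$ we have $\sqrt{-t} \leq \delta/2 < \delta$, so any $(\bx,t) \in \check\cK$ with $t \in [s_1, 0)$ must satisfy $|\bx| \geq \delta > r$. Equivalently, $\cK(t) \cap B_r(\bOh) = \emptyset$ for all $t \in [s_1, 0)$, which is the desired conclusion.

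There is no real obstacle here: the content of the lemma is already packaged into Proposition \ref{theo:basic-prop-cK-cM}(1), whose proof relied on strict shrinker mean-convexity of the exterior smooth flow $\Sigma(t)$ and Ilmanen's localized avoidance principle to separate $\supp\cM$ from $\Sigma$ (and then from $(\bOh,0)$). The present lemma is just the translation of positivity of the parabolic distance into a space-time tube of noncontact near the origin, and the only freedom is in the choice of $r$ and $s_1$, any pair satisfying $r^{2} + |s_1| < \delta^{2}$ will do.
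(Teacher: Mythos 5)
Your proof is correct and takes exactly the same route as the paper, which simply invokes Proposition \ref{theo:basic-prop-cK-cM}(1); you have merely unpacked the parabolic-distance bound into the explicit choices of $r$ and $s_1$. Nothing to add.
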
 
\begin{proof}
This follows from (1) in Proposition \ref{theo:basic-prop-cK-cM}.
\end{proof}
As such, taking $t_1 \in [s_{1},0)$ sufficiently close to $0$, we find that for all $t \in [t_{1},0)$,
\begin{equation}\label{eq:genus.drop.defn.t1}
\tfrac{1}{\sqrt{-t}}\cK(t) \cap B_{R_0}(\bOh) = \emptyset,
\end{equation}
wheres $R_0$ is fixed in Lemma \ref{lemm:ends-decomp-specific} above. This fixes $t_{1}$. Now we fix an arbitrary $\kappa_{1} \in (t_{1},0)$. 
\begin{lemma}\label{lemm:genus.drop.defn.t0}
There is $t_{0}<t_{1}$ sufficiently negative, $\kappa_{0} \in (0,t_{1}-t_{0})$ and $R_{1}\geq R_{0}$ so that for any $R\geq R_{1}$ and $t \in [t_{0},t_{0}+\kappa_{0})$, $\tfrac{1}{\sqrt{-t}}\partial\cK(t)$ is a smooth surface meeting $\partial B_{R}(\bOh)$ transversally with $\genus(\tfrac{1}{\sqrt{-t}} \partial\cK(t) \cap B_{R}(\bOh)) > 0$ and $\tfrac{1}{\sqrt{-t}}\partial\cK(t) \setminus B_{R}(\bOh)$ is diffeomorphic to $(\tfrac{1}{\sqrt{-t}}\partial\cK(t) \cap \partial B_{R}(\bOh))\times [R,\infty)$
\end{lemma}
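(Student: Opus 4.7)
The plan is to show that for $t$ very negative the rescaled flow $\tfrac{1}{\sqrt{-t}}\partial\cK(t)$ is a small smooth graph over $\Sigma$, and then transfer the topological properties of $\Sigma$ (positive genus from Brendle, product ends from Lemma \ref{lemm:ends-decomp-specific}) to this graph via a straight-line isotopy that respects $\partial B_R(\bOh)$ transversally for every $R\geq R_0$ simultaneously.

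By Propositions \ref{prop:exist-ancient-rescaled-MCF-C2-rescaling-argument}(4) and \ref{prop:exist-graphical-one-sided}(3), for $t$ sufficiently negative we have $\tfrac{1}{\sqrt{-t}}\partial\cK(t) = \Graph_\Sigma v(\cdot, -\log(-t))$ with $\|v(\cdot,\tau)\|_{C^k(\Sigma)} \leq C_k e^{-\mu\tau} \to 0$ as $\tau \to -\infty$ for each $k$; in particular $\tfrac{1}{\sqrt{-t}}\partial\cK(t)$ is smooth. Meanwhile, since $\Sigma \in \cS_2^* \setminus \cS_2^{\textnormal{gen}}$ is non-flat and neither a sphere nor a cylinder, Brendle's theorem \cite{Brendle:genus0} gives $\genus(\Sigma) > 0$. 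The product decomposition in Lemma \ref{lemm:ends-decomp-specific} implies $\genus(\Sigma \cap B_R(\bOh)) = \genus(\Sigma) > 0$ for all $R \geq R_0$, since capping off $\partial(\Sigma \cap B_R(\bOh))$ by disks gives the same closed surface as capping the ends of $\Sigma$.

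The key step is a uniform-in-$R$ transversality estimate. Writing each end of $\Sigma$ as a $C^\infty$-small graph over its asymptotic cone or cylinder $\hat\Sigma \times \RR$, one checks that the unit normal to $\Sigma$ and the radial normal to $\partial B_R(\bOh)$ have inner product bounded away from $\pm 1$ uniformly in $R \geq R_0$; indeed, along cylindrical ends the inner product is $O(1/R)$, since $|\nu_{\hat\Sigma}(\bomega)\cdot \bomega|$ is bounded while the radial normal at height $z\approx R$ is $\approx \be$, which is tangent to the cylinder. Consequently there is $\delta = \delta(\Sigma) > 0$ such that whenever $\|w\|_{C^3(\Sigma)} \leq \delta$, the straight-line homotopy $s \mapsto \Graph_\Sigma(sw)$, $s \in [0,1]$, consists of smooth embedded surfaces each meeting every $\partial B_R(\bOh)$, $R \geq R_0$, transversally in the same number of embedded circles as $\Sigma$ does.

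Set $R_1 := R_0$ and choose $t_0 < t_1$ so negative and $\kappa_0 \in (0, t_1 - t_0)$ so small that $\|v(\cdot, -\log(-t))\|_{C^3(\Sigma)} \leq \delta$ for all $t \in [t_0, t_0 + \kappa_0]$. For any $R \geq R_1$ and such $t$, the straight-line isotopy at parameter $s = -\log(-t)$ provides a diffeomorphism between $\Sigma$ and $\tfrac{1}{\sqrt{-t}}\partial\cK(t)$ that maps $\Sigma \cap B_R(\bOh)$ to $\tfrac{1}{\sqrt{-t}}\partial\cK(t) \cap B_R(\bOh)$ (yielding positive genus) and maps $\Sigma \setminus B_R(\bOh)$ to $\tfrac{1}{\sqrt{-t}}\partial\cK(t) \setminus B_R(\bOh)$; combining with Lemma \ref{lemm:ends-decomp-specific} gives $\tfrac{1}{\sqrt{-t}}\partial\cK(t) \setminus B_R(\bOh) \cong \Sigma \setminus B_R(\bOh) \cong (\Sigma \cap \partial B_R) \times [R,\infty) \cong (\tfrac{1}{\sqrt{-t}}\partial\cK(t) \cap \partial B_R) \times [R,\infty)$, the last diffeomorphism holding because both sides are disjoint unions of topological half-cylinders over the same finite collection of circles in $\partial B_R$. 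The main obstacle is exactly the uniform transversality step, but it is forced by the explicit asymptotic form of the ends; once that is established the remaining conclusions follow formally from smallness of $v$.
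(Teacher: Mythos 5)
Your plan is sound and uses the same basic ingredients as the paper (graphicality from Propositions \ref{prop:exist-ancient-rescaled-MCF-C2-rescaling-argument}/\ref{prop:exist-graphical-one-sided}, the product decomposition on the ends, and Brendle's theorem), but you route the conclusion through a direct diffeomorphism of the compact pieces, whereas the paper's one-line proof instead invokes Lemma \ref{lemm:alg.int.pos.genus}. The paper's route is worth noting because it sidesteps a subtlety you gloss over: the straight-line graph map $\bx \mapsto \bx + w(\bx)\,\nu_\Sigma(\bx)$ is a diffeomorphism from $\Sigma$ onto $\Graph_\Sigma(w)$, but it does \emph{not} carry $\Sigma\cap B_R(\bOh)$ onto $\Graph_\Sigma(w)\cap B_R(\bOh)$, since it changes $|\bx|$. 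So the sentence ``the straight-line isotopy \ldots\ provides a diffeomorphism \ldots\ that maps $\Sigma \cap B_R(\bOh)$ to $\tfrac{1}{\sqrt{-t}}\partial\cK(t) \cap B_R(\bOh)$'' is not literally correct. The conclusion you want is still true, but needs a different mechanism: your uniform transversality shows that $\Graph_\Sigma(sw)\cap\overline{B_R(\bOh)}$, $s\in[0,1]$, is a smooth $1$-parameter family of compact surfaces with boundary (the boundary circles depend smoothly on $s$ by the implicit function theorem), hence all fibers are diffeomorphic; alternatively, apply an ambient isotopy extension argument. Once that is fixed, your argument closes.

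By contrast, the paper avoids this entirely: Brendle gives two curves $\gamma_1,\gamma_2\subset\Sigma$ with non-zero algebraic intersection, Lemma \ref{lemm:ends-decomp-specific} lets you take them inside $\Sigma\cap B_{R_0}$, the normal graph map (a genuine diffeomorphism $\Sigma\to\Graph_\Sigma(w)$ preserving intersection numbers) carries them into $\Graph_\Sigma(w)\cap B_{R_0+\delta}\subset\Graph_\Sigma(w)\cap B_R$, and Lemma \ref{lemm:alg.int.pos.genus} then gives positive genus directly. This is softer than producing a diffeomorphism of the compact pieces because it only needs to transport two homology classes, not the whole surface. Your transversality estimate (inner product $O(1/R)$ along cylindrical ends, $o(1)$ along conical ends) is correct and is also what underlies the transversality statement and the product-end structure of $\tfrac{1}{\sqrt{-t}}\partial\cK(t)\setminus B_R(\bOh)$ in both approaches.
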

\begin{proof}
Combine Lemmas \ref{lemm:alg.int.pos.genus} and \ref{lemm:ends-decomp-specific}, property (4) in Proposition \ref{prop:exist-ancient-rescaled-MCF-C2-rescaling-argument}, and \cite[Theorem 2]{Brendle:genus0}. 
\end{proof}

\begin{lemma}\label{lemm:genus.drop.arg.near.bdr.sphere.top}
There is $R_{2}\geq R_{1}$ sufficiently large so that:
\begin{enumerate}
\item For $R \geq R_{2}$ and $t \in [t_{0},t_{1}+\kappa_{1}]$ we have that $\tfrac{1}{\sqrt{-t}} \partial\cK(t)$ is smooth in a neighborhood of $\partial B_{R}(\bOh)$ and these surfaces have transverse intersection. 
\item For any $t > t_{0}$, each component of $\overline{(\tfrac{1}{\sqrt{-t}}\cK(t) \setminus \tfrac{1}{\sqrt{-t_0}}\cK(t_{0}))} \cap \partial B_{R}(\bOh)$
is an annulus. 
\end{enumerate}
\end{lemma}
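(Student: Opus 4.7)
The plan is two-fold: (1) follows from the graphical/asymptotic description of the rescaled flow far from the origin (uniformly in the compact window $[t_0,t_1+\kappa_1]$) combined with transversality of $\Sigma$ with large spheres, and (2) then follows by tracking how the intersection loops $C_s := (-s)^{-1/2}\partial\cK(s) \cap \partial B_R(\bOh)$ move on $\partial B_R(\bOh)$ as $s$ varies, using strict monotonicity of the rescaled sets encoded by shrinker mean convexity.

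For (1), Proposition~\ref{prop:exist-ancient-rescaled-MCF-C2-rescaling-argument}(3) gives a monotone nondecreasing $R(\cdot)$ such that, outside $B_{R(s)}(\bOh)$, the rescaled surface $(-s)^{-1/2}\partial\cK(s)$ is the $C^3$-graph of a small function over $\Sigma$; on the compact window, both $\sup R(\cdot)$ and the $C^3$-norm of the graphical function are controlled uniformly. By Lemma~\ref{lemm:ends-decomp-specific} every $\partial B_R(\bOh)$ with $R \geq R_0$ is transverse to $\Sigma$, and along each end the angle between $\nu_\Sigma$ and $\bx/|\bx|$ stays uniformly bounded away from $0$ and $\pi$ as $R\to\infty$ (the cone angle for conical ends; tending to $\pi/2$ along cylindrical ends, since the cylinder normal lies in the cross-sectional plane while the radial direction of a distant sphere is nearly axial). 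Choosing $R_2$ large relative to $\sup_{[t_0,t_1+\kappa_1]} R(s)$ then preserves this transversality under the small $C^1$-graphical perturbation.

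For (2), part (1) yields that $s \mapsto (-s)^{-1/2}\partial\cK(s)$ is a smooth family of surfaces in a neighborhood of $\partial B_R(\bOh)$, each transverse to $\partial B_R(\bOh)$. The parabolic star-shapedness $\cF_\lambda(\check\cK) \subset \check\cK^\circ$ for $\lambda > 1$ from Proposition~\ref{theo:basic-prop-cK-cM}(2) gives strict monotonicity of the rescaled sets $(-s)^{-1/2}\cK(s)$ in $s$, and in particular pairwise disjointness of the rescaled surfaces. The curves $C_s$ therefore form a smoothly varying family of pairwise disjoint embedded loops on the two-sphere $\partial B_R(\bOh)$; no loop can appear, vanish, merge, or split on the compact interval without forcing a tangential contact to $\partial B_R(\bOh)$ ruled out by (1). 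The family $\{C_s\}$ thus defines a smooth monotone ambient isotopy on $\partial B_R(\bOh)$ sweeping from $C_{t_0}$ to $C_t$, and the swept region on $\partial B_R(\bOh)$ decomposes into closed annuli, each bounded by one loop of $C_{t_0}$ and the corresponding (homotopic) loop of $C_t$.

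The principal technical obstacle is the uniform transversality of $\Sigma$ with $\partial B_R(\bOh)$ along the cylindrical ends as $R\to\infty$, since one must rule out near-tangential behavior in the ``radial'' direction of a far sphere against the near-``axial'' direction of the cylinder. Using the $C^\infty_\textnormal{loc}$ asymptotic description from Lemma~\ref{lem:ends-decomposition}, this reduces to a direct computation on the model $\hat\Sigma\times\RR$ where the sphere and cylinder normals make angle $\arccos(r_0/R) \to \pi/2$, giving a robust lower bound that survives the graphical perturbation. Once this is in place, the remaining ingredients---standard smooth transversality on a compact time interval and the topology of monotone smooth isotopies of closed embedded curves on $S^2$---are routine.
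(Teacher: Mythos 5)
Your proof is correct and takes essentially the same approach as the paper's (terse) argument: choose $R_2$ large using Proposition~\ref{prop:exist-ancient-rescaled-MCF-C2-rescaling-argument}(3) and the uniform graphical description on the compact time window to get (1), then derive (2) by combining the resulting smoothness/transversality near $\partial B_R(\bOh)$ with the strict nesting $\tfrac{1}{\sqrt{-t}}\cK(t)\subset\tfrac{1}{\sqrt{-s}}\cK(s)^\circ$ for $t>s$ coming from Proposition~\ref{theo:basic-prop-cK-cM}(2). One small remark: by that nesting, the set $\tfrac{1}{\sqrt{-t}}\cK(t)\setminus\tfrac{1}{\sqrt{-t_0}}\cK(t_0)$ as literally written in the lemma is empty for $t>t_0$; what you analyze (correctly) is the swept region, i.e.\ $\overline{\tfrac{1}{\sqrt{-t_0}}\cK(t_0)\setminus\tfrac{1}{\sqrt{-t}}\cK(t)}\cap\partial B_R(\bOh)$, and your ``no birth/death/merge/split'' argument via transversality and the disjoint curtains picture does establish that each component of this is an annulus, which is the intended content.
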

\begin{proof}
By (3) in Proposition \ref{prop:exist-ancient-rescaled-MCF-C2-rescaling-argument} there is $R(t)$ monotonically increasing on $(-\infty,0)$ so that 
\[
\partial \cK(t) \setminus B_{R(t)}(\bOh) 
\]
is a smooth strictly shrinker mean convex flow that can be written as a small $C^{3}$-graph over a subset of $\Sigma$. Take $R_{2} > \max_{t\in[t_{0},t_{1}+\kappa_{1}]}\tfrac{1}{\sqrt{-t}}R(t) = \tfrac{1}{\sqrt{-(t_{1}+\kappa_{1})}} R(t_{1}+\kappa_{1})$. This yields (1). To prove (2), we combine the above reasoning with the geometric reformulation of shrinker mean convexity: $\tfrac{1}{\sqrt{-t}} \cK(t) \subset \tfrac{1}{\sqrt{-s}} \cK(s)$ for $t>s$. 
\end{proof}

 Loosely speaking, the next lemma shows that if for a time $t$ close to zero $(-t)^{-1/2} \partial\cK(t)$ still has genus in a large ball, then this can be detected by a homologically nontrivial  loop in $(-t)^{-1/2} \cK(t)$ (relative to the homology created by the ends of the shrinker) that was trivial in the ancient past. Note that we must take care to avoid loops that are homologically nontrivial only because they wrap around the ends of the shrinker/flow.

\begin{lemma}\label{lemm:one.sided.genus.drop.inding.loop}
Suppose  that for some $t\in [t_{1},t_{1}+\kappa_{1}]$ and $R\geq R_{2}$ it holds that $\partial \cK(t)$ is smooth and $\genus(\tfrac{1}{\sqrt{-t}} \partial\cK(t)\cap B_{R}(\bOh)) > 0$. Then, there is a loop $\sigma \subset \tfrac{1}{\sqrt{-t}} \cK(t)\cap B_{R}(\bOh)$ so that
\begin{itemize}
\item  for any $1$-cycle  $\beta \subset \partial B_{R}(\bOh)\cap \tfrac{1}{\sqrt{-t}}\cK(t)$, it holds that  $[\sigma] \neq [\beta]$ in $H_{1}(\tfrac{1}{\sqrt{-t}} \cK(t)\cap \overline{B_{R}(\bOh)};\ZZ)$, but
\item there is a $1$-cycle $\beta \subset \partial B_{R}(\bOh)\cap \tfrac{1}{\sqrt{-t_{0}}}\cK(t_{0})$ so that $[\sigma] = [\beta]$ in $H_{1}(\tfrac{1}{\sqrt{-t_{0}}}\cK(t_{0})\cap \overline{B_{R}(\bOh)};\ZZ)$. 
\end{itemize}
\end{lemma}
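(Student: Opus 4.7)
Write $U_s := \tfrac{1}{\sqrt{-s}}\cK(s) \cap \overline{B_R(\bOh)}$, $V_s := \tfrac{1}{\sqrt{-s}}\partial\cK(s) \cap B_R(\bOh)$, and $W_s := U_s \cap \partial B_R(\bOh)$. By transversality (part (1) of Lemma \ref{lemm:genus.drop.arg.near.bdr.sphere.top}), $U_s$ is a compact $3$-manifold whose boundary decomposes as $\partial U_s = V_s \cup_{\partial V_s} W_s$, a closed orientable surface. Two geometric inputs will drive the argument: first, by \eqref{eq:genus.drop.defn.t1}, since $t \in [t_1, t_1+\kappa_1]$ we have $U_t \subset \overline{B_R}\setminus B_{R_0}$; second, by part (4) of Proposition \ref{prop:exist-ancient-rescaled-MCF-C2-rescaling-argument} together with Lemma \ref{lem:ends-decomposition}, $U_{t_0}$ is a small $C^3$-perturbation of $\Omega \cap \overline{B_R}$, so after slightly enlarging $R_0$ while keeping $R_0 < R_2$, the portion $U_{t_0}\cap(\overline{B_R}\setminus B_{R_0})$ is diffeomorphic to a disjoint union of products $D_\alpha\times[R_0,R]$, where each $D_\alpha$ is a planar region in $\partial B_{R_0}$.

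For the first bullet point, I would find a simple closed curve $\gamma\subset V_t^\circ$ and take $\sigma$ to be the small inward-normal push of $\gamma$ into $U_t^\circ$. The curve $\gamma$ must be chosen so that $[\sigma]$ is non-trivial in the quotient $H_1(U_t;\ZZ)/\operatorname{im}\big(H_1(W_t;\ZZ)\to H_1(U_t;\ZZ)\big)$. Existence of such $\gamma$ is an algebraic topology fact relying on $\genus(V_t) > 0$: an Euler characteristic (or Mayer--Vietoris) computation for $\partial U_t = V_t\cup_{\partial V_t} W_t$, together with the half-lives-half-dies principle applied to the compact 3-region $U_t\subset\RR^3$, yields
\begin{equation*}
\rank H_1(U_t;\ZZ) - \rank\operatorname{im}\big(H_1(W_t;\ZZ)\to H_1(U_t;\ZZ)\big) \geq \genus(V_t) > 0,
\end{equation*}
and representative cycles for the quotient can be taken to lie in (the push-off of) $V_t$.

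For the second bullet point, I would exploit the product structure at time $t_0$. Since $\sigma \subset U_t \subset \overline{B_R}\setminus B_{R_0}$, we have $\sigma\subset U_{t_0}\cap(\overline{B_R}\setminus B_{R_0})$, which is the disjoint union of products $D_\alpha\times[R_0,R]$. In such a product, $H_1$ is generated by loops encircling the boundary circles of the planar factor $D_\alpha$, and each such generator is homotopic through the product to a loop in $D_\alpha\times\{R\}\subset W_{t_0}$. Therefore $[\sigma] \in H_1(U_{t_0};\ZZ)$ is represented by a $1$-cycle $\beta \subset W_{t_0}$, as required.

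The main obstacle is carrying out Step 1 rigorously — in particular establishing the rank inequality above in the general (possibly disconnected) case of $V_t$ and $W_t$, and realising the non-trivial quotient class as a single embedded loop $\sigma$ obtained from an actual curve on $V_t$. A clean way to handle this is Lefschetz duality applied to the $3$-manifold-with-corners $U_t$ with boundary split as $V_t\cup W_t$: the identification $H_1(U_t, W_t;\ZZ) \cong H^2(U_t, V_t;\ZZ)$ reduces the construction to an intersection-number computation, in which a judiciously chosen $\gamma$ pairs to $\pm 1$ with a relative $2$-cycle dual to it, while any $\beta\subset W_t$ pairs trivially with the same $2$-cycle; the fact that no such obstruction appears at time $t_0$ is precisely the absence of genus in the product model from Step 2.
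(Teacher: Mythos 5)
Your two-step strategy matches the paper's proof in outline, but Step~1 has a genuine gap. The rank inequality
\begin{equation*}
\rank H_1(U_t;\ZZ) - \rank\im\bigl(H_1(W_t;\ZZ)\to H_1(U_t;\ZZ)\bigr) \geq \genus(V_t),
\end{equation*}
and, more importantly, the further claim that a nontrivial quotient class can be represented by a single embedded loop obtained by pushing a curve off $V_t$, are not proved: as you yourself note, both the possible disconnectedness of $V_t$ and $W_t$ and the passage from a rank bound to a concrete embedded representative need work, and the Lefschetz-duality alternative you suggest is not carried out either. This is precisely the content of Lemma~\ref{lemma:genus.loops.inside.first}, which the paper invokes here and proves in Appendix~\ref{sec:top-regions-R3} by induction on the number of components of $\partial\Omega\cap\partial B$, with base case Lemma~\ref{lemm:H1.subset.R3.bdry} (i.e.\ $H_1(\Omega;\ZZ)\cong\ZZ^{\genus(\partial\Omega)}$, via Alexander and Poincar\'e--Lefschetz duality). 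The paper also first replaces $\tfrac{1}{\sqrt{-t}}\cK(t)$ (whose boundary is noncompact) by a compact smoothed-corner region $\tilde\Omega$ built from $\tfrac{1}{\sqrt{-t}}\cK(t)\cap B_{R+\eps}(\bOh)$ so that the appendix lemma applies with $B=B_R(\bOh)$; your $U_t$ is compact but has corners on $\partial B_R(\bOh)$, which needs the same kind of smoothing before any duality argument can be run.

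In Step~2 you silently use the nesting $U_t\subset U_{t_0}$ for $t>t_0$: from $\sigma\subset U_t$ and $U_t\subset\overline{B_R(\bOh)}\setminus B_{R_0}(\bOh)$ you cannot conclude $\sigma\subset U_{t_0}$ without it. This nesting is shrinker mean convexity of the one-sided flow (Proposition~\ref{theo:basic-prop-cK-cM}(2)) re-expressed through the rescaled sets $\tfrac{1}{\sqrt{-s}}\cK(s)$, and must be cited; otherwise the deformation-retract argument never gets started. With those two points supplied, the rest (the product structure of $U_{t_0}$ in the annulus, the homotopy through the product to a cycle on $\partial B_R(\bOh)\cap U_{t_0}$) agrees with the paper's argument.
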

\begin{proof}
Applying Lemma \ref{lemma:genus.loops.inside.first} to the set $\tilde \Omega$ formed by smoothing out the corners of $\tfrac{1}{\sqrt{-t}} \cK(t) \cap B_{R+\eps}(\bOh)$ and ball $B=B_{R}(\bOh)$ yields $\sigma \subset  \tfrac{1}{\sqrt{-t}} \cK(t)\cap B_{R}(\bOh)$ satisfying the first bullet point (since 
$
\genus(\partial\tilde\Omega \cap B_{R}(\bOh)) > 0 $
by assumption). 

Now, we observe that by shrinker mean convexity and \eqref{eq:genus.drop.defn.t1}
\[
\sigma \subset \tfrac{1}{\sqrt{-t_{0}}} \cK(t_{0}) \cap (B_{R}(\bOh)\setminus B_{R_{0}}(\bOh)). 
\]
By Lemma \ref{lemm:genus.drop.defn.t0} we have
\[
\tfrac{1}{\sqrt{-t_{0}}} \cK(t_{0}) \cap (\overline{B_{R}(\bOh)}\setminus B_{R_{0}}(\bOh))
\]
deformation retracts to
\[ 
\tfrac{1}{\sqrt{-t_{0}}} \cK(t_{0})\cap \partial B_{R}(\bOh)
\]
so the second bullet point follows. 
\end{proof}

\begin{lemma}\label{lemm:genus.drop.minimize}
Recall that $\partial\Omega = \Sigma$. For some $R\geq R_{2}$ fixed, suppose that there is a loop $\sigma \subset \Omega \cap B_{R}(\bOh)$ and a $1$-cycle $\beta \subset  \Omega \cap \partial B_{R}(\bOh)$ so that $[\sigma] = [\beta]$ in $H_{1}(\Omega \cap \overline{B_{R}(\bOh)};\ZZ)$. Then, there is a connected smooth embedded self-shrinker $\Gamma\subset \Omega$ with $\partial \Gamma = \sigma$. Moreover,
\[
\sqrt{0} \Gamma \cap \partial B_{1}(\bOh) \qquad \textrm{and} \qquad \sqrt{0} \Sigma \cap \partial B_{1}(\bOh)
\]
are disjoint. 
\end{lemma}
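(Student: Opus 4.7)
The plan is to obtain $\Gamma$ as a minimizer of the $F$-functional among integral rectifiable $2$-currents supported in $\overline{\Omega}$ with boundary $\sigma$, then to upgrade this minimizer to a smooth connected shrinker in the open region $\Omega$ and finally extract the blow-down disjointness from the avoidance principle for mean curvature flow.

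\medskip

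\noindent\textbf{Step 1 (Existence of a competitor).} First I would use the hypothesis $[\sigma] = [\beta]$ in $H_1(\Omega\cap\overline{B_R(\bOh)};\ZZ)$ to produce an integral $2$-chain $S_1 \subset \Omega\cap\overline{B_R(\bOh)}$ with $\partial S_1 = \sigma - \beta$. The $1$-cycle $\beta$ lies in the $2$-sphere $\partial B_R(\bOh)$, and using that $R\ge R_2\ge R_1\ge R_0$ so that the ends decomposition of Lemma~\ref{lem:ends-decomposition} is already active on $\partial B_R(\bOh)$, I would complete $S_1$ to an integral chain $T_0$ in $\overline{\Omega}$ with $\partial T_0 = \sigma$, possibly capping $\beta$ by a portion of $\Omega\cap\partial B_R(\bOh)$ together with a piece extending along the ends of $\Omega$ to infinity (the Gaussian weight will make such a piece finite in $F$-mass). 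This gives a non-empty admissible class $\mathcal{T}$ of integral currents $T$ in $\overline{\Omega}$ with $\partial T = \sigma$.

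\medskip

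\noindent\textbf{Step 2 ($F$-minimization and regularity).} Next I would minimize $F(T) = (4\pi)^{-1}\int e^{-|\bx|^2/4}\, d\|T\|$ over $\mathcal{T}$. The Gaussian weight provides coercivity and uniform local mass bounds, so a minimizing sequence has a subsequence converging weakly to an integral current $\Gamma \in \mathcal{T}$ with $\partial\Gamma = \sigma$, by lower semicontinuity of $F$ and closure of the integral class. Regularity then follows because $\Gamma$ is a codimension-one area-minimizing integral current for the conformally flat metric $g = e^{-|\bx|^2/2n}g_{\RR^{n+1}}$, $n=2$; interior regularity gives a smooth embedded hypersurface (the singular set has codimension $\geq 7$, hence is empty here), and boundary regularity at the smooth curve $\sigma$ is classical Allard-type boundary regularity. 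Since minimizers of $F$ are critical for $F$, $\Gamma$ satisfies the shrinker equation $\bH + \tfrac12 \bx^\perp = 0$ on its interior.

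\medskip

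\noindent\textbf{Step 3 (Non-contact with $\Sigma$ and connectedness).} I would then show $\Gamma\subset \Omega$, i.e.\ $\Gamma$ does not touch $\Sigma$. If $\Gamma$ contained an interior point of $\Sigma$, the strong maximum principle for minimal surfaces in the Gaussian metric (applied locally to two tangential shrinkers) would force a component of $\Gamma$ to coincide with a component of $\Sigma$ near that point; by unique continuation this component would extend to all of the corresponding component of $\Sigma$, which is complete without boundary. This contradicts $\partial\Gamma = \sigma \subset \Omega\setminus\Sigma$. Taking $\Gamma$ to be the connected component whose boundary is $\sigma$ gives the claimed smooth connected embedded shrinker in $\Omega$.

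\medskip

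\noindent\textbf{Step 4 (Blow-down disjointness — the main obstacle).} This is the subtlest part. The flows $t\mapsto\sqrt{-t}\Gamma$ and $t\mapsto\sqrt{-t}\Sigma$ are both mean curvature flows for $t<0$ and, by Step~3, they are disjoint at $t=-1$. Using Ilmanen's localised avoidance principle (Theorem~\ref{theo:ilmanen-avoidance}) together with the Ecker--Huisken maximum principle at infinity to handle the non-compactness of $\Gamma$ and $\Sigma$, these flows remain disjoint for all $t\in[-1,0)$. Passing to the Hausdorff limit as $t\nearrow 0$, any intersection point $\bx_0 \in \sqrt{0}\Gamma\cap\sqrt{0}\Sigma \cap\partial B_1(\bOh)$ would produce sequences $\bx_k\in\Gamma$, $\by_k\in\Sigma$ with $\sqrt{-t_k}\bx_k$, $\sqrt{-t_k'}\by_k\to \bx_0$; translating-rescaling at these points would produce two tangent flows to $\sqrt{-t}\Gamma$ and $\sqrt{-t}\Sigma$ at $(\bx_0,0)$ that share a point, violating the strict avoidance inherited from disjointness. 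Hence $\sqrt{0}\Gamma\cap\sqrt{0}\Sigma\cap\partial B_1(\bOh)=\emptyset$, as required. The hard part will be justifying this passage to the limit rigorously in the presence of possibly non-compact cylindrical ends of both $\Sigma$ and $\Gamma$, where one must combine Lemma~\ref{lem:ends-decomposition} (ends of $\Sigma$) with an analogous ends analysis for $\Gamma$ obtained from the $F$-bound $F(\Gamma)\le F(T_0)<\infty$ and the minimality condition.
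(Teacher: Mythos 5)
Your plan follows the same broad strategy as the paper --- obtain $\Gamma$ as a Gaussian-area minimizer in $\Omega$ with boundary $\sigma$, use regularity and the strong maximum principle, then deduce the blow-down disjointness from Ilmanen's avoidance principle --- but the two proofs differ in how they set up the minimization and, more importantly, your Step 4 contains a real gap.

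On the minimization: the paper does not minimize $F$ directly over integral currents in the noncompact $\overline\Omega$. Instead it picks (as in Brendle's work) a sequence of metrics $g_k\to e^{-|\bx|^2/2n}g_{\RR^{n+1}}$ making $\Omega\cap B_{2k}(\bOh)$ mean convex in the Meeks--Yau sense, replaces $\beta$ by homologous cycles $\beta_k\subset\Omega\cap\partial B_{2k}(\bOh)$, and solves a sequence of compact embedded Plateau problems with boundaries $\sigma-\beta_k$, then passes to the limit (using interior regularity and Hardt--Simon boundary regularity). Your direct GMT approach is a legitimate alternative, but it puts more of the burden on showing the competitor $T_0$ of Step 1 has finite $F$-mass and that a minimizing sequence does not lose mass at infinity along the cylindrical ends; these are exactly the issues the paper's compact exhaustion is designed to sidestep. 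If you go the direct route, you should say explicitly why the minimizer has the prescribed boundary (no boundary escapes to infinity, which is fine because $\sigma$ is compact) and why minimizers have no closed components.

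The genuine gap is in Step 4. First, the appeal to the Ecker--Huisken maximum principle at infinity is misplaced: since $\Gamma$ and $\Sigma$ are shrinkers, the flows $t\mapsto\sqrt{-t}\Gamma$ and $t\mapsto\sqrt{-t}\Sigma$ are just rescalings of disjoint static surfaces, so they are automatically disjoint for all $t<0$; no parabolic maximum principle is needed. Second, your claim that ``translating-rescaling would produce two tangent flows at $(\bx_0,0)$ that share a point, violating strict avoidance'' is not a valid inference --- two blow-down cones can certainly meet at the origin without the original hypersurfaces being close, and ``strict avoidance'' at a single space-time point is not a contradiction. What actually closes the argument, and what the paper does, is to apply Ilmanen's localized avoidance principle (Theorem \ref{theo:ilmanen-avoidance}) centered at $\bx_0\in\partial B_1(\bOh)$ on a small ball: the localized distance $d_t$ between $\sqrt{-t}\Gamma$ and $\sqrt{-t}\Sigma$ is nondecreasing in $t$, hence bounded below by its value at $t=-1$, so the two flows stay a definite distance apart at $t=0$ in a smaller ball. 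This is a purely local, quantitative statement; the noncompact cylindrical ends of $\Gamma$ and $\Sigma$ never enter because the Ilmanen distance is computed on a fixed localized region. The ``hard part'' you flag at the end of Step 4 is in fact not an obstacle at all once the avoidance principle is used locally in this way.
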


\begin{proof}
As in \cite[Proposition 12]{Brendle:genus0},  we can choose a sequence of Riemannian metrics $g_{k}$ on $\RR^{n+1}$ converging in $C^{\infty}_{\textrm{loc}}(\RR^{n+1})$ to $e^{-\frac{|\bx|^{2}}{2n}}g_{\RR^{n+1}}$ so that $\Omega \cap B_{2k}(\bOh)$ is (weakly) shrinker mean convex in the sense of Meeks--Yau \cite[\S 1]{MeeksYau}. By Lemma \ref{lemm:ends-decomp-specific}, we can choose $\beta_{k} \subset \Omega \cap \partial B_{2k}(\bOh)$ homologous to $\beta$ in $\Omega$. 

Let $\Gamma_{k}$ minimize $g_{k}$-area among oriented embedded surfaces $\Gamma' \subset \Omega$ with $\partial\Gamma' = \sigma - \beta_{k}$. Using standard interior regularity for minimizers (and \cite{HardtSimon:bdry.reg}) we can pass a subsequence and obtain a connected smooth embedded self-shrinker with $\Gamma\subset \Omega$ (thanks to the strong maximum principle) and $\partial\Gamma = \sigma$. The statement about the asymptotic cones follows from Ilmanen's avoidance principle (Theorem \ref{theo:ilmanen-avoidance}) as in Lemma \ref{lemm:lin-grow-trans}. Indeed, for $t<0$, $t\mapsto \sqrt{-t}\Gamma,t\mapsto \sqrt{-t}\Sigma$ are disjoint mean curvature flows in any small ball around any point in $\partial B_{1}(\bOh)$. Applying Ilmanen's avoidance principle, we find that they remain a definite distance apart at $t=0$ (in a smaller ball). This completes the proof. 
\end{proof}

\begin{corollary}\label{coro:genus.drop.avoidance}
Fix $\sigma,\Gamma$ as in Lemma \ref{lemm:genus.drop.minimize}. Assume that $\sigma \subset\frac{1}{\sqrt{-t'}} \cK(t')$ for some $t'<0$. Then $\Gamma \subset\frac{1}{\sqrt{-t'}} \cK(t')$
\end{corollary}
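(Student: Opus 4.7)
The plan is to work entirely in the rescaled picture and use the strict shrinker mean convexity of $\cK$ from Proposition \ref{theo:basic-prop-cK-cM}(2), together with Ilmanen's avoidance principle, to propagate the inclusion $\Gamma \subset \tilde\cK(\tau)$ from the ancient past forward to the rescaled time $\tau_0 := -\log(-t')$. Here I set $\tilde\cK(\tau) := e^{\tau/2}\cK(-e^{-\tau})$, so the conclusion $\sqrt{-t'}\Gamma \subset \cK(t')$ is equivalent to $\Gamma \subset \tilde\cK(\tau_0)$.

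First, I would unwind Proposition \ref{theo:basic-prop-cK-cM}(2) to deduce that $\tau \mapsto \tilde\cK(\tau)$ is \emph{strictly decreasing}: $\tilde\cK(\tau) \subset \tilde\cK(\tau')^{\circ}$ whenever $\tau' < \tau$. Thus the hypothesis $\sigma \subset \tilde\cK(\tau_0)$ yields $\sigma \subset \tilde\cK(\tau)^{\circ}$ for every $\tau < \tau_0$; equivalently, $\sqrt{-s}\sigma \subset \cK(s)^{\circ}$ for every $s < t'$.

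Second, I would verify that $\Gamma \subset \tilde\cK(\tau)$ holds for all sufficiently negative $\tau$. By Proposition \ref{prop:exist-ancient-rescaled-MCF-C2-rescaling-argument}(4), $\partial\tilde\cK(\tau) = \bar M(\tau)$ is the normal graph of a function $v(\cdot,\tau)$ over $\Sigma$ with $v \to 0$ in $C^3_\textnormal{loc}$ as $\tau \to -\infty$, so the slab of $\Omega$ between $\Sigma$ and $\bar M(\tau)$ collapses onto $\Sigma$ and $\tilde\cK(\tau)$ exhausts $\Omega$ from inside. Combined with the disjointness from Lemma \ref{lemm:genus.drop.minimize} of $\sqrt{0}\Gamma \cap \partial B_1(\bOh)$ and $\sqrt{0}\Sigma \cap \partial B_1(\bOh)$, which gives a uniform radial separation of the ends of $\Gamma$ from those of $\Sigma$, one concludes $\Gamma \subset \tilde\cK(\tau)$ for all $\tau \leq \tau_\textnormal{anc}$ and some $\tau_\textnormal{anc} < \tau_0$.

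Third, set $\tau^* := \sup\{\tau \leq \tau_0 : \Gamma \subset \tilde\cK(\tau)\}$. By Step 2 and the closedness of $\tilde\cK(\cdot)$ one has $\tau^* \in [\tau_\textnormal{anc},\tau_0]$ and $\Gamma \subset \tilde\cK(\tau^*)$. Suppose by contradiction that $\tau^* < \tau_0$. Then there exist $\tau_k \searrow \tau^*$ and $\bp_k \in \Gamma \setminus \tilde\cK(\tau_k)$, and a subsequential limit $\bp$ lies in $\Gamma \cap \partial\tilde\cK(\tau^*)$. The strict containment $\sigma \subset \tilde\cK(\tau^*)^{\circ}$ from Step 1 (available because $\tau^* < \tau_0$) forces $\bp \in \mathring\Gamma$. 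Translating to non-rescaled time $s^* := -e^{-\tau^*}$, the smooth classical mean curvature flow $s \mapsto \sqrt{-s}\Gamma$ touches the Brakke flow $\partial\cK$ tangentially at the interior point $\sqrt{-s^*}\bp$ at time $s^*$. Ilmanen's avoidance principle (Theorem \ref{theo:ilmanen-avoidance}) together with the strong maximum principle for mean curvature flow forces $\sqrt{-s}\Gamma$ and $\partial\cK$ to coincide in a space-time neighborhood of the contact; forward uniqueness of smooth MCF and unique continuation for the shrinker equation propagate this agreement along the connected analytic surface $\Gamma$, forcing $\sqrt{-s^*}\sigma = \partial(\sqrt{-s^*}\Gamma) \subset \partial\cK(s^*)$, in contradiction with $\sqrt{-s^*}\sigma \subset \cK(s^*)^{\circ}$ from Step 1. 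Hence $\tau^* = \tau_0$ and $\Gamma \subset \tilde\cK(\tau_0)$.

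The main obstacle is Step 3, in particular rigorously carrying out the tangential-contact / strong-maximum-principle argument when $\partial\cK$ may be singular at the contact point. This is resolved using the multiplicity-one spherical/cylindrical classification of the singularities of $\cM$ combined with the strict mean convex neighborhood property in Proposition \ref{coro:summary-tleq0-non-rescaled}(4)--(5), which ensures that the touching geometry between $\partial\cK$ and the smooth self-shrinker $\Gamma$ is regular enough to apply the classical maximum principle locally.
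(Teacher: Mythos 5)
Your proposal is in the same family as the paper's argument (propagating the inclusion from the ancient past via an avoidance-type argument), but the execution has two genuine gaps.

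First, the compactness of the contact sequence $\bp_k$ in Step~3 is not justified. You choose $\bp_k\in\Gamma\setminus\tilde\cK(\tau_k)$ and pass to a subsequential limit $\bp\in\Gamma\cap\partial\tilde\cK(\tau^*)$, but nothing prevents $\bp_k\to\infty$: $\Gamma$ is a non-compact self-shrinker and the inclusion could first fail ``at infinity.'' Your Step~2 only gives a uniform radial separation between the ends of $\Gamma$ and the ends of $\partial\tilde\cK(\tau)$ in the ancient-past regime $\tau\leq\tau_\textrm{anc}$ (where $\bar M(\tau)$ is graphically close to $\Sigma$). To rule out escape to infinity at \emph{all} times up to $\tau_0$ you need uniform control of the ends of $\partial\cK(t)$ for $t\leq t'$, which is exactly what the sublinear growth estimate of Proposition~\ref{prop:sublinear-growth}, combined with the disjointness of $\sqrt{0}\Gamma$ and $\sqrt{0}\Sigma$ from Lemma~\ref{lemm:genus.drop.minimize}, provides. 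This is the missing ingredient: it yields $d\bigl(\tfrac{1}{\sqrt{-t}}\partial\cK(t)\setminus B_R,\,\Gamma\setminus B_R\bigr)\to\infty$ uniformly for $t\leq t'$, after which Ilmanen's localized avoidance closes the argument.

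Second, the contact analysis via ``strong maximum principle $+$ unique continuation'' is both more than what is needed and not cleanly available. Once the flows are disjoint at some earlier time and remain uniformly separated at spatial infinity, Ilmanen's avoidance principle (Theorem~\ref{theo:ilmanen-avoidance}) already forbids any first contact; there is no need to argue that the surfaces must locally coincide. Moreover the ``coincidence in a neighborhood plus unique continuation'' step would require $\partial\cK$ to be smooth at the contact, which is precisely the difficulty you flag, and the proposed resolution via Proposition~\ref{coro:summary-tleq0-non-rescaled}(4)--(5) does not actually upgrade to the local regularity at a potential contact time (Allard/White regularity tells you about density-one points, but you have not shown the contact is such a point). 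The paper's route avoids this entirely: it shows $\Gamma\cap\tfrac{1}{\sqrt{-t}}\partial\cK(t)=\emptyset$ for all $t\leq t'$ directly from avoidance, and then concludes $\Gamma\subset\tfrac{1}{\sqrt{-t'}}\cK(t')$ from connectedness of $\Gamma$ together with $\sigma=\partial\Gamma\subset\tfrac{1}{\sqrt{-t'}}\cK(t')$, with no local analysis of a touching point required.
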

\begin{proof}
Combining Lemma \ref{lemm:genus.drop.minimize} with the sublinear growth estimates in Proposition \ref{prop:sublinear-growth} applied to $\cM(t)$ (recall that $\cM(t)$ is the one-sided Brakke flow with $\supp\cM(t) = \partial\cK(t)$), we have that
\[
d((\tfrac{1}{\sqrt{-t}} \partial\cK(t)) \setminus B_{R}(\bOh), \Gamma \setminus B_{R}(\bOh)) \to \infty
\]
as $R\to\infty$, uniformly for $t\leq t'$. Using this,   together with the fact that $\frac{1}{\sqrt{-t}} \cK(t)$ exhausts $\Omega$ as $t\to -\infty$ and the avoidance principle, we find that $(\tfrac{1}{\sqrt{-t}} \partial\cK(t)) \cap \Gamma = \emptyset$ for all $t\leq t'$. This proves the assertion (because $\Gamma$ is connected, if there was a point in $\Gamma \setminus \tfrac{1}{\sqrt{-t}}\cK(t)$ then there would be a point in $\Gamma \cap \tfrac{1}{\sqrt{-t}}\partial \cK(t)$). 
\end{proof}

With these preparations we can now prove the genus drop result.
\begin{proof}[Proof of Proposition \ref{prop:genus-drop-one-sided-flow}]
Fix $t_{1},\kappa_{1}$ as in \eqref{eq:genus.drop.defn.t1} and $t_{0},\kappa_{0}$ as in Lemma \ref{lemm:genus.drop.defn.t0}. Then, as long as we fix $R\geq \tfrac{1}{\sqrt{-(t_{1}+\kappa_{1})}}R_{2}$ sufficiently large, we can ensure that (3) and (4) hold by using stability of cylinders (Lemma \ref{lem:smooth-stability-cylinder}) and pseudolocality (along the conical ends). 

Property (1) follows from Lemma \ref{lemm:genus.drop.defn.t0} (with $\sqrt{-t}R \geq R_{1}$ in place of $R$). To prove property (2), we assume that for some $t \in [t_{1},t_{1}+\kappa_{1}]$, $\partial\cK(t)$ is smooth and $\genus(\partial\cK(t)\cap B_{R}(\bOh))>0$. Hence,
\[
\genus(\tfrac{1}{\sqrt{-t}} \partial\cK(t)\cap B_{R'}(\bOh)) > 0 
\]
for $R' = \tfrac{1}{\sqrt{-t}} R \geq R_{2}$. Fix $\sigma$ as in Lemma \ref{lemm:one.sided.genus.drop.inding.loop} (with $R'$ in place of $R$) and then $\Gamma$ as in Lemma \ref{lemm:genus.drop.minimize}. By Corollary \ref{coro:genus.drop.avoidance} (with $t=t'$) we find that
\[
\Gamma \subset \tfrac{1}{\sqrt{-t}} \cK(t).
\]
We can perturb $\Gamma$ slightly to find a surface with $\Gamma \subset \tfrac{1}{\sqrt{-t}} \cK(t)$ and $\Gamma$  transversal to $\partial B_{R'}(\bOh)$. This yields 
\[
\sigma = \partial (\Gamma \cap B_{R}(\bOh)) - \beta
\]
for some $\beta \subset (\tfrac{1}{\sqrt{-t}} \cK(t)) \cap \partial B_{R'}(\bOh)$. This contradicts the first bullet point in Lemma \ref{lemm:one.sided.genus.drop.inding.loop}, completing the proof. 
\end{proof}


\section{The first non-generic time for flows in $\RR^3$}\label{sec:main-generic}

\begin{definition}[Generic singularities and first non-generic time]\label{defi:generic-time}
Consider $\cM$ a cyclic integral unit-regular Brakke flow of surfaces in $\RR^3$. 
\begin{itemize}
\item We define $\sing_\textnormal{gen}\cM\subset \sing\cM$ to be the set of singular points for $\cM$ so that one tangent flow (and thus all \cite{ColdingMinicozzi:uniqueness-tangent-flow})  is a multiplicity-one shrinking sphere or cylinder. 
\item Then, we define $T_\textrm{gen}(\cM) : = \inf \ft(\sing\cM \setminus \sing_\textrm{gen}\cM)$, i.e., the first time that there is a ``non-generic'' singularity. 
\item Finally, we set $\fS_\textrm{first}(\cM) : = (\sing \cM \setminus \sing_\textrm{gen}\cM) \cap \{t=T_\textrm{gen}(\cM)\}$, i.e., this is the set of ``non-generic'' singularities at the first time that they exist. 
\end{itemize}
\end{definition}

The following is our main result. 

\begin{theorem}\label{theo:generic.flow.R3}
Suppose that $M\subset \RR^3$ is a closed embedded surface. Then, there exist arbitrarily small $C^\infty$ graphs $M'$ over $M$ and corresponding cyclic integral unit-regular Brakke flows $\cM'$ with $\cM'(0) = \cH^2\lfloor M'$ so that either:
\begin{enumerate}
\item $\cM'$ has only generic singularities: $\sing\cM' = \sing_\textnormal{gen}\cM'$, or 
\item multiplicity occurs: there is $\bx \in \RR^3$ so that any tangent flow to $\cM'$ at the space-time point $(\bx,T_\textnormal{gen}(\cM'))$ is $k\cH^2\lfloor \sqrt{-t}\Sigma$ for $k\geq 2$ and $\Sigma \in \cS_2$ a smooth shrinker.
\end{enumerate}
\end{theorem}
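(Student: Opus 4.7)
The plan is to run the perturbative scheme outlined in Section \ref{subsec:sketch-genus-drop}, iterated finitely many times. Let $\cM$ be a cyclic integral unit regular Brakke flow starting from $\cH^2\lfloor M$. If already $\sing\cM = \sing_\textnormal{gen}\cM$, or if condition (2) holds, we are done. Otherwise, at the first non-generic time $T = T_\textnormal{gen}(\cM)$, each $\bx \in \fS_\textnormal{first}(\cM)$ has a multiplicity one tangent flow supported on some $\Sigma_\bx \in \cS_2^*\setminus \cS_2^\textnormal{gen}$. By Proposition \ref{prop:ends.shrinkers.R3}, $\Sigma_\bx \in \cS_2'''$, so the existence/uniqueness theory of Part 1 applies.

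The first major step is the local perturbation at a single non-generic point. Embed $M$ in a one-parameter local foliation $s\mapsto M_s(0)$ with $M_0(0) = M$ and flow each leaf simultaneously. Choose $s_j\to 0$ so that the level set flows of $M_{s_j}(0)$ do not fatten (possible by \cite[\S 11.3-4]{Ilmanen:elliptic}), and consider the associated unit-regular cyclic Brakke flows $\cM_{s_j}$. Fix $\bx \in \fS_\textnormal{first}(\cM)$; parabolically rescale $\cM_{s_j}$ around $(\bx,T)$ by the space-time parabolic distance $d_j$ of $\supp \cM_{s_j}$ to $(\bx,T)$. Pass to a subsequential limit $\check \cM$. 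A standard Huisken monotonicity argument (as in the proof of Lemma \ref{lemm:Tgen-est-one-sidedR3}) gives $\lambda(\check \cM) \leq F(\Sigma_\bx)$, so the tangent flow at $-\infty$ is the multiplicity-one shrinker $\Sigma_\bx$. By construction $\check \cM$ is nontrivial, one-sided in $\Omega_\bx$, and converges smoothly with multiplicity one to $\Sigma_\bx$ as $\tau\to-\infty$. Hence by the uniqueness Theorem \ref{thm:uniqueness-global}, $\check \cM$ coincides (up to a parabolic translation) with the ancient one-sided flow $\bar \cM$ of Proposition \ref{prop:exist-ancient-rescaled-MCF-C2-rescaling-argument}. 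By Proposition \ref{coro:summary-tleq0-non-rescaled}, $\bar \cM$ has only multiplicity one spherical/cylindrical singularities for $t<0$. Unwinding the rescaling, for $j$ large $\cM_{s_j}$ has only generic singularities in a parabolic neighborhood of $(\bx,T)$, at least up to time $T - o(d_j^2)$.

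The second step is the genus drop. By Proposition \ref{prop:genus-drop-one-sided-flow}, there is $R>0$ and a rescaled time interval on which $\bar \cM$ has strictly dropped genus inside $B_R(\bOh)$ while remaining graphical and star-shaped in the annular region $\{R/2<|\bx|<3R\}$. Un-rescaling, for $j$ sufficiently large the surface $M_{s_j}(T - o(d_j^2))$ contains a simple annular region near $\bx$ across which a handle that was concentrating into $(\bx,T)$ in $\cM$ has been pinched off in $\cM_{s_j}$. Combining with local topological monotonicity (Appendix \ref{app:loc-top-mon}) and White's global genus monotonicity \cite{White:topology-weak} applied to $\cM_{s_j}$ on $[0,T-o(d_j^2)]$, this shows that the total genus of $M_{s_j}(t)$ strictly decreases across $t=T$. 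A straightforward finite-covering argument handles the case of finitely many $\bx \in \fS_\textnormal{first}(\cM)$ simultaneously (using disjoint foliations or simultaneous perturbation; this is exactly as in \cite[\S 8.4]{CCMS:generic1}). A small further approximation lets us arrange the new perturbed flow to still be a cyclic integral unit regular Brakke flow arbitrarily close to $\cM$ in the sense that the initial surface is a small $C^\infty$ graph over $M$.

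Finally, iterate. Let $g_k$ denote the genus of $M^{(k)}(0)$, the perturbed surface at step $k$; by White's monotonicity the genus of $M^{(k)}(t)$ is $\leq g_0 = \genus(M)$ for all $t$ at which it is smooth, and the previous step shows that at each iteration either (1) or (2) holds, or the genus available at the first non-generic time drops by at least one. Since $g_0$ is finite, after at most $g_0+1$ iterations one of the alternatives (1), (2) in the statement must be realized. The main obstacle in this argument—the uniqueness of the ancient one-sided flow in the presence of cylindrical ends, which is what allows step one to go through—has been addressed by Theorem \ref{thm:uniqueness-global}; the main conceptual subtlety in the genus-drop step is ensuring that the handle we ``pinch off'' is detected globally, not just in the local blow-up, and this is precisely what Proposition \ref{prop:genus-drop-one-sided-flow} and the localized topological monotonicity of Appendix \ref{app:loc-top-mon} provide.
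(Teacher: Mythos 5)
Your proposal follows the same overall route as the paper: perturb one-sidedly, blow up at a non-generic point at the first non-generic time, invoke the uniqueness theorem to identify the limit as the ancient one-sided flow, apply the genus-drop proposition together with White's topological monotonicity, and iterate finitely many times (bounded by the initial genus). The paper packages the single-step perturbation as Proposition \ref{prop:generic.flow.R3.iterate} and the iteration as a short deduction, but the ingredients and their order are identical.

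There is, however, one spot where you should be more careful, and it is not merely cosmetic. You fix an arbitrary $\bx \in \fS_\textnormal{first}(\cM)$ and rescale by the parabolic distance $d_j$ from $(\bx, T)$ to $\supp \cM_{s_j}$. The paper instead defines $d_j$ as the \emph{maximum} of this distance over all points of $\fS_\textnormal{first}(\cM)$ (which is a finite set by Lemma \ref{lemm:first-generic-time.points}) and blows up at a point where the maximum is achieved. This choice is essential for Lemma \ref{lemm:Tgen-est-one-sidedR3}, which gives $T_\textnormal{gen}(\cM_{s_j}) \geq T - o(d_j^2)$ with $d_j$ the \emph{max} distance. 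You need this lower bound on $T_\textnormal{gen}(\cM_{s_j})$ in order to apply White's monotonicity and the localized topological monotonicity of Appendix \ref{app:loc-top-mon} across the genus-drop time interval $[T + t_0 d_j^2, T + (t_1+\kappa_1)d_j^2]$; if the genus drops at a time past which $\cM_{s_j}$ is no longer known to fatten only generically, the comparison of $\genus(M_{s_j}(t))$ before and after the drop is not justified. If the $\bx$ you fixed is not a maximizing point, your $d_j$ could be much smaller than the paper's, the rescaled genus-drop window sits much closer to $T$, and Lemma \ref{lemm:Tgen-est-one-sidedR3} (stated in terms of the max distance) no longer guarantees $T_\textnormal{gen}(\cM_{s_j})$ lies beyond it. The fix is simple: after passing to a subsequence so that the finite max is achieved at a fixed point, take $\bx$ to be that point. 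Your phrase ``finite-covering argument \ldots exactly as in \cite[\S 8.4]{CCMS:generic1}'' is not what the paper does here and does not obviously supply this fix, so you should replace it with the max-distance argument and the explicit invocation of Lemma \ref{lemm:Tgen-est-one-sidedR3} (for its actual conclusion about $T_\textnormal{gen}$, not just as a source for the Huisken-monotonicity entropy bound).
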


Compare with \cite[Theorem 11.1]{CCMS:generic1} which also left open the possibility that a non-cylindrical shrinker with a cylindrical end could occur. The proof here follows a similar strategy (granted the stronger one-sided uniqueness result Theorem \ref{thm:uniqueness-global})  but differs in some minor, but technically important ways.

In this section we will establish various preliminary results that lead up to the proof of Theorem \ref{theo:generic.flow.R3}. 

\begin{lemma}\label{lemm:first-generic-time.unique.BF}
If $\cM,\cM'$ are cyclic integral unit-regular Brakke flows in $\RR^3$ with $\cM(0) = \cM'(0) = \cH^2\lfloor M$ for a closed embedded surface $M\subset \RR^3$ then $T_\textnormal{gen}(\cM) =T_\textnormal{gen}(\cM')$, $\cM(t) = \cM'(t)$ for $t< T_\textnormal{gen}(\cM)$, and $\fS_\textnormal{first}(\cM) = \fS_\textnormal{first}(\cM')$.
 \end{lemma}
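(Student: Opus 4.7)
The plan is to proceed in three stages: first, to establish that $\cM$ and $\cM'$ coincide on the time interval on which both flows have only generic singularities; second, to use this to show the two non-generic times are equal; and third, to deduce equality of the non-generic singular sets at that common time. Concretely, I would set $T_* := \min(T_\textnormal{gen}(\cM), T_\textnormal{gen}(\cM'))$. For every $t < T_*$, by the definition of $T_\textnormal{gen}$ both $\cM$ and $\cM'$ have only multiplicity-one spherical or cylindrical singularities on $(-\infty, t]$, so the resolution of the mean convex neighborhood conjecture by Choi--Haslhofer--Hershkovits \cite{ChoiHaslhoferHershkovits} (invoked already in the introduction) yields $\cM(s) = \cM'(s)$ for all $s \leq t$. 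Letting $t \nearrow T_*$ gives $\cM(s) = \cM'(s)$ for every $s < T_*$.

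In the second stage I would argue by contradiction: assume without loss of generality that $T_\textnormal{gen}(\cM) \leq T_\textnormal{gen}(\cM')$, and that the inequality is strict, so that $T_* = T_\textnormal{gen}(\cM) < T_\textnormal{gen}(\cM')$. Pick $\bx \in \fS_\textnormal{first}(\cM)$, so $(\bx, T_*) \in \sing\cM \setminus \sing_\textnormal{gen}\cM$. The Gaussian density $\Theta_\cM(\bx, T_*)$ is computed from $\mu_\cM(s)$ for $s < T_*$ via Huisken's monotonicity, hence equals $\Theta_{\cM'}(\bx, T_*)$ and in particular exceeds $1$, so $(\bx, T_*) \in \sing\cM'$. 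Similarly, any tangent flow $\tilde\cM$ to $\cM$ at $(\bx, T_*)$ arises as a weak subsequential limit of the parabolic rescalings $\cF_{\lambda_j}(\cM - (\bx, T_*))$, and its past portion $\tilde\cM|_{\{t<0\}}$ is determined by $\mu_\cM$ restricted to $\{s < T_*\}$. By the first stage, extracting the same rescalings from $\cM'$ along the same subsequence $\lambda_j$ produces the same past portion, so the non-generic shrinker supporting $\tilde\cM|_{\{t<0\}}$ is also a tangent flow to $\cM'$ at $(\bx, T_*)$. Thus $(\bx, T_*) \in \sing\cM' \setminus \sing_\textnormal{gen}\cM'$, contradicting $T_\textnormal{gen}(\cM') > T_*$. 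Hence $T_\textnormal{gen}(\cM) = T_\textnormal{gen}(\cM')$, and applying the same tangent-flow identification in both directions at this common time $T$ yields $\fS_\textnormal{first}(\cM) = \fS_\textnormal{first}(\cM')$.

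The only substantive input is the uniqueness statement in the first stage, which is the mean convex neighborhood theorem of Choi--Haslhofer--Hershkovits \cite{ChoiHaslhoferHershkovits}; beyond this, everything is bookkeeping that uses only that both $\Theta_\cM(\bx, T)$ and the past portion of any tangent flow to $\cM$ at $(\bx, T)$ are determined by $\mu_\cM$ on $\{s < T\}$. There is no real obstacle here; the lemma simply records the well-definedness---independent of the choice of cyclic integral unit-regular Brakke flow representative---of ``the'' first non-generic time and first non-generic singular set associated to the initial surface $M$, which is what allows $T_\textnormal{gen}$ and $\fS_\textnormal{first}$ to be invoked in the subsequent proof of Theorem \ref{theo:generic.flow.R3}.
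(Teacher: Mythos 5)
Your proof takes essentially the same approach as the paper's and is correct in structure. The paper's own proof is even more terse: it records that $\reg\cM \cap \{t < T_\textrm{gen}(\cM)\}$ is connected (by \cite[Corollary F.5]{CCMS:generic1} together with the definition of $T_\textrm{gen}$), and then concludes that the assertion follows from unit-regularity and the resolution of the mean convex neighborhood conjecture \cite{ChoiHaslhoferHershkovits} (cf.\ \cite{HershkovtisWhite}).

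The one place you should be slightly more careful is your ``stage one.'' As stated, you invoke the mean convex neighborhood conjecture as if it directly yields global agreement $\cM(s) = \cM'(s)$ for all $s \leq t$. What \cite{ChoiHaslhoferHershkovits} (combined with \cite{HershkovtisWhite}) really supplies is uniqueness of the flow in a space-time neighborhood of each generic singular point, together with unit-regularity giving uniqueness near regular points; to propagate the initial agreement $\cM(0) = \cM'(0)$ forward through the singular set, one needs the continuation argument that uses connectedness of the space-time regular part, which is the explicit role of \cite[Corollary F.5]{CCMS:generic1} in the paper's proof. So you should either cite the connectedness result or phrase stage one so that it is clear you are appealing to the global uniqueness consequence (as formulated, e.g., in \cite[Theorem 1.9]{ChoiHaslhoferHershkovits}) rather than just the local statement. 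Once this is in place, your stages two and three---that density $\Theta_\cM(\bx,T_*)$ and the $t<0$ portion of any tangent flow at $(\bx,T_*)$ are determined by $\mu_\cM$ restricted to $\{s < T_*\}$, and hence equal for $\cM$ and $\cM'$---are correct bookkeeping, and indeed make explicit what the paper treats as immediate.
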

\begin{proof}
By definition of $T_\textnormal{gen}$ and \cite[Corollary F.5]{CCMS:generic1}, we find that $\reg \cM \cap \{t < T_\textrm{gen}(\cM)\}$ is connected. Thus, the assertion follows from unit-regularity and the resolution of the mean convex neighborhood conjecture \cite[Theorem 1.9]{ChoiHaslhoferHershkovits} (cf. \cite{HershkovtisWhite}). 
\end{proof}

\begin{definition}
Consider  $M \subset \RR^3$ a closed embedded surface. Let $U$ denote the unique compact region bounded by $M$. Write $M(t)$ for the level set flow of $M$ and $U(t)$ for the level set flow of $U$. 

Consider a cyclic integral unit-regular Brakke flow $\cM$ in $\RR^3$ with $\cM(0) = \cH^2\lfloor M$. We say
the set of times $\cT_\textrm{reg}(\cM)$ is the complement of the set of singular times $\ft(\sing\cM)$. Note that by upper semi-continuity of density $\cT_\textrm{reg}(\cM)$ is open. Furthermore, for $t\in \cT_\textrm{reg}(\cM)$ the level set flow $M(t)$ of $M$ at time $t$ is a smooth embedded surface $M(t)$ with $M(t) = \partial U(t)$ and $\cM(t) = \cH^2\lfloor M(t)$.
\end{definition}

The next result follows from combining several important results in the literature (cf.\ \cite[Proposition 11.3]{CCMS:generic1}).
\begin{proposition}\label{prop:properties.until.nongeneric.time}
Consider a cyclic integral unit-regular Brakke flow $\cM$ in $\RR^3$ with $\cM(0) = \cH^2\lfloor M$ for a closed embedded surface $M \subset \RR^3$. Write $U$ for the unique compact set bounded by $M$ and $U(t)$ for the level set flow of $U$. Then:
\begin{itemize}
\item the level set flow $M(t)$ of $M$ does not fatten before $T_\textnormal{gen}(\cM)$,
\item almost every $t\in [0,T_\textnormal{gen}(\cM)]$ is a regular time for the flow, and
\item $t\mapsto \genus(M(t))$ is non-increasing for $t \in \cT_\textnormal{reg}$. 
\end{itemize}
Now, assume that $T_\textnormal{gen}(\cM) <\infty$. Then:
\begin{itemize}
\item Every tangent flow at $(\bx,T_\textnormal{gen}(\cM)) \in \sing\cM$ is of the following form for $t<0$
\[
t\mapsto k \cH^2\lfloor\sqrt{-t}\Sigma
\]
where $k \in \NN$ is the multiplicity and $\Sigma$ a smooth embedded self-shrinker and
\item the self-shrinker $\Sigma$ in the previous bullet point satisfies 
\[
\genus(\Sigma) \leq \lim_{\cT_\textrm{reg} \ni t \nearrow T_\textnormal{gen}(\cM)}\genus(M(t)) . 
\]
and $\Sigma \in \cS_2''$ has nice stable ends. 
\end{itemize}
Finally, assume that for some point $(\bx,T_\textnormal{gen}(\cM)) \in \fS_\textnormal{first}(\cM)$ there is some tangent flow of the form $t\mapsto \cH^2\lfloor \sqrt{-t}\Sigma$, i.e., the tangent flow is multiplicity one. Then:
\begin{itemize}
\item all tangent flows at $(\bx,T_\textnormal{gen}(\cM))$ occur with multiplicity-one---possibly with a different $\Sigma$, 
\item $\genus(\Sigma) \geq 1$, and
\item $F(\Sigma) \geq \lambda_1 + \delta_0$ for some numerical constant $\delta_0$. 
\end{itemize}
\end{proposition}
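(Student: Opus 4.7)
The plan is to prove the three conclusions in the order (iii), (ii), (i). Throughout I will use Huisken's monotonicity formula to identify the Gaussian density $\Theta := \Theta_\cM(\bx,T_\textnormal{gen}(\cM))$ at the given space-time point: from the hypothesized multiplicity-one tangent flow, $\Theta = F(\Sigma)$, and for any other tangent flow $k'\,\cH^2\lfloor\sqrt{-t}\Sigma''$ at the same point, $\Theta = k'\,F(\Sigma'')$.

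For (iii) I would argue by contradiction via smooth compactness. If no numerical $\delta_0$ worked, there would be a sequence $\Sigma_j \in \cS_2\setminus\cS_2^\textnormal{gen}$ of smooth embedded shrinkers in $\RR^3$ with $F(\Sigma_j)\searrow\lambda_1$. By Wang \cite{Wang:ends-conical} each $\Sigma_j$ has nice ends, and smooth compactness of embedded shrinkers with bounded entropy and bounded genus (Colding--Ilmanen--Minicozzi \cite{ColdingIlmanenMinicozzi}, cf.\ \cite{BernsteinWang:1}) produces a subsequential smooth limit $\Sigma_\infty$ (away from a discrete singular set) with $F(\Sigma_\infty)=\lambda_1$. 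Bernstein--Wang's rigidity \cite{BernsteinWang:TopologicalProperty} forces $\Sigma_\infty \in \cS_2^\textnormal{gen}$, so smooth convergence gives $\Sigma_j \in \cS_2^\textnormal{gen}$ for $j$ large, contradicting the choice of sequence. The main technical point is controlling the genus of $\Sigma_j$ uniformly; this uses entropy--genus control for shrinkers in $\RR^3$ close to the entropy level of the cylinder, combined with Brakke regularity to upgrade weak convergence to smooth convergence near multiplicity-one cylindrical strata of the limit.

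For (ii), observe that $(\bx,T_\textnormal{gen}(\cM))\in\fS_\textnormal{first}(\cM)\subset\sing\cM$ and $\cM$ is unit-regular, so $\Theta\geq 1+\varepsilon_0$, which immediately rules out $\Sigma$ being a plane. By definition of $\fS_\textnormal{first}$, also $\Sigma\notin\cS_2^\textnormal{gen}$. Brendle's theorem \cite{Brendle:genus0}, classifying smooth embedded genus-zero shrinkers in $\RR^3$ as planes, round spheres and round cylinders, then forces $\genus(\Sigma)\geq 1$.

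For (i), the density identity $k'F(\Sigma'')=F(\Sigma)$ alone is not enough, since $F(\Sigma)$ is a priori unbounded and arbitrary values of $k'$ are algebraically compatible with it. Instead I would invoke the weak tangent-flow uniqueness results collected in Appendix \ref{app:unique-weak-tf}, which are designed precisely to conclude that, whenever at least one tangent flow at a space-time point of a cyclic integral unit-regular Brakke flow in $\RR^3$ is a multiplicity-one smooth self-shrinker, every tangent flow at that point is also multiplicity one (though possibly over a different shrinker). The main obstacle is exactly this step: full tangent-flow uniqueness for shrinkers with cylindrical ends remains open in general, but the weaker multiplicity statement is what the appendix supplies, and it is all that is required here.
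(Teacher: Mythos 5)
Your proposal addresses only the final three bullet points of the proposition (the conclusions under the hypothesis that some tangent flow at $(\bx,T_\textnormal{gen}(\cM))\in\fS_\textnormal{first}(\cM)$ is multiplicity one). The first five bullet points---non-fattening of the level set flow before $T_\textnormal{gen}$, regularity of the flow at almost every time, monotonicity of genus, the structure $k\cH^2\lfloor\sqrt{-t}\Sigma$ of every tangent flow at the first non-generic time together with the bound $\genus(\Sigma)\leq\lim\genus(M(t))$, and the membership $\Sigma\in\cS_2''$---are never touched. In the paper these rest on Choi--Haslhofer--Hershkovits for non-fattening, Colding--Minicozzi together with Hershkovits--White for a.e.\ regularity, White's genus monotonicity, an adaptation of Ilmanen's regularity argument for the tangent flow structure, and Proposition \ref{prop:ends.shrinkers.R3} for nice stable ends. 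None of this is present, so the proposal is incomplete as a proof of the stated proposition.

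For the three bullet points you do treat: your argument for the multiplicity-one statement coincides with the paper's, which invokes Proposition \ref{prop:smooth-tf-all-are} from Appendix \ref{app:unique-weak-tf}; and your argument for $\genus(\Sigma)\geq 1$, using $\Theta\geq 1+\varepsilon_0$ to rule out the plane, the definition of $\fS_\textnormal{first}$ to exclude $\cS_2^\textnormal{gen}$, and then Brendle's genus-zero classification, also matches the paper. However, for the entropy gap $F(\Sigma)\geq\lambda_1+\delta_0$ the paper simply cites \cite[Corollary 1.2]{BernsteinWang:TopologicalProperty}, whereas your blow-up-and-compactness argument has a genuine gap at exactly the step you flag: to extract a smooth subsequential limit from a sequence $\Sigma_j$ with $F(\Sigma_j)\searrow\lambda_1$ via the Colding--Minicozzi / Sun--Wang compactness of shrinkers in $\RR^3$, one needs a uniform genus bound along the sequence, and such a bound is not available from the entropy bound alone. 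Asserting ``entropy--genus control for shrinkers in $\RR^3$ close to the entropy level of the cylinder'' is essentially assuming a version of the conclusion; establishing the gap is the content of Bernstein--Wang's theorem, which they prove by a different (one-sided flow and topology) argument rather than by soft compactness. As written, your proof of this bullet point does not close.
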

\begin{proof}
By \cite[Theorem 1.9]{ChoiHaslhoferHershkovits} the level set flow of $M$ does not fatten before $T_\textrm{gen}(\cM)$. Hence, by \cite[Corollary 1.4]{ColdingMinicozzi:sing-generic} (and \cite[Corollary F.5]{CCMS:generic1}), we have that $M(t)$ is smooth and $\cM(t) = \cH^2\lfloor M(t)$ for almost every $t \in [0,T_\textrm{gen}(\cM)]$. Combined with \cite[Theorems 3.1, B.2, and C.1]{HershkovtisWhite}, this implies that almost every $t \in [0,T_\textrm{gen}(\cM)]$ is a regular time for the flow. Now, by \cite[Theorem 1]{White:topology-weak}, we have that $t\mapsto \genus(M(t))$ is non-increasing for $t \in \cT_\textrm{reg}$. This verifies the first set of bullet points. 

The second set of bullet points follows from a straightforward adaptation of \cite{Ilmanen:singularities} to avoid the singular time-slices (for $\Sigma \in\cS_2''$, see Proposition \ref{prop:ends.shrinkers.R3}). Finally, for the third set of bullet points, the first one is proven in Proposition \ref{prop:smooth-tf-all-are}. The second follows from \cite[Theorem 2]{Brendle:genus0} and the third from \cite[Corollary 1.2]{BernsteinWang:TopologicalProperty}. This completes the proof. 
\end{proof}

It is convenient to define the quantity 
\[
\genus_{T_\textnormal{gen}}(\cM) : = \lim_{\cT_\textrm{reg} \ni t \nearrow T_\textnormal{gen}(\cM)}\genus(M(t)) . 
\]
We will refer to this below. Note that by Proposition \ref{prop:properties.until.nongeneric.time} we have $\genus_{T_\textnormal{gen}}(\cM) \leq \genus M$. Moreover, by Lemma \ref{lemm:first-generic-time.unique.BF} and Proposition \ref{prop:properties.until.nongeneric.time} we find that the quantities $T_\textrm{gen}(\cM)$ and $\genus_{T_\textnormal{gen}}(\cM)$ depend only on the initial surface $M$, not the chosen Brakke flow $\cM$ (by \cite[Theorem B.6]{HershkovitsWhite:set-theoretic} a flow $\cM$ as above always exists). As such, we will sometimes write $T_\textrm{gen}(M)$ and $\genus_{T_\textnormal{gen}}(M)$.

The following lemma will be important later. 

\begin{lemma}\label{lemm:first-generic-time.points}
Consider $\cM$ a cyclic integral unit-regular Brakke flow in $\RR^3$ with $\cM(0)  = \cH^2\lfloor M$ a closed embedded surface $M\subset \RR^3$. Assume that $T_\textnormal{gen}(\cM) < \infty$ and at any $(\bx,T_{\textnormal{gen}}(\cM)) \in \fS_\textnormal{first}(\cM)$, all tangent flows occur with multiplicity one. Then $\fS_\textnormal{first}(\cM)$ is a non-zero finite set. 
\end{lemma}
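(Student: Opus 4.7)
The plan is to combine upper semi-continuity of Gaussian density with the sharp entropy gap for non-generic singularities (\cite{BernsteinWang:TopologicalProperty}) and with the structure of the ends of shrinkers in $\cS_2$ recorded in Proposition \ref{prop:ends.shrinkers.R3}. The key starting observation is that every singular point of $\cM$ whose tangent flow is non-generic has $\Theta(\cM,\cdot) \geq \lambda_1 + \delta_0$ for a numerical constant $\delta_0 > 0$: if the tangent flow is multiplicity one this is the entropy gap for non-cylindrical, non-spherical shrinkers, while if its multiplicity is at least two the density is at least $2\lambda(\SS^2)$, which exceeds $\lambda_1 + \delta_0$.

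For non-emptiness, I would pick $X_k = (\bx_k, t_k) \in \sing\cM \setminus \sing_\textnormal{gen}\cM$ with $t_k \searrow T_\textnormal{gen}(\cM)$. Avoidance with a large sphere keeps $\bx_k$ in a fixed compact set, so after passing to a subsequence $X_k \to X_\infty = (\bx_\infty, T_\textnormal{gen}(\cM))$. Upper semi-continuity of density then yields $\Theta(\cM, X_\infty) \geq \lambda_1 + \delta_0$, so in particular $X_\infty \in \sing\cM$, and no tangent flow at $X_\infty$ can be a multiplicity-one round sphere or cylinder (the densities $\lambda(\SS^2)$ and $\lambda_1$ are both strictly less than $\lambda_1 + \delta_0$). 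Hence $X_\infty \in \fS_\textnormal{first}(\cM)$, so this set is non-empty.

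For finiteness, I would argue by contradiction: assume there are infinitely many distinct $X_k = (\bx_k, T_\textnormal{gen}(\cM)) \in \fS_\textnormal{first}(\cM)$. Compactness of $\supp \cM(T_\textnormal{gen}(\cM))$ gives, after passing to a subsequence, $X_k \to X_\infty \in \fS_\textnormal{first}(\cM)$ with $\bx_k \neq \bx_\infty$ for all $k$. By the multiplicity-one hypothesis combined with the last bullet of Proposition \ref{prop:properties.until.nongeneric.time}, some tangent flow to $\cM$ at $X_\infty$ has the form $\tilde\cM(t) = \cH^2 \lfloor \sqrt{-t}\Sigma_\infty$ for $\Sigma_\infty \in \cS_2''\setminus \cS_2^\textnormal{gen}$. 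Setting $\lambda_k = |\bx_k - \bx_\infty|^{-1}\to\infty$ and extracting a further subsequence, I would arrange $\cF_{\lambda_k}(\cM)\rightharpoonup \tilde\cM$ as Brakke flows and $\cF_{\lambda_k}(X_k) = (\lambda_k(\bx_k - \bx_\infty), 0) \to Y = (\tilde\bx, 0)$ with $|\tilde\bx|=1$. Upper semi-continuity of density would then yield
\[
\Theta(\tilde\cM, Y) \geq \limsup_{k\to\infty} \Theta(\cF_{\lambda_k}(\cM), \cF_{\lambda_k}(X_k)) = \limsup_{k\to\infty} \Theta(\cM, X_k) \geq \lambda_1 + \delta_0.
\]
On the other hand, because $\Sigma_\infty$ has nice ends in $\RR^3$, the tangent flow to $\tilde\cM$ at $Y$ is either trivial (when $\tilde\bx \notin \sqrt{0}\Sigma_\infty$), a multiplicity-one plane (when $\tilde\bx$ lies on a conical ray of $\sqrt{0}\Sigma_\infty$, giving density $1$), or the multiplicity-one shrinking flow of a round cylinder $\SS^1 \times \RR$ (when $\tilde\bx$ lies on a cylindrical ray, giving density $\lambda_1$). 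In every case $\Theta(\tilde\cM, Y) \leq \lambda_1 < \lambda_1 + \delta_0$, the desired contradiction.

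The hard part will be the density identification at $Y$: verifying that zooming in on $\tilde\cM$ at an interior point of the (time-zero) asymptotic cone or cylinder really produces the expected multiplicity-one plane or round-cylinder model. This is exactly where the nice-stable-ends structure available for shrinkers in $\RR^3$ (Proposition \ref{prop:ends.shrinkers.R3}) enters essentially; the remaining steps reduce to standard upper semi-continuity under parabolic rescaling together with Brakke-flow compactness.
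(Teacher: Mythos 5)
Your proof is correct and follows essentially the same strategy as the paper: use the entropy gap $\Theta \geq \lambda_1 + \delta_0$ (the third set of bullets in Proposition~\ref{prop:properties.until.nongeneric.time}) together with upper semi-continuity of Gaussian density under Brakke-flow convergence, and derive a contradiction from rescaling at a hypothetical accumulation point. The main structural difference is that the paper splits finiteness into two sub-claims (``$\fS_\textnormal{first}(\cM)$ is closed'' and ``$\fS_\textnormal{first}(\cM)$ is discrete,'' from which finiteness follows since the set lives in a compact time-slice), whereas you prove finiteness directly by contradiction; these are equivalent. Your non-emptiness argument is also more detailed than the paper's one-line ``clear by definition'' -- the paper implicitly leans on the fact that $\sing_\textnormal{gen}\cM$ is relatively open in the closed set $\sing\cM$ (a consequence of \cite{ChoiHaslhoferHershkovits}), so that the infimum defining $T_\textnormal{gen}$ is attained, while you instead pass to a limit along $t_k \searrow T_\textnormal{gen}(\cM)$ and use the density gap; both work. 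One small remark: the worry you express at the end about ``the hard part'' being the density identification at $Y$ is misplaced. Once a tangent flow at $X_\infty$ has been identified as the multiplicity-one shrinking flow of some $\Sigma_\infty \in \cS_2'' \setminus \cS_2^\textnormal{gen}$, the fact that any further tangent flow at a point $Y = (\tilde\bx, 0)$ with $|\tilde\bx| = 1$ is a multiplicity-one plane or round cylinder (density $\leq \lambda_1$) is immediate from the definition of nice ends (Definition~\ref{defi:nice-ends}) and Proposition~\ref{prop:ends.shrinkers.R3}; no additional work is required, and indeed the paper's proof treats it as given.
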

\begin{proof}
It is clear that $\fS_\textnormal{first}(\cM)$ is non-zero by definition of the first non-generic time. We claim that $\fS_\textnormal{first}(\cM)$ is closed. Indeed, if a sequence $(\bx_j,T_\textrm{gen}(\cM)) \in \fS_\textnormal{first}(\cM)$ has $(\bx_j,T_\textrm{gen}(\cM)) \to (\bx_\infty,T_\textrm{gen}(\cM))$ we note that
\[
\Theta_{\cM}(\bx_\infty,T_\textrm{gen}(\cM)) \geq \limsup_{j\to\infty} \Theta_{\cM}(\bx_\infty,T_\textrm{gen}(\cM)) \geq \lambda_1 + \delta_0
\]
by upper-semicontinuity of density and the final bullet point in Proposition \ref{prop:properties.until.nongeneric.time}. This implies that $(\bx_\infty,T_\textrm{gen}(\cM)) \in \fS_\textnormal{first}(\cM)$.  

We now claim that $\fS_\textrm{first}(\cM)$ is discrete. (This will finish the proof.) Indeed, if $(\bx_j,T_\textrm{gen}(\cM)) \to (\bx_\infty,T_\textrm{gen}(\cM))$ are distinct elements of $\fS_\textrm{gen}(\cM)$ we can rescale $\cM$ around $(\bx_\infty,T_\textrm{gen}(\cM))$ by $|\bx_j - \bx_\infty|^{-1}$ to yield $\tilde \cM_j$. Note that up to a subsequence, $\tilde\cM_j$ converges to a tangent flow $\tilde\cM$ to $\cM$ at $(\bx_\infty,T_\textrm{gen}(\cM))$. By Proposition \ref{prop:properties.until.nongeneric.time} and the multiplicity one assumption, $\tilde\cM = \cH^2\lfloor \Sigma$ for some $\Sigma\in \cS_2''$. Rescaling the points $\bx_j$ and passing to the limit we find $\tilde \bx \in \RR^3$ with $|\tilde \bx| = 1$ and $\Theta_{\tilde\cM}(\bx,0) \geq \lambda_1 + \delta_0$ (by upper-semicontinuity of density and the final bullet point in Proposition \ref{prop:properties.until.nongeneric.time}). This contradicts the fact that all tangent flows to $\tilde\cM$ at $(\bx,0) \neq (\bOh,0)$ are multiplicity one cylinders or planes. This completes the proof. 
\end{proof}

\begin{proposition}\label{prop:generic.flow.R3.iterate}
Consider a cyclic integral unit-regular Brakke flow $\cM$ in $\RR^3$ with $\cM(0) = \cH^2\lfloor M$ for a closed embedded surface $M \subset \RR^3$. Write $U$ for the unique compact region bounded by $M$ and choose the inwards pointing unit normal. Assume that $T_\textnormal{gen}(\cM) < \infty$ and at any $(\bx,T_{\textnormal{gen}}(\cM)) \in \fS_\textnormal{first}(\cM)$, all tangent flows occur with multiplicity one. If $M_j$ is a sequence of normal graphs of functions $0<u_j$ over $M$ with $u_j\to 0$ in $C^\infty(M)$, then 
\[
\limsup_{j\to\infty} \genus_{T_\textnormal{gen}}(M_j) \leq  \genus_{T_\textnormal{gen}}(M) - 1. 
\]
\end{proposition}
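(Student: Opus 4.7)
The strategy is the same ``perturb, rescale, use uniqueness, exploit the localized genus drop'' scheme sketched in Section \ref{subsec:sketch-genus-drop}. Fix a point $(\bx,T)\in\fS_\textnormal{first}(\cM)$ with $T=T_\textnormal{gen}(\cM)$, which exists by Lemma \ref{lemm:first-generic-time.points} (using the standing multiplicity-one assumption). By Proposition \ref{prop:properties.until.nongeneric.time} every tangent flow at $(\bx,T)$ is of the form $t\mapsto \cH^2\lfloor\sqrt{-t}\Sigma$ for some $\Sigma\in\cS_2''$ with $\genus(\Sigma)\geq1$, and since $\Sigma$ is not a generalized cylinder we in fact have $\Sigma\in\cS_2'''$ by Proposition \ref{prop:ends.shrinkers.R3}. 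Let $(\Sigma,\Omega)$ denote the choice of side compatible with the inward normal to $M$ at $\bx$ (so that the perturbations $M_j\subset U$ land on the $\Omega$ side of any tangent flow). For each large $j$, let $\cM_j$ be a cyclic integral unit-regular Brakke flow starting from $M_j$; since $u_j\to0$ in $C^\infty(M)$ one checks (via entropy bounds and Brakke regularity) that $\cM_j\rightharpoonup\cM$ and hence $T_\textnormal{gen}(\cM_j)\geq T-o(1)$ with $\sing\cM_j\cap\ft^{-1}([0,T_\textnormal{gen}(\cM_j)))$ converging to $\sing\cM\cap\ft^{-1}([0,T))$ locally in space-time.

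Let $d_j>0$ denote the parabolic distance in $\RR^3\times\RR$ from $(\bx,T)$ to the space-time support of $\cM_j$; since $u_j>0$ and $\cM_j\rightharpoonup\cM$, we have $d_j\to0$. Parabolically rescale around $(\bx,T)$ by $d_j^{-1}$ and pass to a subsequential Brakke flow limit to obtain an ancient flow $\check\cM$. Using the multiplicity-one hypothesis at $(\bx,T)$ together with Huisken's monotonicity (exactly as in the forthcoming Lemma \ref{lemm:Tgen-est-one-sidedR3}), any tangent flow of $\check\cM$ at $\tau=-\infty$ has density $\leq F(\Sigma)$, so $\check\cM$ rescales smoothly and with multiplicity one to $\Sigma$ in the ancient past. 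Since $U_j(t)\subset U(t)$ for every $t$ by avoidance, $\check\cM$ is one-sided in the closure of $\Omega$, and the definition of $d_j$ guarantees $\supp\check\cM$ meets $(\bOh,0)$, ruling out $\check\cM=\cH^2\lfloor\sqrt{-t}\Sigma$. The uniqueness result Theorem \ref{thm:uniqueness-global} (applied to the rescaled flow) therefore implies that $\check\cM$ is a parabolic time-translate of the canonical ancient one-sided flow $\bar M$ of Proposition \ref{prop:exist-ancient-rescaled-MCF-C2-rescaling-argument}.

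Now apply Proposition \ref{prop:genus-drop-one-sided-flow} to $\bar M$ to get rescaled times $t_0<t_1<0$, a radius $R>0$ and $\kappa_0,\kappa_1>0$ such that (i) $\genus(\partial\bar\cK(t)\cap B_R(\bOh))\geq1$ for $t\in[t_0,t_0+\kappa_0]$ at regular times, (ii) $\genus(\partial\bar\cK(t)\cap B_R(\bOh))=0$ for $t\in[t_1,t_1+\kappa_1]$ at regular times, and (iii) $\partial\bar\cK$ is a simple, essentially radial flow on the annulus $\{R/2<|\bx|<3R\}\times[t_0,t_1+\kappa_1]$. Un-rescaling, this produces times $s_j^0<s_j^1$ converging to $T$ and a radius $r_j\sim d_j R$ such that, modulo an arbitrarily small error coming from the convergence $\check\cM_j\rightharpoonup\check\cM$, the flow $\cM_j$ is a simple flow on the space-time annulus $\{r_j/2<|\bx-\bx|<3r_j\}\times[s_j^0,s_j^1]$ with the corresponding genus behaviour: $\genus(M_j(s_j^0)\cap B_{r_j}(\bx))\geq1$ and $\genus(M_j(s_j^1)\cap B_{r_j}(\bx))=0$. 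The local topological monotonicity machinery of Appendix \ref{app:loc-top-mon} (combined with White's global monotonicity \cite[Theorem 1]{White:topology-weak} on the complement $\RR^3\setminus M_j(t)$) then turns this into a strict drop of the global genus: a loop in $\RR^3\setminus M_j(s_j^0)$ detecting a handle near $\bx$ bounds in $\RR^3\setminus M_j(s_j^1)$, so $\genus(M_j(s_j^1))\leq\genus(M_j(s_j^0))-1$. Since $\cM_j$ has only generic singularities on $[0,s_j^1)$ for $j$ large (by the convergence $\cM_j\rightharpoonup\cM$ and the fact that $\sing\cM\cap\{t<T\}$ is generic), $s_j^1<T_\textnormal{gen}(\cM_j)$ and $\genus(M_j(s_j^0))\leq\genus_{T_\textnormal{gen}}(M)+o(1)$, yielding $\genus_{T_\textnormal{gen}}(M_j)\leq\genus_{T_\textnormal{gen}}(M)-1$ upon taking $j\to\infty$.

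The main technical obstacle is verifying that the localized genus drop for $\bar M$ on $B_R(\bOh)$ actually descends to a global genus drop for $M_j$: one must exhibit an explicit $1$-cycle $\sigma_j\subset \RR^3\setminus M_j(s_j^0)$ which bounds in $\RR^3\setminus M_j(s_j^1)$ but not in $\RR^3\setminus M_j(s_j^0)$, and the simple-flow plus $|\bx^T|\geq\tfrac12|\bx|$ properties from Proposition \ref{prop:genus-drop-one-sided-flow} parts (3)--(4) are tailored to let one build $\sigma_j$ as a parallel loop in the annulus and argue (via the localized topological monotonicity of Appendix \ref{app:loc-top-mon}) that any bounding $2$-chain for $\sigma_j$ at time $s_j^0$ would persist to the later regular time $s_j^1$, contradicting the intermediate genus-$1$ structure near $\bx$. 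Everything else (convergence of flows, non-fattening, and upper semicontinuity of density) is standard given the hypotheses already accumulated.
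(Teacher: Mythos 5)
Your overall strategy matches the paper's: rescale the perturbed flows around a non-generic singular point, use Theorem \ref{thm:uniqueness-global} to identify the limit with a parabolic dilation of the canonical one-sided ancient flow, and then transfer the localized genus drop of Proposition \ref{prop:genus-drop-one-sided-flow} to the perturbed flows via the topological machinery of Appendix \ref{app:loc-top-mon}. However, two steps contain genuine gaps.

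First, you fix a point $(\bx,T)\in\fS_\textnormal{first}(\cM)$ at the outset and set $d_j := d((\bx,T),\supp\cM_j)$. The paper instead defines $d_j$ as the \emph{maximum} of these distances over the finite set $\fS_\textnormal{first}(\cM)$, and then passes to a subsequence in which the maximum is achieved at a fixed point. This is not cosmetic. Lemma \ref{lemm:Tgen-est-one-sidedR3} only gives $T_\textnormal{gen}(\cM_j) \geq T_\textnormal{gen}(M) - o(d_j^2)$ with $d_j$ the maximal distance. If the distance from $\cM_j$ to your fixed point is $d_j' \ll d_j$, the rescaled first non-generic time $T_\textnormal{gen}(\check\cM_j)$ can go to $-\infty$ rather than to $0$, and then you cannot guarantee that your genus-drop times $s_j^0<s_j^1$ are earlier than $T_\textnormal{gen}(\cM_j)$, so the inequality $\genus_{T_\textnormal{gen}}(M_j)\leq\genus(M_j(s_j^1))$ fails. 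Your parenthetical justification that $\cM_j\rightharpoonup\cM$ and $\sing\cM\cap\{t<T\}$ being generic implies $s_j^1<T_\textnormal{gen}(\cM_j)$ is not correct as stated; exactly this delicate issue is what Lemma \ref{lemm:Tgen-est-one-sidedR3} is for, and that lemma requires the maximal choice of $d_j$.

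Second, your description of how the localized genus drop inside $B_R$ produces a drop in the \emph{global} genus is both vague and runs the argument in the wrong direction. You propose to exhibit a cycle $\sigma_j\subset\RR^3\setminus M_j(s_j^0)$ that fails to bound at $s_j^0$ and bounds at $s_j^1$, and then deduce the genus drop; but a single class becoming null does not a priori decrease $\rank H_1$, and ``any bounding $2$-chain for $\sigma_j$ at $s_j^0$ persists to $s_j^1$, contradicting the genus-$1$ structure'' is not coherent (you want $\sigma_j$ \emph{not} to bound at $s_j^0$). The paper runs the argument by contradiction and in the opposite temporal direction: assume $g_{1,j}=g_{0,j}=:g$, use Lemma \ref{lemma:genus.loops.inside.first} to select $g$ linearly independent loops in $W_j[t_{1,j}]$ none of which lies inside $B$ (possible because the genus inside $B$ is zero at $t_{1,j}$), push them back to $t_{0,j}$ using \cite[Theorems 5.4, 6.2]{White:topology-weak} together with Theorem \ref{theo:homotope-time-zero-MCF-complement-simple-flow} (which requires the simple-flow structure on the annulus, i.e.\ properties (3)--(4) of Proposition \ref{prop:genus-drop-one-sided-flow}), and then apply Lemma \ref{lemma:genus.loops.inside.second} to conclude $\genus(\check M_j(t_{0,j})\cap B_R)=0$, contradicting the genus drop at $t_{0,j}$. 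You should adopt this counting-by-contradiction formulation; the direct ``exhibit a loop that dies'' approach is not obviously salvageable without it.
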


\begin{remark}
This result would also hold (with the same proof) if we assumed that $0>u_j$. 
\end{remark}

Before proving Proposition \ref{prop:generic.flow.R3.iterate}, we observe that it immediately implies Theorem \ref{theo:generic.flow.R3} after finitely many iterations.

\begin{proof}[Proof of Theorem \ref{theo:generic.flow.R3} using Proposition \ref{prop:generic.flow.R3.iterate}]
For $M\subset \RR^3$ a closed embedded surface, fix (using \cite[Theorem B.6]{HershkovitsWhite:set-theoretic}) a cyclic integral unit-regular Brakke flow $\cM$ with $\cM(0) = \cH^2\lfloor M$.  If $T_{\textrm{gen}}(\cM) = \infty$ then $\sing \cM = \sing_\textrm{gen}\cM$ so we're in case (1) of Theorem \ref{theo:generic.flow.R3} with $M'=M$ (i.e.\ no perturbation is necessary).  If $T_{\textrm{gen}}(\cM) < \infty$, then either it holds that all $(\bx,T_{\textnormal{gen}}(\cM)) \in \cS_\textnormal{first}(\cM)$ have some tangent flow with multiplicity $k=1$ or some $(\bx,T_{\textnormal{gen}}(\cM)) \in \cS_\textnormal{first}(\cM)$ has only tangent flows with multiplicity $k\geq 2$. In the latter case, we're in case (2) of Theorem \ref{theo:generic.flow.R3} (again with $M'=M$). 

It thus remains to consider the case where $T_{\textrm{gen}}(\cM) < \infty$ and at all $(\bx,T_{\textnormal{gen}}(\cM)) \in\fS_\textnormal{first}(\cM)$, some tangent flows occur with multiplicity-one. By Proposition \ref{prop:properties.until.nongeneric.time} (the ``...Finally,'' bullet points) it holds that at all $(\bx,T_{\textnormal{gen}}(\cM)) \in\fS_\textnormal{first}(\cM)$, all tangent flows occur with multiplicity-one. 

Note also that by Proposition \ref{prop:properties.until.nongeneric.time} we see that $\genus_{T_\textrm{gen}}(M) \geq 1$. Using Proposition \ref{prop:generic.flow.R3.iterate} we can thus find $M'$ a (small) normal graph over $M$ with
\[
\genus_{T_\textrm{gen}}(M') \leq \genus_{T_\textrm{gen}}(M) - 1. 
\]
Now, by the argument in the previous paragraph if $M'$ fails to satisfy the desired properties in Theorem \ref{theo:generic.flow.R3} for some (and thus any) cyclic unit-regular integral Brakke flow $\cM'$ with $\cM'(0) = \cH^2\lfloor M'$ it holds that $T_{\textrm{gen}}(\cM') < \infty$ and at all $(\bx,T_{\textnormal{gen}}(\cM')) \in \fS_\textnormal{first}(\cM)$, all tangent flows occur with multiplicity one. We can thus apply  Proposition \ref{prop:generic.flow.R3.iterate} with $M'$ in place of $M$. Note that at each step the genus at the non-generic time must strictly decrease. As such, the perturbations must eventually satisfy the asserted properties in Theorem \ref{theo:generic.flow.R3}. (By choosing all perturbations sufficiently small, we can arrange that the final surface is a small graph over $M$.) This completes the proof.
\end{proof}

The remainder of this section is thus devoted to the proof of Proposition \ref{prop:generic.flow.R3.iterate}. Below we fix a cyclic integral unit-regular Brakke flow $\cM$ in $\RR^3$ with $\cM(0) = \cH^2\lfloor M$ for a closed embedded surface $M \subset \RR^3$.  Assume that $T_\textnormal{gen}(\cM) < \infty$ and at any $(\bx,T_{\textnormal{gen}}(\cM)) \in \fS_\textnormal{first}(\cM)$, all tangent flows occur with multiplicity one. 

Consider $M_j$ is a sequence of normal graphs of functions $0<u_j$ over $M$ with $u_j\to 0$ in $C^\infty(M)$ and fix cyclic integral unit-regular Brakke flows $\cM_j$ with $\cM_j(0) = \cH^2\lfloor M_j$. Using  Lemma \ref{lemm:first-generic-time.unique.BF} it suffices to assume (after passing to a subsequence and replacing $\cM$ by the limiting Brakke flow) that $\cM_j \rightharpoonup \cM$. 

Define
\begin{equation}\label{eq:R3-defn-dj}
d_j : = \max_{(\bx,T_{\textrm{gen}}(\cM)) \in \fS_\textrm{first}(\cM)} d((\bx,T_{\textrm{gen}}(\cM)), \supp\cM_j)
\end{equation}
(using the parabolic distance). Note that $0<d_j =o(1)$. (Lemma \ref{lemm:first-generic-time.points} implies that $\fS_\textrm{first}(\cM)$ is a finite set.)

Before proving Proposition \ref{prop:generic.flow.R3.iterate} we first show that the first non-generic time does not significantly decrease under the perturbation. 

\begin{lemma}\label{lemm:Tgen-est-one-sidedR3}
For $M,M_j,\cM,\cM_j$ as above we have $T_\textnormal{gen}(\cM_j) \geq T_\textnormal{gen}(M) - o(d_j^2)$ as $j\to\infty$. 
\end{lemma}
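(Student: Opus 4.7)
The plan is to argue by contradiction. Suppose, after extracting a subsequence, that there exists $\varepsilon > 0$ with $T_\textnormal{gen}(\cM_j) \leq T_\textnormal{gen}(\cM) - \varepsilon\, d_j^2$ for every $j$, and pick $(\bx_j,t_j) \in \fS_\textnormal{first}(\cM_j)$ so that $\Theta_{\cM_j}(\bx_j,t_j) \geq \lambda_1 + \delta_0$. Upper semicontinuity of density under $\cM_j \rightharpoonup \cM$, combined with the fact that $\cM$ admits no non-generic singularities strictly before $T_\textnormal{gen}(\cM)$, forces $(\bx_j, t_j) \to (\bx_\infty, T_\textnormal{gen}(\cM))$ along a subsequence with $(\bx_\infty, T_\textnormal{gen}(\cM)) \in \fS_\textnormal{first}(\cM)$. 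Since $\fS_\textnormal{first}(\cM)$ is finite by Lemma \ref{lemm:first-generic-time.points}, after a further subsequence I may assume that the maximum in \eqref{eq:R3-defn-dj} is attained at this same point $\bx_\infty$ for every $j$.

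Next I parabolically rescale both $\cM_j$ and $\cM$ by $d_j^{-1}$ about $(\bx_\infty,T_\textnormal{gen}(\cM))$, producing $\hat\cM_j$ and $\hat\cM$. By construction $\supp \hat\cM_j$ avoids the open unit parabolic ball around the origin but has a point on its boundary. By Proposition \ref{prop:properties.until.nongeneric.time} together with Proposition \ref{prop:ends.shrinkers.R3}, $\hat\cM$ subsequentially converges to a tangent flow $\tilde\cM = \cH^2\lfloor \sqrt{-t}\Sigma$ to $\cM$ at $(\bx_\infty, T_\textnormal{gen}(\cM))$, with $\Sigma \in \cS_2'''$ and multiplicity one. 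Passing $\hat\cM_j$ to a Brakke subsequential limit yields an ancient integral unit-regular cyclic Brakke flow $\check\cM$ which is (a) nontrivial, with a point on the unit parabolic sphere, (b) disjoint from $(\bOh,0)$, and (c) contained in the closure of a fixed side of $\tilde\cM$. Property (c) uses the mean-convex neighborhood conjecture \cite{ChoiHaslhoferHershkovits} to ensure that $\cM_j$ stays on the prescribed side of $\cM$ up to time $T_\textnormal{gen}(\cM)$, a property that survives the rescaling and the limit.

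The crucial step is to identify the tangent flow of the rescaled version of $\check\cM$ at $\tau = -\infty$ as $\tilde\cM$ with multiplicity one, in order to invoke Theorem \ref{thm:uniqueness-global}. Huisken monotonicity, together with the identity $\Theta_{\cM}(\bx_\infty,T_\textnormal{gen}(\cM)) = F(\Sigma)$, delivers the uniform upper bound $\Theta_{\check\cM}(X) \leq F(\Sigma)$ for every $X \in \supp\check\cM$. Combined with the one-sidedness of $\check\cM$ relative to $\tilde\cM$ and the Frankel property for self-shrinkers (cf.\ \cite[Corollary C.4]{CCMS:generic1}), this rules out any other shrinker as the tangent flow at $-\infty$ and rules out higher multiplicity; Brakke regularity then upgrades this to smooth graphical multiplicity-one convergence to $\Sigma$ in the rescaled ancient past. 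Theorem \ref{thm:uniqueness-global} now applies: either $\check\cM = \tilde\cM$, which is incompatible with $(\bOh,0) \not\in \supp\check\cM$, or $\check\cM$ is a time translate of the one-sided flow $\bar\cM$ of Proposition \ref{prop:exist-ancient-rescaled-MCF-C2-rescaling-argument}. In the latter case the rescaled points $(d_j^{-1}(\bx_j - \bx_\infty), d_j^{-2}(t_j - T_\textnormal{gen}(\cM)))$ have non-rescaled time coordinate $\leq -\varepsilon$, and upper semicontinuity of density then produces a non-generic singular point of $\check\cM$ at some $t<0$, contradicting Proposition \ref{coro:summary-tleq0-non-rescaled}.

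The main obstacle is the ancient-past identification carried out in the third paragraph: one must show that the limit $\check\cM$ has $\tilde\cM$ as its unique multiplicity-one tangent flow at $-\infty$ before the uniqueness theorem can be invoked, and this is where the density upper bound at the collapsing scale and the Frankel property for self-shrinkers must be combined carefully.
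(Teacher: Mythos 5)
Your high-level strategy (contradiction, density semicontinuity, one-sided limit, invoking Theorem \ref{thm:uniqueness-global}, and then using structure of the one-sided flow) matches the paper, but the rescaling you chose breaks the argument at the final step.

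The crucial issue is the rescaling factor. You rescale by $d_j^{-1}$ about $(\bx_\infty,T_\textnormal{gen}(\cM))$. The paper instead rescales by the factor $\lambda_j$ that normalizes the parabolic distance from the origin to the non-generic singular point of $\cM_j$, i.e.\ so that $|(\check\bx_j,\check T_j)|=1$. These two choices are \emph{not} comparable up to constants. In fact, the paper's own chain of deductions (once $\check\cM$ is identified as a dilation of the one-sided flow, Proposition \ref{coro:summary-tleq0-non-rescaled}(4) forces $\check T_0=0$, and the scale-invariant form of the contradiction hypothesis then gives $\lambda_j d_j\to0$) shows that $\lambda_j = o(d_j^{-1})$. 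Consequently, with your normalization the rescaled singular points $(d_j^{-1}(\bx_j-\bx_\infty),\,d_j^{-2}t_j)$ satisfy $\max(|\hat\bx_j|,|\hat t_j|^{1/2}) = (\lambda_j d_j)^{-1}\to\infty$: they escape to spatial or temporal infinity. Your concluding step --- ``upper semicontinuity of density then produces a non-generic singular point of $\check\cM$ at some $t<0$, contradicting Proposition \ref{coro:summary-tleq0-non-rescaled}'' --- needs the rescaled singular points to converge in a compact region of space-time; they do not, so no singular point of $\check\cM$ is produced and the contradiction never materializes. Relatedly, your assertion that the maximum in \eqref{eq:R3-defn-dj} is attained at $\bx_\infty$ after a subsequence is unjustified: the point of $\fS_\textnormal{first}(\cM)$ realizing the maximum need not coincide with the limit of the non-generic singular points $(\bx_j,t_j)$ of $\cM_j$. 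Both facts that you want from it --- that $\supp\hat\cM_j$ avoids the open unit ball and that the rescaled singular points lie on its boundary --- can fail.

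What the paper actually does, and what is needed, is subtler in exactly this respect. It rescales so the singular point of $\cM_j$ is at parabolic distance one, so it \emph{does} survive the limit; it then uses the density bound $\Theta_{\check\cM}(\check\bx_0,\check T_0)\geq\lambda_1+\delta_0$ to rule out the static case, invokes Proposition \ref{coro:summary-tleq0-non-rescaled}(4) to conclude $\check T_0=0$ (not $<0$), derives $\lambda_j d_j\to0$ from the scale-invariant form of the hypothesis $T_j\leq-\eta d_j^2$, and only needs the inequality $d((\bOh,0),\supp\cM_j)\leq d_j$ --- no claim on where the maximum is attained --- to conclude $d((\bOh,0),\supp\check\cM)=0$. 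The contradiction is then with Proposition \ref{theo:basic-prop-cK-cM}(1), which says the one-sided flow has positive parabolic distance from the origin. To repair your proof you would need to switch to the paper's normalization (or, equivalently, rescale a second time by $\max(|\hat\bx_j|,|\hat t_j|^{1/2})^{-1}$ to bring the singular points back to scale one, which amounts to the same thing), and replace the final contradiction with the distance-to-origin argument rather than the production of a singular point at negative time.
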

\begin{proof}
Passing to a subsequence, we can assume for contradiction that there is some $\eta>0$ so that 
\[
T_\textnormal{gen}(\cM_j) \leq T_\textnormal{gen}(M) - \eta d_j^2
\]
By stability of cylindrical (generic) singularities (this follows from upper-semicontinuity of density and the final bullet point in Proposition \ref{prop:properties.until.nongeneric.time}) we see that in fact
\begin{equation}\label{eq:gen-time-not-sig-decrease-contr}
T_\textnormal{gen}(M) - o(1) \leq T_\textnormal{gen}(\cM_j) \leq T_\textnormal{gen}(M) - \eta d_j^2
\end{equation}
as $j\to\infty$. By translating in space-time and using \eqref{eq:gen-time-not-sig-decrease-contr} we can assume that $T_\textrm{gen}(\cM) = 0$ and that there is $(\bx_j,T_j) \in \fS_\textrm{first}(\cM_j)$ with $T_j = T_\textrm{gen}(\cM_j)$ so that $(\bx_j,T_j)\to (\bOh,0) \in \fS_\textrm{first}(\cM)$ but
\begin{equation}\label{eq:gen-time-not-sig-decrease-contr-2}
T_j \leq - \eta d_j^2. 
\end{equation}
Now, rescale $\cM,\cM_j,(\bx_j,T_j)$ around the space-time origin to $\tilde \cM_j,\check \cM_j,(\check\bx_j,\check T_j)$ so that $|(\check \bx_j,\check T_j)| = 1$. Passing to a further subsequence, we assume that $\tilde \cM_j \rightharpoonup \tilde\cM$, $\check \cM_j \rightharpoonup \check \cM$, $(\check\bx_j,\check T_j) \to (\check\bx_0,\check T_0)$. Note that $\check T_0 \leq 0$ and $\tilde\cM(t) = \cH^2\lfloor\sqrt{-t}\Sigma$ for some $\Sigma\in \cS_2''$.

We claim that is $\check \cM$ either $\check\cM(t) = \cH^2\lfloor \sqrt{-t}\Sigma$ for all $t<0$ or $\check \cM$ is (up to a parabolic dilation around the origin) the ancient flow constructed in Proposition  \ref{prop:exist-ancient-rescaled-MCF-C2-rescaling-argument}. We will verify this claim below but first will show how it completes the proof. By upper-semicontinuity of density and the final bullet point in Proposition \ref{prop:properties.until.nongeneric.time} we see that $\Theta_{\check\cM}(\check \bx_0,\check T_0) \geq \lambda_1 + \delta_0$.  Thus, the first case ($\check\cM(t) = \cH^2\lfloor \sqrt{-t}\Sigma$) cannot occur, since $\check T_0 \leq 0$, so we find that $\check\cM$ is a parabolic dilation of the ancient flow constructed  in Proposition  \ref{prop:exist-ancient-rescaled-MCF-C2-rescaling-argument}. By (4) in Proposition \ref{coro:summary-tleq0-non-rescaled} we see that $\check T_0 = 0$. By the scaling invariance of \eqref{eq:gen-time-not-sig-decrease-contr-2}, $\check T_j \to 0$ and the definition of $d_j$ we see that 
$$ d((\bOh,0),\supp \check \cM_j) \to 0$$
and thus $d((\bOh,0),\supp\check \cM) = 0$. This contradicts (1) in Proposition \ref{theo:basic-prop-cK-cM}.

We now prove the claim about $\check \cM$. We can choose an open set $\Omega$ so that $\partial\Omega = \Sigma$ and $\supp\check\cM(t) \subset \sqrt{-t} \overline\Omega$ (to see that $\check \cM$ is contained to one side of $\sqrt{-t}\Sigma$ we can use the fact that the level set flow of the inside $U_j$ of $M_j$ contains the level set flow $U$, the inside of $M$). We claim that \begin{equation}\label{eq:R3-sing-time-lemm-ent-bd}
\lambda(\check \cM) \leq F(\Sigma).
\end{equation}
 Indeed, we can choose $(\by_j,s_j) \in \RR^3\times \RR$ with $(\by_j,s_j) \to (\bOh,0)$ and $r_j\to 0$ and 
 $$\Theta_{\cM_j}((\by_j,s_j),r_j) = \lambda(\check\cM) + o(1)\, .$$ 
 For $r>0$ fixed, we can take $j$ large and apply Huisken's monotonicity to yield 
\[
\Theta_{\cM_j}((\by_j,s_j),r) \geq \Theta_{\cM_j}((\by_j,s_j),r_j) = \lambda(\check\cM) + o(1).
\]
Sending $j\to\infty$, we find
\[
\Theta_\cM((\bOh,0),r) \geq \lambda(\check\cM). 
\]
Then, sending $r\to 0$, we obtain \eqref{eq:R3-sing-time-lemm-ent-bd}. 

Since $\supp\check\cM(t) \subset \sqrt{-t} \overline\Omega$ we find that any tangent flow to $\check\cM$ at $-\infty$ is associated to an integral unit-regular varifold shrinker ($F$-stationary varifold) $V$ with $\supp V\subset \overline \Omega$. The Frankel property for shrinkers (Corollary \ref{cor:frank.shrink}) implies that $\supp V \cap \Sigma \not = \emptyset$. Then the strong maximum principle for stationary varifolds \cite{SolomonWhite,Ilmanen:maximum} (either applies since $\Sigma$ is smooth) gives $V = m \cH^2\lfloor \Sigma$ for some $m \in \NN$. The entropy bound \eqref{eq:R3-sing-time-lemm-ent-bd} then implies that $m=1$. 

We can thus apply the global uniqueness result Theorem \ref{thm:uniqueness-global} to see that either $\check\cM(t) = \cH^2\lfloor \sqrt{-t}\Sigma$ for all $t<0$ or $\check \cM$ is (up to a parabolic dilation around the origin) the ancient flow constructed in Proposition  \ref{prop:exist-ancient-rescaled-MCF-C2-rescaling-argument}. This proves the claim and thus completes the proof. 
\end{proof}

We now complete the proof of Theorem \ref{theo:generic.flow.R3} by establishing Proposition \ref{prop:generic.flow.R3.iterate}. 
\begin{proof}[Proof of Proposition \ref{prop:generic.flow.R3.iterate}]
Recall that $M\subset \RR^3$ is a closed embedded surface and $M_j$ are the normal graphs of functions $0<u_j$ with $u_j\to0$ in $C^\infty(M)$. Write $U_j$ for the unique compact region bounded by $M$ and let $U_j(t)$ denote the level set flow of $U_j$. Write $\cU_j$ for the space-time track of $U_j(t)$.

We have fixed cyclic integral unit-regular Brakke flows $\cM_j$ with $\cM_j(0) = \cH^2\lfloor M_j$ and so that $\cM_j\rightharpoonup \cM$ a cyclic integral unit-regular Brakke flow with $\cM(0) = \cH^2\lfloor M$ and so that all tangent flows to $\cM$ at time $T_\textrm{gen}(\cM)$ occur with multiplicity one. 

By Lemma \ref{lemm:first-generic-time.points}, the set $\fS_\textrm{first}(\cM)$ is finite. Thus, by passing to a subsequence and recalling the definition of $d_j$ in \eqref{eq:R3-defn-dj} we can translate $\cM,\cM_j$ by the same fixed space-time vector to assume that $T_\textrm{gen}(\cM) = 0$ and
\[
d_j = d((\bOh,0),\supp\cM_j)
\]
(i.e., that the maximum in $d_j$ is achieved at the space-time origin). Define $\tilde \cM_j,\check\cM_j$ by parabolically dilating around the space-time origin by $d_j^{-1}$. Exactly as in the proof of Lemma \ref{lemm:Tgen-est-one-sidedR3} we find that (after passing to a subsequence) $\tilde\cM_j$ limits to $\cH^2\lfloor\sqrt{-t}\Sigma$ and $\check \cM_j \rightharpoonup \check\cM$ a parabolic dilation of the ancient flow constructed in Proposition  \ref{prop:exist-ancient-rescaled-MCF-C2-rescaling-argument}. (Note that the other possibility considered in the proof of Lemma \ref{lemm:Tgen-est-one-sidedR3}, namely $\check\cM = \cH^2 \lfloor \sqrt{-t}\Sigma$ cannot occur since $d((\bOh,0),\supp\check\cM_j) = 1$ by the choice of rescaling.) 

Because the ancient flow $\check \cM = \cH^n \lfloor \partial\cK(t)$ is a parabolic dilation of the flow considered in Section \ref{sec:genus-drop-ancient}, we can apply Proposition \ref{prop:genus-drop-one-sided-flow} to find $t_0 < t_1 < 0$ and $\kappa_0 \in (0,t_1-t_0),\kappa_1 \in (0,-t_1)$ and $R>0$ so that 
\begin{enumerate}
\item for any $t \in [t_{0},t_{0}+\kappa_{0}]$ so that $\partial\cK(t)$ is smooth, $\genus(\partial\cK(t) \cap B_{R}(\bOh)) > 0$,
\item  for any $t \in [t_{1},t_{1}+\kappa_{1}]$ so that $\partial\cK(t)$ is smooth, $\genus(\partial\cK(t) \cap B_{R}(\bOh)) = 0$,
\item  $t\mapsto \partial\cK(t)$ is a simple flow in $\{R/2 <  |\bx| < 3R\} \times [t_0,t_1+\kappa_{1}]$,
\item  and
for $(\bx,t) \in \partial\cK(t) \cap \{R/2 < |\bx| < 3R\}\times  [t_0,t_1+\kappa_{1}]$, we have $|\bx^T| \geq \tfrac 12 |\bx|$, for $\bx^T$ the projection of $\bx$ to $T_\bx\partial\cK(t)$.  
\end{enumerate}
(Note that the various constants will be changed by the parabolic dilation, but once this is taken into account they will be fixed with the above properties and will not depend on $j$. This is all we will need later.) Note that by Lemma \ref{lemm:Tgen-est-one-sidedR3} and choice of translation/rescaling we have $T_\textrm{gen}(\check\cM_j) \geq -o(1)$. In particular, for $j$ sufficiently large, $T_\textrm{gen}(\check\cM_j) \geq t_1 + \kappa_1$. 

By unit-regularity and (3) and (4) above, we conclude that for $j$ sufficiently large,
\begin{equation}\label{eq:checkM_j-simple-flow}
\textrm{$\check\cM_j$ is a simple flow in $\{3R/4<|\bx| < 2R\}\times [t_0 + \kappa_0/2,t_1+\kappa_1/2]$.}
\end{equation}
Furthermore, by (1) (and (6) in Proposition \ref{coro:summary-tleq0-non-rescaled} combined with unit-regularity)  there is $t_{0,j} \in [t_0 + \kappa_0/2,t_0+3\kappa_0/4] \cap \cT_\textrm{reg}(\check\cM_j)$ with 
\begin{equation}\label{eq:R3genusdrop.genus.inside}
\genus(\check M_j(t_{0,j}) \cap B_R(\bOh)) > 0.
\end{equation}
Similarly, by (2) there is $t_{1,j} \in [t_1 + \kappa_1/2,t_1+3\kappa_1/4] \cap \cT_\textrm{reg}(\check\cM_j)$ with 
\begin{equation}\label{eq:R3genusdrop.genus.inside.dies}
\genus(\check M_j(t_{1,j}) \cap B_R(\bOh)) = 0.
\end{equation}

Discarding finitely many terms we can ensure (thanks to $\cM_j\rightharpoonup\cM$ and the monotonicity of genus) that for all $j$, it holds that 
\[
g_{0,j} : = \genus({\check M_j(t_{0,j})}) \leq \genus_{T_\textrm{gen}}(M). 
\]
Defining $g_{1,j} : = \genus(\check M_j(t_{1,j}))$, we claim that
\begin{equation}\label{eq:genus-ineq-R3-in-proof}
 g_{1,j} \leq g_{0,j} - 1
\end{equation}
for $j$ sufficiently large. Thanks to (1)-(4) above, the proof of \eqref{eq:genus-ineq-R3-in-proof} is now completed exactly as in \cite[Proposition 11.4]{CCMS:generic1} by using the localized topological monotonicity results proven in \cite[Appendix G]{CCMS:generic1}. 

For the reader's convenience we recall the proof here. If \eqref{eq:genus-ineq-R3-in-proof} fails, we can take $j$ sufficiently large so that \eqref{eq:R3genusdrop.genus.inside} and \eqref{eq:R3genusdrop.genus.inside.dies} hold but 
\[
g_{0,j} = g_{1,j} : = g
\]
For this $j$ fixed, we write $W_j[t]:=(\check U_j(t))^c$ and $W_j[a,b] : = \check \cU_j^c \cap \ft^{-1}([a,b])$ (where $\check \cU_j$ is the space-time track of the level set flow of $U_j$ translated in space-time and then parabolically rescaled by $d_j^{-1}$ as above and $\check U_j(t)$ is the time $t$ slice.)

Fix $B=B_R(\bOh)$ and recall \eqref{eq:checkM_j-simple-flow}. By Lemmas \ref{lemm:H1.subset.R3.bdry} and \ref{lemma:genus.loops.inside.first} and \eqref{eq:R3genusdrop.genus.inside.dies} we can choose loops 
\[
\gamma_1^{(1)},\dots, \gamma_g^{(1)} \subset W_j[t_{1,j}]
\]
so that 
\[
\{[\gamma_1^{(1)}],\dots, [\gamma_g^{(1)}]\} \subset H_1(W_j[t_{1,j}],\ZZ)
\]
is linearly independent and each $\gamma_j^{(1)}$ satisfies one of the following:
\begin{itemize}
\item $\gamma_j^{(1)}$ is contained in $\overline B^c$ , or
\item there is some component $\cB_j^{(1)}$ of $\partial B\cap W_j[t_{1,j}]$ that has non-zero mod-2 intersection with $\gamma_j^{(1)}$ and zero mod-2 intersection with each previous $\gamma_i^{(1)}$, $i<j$. 
\end{itemize}
(No $\gamma_j^{(1)}$ is contained in $B$, thanks to \eqref{eq:R3genusdrop.genus.inside.dies}.) By \cite[Theorem 6.2]{White:topology-weak} we have that 
\[
\{[\gamma_1^{(1)}],\dots, [\gamma_g^{(1)}]\} \subset H_1(W_j[t_{0,j},t_{1,j}],\ZZ)
\]
is linearly independent. We now choose loops $\gamma_1^{(0)},\dots,\gamma_g^{(0)} \subset W_j[t_{0,j}]$ so that
\begin{itemize}
\item Each $\gamma_j^{(1)}$ is homotopic to $\gamma_j^{(0)}$ in $W_j[t_{0,j},t_{1,j}]$; see \cite[Theorem 5.4]{White:topology-weak}.
\item If $\gamma^{(1)}_j$ is contained in $\overline B^c$ then so is $\gamma^{(0)}_j$ (in fact, so is the entire homotopy between them); see Theorem \ref{theo:homotope-time-zero-MCF-complement-simple-flow}. 
\item If $\gamma^{(1)}_j$ is not entirely contained in $\overline B^c$ then there is some component $\cB^{(0)}_j$ of $\partial B\cap W_j[t_{0,j}]$ that has non-zero mod-2 intersection with $\gamma_j^{(0)}$ and zero mod-2 intersection with each previous $\gamma_i^{(0)}$, $i<j$; this follows from simplicity of the flow near $\partial B$ and the fact that mod-2 intersection is preserved under homotopy. 
\end{itemize}
By Lemma \ref{lemma:genus.loops.inside.second} we have that 
\[
\genus(\check M_j(t_{0,j})\cap B_R(\bOh)) = 0. 
\]
This contradicts \eqref{eq:R3genusdrop.genus.inside}, completing the proof of \eqref{eq:genus-ineq-R3-in-proof} and thus the proposition. 
\end{proof}

\appendix

\part*{Appendices}

\section{Graphs over shrinkers} \label{app:graph-shrinker}

Recall the conventions for the second fundamental form and mean curvature as described in Section \ref{subsec:convent}. Suppose that $\Sigma^n\subset \RR^{n+1}$ is a smooth shrinker (the estimates here are local so $\Sigma$ could just be a small piece of a shrinker) and consider for $I \subset \RR$ an interval of time $u : \Sigma \times I \to \RR$ so that
\[
|u||A| < \eta < 1
\]
along $\Sigma\times I$, where $A$ is the second fundamental form of $\Sigma$. This allows us to define the graph
\[
\Gamma_\tau  : =\{ \bx + u(\bx,\tau) \nu_\Sigma(\bx): \bx \in \Sigma\}. 
\]
We compute here various geometric quantities associated to $\Gamma_\tau$. Define
\begin{equation}\label{app:graphical-eqns-defn-v}
v(\bx,\tau) = (1+|(\Id - u S_\Sigma)^{-1}(\nabla_\Sigma u)|^2)^{\frac 12}. 
\end{equation}
\begin{lemma}
The upwards pointing normal along $\Gamma_\tau$ is
\begin{equation}\label{eq:app-unit-normal-graph}
\nu_\Gamma = v^{-1}(-(\Id - u S_\Sigma)^{-1}\nabla_\Sigma u + \nu_\Sigma).
\end{equation}
In particular
\begin{equation}\label{eq:app-defi-vdotv-v}
v = (\nu_\Sigma \cdot \nu_\Gamma)^{-1}. 
\end{equation}
\end{lemma}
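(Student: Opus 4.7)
The plan is to parametrize $\Gamma_\tau$ explicitly and compute its tangent space, then solve the orthogonality equations directly.

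First I would introduce the immersion $\Phi \colon \Sigma \to \RR^{n+1}$ defined by $\Phi(\bx) = \bx + u(\bx,\tau)\nu_\Sigma(\bx)$ (with $\tau$ fixed throughout), and differentiate along an arbitrary tangent vector $\bX \in T_\bx \Sigma$. Using $D_\bX \nu_\Sigma = -S_\Sigma(\bX)$ from \eqref{eq:convent.shape.oper}, this gives
\[
d\Phi_\bx(\bX) = (\Id - u\,S_\Sigma)(\bX) + (\bX\cdot\nabla_\Sigma u)\,\nu_\Sigma.
\]
Because $|u||A_\Sigma| < \eta < 1$ pointwise, the operator $\Id - uS_\Sigma \colon T_\bx\Sigma \to T_\bx\Sigma$ is invertible (and symmetric, since $S_\Sigma$ is), so $d\Phi_\bx$ is an isomorphism onto $T_{\Phi(\bx)}\Gamma_\tau$.

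Next I would find $\nu_\Gamma$ by the ansatz $\nu_\Gamma = c\,(\nu_\Sigma - \bW)$ with $\bW \in T_\bx\Sigma$ and $c>0$, which automatically satisfies $\nu_\Gamma\cdot\nu_\Sigma > 0$. Orthogonality to every $d\Phi_\bx(\bX)$ reduces to
\[
(\bX\cdot \nabla_\Sigma u) - \bW\cdot (\Id - u S_\Sigma)(\bX) = 0 \qquad \text{for all } \bX \in T_\bx \Sigma.
\]
By symmetry of $\Id - uS_\Sigma$, this is equivalent to $(\Id - uS_\Sigma)\bW = \nabla_\Sigma u$, hence $\bW = (\Id - uS_\Sigma)^{-1}\nabla_\Sigma u$. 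Normalization gives $|\nu_\Sigma - \bW|^2 = 1 + |(\Id - uS_\Sigma)^{-1}\nabla_\Sigma u|^2 = v^2$ by the definition \eqref{app:graphical-eqns-defn-v}, so $c = v^{-1}$, which is \eqref{eq:app-unit-normal-graph}. The identity \eqref{eq:app-defi-vdotv-v} then falls out by taking the inner product of \eqref{eq:app-unit-normal-graph} with $\nu_\Sigma$ and using $\nu_\Sigma\cdot \bW = 0$.

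There is no real obstacle here: the only subtlety is the self-adjointness step, which lets one invert $\Id - uS_\Sigma$ against $\nabla_\Sigma u$ rather than against its transpose. The hypothesis $|u||A| < \eta < 1$ is exactly what is needed to ensure that inverse exists. The remainder is a short direct calculation.
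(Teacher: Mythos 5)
Your proof is correct and follows essentially the same route as the paper's: identify the tangent space of $\Gamma_\tau$ via $d\Phi$, use self-adjointness of $\Id - uS_\Sigma$ to solve the orthogonality condition, and normalize using the definition of $v$. The paper's version is just more terse, stating the tangent-space characterization and immediately concluding; you have written out the intermediate steps that it leaves to the reader.
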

\begin{proof}
Note that $\bZ \in T_{\bx} \Sigma$ if and only if 
\[
(\Id - uS_\Sigma) \bZ  + du(\bZ) \nu_\Sigma( \bx)  \in T_{\bx + u(\bx,\tau) \nu_\Sigma(\bx)}\Gamma_\tau. 
\]
Since the shape operator is self-adjoint, we thus find that the expression for $\nu_\Gamma$ is orthogonal to $\Gamma_\tau$. The assertion thus follows from the definition of $v$. 
\end{proof}
For $\ell \in \{0,1\}$ set
\begin{equation}\label{eq:defn-sigma-x-app}
\sigma_{\ell}(\bx,\tau) : = \sum_{j=0}^\ell \sum_{k=0}^{4-2j} |\partial_\tau^j\nabla^k u(\bx,\tau)|. 
\end{equation}
We now study the form of the error terms in the linearization of the rescaled mean curvature flow equation. 
\begin{lemma}\label{lemm:expand-H-app}
The mean curvature of $\Gamma_\tau$ at $\bx + u(\bx,\tau) \nu_\Sigma(\bx)$ satisfies
\begin{equation}
v(\bx,\tau) H_\Gamma(\bx + u(\bx,\tau) \nu_\Sigma(\bx),\tau) = H_\Sigma(\bx) +  (\Delta_\Sigma u + |A_\Sigma |^2 u)(\bx) + E^H
\end{equation}
where the error $E^H$ can be decomposed into terms of the form 
\[
E^H = u E^H_1  + E^H_2(\nabla_\Sigma u,\nabla_\Sigma u) 
\]
where $E^H_1 \in C^\infty(\Sigma)$ and $E^H_2 \in C^\infty(\Sigma ; T^* \Sigma \otimes T^*\Sigma)$ satisfy the following estimates: 
\[
|\partial_\tau  E_1^H(\bx,\tau)| \leq C_1^H\sigma_1(\bx,\tau),\qquad  \sum_{k=0}^2 |\nabla^k_\Sigma E_1^H(\bx,\tau)|  \leq C_1^H \sigma_0(\bx,\tau)
\]
and\footnote{recall that $E_2$ is a section of $T^*\Sigma\otimes T^*\Sigma$ so e.g., $\nabla_\Sigma E_2$ is a section of $T^*\Sigma\otimes T^*\Sigma \otimes T^*\Sigma$}
\[
|\partial_\tau  E_2^H(\bx,t)| \leq C_2^H (1+\sigma_1(\bx,\tau)),\qquad \sum_{k=0}^2 |\nabla^k E_2^H(\bx,t)|  \leq C_2^H (1+ \sigma_0(\bx,\tau))
\]
where $C_1^H,C_2^H$ depend only on $\eta$ and an upper bound for $\sum_{k=0}^3 |\nabla^k A|(\bx)$. 
\end{lemma}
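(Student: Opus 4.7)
The plan is to parametrize $\Gamma_\tau$ as the image of $F_\tau(\bx) = \bx + u(\bx,\tau)\nu_\Sigma(\bx)$ and to expand $v H_\Gamma$ in the nonlinear contributions from $u$ and its derivatives. First I would compute $dF_\tau(\bZ) = P\bZ + du(\bZ)\nu_\Sigma$ with $P := \Id - uS_\Sigma$, which gives the induced metric $g_\Gamma(\bZ_1,\bZ_2) = g_\Sigma(P\bZ_1,P\bZ_2) + du(\bZ_1)\,du(\bZ_2)$. The inverse $g_\Gamma^{-1}$ is obtained from the Sherman--Morrison identity: it equals the inverse of the base quadratic form $g_\Sigma(P\cdot,P\cdot)$ minus a rank-one correction proportional to $P^{-1}\nabla_\Sigma u \otimes P^{-1}\nabla_\Sigma u$, divided by $v^2$. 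The unit normal $\nu_\Gamma$ is given by \eqref{eq:app-unit-normal-graph}, with $v = (\nu_\Sigma\cdot\nu_\Gamma)^{-1}$ as in \eqref{eq:app-defi-vdotv-v}.

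Next I would compute $v H_\Gamma = v\, g_\Gamma^{ij}\,\langle D^{\RR^{n+1}}_{\partial_i F_\tau}\partial_j F_\tau,\,\nu_\Gamma\rangle$ by substituting these expressions, using $D\nu_\Sigma = -S_\Sigma$ and the Gauss formula on $\Sigma$. The result is a rational expression in $u$, $\nabla_\Sigma u$, $\nabla_\Sigma^2 u$ whose coefficients are smooth in $\bx$ and polynomial in the components of $\nu_\Sigma$, $S_\Sigma$, $A_\Sigma$ and $\nabla_\Sigma A_\Sigma$. Setting $u\equiv 0$ recovers $H_\Sigma$, and the classical linearization of mean curvature under normal variations identifies the part of the expansion that is linear in $(u,\nabla_\Sigma u,\nabla_\Sigma^2 u)$ as exactly $\Delta_\Sigma u + |A_\Sigma|^2 u$; everything remaining is the error $E^H$.

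The key structural claim is that $E^H$ splits as $uE_1^H + E_2^H(\nabla_\Sigma u,\nabla_\Sigma u)$. To verify this I would expand
\[
P^{-1} = \Id + uS_\Sigma + u^2 S_\Sigma^2 + \cdots, \qquad v^{-2} = 1 - |P^{-1}\nabla_\Sigma u|^2 + |P^{-1}\nabla_\Sigma u|^4 - \cdots,
\]
and track where each term in $v H_\Gamma$ comes from: every factor introduced beyond the identity in $P^{-1}$ carries an explicit $u$, and every factor introduced beyond $1$ in $v^{-2}$ carries a pair $\nabla_\Sigma u\otimes\nabla_\Sigma u$. The main obstacle is to rule out any contribution of the form $(\textrm{smooth coefficient})\cdot\nabla_\Sigma^2 u$ lacking a prefactor of either $u$ or a contracted $\nabla_\Sigma u$, beyond the $\Delta_\Sigma u$ already identified. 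Second derivatives of $u$ enter only through the contraction of $g_\Gamma^{-1}$ against the normal component of $D^2 F_\tau$; the pure $\nabla_\Sigma^2 u$ contribution comes from the identity part of the base inverse $g_\Sigma^{-1}$ and contracts to $\Delta_\Sigma u$, while every other second-derivative contribution explicitly carries either a $u$ factor (from the power series for $P^{-1}$) or a $\nabla_\Sigma u$ factor (from the Sherman--Morrison correction). This gives the claimed decomposition, with $E_1^H$ and $E_2^H$ realized as smooth functions of $(\bx, u,\nabla_\Sigma u,\nabla_\Sigma^2 u)$.

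Finally, the derivative estimates are automatic from the chain rule applied to these explicit formulas. Differentiating $E_1^H$ and $E_2^H$ up to twice in space pulls in at most $\nabla_\Sigma^4 u$, controlled by $\sigma_0$; applying one $\partial_\tau$ together with at most two spatial derivatives pulls in $\partial_\tau\nabla_\Sigma^2 u$, controlled by $\sigma_1$. The resulting constants $C_1^H,C_2^H$ depend only on $\eta$ (through the uniform invertibility of $P$) and on an upper bound for $\sum_{k=0}^3|\nabla_\Sigma^k A_\Sigma|$ (which bounds the intrinsic coefficients appearing in the formula), as asserted.
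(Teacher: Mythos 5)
Your proposal is correct, and it gives the same conclusion, but it reaches it by a genuinely different computation than the paper. You compute $vH_\Gamma = v\,g_\Gamma^{ij}\langle\nu_\Gamma,\partial_i\partial_jF_\tau\rangle$ directly from the second fundamental form of the graph, isolate the $\nabla^2_\Sigma u$ contribution as $g_\Gamma^{ij}u_{ij}\,(\nu_\Sigma\cdot\nu_\Gamma)$, and then bookkeep the Neumann series for $P^{-1}$ and the Sherman--Morrison correction to $g_\Gamma^{-1}$ to show that every departure from $\Delta_\Sigma u + |A_\Sigma|^2u$ is weighted by a factor of $u$ or by a contracted $\nabla_\Sigma u\otimes\nabla_\Sigma u$. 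The paper instead never touches $A_\Gamma$ directly: it applies the area formula $\Psi^*d\mu_\Gamma = \sqrt{\det\cG}\,d\mu_\Sigma$ with $\cG = (\Id-uS)^2 + \nabla_\Sigma u\otimes du$, varies $u\mapsto u+s\varphi$, and reads off $vH_\Gamma$ from the first variation of weighted area; after one integration by parts this produces the closed divergence-form expression
\[
vH_\Gamma = \frac{v^2}{\sqrt{\det\cG}}\Div\bigl(\sqrt{\det\cG}\,\cG^{-1}\nabla u\bigr) + v^2\bigl(\tr\cG^{-1}S - u\,\tr\cG^{-1}S^2\bigr),
\]
which is then simplified by a short ``$\approx$'' calculus. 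The variational route buys you the divergence structure for free --- $\nabla^2_\Sigma u$ can only appear inside the $\Div$, so there is nothing to check --- whereas in your route that fact is exactly the ``main obstacle'' you flag and then argue by tracking where second derivatives enter $\partial_i\partial_jF_\tau$. Your direct approach is more elementary (no first variation needed) and makes the quadratic structure of $E^H_2$ in $\nabla_\Sigma u$ very explicit through Sherman--Morrison, at the cost of a more painstaking verification that no stray $\nabla^2_\Sigma u$ terms survive. Both yield identical estimates: the coefficient functions depend on $\bx$ through $A$ and at most one derivative $\nabla A$, so two more spatial derivatives of $E^H_i$ cost $\nabla^3 A$ and $\nabla^4 u$, which is exactly why the constants depend on $\eta$ and $\sum_{k=0}^3|\nabla^kA|$ and why $\sigma_0,\sigma_1$ bound the right-hand sides.
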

\begin{proof}
The area formula \cite[\S 2.5]{Simon:GMT} implies that for $\tau$ fixed, $\Psi(\bx) = \bx + u(\bx,\tau)\nu_\Sigma(\bx)$ satisfies
\[
\Psi^* d\mu_\Gamma = \sqrt{\det \cG } \, d\mu_\Sigma
\]
where $\cG: T_{\bx}\Sigma \to T_{\bx}\Sigma$ is the linear map
\[
\cG = (\Id - u S)^2 + \nabla_\Sigma u \otimes du. 
\]
It is straightforward to check that $\cG$ is invertible as long as $|u| |A|< 1$. 

Set $u_s = u+s\varphi$ for $\varphi\in C^\infty_c(\Sigma)$ arbitrary. Then (writing $\cG_s$ for $\cG$ with $u$ replaced by $u+s\varphi$) the first variation formula yields
\[
\frac{d}{ds}\Big|_{s=0} \int_\Sigma \sqrt{\det \cG_s } \, d\mu_\Sigma = - \int_\Sigma H_\Gamma (\nu_\Sigma \cdot \nu_\Gamma) \varphi \sqrt{\det \cG} \, d\mu_\Sigma.
\]
On the other hand, we compute 
\[
\frac{d}{ds}\Big|_{s=0} \cG_s = -2  (\Id - u S)  S \varphi+ \nabla \varphi \otimes du + \nabla u \otimes d\varphi,
\]
so
\begin{align*}
\frac{d}{ds}\Big|_{s=0} \sqrt{\det \cG_s} & = \tfrac 12 \tr \left( \cG^{-1} \tfrac{d}{ds}\Big|_{s=0} \cG_s \right)  \sqrt{\det \cG}\\
& = \tr \left( \cG^{-1} \left( \nabla u \otimes d\varphi - (S - u S^2)  \varphi \right)\right)  \sqrt{\det \cG}.
\end{align*} 
Integrating by parts, we thus find 
\begin{equation}\label{eq:mean-curv-graph-div-form}
v H_\Gamma   = \frac{v^2}{\sqrt{\det\cG}} \Div (\sqrt{\det\cG} \, \cG^{-1} \nabla u) + v^2 ( \tr \cG^{-1} S -   \tr \cG^{-1} S^2 u ).
\end{equation}
where we emphasize that all quantities (on the right hand side) are taken along $\Sigma$. 

We now simplify this expression. For notational simplicity, below we will write $f \approx g$ if $f=g + u e_1 + e_2(\nabla u,\nabla u)$ for $e_1,e_2$ satisfying the estimates for $E_1^H,E_2^H$ in the statement of the lemma (i.e., terms we will absorb in to the error terms). Note that $v\approx 1$ and if we abuse the notation slightly to allow for tensor valued expressions, $\cG \approx \Id -2uS$ so $\cG^{-1}\approx \Id + 2uS$. We compute
\begin{align*}
 \tr \cG^{-1} S -   \tr \cG^{-1} S^2 u & =  \tr  \left((1+2u S) S \right) - \tr \left( S^2 u\right) \\
& +  \tr  \left( (\cG^{-1} - \Id - 2u S) S  -  (\cG^{-1} - \Id) S^2 u \right)\\
& =  H + |A|^2 u\\
& +  \tr  \left( (\cG^{-1} - \Id - 2u S_\Sigma) S  -  (\cG^{-1} - \Id) S^2 u \right),
\end{align*}
which implies that
\begin{align*}
v^2 \left( \tr \cG^{-1} S -   \tr \cG^{-1} S^2 u\right) & \approx  H + |A|^2 u. 
\end{align*}
We now consider the divergence term in \eqref{eq:mean-curv-graph-div-form}. Write 
\[
\frac{v^2}{\sqrt{\det\cG}} \Div (\sqrt{\det\cG} \, \cG^{-1} \nabla u) \approx \Div(\cG^{-1}\nabla u) + \frac 12 \nabla \log \det \cG \cdot \nabla u
\]
Since $\nabla u$ enters $\cG$ quadratically, we easily see that $ \nabla \log \det \cG \cdot \nabla u \approx 0$. Finally, we have
\begin{align*}
\Div(\cG^{-1}\nabla u) & \approx \Div \left( (\Id + \nabla u \otimes du)^{-1} \nabla u\right) \\
& = \Div \left( \left( (\Id - \frac{ \nabla u \otimes du}{1+|\nabla u|^2}  \right) \nabla u\right)\\
& = \Div \left( \frac{\nabla u }{1+|\nabla u|^2} \right) \approx \Delta u .
\end{align*}
This completes the proof. 
\end{proof}
\begin{lemma}\label{lemm:expand-xdotnu-app}
The support function along $\Gamma_\tau$ satisfies
\begin{equation}
v(\bx,\tau) \left( \bx_\Gamma \cdot \nu_\Gamma \right) = \bx \cdot \nu_\Sigma + u(\bx,\tau) - \bx^T \cdot \nabla u + u E^{\bx\cdot \nu}
\end{equation}
at $ \bx_\Gamma = \bx + u(\bx,\tau)  \nu_\Sigma(\bx)$, where $E^{\bx\cdot \nu}$ satisfies
\[
|\partial_\tau  E^{\bx\cdot\nu}(\bx,\tau)| \leq C \sigma_1(\bx,\tau) , \qquad  \sum_{k=0}^2 |\nabla^k E^{\bx\cdot\nu}(\bx,\tau)|  \leq C \sigma_0(\bx,\tau)
\]
for $C$ depending only on $\eta$ and an upper bound\footnote{in particular $C$ does \emph{not} depend on $|\bx|$} for $\sum_{k=0}^3 |\nabla^k A|(\bx)$. 
\end{lemma}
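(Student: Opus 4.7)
The plan is a direct algebraic calculation starting from the formula \eqref{eq:app-unit-normal-graph} for $\nu_\Gamma$ proved in the preceding lemma. Substituting $\bx_\Gamma = \bx + u\nu_\Sigma$ and exploiting that $(\Id - uS_\Sigma)^{-1}\nabla_\Sigma u \in T_{\bx}\Sigma$ (since $S_\Sigma$ preserves $T_\bx\Sigma$, so does $(\Id - uS_\Sigma)^{-1}$, and therefore this vector is orthogonal to $\nu_\Sigma$), I would first obtain
\[
v\,(\bx_\Gamma \cdot \nu_\Gamma) \;=\; (\bx + u\nu_\Sigma)\cdot\bigl(-(\Id - uS_\Sigma)^{-1}\nabla_\Sigma u + \nu_\Sigma\bigr) \;=\; \bx\cdot\nu_\Sigma + u - \bx^T \cdot (\Id - uS_\Sigma)^{-1}\nabla_\Sigma u,
\]
where I have also used $\bx\cdot(\Id - uS_\Sigma)^{-1}\nabla_\Sigma u = \bx^T\cdot(\Id - uS_\Sigma)^{-1}\nabla_\Sigma u$ for the same tangency reason.

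Next I would use the Neumann-type identity $(\Id - uS_\Sigma)^{-1} = \Id + u\,S_\Sigma(\Id - uS_\Sigma)^{-1}$ to extract the leading first order term:
\[
\bx^T \cdot (\Id - uS_\Sigma)^{-1}\nabla_\Sigma u \;=\; \bx^T \cdot \nabla_\Sigma u \;+\; u\,\bx^T \cdot S_\Sigma(\Id - uS_\Sigma)^{-1}\nabla_\Sigma u.
\]
This gives exactly the stated identity with
\[
E^{\bx\cdot\nu}(\bx,\tau) \;:=\; -\,\bx^T \cdot S_\Sigma(\Id - uS_\Sigma)^{-1}\nabla_\Sigma u,
\]
so no further algebraic manipulation is needed to reach the main claim. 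The only remaining task is the derivative estimate.

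For the derivative bounds I would apply the product rule to the above formula for $E^{\bx\cdot\nu}$. Since $|u|\cdot|S_\Sigma| < \eta < 1$, the operator $(\Id - uS_\Sigma)^{-1}$ and its $\bx$- and $\tau$-derivatives are controlled in terms of $\eta$ and pointwise bounds on $|\nabla^k A|$; differentiation of $\bx^T = \bx - (\bx\cdot\nu_\Sigma)\nu_\Sigma$ along $\Sigma$ produces only factors of $\Id$ and $S_\Sigma$ (via $\nabla_\Sigma\nu_\Sigma = -S_\Sigma$); and every derivative landing on $\nabla_\Sigma u$ or on $u$ produces a factor bounded by an appropriate term in $\sigma_0$, while each $\tau$-derivative that lands on $u$ contributes at worst the $\sigma_1$ bound. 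Collecting these and noting that there is no obstruction to commuting the operations, one gets the claimed pointwise estimates with a constant depending only on $\eta$ and $\sum_{k=0}^{3}|\nabla^k A|(\bx)$. The hardest part is essentially just careful bookkeeping of the product rule; there is no analytic difficulty, only the tensorial arithmetic of tracking how $\bx^T$, $S_\Sigma$, and $(\Id - uS_\Sigma)^{-1}$ interact under differentiation on $\Sigma$.
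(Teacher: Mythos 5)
Your algebraic reduction is correct up to the point where you define
\[
E^{\bx\cdot\nu} := -\,\bx^T \cdot S_\Sigma(\Id - uS_\Sigma)^{-1}\nabla_\Sigma u,
\]
but the subsequent "bookkeeping of the product rule" step does not give the claimed estimates, and the gap is exactly what the footnote in the statement is warning about: the constant $C$ must not depend on $|\bx|$. As written, your $E^{\bx\cdot\nu}$ contains a bare factor of $\bx^T$, whose Euclidean norm $|\bx^T|$ is generally of order $|\bx|$ and unbounded along a noncompact shrinker (in particular along the cylindrical and conical ends central to this paper). So the zeroth-order term of your expression is not controlled by $\eta$ and $\sum_{k\le 3}|\nabla^k A|$ alone; no amount of careful differentiation will fix a coefficient that is already unbounded at the pointwise level.

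The missing ingredient is the shrinker equation. Differentiating $H_\Sigma + \tfrac12 \bx\cdot\nu_\Sigma = 0$ along $\Sigma$ yields the identity $S_\Sigma(\bx^T) = 2\nabla_\Sigma H_\Sigma$. Since $S_\Sigma$ is self-adjoint and it appears contracted against $\bx^T$ in your error term (in the form $\bx^T\cdot S_\Sigma(\Id - uS_\Sigma)^{-1}\nabla u = S_\Sigma(\bx^T)\cdot(\Id - uS_\Sigma)^{-1}\nabla u$), the unbounded vector $\bx^T$ can be replaced by $2\nabla_\Sigma H_\Sigma$, which is controlled by $|\nabla A_\Sigma|$. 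After this substitution your error term coincides with the paper's $E^{\bx\cdot\nu} = -2\sum_{k\ge 0} u^k\,\nabla H\cdot S^k(\nabla u)$, and the derivative estimates then go through exactly as you outline. So the skeleton of your argument is right, but the substitution via the shrinker identity is the genuine content of this lemma, not an optional simplification; without it the claimed $|\bx|$-uniformity is false.
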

\begin{proof}
Differentiating the shrinker equation $H + \tfrac 12 \bx \cdot \nu_\Sigma = 0$ we find
\[
S(\bx^T) =  2 \nabla H . 
\]
Hence, using \eqref{eq:app-unit-normal-graph} we compute
\begin{align*}
v(\bx,\tau) \left( \bx_\Gamma \cdot \nu_\Gamma \right) & = \bx \cdot \nu_\Sigma  + u - \bx \cdot (\Id - u S)^{-1} \nabla u\\
&  = \bx \cdot \nu_\Sigma  + u - \bx\cdot \nabla u - u \sum_{k=1}^\infty u^{k-1} \bx \cdot S^k(\nabla u)\\
&  = \bx \cdot \nu_\Sigma  + u - \bx\cdot \nabla u - u \sum_{k=0}^\infty u^{k} S(\bx^T) \cdot S^k(\nabla u)\\
&  = \bx \cdot \nu_\Sigma  + u - \bx^T \cdot \nabla u - 2 u \sum_{k=0}^\infty u^{k} \nabla H \cdot S^k(\nabla u).
\end{align*}
This completes the proof. 
\end{proof}
\begin{corollary}\label{coro:expand-rescaled-mcf-app}
We have
\begin{align*}
 v(\bx,\tau) (\partial_{\tau}\bx_{\Gamma} - \bH -\tfrac 12 \bx_{\Gamma}) \cdot \nu_{\Gamma} & = \partial_{\tau}u  - {\big (\underbrace{\Delta u - \tfrac 12 \bx^T \cdot \nabla u + ( |A|^2 + \tfrac 12) u}_{=L u} \big) }+ E 
\end{align*}
at $\bx_{\Gamma} = \bx + u(\bx,\tau)\nu_{\Gamma}(\bx)$ for  $E=uE_1 + E_2(\nabla u,\nabla u)$ for $E_1,E_2$ satisfying
\[
|\partial_\tau  E_1(\bx,\tau)| \leq C_1\sigma_1(\bx,\tau), \qquad \sum_{k=0}^2 |\nabla^k E_1(\bx,\tau)|  \leq C_1\sigma_0(\bx,\tau)
\]
and
\[
|\partial_\tau  E_2(\bx,\tau)| \leq C_2(1+\sigma_1(\bx,\tau)),\qquad \sum_{k=0}^2 |\nabla^k E_2(\bx,\tau)|  \leq C_2 (1+ \sigma_0(\bx,\tau))
\]
where $C_1,C_2$ depend only on $\eta$ and an upper bound for $\sum_{k=0}^3 |\nabla^k_\Sigma A|(\bx)$. 

In particular, if $\Gamma_\tau$ is a solution to rescaled mean curvature flow, i.e.,
\[
(\partial_\tau \bx)^\perp = \bH + \tfrac 12\bx^\perp
\]
then 
\begin{equation}\label{eq:error-term-linearize}
\partial_\tau u = Lu + E
\end{equation}
for $E$ as above. 
\end{corollary}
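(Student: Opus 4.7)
The plan is to combine Lemma \ref{lemm:expand-H-app} and Lemma \ref{lemm:expand-xdotnu-app} with the elementary computation of $v(\partial_\tau \bx_\Gamma)\cdot \nu_\Gamma$, and then invoke the shrinker equation for $\Sigma$ to cancel the zero-th order terms. There is essentially no serious obstacle—this is a bookkeeping step that packages the already-proven expansions into the linearized rescaled mean curvature flow operator.

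First I would handle the time-derivative term. Since $\bx_\Gamma(\bx,\tau)=\bx+u(\bx,\tau)\nu_\Sigma(\bx)$ and $\nu_\Sigma$ is $\tau$-independent, we have $\partial_\tau \bx_\Gamma=(\partial_\tau u)\,\nu_\Sigma$. Combining this with \eqref{eq:app-defi-vdotv-v}, namely $v=(\nu_\Sigma\cdot\nu_\Gamma)^{-1}$, gives
\[
v(\partial_\tau \bx_\Gamma)\cdot \nu_\Gamma = v(\partial_\tau u)(\nu_\Sigma\cdot\nu_\Gamma)=\partial_\tau u.
\]

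Next, writing $\bH_\Gamma=H_\Gamma\,\nu_\Gamma$ and applying Lemma \ref{lemm:expand-H-app},
\[
v\,\bH_\Gamma\cdot \nu_\Gamma = vH_\Gamma = H_\Sigma+\Delta_\Sigma u+|A|^2 u+E^H,
\]
while Lemma \ref{lemm:expand-xdotnu-app} yields
\[
\tfrac{1}{2}\,v(\bx_\Gamma\cdot \nu_\Gamma)=\tfrac{1}{2}\bx\cdot\nu_\Sigma+\tfrac{1}{2}u-\tfrac{1}{2}\bx^T\!\cdot\nabla_\Sigma u+\tfrac{1}{2}u\,E^{\bx\cdot\nu}.
\]
Adding these two identities and using the self-shrinker equation $H_\Sigma+\tfrac{1}{2}\bx\cdot \nu_\Sigma=0$ (the scalar version of \eqref{eq:shrinker-def}), the terms $H_\Sigma$ and $\tfrac{1}{2}\bx\cdot\nu_\Sigma$ drop out, leaving
\[
v(\bH_\Gamma+\tfrac{1}{2}\bx_\Gamma)\cdot \nu_\Gamma = \Delta_\Sigma u-\tfrac{1}{2}\bx^T\!\cdot\nabla_\Sigma u+\bigl(|A|^2+\tfrac{1}{2}\bigr)u + E^H+\tfrac{1}{2}u\,E^{\bx\cdot\nu} = Lu-E,
\]
where we have set $E:=-E^H-\tfrac{1}{2}u\,E^{\bx\cdot\nu}$. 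Subtracting the previous display gives the claimed identity
\[
v(\partial_\tau \bx_\Gamma-\bH-\tfrac{1}{2}\bx_\Gamma)\cdot \nu_\Gamma=\partial_\tau u-Lu+E.
\]

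It remains to verify that $E$ has the stated structural form $E=uE_1+E_2(\nabla_\Sigma u,\nabla_\Sigma u)$ with the asserted bounds on $E_1,E_2$. This is immediate: Lemma \ref{lemm:expand-H-app} already writes $E^H=uE^H_1+E^H_2(\nabla_\Sigma u,\nabla_\Sigma u)$ with $E^H_1,E^H_2$ satisfying the required estimates in terms of $\sigma_0,\sigma_1$, and Lemma \ref{lemm:expand-xdotnu-app} gives the analogous estimates on $E^{\bx\cdot\nu}$. The term $\tfrac{1}{2}u\,E^{\bx\cdot\nu}$ is purely of ``$uE_1$'' type, so it is absorbed into $E_1$ without disturbing the bound structure. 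Finally, if $\Gamma_\tau$ solves rescaled mean curvature flow, then $(\partial_\tau\bx_\Gamma-\bH-\tfrac{1}{2}\bx_\Gamma)\cdot \nu_\Gamma=0$ by definition, and the identity reduces to \eqref{eq:error-term-linearize}.
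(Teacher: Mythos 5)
Your argument is correct and follows the same route the paper takes: compute the time-derivative term using $v=(\nu_\Sigma\cdot\nu_\Gamma)^{-1}$, substitute the expansions of $vH_\Gamma$ and $v(\bx_\Gamma\cdot\nu_\Gamma)$ from Lemmas \ref{lemm:expand-H-app} and \ref{lemm:expand-xdotnu-app}, and cancel the zeroth-order terms with the shrinker equation $H_\Sigma+\tfrac12\bx\cdot\nu_\Sigma=0$. If anything, your writeup is slightly more careful than the paper's (you keep track of the $v$ factor and of the coefficient $\tfrac12 u$ on $E^{\bx\cdot\nu}$ explicitly, whereas the paper's displayed computation drops these), but the substance is identical.
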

\begin{proof}
Recall that our convention is that $\bH = H \nu_{\Gamma} $. Thus, when combined with Lemmas \ref{lemm:expand-H-app} and \ref{lemm:expand-xdotnu-app}, we compute
\begin{align*}
& (\partial_{\tau}\bx_{\Gamma} - \bH - \tfrac 12 \bx_{\Gamma}) \cdot \nu_{\Gamma} \\
& = (\nu_{\Sigma}\cdot \nu_{\Gamma}) \partial_{\tau}u - H - \tfrac 12 \bx_{\Gamma}\cdot\nu_{\Gamma} \\
& =  - \underbrace{( H_{\Sigma}(\bx) + \tfrac 12 \bx\cdot \nu_{\Sigma})}_{=0}  + \partial_{\tau}u - (\Delta_{\Sigma} u + |A_{\Sigma}|^{2}u - \tfrac 12 \bx^{T}\cdot \nabla u + \tfrac 12 u  ) + E^{\bx\cdot\nu} - E^{H} . 
\end{align*}
Combined with the estimates from Lemmas \ref{lemm:expand-H-app} and \ref{lemm:expand-xdotnu-app}, this completes the proof.  
\end{proof}

Finally, we recall the improved estimates that hold along an asymptotically conical end $\Sigma_\textrm{con}$, proven in \cite[Lemma 3.6]{CCMS:generic1} .
\begin{lemma}\label{lemm:conical-end-decomp-error-lin}
Along an asymptotically conical end $\Sigma = \Sigma_\textnormal{con}$ there is $\eta,C$ depending on $\Sigma$ so that if
\[
|\bx|^{-1} |u(\bx)| + |\nabla u(\bx)| + |\bx| |\nabla^2u| \leq \eta 
\]
then the error term $E$ in \eqref{eq:error-term-linearize} decomposes as
\begin{multline} \label{eq:linearized.equation.error.decomposition.con}
		E(u)(\mathbf{x}) = u(\mathbf{x}) E_1(\mathbf{x}, u(\mathbf{x}), \nabla u(\mathbf{x}), \nabla^2 u(\mathbf{x})) \\
		+  \nabla u(\mathbf{x}) \cdot \mathbf{E}_2(\mathbf{x}, u(\mathbf{x}), \nabla u(\mathbf{x}), \nabla^2 u(\mathbf{x})),
	\end{multline}
	where $E_1$, $\mathbf{E}_2$ are smooth functions on the following domains
\begin{align*}
	&E_1(\mathbf{x}, \cdot, \cdot, \cdot) : \RR \times T_{\mathbf{x}} \Sigma \times \operatorname{Sym}(T_{\mathbf{x}} \Sigma \otimes T_{\mathbf{x}} \Sigma) \to \RR,\\
	&\mathbf{E}_2(\mathbf{x}, \cdot, \cdot, \cdot) : \RR \times T_{\mathbf{x}} \Sigma \times \operatorname{Sym}(T_{\mathbf{x}} \Sigma \otimes T_{\mathbf{x}} \Sigma) \to T_{\mathbf{x}} \Sigma.
\end{align*}
Moreover, we can estimate:
	\begin{align*} 
		& r(\mathbf{x}) ^{2+j-\ell} |\nabla_{\mathbf{x}}^i \nabla_z^j \nabla_{\mathbf{q}}^k \nabla_{\mathbf{A}}^\ell E_1(\mathbf{x}, z, \mathbf{q}, \mathbf{A})| \nonumber \\
		& \qquad \qquad \qquad \leq C( r(\mathbf{x})^{-1} |z| + |\mathbf{q}| + r(\mathbf{x}) |\mathbf{A}|)^{\max\{0, 1-j-k-\ell\}}, \\ 
		& r(\mathbf{x}) ^{1+j-\ell} |\nabla_{\mathbf{x}}^i \nabla_z^j \nabla_{\mathbf{q}}^k \nabla_{\mathbf{A}}^\ell \mathbf{E}_2(\mathbf{x}, z, \mathbf{q}, \mathbf{A})| \nonumber \\
		& \qquad \qquad \qquad \leq C( r(\mathbf{x})^{-1} |z| + |\mathbf{q}| + r(\mathbf{x}) |\mathbf{A}|)^{ \max\{0, 1-j-k-\ell\}} .
	\end{align*}
	In the above, $C = C(\Sigma,i,j,k,\ell)$, and $i,j,k,\ell \geq 0$.
\end{lemma}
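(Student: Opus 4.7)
\medskip

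\noindent\textbf{Proof plan.} The starting point is Corollary \ref{coro:expand-rescaled-mcf-app}, which already writes $E(u) = u E_1(\bx,\tau) + E_2(\nabla u, \nabla u)$ with coefficients smoothly depending on $(u,\nabla u, \nabla^2 u)$ and on derivatives of the shape operator $S_\Sigma$ up to order three. My plan is to retrace the derivations in Lemmas \ref{lemm:expand-H-app} and \ref{lemm:expand-xdotnu-app}, keeping track of the powers of $r(\bx)=|\bx|$ that appear, and then to re-package the quadratic gradient term as $\nabla u \cdot \mathbf{E}_2$. The key structural input is the asymptotically conical geometry: since $\sqrt{0}\Sigma_\textnormal{con}$ is a smooth cone and $r\mapsto r^{-1}\Sigma_\textnormal{con}$ converges smoothly, we have the standard scale-invariant estimates
\begin{equation*}
|\nabla_\Sigma^k A_\Sigma|(\bx) \leq C\, r(\bx)^{-1-k}, \qquad |\nabla_\Sigma^k H_\Sigma|(\bx) \leq C\, r(\bx)^{-1-k}
\end{equation*}
for $k\geq 0$, valid on $\Sigma_\textnormal{con}$ with $C=C(\Sigma,k)$. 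These will provide the negative powers of $r$ in the weighted bounds for $E_1, \mathbf{E}_2$.

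\smallskip

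The first step is to revisit the proof of Lemma \ref{lemm:expand-H-app}. Expanding $\cG^{-1}$, $\sqrt{\det\cG}$, and $v$ as power series in $u S_\Sigma$ and $\nabla u \otimes du$, one sees that every remainder term after extracting the linear part $Lu$ in \eqref{eq:mean-curv-graph-div-form} comes with at least one extra factor of either $u\,|A_\Sigma|$ or $|\nabla u|^2$. The conical bounds $|\nabla^k A_\Sigma| \lesssim r^{-1-k}$, together with the hypothesis $r^{-1}|u| + |\nabla u| + r|\nabla^2 u| \leq \eta$, make each product of shape operator factors contribute a canonical weight: namely, a term of the schematic form $(u\cdot \text{stuff})$ gets the weight $r^{-2}$ while a term of the form $(\nabla u \cdot \nabla u \cdot \text{stuff})$ gets the weight $r^{-1}\cdot r^{-1}\cdot r^{-?}$. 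Similarly, for $E^{\bx\cdot\nu}$ in Lemma \ref{lemm:expand-xdotnu-app}, the expression $u\sum_k u^k \nabla H \cdot S^k(\nabla u)$ has the decisive $\nabla H$ factor of order $r^{-2}$. Tracking these weights through the algebra produces, after writing the $\nabla u \otimes \nabla u$ terms as $\nabla u \cdot \mathbf{E}_2$, the decomposition \eqref{eq:linearized.equation.error.decomposition.con}, where the bounds on $E_1, \mathbf{E}_2$ (before taking any $\nabla_z, \nabla_\mathbf{q}, \nabla_\mathbf{A}$ derivatives) read
\begin{equation*}
r(\bx)^{2}|E_1| + r(\bx)|\mathbf{E}_2| \leq C(r^{-1}|u|+|\nabla u|+r|\nabla^2 u|).
\end{equation*}
This corresponds to the $j=k=\ell=0$, $i=0$ case, where $\max\{0,1-j-k-\ell\}=1$.

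\smallskip

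The second step is to check the weighted estimates on all mixed derivatives $\nabla_\bx^i \nabla_z^j \nabla_\mathbf{q}^k \nabla_\mathbf{A}^\ell$. The spatial derivatives $\nabla_\bx^i$ only hit the coefficients $S_\Sigma,\nabla S_\Sigma,\ldots$ and each differentiation removes one power of $r$, producing the prefactor $r^{-2-i+\ell}$ for $E_1$ and $r^{-1-i+\ell}$ for $\mathbf{E}_2$ in the final estimate (the $+\ell$ arises because differentiation in $\mathbf{A}$ is dual to a factor of $\nabla^2 u$, which has natural weight $r^{-1}$ per our hypothesis). The derivatives in $z$ produce an extra factor of $r^{-2}$ each (since $u$ enters multiplicatively with coefficients of weight $r^{-2}$ via $S_\Sigma^2$), which is consistent with the claimed $r^{-2-j}$ growth in the $z$-direction once combined with $i$-derivatives. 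Finally, the truncation in the smallness exponent $\max\{0,1-j-k-\ell\}$ reflects the fact that $E_1$ and $\mathbf{E}_2$ are at worst linear in their arguments $(z,\mathbf{q},\mathbf{A})$ to leading order: after taking one derivative in any of these three variables the linear factor disappears and the bound is scale-invariant; higher derivatives yield the same scale-invariant bound since the leading contributions are linear polynomials in $(z,\mathbf{q},\mathbf{A})$ with smooth coefficients.

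\smallskip

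The main obstacle, and the part requiring the most care, will be the exponent bookkeeping in the mixed derivative estimate. It is tempting to simply differentiate the schematic formula, but one must verify that each increment in $j,k$, or $\ell$ indeed reduces the total polynomial degree in $(z,\mathbf{q},\mathbf{A})$ by exactly one (until it hits zero), which hinges on the fact that \emph{before any cancellation} the expressions for $E^H$ and $E^{\bx\cdot\nu}$ are real-analytic in $(u,\nabla u, \nabla^2 u)$ with rational coefficients in $(u S_\Sigma, \nabla u \otimes du)$, so that after isolating the linear-in-$u$ and $\nabla u$ dependent pieces, the smoothness in $(z,\mathbf{q},\mathbf{A})$ and the quadratic truncation are automatic. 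Since the assertion is precisely \cite[Lemma 3.6]{CCMS:generic1} in the earlier paper of the authors with Mantoulidis, I would cite that reference for the detailed bookkeeping, adding only the observation that the derivation here is unchanged because the conical end $\Sigma_\textnormal{con}$ is treated identically to that paper (no cylindrical-end input is used).
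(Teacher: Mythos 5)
Your proposal takes essentially the same route as the paper: the paper does not give a proof of this lemma at all, but simply recalls it as proven in \cite[Lemma 3.6]{CCMS:generic1}, which is exactly where you ultimately land. The added sketch tracing through the Appendix \ref{app:graph-shrinker} expansions and using the conical scale-invariant bounds $|\nabla^k A_\Sigma| \lesssim r^{-1-k}$ is a sensible unpacking of what the cited lemma does.

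One small slip in your bookkeeping: in discussing derivatives in $z$, you say each $\nabla_z$ produces ``an extra factor of $r^{-2}$,'' but the stated estimate gives $|\nabla_z^j E_1| \lesssim r^{-2-j}$ (for $i=k=\ell=0$, $j\geq 1$), i.e.\ each $z$-derivative costs $r^{-1}$, not $r^{-2}$. The reason is that differentiating in $z$ simply strips one factor of $u$ from a term of the schematic form $r^{-2}\cdot(\text{polynomial in }r^{-1}u,\nabla u,r\nabla^2 u)$; since $r^{-1}u$ is the dimensionless variable, $\partial_z$ contributes $r^{-1}$, not $r^{-2}$. Your conclusion ``$r^{-2-j}$'' is correct, so this is only a misstatement in the parenthetical reasoning, and it is superseded by the deferral to \cite[Lemma 3.6]{CCMS:generic1} in any case.
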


Along an asymptotically cylindrical end the estimates can be extended as follows.

\begin{lemma}\label{lemm:cylindrical-end-decomp-error-lin}
Along the core $\Sigma \cap B_R(\bOh)$ and every asymptotically cylindrical end $\Sigma = \Sigma_\textnormal{cyl}$ there is $\eta,C$ depending on $\Sigma$ so that if $ \|u\|_{C^2} \leq \eta$ as well as $|u| |A| \leq \eta$ then the error term $E$ in \eqref{eq:error-term-linearize} decomposes as
\begin{multline} \label{eq:linearized.equation.error.decomposition.cyl}
		E(u)(\mathbf{x}) = u(\mathbf{x}) E_1(\mathbf{x}, u(\mathbf{x}), \nabla u(\mathbf{x}), \nabla^2 u(\mathbf{x})) \\
		+  \nabla u(\mathbf{x}) \cdot \mathbf{E}_2(\mathbf{x}, u(\mathbf{x}), \nabla u(\mathbf{x}), \nabla^2 u(\mathbf{x})),
	\end{multline}
	where $E_1$, $\mathbf{E}_2$ are smooth functions as in Lemma \ref{lemm:conical-end-decomp-error-lin}, where we can estimate:
	\begin{align*} 
		&  |\nabla_{\mathbf{x}}^i \nabla_z^j \nabla_{\mathbf{q}}^k \nabla_{\mathbf{A}}^\ell E_1(\mathbf{x}, z, \mathbf{q}, \mathbf{A})|  \leq C( |z| + |\mathbf{q}| + |\mathbf{A}|)^{\max\{0, 1-j-k-\ell\}}, \\ 
		& |\nabla_{\mathbf{x}}^i \nabla_z^j \nabla_{\mathbf{q}}^k \nabla_{\mathbf{A}}^\ell \mathbf{E}_2(\mathbf{x}, z, \mathbf{q}, \mathbf{A})|  \leq C(  |z| + |\mathbf{q}| + |\mathbf{A}|)^{ \max\{0, 1-j-k-\ell\}} .
	\end{align*}
	In the above, $C = C(\Sigma,i,j,k,\ell)$, and $i,j,k,\ell \geq 0$.
\end{lemma}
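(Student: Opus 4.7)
The plan is to adapt the proof of Lemma \ref{lemm:conical-end-decomp-error-lin} (i.e., \cite[Lemma 3.6]{CCMS:generic1}), observing that along the core $\Sigma\cap B_R(\bOh)$ and along any asymptotically cylindrical end the geometry of $\Sigma$ is \emph{uniformly bounded}: by Lemma \ref{lem:ends-decomposition} together with the smoothness of $\hat\Sigma\times\RR$, we have $\sum_{k=0}^{m} |\nabla_\Sigma^k A_\Sigma|(\bx)\leq C(m,\Sigma)$ uniformly on these regions. Consequently, all weights of the form $r(\bx)^{\pm\ell}$ appearing in the conical estimates can simply be dropped.

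First, I would extract the explicit form of $E_1$ and $\mathbf{E}_2$ from the proof of Corollary \ref{coro:expand-rescaled-mcf-app}. Combining Lemmas \ref{lemm:expand-H-app} and \ref{lemm:expand-xdotnu-app}, the error $E$ can be written as
\[
E(u)(\bx) = u(\bx)\, \Phi_1\!\bigl(\bx, u(\bx), \nabla u(\bx), \nabla^2 u(\bx)\bigr) + \nabla u(\bx)\cdot \boldsymbol{\Phi}_2\!\bigl(\bx, u(\bx), \nabla u(\bx), \nabla^2 u(\bx)\bigr),
\]
where $\Phi_1,\boldsymbol{\Phi}_2$ are smooth functions of their arguments, polynomial in $u,\nabla u,\nabla^2 u$ with coefficients built from $g_\Sigma$, $A_\Sigma$, $\nabla_\Sigma A_\Sigma$, $\bx$, and the Neumann series expansion of $(\Id - uS_\Sigma)^{-1}$ (which converges once $|u||A_\Sigma|\leq \eta<1$). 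The quadratic nature of $E_2$ in $\nabla u$ in Corollary \ref{coro:expand-rescaled-mcf-app} is then split as one factor of $\nabla u$ and one factor absorbed into $\boldsymbol{\Phi}_2$, exactly as in the conical case. This yields the decomposition \eqref{eq:linearized.equation.error.decomposition.cyl} with $E_1 := \Phi_1$ and $\mathbf{E}_2 := \boldsymbol{\Phi}_2$.

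Next, I would verify the derivative estimates. Since along the core and along each cylindrical end $\Sigma$ has uniformly bounded geometry and uniformly bounded $\bx^T$ (intrinsically, $\bx^T$ is bounded on the core; along a cylindrical end modeled on $\hat\Sigma\times\RR$, the tangential component of $\bx$ grows at most linearly in the cylinder direction, but this linear factor is harmless because derivatives $\nabla_{\mathbf{x}}^i$ with $i\geq 1$ produce genuinely bounded quantities, and more importantly the $\bx^T$ dependence enters \emph{only} through the scalar product $\bx^T\cdot\nabla u$ which is absorbed into $L u$, not into $E$), the constants appearing in differentiating $\Phi_1$ and $\boldsymbol{\Phi}_2$ any number of times in $\bx$ are uniformly bounded. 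Differentiation in $(z,\mathbf{q},\mathbf{A})$ kills one factor of the polynomial dependence on $u,\nabla u,\nabla^2 u$ each time, which is exactly the content of the exponent $\max\{0, 1-j-k-\ell\}$ on the right-hand side. Because we no longer have weights $r(\bx)^{d}$ and coefficients decay at a scale-invariant rate, there are no weights on either side of the claimed estimates.

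The only point that requires any care is the $\bx^T$-term. A priori, the expression for $E^{\bx\cdot\nu}$ in Lemma \ref{lemm:expand-xdotnu-app} contains a factor of $\bx$; along a cylindrical end the component of $\bx$ in the $\RR$ direction is unbounded. However, inspecting the proof of Lemma \ref{lemm:expand-xdotnu-app}, the only place $\bx$ enters the \emph{error} (after cancellation using the shrinker equation) is through $2u \sum_{k\geq 0} u^k \nabla_\Sigma H \cdot S^k(\nabla u)$, which involves only \emph{intrinsic} quantities on $\Sigma$ (namely $\nabla_\Sigma H$, $S$, $\nabla u$, and powers of $u$); the explicit $\bx^T\cdot\nabla u$ has already been separated out and is part of $L u$. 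Since $\nabla_\Sigma H$ and $S$ are uniformly bounded (with uniformly bounded derivatives) on the core and on cylindrical ends, $\Phi_1$ and $\boldsymbol{\Phi}_2$ admit the stated uniform estimates. This is the only conceptual step—once it is checked, the remaining estimates follow by routine multivariable chain rule bookkeeping identical to the conical argument, with all weight exponents set to zero.
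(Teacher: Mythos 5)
Your proposal is correct and takes essentially the same route as the paper. The paper's own proof is a three-line reduction: split $E$ into $E^H$ and $E^{\bx\cdot\nu}$, handle $E^H$ exactly as in the conical \cite[Lemma 3.6]{CCMS:generic1} with all weight exponents replaced by zero (since $\sum_k|\nabla^k A_\Sigma|$ is \emph{uniformly} bounded on the core and on cylindrical ends rather than merely scale-invariantly bounded), and observe—as you do—that in $E^{\bx\cdot\nu}$ the shrinker identity $S(\bx^T)=2\nabla_\Sigma H$ from Lemma \ref{lemm:expand-xdotnu-app} eliminates the a priori troublesome linear growth of $\bx^T$, leaving only intrinsic bounded data $\nabla_\Sigma H$, $S$, $u$, $\nabla u$ in the error. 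You correctly identify this as the only nontrivial point, and the rest is the same bookkeeping.
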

\begin{proof}
 As in the proof of \cite[Lemma 3.6]{CCMS:generic1} one can split the error into $E^H(u)$ and $E^{\bx \cdot \nu}(u)$. The estimate for the $E^H(u)$ term is as in \cite[Lemma 3.6]{CCMS:generic1}, whereas for the $E^{\bx \cdot \nu}(u)$ term we can argue as in the proof of Lemma \ref{lemm:expand-xdotnu-app}.
\end{proof}

We also need to understand the linearization of the shrinker mean curvature of two graphs over $\Sigma$, relative to each other and relative to the base $\Sigma$.

\begin{lemma} \label{lemm:relative-shrinker-mean-curvature} For $\delta < \tfrac 12 (\sup_\Omega |A_\Sigma|)^{-1}$ and  $u_i \in C^\infty(\Sigma), i = 0,1$ with 
\begin{equation}\label{eq:C2-bound}
 |u_i(\bx)|+ |\nabla_\Sigma u_i(\bx)| + |\nabla^2_\Sigma u_i(\bx) | + |\nabla^3_\Sigma u_i(\bx) | \leq \delta
\end{equation}
for all $\bx \in \Sigma$. Letting
 $$\Gamma_i:= \{ \bx + u_i(\bx) \nu_\Sigma(\bx): \bx \in \Sigma\}$$
 then denoting $w =u_1-u_0$ and $v_i:= (\nu_{\Gamma_i} \cdot \nu_\Sigma)^{-1}$ there exists $C=C(\sup_{\Sigma} |A_{\Sigma}| + |\nabla A_{\Sigma}|+ |\nabla^2 A_{\Sigma}|)$ such that
 \begin{equation*}
  v_1\Big(H_{\Sigma_1} + \tfrac 12 \bx_{\Sigma_1} \cdot \nu_{\Sigma_1}\Big) - v_0\Big(H_{\Sigma_0}+ \tfrac 12\bx_{\Sigma_0}\cdot \nu_{\Sigma_0}\Big)  
  = L_\Sigma w + E^w
 \end{equation*}
 where $\bx_{\Sigma_i} = \bx + u_i(\bx)\nu_\Sigma(\bx)$ and the error term $E$ satisfies
 $$ E^w(\bx) = w(\bx) F(\bx) + \nabla w(\bx) \cdot \bF(\bx) + \nabla^2 w(\bx) \cdot \mathcal{F}(\bx) \, ,$$
 with the estimate
 $$|F| + |\bF| + |\cF| + |\nabla \cF| \leq C\delta$$
 for all $\bx \in \Sigma$. Furthermore, under the assumption that $\Sigma$ is a properly embedded shrinker, it holds
\begin{equation} \label{eq:relative-estimate-ibp-hits-gaussian}\left|\int_\Sigma w (\bx^T\otimes \nabla_\Sigma w) \cdot \cF\, \rho\, d\cH^n\right| \leq C \delta \|w\|^2_{W,1}\, .\end{equation}
 In the case $\Sigma_\textnormal{con} \subset \Sigma$, all the above statements remain the same if along the conical ends $\Sigma_\textnormal{con}$ one replaces \eqref{eq:C2-bound} with the weaker assumption
 $$ \|u_i\|_{3,\alpha;\Sigma_\textnormal{con}}^{(1)} \leq \delta$$
 for $i=1,2$.
 \end{lemma}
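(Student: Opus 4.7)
The plan is to reduce the statement to a difference of two instances of the static ($\partial_\tau = 0$) version of Corollary \ref{coro:expand-rescaled-mcf-app} and then perform a fundamental-theorem-of-calculus interpolation between $u_0$ and $u_1$.

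First, for $i=0,1$, Corollary \ref{coro:expand-rescaled-mcf-app} applied to the time-independent graph $\Gamma_i$ yields
\[
v_i\bigl(H_{\Sigma_i} + \tfrac12 \bx_{\Sigma_i}\cdot\nu_{\Sigma_i}\bigr) = L_\Sigma u_i - \cE(u_i),
\]
where the inspection of the proofs of Lemmas \ref{lemm:expand-H-app}, \ref{lemm:expand-xdotnu-app} shows that $\cE$ is, along $\Sigma$, a smooth function $\cE(\bx, u_i(\bx), \nabla u_i(\bx), \nabla^2 u_i(\bx))$ whose derivatives are controlled in terms of $\eta$ and $\|A_\Sigma\|_{C^3(\Sigma)}$. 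Subtracting the two identities at $i=0,1$ gives
\[
v_1\bigl(H_{\Sigma_1} + \tfrac12 \bx_{\Sigma_1}\cdot\nu_{\Sigma_1}\bigr) - v_0\bigl(H_{\Sigma_0} + \tfrac12 \bx_{\Sigma_0}\cdot\nu_{\Sigma_0}\bigr) = L_\Sigma w - \bigl[\cE(u_1)-\cE(u_0)\bigr],
\]
so that $E^w = -\bigl[\cE(u_1)-\cE(u_0)\bigr]$.

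Next I would interpolate by setting $u_s := u_0 + sw$ and writing
\[
\cE(u_1)-\cE(u_0) = \int_0^1 \partial_s \cE(u_s)\, ds = \int_0^1 \bigl[a_0(s,\bx)\, w + a_1(s,\bx)\cdot\nabla w + a_2(s,\bx)\cdot\nabla^2 w\bigr] ds,
\]
where $a_j(s,\bx) = \partial_{p_j}\cE(\bx, u_s, \nabla u_s, \nabla^2 u_s)$ (with $p_0,p_1,p_2$ denoting the $u,\nabla u, \nabla^2 u$ slots). Because $\cE$ vanishes together with $\partial_{p_j}\cE$ at $(u,\nabla u,\nabla^2 u)=(0,0,0)$---this is the content of the decomposition $\cE=u E_1 + E_2(\nabla u,\nabla u)$ coming from Lemmas \ref{lemm:expand-H-app} and \ref{lemm:expand-xdotnu-app}---one checks $|a_j(s,\bx)|\leq C\delta$ uniformly in $s$. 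Setting $F := -\int_0^1 a_0\, ds$, $\mathbf F := -\int_0^1 a_1\, ds$, and $\cF := -\int_0^1 a_2\, ds$ yields the claimed decomposition of $E^w$ with $|F|+|\mathbf F|+|\cF| \leq C\delta$.

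The estimate on $|\nabla \cF|$ is the main technical point: differentiating $a_2(s,\bx) = \partial_{p_2}\cE(\bx, u_s, \nabla u_s, \nabla^2 u_s)$ in $\bx$ produces, via chain rule, a term $(\partial_{p_2}\partial_{p_2}\cE)\cdot \nabla^3 u_s$, and this is precisely why the $|\nabla^3 u_i|\leq \delta$ part of the hypothesis \eqref{eq:C2-bound} is needed. The remaining terms (involving $\partial_\bx$, or $\nabla u_s$, $\nabla^2 u_s$) are bounded by $C\delta$ directly. Along an asymptotically conical end, one replaces the pointwise bounds by the weighted estimates of Lemma \ref{lemm:conical-end-decomp-error-lin}; the scaling of the weights absorbs the powers of $|\bx|$ produced by derivatives of the ambient coefficients, and the weighted $C^{3,\alpha}$ hypothesis $\|u_i\|_{3,\alpha;\Sigma_\textnormal{con}}^{(1)}\leq \delta$ plays the role of the $C^3$ bound from \eqref{eq:C2-bound}.

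For the integrated estimate \eqref{eq:relative-estimate-ibp-hits-gaussian}, the key input is the weighted Sobolev-type inequality on a properly embedded shrinker,
\[
\int_\Sigma w^2 |\bx^T|^2 \rho\, d\cH^n \leq C\, \|w\|_{W,1}^2,
\]
which follows by testing $0 = \int_\Sigma \Div_\Sigma(w^2 \bx^T\rho)\, d\cH^n$, using $\Div_\Sigma \bx^T = n + H(\bx\cdot\nu) \leq n$ (by the shrinker equation), $\nabla\rho = -\tfrac12\bx^T\rho$, and absorbing via Cauchy--Schwarz. Given this, $|(\bx^T\otimes \nabla w)\cdot\cF|\leq C\delta\, |\bx^T|\,|\nabla w|$ and a single Cauchy--Schwarz yield
\[
\Bigl|\int_\Sigma w\,(\bx^T\otimes\nabla w)\cdot\cF\, \rho\Bigr| \leq C\delta \Bigl(\int w^2|\bx^T|^2\rho\Bigr)^{1/2}\Bigl(\int|\nabla w|^2\rho\Bigr)^{1/2} \leq C\delta\, \|w\|_{W,1}^{2}.
\]
The hardest step is the $|\nabla\cF|$ bound, which forces the stronger hypothesis of a bound on $\nabla^3 u_i$ (and its weighted analog along the conical ends).
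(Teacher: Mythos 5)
Your proof is correct and follows essentially the same route as the paper: write $v_i(H_{\Sigma_i}+\tfrac12\bx_{\Sigma_i}\cdot\nu_{\Sigma_i}) = L_\Sigma u_i - \cE(u_i)$ via the static case of Corollary~\ref{coro:expand-rescaled-mcf-app}, interpolate $\cE(u_1)-\cE(u_0)$ by the fundamental theorem of calculus along $u_s = u_0 + sw$, read off $F,\bF,\cF$ as integrals of the first partials of $\cE$, and bound them by $C\delta$ using the fact that $\cE$ vanishes to second order at the origin of the $(u,\nabla u,\nabla^2 u)$ slots. The paper organizes the interpolation through the auxiliary decomposition $\cE = u E_1 + \nabla u\cdot\bE_2$ of Lemma~\ref{lemm:cylindrical-end-decomp-error-lin} (hence the long display~\eqref{eq:uniqueness.w.error}), and for the weighted $L^2$ bound~\eqref{eq:relative-estimate-ibp-hits-gaussian} simply cites Ecker's Sobolev inequality $\||\bx|f\|_W^2\le 4n\|f\|_{W,1}^2$; your divergence-theorem derivation of that inequality (with $\Div_\Sigma\bx^T = n - 2H^2 \le n$ after the shrinker equation) is a correct self-contained substitute.

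One inaccuracy worth correcting: you attribute the need for the $|\nabla^3 u_i|\le\delta$ hypothesis to a term $(\partial_{p_2}\partial_{p_2}\cE)\cdot\nabla^3 u_s$ in $\nabla\cF$. But the shrinker mean curvature of a graph is a quasilinear second-order operator, so $\cE$ is \emph{affine} in $p_2 = \nabla^2 u$ and $\partial_{p_2}^2\cE \equiv 0$; in your direct FTC the coefficient $\cF = -\int_0^1\partial_{p_2}\cE\,ds$ depends only on $(\bx,u_s,\nabla u_s)$, so $\nabla\cF$ never produces $\nabla^3 u_s$ at all. The $\nabla^3 u$ hypothesis \emph{does} enter in the paper's version because the factored decomposition $\cE = uE_1 + \nabla u\cdot\bE_2$ need not be linear in $A=\nabla^2 u$ even when $\cE$ is, so the integrands $D_A E_1(\cdots), D_A\bE_2(\cdots)$ in \eqref{eq:uniqueness.w.error} get differentiated in $A$ by the chain rule when one takes $\nabla\cF$. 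Your approach therefore proves a marginally stronger statement (only $\nabla^2 u$ control needed for the $\cF$ estimates); the extra hypothesis is harmless, but the stated reason for it is wrong.
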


\begin{proof} The proof follows similarly as the proof of \cite[Corollary 5.2]{CCMS:generic1}. Denoting 
$$w: =u_1-u_2,\  E^w:= E(u_1)-E(u_2)$$ 
we have
$$ \big(\tfrac{\partial}{\partial \tau} - L \big) w = E^w\, .$$
Using Lemma \ref{lemm:cylindrical-end-decomp-error-lin} and the fundamental theorem of calculus, we obtain as in \cite[(5.11)]{CCMS:generic1}
 \begin{equation} \label{eq:uniqueness.w.error}
 \begin{split}
		E^w 	& = w E_1(\cdot, u_2, \nabla_\Sigma u_2, \nabla_\Sigma^2 u_2) + \nabla_\Sigma w \cdot \mathbf{E}_2(\cdot, u_2, \nabla_\Sigma u_2, \nabla_\Sigma^2 u_2)  \\
			& \quad + \Big[ u_1 \int_0^1 D_z E_1(\cdots) \, dt \Big] w  + \Big[ u_1 \int_0^1 D_{\mathbf{q}} E_1(\cdots) \, dt \Big] \cdot \nabla_\Sigma w  \\
			& \quad + \Big[ u_1 \int_0^1 D_{\mathbf{A}} E_1(\cdots) \, dt \Big] \cdot \nabla^2_\Sigma w + \Big[ \nabla_\Sigma u_1 \cdot \int_0^1 D_z \mathbf{E}_2(\cdots) \, dt \Big] w  \\
			& \quad + \Big[ \nabla_\Sigma u_1 \cdot \int_0^1 D_{\mathbf{q}} \mathbf{E}_2(\cdots) \, dt \Big] \cdot \nabla_\Sigma w\\
			&\quad  + \Big[ \nabla_\Sigma u_1 \cdot \int_0^1 D_{\mathbf{A}} \mathbf{E}_2(\cdots) \, dt \Big] \cdot \nabla^2_\Sigma w,
			\end{split}
	\end{equation}
where, in all six instances, $\cdots$ stands for $(\cdot, u_2 + tw, \nabla_\Sigma u_2 + t \nabla_\Sigma w, \nabla_\Sigma^2 u_2 + t \nabla^2_\Sigma w)$. We note that we can thus write
$$ E^w= w F + \nabla_\Sigma w \cdot \mathbf{F} + \nabla^2_\Sigma w \cdot \mathcal{F}\, .$$
The estimates for $F,\mathbf{F},\mathcal{F},\nabla\mathcal{F}$ then follow from Lemma \ref{lemm:cylindrical-end-decomp-error-lin}.

In case $\Sigma$ is a properly embedded shrinker, recall that Ecker's Sobolev inequality \cite{Ecker:Sobolev} (cf.\ \cite[Proposition 3.9]{ChodoshSchulze}) implies that for $f \in H^1_W(\Sigma)$
$$ \| |\bx| f  \|^2_W \leq 4n \| f\|_{W,1}^2\, ,$$
and we can thus estimate
$$ \left| \int_\Sigma w\, (\bx^T \otimes \nabla_\Sigma w) \cdot \mathcal{F} \,e^{-\tfrac14|\bx|^{2}} d\mathcal{H}^n \right| \leq C \delta (\| |\bx|w\|_W \|\nabla_\Sigma w\|_W) \leq C \delta \| w\|_{W,1}^2 \, .$$
The claim for the conical ends $ \|u_i\|_{2,\alpha;\Sigma_\text{con}}^{(1)} \leq \delta$ for $i=1,2$, follow similarly, (compare
\cite[(5.11)]{CCMS:generic1}), using Lemma \ref{lemm:conical-end-decomp-error-lin} instead of  Lemma \ref{lemm:cylindrical-end-decomp-error-lin}.
\end{proof}

\section{Positive graphs over shrinkers} \label{app:graph-shrinker-pos}
We continue the analysis in Appendix \ref{app:graph-shrinker} and consider $u:\Sigma\times I \to \RR$ (where we continue to allow for $\Sigma$ to be possibly incomplete) so that the graph $\Gamma_\tau$ is flowing by rescaled mean curvature flow. We continue to require that
\[
|u| |A| \leq \eta < 1
\]
on $\Sigma\times I$. In this appendix, we additionally require that the function $u$ is positive for all $(\bx,t) \in \Sigma \times I$. We set $v = \log u$. Note that
\begin{align*}
\partial_\tau v & = \frac{\partial_\tau u}{u}\\
\nabla v & = \frac{\nabla u}{u}\\
\nabla^2 v & = \frac{\nabla^2 u}{u} - d v \otimes dv. 
\end{align*}
Recall the quantities $\sigma_0$, $\sigma_1$ defined in \eqref{eq:defn-sigma-x-app} and  that Corollary \ref{coro:expand-rescaled-mcf-app} implies that  
\begin{equation}\label{eq:app_posgraphs.1}
\partial_\tau u = L u + E
\end{equation}
where $E=uE_1 + E_2(\nabla u,\nabla u)$ satisfy the estimates stated above. Below we show that these estimates yield estimates for $u^{-1}E$. 
\begin{proposition} \label{prop:app_posgraphs.2}
For $(\bx,\tau) \in \Sigma\times I$, assume that $\sigma_1(\bx,\tau) \leq 1$. There is $C$ depending only on $\eta$ and an upper bound for $\sum_{k=0}^3 |\nabla^k A|(\bx)$ so that
\begin{align*}
|(u^{-1} E)| &\leq C \sigma_0 (1+ |\nabla v|) \\
|\nabla v| |\nabla(u^{-1} E)| & \leq C \sigma_0 (1 +  |\nabla v|^3)\\
|\partial_\tau(u^{-1}E)| & \leq C \sigma_1 (1 +  |\nabla v|^3)
\end{align*}
at $(\bx,\tau)$. 
\end{proposition}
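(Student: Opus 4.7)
The plan is to reduce everything to algebraic manipulations using the key identity $\nabla u = u \nabla v$, which lets us trade powers of $u$ for powers of $|\nabla v|$ via the universal bound $|u||\nabla v| = |\nabla u| \leq \sigma_0$. First, rewrite
\[
u^{-1} E \;=\; E_1 \;+\; u\, E_2(\nabla v, \nabla v),
\]
using that $\nabla u \otimes \nabla u = u^2 \nabla v \otimes \nabla v$. The first estimate is then immediate: $|E_1| \leq C\sigma_0$ by the bounds in Corollary~\ref{coro:expand-rescaled-mcf-app}, and
\[
|u|\,|E_2|\,|\nabla v|^2 \;\leq\; C(1+\sigma_0)|\nabla u|\,|\nabla v| \;\leq\; C\sigma_0|\nabla v|,
\]
so $|u^{-1}E| \leq C\sigma_0(1+|\nabla v|)$.

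For the spatial derivative estimate, I would differentiate the rewritten expression to get
\[
\nabla(u^{-1}E) \;=\; \nabla E_1 \;+\; (\nabla u)\,E_2(\nabla v,\nabla v) \;+\; u\,(\nabla E_2)(\nabla v,\nabla v) \;+\; 2u\,E_2(\nabla^2 v,\nabla v).
\]
The only term requiring care is the last one, since $\nabla^2 v$ is not directly controlled. Here the identity $u\,\nabla^2 v = \nabla^2 u - u\,\nabla v\otimes\nabla v$ yields $u|\nabla^2 v| \leq \sigma_0 + \sigma_0|\nabla v|$. Plugging in and using $u|\nabla v|^2 \leq \sigma_0|\nabla v|$ throughout, each term is bounded by $C\sigma_0(1+|\nabla v|^2)$, and multiplying by the outer $|\nabla v|$ factor and absorbing via $|\nabla v|(1+|\nabla v|^2) \leq 2(1+|\nabla v|^3)$ gives the claimed estimate.

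For the time derivative, I would work with the original form $u^{-1}E = E_1 + u^{-1}E_2(\nabla u, \nabla u)$ so that the $\partial_\tau u$ factors appear naturally and can be estimated by $\sigma_1$ without ever needing to bound $\partial_\tau v$ (which is problematic when $u$ is small). Differentiating in $\tau$ produces four terms:
\[
\partial_\tau E_1,\quad -u^{-2}(\partial_\tau u)\,E_2(\nabla u,\nabla u),\quad u^{-1}(\partial_\tau E_2)(\nabla u,\nabla u),\quad 2u^{-1} E_2(\nabla \partial_\tau u,\nabla u).
\]
After substituting $\nabla u = u\nabla v$ and using the weighted bounds on $E_2$, each is dominated by $C\sigma_1(1+|\nabla v|^2)$; e.g., the middle terms satisfy $|\partial_\tau u|\,(1+\sigma_0)|\nabla v|^2 \leq C\sigma_1|\nabla v|^2$ and $u(1+\sigma_1)|\nabla v|^2 \leq C\sigma_1|\nabla v|^2$ (using $u\leq \sigma_0 \leq \sigma_1$), while the last term is bounded by $(1+\sigma_0)|\nabla \partial_\tau u|\,|\nabla v| \leq C\sigma_1|\nabla v|$.

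The expected obstacle is bookkeeping rather than substance: every bound follows from repeated application of $|\nabla u|=u|\nabla v|$ together with the identity $u\nabla^2 v = \nabla^2 u - u\,\nabla v \otimes \nabla v$. The hypothesis $\sigma_1 \leq 1$ (which implies $\sigma_0\leq 1$) is used freely to absorb factors like $(1+\sigma_0)$ into constants and to simplify $(1+|\nabla v|^2)$ into $(1+|\nabla v|^3)$ on a case-by-case basis depending on whether $|\nabla v| \lessgtr 1$.
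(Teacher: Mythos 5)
Your proof is correct and follows essentially the same route as the paper, which likewise reduces to the $E_2$ term and uses $\nabla u = u\nabla v$ together with $|u|, |\nabla u|, |\nabla^2 u|, |\partial_\tau u|, |\partial_\tau\nabla u| \leq \sigma_1$ to trade powers of $u$ for powers of $|\nabla v|$. The only cosmetic difference is that the paper keeps the expression as $u^{-1}E_2(\nabla u,\nabla u)$ and differentiates directly (so $\nabla^2 u$ appears and never $\nabla^2 v$), whereas you rewrite as $uE_2(\nabla v,\nabla v)$ first and then need the identity $u\nabla^2 v = \nabla^2 u - u\,\nabla v\otimes\nabla v$ to recover control; the two bookkeepings are equivalent term by term.
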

\begin{proof}
It suffices to only consider $E_2$. Writing $C$ for a constant that will change from line to line but only depends on $\eta$ and an upper bound for $\sum_{k=0}^3 |\nabla^k A|(\bx)$, we have:
\begin{align*}
|u^{-1}E_2(\nabla u,\nabla u)| & \leq C (1+ \sigma_0) |\nabla v| |\nabla u| \leq C \sigma_0 |\nabla v|.
\end{align*}
For the second estimate, we have 
\begin{align*}
|\nabla v| |\nabla (u^{-1}E_2(\nabla u,\nabla u))| & \leq C |\nabla v|(u^{-2} |\nabla u|^3 + u^{-1} |\nabla u|^{2} + u^{-1}|\nabla^2 u||\nabla u|)\\
& \leq C \sigma_0 (|\nabla v|^3 + |\nabla v|^2).
\end{align*}
Similarly,
\begin{align*}
|\partial_\tau (u^{-1}E_2(\nabla u,\nabla u))| & \leq C (u^{-2} |\partial_\tau u| |\nabla u|^2 + u^{-1}|\nabla u|^2 + u^{-1}|\nabla u| |\partial_\tau \nabla u|) \\
& \leq C \sigma_1 (|\nabla v|^2 + |\nabla v|).
\end{align*}
Clearly both of these estimates imply the asserted bound. 
\end{proof}
From \eqref{eq:app_posgraphs.1} we see that $v$ satisfies
\begin{equation}\label{eq:app_posgraphs.2}
\partial_\tau v = \Delta v + |\nabla v|^2 - \tfrac{1}{2} \bx^T \cdot \nabla v  +\tfrac{1}{2} + |A|^2 + u^{-1} E\, .
 \end{equation}
 We denote the corresponding linearized operator by $\cN$, defined via
 $$ \cN w: = \Delta w + 2 \nabla v \cdot \nabla w -\tfrac{1}{2} \bx^T \cdot \nabla w\, .$$
 We extend $\cN$ to act on tensors in the usual way, i.e.,
\[
\cN T : = \sum_{i=1}^n \nabla^2_{\bE_i,\bE_i} T + 2\nabla_{\nabla v} T - \frac 12 \nabla_{\bx^T} T. 
\]
for $\bE_1,\dots,\bE_n$ an orthonormal basis of $T_pM$. 
\begin{proposition}\label{prop:app_posgraphs.3}
Denoting $v_\tau$ by the $\tau$-derivative of $v$, we have 
\begin{align}
\partial_\tau v_\tau &=  \cN v_\tau + \partial_\tau(u^{-1}E)\, , \label{eq:app_posgraphs.3}\\
\partial_\tau \nabla_\Sigma v & =  \cN \nabla v  - \tfrac 12 \nabla v + S^2(\nabla v) + \nabla |A|^2 + \nabla(u^{-1}E) 
 \, , \label{eq:app_posgraphs.4}\\
\partial_\tau|\nabla v|^2 & = \cN |\nabla v|^2 - 2 |\nabla^2 v|^2 - |\nabla v|^2 +  2 \nabla v \cdot S^2(\nabla v) + 2 \nabla v \cdot \nabla |A|^2 \label{eq:app_posgraphs.5} \\
&\quad + 2
\nabla v \cdot \nabla(u^{-1}E) \nonumber
\end{align}
\end{proposition}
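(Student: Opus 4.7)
The plan is to differentiate equation \eqref{eq:app_posgraphs.2} and track the terms carefully. Since $\Sigma$ is a fixed shrinker, the operators $\Delta$, $\nabla$, the tangential projection $\bx^T$, the shape operator $S$, and $|A|^2$ are all $\tau$-independent, which simplifies the $\tau$-differentiation considerably.

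For \eqref{eq:app_posgraphs.3}, I would simply apply $\partial_\tau$ to both sides of \eqref{eq:app_posgraphs.2}. The constants and the $|A|^2$ term drop out, and since $2\nabla v \cdot \nabla v_\tau$ appears from $\partial_\tau|\nabla v|^2$, this gives exactly the expression $\cN v_\tau$ plus $\partial_\tau(u^{-1}E)$.

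For \eqref{eq:app_posgraphs.4}, I would apply $\nabla$ to \eqref{eq:app_posgraphs.2}. The key computations are:
\begin{itemize}
\item[(i)] By the Bochner--Ricci commutator identity on $\Sigma$,
\[
\nabla_k \Delta v = \Delta \nabla_k v - \Ric_{kj} \nabla^j v.
\]
Applying the Gauss equation \eqref{eq:Gauss.convention.traced} gives $\Ric(\nabla v) = H\, S(\nabla v) - S^2(\nabla v)$.
\item[(ii)] A direct computation of $\nabla \bx^T = \nabla(\bx - (\bx\cdot\nu)\nu)$ tangentially yields
\[
\nabla_i \bx^T_j = g_{ij} + (\bx\cdot\nu) A_{ij} = g_{ij} - 2H A_{ij},
\]
using the shrinker equation $H+\tfrac12 \bx\cdot\nu = 0$. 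Consequently
\[
\nabla_k\big(-\tfrac12 \bx^T\cdot\nabla v\big) = -\tfrac12 \nabla_k v + H\, S(\nabla v)_k - \tfrac12 \bx^T\cdot\nabla(\nabla_k v).
\]
\item[(iii)] From $\nabla|\nabla v|^2 = 2\nabla^2 v(\nabla v,\cdot)$ we get $\nabla_k |\nabla v|^2 = 2\,\nabla v \cdot \nabla(\nabla_k v)$.
\end{itemize}
The main observation is that the two $H\,S(\nabla v)$ contributions (one from the Ricci term in (i), one from the drift derivative in (ii)) cancel exactly, leaving precisely the claimed $S^2(\nabla v)$ term plus $\cN \nabla v$, $-\tfrac12 \nabla v$, $\nabla|A|^2$, and $\nabla(u^{-1}E)$. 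This cancellation, driven by the shrinker equation, is the only nonroutine step and will be where I expect the bookkeeping to demand some care.

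For \eqref{eq:app_posgraphs.5}, I would pair \eqref{eq:app_posgraphs.4} with $\nabla v$ and use the Bochner-type identity
\[
\cN |\nabla v|^2 - 2|\nabla^2 v|^2 = 2\,\nabla v \cdot \cN \nabla v,
\]
which follows from $\Delta|\nabla v|^2 = 2|\nabla^2 v|^2 + 2\nabla v\cdot\Delta\nabla v + 2\,\Ric(\nabla v,\nabla v)$ combined with the commutator identity in (i) above (so the $\Ric$ contributions cancel between $\Delta|\nabla v|^2$ and $2\nabla v\cdot\Delta\nabla v$), together with the tautologies $\nabla v\cdot \nabla|\nabla v|^2 = 2\nabla^k v\nabla^j v\nabla_k\nabla_j v$ and the analogous identity for the drift term. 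Substituting then yields \eqref{eq:app_posgraphs.5} directly.

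Thus the whole proposition reduces to routine tensor-calculus on the fixed shrinker $\Sigma$; the only substantive point is the cancellation of $H\,S(\nabla v)$ in the derivation of \eqref{eq:app_posgraphs.4}, and this is forced by the shrinker equation together with the Gauss identity \eqref{eq:Gauss.convention.traced}.
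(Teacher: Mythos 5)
Your proof is correct and follows essentially the same route as the paper: differentiate \eqref{eq:app_posgraphs.2}, commute $\nabla$ past $\Delta$ via the Ricci identity, express $\Ric$ through the Gauss equation, and observe that the two $H\,S(\nabla v)$ contributions cancel, which is exactly the computation the paper carries out. The only cosmetic difference is in \eqref{eq:app_posgraphs.5}, where you contract \eqref{eq:app_posgraphs.4} with $\nabla v$ and invoke a Bochner-type identity, whereas the paper traces $\cN(\nabla v\otimes\nabla v)$; these are the same calculation.
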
 
\begin{proof}
Equation \eqref{eq:app_posgraphs.3} follows directly from differentiating \eqref{eq:app_posgraphs.2} with respect to $\tau$. To prove \eqref{eq:app_posgraphs.4}, we consider the gradient of \eqref{eq:app_posgraphs.2}. We first note that (recalling the conventions \eqref{eq:convent.shape.oper} and \eqref{eq:convent.sff})
\[
\nabla (\bx^T \cdot \nabla v) = \nabla v + \nabla_{\bx^T} \nabla v -2 H S(\nabla v),
\]
so
\begin{align*}
\partial_\tau \nabla v = \nabla \Delta v + 2\nabla_{\nabla v}\nabla v - \tfrac 12 \nabla_{\bx^T} \nabla v - \tfrac 12 \nabla v +  H S(\nabla v)  + \nabla |A|^2 + \nabla(u^{-1}E).
\end{align*}
Using the commutator identities \eqref{eq:app.3rd.der.12.slot.perm} and \eqref{eq:app.3rd.der.23.slot.perm} proven below, we find 
\[
\nabla \Delta v = \sum_{i=1}^n \nabla^2_{\bE_i,\bE_i} \nabla v - \Ric(\nabla v,\cdot)^\sharp . 
\]
On the other hand, the (traced) Gauss equations \eqref{eq:Gauss.convention.traced} yield 
\[
\Ric(\nabla v,\cdot)^\sharp = H S(\nabla v) - S^2(\nabla v),
\]
so putting these expressions together, \eqref{eq:app_posgraphs.4} follows. Combining \eqref{eq:app_posgraphs.4} with 
\[
\cN(\nabla v\otimes \nabla v) = (\cN\nabla v) \otimes \nabla v + \nabla v \otimes \cN \nabla v + 2 \sum_{i=1}^n (\nabla_{\bE_i} \nabla v) \otimes (\nabla_{\bE_i} \nabla v)
\]
and taking the trace yields \eqref{eq:app_posgraphs.5}. \end{proof}

\begin{lemma}
For a function $f$ on a Riemannian manifold $(M^n,g)$ 
\begin{align}
\nabla^3_{\bX,\bY,\bZ} f & = \nabla^3_{\bY,\bX,\bZ} f + R(\bX,\bY,\bZ,\nabla f) \label{eq:app.3rd.der.12.slot.perm}\\
\nabla^3_{\bX,\bY,\bZ} f & = \nabla^3_{\bX,\bZ,\bY} f \label{eq:app.3rd.der.23.slot.perm}
\end{align}
\end{lemma}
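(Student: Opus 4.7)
Both identities are instances of standard Ricci identities; the main care needed is bookkeeping with the paper's curvature sign conventions \eqref{eq:curv.con1}--\eqref{eq:curv.con2}. I would first fix the convention that $\nabla^{3}f = \nabla(\nabla^{2}f)$ is the $(0,3)$-tensor with $(\nabla^{3}f)(\bX,\bY,\bZ) = (\nabla_{\bX}\nabla^{2}f)(\bY,\bZ)$, so that the first slot is the ``outer'' derivative and the remaining two come from the Hessian.

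For \eqref{eq:app.3rd.der.23.slot.perm}, the plan is to use that the Hessian $\nabla^{2}f$ is symmetric (this is equivalent to the torsion-freeness of $\nabla$, via $\nabla^{2}f(\bY,\bZ)-\nabla^{2}f(\bZ,\bY) = -([\bY,\bZ]-\nabla_{\bY}\bZ+\nabla_{\bZ}\bY)f = 0$). Because the covariant derivative of a tensor commutes with permutation of slots, $\nabla_{\bX}(\nabla^{2}f)$ is also symmetric in its two remaining slots, giving the claim. (One can verify this directly by differentiating $\nabla^{2}f(\bY,\bZ)=\nabla^{2}f(\bZ,\bY)$ against $\bX$ in the tensorial sense.)

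For \eqref{eq:app.3rd.der.12.slot.perm}, the plan is to apply the Ricci identity to the $1$-form $\omega = df$. Namely, for any $(0,1)$-tensor $\omega$,
\[
(\nabla^{2}\omega)(\bX,\bY,\bZ) - (\nabla^{2}\omega)(\bY,\bX,\bZ) \;=\; -\,\omega\!\bigl(R(\bX,\bY)\bZ\bigr),
\]
where $R(\bX,\bY)$ is as in \eqref{eq:curv.con1}. Specializing to $\omega = df$ and using $df(\bW)=g(\nabla f,\bW)$ together with the antisymmetry $g(R(\bX,\bY)\bW,\bZ) = -g(R(\bX,\bY)\bZ,\bW)$ of the curvature tensor in its last two slots yields
\[
-df\!\bigl(R(\bX,\bY)\bZ\bigr) \;=\; -g(\nabla f, R(\bX,\bY)\bZ) \;=\; g(R(\bX,\bY)\nabla f,\bZ)\cdot(-1)\cdot(-1),
\]
which, by convention \eqref{eq:curv.con2}, equals $R(\bX,\bY,\bZ,\nabla f)$. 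Since $(\nabla^{2}df)(\bX,\bY,\bZ)$ coincides with $\nabla^{3}_{\bX,\bY,\bZ}f$ (this is precisely the definition of $\nabla^{3}f$ via $\nabla^{2}f = \nabla(df)$), rearranging gives \eqref{eq:app.3rd.der.12.slot.perm}.

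There is no real obstacle here; the only subtle point is matching sign conventions when converting between $R(\bX,\bY)\bZ$, its metric contraction $R(\bX,\bY,\bZ,\bW)$, and the action of $R(\bX,\bY)$ on $1$-forms. I would therefore devote one displayed calculation to pinning down that $-df(R(\bX,\bY)\bZ) = R(\bX,\bY,\bZ,\nabla f)$ under the paper's conventions \eqref{eq:curv.con1}--\eqref{eq:curv.con2}, and consider the proof complete.
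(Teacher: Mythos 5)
Your proof is correct and the key content — Hessian symmetry for \eqref{eq:app.3rd.der.23.slot.perm}, a commutator-of-derivatives identity for \eqref{eq:app.3rd.der.12.slot.perm}, plus careful matching of the paper's sign conventions \eqref{eq:curv.con1}--\eqref{eq:curv.con2} — is exactly what is needed. The paper's proof of \eqref{eq:app.3rd.der.12.slot.perm} is a self-contained chain of equalities computed at a point where $\bX,\bY,\bZ$ are chosen parallel, whereas you invoke the standard Ricci identity for $1$-forms and then identify $\nabla^3 f$ with $\nabla^2(df)$; these are the same argument in different packaging, since the Ricci identity for $\omega = df$ is precisely what the paper re-derives on the fly. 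Your sign verification that $-df(R(\bX,\bY)\bZ) = R(\bX,\bY,\bZ,\nabla f)$ under the unusual slot ordering $R(\bX,\bY,\bZ,\bW) = g(R(\bX,\bY)\bW,\bZ)$ lands on the right answer, though the intermediate ``$g(R(\bX,\bY)\nabla f,\bZ)\cdot(-1)\cdot(-1)$'' line is hard to parse and would benefit from being spelled out as two separate uses of the antisymmetry in the last two slots and the definition \eqref{eq:curv.con2}. For \eqref{eq:app.3rd.der.23.slot.perm} your reasoning (covariant derivative preserves symmetry of the inner slots) matches the paper's one-line observation.
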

\begin{proof}
We can assume that $\bX,\bY,\bZ$ are parallel at the point under consideration and compute
\begin{align*}
\nabla^3_{\bX,\bY,\bZ} f & = \nabla_\bX \nabla^2_{\bY,\bZ} f \\
& = \nabla_\bX \nabla_\bY \nabla_\bZ f - \nabla_\bX \nabla_{\nabla_\bY\bZ} f\\
& = \nabla_\bY \nabla_\bX \nabla_\bZ f -  \nabla_{\nabla_\bX\nabla_\bY\bZ} f\\
& = \nabla_\bY \nabla^2_{\bX,\bZ} f + \nabla_\bY \nabla_{\nabla_\bX\bZ} f -  \nabla_{\nabla_\bX\nabla_\bY\bZ} f\\
& =   \nabla^3_{\bY,\bX,\bZ} f -  \nabla_{\nabla_\bX\nabla_\bY\bZ- \nabla_\bY\nabla_\bX\bZ} f\\
& = \nabla^3_{\bY,\bX,\bZ} f + R(\bX,\bY,\bZ,\nabla f). 
\end{align*}
This proves \eqref{eq:app.3rd.der.12.slot.perm}. The first line of the calculation also clearly implies \eqref{eq:app.3rd.der.23.slot.perm}. 
\end{proof}

 Combining Proposition \ref{prop:app_posgraphs.2} with Proposition \ref{prop:app_posgraphs.3} yields
 
\begin{proposition}\label{prop:app_evolution_eq}
For $(\bx,\tau) \in \Sigma\times I$, assume that $\sigma_1(\bx,\tau) \leq 1$. There is $C$ depending only on $\eta$ and an upper bound for $\sum_{k=0}^3 |\nabla^k A|(\bx)$ so that $w:= v_\tau - \tfrac12 |\nabla v|^2$ satisfies
\[ w_\tau \geq \cN w+ |\nabla^2 v|^2 + \tfrac 12  |\nabla v|^2 -  |\nabla v|^2 |A|^2 - |\nabla v| |\nabla|A|^2| - C \sigma_1 (1+ |\nabla v|^3)\, . \]
\end{proposition}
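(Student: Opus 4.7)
The plan is to simply combine the two evolution equations in Proposition \ref{prop:app_posgraphs.3} and bound the resulting error terms using Proposition \ref{prop:app_posgraphs.2}. No new geometric identities are needed.

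First, I would compute $\partial_\tau w = \partial_\tau v_\tau - \tfrac12 \partial_\tau |\nabla v|^2$ directly from \eqref{eq:app_posgraphs.3} and \eqref{eq:app_posgraphs.5}. Using the linearity of $\cN$ and the fact that $\cN w = \cN v_\tau - \tfrac12 \cN |\nabla v|^2$, I would find
\[
\partial_\tau w = \cN w + |\nabla^2 v|^2 + \tfrac12 |\nabla v|^2 - \nabla v \cdot S^2(\nabla v) - \nabla v \cdot \nabla |A|^2 + \partial_\tau(u^{-1}E) - \nabla v \cdot \nabla(u^{-1}E).
\]
The two ``good'' Bochner-type terms $|\nabla^2 v|^2$ and $\tfrac12|\nabla v|^2$ appear automatically from the identity for $\partial_\tau |\nabla v|^2$.

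Second, I would estimate the curvature terms pointwise: since $S$ is self-adjoint with $|S|^2 = |A|^2$, one has $|\nabla v \cdot S^2(\nabla v)| \leq |A|^2 |\nabla v|^2$ (writing $\nabla v$ in an eigenbasis of $S$), giving $-\nabla v \cdot S^2(\nabla v) \geq -|A|^2 |\nabla v|^2$, and Cauchy-Schwarz yields $-\nabla v \cdot \nabla |A|^2 \geq -|\nabla v|\,|\nabla |A|^2|$. These are exactly the two $|A|$-dependent negative contributions in the statement.

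Finally, the two error contributions involving $E$ are controlled by Proposition \ref{prop:app_posgraphs.2}: under the hypothesis $\sigma_1(\bx,\tau)\leq 1$, we have $|\partial_\tau(u^{-1}E)| \leq C\sigma_1(1+|\nabla v|^3)$ and $|\nabla v|\,|\nabla(u^{-1}E)| \leq C\sigma_0(1+|\nabla v|^3) \leq C\sigma_1(1+|\nabla v|^3)$. Adding them gives the final error term $-C\sigma_1(1+|\nabla v|^3)$, completing the inequality. There is no substantive obstacle here — the whole proof is a short algebraic manipulation, with the only minor care needed being to keep track of signs when dropping $-\nabla v \cdot S^2(\nabla v)$ and the two $u^{-1}E$ contributions, and using $\sigma_0 \leq \sigma_1$ to unify the error estimates.
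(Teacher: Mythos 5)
Your proof is correct and follows precisely the route the paper intends: the statement is immediately preceded by the remark that it follows by ``combining Proposition \ref{prop:app_posgraphs.2} with Proposition \ref{prop:app_posgraphs.3},'' and you carry this out correctly, including the observation that $\nabla v \cdot S^2(\nabla v) = |S(\nabla v)|^2 \leq |A|^2|\nabla v|^2$ and the use of $\sigma_0 \leq \sigma_1$ to merge the two error bounds.
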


\medskip 

\subsection{Li--Yau estimate}
 In the following we assume that $\Sigma \in \cS_n'$ (i.e., $\Sigma$ has nice ends). We define
 \begin{equation}\label{eq:app_Li-Yau.1}
\Sigma' = \Sigma \cap B_{2 R}(\bOh) 
 \end{equation}
 where $R$ is assumed to satisfy the following three conditions
 \begin{equation}\label{eq:app_Li-Yau.2}
R \geq \max \{10^{10}n, \sup_{\Sigma} |A|, \sup_\Sigma  |\nabla |A|^2| \},
 \end{equation}
 as well as 
 \begin{equation}\label{eq:app_Li-Yau.2.1}
0 \leq \inf_{r \in [9nR^2,\infty)} ( \tfrac{5}{24n}r^2-10^{-8}n^{-1} r^{\frac{3}{2}}-2),
 \end{equation}
 and \eqref{eq:app_Li-Yau.3} below.
 
 We introduce a cut-off function $\gamma$ such that $\gamma(r) = 0$ for $r\geq 4$, $\gamma(r) = 1$ for $r\leq 1, -2 \leq \gamma'(r) \leq 0$ and $|\gamma''| \leq 2$ for all $r$. We let $\eta(\bx):= \gamma(|\bx|^2R^{-2})$. Since $\Sigma$ has nice ends we can assume that $R$ is sufficiently large such that
 \begin{equation}\label{eq:app_Li-Yau.3}
|\Delta_\Sigma \eta | \leq R^{-1}\, ,
 \end{equation}
 where $C=C(\sup_\Sigma |A|^2)$.
 \begin{proposition}[Li--Yau type estimate]\label{prop:Li-Yau}
Assume $u:\Sigma'\times (-\infty, 0) \to (0,\infty)$ is such that the graph $\Gamma_\tau$ is flowing by rescaled mean curvature flow and that $\sup_{\Sigma'} \sigma_1(\cdot, \tau) \to 0$ as $\tau \to -\infty$.  
Then there is $\tau_0 \ll -1$ such that
\[
|\nabla \log u|^{2} \leq 20n R^2 + 2 \partial_{\tau} \log u
\]
holds for $(\bx,\tau) \in B_R(0)\cap \Sigma \times (-\infty,\tau_0]$. 
\end{proposition}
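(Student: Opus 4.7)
Set $v := \log u$, $q := |\nabla v|^2$, and $w := v_\tau - \tfrac{1}{2}q$; the desired inequality is equivalent to $w \geq -10nR^2$ on $(B_R(\bOh)\cap\Sigma)\times(-\infty,\tau_0]$. My plan is to prove the stronger statement $F := \eta w \geq -10nR^2$ on all of $\Sigma'\times(-\infty,\tau_0]$ via a parabolic maximum principle argument, from which the conclusion follows because $\eta \equiv 1$ on $B_R\cap\Sigma$. I first take $\tau_0 \ll -1$ so that $C\sigma_1(\cdot,\tau) \leq 10^{-9}n^{-1}$ on $\Sigma'\times(-\infty,\tau_0]$, which is possible by the hypothesis $\sup_{\Sigma'}\sigma_1(\cdot,\tau)\to 0$. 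Using $|A|\leq R$ and $|\nabla|A|^2|\leq R$ from \eqref{eq:app_Li-Yau.2}, Proposition \ref{prop:app_evolution_eq} then yields
\[
w_\tau - \cN w \geq |\nabla^2 v|^2 + \tfrac{q}{2} - R^2 q - R\sqrt{q} - 10^{-9}n^{-1}(1+q^{3/2}),
\]
while \eqref{eq:app_posgraphs.2} gives $\Delta v = w - q/2 + \beta$ on $\Sigma'$, where $|\beta| \leq 2R\sqrt{q} + R^2 + 1 + C\sigma_0(1+\sqrt{q})$ by $|\bx^T|\leq 2R$, $|A|^2\leq R^2$ and Proposition \ref{prop:app_posgraphs.2}. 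Hence by $|\nabla^2 v|^2 \geq \tfrac{1}{n}(\Delta v)^2$ and Young's inequality one obtains the Bochner-type lower bound $|\nabla^2 v|^2 \geq \tfrac{1}{2n}(w - q/2)^2 - \tfrac{C}{n}(R^2 q + R^4)$.

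Suppose for contradiction $F$ attains a value strictly less than $-10nR^2$ at an interior minimum $(\bx_*,\tau_*)$ (existence of such a minimum is justified at the end). There $\nabla F = 0$ gives $\nabla w = -\tfrac{w}{\eta}\nabla\eta$, $F_\tau \leq 0$ gives $w_\tau \leq 0$, and $\cN F \geq 0$ (using $\cN F = \eta\cN w + (L\eta)w + 2\nabla\eta\cdot\nabla w + 2(\nabla v\cdot\nabla\eta)w$) yield
\[
\eta(w_\tau - \cN w) \leq (L\eta)w - \tfrac{2w}{\eta}|\nabla\eta|^2 + 2(\nabla v\cdot\nabla\eta)w.
\]
The explicit form $\eta(\bx) = \gamma(|\bx|^2/R^2)$ together with \eqref{eq:app_Li-Yau.2}--\eqref{eq:app_Li-Yau.3} gives $|\nabla\eta|\leq 8/R$ and $|L\eta|\leq 9$. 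Setting $M := -w(\bx_*,\tau_*) > 10nR^2$, I split on the size of $q$. If $q \geq 9nR^2$, then the Bochner-type bound gives $|\nabla^2 v|^2 \geq q^2/(5n) - O(R^2 q + R^4)$, which dominates the $-R^2 q$ and cutoff-derivative terms on the RHS; after absorbing all lower-order corrections the combined inequality reduces exactly to
\[
\tfrac{5}{24n}q^2 - 10^{-8}n^{-1}q^{3/2} - 2 \leq 0,
\]
contradicting \eqref{eq:app_Li-Yau.2.1}. If instead $q < 9nR^2$, then $M > 10nR^2 > q$ forces $|w - q/2| \geq M/2$, so $|\nabla^2 v|^2 \geq M^2/(4n) - O(R^4)$, and the combined inequality becomes $M^2/(8n) \leq CM R^2 + O(R^4)$; using $R \geq 10^{10}n$ from \eqref{eq:app_Li-Yau.2} to track the constants forces $M < 10nR^2$, again a contradiction.

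To ensure the existence of the interior minimum despite the unbounded past, I work on the truncated strip $\Sigma'\times[T_1,\tau_0]$ and use the penalization $F_\delta(\bx,\tau) := F(\bx,\tau) - \delta\tau$ for small $\delta > 0$. The term $-\delta\tau$ grows as $\tau \to -\infty$, while $F$ is continuous and bounded on each compact sub-strip (via parabolic Schauder estimates applied to $\partial_\tau u = Lu + E$, using $\sigma_1 \to 0$), so for $T_1$ sufficiently negative the minimum of $F_\delta$ on $[T_1,\tau_0]$ is attained either on the $\tau = \tau_0$ slice (directly giving $F(\cdot,\tau_0) \geq -10nR^2 + O(\delta)$) or at an interior space-time point, where the argument above applies up to a $\delta$-perturbation absorbed into the constants. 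Sending $\delta \to 0$ and then $T_1 \to -\infty$ completes the proof. The principal obstacle is the careful constant tracking in the two-case algebraic analysis; the polynomial threshold in \eqref{eq:app_Li-Yau.2.1} is precisely tuned so that Case (i) closes via the Bochner inequality, and Case (ii) requires exploiting the gap $R \geq 10^{10}n$ from \eqref{eq:app_Li-Yau.2} to push the effective constant below $10$. A secondary technical difficulty is the non-compact time domain, which is handled by the penalization $F_\delta$.
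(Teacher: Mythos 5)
Your overall strategy (cutoff $\times$ $(v_\tau-\tfrac12|\nabla v|^2)$, maximum principle at a point of contact/minimum, Bochner‑type inequality $|\nabla^2 v|^2 \geq \tfrac1n(\Delta v)^2$, and threshold condition \eqref{eq:app_Li-Yau.2.1}) matches the paper's, but there is a genuine gap in the choice of cutoff. You work with $F=\eta w$; the paper works with $W=\eta^2 w + h(\tau)$. The quadratic power of the cutoff is not cosmetic. At a critical point of $\eta w$, the identity $\nabla w = -\tfrac{w}{\eta}\nabla\eta$ feeds a term $-\tfrac{2w}{\eta}|\nabla\eta|^2$ into your inequality. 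Since at the interior minimum the only lower bound on $\eta_*:=\eta(\bx_*)$ is $\eta_* > 10nR^2/M$ (with $M=-w(\bx_*,\tau_*)$ a priori arbitrarily large), the term $2M|\nabla\eta|^2/\eta_*$ is unbounded relative to the good term $\eta_*|\nabla^2 v|^2 \gtrsim \eta_* M^2/n$: dominating it requires $\eta_*^2 M \gtrsim n/R^2$, which does not follow from $\eta_* M > 10nR^2$ without an a priori lower bound on $\eta_*$. With $\eta^2$, the analogous identity is $\eta\nabla w=-2w\nabla\eta$, and the offending term becomes $-8w|\nabla\eta|^2$ — no $1/\eta$ factor at all. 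Your two-case argument, as written, implicitly absorbs the $1/\eta$ term into "$CMR^2$", which is precisely the step that does not close.

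A secondary, more minor issue is the treatment of the non-compact time interval. Your penalization $F_\delta = F - \delta\tau$ can produce a minimum on the initial slice $\tau=T_1$, and you have no control on $\inf_{\Sigma'}F(\cdot,T_1)$ uniformly in $T_1$ (the hypothesis $\sigma_1\to0$ controls $u$ and its derivatives, not $\log u$, so $w$ need not be bounded as $\tau\to-\infty$). The paper sidesteps this with the barrier $h(\tau)=10n(\tau-T+R^{-2})^{-1}$, which blows up as $\tau\to (T-R^{-2})^+$: the argument then takes place on the compact window $(T-R^{-2},T]$, where $w\eta^2$ is automatically bounded ($\Sigma'$ compact, $\eta$ vanishes on $\partial\Sigma'$, $w$ smooth), so a first time of contact exists. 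This also produces the constant $h(T)=10nR^2$ directly, with no limit in $\delta$ or $T_1$ to take. Finally, a small slip: in $\cN(\eta w)=\eta\cN w+ w(\Delta\eta-\tfrac12\bx^T\cdot\nabla\eta)+2\nabla\eta\cdot\nabla w+2w\nabla v\cdot\nabla\eta$, the coefficient of $w$ is $\Delta\eta - \tfrac12\bx^T\cdot\nabla\eta$, not $L\eta$ (the latter carries the zeroth-order $(\tfrac12+|A|^2)\eta$ term which does not appear here). The paper also exploits the sign $\bx^T\cdot\nabla\eta\le 0$ to drop that cutoff term outright; you should do the same rather than estimate it crudely.
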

\begin{proof} We fix $\tau_0$ sufficiently negative below.  Given $T\leq \tau_0$, we define
\[ W(\bx,\tau) = w(\bx, \tau) \eta^2(\bx) + h(\tau)\, , \]
where $w=v_\tau - \tfrac12 |\nabla v|^2$ and
\[ h(\tau) = 10 n (\tau-T + R^{-2})^{-1}\, .\]
From Proposition \ref{prop:app_evolution_eq} we see that
\begin{equation*}
\begin{split}
 \partial_\tau W &\geq \cN W - \tfrac{1}{10n} h^2 + \eta^2 |\nabla^2 v|^2 - \eta^2  |\nabla v|^2|A|^2 - \eta^2 |\nabla v| |\nabla |A|^2|\\
 &\quad - C \sigma_1 \eta^2(1+|\nabla v|^3) - w \Delta \eta^2 - 4 \eta \nabla w \cdot \nabla \eta - 4\eta w \nabla v \cdot \nabla \eta\\
 &\quad  + w\eta\,  \bx^T \cdot \nabla \eta\, .
\end{split} 
\end{equation*}
 Towards a contradiction, we assume that there exists a point $(\bp_1,\tau_1)$ such that $\tau_1\leq T$, $W(\bp_1,\tau_1)=0$, and $W(\cdot,\tau)>0$ holds for  $T-R^2 < \tau<\tau_1$
 . Since $\nabla W(\bp_1,\tau_1)=0$, we have $\eta \nabla w=-2w \nabla \eta$ at $(\bp_1,\tau_1)$. In addition, we have $W_\tau-\mathcal{N} W\leq 0$ at $(p_1,\tau_1)$. Moreover, $W=0$ yields $w\leq 0$ at $(\bp_1,\tau_1)$ and we have $\bx^T\cdot \nabla \eta \leq 0$ at $(\bp_1,\tau_1)$ by the definition of $\eta$. Hence at $(\bp_1,\tau_1)$,
\begin{equation*}
\begin{split}
0&\geq   -\tfrac{1}{10n} h^2+\eta^2|\nabla^2 v|^2- \eta^2|A|^2|\nabla v|^2 - \eta^2 |\nabla v| |\nabla |A|^2|\\
&\quad  -C\sigma_1 \eta^2(1+ |\nabla  v|^3)-2 w\eta \Delta \eta +6w |\nabla \eta|^2 -4\eta w\nabla v \cdot \nabla \eta.
\end{split}
\end{equation*}
By using the definition of $R,\eta$ and using $|\nabla^2 v|^2 \geq n^{-1} |\Delta v|^2$ this implies 
\begin{equation}\label{eq:app_Li-Yau.4}
\begin{split}
0&\geq   -\tfrac{1}{10n} h^2+\tfrac{1}{n}\eta^2|\Delta v|^2- \eta^2R^2|\nabla v|^2 - \eta^2 R |\nabla v| -C\sigma_1 \eta^2(1+ |\nabla  v|^3)\\
& \quad- 20 R^{-1}|w|(1+\eta|\nabla v|)
\end{split}
\end{equation}
at $(\bp_1,\tau_1)$.  

On the other hand, by using \eqref{eq:app_posgraphs.2} and $|\bx|\leq 2R$ on $B_{2R}(\bOh)$ together with Proposition \ref{prop:app_posgraphs.2}, we have
\[
w \geq \Delta v+\tfrac12 |\nabla v|^2-(R+C\sigma_0) |\nabla v|+\tfrac12+|A|^2 - C\sigma_0,
\]
for some $C$ depending on $\sup_{B_{2R}(\bOh)} \sum_{k=0}^3 |\nabla^k A|$. By choosing $\tau_0$ sufficiently negative, we can arrange that $C\sigma_0 \leq \min\{\tfrac 12, \tfrac 13 R\}$ (note that the constant $C$ here depends only on $\Sigma$, since it comes in through the applications of Propositions \ref{prop:app_posgraphs.2} and Proposition \ref{prop:app_evolution_eq}), we find that
\begin{equation*}
w \geq \Delta v+\tfrac12 |\nabla v|^2-\tfrac43 R |\nabla v|\geq \Delta v+  \tfrac14  |\nabla v|^2- 2R^2,
\end{equation*}
for $\tau \leq \tau_0$. Since $w\leq 0$ at $(\bp_1,\tau_1)$,  we see that at this point 
\begin{equation*}
-w \geq -w\eta^2= h \geq 10nR^2
\end{equation*}
and thus  (note that $n\geq 2$)
\begin{equation}\label{eq:Laplace_v-bound}
 \tfrac14 |\nabla v|^2+\tfrac{9}{10}|w|\leq -\Delta v,
\end{equation}
at $(\bp_1,\tau_1)$. This yields
\begin{equation*}
\tfrac18 \eta^2 |\Delta v|^2\geq \tfrac18 \left(\tfrac{9}{10} \right)^2w^2\eta^2\geq \tfrac{1}{10} w^2\eta^2\geq \tfrac{1}{10}h^2.
\end{equation*}
Combining with \eqref{eq:app_Li-Yau.4} implies 
\begin{equation*}
\begin{split}
0&\geq    \tfrac{7}{8n}\eta^2|\Delta v|^2-4 R^2\eta^2|\Delta v| - 2R\eta^2 |\Delta v|^{\tfrac{1}{2}}\\
&\quad -C\sigma_0 \eta^2(|\Delta  v|^{\frac{3}{2}}+1)-30R^{-1}|\Delta v|(1+2\eta|\Delta v|^{\frac{1}{2}}).
\end{split}
\end{equation*}
We can choose $\tau_0$ sufficiently negative so that we have
\begin{align*}
0\geq    \tfrac{7}{8n}\eta^2|\Delta v|^2-10^{-8}n^{-1}\eta |\Delta v|^{\frac{3}{2}}- 6  R^2 |\Delta v|-2.
\end{align*}
On the other hand, by using \eqref{eq:Laplace_v-bound} we derive
\begin{equation*}
 \eta^2 |\Delta v| \geq \tfrac{9}{10} |w|\eta^2 =\tfrac{9}{10}h\geq 9nR^2 .
\end{equation*}
Combining these expressions we derive a contradiction to \eqref{eq:app_Li-Yau.2.1}. In conclusion, we have $W \geq 0$ for $\tau \leq T$. Therefore, given $T \leq \tau_0$
\begin{equation*}
0\leq W(x,T)=w(x,T)+h(T)=w(x,T)+10nR^2,
\end{equation*}
holds in $B_R(0)\cap \Sigma$. 
\end{proof}
 
\section{Integral Brakke $\bX$-flows} \label{app:integral-brakke-X-flows}
We record here the notion of Brakke flows with a transport term as introduced by Hershkovits--White in \cite{HershkovitsWhite:set-theoretic}. An ($n$-dimensional) \emph{integral Brakke $\bX$-flow} in $\RR^{n+1}$ (or more generally a Riemannian manifold) is a $1$-parameter family of Radon measures $(\mu(t))_{t\in I}$ over an interval $I\subset \RR$ so that:
\begin{enumerate}
\item For almost every $t \in I$ there is an integral $n$-dimensional integral varifold $V(t)$ with $\mu(t) = \mu_{V(t)}$ so that $V(t)$ has locally bounded first variation and has mean curvature $\bH$ orthogonal to $\textrm{Tan}(V(t),\cdot)$ almost everywhere. 
\item For a bounded interval $[t_1,t_2] \subset I$ and compact set $K\subset \RR^{n+1}$, it holds
\[
\int_{t_1}^{t_2} \int_K (1+|\bH|^2) d\mu(t) dt < \infty.
\]
\item For $[t_1,t_2]\subset I$ and $f \in C^1_c(\RR^{n+1}\times [t_1,t_2] ; [0,\infty))$ then
\begin{align*}
& \int f(\cdot,t_2) d\mu(t_2) - \int f(\cdot,t_1) d\mu(t_1) \\
& \leq \int_{t_1}^{t_2} \int \left( \tfrac{\partial f}{\partial t} + \nabla^\perp f \cdot (\bH +\bX) - f \bH \cdot (\bH + \bX^\perp)  \right) d\mu(t) dt.
\end{align*}
\end{enumerate}
Convergence/compactness of integral Brakke $\bX$-flows is essentially the same as in the standard $\bX=0$ case. See \cite{HershkovitsWhite:set-theoretic} for further discussion.

\section{Ilmanen's avoidance principle}\label{app:ilmanen.avoidance}
For $R>0$ and $(\bx_0,t_0) \in \RR^{n+1}\times \RR$, we define 
\begin{equation*}
u(\mathbf{x}, t) := (R^{2} - |\mathbf{x}-\mathbf{x}_{0}|^2 - 2n(t-t_{0}))_+ .
\end{equation*}
Let $W \subset \RR^{n+1}\times \RR$ be open and assume that $u(\cdot,t)$ vanishes on $\partial W(t)$ for all $t \in \mathfrak{t}(W)$. For $\mathbf{p}, \mathbf{q} \in W(t)$, define the distance
\begin{equation} \label{eq:ilmanen-distance-function} d_t(\mathbf{p}, \mathbf{q}) := \inf \Big\{ \int_\gamma u(\gamma(s), t)^{-1} \, ds : \gamma \text{ is a curve joining } \mathbf{p}, \mathbf{q} \text{ in } W(t) \Big\}. 
\end{equation}
We use the standard convention that $\inf \emptyset = \infty$. Note that $d_t$ is just the distance in the (complete) conformally Euclidean metric $g_t := u(\cdot,t)^{-2} g_{\RR^{n+1}}$. More generally, we can consider the distance between two closed sets in $W_t$ defined in the usual way.

\begin{theorem}[Ilmanen]\label{theo:ilmanen-avoidance}
Consider two closed weak set flows $\cM$, $\cM'$ in $\mathbb{R}^{n+1}$ and constants satisfying $R>0,\gamma>0$, $a<b<a + \frac{R^{2}-\gamma}{2n}$. Assume that
\[
\cM(t) \cap B_{\sqrt{\gamma + R^{2} - 2n(t-a)}}(\mathbf{x}_{0}) \quad \textrm{and} \quad \cM'(t) \cap B_{\sqrt{\gamma + R^{2} - 2n(t-a)}}(\mathbf{x}_{0})
\]
are disjoint for $t \in [a,b)$. Then, using this choice of $R$ and $\mathbf{x}_{0}$ along with $t_{0}=a$ in the definition of $u$ and $d_t$ above, we have that $t\mapsto d_{t}(\cM(t),\cM'(t))$ is non-decreasing for $t \in [a,b)$ and
\[
\cM(b) \cap \cM'(b) \cap B_{\sqrt{R^{2} - \gamma - 2n(b-a)}}(\mathbf{x}_{0}) = \emptyset.
\]
\end{theorem}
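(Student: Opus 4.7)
The strategy is Ilmanen's classical barrier argument using shrinking spheres as classical MCF barriers against weak set flows. The key underlying fact is that for any $\by \in \RR^{n+1}$ and $r_0 > 0$, the family $t \mapsto \partial B_{r(t)}(\by)$ with $r(t)^2 = r_0^2 - 2n(t - t_0)$ is a smooth mean curvature flow on its existence interval, so the definition of closed weak set flow directly implies that if $\cN$ is such a flow with $\cN(t_0) \cap \overline{B_{r_0}(\by)} = \emptyset$ then $\cN(t) \cap \overline{B_{r(t)}(\by)} = \emptyset$ for all $t \in [t_0, t_0 + r_0^2/(2n))$. The function $u$ is engineered so that its zero set at time $t$ is itself such a shrinking sphere of radius $\sqrt{R^2 - 2n(t-a)}$, and the $\gamma$-buffer in the hypothesis keeps $\cM(t) \cap W(t)$ and $\cM'(t) \cap W(t)$ uniformly away from $\partial W(t)$, where the conformal metric $g_t = u^{-2} g_{\RR^{n+1}}$ degenerates.

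The heart of the proof is to show that $t \mapsto d_t(\cM(t), \cM'(t))$ is non-decreasing (and lower semicontinuous) on $[a,b)$. Given $a \leq t_1 < t_2 < b$, I take points $\bp_2 \in \cM(t_2)$, $\bq_2 \in \cM'(t_2)$ realizing near-minimal $d_{t_2}$-distance and a near-minimizing $g_{t_2}$-geodesic $\gamma_2 \subset W(t_2)$ from $\bp_2$ to $\bq_2$, and cover it by a chain of small Euclidean balls $\overline{B}_{\rho_i}(\by_i)$ with $\by_i \in \gamma_2$, consecutive balls overlapping, the first containing $\bp_2$ and the last containing $\bq_2$, and $\rho_i \ll u(\by_i, t_2)^{1/2}$ so that the $g_{t_2}$-diameter $2\rho_i / u(\by_i, t_2)$ of the $i$-th ball approximates the $g_{t_2}$-length of the sub-arc it covers. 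Flowing each ball backward from $t_2$ to $t_1$ via $\rho_i(t)^2 = \rho_i^2 + 2n(t_2 - t)$ produces enlarged balls $\overline{B}_{\rho_i(t_1)}(\by_i)$. The contrapositive of the barrier fact applied to the two endpoint balls forces $\overline{B}_{\rho_0(t_1)}(\by_0) \cap \cM(t_1) \ni \bp_1$ and $\overline{B}_{\rho_k(t_1)}(\by_k) \cap \cM'(t_1) \ni \bq_1$. A first-order calculation of how the $g_t$-length of a ball of shrinking-sphere type evolves in $t$ (using $\partial_t u = -2n$ and $\partial_t (\rho^2) = -2n$) shows that the $g_{t_1}$-length of the enlarged chain differs from the $g_{t_2}$-length of the original by $o(1)$ as $t_2 - t_1 \to 0$ and the cover is refined, yielding $d_{t_1}(\bp_1, \bq_1) \leq d_{t_2}(\bp_2, \bq_2) + o(1)$, and hence monotonicity after iterating over small time intervals.

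Given the monotonicity, the conclusion follows. The Euclidean disjointness hypothesis at $t=a$ inside $B_{\sqrt{\gamma+R^2}}(\bx_0)$ yields a uniform lower bound $u \geq \gamma$ on the relevant slices of $\cM(a), \cM'(a)$ together with a positive Euclidean separation, giving $d_a(\cM(a), \cM'(a)) > 0$; by monotonicity and lower semicontinuity, $d_b(\cM(b), \cM'(b)) \geq d_a > 0$. A point $\bp$ in $\cM(b) \cap \cM'(b) \cap B_{\sqrt{R^2 - \gamma - 2n(b-a)}}(\bx_0)$ would satisfy $u(\bp,b) \geq \gamma > 0$, placing $\bp$ in the interior of $W(b)$ and forcing $d_b(\cM(b), \cM'(b)) = 0$, a contradiction. \emph{The main obstacle} is the backward-tracking length estimate: since the enlarged balls have radii at least $\sqrt{2n(t_2-t_1)}$ regardless of how small $\rho_i$ is, one cannot refine the cover arbitrarily while also shrinking the time step in isolation. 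The resolution is to track the infinitesimal rate of change of conformal chain-length under the shrinking-sphere evolution and integrate, using that the compensating rate $\partial_t u = -2n$ of the weight exactly cancels the rate of radial shrinking to leading order — this is precisely why the constant $2n$ appears in the definition of $u$ — and this cancellation holds uniformly only because the $\gamma$-buffer prevents the balls from approaching $\partial W(t)$.
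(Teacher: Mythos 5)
The paper does not give its own proof of this statement; it cites \cite[Appendix C]{CCMS:generic1}, so there is nothing to compare against directly. On its own merits, your sketch gets the high-level architecture right (shrinking spheres as classical barriers against weak set flows, and the conformal metric $g_t = u^{-2}g_{\RR^{n+1}}$ chosen so that the weight degenerates exactly at the shrinking sphere $\partial W(t)$), and the deduction of the final disjointness conclusion from monotonicity is sound. But the monotonicity argument itself---which is the entire content of the theorem---has a genuine gap that you acknowledge only partially and whose claimed resolution does not hold up.

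Your ``resolution'' via the cancellation $\partial_t u = -2n$ only addresses the \emph{interior} segments of the discretized chain. Indeed, for a fixed curve $\gamma$ lying in $W(t_2)$, one does have $L_{t_1}(\gamma) = \int_\gamma \frac{ds}{u(\cdot,t_2)+2n(t_2-t_1)} \leq L_{t_2}(\gamma)$, so the interior is not actually where the difficulty lies. The difficulty is at the \emph{endpoints}: the barrier contrapositive applied to a ball of radius $\rho_0$ around $\bp_2 \in \cM(t_2)$ only produces some point $\bp_1 \in \cM(t_1) \cap \overline{B}_{\sqrt{\rho_0^2 + 2n(t_2-t_1)}}(\bp_2)$, and the conformal length needed to splice $\bp_1$ onto the chain is of order $\sqrt{t_2-t_1}\,/\,u(\bp_2,t_1)$. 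Since the near-minimizing curve at time $t_2$ may have its endpoints arbitrarily close to $\partial W(t_2)$, where $u$ is arbitrarily small, this splicing cost is not controlled: if $u(\bp_2,t_2) \ll t_2-t_1$ it is of order $(t_2-t_1)^{-1/2}$, and even if one iterates over time steps of length $\delta$ the accumulated error is $O((b-a)/\sqrt{\delta})$, which diverges. There is a related containment issue you do not mention: the enlarged endpoint balls, of radius $\sim\sqrt{2n\delta}$, can leave $W(t_1)$, since $\partial W$ recedes only at rate $O(\delta)$; so the chain at time $t_1$ is not obviously a curve in $W(t_1)$.

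Separately, your reading of the $\gamma$-buffer is factually wrong, and this feeds the gap above. The hypothesis says $\cM(t)$ and $\cM'(t)$ are disjoint \emph{from each other} inside the enlarged ball $B_{\sqrt{\gamma + R^2 - 2n(t-a)}}(\bx_0)$; it says nothing about either flow staying away from $\partial W(t)$. So the claim ``the $\gamma$-buffer \dots keeps $\cM(t)\cap W(t)$ and $\cM'(t)\cap W(t)$ uniformly away from $\partial W(t)$'' is false, and so is the assertion ``$u \geq \gamma$ on the relevant slices of $\cM(a),\cM'(a)$'' (a point of $\cM(a)$ inside the enlarged ball can have $u$ arbitrarily small positive, or even $u=0$). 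The actual role of the $\gamma$-buffer is to make the barrier arguments robust near $\partial W(t)$, where balls swept in the proof unavoidably stray into the annular region between $\partial W(t)$ and the enlarged sphere; without a correct accounting of this region the monotonicity step does not close. You would need to either redo the argument so that the test curves are kept a definite conformal distance from $\partial W(t)$ (using, e.g., the a priori lower bound $d_a > 0$ and a continuity/bootstrapping scheme), or abandon the discretized-chain approach in favor of a direct comparison of how the conformal length of a \emph{single} curve and its endpoint attachment cost evolve, integrating a differential inequality rather than summing discrete errors. As written, the proof does not go through.
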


See \cite[Appendix C]{CCMS:generic1} for a proof. 

In particular we recall that a $F$-stationary varifold is the varifold version of self-shrinker. The avoidance principle immediately implies (cf.\ \cite[Corollary C.4]{CCMS:generic1})
\begin{corollary}[Frankel property for self-shrinkers]\label{cor:frank.shrink}
If $V,V'$ are $F$-stationary varifolds then $\supp V \cap \supp V' \not = \emptyset$. 
\end{corollary}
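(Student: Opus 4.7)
I would argue by contradiction: assume $\supp V \cap \supp V' = \emptyset$ and apply Ilmanen's avoidance principle (Theorem~\ref{theo:ilmanen-avoidance}) to the associated shrinking Brakke flows. Since $V$ and $V'$ are $F$-stationary, the families $t \mapsto \mu_{\sqrt{-t}V}$ and $t \mapsto \mu_{\sqrt{-t}V'}$ are integral Brakke flows on $(-\infty,0)$, and their supports $\cM(t) := \sqrt{-t}\supp V$ and $\cM'(t) := \sqrt{-t}\supp V'$ are closed weak set flows that are disjoint at every time $t < 0$ by invariance of disjointness under simultaneous rescaling.

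Applying Theorem~\ref{theo:ilmanen-avoidance} with $\mathbf{x}_0 = \mathbf{0}$, $a = -1$, some $b \in (-1, 0)$, small $\gamma > 0$, and large $R$, the disjointness hypothesis is automatic, and the conclusion yields that the conformal distance $t \mapsto d_t(\cM(t), \cM'(t))$, measured in the complete Riemannian metric $g_t := u(\cdot, t)^{-2} g_{\mathbb{R}^{n+1}}$ on $\{u > 0\}$ with $u(\mathbf{x}, t) = R^2 - |\mathbf{x}|^2 - 2n(t+1)$, is non-decreasing on $[-1, b)$. The plan is to contradict this by showing $d_t \to 0$ as $t \to 0^-$ while the disjointness of $\supp V$ and $\supp V'$ forces $d_{-1}$ to be strictly positive.

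For the collapse, fix $\mathbf{p} \in \supp V$ and $\mathbf{p}' \in \supp V'$; then $\sqrt{-t}\mathbf{p} \in \cM(t)$ and $\sqrt{-t}\mathbf{p}' \in \cM'(t)$ both converge to $\mathbf{0}$ as $t \to 0^-$, and on a fixed Euclidean neighborhood of $\mathbf{0}$ the factor $u$ is uniformly bounded below (of order $R^2 - 2n$ for $R$ large and $t \in [-1, 0)$), so $g_t$ is uniformly comparable to the Euclidean metric there. Integrating $u^{-1}$ along the straight segment from $\sqrt{-t}\mathbf{p}$ to $\sqrt{-t}\mathbf{p}'$ gives
\begin{equation*}
d_t(\cM(t), \cM'(t)) \leq d_t(\sqrt{-t}\mathbf{p}, \sqrt{-t}\mathbf{p}') \lesssim R^{-2} \sqrt{-t}(|\mathbf{p}| + |\mathbf{p}'|) \longrightarrow 0,
\end{equation*}
so $d_{-1}(\supp V, \supp V') = 0$ by monotonicity. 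Hence there are $\mathbf{q}_k \in \supp V$ and $\mathbf{q}'_k \in \supp V'$ with $d_{-1}(\mathbf{q}_k, \mathbf{q}'_k) \to 0$, and since $g_{-1}$ is complete on $B_R(\mathbf{0})$ with conformal factor $u^{-1} = (R^2 - |\mathbf{x}|^2)^{-1}$ blowing up at $\partial B_R(\mathbf{0})$ in a manner that sends sequences approaching distinct Euclidean boundary points to infinite $g_{-1}$-distance, any such pair of sequences must (after passing to a subsequence) share a common Euclidean limit $\mathbf{q}_\infty \in \overline{B_R(\mathbf{0})}$. Closedness of $\supp V$ and $\supp V'$ then gives $\mathbf{q}_\infty \in \supp V \cap \supp V'$, contradicting disjointness.

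\textbf{The main obstacle} is this final dichotomy: ruling out that sequences with $g_{-1}$-distance $\to 0$ could drift toward different points of $\partial B_R(\mathbf{0})$ without a common Euclidean limit. This requires analyzing the conformal metric $g_{-1}$ near the Euclidean boundary using the explicit form of $u$, and exploiting that any two distinct directions on $\partial B_R(\mathbf{0})$ are infinitely separated in the complete metric $g_{-1}$.
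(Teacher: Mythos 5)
Your plan is correct, and it is the approach the paper has in mind: the corollary is stated as an immediate consequence of Ilmanen's avoidance principle (Theorem~\ref{theo:ilmanen-avoidance}), with the details deferred to \cite[Corollary C.4]{CCMS:generic1}, and your rescaling/collapse argument matches that route. The one thing worth pointing out is that what you flag at the end as ``the main obstacle'' is not an obstacle at all, and your proposed resolution via the blow-up of the conformal factor near $\partial B_R(\bOh)$ is more work than is needed. At time $t=-1$ the definition gives $u(\bx,-1) = (R^2 - |\bx|^2)_+$ (taking $t_0 = -1$, $\bx_0 = \bOh$), so on all of $B_R(\bOh)$ one has the crude upper bound $u \leq R^2$, hence $g_{-1} = u^{-2} g_{\RR^{n+1}} \geq R^{-4} g_{\RR^{n+1}}$ and therefore
\[
d_{-1}(\bq,\bq') \geq R^{-2}\, |\bq - \bq'|
\]
for any $\bq,\bq' \in B_R(\bOh)$. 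Thus $d_{-1}(\bq_k,\bq'_k)\to 0$ already forces $|\bq_k - \bq'_k| \to 0$ in the Euclidean metric; passing to a subsequence that converges in the compact set $\overline{B_R(\bOh)}$ gives a single common limit $\bq_\infty$, and since $\supp V$ and $\supp V'$ are closed in $\RR^{n+1}$ we conclude $\bq_\infty \in \supp V \cap \supp V'$, the desired contradiction. No completeness of $g_{-1}$, no analysis of distinct boundary limit points, and no dichotomy is required; the case distinction you were worried about never arises once you observe that the conformal metric uniformly dominates a constant multiple of the Euclidean one.
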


\section{Brakke flows avoid smooth flows with bounded curvature}\label{app:bf-avoid-bd-curv}

Recall that (in $\RR^{n+1}$) a Brakke flow avoids another Brakke flow as long as one of them has compact initial condition (and they are both initially disjoint). See \cite[10.6]{Ilmanen:elliptic}. 

When both flows have non-compact initial data, it is less clear to what extent this holds. We note that the Ecker--Huisken maximum principle can be used when the flows are both smooth and one is graphical over the other one (and in \cite{CCMS:generic1} we observed that one can weaken the smoothness assumption in a compact set by using Ilmanen's avoidance principle). In this appendix we record the following observation of a related but slightly different nature.

\begin{proposition}\label{prop:avoidance-noncompact}
Suppose that $\Omega_t$ is a family of open sets with the property that $[0,T]\ni t \mapsto \partial\Omega_t := M_t$ is a smooth mean curvature flow with uniformly bounded curvature
\begin{equation}\label{eq:bd.curv.flow.unique}
|A_{M_t}| \leq C
\end{equation}
and uniformly controlled area ratios on small scales, i.e.,
\begin{equation}\label{eq:bd.loc.area.ratios}
\limsup_{r\to 0} \sup_{\bx \in \RR^{n+1}, t\in [0,T]} \frac{|M_t \cap B_r(\bx)|}{\omega_n r^n} = 1.
\end{equation}
 If $[0,T]\ni t\mapsto \tilde \mu_t$ is a Brakke flow with $\supp \tilde \mu_0 \cap \overline{\Omega}_0 = \emptyset$, then it holds that $\supp \tilde\mu_t \cap  \overline\Omega_t = \emptyset$ for all $t\in (0,T]$. 
\end{proposition}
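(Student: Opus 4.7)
The plan is to combine Ilmanen's localized avoidance principle (Theorem \ref{theo:ilmanen-avoidance}) applied to the pair of closed weak set flows $(M, \supp\tilde\mu)$ with a Brakke lower-semicontinuity argument to rule out interior contact. First I would note that $t \mapsto M_t$, viewed as a closed subset of spacetime, is a weak set flow because smooth classical mean curvature flows avoid disjoint compact classical comparison flows by the standard maximum principle, and $\supp\tilde\mu$ is a closed weak set flow generated by $\supp\tilde\mu_0$ by \cite[10.5]{Ilmanen:elliptic}.

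\textbf{Step 1 (boundary avoidance).} I would show $M_t \cap \supp\tilde\mu_t = \emptyset$ for all $t \in [0,T]$ by a continuity argument. Set $E := \{t \in [0,T] : M_s \cap \supp\tilde\mu_s = \emptyset \text{ for all } s \in [0,t]\}$. Closedness of $E$ at its supremum $t_*$ follows from Theorem \ref{theo:ilmanen-avoidance} applied with any $\bx_0$, $\gamma=1$, and $R$ large (so $R^2 > 1 + 2nt_*$): for $s \in [0,t_*)$ the flows are globally disjoint, trivially verifying the shrinking-ball hypothesis, and the conclusion gives disjointness at $t_*$ in a ball of radius $\sqrt{R^2-1-2nt_*}$, which is arbitrarily large as $R \to \infty$. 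Openness at $t_*$ uses the bounded-curvature normal-speed bound $|\bH_{M_t}| \leq \sqrt{n}C$ (so $M_t \subset U_{\sqrt{n}C(t-t_*)}(M_{t_*})$ for $t$ slightly larger) together with Ilmanen's shrinking-sphere avoidance (giving $\supp\tilde\mu_t \subset U_{\sqrt{2n(t-t_*)}}(\supp\tilde\mu_{t_*})$); for any $\bx_0$ and $R$, compactness of $M_{t_*} \cap \overline{B_R(\bx_0)}$ and $\supp\tilde\mu_{t_*} \cap \overline{B_R(\bx_0)}$ yields a positive local separation, hence short-time disjointness in $B_{R/2}(\bx_0)$, which Theorem \ref{theo:ilmanen-avoidance} then propagates to extend $E$ beyond $t_*$.

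\textbf{Step 2 (interior avoidance).} I would then show $\supp\tilde\mu_t \cap \Omega_t^\circ = \emptyset$. Let $t_{**}$ be the first time this fails with $\bx_* \in \supp\tilde\mu_{t_{**}} \cap \Omega_{t_{**}}^\circ$; note $\bx_* \notin M_{t_{**}}$ by Step 1. Choose $\rho > 0$ with $\overline{B_{2\rho}(\bx_*)} \subset \Omega_{t_{**}}^\circ$; bounded curvature of $M_t$ yields $\overline{B_\rho(\bx_*)} \subset \Omega_t^\circ$ for $t \in [t_{**}-\delta, t_{**}]$ with $\delta$ small. Applying Brakke's inequality to a cutoff $\phi \in C^1_c(B_\rho(\bx_*))$ with $\phi \equiv 1$ on $B_{\rho/2}(\bx_*)$ gives
\[
\int \phi\, d\tilde\mu_{t_{**}} - \int \phi\, d\tilde\mu_t \leq \int_t^{t_{**}}\!\! \int (|\bH||\nabla\phi| + \phi_t)\, d\tilde\mu_s\, ds,
\]
so by lower semicontinuity from the past, $\int \phi\, d\tilde\mu_t > 0$ for some $t < t_{**}$ close to $t_{**}$, whence $\emptyset \neq \supp\tilde\mu_t \cap B_{\rho/2}(\bx_*) \subset \supp\tilde\mu_t \cap \Omega_t^\circ$, contradicting minimality.

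The main obstacle is the openness step in Step 1: two disjoint closed sets in $\RR^{n+1}$ need not be uniformly separated at infinity, so local short-time disjointness estimates may depend on the center $\bx_0$. The bounded-curvature hypothesis together with the uniform local area ratios \eqref{eq:bd.loc.area.ratios} is essential here, as these assumptions make the infinitesimal behavior of $M_t$ uniform in $(\bx, t)$ — in particular providing a uniform tubular neighborhood — so that Theorem \ref{theo:ilmanen-avoidance} can be iterated in balls of fixed size to propagate disjointness to cover $[t_*, t_* + \delta]$ uniformly in space, completing the contradiction.
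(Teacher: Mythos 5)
Your proposal takes a genuinely different route from the paper's proof, but the openness step in Step 1 has a gap that is not fixable in the way you suggest. The obstruction you identify is the right one — the local Euclidean separation $\eps(\bx_0,R)$ between $M_{t_*}\cap B_R(\bx_0)$ and $\supp\tilde\mu_{t_*}\cap B_R(\bx_0)$ can degenerate as $\bx_0\to\infty$ — but the hypotheses \eqref{eq:bd.curv.flow.unique} and \eqref{eq:bd.loc.area.ratios} cannot repair it. Those conditions control the intrinsic geometry of $M$ (uniform tubular neighborhood, uniform local area ratios), but say nothing about how close the Brakke flow $\supp\tilde\mu_{t_*}$ comes to $M_{t_*}$: the two could be asymptotically tangent at spatial infinity while still being disjoint, in which case no uniform tubular neighborhood of $M$ gives you a uniform waiting time $\delta$ before potential contact. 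Without a uniform $\delta>0$, iterating Theorem \ref{theo:ilmanen-avoidance} over balls of a fixed size does not prevent contacts from occurring at times $t_j\downarrow t_*$ with contact points $\bx_j\to\infty$, which is exactly the failure mode you need to exclude. (A variant of this escape-to-infinity issue also undermines Step 2: the ``first failure time'' $t_{**}$ is only an infimum, and need not be attained if the offending points of $\supp\tilde\mu_{t_j}\cap\Omega^\circ_{t_j}$ run off to infinity as $t_j$ decreases to the infimum, so the Brakke inequality cannot be set up at a fixed $(\bx_*,t_{**})$.)

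The paper avoids the time-continuity argument entirely. It approximates $\overline{\Omega_0}$ by compact sets $K_0^R\subset\overline{\Omega_0\cap B_{R+\delta_R}(\bOh)}$ with smooth, non-fattening boundary that is graphical over $M_0$ in $B_{R/2}(\bOh)$ with small norm, lets $K^R_t$ be their level-set flows, and invokes the known avoidance between a compact weak set flow and an arbitrary one to conclude $\supp\tilde\mu_t\cap K^R_t=\emptyset$ for all $t$. The hypotheses \eqref{eq:bd.curv.flow.unique}, \eqref{eq:bd.loc.area.ratios} enter at a completely different point than you propose: via a compactness argument and Lemma \ref{lemm:uniqueness-BF} (uniqueness of bounded-curvature smooth flows), they are used to show $\cH^n\lfloor\partial K^R_t$ converges to $\cH^n\lfloor M_t$, so that $K^R_t$ exhausts $\overline{\Omega_t}$ locally. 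If some $\bx\in\overline{\Omega_t}\cap\supp\tilde\mu_t$ existed, then since $K^R_t$ and $\supp\tilde\mu_t$ are globally disjoint, Theorem \ref{theo:ilmanen-avoidance} applied in a ball around $\bx$ gives monotonicity $d_t(K^R_t,\supp\tilde\mu_t)\geq d_0(K^R_0,\supp\tilde\mu_0)\geq c>0$ uniformly in $R$; but as $R\to\infty$ a short segment from $\bx$ to $K^R_t$ shows $d_t(K^R_t,\supp\tilde\mu_t)=o(1)$, a contradiction. This directly contradicts a hypothetical contact point rather than trying to prevent contact by propagating separation forward, and is the correct way to handle the noncompactness. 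I suggest you restructure along these lines.
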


The point is that neither of the flows are assumed to have compact support. 

\begin{proof}
For $R>0$ (large) fix a compact set $K_0^R$ as follows. Choose $|\delta_R| < 1/R$ so that $\partial \Omega_0$ is tranverse to $\partial B_{R+\delta_R}(\bOh)$. Then, we can fix $K_0^R \subset \overline{\Omega_0\cap B_{R+\delta_R}(\bOh)}$ so that (i) $K_0 ^R$ does not fatten under the level-set flow and (ii) $\partial K_0^R$ is smooth (iii) $\partial K_0^R \cap B_{R/2}(\bOh)$ is a graph over a portion of $M_0$ with $C^{\lfloor R\rfloor}$-norm $\leq 1/R$ and (iv) the condition \eqref{eq:bd.loc.area.ratios} holds for $\partial K_0^R$ in place of $M_t$, uniformly as $R\to\infty$. 

Let $K^R_t$ denote the level set flow of $K_0^R$. We note that $K_t^R$ is a set-theoretic sub-solution (cf.\ \cite[10.1]{Ilmanen:elliptic}) with compact initial condition and moreover because $\tilde \mu_t$ is a Brakke flow, $\supp \tilde\mu_t$ is a set-theoretic subsolution by \cite[10.5]{Ilmanen:elliptic}. Using the fact that two set-theoretic solutions avoid each-other when one has compact initial conditions by \cite[10.2]{Ilmanen:elliptic}, we find that $\supp \tilde \mu_t  \cap K_0^R = \emptyset$ for all $t \in [0,T]$.

Let $R\to \infty$. We claim that $K^R_t$ converges to $K_t$ for $t \in [0,T]$ in the local Hausdorff sense. In fact, $\partial K_t^R$ converges in the locally smooth graphical sense to $M_t$. To see this, first note that $t\mapsto \mu_t^R:= \cH^n\lfloor \partial^* K_t^R$ is an integral unit-regular Brakke flow by \cite[11.3]{Ilmanen:elliptic} (since $K_0^R$ is non-fattening under the level-set flow by (i) above). Using (iv) and a covering argument we can find a subsequence $R_i\to\infty$ and pass $\mu_t^{R_i}$ to the limit as a Brakke flow $\mu_t$. Note that $\mu_0 = \cH^n \lfloor M_0$. By Lemma \ref{lemm:uniqueness-BF} below, we find that $\mu_t = \cH^n\lfloor M_t$ for all $t \in [0,T]$. (Thus, we did not need to take a subsequence.) Unit regularity of $\mu_t^{R}$ then shows that $\mu_t^{R}$ is actually smooth on compact sets for $R$ large and converges locally smoothly. This yields the asserted convergence of $K_t^R$ to $K_t$.

Suppose that $\tilde\mu_t$ is as in the statement of the proposition but there is $ t\in(0,T]$ and $\bx \in K_t \cap \supp\tilde \mu_t$. Choose  {$\gamma,R_0>0$ so that $R_0^2 - \gamma > 2nT$ and $\bx \in B_{\sqrt{R_0^2 - 2nT -\gamma}}(\bOh)$. By Ilmanen's avoidance principle, Theorem \ref{theo:ilmanen-avoidance}, applied to $K_t^R$ and $\supp\tilde \mu_t$, (with constants $a=0,b=T,R_0,\gamma$)} we have that 
\[
d_t(K_t^R,\supp \tilde\mu_t) \geq d_0(K_t^R,\supp \tilde\mu_0), 
\]
where $d_t$ is as in \eqref{eq:ilmanen-distance-function}.
Note that $d_0(K_t^R,\supp \tilde\mu_0)\geq c>0$ as $R\to\infty$ by (iii) above (and disjointness of $K_0$ and $\supp\tilde\mu_0$). On the other hand, we can take $l_R$ to be a line segment connecting $\bx$ to $K_t^R$. Because $K_t^R$ is converging to $K_t$, we find that
\[
d_t(K_t^R,\supp\tilde\mu_t) \leq \int_{l_R} u(l_R(s),t)^{-1}ds = o(1)
\]
as $R\to\infty$ (with the notation as in Appendix \ref{app:ilmanen.avoidance} above). This is a contradiction, completing the proof. 
\end{proof}

\begin{lemma}\label{lemm:uniqueness-BF}
Suppose that $[0,T]\ni t\mapsto M_t$ is a smooth mean curvature flow with the property that $M_t$ satisfies \eqref{eq:bd.curv.flow.unique} and \eqref{eq:bd.loc.area.ratios}. Then for $[0,T] \ni t\mapsto \mu_t$ an integral unit-regular Brakke flow with $\mu_0 = \cH^n\lfloor M_0$, then it holds that $\mu_t = \cH^n\lfloor M_t$ for all $t\in[0,T]$. 
\end{lemma}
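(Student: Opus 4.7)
The plan is to establish short-time uniqueness via Huisken's monotonicity and Brakke's local regularity theorem, then iterate.

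\textbf{Density estimate.} I would first show that for every $\varepsilon > 0$ there is $\tau_0 = \tau_0(C,n,\varepsilon) > 0$ such that for all $\bx_0 \in \RR^{n+1}$ and $t_0 \in (0,\tau_0]$, Huisken's monotonicity applied to $\mu$ yields
\[
\Theta_\mu(\bx_0, t_0) \;\leq\; \int \rho_{(\bx_0,t_0)}(\bx,0)\, d\mu_0(\bx) \;=\; \int_{M_0} \rho_{(\bx_0,t_0)}(\bx,0)\, d\cH^n(\bx) \;\leq\; 1 + \varepsilon.
\]
The second inequality is the key analytic step and uses \eqref{eq:bd.curv.flow.unique} together with \eqref{eq:bd.loc.area.ratios}: the backward heat kernel is concentrated at scale $\sqrt{t_0}$ around $\bx_0$, and for $\sqrt{t_0}$ much smaller than $1/C$, the bounded curvature forces $M_0$ to be a graph over the tangent plane at a nearest point $\by_0 \in M_0$ with second-order error, while the local area ratio condition ensures the Gaussian mass over this graph is approximately that of a plane (equal to $1$ if the plane contains $\bx_0$, and strictly less otherwise). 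The correction is of order $C^2 t_0$.

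\textbf{Application of Brakke regularity and smooth uniqueness.} I would choose $\varepsilon$ strictly less than the threshold $\varepsilon_0$ in White's version of Brakke's local regularity theorem \cite{White:Brakke}. For each $\bx_0 \in M_{t_0}$ with $t_0 \in (0,\tau_0]$, the regularity theorem implies that $\mu$ is represented by a smooth multiplicity-one mean curvature flow $\tilde M_t$ with bounded curvature in a parabolic neighborhood of $(\bx_0,t_0)$. Since $\mu_0 = \cH^n\lfloor M_0$, both $\tilde M_t$ and $M_t$ are smooth mean curvature flows with bounded curvature emanating from $M_0$; by the uniqueness result \cite[Theorem 1.1]{ChenYin}, they coincide, giving $\mu_t = \cH^n\lfloor M_t$ in a tubular neighborhood of $M_t$. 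For $\bx_0 \notin M_{t_0}$, the speed bound $|\bH| \leq \sqrt{n}\,C$ ensures $d(\bx_0,M_0) \geq \tfrac{1}{2} d(\bx_0,M_{t_0}) > 0$ for small $t_0$, so that the same Huisken monotonicity gives
\[
\Theta_\mu(\bx_0, t_0) \leq \int_{M_0} \rho_{(\bx_0,t_0)}(\bx,0)\, d\cH^n(\bx) \to 0
\]
exponentially in $1/t_0$. Integrality of $\mu$ then forces $\bx_0 \notin \supp\mu_{t_0}$. Combining, $\mu_t = \cH^n\lfloor M_t$ on $[0,\tau_0]$.

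\textbf{Iteration.} Since $\tau_0$ depends only on $C$ and $n$, and the bounds \eqref{eq:bd.curv.flow.unique}, \eqref{eq:bd.loc.area.ratios} hold uniformly on $[0,T]$, I would iterate the above argument on successive intervals $[k\tau_0,(k+1)\tau_0]$, using $\mu_{k\tau_0} = \cH^n\lfloor M_{k\tau_0}$ as new initial data, to cover $[0,T]$ in finitely many steps.

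The main obstacle is making the density estimate uniform in $\bx_0$ when $M_0$ is non-compact: one needs to bound the Gaussian integral $\int_{M_0}\rho_{(\bx_0,t_0)}\,d\cH^n$ over possibly unbounded $M_0$, which requires extracting at most polynomial local volume growth from \eqref{eq:bd.loc.area.ratios} combined with \eqref{eq:bd.curv.flow.unique}, and localizing the argument to compact subsets so that all Gaussian integrals are finite.
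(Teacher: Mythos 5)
Your approach is morally the same as the paper's: both rest on the trio of (i) a density estimate forcing $\Theta_\mu$ close to $1$, (ii) White's local regularity / unit-regularity, and (iii) Chen--Yin uniqueness for bounded-curvature mean curvature flows. The difference is packaging: you phrase it as a quantitative short-time statement plus iteration, while the paper runs an open-and-closed argument on the set $\{t \in [0,T] : \mu_t = \cH^n \lfloor M_t\}$, which sidesteps the need to name an explicit uniform step size.

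The specific claim that $\tau_0$ depends only on $C$, $n$, $\varepsilon$ is false, and the error obscures the role of hypothesis \eqref{eq:bd.loc.area.ratios}. Take $M_t$ to be a static pair of parallel hyperplanes at distance $\delta>0$: then $|A|\equiv 0$, so one may take $C=0$, and $\limsup_{r\to 0}$ of the sup of area ratios is exactly $1$, so both hypotheses hold for every $\delta$. Yet for $\bx_0$ on one of the two planes,
\[
\int_{M_0}\rho_{(\bx_0,t_0)}\,d\cH^n \;=\; 1 + e^{-\delta^2/4t_0},
\]
which tends to $2$ as soon as $t_0 \gg \delta^2$. So no choice of $\tau_0$ depending only on $C,n,\varepsilon$ can yield the bound $1+\varepsilon$ uniformly in $\delta$. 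The point your heuristic misses is that the bounded second fundamental form only controls the \emph{nearest} local sheet of $M_0$ as a graph over $T_{\by_0}M_0$; it says nothing about other sheets passing nearby, which is exactly the phenomenon \eqref{eq:bd.loc.area.ratios} is there to control. Unpacking the $\limsup$ in \eqref{eq:bd.loc.area.ratios} gives, for each $\varepsilon>0$, a scale $r_\varepsilon>0$ depending on the flow (uniformly in $t\in[0,T]$, not merely on $C$) such that area ratios are $\leq 1+\varepsilon$ on all balls of radius $\leq r_\varepsilon$; the correct choice is $\tau_0 \sim \min\{r_\varepsilon^2, C^{-2}\}$, and your iteration still closes since this $\tau_0$ is uniform across $[0,T]$. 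The remaining point you flag as an obstacle — finiteness of the Gaussian integrals for non-compact $M_0$, needed before Huisken's monotonicity can even be invoked — is resolved by the same $r_\varepsilon$: covering $B_R(\bx_0)$ by $O((R/r_\varepsilon)^{n+1})$ balls of radius $r_\varepsilon$ gives $|M_t\cap B_R(\bx_0)| \lesssim R^{n+1}/r_\varepsilon$, so the volume growth is polynomial and every Gaussian integral in sight converges.
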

\begin{proof}
By unit-regularity and \eqref{eq:bd.curv.flow.unique}, \eqref{eq:bd.loc.area.ratios}, the set of times $t$ with $\mu_t = \cH^n \lfloor M_t$ is closed. To prove openness, we can note that again by unit-regularity and \eqref{eq:bd.curv.flow.unique}, \eqref{eq:bd.loc.area.ratios}, if $\mu_t = \cH^n \lfloor M_t$ then for $s>0$ small, $\mu_{t+s} = \cH^n \lfloor M_{t+s}'$ for some smooth mean curvature flow $M_{t+s}'$ with bounded curvature. Then, $M_{t+s}'=M_{t+s}$ by uniqueness of smooth mean curvature flows with bounded mean curvature \cite{ChenYin}.  Alternatively, one could argue that $M_{t+s}'$ is a small graph (with zero initial conditions at $s=0$) over $M_{t+s}$ and apply the Ecker--Huisken maximum principle. 
\end{proof}

\section{The topology of regions in $\RR^3$}\label{sec:top-regions-R3}

We collect some useful results concerning the topology of regions in $\RR^3$. Recall the following definitions. 

\begin{definition}
If $M$ is an oriented compact connected surface without boundary, decomposing $M = \cup_{\alpha} M _{\alpha}$ into connected components, then we set
\[
\genus(M) : = \sum_{\alpha} \genus(M_{\alpha}) = \tfrac 12 b_{1}(M). 
\]
\end{definition}
\begin{definition}\label{defi:genus.surface.w.bdry}
If $M^2$ is an oriented compact surface with boundary $\partial M$, the genus of $M$ is defined to be the genus of the oriented surface $\tilde M$ formed by capping off each  circle in $\partial M$ by a disk. 
\end{definition} 

\begin{lemma}\label{lemm:alg.int.pos.genus}
If $M^2$ is an oriented compact surface with boundary so that there are two curves in $M^2$ with non-zero algebraic intersection, then $\genus(M) > 0$. 
\end{lemma}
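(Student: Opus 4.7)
The idea is to reduce to the closed case via the capping construction in Definition \ref{defi:genus.surface.w.bdry} and then use the triviality of $H_1$ of a sphere. First I would let $\tilde M$ denote the closed oriented surface obtained from $M$ by attaching a disk $D_i$ along each boundary circle of $\partial M$, so that by definition $\genus(M) = \genus(\tilde M)$. I will argue by contrapositive: assume $\genus(M) = 0$ and show that every pair of curves $\gamma_1, \gamma_2 \subset M$ must have vanishing algebraic intersection number.

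Under this assumption, each connected component of $\tilde M$ is a $2$-sphere, and in particular $H_1(\tilde M;\mathbb{Z}) = 0$. Given the two loops $\gamma_1, \gamma_2 \subset M$, after a small isotopy (which does not change the algebraic intersection number) I may assume that $\gamma_1, \gamma_2$ lie in the interior $M^\circ$, that they are transverse to each other, and that they are disjoint from the attached caps $\bigsqcup_i D_i \subset \tilde M$. Then $\gamma_1 \cap \gamma_2 \subset M^\circ \subset \tilde M$ as a set of transverse intersection points, and the local orientations used to compute the intersection sign depend only on the ambient orientation of $\tilde M$ (which restricts to the given orientation on $M$), so the algebraic intersection numbers in $M$ and in $\tilde M$ agree:
\[
\gamma_1 \cdot_{M} \gamma_2 = \gamma_1 \cdot_{\tilde M} \gamma_2.
\]

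Because $H_1(\tilde M;\mathbb{Z}) = 0$, the class $[\gamma_1] \in H_1(\tilde M;\mathbb{Z})$ vanishes, so $\gamma_1$ bounds a $2$-chain in $\tilde M$; since the algebraic intersection pairing descends to homology, this forces $\gamma_1 \cdot_{\tilde M} \gamma_2 = 0$, and hence $\gamma_1 \cdot_M \gamma_2 = 0$. This contradicts the hypothesis that there exist two curves in $M$ with non-zero algebraic intersection, so we must have $\genus(M) > 0$.

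I expect no substantial obstacle: the only point requiring a line of care is the identification of algebraic intersection numbers in $M$ and $\tilde M$, which is handled by perturbing the curves into $M^\circ$ away from the caps. (If one prefers, the same argument runs over $\mathbb{Z}/2$ mod-$2$ intersection numbers, since $H_1(S^2;\mathbb{Z}/2) = 0$ as well.)
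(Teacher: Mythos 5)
Your proof is correct and follows essentially the same route as the paper's: cap off the boundary to form $\tilde M$, observe that the intersection number of the two curves is unchanged by this operation (since they can be kept in $M^\circ$, away from the caps), and then use the triviality of the topology of a genus-zero closed surface to force the intersection number to vanish. The only cosmetic difference is that you work through $H_1(\tilde M;\ZZ)=0$ while the paper observes that every loop on a sphere is nullhomotopic; both are equivalent ways of expressing that the sphere carries no nontrivial intersection pairing.
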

\begin{proof}
We can assume that $M$ is connected. The capped off surface $\tilde M$ has two curves with non-zero algebraic intersection. These curves cannot be nullhomotopic  in $\tilde M$ (since algebraic intersection is unchanged by homotopy), so $\tilde M$ is not a sphere. 
\end{proof}

The following result is well-known, but we give the proof for completeness. 
\begin{lemma}\label{lemm:H1.subset.R3.bdry}
Suppose that a closed set $\Omega\subset \RR^{3}$ is a $3$-manifold with compact boundary $\partial\Omega$. Set $g = \genus(\partial \Omega)$. Then $H_{1}(\Omega;\ZZ) = \ZZ^{g}$. 
\end{lemma}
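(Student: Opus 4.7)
The plan is to first reduce to a compact $3$-manifold $\hat\Omega$ sitting inside $S^3$, then combine Mayer--Vietoris on $S^3$ with the classical ``half lives, half dies'' principle for compact oriented $3$-manifolds with boundary.

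First, the compactification. If $\Omega$ is bounded I set $\hat\Omega := \Omega \subset S^3$. If $\Omega$ is unbounded then, since $\partial\Omega$ is compact, I choose $R$ with $\partial\Omega \subset B_R(\bOh)$; the connected set $\RR^3 \setminus \overline{B_R(\bOh)}$ is disjoint from $\partial\Omega$, and because $\Omega$ is closed and unbounded it must contain this set entirely. Adjoining the point at infinity then makes $\hat\Omega := \Omega \cup\{\infty\} \subset S^3$ a compact $3$-manifold with $\partial\hat\Omega = \partial\Omega$. Since removing an interior point of a $3$-manifold leaves $H_1$ unchanged (a punctured $3$-ball is homotopy equivalent to $S^2$), I have $H_1(\Omega;\ZZ) \cong H_1(\hat\Omega;\ZZ)$ in both cases.

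Next, I will run Mayer--Vietoris on $S^3$. Setting $\hat N := \overline{S^3 \setminus \hat\Omega}$, another compact $3$-manifold with $\partial \hat N = \partial\Omega$, and taking a bicollar neighborhood of $\partial\Omega$ to produce open thickenings $U \supset \hat\Omega$, $V \supset \hat N$ with $U \cap V \simeq \partial\Omega$, the Mayer--Vietoris sequence for $S^3 = U \cup V$ reads
\[
0 = H_2(S^3) \longrightarrow H_1(\partial\Omega) \xrightarrow{(i_*,\,j_*)} H_1(\hat\Omega) \oplus H_1(\hat N) \longrightarrow H_1(S^3) = 0,
\]
so the middle map is an isomorphism $\ZZ^{2g} \cong H_1(\hat\Omega;\ZZ) \oplus H_1(\hat N;\ZZ)$. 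In particular both summands are free abelian with ranks summing to $2g$.

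To pin down each rank I will invoke half-lives-half-dies: for any compact oriented $3$-manifold $M$ with boundary, the image of $H_1(\partial M;\QQ) \to H_1(M;\QQ)$ has dimension exactly $\tfrac12 \dim H_1(\partial M;\QQ)$, a consequence of Lefschetz duality applied to the long exact sequence of $(M,\partial M)$. Applied to $\hat\Omega$ and $\hat N$ this forces $\rank H_1(\hat\Omega;\ZZ) \geq g$ and $\rank H_1(\hat N;\ZZ) \geq g$; combined with the rank sum $2g$ from Mayer--Vietoris, both ranks equal $g$, and freeness of $H_1(\hat\Omega;\ZZ)$ yields $H_1(\Omega;\ZZ) = H_1(\hat\Omega;\ZZ) = \ZZ^g$. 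The only non-routine input is the half-lives-half-dies principle; if one wishes to avoid it, one can instead apply Alexander duality to get $H_1(\hat\Omega;\ZZ) \cong H^1(\hat N;\ZZ) \cong H_1(\hat N;\ZZ)$ (the last isomorphism via universal coefficients plus freeness of $H_1(\hat N)$), and then the splitting $A \oplus A \cong \ZZ^{2g}$ again forces $A \cong \ZZ^g$. No other obstacle is anticipated.
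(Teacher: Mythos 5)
Your proof is correct and follows essentially the same strategy as the paper: compactify $\Omega$ to a compact submanifold of $S^3$, establish $H_1(\Omega;\ZZ)\oplus H_1(\Omega';\ZZ)\cong \ZZ^{2g}$ (you via Mayer--Vietoris, the paper via Alexander duality applied to $\partial\Omega$ — equivalent computations here), and then invoke half-lives-half-dies, which the paper proves from scratch via the Lefschetz-duality commutative diagram exactly as you indicate. Your alternative route at the end — applying Alexander duality to $\Omega$ itself to get $H_1(\Omega;\ZZ)\cong H^1(\hat N;\ZZ)\cong H_1(\hat N;\ZZ)$ and then reading off the rank from the splitting — is a genuine (and slightly cleaner) shortcut that bypasses half-lives-half-dies entirely, and your treatment of the compactification step is more careful than the paper's one-line remark.
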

\begin{proof}
By compactifying, it suffices to prove that for $\Omega\subset S^{3}$ a compact $3$-submanifold with boundary $\partial\Omega$ of genus $g$, it holds that $H_{1}(\Omega;\ZZ) = \ZZ^{g}$ (adding the point at infinity to $\Omega$ will not change $H_{1}$). Write $\Omega ' = S^{3} \setminus \Omega^{\circ}$. 

 Alexander duality (cf.\ \cite[Theorem 3.44]{Hatcher}) yields
 \[
 H_{1}(\Omega;\ZZ) \oplus H_{1}(\Omega';\ZZ) = H^{1}(\partial\Omega;\ZZ) = \ZZ^{2g}. 
 \]
In particular, $ H_{1}(\Omega;\ZZ), H_{1}(\Omega';\ZZ)$ are free and finitely generated with
\[
\rank  H_{1}(\Omega;\ZZ)+ \rank H_{1}(\Omega';\ZZ) = 2g. 
\]
Thus, using the universal coefficients theorem for homology \cite[Theorem 3A.3]{Hatcher}, it suffices to prove that 
\begin{equation}\label{eq:homol.calc.geq}
\dim H_{1}(\Omega;\QQ),\dim H_{1}(\Omega';\QQ) \geq \genus M,
\end{equation}
We prove \eqref{eq:homol.calc.geq} for $\Omega$ since the argument for $\Omega'$ will be identical.

 We will write $i : \partial \Omega \to \Omega$ for the inclusion map. We now follow \cite[Lemma 3.5]{Hatcher:3mfld} closely. Poincar\'e duality and the long exact sequence for the pair $(\Omega,\partial\Omega)$ yields the commutative diagram
\[
\xymatrix{
H_{2}(\Omega,\partial\Omega;\QQ) \ar[r]^{\partial} \ar@{}[d]|*=0[@]{\approx}  & H_{1}(\partial\Omega;\QQ) \ar[r]^{i_{*}} \ar@{}[d]|*=0[@]{\approx} & H_{1}(\Omega;\QQ) \ar@{}[d]|*=0[@]{\approx}\\
H^{1}(\Omega;\QQ) \ar[r]^{i^{*}}  & H^{1}(\partial\Omega;\QQ) \ar[r]^{\delta}  & H^{2}(\Omega,\partial\Omega;\QQ). 
}
\]
The map $i^{*}$ is the dual map to $i_{*}$ (since we are working over a field). Thus, $\dim \ker i_{*} = \dim \coker i^{*}$. Combined with exactness of the rows in the above diagram, we find
\[
\dim \image \partial = \dim \ker i_{*} = \dim\coker i^{*} = \dim\coker \partial
\]
Thus, 
\[
\dim\ker i_{*} = \dim\image\partial = \tfrac 12 \dim H_{1}(\partial\Omega,\QQ) = g.
\]
Thus,
\[
\dim H_{1}(\Omega;\QQ) \geq \dim \image i_{*} = \dim H_{1}(\partial\Omega;\QQ) - \dim \image i_{*} = g. 
\]
This establishes \eqref{eq:homol.calc.geq} and thus completes the proof.
\end{proof}
The next lemma is a refinement of \cite[Lemma 11.7]{CCMS:generic1} (using Lemma \ref{lemm:H1.subset.R3.bdry} in place of Alexander duality). 
\begin{lemma}\label{lemma:genus.loops.inside.first}
Suppose that a closed set $\Omega\subset \RR^{3}$ is a $3$-manifold with compact boundary $\partial\Omega$.  Set $g = \genus(\partial \Omega)$. Assume that $\partial\Omega$ is transverse to a sphere $\partial B\subset \RR^{3}$. There are loops $\gamma_{1},\dots,\gamma_{g}$ so that $\{[\gamma_{1}],\dots,[\gamma_{g}]\} \subset H_{1}(\Omega;\ZZ)$ is linearly independent and for $i=1,\dots,g$, either
\begin{itemize}
\item $\gamma_{i}$ is contained in $B$ or in $\overline B^{c}$, or
\item there is a component $\cU_{i}$ of $\partial B\cap \Omega$ that has non-zero mod-2 intersection with $\gamma_{i}$ and zero mod-2 intersection with each previous $\gamma_{j}$, $j<i$. 
\end{itemize}
Moreover, we can arrange that exactly $2\genus(S\cap B)$ of the $\gamma_{i}$ are contained entirely in $B$ and if, in $H_{1}(\Omega \cap \overline B;\ZZ)$, 
\[
\sum_{\{i : \gamma_{i}\subset B\}} n_{i}[\gamma_{i}] = [\beta]
\]
for some cycle $\beta \subset \Omega \cap \partial B$, then all of the coefficients $n_{i}\in\ZZ$ vanish. 
\end{lemma}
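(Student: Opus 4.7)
The core tool is a Mayer--Vietoris argument for the decomposition
$$\Omega = \Omega_{\rm in} \cup \Omega_{\rm out},\qquad \Omega_{\rm in}:=\overline{\Omega \cap B},\qquad \Omega_{\rm out}:=\overline{\Omega\setminus B},$$
whose pieces are both $3$-manifolds with compact boundary (after smoothing the corners along the transverse locus $\partial\Omega\cap\partial B$), and whose intersection is the compact surface $\Omega_0:=\Omega\cap\partial B$. Lemma \ref{lemm:H1.subset.R3.bdry} identifies $H_1(\Omega;\ZZ)$, $H_1(\Omega_{\rm in};\ZZ)$, and $H_1(\Omega_{\rm out};\ZZ)$ as free abelian groups of rank equal to the genus of the respective boundary, so every group in the Mayer--Vietoris long exact sequence
$$H_1(\Omega_0;\ZZ)\xrightarrow{\psi} H_1(\Omega_{\rm in};\ZZ)\oplus H_1(\Omega_{\rm out};\ZZ)\xrightarrow{\varphi} H_1(\Omega;\ZZ)\xrightarrow{\delta} \widetilde H_0(\Omega_0;\ZZ)\to 0$$
is free of finite rank. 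The induced short exact sequence $0\to\mathrm{coker}\,\psi\to H_1(\Omega;\ZZ)\to\mathrm{image}\,\delta\to 0$ therefore splits.

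The plan is to produce the $\gamma_i$ in three batches mirroring this structure. First, I choose a maximal set of classes in $H_1(\Omega_{\rm in};\ZZ)$ whose images under $\varphi$ are $\ZZ$-linearly independent in $H_1(\Omega;\ZZ)$, represented by loops in $\Omega_{\rm in}$ pushed slightly into $B^\circ$. Second, I extend to a $\ZZ$-basis of $\mathrm{image}\,\varphi$ using classes from $H_1(\Omega_{\rm out};\ZZ)$ represented by loops in $\overline B^c$. Third, I lift a $\ZZ$-basis of $\mathrm{image}\,\delta\subset\widetilde H_0(\Omega_0;\ZZ)$ to $H_1(\Omega;\ZZ)$ using the splitting above; each such lift is represented by a loop $\gamma$ that meets $\partial B$ transversely in points whose signed per-component count is exactly the coordinate of $\delta[\gamma]$. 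Integer row reduction on the resulting crossing-number matrix lets me order these loops so that at step $i$, the chosen $\gamma_i$ has odd mod-$2$ intersection with some component $\cU_i$ of $\Omega_0$ that has even mod-$2$ intersection with all previously chosen $\gamma_j$ ($j<i$), which gives the second bullet of the main dichotomy.

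The \emph{moreover} clause is extracted from the same construction. The number of $\gamma_i$ contained in $B$ is by definition the rank of $\varphi|_{H_1(\Omega_{\rm in})}$, which by exactness equals $\mathrm{rank}\,H_1(\Omega_{\rm in};\ZZ)-\mathrm{rank}\,\mathrm{image}(\psi_1)$, where $\psi_1$ denotes the first coordinate of $\psi$; combining Lemma \ref{lemm:H1.subset.R3.bdry} applied to $\Omega_{\rm in}$ with a count of the image of $H_1(\Omega_0;\ZZ)\to H_1(\Omega_{\rm in};\ZZ)$ yields the stated count in terms of $\genus(\partial\Omega\cap B)$ (in the sense of Definition \ref{defi:genus.surface.w.bdry}). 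For the non-degeneracy statement, note that $\psi_1$ factors as $H_1(\Omega_0)\to H_1(\Omega_{\rm in})$, so a relation $\sum n_i[\gamma_i]=[\beta]$ in $H_1(\Omega\cap\overline B;\ZZ)$ with $\beta\subset\Omega_0$ is exactly a relation in $H_1(\Omega_{\rm in};\ZZ)/\mathrm{image}\,\psi_1$; having chosen the inside generators to descend to a $\ZZ$-basis of this quotient from the start forces every $n_i$ to vanish. The most delicate part of the plan will be maintaining integral (not merely rational) linear independence throughout the inductive choice and correctly matching the algebraic count of inside loops to the genus of $\partial\Omega\cap B$; both steps are standard but require careful bookkeeping with the various smoothed boundary surfaces.
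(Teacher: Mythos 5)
Your Mayer--Vietoris strategy is a genuinely different route from the paper's proof, which instead inducts on the number of circles of $S\cap\partial B$ (where $S=\partial\Omega$) by surgering $S$ along an innermost such circle. That induction never has to compute the rank of any map directly: the genus count is simply propagated through each step via $\genus(S'\cap B)=\genus(S\cap B)$ for the surgered surface $S'$, and the only new loop constructed in a non-separating step is built by hand to meet a chosen disk of $\partial B$ exactly once, giving the mod-$2$ bullet for free. Your approach front-loads all the algebra into rank identities that you assert rather than prove, and at least one of them is incorrect as stated.

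Concretely: (i) $\mathrm{rank}\,\varphi|_{H_1(\Omega_{\rm in})}=\mathrm{rank}\,H_1(\Omega_{\rm in})-\mathrm{rank}\,\psi_1(\ker\psi_2)$, which is \emph{not} $\mathrm{rank}\,H_1(\Omega_{\rm in})-\mathrm{rank}\,\mathrm{image}\,\psi_1$ in general. The quantity that is actually compatible with the ``moreover'' clause is the second one, $\mathrm{rank}\bigl(H_1(\Omega_{\rm in})/\mathrm{image}\,\psi_1\bigr)$; your first paragraph uses the first quantity and your last paragraph quietly switches to the second, and they need not agree. (ii) The identification of $\mathrm{rank}\bigl(H_1(\Omega_{\rm in})/\mathrm{image}\,\psi_1\bigr)$ with $\genus(\partial\Omega\cap B)$ is not a corollary of Lemma~\ref{lemm:H1.subset.R3.bdry}: it is a separate duality fact for the compact $3$-manifold $\Omega_{\rm in}$ with boundary split into the two pieces $S\cap\overline B$ and $\Omega_0=\Omega\cap\partial B$ (roughly, $H_1(\Omega_{\rm in},\Omega_0;\QQ)\cong H^2(\Omega_{\rm in},S\cap\overline B;\QQ)$ combined with an Euler-characteristic and component count), and it is essentially the entire content of the ``moreover'' clause, so it cannot be waved off as bookkeeping. (iii) ``Integer row reduction'' on the crossing-number matrix does not by itself give the required $\FF_2$-triangular structure; you also need that $\mathrm{image}\,\delta$ is a direct summand of $\widetilde H_0(\Omega_0;\ZZ)$ (true, because the cokernel of $\delta$ embeds in the free group $\widetilde H_0(\Omega_{\rm in})\oplus\widetilde H_0(\Omega_{\rm out})$), so that its mod-$2$ reduction inside $\FF_2^{\#\text{components}}$ still has full rank. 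Finally, your count of $\genus(\partial\Omega\cap B)$ inside loops matches what the paper's base case actually produces rather than the ``$2\genus(S\cap B)$'' in the displayed statement; that factor of $2$ appears to be a typo, but your write-up should address the discrepancy explicitly rather than inherit it silently.
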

\begin{proof}
The proof is essentially identical to \cite[Lemma 11.7]{CCMS:generic1} (with Lemma \ref{lemm:H1.subset.R3.bdry} in place of Alexander duality). For the reader's convenience we give the proof below. 

We set $S=\partial\Omega$.  We induct on the number of components $b\geq 0$ of $S \cap \partial B$. 

We begin with the base case $b=0$. Here, the claim follows from Lemma \ref{lemm:H1.subset.R3.bdry}. (To ensure that the loops constructed are contained either in $B$ or $\overline B^c$, we can apply Lemma \ref{lemm:H1.subset.R3.bdry} to $\Omega \cap \overline B$ and $\Omega \setminus B$ separately.) 

For the inductive step, if we consider the $b$ components of $S\cap \partial B$, there is an inner most component (non-unique) $\alpha$ of $S\cap \partial B$ with a disk $D\subset \partial B$ with $\partial D = \alpha$ and $S\cap D^\circ = \emptyset$. We form a new surface $S'$ by removing an annulus $A = U_{\eps/10}(\alpha)\subset S$ and then gluing in two disks that are slight deformations of $D$ in/out of $B$ to cap off the boundary components $S\setminus A$. We can arrange that this occurs in $U_\eps(D)\subset \RR^3$ with $\eps>0$ small enough so that $U_\eps(D)$ is contractible. Now, we can form $\Omega'$ with $\partial\Omega' = S'$ and so that $\Omega'$ agrees with $\Omega$ outside of $U_\eps(D)$. 

Note that $S'$ has $b-1$ components in $S'\cap \partial B$, so we will be able to use the inductive hypothesis. We also note that $\genus(S'\cap B) = \genus(S\cap B)$. 

We consider two cases depending on whether or not $\alpha$ separates the component of $S$ that contains it or not. 

\emph{Separating case.} Suppose $\alpha$ is separating. In this case, it is easy to see that $\genus(S') = \genus(S) = g$. The inductive hypothesis yields a set of loops $\gamma_1,\dots,\gamma_g$ satisfying the claim with $\Omega'$ replacing $\Omega$. As long as $\eps>0$ was chosen small enough above, we can assume that the $\gamma_j$ are contained in $\Omega$. We claim that these curves satisfy the assertion of the lemma (for $\Omega$).  The two bullet points follow immediately from the construction. 

Write $I_B$ for the set of indices with $\gamma_i \subset B$ and similarly for $I_{B^c}$. Suppose that there are integers $n_i$ so that
\[
\sum_{i \in I_B} n_i[\gamma_i] = [\beta]
\]
in $H_1(\Omega \cap \overline B;\ZZ)$ for some $1$-cycle $\beta \subset \Omega \cap \partial B$. Write $\beta = \beta' + \beta''$ where $\beta''$ are the components of $\beta$ intersecting $D$. Note that in fact $\beta''\subset D$ and thus $\beta''$ bounds in $D$. As such, we can reduce to the case where there is a $2$-chain $\sigma \subset  (\Omega \setminus U_\eps(D)) \cap \overline B$ with
\[
\partial\sigma = \beta' - \sum_{i\in I_B} n_i\gamma_i. 
\]
(Indeed, by assumption there is such a $\sigma$ in $\Omega\cap\overline B$. Since $\partial\sigma \cap U_\eps(D) = \emptyset$, we can then push $\sigma$ out of $U_\eps(D)$.) Because $(\Omega \setminus U_\eps(D)) \cap \overline B \subset \Omega' \cap \overline B$, all $n_i$ vanish by the inductive hypothesis. This proves the final assertion in the claim. 

Finally, it remains to check that $\{[\gamma_1],\dots,[\gamma_g]\}\subset H_1(\Omega;\ZZ)$ is linearly independent. Suppose that 
\[
n_1[\gamma_1]+\dots + n_g[\gamma_g] = 0 \textrm{ in $H_1(\Omega;\ZZ)$}.
\]
By construction (specifically the second bullet point) we can consider the intersection with the regions $\cU_i$ (proceeding from large to small indices) to see that $n_i = 0$ for any $\gamma_i$ that intersects $\partial B$. Using Mayer--Vietoris (or transversality) we can find a $1$-cycle $\beta \subset \Omega\cap \partial B$ with 
\[
\sum_{i\in I_B} n_i[\gamma_i] = [\beta] \textrm{ in $H_1(\Omega\cap \overline B;\ZZ)$ and } [\beta] = - \sum_{i\in I_{B^c}} n_i[\gamma_i] \textrm{ in $H_1(\Omega\cap B^c;\ZZ)$}.
\]
As proven above, this yields $n_i=0$ for $i\in I_B$. As above, if we write $\beta=\beta'+\beta''$ for $\beta''$ the components intersecting $D$ (so $\beta''\subset D$) then we find that $[\beta']=0$ in $H_1(\Omega' \cap \overline B;\ZZ)$ and 
\[
\sum_{i\in I_{B^c}} n_i[\gamma_i] = - [\beta']
\]
in $H_1(\Omega'\setminus B;\ZZ)$. Thus,we find that $\sum_{i\in I_{B^c}} n_i[\gamma_i] = 0$ in $H_1(\Omega';\ZZ)$. Thus $n_i=0$ for $i\in I_{B^c}$ by the inductive hypothesis. This completes the proof of linear independence, and thus finishes the consideration of the separating case. 

\emph{Nonseparating case}. If $\alpha$ does not separate the component of $S$ that contains it, we find that $\genus(S') = g-1$. We can use the inductive hypothesis to find $\gamma_1,\dots,\gamma_{g-1} \subset \Omega \cap \Omega'$ satisfying the assertion for $\Omega'$. We now define $\gamma_g$ depending on whether or not $D\subset \Omega$. If $D \subset \Omega$ we can choose a loop in $S$ intersecting $\alpha$ precisely once and then push this loop into $\Omega$ to find $\gamma_g$ a loop in $\Omega$ intersecting $D$ exactly once. If $D \cap \Omega =\emptyset$ then we define $\gamma_g$ to be $\alpha$ pushed into $\Omega$ and out of $B$. The various assertions are checked exactly as above. This completes the proof. 
\end{proof}

The next lemma is an analogue  of \cite[Lemma 11.7]{CCMS:generic1} in this setting. 

\begin{lemma}\label{lemma:genus.loops.inside.second}
Suppose that a closed set $\Omega\subset \RR^{3}$ is a $3$-manifold with compact boundary $\partial\Omega$. Set $g=\genus(\partial\Omega)$. Assume that $\partial\Omega$ is transverse to a sphere $\partial B\subset \RR^3$. Assume that we have $\{[\gamma_1],\dots,[\gamma_g]\}\subset H_1(\Omega;\ZZ) \approx \ZZ^g$ linearly independent and where each $\gamma_i$ satisfies one of the following conditions:
\begin{itemize}
\item $\gamma_i$ is contained in $B$ or $\overline B^c$, or 
\item there is a component $\cU_i$ of $\partial B\setminus S$ that has non-zero mod-2 intersection with $\gamma_i$ and zero mod-2 intersection which each previous $\gamma_j$, $j<i$. 
\end{itemize}
Then, if $\genus(\partial\Omega \cap B) >0$, at least one of the $\gamma_i$ is contained in $B$. 
\end{lemma}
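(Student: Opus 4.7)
The plan is to argue by contradiction, playing off the given classes $\{[\gamma_i]\}$ against the second system of loops supplied by Lemma \ref{lemma:genus.loops.inside.first} applied to the same $\Omega$ and $B$. Assume no $\gamma_i$ is contained in $B$, and set $g' := \genus(\partial\Omega \cap B) > 0$. Lemma \ref{lemma:genus.loops.inside.first} then produces $\mathbb{Z}$-linearly independent classes $[\gamma'_1], \ldots, [\gamma'_g]$ in $H_1(\Omega;\mathbb{Z})$ with exactly $2g'$ of them contained in $B$; I would fix one such $\gamma' := \gamma'_j \subset B$. By Lemma \ref{lemm:H1.subset.R3.bdry}, $H_1(\Omega;\mathbb{Z}) \cong \mathbb{Z}^g$, so $\{[\gamma_i]\}$ is a $\mathbb{Q}$-basis of $H_1(\Omega;\mathbb{Q})$, and I can write $[\gamma'] = \sum_{i=1}^g c_i [\gamma_i]$ uniquely with $c_i \in \mathbb{Q}$.

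The key step will be to show that $c_i = 0$ for every $i$ in $I_2 := \{i : \gamma_i \text{ crosses } \partial B\}$, forcing $[\gamma']$ to lie in the $\mathbb{Q}$-span of $\{[\gamma_i] : i \in I_1\}$ where $I_1 := \{i : \gamma_i \subset \overline B^c\}$. For this I would use the regions $\cU_i$ supplied by the second bullet: each $\cU_i$ is a $2$-chain in $\Omega$ with $\partial \cU_i \subset \partial\Omega$, so it represents a class in $H_2(\Omega,\partial\Omega;\mathbb{Z})$, and hence pairs with $H_1(\Omega;\mathbb{Z})$ by transverse algebraic intersection via Poincar\'e--Lefschetz duality. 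Since $\gamma' \subset B^\circ$ and, after a small perturbation, $\gamma_i \subset (\overline B^c)^\circ$ for $i \in I_1$ are both disjoint from $\partial B$, enumerating $I_2 = \{i_1 < \cdots < i_k\}$ and pairing the identity $[\gamma'] = \sum c_i [\gamma_i]$ with $\cU_{i_b}$ yields a linear system $M\vec{c}_{I_2} = \vec{0}$, where $M_{ab} = \gamma_{i_a}\cdot\cU_{i_b}$. The second bullet forces $M$ to have odd diagonal and even entries strictly above the diagonal, so $\det M$ is odd, hence nonzero over $\mathbb{Q}$, and the system pins down $c_i = 0$ for all $i \in I_2$. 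Clearing denominators then yields an integer identity $N[\gamma'] = \sum_{i \in I_1} m_i [\gamma_i]$ in $H_1(\Omega;\mathbb{Z})$ with $N \geq 1$.

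To finish, I would feed this relation into the Mayer--Vietoris sequence for the decomposition $\Omega = (\Omega \cap \overline B) \cup (\Omega \cap \overline{B^c})$ (using a standard open thickening to apply MV to a closed cover). Since $N[\gamma']$ lifts to $H_1(\Omega \cap \overline B;\mathbb{Z})$ and $\sum m_i [\gamma_i]$ lifts to $H_1(\Omega \cap \overline{B^c};\mathbb{Z})$, and their difference vanishes in $H_1(\Omega;\mathbb{Z})$, exactness produces a $1$-cycle $\beta \subset \Omega \cap \partial B$ with $N[\gamma'] = [\beta]$ in $H_1(\Omega \cap \overline B;\mathbb{Z})$. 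The ``moreover'' clause of Lemma \ref{lemma:genus.loops.inside.first} then forces $N = 0$, contradicting $N \geq 1$. The principal obstacle I expect is that the hypothesis supplies only \emph{mod-$2$} control of the intersections $\gamma_j \cdot \cU_i$, so $M$ is not literally triangular over $\mathbb{Z}$; the saving observation is that mod-$2$ lower-triangularity with unit diagonal is already enough to make $\det M$ odd, and hence invertible over $\mathbb{Q}$, which is exactly what the argument needs.
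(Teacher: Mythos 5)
Your proof is correct and follows essentially the same route as the paper's: it likewise invokes Lemma \ref{lemma:genus.loops.inside.first} to obtain a loop in $\Omega\cap B$ detecting the genus, kills the crossing coefficients in the resulting relation by intersecting with the regions $\cU_i$, and then uses Mayer--Vietoris together with the final clause of Lemma \ref{lemma:genus.loops.inside.first} to reach a contradiction. Your explicit observation that the intersection matrix is lower triangular mod $2$ with odd diagonal, hence has odd (nonzero) determinant, makes fully rigorous a step that the paper compresses into the single assertion that intersecting with the $\cU_i$'s ``from large to small indices'' forces those coefficients to vanish.
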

\begin{proof}
Assume this does not hold. Lemma \ref{lemma:genus.loops.inside.first} and the assumption that $\genus(\partial\Omega\cap B)> 0$ yields $\eta \subset \Omega\cap B$ so that $m\eta$ is not homologous in $\Omega$ to a cycle in $\partial B\cap \Omega$ for any $m\in\ZZ\setminus\{0\}$. We claim that 
\[
\{[\gamma_1],\dots,[\gamma_g],[\eta]\} \subset H_1(\Omega;\ZZ) \approx\ZZ^g
\]
is linearly independent, a contradiction. 

If it were linearly dependent, we could write
\[
m [\eta] = \sum_{i=1}^g n_i[\gamma_i]\textrm{ in $H_1(\Omega;\ZZ)$}.
\]
The second bullet point implies that $n_i = 0$ if $\gamma_i$ is not contained entirely in $B^c$. Mayer--Vietoris then implies that $m\eta$ is homologous to a cycle in $\partial B\cap \Omega$, so $m=0$. This yields a linear dependency between the $[\gamma_i]$'s, a contradiction. This completes the proof. 
\end{proof}

\section{Localized topological monotonicity} \label{app:loc-top-mon}

We recall here some results from \cite[Appendix G]{CCMS:generic1}. We begin with the following result. 
\begin{definition}[Simple flow {\cite[Appendix G]{CCMS:generic1}}] For $U\subset \RR^3$ an open set with smooth boundary, a \emph{simple flow} in $U\times I$ is a closed subset of space-time $\cM \subset \RR^{3}\times \RR$ so that there is a compact $2$-manifold with boundary and a continuous map $f: M \times I \to \RR^3$ so that 
\begin{enumerate}
\item $\cM(t) \cap \overline U= f(M\times \{t\})$ (where $\cM(t) = \{\bx \in \RR^3 : (\bx,t) \in \cM\}$) 
\item $f$ is smooth on $M^\circ \times I$ where $M^\circ = M\setminus\partial M$ 
\item $f(\cdot,t)$, $t\in I$ is an embedding of $M^\circ$ into $U$
\item $t\mapsto f(M^\circ\times \{t\})$ is a smooth mean curvature flow
\item $f|_{\partial M\times I}$ is a smooth family of embeddings of $\partial M$ into $\partial U$. 
\end{enumerate}
\end{definition}

We now recall some definitions from \cite{White:topology-weak} and \cite[Appendix G]{CCMS:generic1} (since we will only use these results in $\RR^3$ we do not state them in general dimensions). For $\cM \subset \RR^{3}\times [0,T]$ we set 
\[
W[t] : =\cM^c \cap \ft^{-1}(\{t\}), \qquad W[0,T] : = \cM^c \cap \ft^{-1}([0,T]).
\]
Similarly, for $\Omega\subset \RR^3$ an open set with smooth boundary, we set
\[
W_\Omega[t] : =\cM^c \cap \Omega \cap \ft^{-1}(\{t\}), \qquad W[0,T] : = \cM^c \cap (\Omega\times [0,T]) \cap \ft^{-1}([0,T]).
\]
The following result proven  \cite[Appendix G]{CCMS:generic1} localizes the corresponding result in \cite{White:topology-weak}.

\begin{theorem}\label{theo:homotope-time-zero-MCF-complement-simple-flow}
Let $\cM$ be a level set flow and $\Omega\subset \RR^3$ be an open set with smooth boundary. Assume that $\cM$ is a simple flow in $U\times [0,T]$ for some tubular neighborhood $U$ of $\partial\Omega$. Then any loop in $W_\Omega[0,T]$ is homotopic to one in $W_\Omega[0]$. In particular 
\[
H_1(W_\Omega[0];\ZZ) \to H_1(W_\Omega[0,T];\ZZ)
\]
is surjective. 
\end{theorem}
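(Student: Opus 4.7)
The plan is to adapt White's argument for topological monotonicity \cite{White:topology-weak} to the localized setting, with the main new ingredient being a boundary homotopy constructed from the simple flow hypothesis. Recall that in the classical (unlocalized) setting, any loop in $W[0,T] = \RR^3 \times [0,T] \setminus \cM$ is homotopic to one in $W[0]$; this is proven by constructing a continuous ``pushback'' map that deforms the time coordinate down to $0$ while avoiding $\cM$. Our task is to upgrade this construction so that the homotopy remains inside $\Omega \times [0,T]$, never crossing $\partial\Omega$.

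First I would perform a general-position reduction: by a small perturbation one may assume $\gamma \subset W_\Omega[0,T]$ is smooth, avoids a neighborhood of $\sing\cM$ (using that $\sing\cM$ has parabolic codimension $\geq 2$), is transverse to all time slices, and is disjoint from $\partial\Omega \times [0,T]$ (by a slight inward perturbation near any points where $\gamma$ meets $\partial\Omega$). Next, I would apply White's interior pushback argument verbatim on any compact set $K \subset \Omega^{\circ}$ to produce a partial homotopy that decreases the time coordinate of $\gamma$; the subtlety is that this homotopy may drift close to, or across, $\partial\Omega$.

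To fix this, I would use the simple flow hypothesis to construct a boundary homotopy on a (smaller) tubular neighborhood $U' \Subset U$ of $\partial\Omega$ as follows. The simple flow parametrization $f : M \times [0,T] \to \RR^3$ provides a smooth isotopy $f(\cdot,t) \circ f(\cdot,0)^{-1}$ of $\cM(0) \cap U$ onto $\cM(t) \cap U$. Using this together with the normal form of a smooth family of hypersurfaces meeting $\partial U$ transversely, I would extend these isotopies to a continuous family of diffeomorphisms $\Phi_t : U' \to U'$ satisfying $\Phi_t(\cM(0) \cap U') = \cM(t) \cap U'$, $\Phi_t(\partial\Omega \cap U') = \partial\Omega \cap U'$, and $\Phi_0 = \id$. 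Then $H(\bx,t,s) := (\Phi_{(1-s)t}\circ\Phi_t^{-1}(\bx),\, (1-s)t)$ defines a homotopy on $U' \times [0,T]$ that pushes time down, avoids $\cM$ by construction, and preserves $\Omega$.

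Finally, I would glue the interior pseudo-retraction and the boundary homotopy via a partition of unity subordinate to the cover $\{\Omega^{\circ} \setminus \overline{U''},\, U'\}$ for some $U'' \Subset U'$. The main obstacle is this gluing: one must interpolate between two homotopies in such a way that the interpolated path still avoids $\cM$. This is where the simple flow hypothesis is essential, as it guarantees that near the boundary the two homotopies agree in a natural way (both push along the level curves of the time function composed with the flow parametrization), so the partition-of-unity average can be arranged to remain transverse to $\cM$ on compact subsets of $(\Omega \setminus \sing\cM) \times [0,T]$. The resulting homotopy carries $\gamma$ into $W_\Omega[0]$ entirely within $W_\Omega[0,T]$, which implies the surjectivity statement on $H_1$.
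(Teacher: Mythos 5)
The crux of your argument is the construction of a continuous family of diffeomorphisms $\Phi_t : U' \to U'$ with $\Phi_0 = \id$ that simultaneously carry $\cM(0) \cap U'$ to $\cM(t) \cap U'$ and preserve $\partial\Omega \cap U'$, and this is where the proposal has a genuine gap. Such a family automatically carries $\cM(0) \cap \partial\Omega \cap U'$ onto $\cM(t) \cap \partial\Omega \cap U'$ diffeomorphically, so it can only exist if the isotopy class of the intersection of the flow with the fixed hypersurface $\partial\Omega$ is constant in $t$. The simple-flow hypothesis gives no control over this intersection: condition (5) constrains $f|_{\partial M \times I}$ to track $\partial U$ (the two boundary components of the tubular neighborhood), but $\partial\Omega$ sits in the \emph{interior} of $U$, and nothing in the hypotheses prevents $\cM(t)$ from becoming tangent to $\partial\Omega$, which can create or destroy circles in $\cM(t) \cap \partial\Omega$. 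Once that happens no continuous family of diffeomorphisms preserving both $\cM$ and $\partial\Omega$ can exist. In the paper's application (Proposition \ref{prop:genus-drop-one-sided-flow}, item~(4)) this obstruction is sidestepped by a separate star-shapedness estimate which forces transversality of the flow with all spheres $\partial B_r$ in the annular region; but that input is not part of the hypotheses of the present theorem, and your proposal includes no reduction to the transverse case (which itself is not trivial, since the loop $\gamma$ can genuinely live in the shell between $\partial\Omega$ and a perturbed transverse sphere).

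A second, subordinate gap is the partition-of-unity gluing. Interpolating the interior pushback and the boundary homotopy pointwise via a cutoff does not automatically produce a path that avoids $\cM$: the interpolation of two points in $\cM^c$ can lie in $\cM$, and "remaining transverse to $\cM$" does not imply avoidance. You assert that the simple-flow hypothesis makes the two homotopies agree ``in a natural way,'' but this compatibility is precisely the technical heart of the localized monotonicity argument and must be made quantitative---for instance by first using the product structure that $\Psi(\bx,t) := (\Phi_t(\bx),t)$ imposes on the flow tube, or by decomposing $\gamma$ into arcs lying entirely in $U'$ or entirely in $\Omega\setminus\overline{U''}$ and controlling the endpoints on a transverse collar. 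As written, the gluing step is a placeholder rather than a proof.
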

\section{Some remarks concerning tangent flows in $\RR^3$}\label{app:unique-weak-tf}

Huisken's monotonicity and compactness of Brakke flows implies that we can extract tangent flows to a given Brakke flow around every point in space-time. It is a well-known problem to show that such tangent flows are unique, i.e., independent of the chosen sequence of dilations. (Several uniqueness results are known \cite{Schulze:Loj,ColdingIlmanenMinicozzi,ColdingMinicozzi:uniqueness-tangent-flow,ChodoshSchulze,Zhu:mcvx.loj,Zhu:mcvx.loj.rigid} when the tangent flow is associated to a multiplicity one smooth shrinker.) 

In this appendix we observe that certain soft arguments (along with existing results on the structure and compactness of shrinkers) can be used to prove that if a tangent flow with certain properties exists, then any other tangent flow must have these properties (even if uniqueness is not clear). The arguments used here are inspired by work of Bernstein--Wang \cite{BernsteinWang:high-mult-unique} who proved that if one tangent flow is a shrinking sphere/cylinder with multiplicity, then all tangent flows are (but the cylinder could \emph{a priori} rotate between distinct tangent flows). 

\subsection{Smooth multiplicity-one tangent flows}

The main result here is Proposition \ref{prop:smooth-tf-all-are} which roughly says that in $\RR^3$ a Brakke flow has one tangent flow supported on a smooth shrinker with unit multiplicity then all tangent flows are of this form. Using the idea of \cite{BernsteinWang:high-mult-unique} we will see that the key to proving this is the following lemma.  

\begin{lemma}
Suppose that $V_i$ are $F$-stationary cyclic integral varifolds in $\RR^3$ with $V_i \rightharpoonup \cH^2\lfloor\Sigma$ for $\Sigma \in \cS_2$. Then for $i$ sufficiently large, $V_i = \cH^2\lfloor \Sigma_i$ for some $\Sigma_i \in \cS_2$
\end{lemma}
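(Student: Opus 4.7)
The plan is to associate to each $F$-stationary cyclic integral varifold $V_i$ (and to $\Sigma$) the self-similarly shrinking unit-regular integral Brakke flow $\cM_i(t) = \cH^2 \lfloor \sqrt{-t}\, V_i$ (respectively $\cM_\Sigma(t) = \cH^2 \lfloor \sqrt{-t}\,\Sigma$), defined for $t<0$. The varifold convergence $V_i \rightharpoonup \cH^2\lfloor\Sigma$ upgrades to Brakke convergence $\cM_i \rightharpoonup \cM_\Sigma$ with unit multiplicity at $t=-1$. The goal is then to promote this weak convergence to smooth convergence on \emph{all} of $\Sigma$; once this is achieved, the $V_i$ are smooth multiplicity-one graphs over $\Sigma$, hence supported on smooth shrinkers $\Sigma_i \in \cS_2$.

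First I would establish smooth convergence on compact subsets of $\Sigma$ via Brakke's regularity theorem (\cite{White:Brakke}) applied at the time slice $t=-1$: since $\cM_\Sigma(-1) = \cH^2\lfloor\Sigma$ is smooth with multiplicity one, for any compact $K \subset \RR^3$ and $i$ sufficiently large, $\cM_i$ is a smooth multiplicity-one mean curvature flow in a space-time neighborhood of $K \times \{-1\}$. Cyclicity plays its role here: it is preserved under weak limits of integral Brakke flows, and combined with the convergence of masses it rules out configurations other than a single graphical sheet. Slicing at $t=-1$ yields smoothness and $C^\infty_{\textnormal{loc}}$ convergence of $V_i$ to $\Sigma$ on $K$.

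Next I would control the ends uniformly in $i$. By Proposition \ref{prop:ends.shrinkers.R3} we have $\Sigma \in \cS_2'$, so by Lemma \ref{lem:ends-decomposition} there is $R_0$ with $\Sigma \setminus B_{R_0}(\bOh) = \Sigma_{\textnormal{con}} \cup \Sigma_{\textnormal{cyl}}$, the disjoint union of finitely many asymptotically conical ends and finitely many asymptotically cylindrical ends (in $n=2$ the cylindrical models are shrinking round circles crossed with $\RR$). For each conical end $\Sigma'_{\textnormal{con},\ell}$, a pseudolocality-style dilation argument coupled with Allard's regularity applied at the smooth asymptotic cone (as in \cite[Lemma 7.18]{CCMS:generic1}) shows that for $i$ large, $V_i$ is a smooth small graph over the entirety of the end, exploiting scale invariance of the cone limit. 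For each cylindrical end $\Sigma'_{\textnormal{cyl},\ell}$ with axis direction $\bx_\ell$, I would translate the Brakke flow by $-s\bx_\ell$ and invoke Lemma \ref{lem:smooth-stability-cylinder}: the compact-set smooth convergence above guarantees that, sending $i\to\infty$ first and then $s\to\infty$, $V_i - s\bx_\ell = (\cM_i - s\bx_\ell)(-1)$ is a graph over $\hat\Sigma_\ell \times \RR_{\bx_\ell}$ on a large ball with small $C^3$-norm. A diagonal choice $s = s(i) \to \infty$ (slowly enough) feeds the hypothesis of Lemma \ref{lem:smooth-stability-cylinder}, yielding uniform graphical control for the forward-in-time evolution on a fixed ball; the self-similarity $\cM_i(t) = \sqrt{-t}V_i$ then converts this forward-in-time control into graphical control of $V_i$ itself along the axis, and a finite covering argument propagates smoothness to all of $\Sigma'_{\textnormal{cyl},\ell}$.

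Combining these steps, for $i$ large $V_i$ is the multiplicity-one integral varifold associated to a smooth embedded surface $\Sigma_i := \supp V_i$; $F$-stationarity of $V_i$ forces $\Sigma_i$ to be a self-shrinker, and the graphical description near the ends transfers properness and the finite-ends property from $\Sigma$ to $\Sigma_i$, so $\Sigma_i \in \cS_2$. The main obstacle I anticipate is the cylindrical-end analysis: unlike conical ends, cylindrical ends lack a rescaling that trivializes their asymptotic structure, so the argument relies on the dynamic stability of the shrinking cylinder (Lemma \ref{lem:smooth-stability-cylinder}) together with a careful diagonal construction $s(i) \to \infty$ to extract uniform graphical control at infinity from the merely weak varifold convergence.
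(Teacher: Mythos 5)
Your proposal aims to upgrade the weak convergence $V_i \rightharpoonup \cH^2\lfloor\Sigma$ to a global graphical description of $V_i$ over $\Sigma$, and then read off smoothness. The compact-set step (Brakke/Allard regularity) and the conical-end step (pseudolocality, as in \cite[Lemma 7.18]{CCMS:generic1}) are reasonable. The gap is in the cylindrical-end step, and it is not a detail --- it is exactly the place where the argument fails to close.

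Consider a fixed $i$ and the diagonal translate $V_i - s(i)\bx_\ell$. Because $s(i)$ was chosen small enough for the weak convergence to apply, Lemma~\ref{lem:smooth-stability-cylinder} only gives you graphical control near the axis at scale roughly $|z| \in [s(i), \delta^{-1}s(i)]$ (after using self-similarity, $\cM_i(-\delta^2) = \delta\, V_i$ is controlled in a $\delta^{-1}$-ball around $s(i)\bx_\ell$, so $V_i$ is controlled in $B_{\delta^{-2}}(\delta^{-1}s(i)\bx_\ell)$). You cannot iterate to push this to $|z| \to \infty$: the lemma's hypothesis requires $C^3$-norm $\leq D^{-1}$ on a ball of radius $D$, while its conclusion only returns $C^3$-norm $\leq \delta$ with $\delta \geq D^{-1}$, so each pass of the bootstrap \emph{loses} control. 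The ``finite covering argument'' you invoke also does not apply to a noncompact end. The underlying obstruction is real: weak convergence on compact sets gives no uniformity as $|\bx|\to\infty$ for a \emph{fixed} $i$, and establishing graphicality along a cylindrical end of an ancient or self-similar flow is precisely the hard technical content of Sections~\ref{sec:barriers}--\ref{sec:estimates-ancient-general} of the paper; it cannot be compressed into a stability-plus-diagonal argument.

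The paper's proof sidesteps global graphicality entirely and is much softer. Assume for contradiction that $\bp_i \in \sing V_i$. By Allard's theorem and the compact-set smooth convergence, $|\bp_i|\to\infty$. Since $V_i$ is cyclic and integral, $\Theta_{V_i}(\bp_i)\geq 2$, so for some small $r_i = o(1)$ the Gaussian density ratio of the self-similar flow $\cM_i(t) = \cH^2\lfloor\sqrt{-t}\,V_i$ at the rescaled spacetime point $\tilde X_i = (|\bp_i|^{-1}\bp_i, -|\bp_i|^{-2} + (|\bp_i|^{-1}r_i)^2)$ at scale $|\bp_i|^{-1}r_i$ equals $\int\rho_{(\bp_i, -1+r_i^2)}\, d\mu_{V_i} \geq 2 - o(1)$ (this is an exact identity by the scale-invariance of the Gaussian density under the parabolic dilation that preserves $\cM_i$). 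Huisken's monotonicity then gives $\Theta_{\cM_i}(\tilde X_i, r) \geq 2 - o(1)$ for every fixed $r > 0$, and since $\tilde X_i \to (\tilde \bp, 0)$ with $|\tilde\bp| = 1$ and $\cM_i \rightharpoonup \cM$, one concludes $\Theta_{\cM}(\tilde\bp, 0) \geq 2$. But by Wang's classification of ends in $\RR^3$ (Proposition~\ref{prop:ends.shrinkers.R3}), $\Sigma\in\cS_2''$, so the density of $\cM$ at any nonzero point of $\sqrt{0}\Sigma$ at time $0$ is either $1$ (conical end) or $\lambda_1 < 2$ (round cylindrical end). Contradiction. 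This argument needs no control on $V_i$ at infinity at all --- only the compact-set convergence and the structure of $\Sigma$'s tangent flows at infinity. That is what makes it work where the bootstrap does not.
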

\begin{proof}
Assume for contradiction that there is $p_i \in \sing V_i$. Allard's theorem guarantees that $|\bp_i| \to \infty$. Because $V_i$ is cyclic and integral, $\Theta_{V_i}(\bp_i) \geq 2$. Thus, there is $0<r_i=o(1)$ so that
\[
\int \rho_{(p_i,-1+r_i^2)}(\bx,-1) d\mu_{V_i} \geq 2-o(1)
\]
as $i\to\infty$ (where $\rho$ is the Gaussian density defined in \eqref{eq:gauss.dens.defn}). 

Let $\cM_i$ be the Brakke flow (for $t<0$) associated to $V_i$ (and similarly $\cM$ the flow associated to $\Sigma$). Note that $\cM_i \rightharpoonup \cM$. Define $\tilde t_i : = -|\bp_i|^{-2} = o(1)$, $\tilde \bp_i : = |\bp_i|^{-1}\bp_i$ and $\tilde r_i := |\bp_i|^{-1}r_i$. Set $\tilde X_i : = (\tilde \bp_i,\tilde t_i + \tilde r_i^2)$. Observe that 
\[
\Theta_{\cM_i}(\tilde X_i,\tilde r_i) = \int \rho_{(\bp_i,-1+r_i^2)}(\bx,-1) d\mu_{V_i} \geq 2-o(1).
\]
Thus, Huisken's monotonicity formula yields $\Theta_{\cM_i}(\tilde X_i,\tilde r_i + r) \geq 2-o(1)$ as $i\to\infty$, for any $r>0$ fixed. We can pass to a subsequence and assume that $\tilde X_i \to \tilde X = (\tilde \bp,0)$, for $|\tilde \bp|=1$. By the convergence $\cM_i\rightharpoonup\cM$ we thus have
$\Theta_\cM(\tilde X,r) \geq 2$ for all $r>0$, so in particular 
\[
\Theta_\cM(\tilde X)\geq 2.
\]
On the other hand, by \cite{Wang:ends-conical} (cf.\ Proposition \ref{prop:ends.shrinkers.R3}), $\Sigma \in \cS_2''$, implying that $\Theta_\cM(\tilde X) \in \{1,\lambda_1\}$. This is a contradiction, completing the proof.  
\end{proof}

We recall the distance on Radon measures $d_V$ defined in e.g.\ \cite[(2.11)]{ColdingIlmanenMinicozzi},
\[
d_V(\mu_1,\mu_2) = \sum_k 2^{-k} \left| \int f_k e^{-\frac 14 |\bx|^2} d\mu_1 - \int f_k e^{-\frac 14 |\bx|^2} d\mu_2  \right|
\]
where $\{f_n\}$ is a fixed countable dense subset of the unit ball in $C^0_c(\RR^3)$. Recall that $d_V$ metrizes the weak* topology on the set of Radon measures $\mu$ with $\int  e^{-\frac 14 |\bx|^2} d\mu < \infty$.  

Write $\mathfrak{F}$ for the set of cyclic integral $F$-stationary varifolds in $\RR^3$. By an abuse of notation we will consider $\cS_2 \subset \mathfrak{F}$ by identifying $\Sigma\in\cS_2$ with $\cH^2\lfloor \Sigma$. We can rewrite the previous lemma as follows:
\begin{corollary}\label{coro:close-smooth-shrink-impl-smooth}
There is $\eps_0>0$ so that if $V \in \mathfrak{F}$ has $d_V(V,\cS_2) \leq \eps_0$ then $V \in \cS_2$. 
\end{corollary}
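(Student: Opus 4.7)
The plan is a compactness-and-contradiction argument reducing the corollary to the preceding lemma. Suppose no such $\eps_0$ exists; then there exist $V_i \in \mathfrak{F} \setminus \cS_2$ with $d_V(V_i, \cS_2) \to 0$, and correspondingly $\Sigma_i \in \cS_2$ with $d_V(V_i, \Sigma_i) \leq 2 d_V(V_i, \cS_2) \to 0$. The goal is to extract a subsequence along which $V_i \rightharpoonup \cH^2 \lfloor \Sigma_\infty$ as varifolds for some $\Sigma_\infty \in \cS_2$; the preceding lemma then gives $V_i \in \cS_2$ for all sufficiently large $i$, contradicting $V_i \notin \cS_2$.

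The first task is a uniform entropy bound $F(\Sigma_i) \leq C$. This step is delicate because $d_V$-convergence only metrizes weak-$*$ convergence on bounded subsets of the space of Radon measures, so a priori $F(\Sigma_i)$ could diverge. However, both $V_i$ and $\Sigma_i$ are $F$-stationary, and Huisken's monotonicity applied to the associated static flow $t \mapsto \sqrt{-t}W$ yields the identity $(4\pi T)^{-n/2} \int e^{-|\bx|^2/(4T)} d\mu_W = F(W)$ for every $T>0$; taking $T$ slightly larger than $1$ gives the Gaussian-tail bound $\int_{|\bx|>R} e^{-|\bx|^2/4} d\mu_W \leq C e^{-R^2/8} F(W)$ valid for all $W \in \mathfrak{F}$. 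Testing $d_V(V_i,\Sigma_i) \to 0$ against cutoffs supported in $B_R$ and letting $R$ grow appropriately shows $F(V_i) - F(\Sigma_i) \to 0$; and if $F(\Sigma_i) \to \infty$, dilating $V_i$ and $\Sigma_i$ by $F(\Sigma_i)^{-1/n}$ would produce a non-trivial $F$-stationary limit with degenerate entropy behavior, which is excluded by the structure of $\mathfrak{F}$.

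Given the uniform bound $F(\Sigma_i) \leq C$, standard Colding--Minicozzi compactness for smooth embedded self-shrinkers, combined with Wang's classification of ends in $\RR^3$ (Proposition \ref{prop:ends.shrinkers.R3}), extracts a subsequence along which $\Sigma_i$ converges as varifolds to some $F$-stationary limit $\Sigma_\infty$ (smoothly on the regular part). The $d_V$-closeness then upgrades to $V_i \rightharpoonup \Sigma_\infty$ as varifolds. To conclude $\Sigma_\infty \in \cS_2$ (rather than merely $\Sigma_\infty \in \mathfrak{F}$), I would invoke the density-propagation argument used in the preceding lemma: any multiplicity-$\geq 2$ or singular point of $\Sigma_\infty$ would, after a rescaling identical to that proof, produce a point of density $\geq 2$ on $\Sigma_i$ for large $i$, contradicting smoothness of $\Sigma_i \in \cS_2$. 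The main obstacle is the uniform entropy bound; once that is in hand, the rest is a direct application of shrinker compactness and the preceding lemma.
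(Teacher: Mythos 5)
Your high-level strategy (assume no such $\eps_0$, produce $V_i \in \mathfrak{F}\setminus\cS_2$ and $\Sigma_i \in \cS_2$ with $d_V(V_i,\Sigma_i)\to 0$, extract a fixed limit shrinker, and invoke the preceding lemma) is the natural one, and indeed some such compactness reduction is needed since the lemma fixes the limiting $\Sigma$ while the corollary asserts uniformity over all of $\cS_2$. However, three of the steps you lean on do not hold up.

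First, the entropy bound. Dilating an $F$-stationary varifold by $\lambda$ does not yield an $F$-stationary varifold: the shrinker equation $\bH+\tfrac12\bx^\perp=0$ becomes $\bH+\tfrac{\lambda^{-2}}2\bx^\perp=0$ after scaling, so the ``dilate by $F(\Sigma_i)^{-1/n}$ and pass to a limit in $\mathfrak{F}$'' argument has no chance of producing an object in $\mathfrak{F}$, and ``degenerate entropy behavior excluded by the structure of $\mathfrak{F}$'' is not an argument. Moreover the estimate $\int_{|\bx|>R}e^{-|\bx|^2/4}\,d\mu_W\leq Ce^{-R^2/8}F(W)$ controls the tail only in terms of the very quantity $F(W)$ you are trying to bound, so passing from $d_V$-closeness to $|F(V_i)-F(\Sigma_i)|$ small is circular without an a priori entropy bound, and no such bound is supplied by the hypothesis $d_V(V,\cS_2)\leq\eps_0$ alone.

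Second, even granting an entropy bound, Colding--Minicozzi/Sun--Wang compactness for shrinkers in $\RR^3$ requires a \emph{genus} bound in addition. You do not have one, and in fact a subsequential varifold limit $\Sigma_\infty$ of smooth shrinkers with bounded entropy but unbounded genus need not be a smooth multiplicity-one shrinker at all. Third, the ``density-propagation'' argument you propose to rule this out runs the wrong way: upper semicontinuity of density says $\Theta_{\Sigma_\infty}(p)\geq\limsup_i\Theta_{\Sigma_i}(p_i)$, so a density-$\geq 2$ or singular point in the limit $\Sigma_\infty$ (e.g.\ two sheets coalescing) is perfectly consistent with the $\Sigma_i$ being smooth and multiplicity one. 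The lemma's rescaling argument derives a contradiction by landing on a \emph{fixed} $\Sigma\in\cS_2''$ whose far-field densities lie in $\{1,\lambda_1\}$; transplanting it to a possibly non-smooth $\Sigma_\infty$ gives no contradiction. Note that the paper itself presents this corollary as a ``rewrite'' of the lemma without proof; as used in Proposition \ref{prop:smooth-tf-all-are}, the relevant $V$ arises as a tangent flow of a fixed Brakke flow, so entropy (and, via White, genus) bounds are automatically available there, and the reduction to the lemma can be carried out in that restricted setting. What you have written, however, does not establish the corollary as stated.
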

On the other hand, we recall the following standard result. 
\begin{lemma}\label{lemm:huisk-mon-rescaled-shrinker-close}
Consider $\cN$ a rescaled Brakke flow in $\RR^3$ with entropy $\lambda(\cN) < \infty$. Then $\lim_{\tau \to\infty}F(\cN(\tau)) : = \theta_0$ exists and $\lim_{\tau\to \infty} d_V(\cN(\tau),\mathfrak{F}) = 0 $. Furthermore, for $s_0 > 0$ fixed,
\[
\limsup_{\tau\to \infty} \sup_{|\tau' - \tau|\leq s_0} d_V(\cN(\tau),\cN(\tau')) = 0. 
\]
\end{lemma}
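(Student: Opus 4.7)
The plan is to reduce all three statements to Huisken's monotonicity formula for the rescaled flow together with the compactness of (integral) Brakke flows under uniform entropy bounds, using a standard contradiction/translation argument.

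First I would establish existence of $\theta_0$. Under the rescaled flow, Huisken's monotonicity formula says that $\tau \mapsto F(\cN(\tau))$ is non-increasing with equality along an interval if and only if $\cN$ is self-similar (i.e., a shrinker centered at the origin). Since $F \geq 0$ and $F(\cN(\tau)) \leq \lambda(\cN) < \infty$, the limit $\theta_0 := \lim_{\tau\to\infty} F(\cN(\tau))$ exists.

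Next I would handle convergence to $\mathfrak{F}$ by contradiction. Suppose there exist $\tau_i \to \infty$ and $\delta > 0$ with $d_V(\cN(\tau_i),\mathfrak{F}) \geq \delta$. Consider the shifted rescaled flows $\cN_i(\tau) := \cN(\tau + \tau_i)$, each defined on $[-\tau_i + \tau_0, \infty)$ and with entropy $\leq \lambda(\cN)$. By the compactness of integral Brakke flows with uniform local mass bounds (which follow from the entropy bound), a subsequence converges to an integral Brakke flow $\cN_\infty$ defined on all of $\RR$. For every $\tau$, weak convergence and monotonicity give $F(\cN_\infty(\tau)) = \lim_i F(\cN(\tau + \tau_i)) = \theta_0$, so $F$ is constant along $\cN_\infty$. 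By the rigidity case of Huisken's monotonicity, $\cN_\infty(\tau)$ is then an $F$-stationary integral varifold for each $\tau$; picking a time of varifold convergence, $\cN_\infty(0) \in \mathfrak{F}$. But $d_V(\cN_i(0), \cN_\infty(0)) \to 0$ by weak convergence of measures, contradicting $d_V(\cN_i(0), \mathfrak{F}) \geq \delta$.

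For the uniform continuity statement, I would run essentially the same contradiction argument. If the claim fails, there are $s_0 > 0$, $\tau_i \to \infty$, $s_i \in [-s_0, s_0]$, and $\delta > 0$ with
\[
d_V(\cN(\tau_i),\cN(\tau_i + s_i)) \geq \delta.
\]
Pass to a subsequence with $s_i \to s \in [-s_0,s_0]$. The shifts $\cN_i(\tau) := \cN(\tau + \tau_i)$ subsequentially converge to some $\cN_\infty$ on $\RR$ which, by the argument above, is self-similar, and hence $\tau$-independent as a family of measures in the rescaled variable. Then
\[
d_V(\cN_i(0),\cN_i(s_i)) \longrightarrow d_V(\cN_\infty(0),\cN_\infty(s)) = 0,
\]
the desired contradiction. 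The only technical care is in justifying weak convergence of $\cN_i(s_i)$ to $\cN_\infty(s)$ when $s_i \to s$; this follows because the Brakke flow convergence gives measure convergence at every time, combined with the equicontinuity in $\tau$ for $\cN_\infty$ (which is a constant family). The main ``obstacle'' is really just being careful that the Brakke compactness produces a limit on the whole real line rather than on a bounded interval, which is automatic because $\tau_i \to \infty$ and $\cN$ is defined for arbitrarily negative rescaled times after translation.
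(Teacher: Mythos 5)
Your compactness argument is fine for the first two assertions (modulo the standard observations that $F$ is continuous under weak$^*$ convergence with uniform entropy bounds so that $F(\cN_\infty(\tau))=\theta_0$, and that cyclicity is preserved under Brakke flow convergence so the limit really lies in $\mathfrak{F}$ as defined). For the third assertion, however, the step you flag as ``the only technical care'' is a genuine gap as written. Brakke flow convergence $\cN_i\rightharpoonup\cN_\infty$ only gives $\cN_i(\tau)\to\cN_\infty(\tau)$ for each \emph{fixed} $\tau$. The constancy of the limit $\cN_\infty$ in $\tau$ (your ``equicontinuity'') is a property of $\cN_\infty$, not of the approximants $\cN_i$; it carries no information about whether $\cN_i(s_i)$ is close to $\cN_i(s)$ when $s_i\to s$ with $s_i\ne s$. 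Since Brakke flows can drop mass suddenly, this really needs an argument.

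The gap is closable, but not by invoking $\cN_\infty$. The cleanest fix—and presumably what the paper means by ``a direct consequence of Huisken's monotonicity formula''—is to get a quantitative modulus of continuity in $\tau$ directly. In the rescaled setting the monotonicity calculation yields, for $\tau<\tau'$,
\[
\int_\tau^{\tau'}\!\!\int \bigl|\bH+\tfrac12\bx^\perp\bigr|^2\rho\, d\cN\, ds \;\le\; F(\cN(\tau))-F(\cN(\tau')),
\]
and applying the same computation with $\rho$ replaced by $f\rho$ ($f\ge 0$, $C^1_c$, $|f|\le1$) and estimating the cross term $\nabla f\cdot(\bH+\tfrac12\bx^\perp)$ by Cauchy--Schwarz against the entropy bound gives
\[
\Bigl|\int f\rho\,d\cN(\tau')-\int f\rho\,d\cN(\tau)\Bigr| \le C(f)\bigl(F(\cN(\tau))-F(\cN(\tau'))\bigr)+C(f)\sqrt{s_0\,\lambda(\cN)}\,\sqrt{F(\cN(\tau))-F(\cN(\tau'))},
\]
which tends to $0$ uniformly over $|\tau'-\tau|\le s_0$ since $F(\cN(\cdot))\to\theta_0$. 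This gives the uniform continuity directly and, fed into your compactness argument, it also closes the specific moving-time step ($\cN_i(s_i)\to V$) if you prefer to keep that structure. Alternatively one can squeeze using the Brakke inequality on rational sandwiching times $q_1<s_i<q_2$ and the entropy bound, but either way some additional estimate beyond convergence at fixed times is needed.
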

\begin{proof}
This is a direct consequence of Huisken's monotonicity formula for Brakke flows (in the rescaled setting) cf.\ \cite{Ilmanen:singularities}. 
\end{proof}

\begin{proposition}\label{prop:smooth-tf-all-are}
Consider $\cM$ a cyclic integral unit-regular Brakke flow in $\RR^3$. Suppose that some tangent flow to $\cM$ at $X \in \RR^3\times \RR$ satisfies  $t\mapsto \cH^2\lfloor\sqrt{-t}\Sigma$ for $t<0$, where $\Sigma \in \cS_2$ is a smooth self-shrinker. Then, any tangent flow to $\cM$ at $X$ is of the form  $t\mapsto \cH^2\lfloor\sqrt{-t}\Sigma'$ for $t<0$, for some $\Sigma' \in \cS_2$. 
\end{proposition}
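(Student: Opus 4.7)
The plan is to follow the soft strategy of Bernstein--Wang~\cite{BernsteinWang:high-mult-unique}: combine the clopen-ness of $\cS_2$ inside $\mathfrak{F}$ (furnished by Corollary~\ref{coro:close-smooth-shrink-impl-smooth}) with connectivity of the set of subsequential limits of the rescaled flow at $X$. Throughout, let $\cN(\tau)$ denote the family of rescaled Brakke flows obtained by parabolically rescaling $\cM$ around $X$ and passing to the rescaled time variable $\tau$, and let $\cT(X)\subset\mathfrak F$ denote the set of all $d_V$-subsequential limits of $\cN(\tau)$ as $\tau\to\infty$. Every tangent flow at $X$ is determined by its $t=-1$ slice, which lies in $\cT(X)$, so it suffices to prove $\cT(X)\subset\cS_2$.

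First I would record the basic properties of $\cT(X)$. By Huisken's monotonicity, $F(V)=\Theta_\cM(X)=F(\Sigma)$ for every $V\in\cT(X)$, so $\cT(X)$ lies in a fixed $d_V$-precompact entropy level of $\mathfrak{F}$; a diagonal extraction then shows $\cT(X)$ is closed in $\mathfrak{F}$, hence compact. By hypothesis $\Sigma\in\cT(X)\cap\cS_2$. Lemma~\ref{lemm:huisk-mon-rescaled-shrinker-close} supplies two quantitative facts I will exploit: $d_V(\cN(\tau),\cT(X))\to 0$ as $\tau\to\infty$, and for each fixed $s_0>0$, $\sup_{|\tau'-\tau|\leq s_0} d_V(\cN(\tau),\cN(\tau'))\to 0$ as $\tau\to\infty$, i.e., the rescaled flow is asymptotically uniformly continuous in rescaled time.

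Next I would establish connectedness of $\cT(X)$ by a soft IVT argument. Suppose for contradiction $\cT(X)=A\sqcup B$ with $A,B$ nonempty compact sets and $d_V(A,B)\geq 3\delta>0$. Since $\cT(X)\subset U_\delta(A)\cup U_\delta(B)$ and $d_V(\cN(\tau),\cT(X))\to 0$, for all sufficiently large $\tau$ we have $\cN(\tau)\in U_\delta(A)\cup U_\delta(B)$; moreover there exist sequences $\tau_n^A,\tau_n^B\to\infty$ with $\cN(\tau_n^A)\in U_\delta(A)$ and $\cN(\tau_n^B)\in U_\delta(B)$. The asymptotic uniform continuity over unit-length intervals forces the existence of a sequence $\tau_n^*\to\infty$ with $d_V(\cN(\tau_n^*),A),\,d_V(\cN(\tau_n^*),B)\geq\delta$. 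Extracting a $d_V$-subsequential limit $V^*$ yields $V^*\in\cT(X)$ with $V^*\notin U_\delta(A)\cup U_\delta(B)$, contradicting $\cT(X)\subset U_\delta(A)\cup U_\delta(B)$.

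Finally I would combine connectedness with Corollary~\ref{coro:close-smooth-shrink-impl-smooth}. The corollary says that $d_V(V,\cS_2)\leq\eps_0$ forces $V\in\cS_2$; this immediately implies that $\cS_2\cap\mathfrak{F}$ is open in $\mathfrak{F}$, and it also gives closedness, for if $V_n\in\cS_2$ and $V_n\to V$ in $d_V$ then $d_V(V,\cS_2)=0\leq\eps_0$, so $V\in\cS_2$. Hence $\cS_2\cap\mathfrak{F}$ is clopen in $\mathfrak{F}$, so $\cS_2\cap\cT(X)$ is clopen in $\cT(X)$, and since it contains $\Sigma$ and $\cT(X)$ is connected, we conclude $\cT(X)\subset\cS_2$. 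Every tangent flow at $X$ is therefore of the asserted form $t\mapsto\cH^2\lfloor\sqrt{-t}\Sigma'$ with $\Sigma'\in\cS_2$. The main technical step is the connectivity argument, which is entirely driven by the asymptotic near-constancy of $\cN(\tau)$ in rescaled time; once that is available, Corollary~\ref{coro:close-smooth-shrink-impl-smooth} closes the argument.
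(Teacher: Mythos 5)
Your proof is correct and rests on exactly the same two ingredients the paper uses---Corollary~\ref{coro:close-smooth-shrink-impl-smooth} (stability of being a smooth multiplicity-one shrinker under small $d_V$-perturbations within $\mathfrak{F}$) and Lemma~\ref{lemm:huisk-mon-rescaled-shrinker-close} (asymptotic near-constancy of the rescaled flow)---but it packages them differently.  You invoke the classical soft template: the set $\cT(X)$ of subsequential limits is compact and connected (by the IVT argument driven by asymptotic uniform continuity), and $\cS_2$ is clopen in $\mathfrak{F}$, so a single smooth multiplicity-one tangent flow forces all of $\cT(X)\subset\cS_2$.  The paper instead runs a hands-on escape-time argument: it defines $\tau_i'$ as the last time after $\tau_i$ at which the flow stays within $\eps_0/2$ of $\cS_2$, shows $\tau_i'-\tau_i\to\infty$, and derives a contradiction at time $\tau_i'-1$ using the same two lemmas.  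The content is identical; your version is slightly more abstract and reusable (it makes the ``connectedness of the tangent set'' structure explicit), while the paper's version avoids discussing $\cT(X)$ as a set and may be easier to make fully rigorous because it never needs to argue about $\cT(X)$ being compact or about an intermediate-value step.  One small point to nail down in your connectedness step: $\tau\mapsto\cN(\tau)$ is not a priori continuous for a Brakke flow, so the ``transition time'' $\tau_n^*$ must be produced from the asymptotic oscillation bound of Lemma~\ref{lemm:huisk-mon-rescaled-shrinker-close} (e.g., by taking $s=\sup\{\tau\in[\tau_n^A,\tau_n^B]:\cN(\tau)\in U_\delta(A)\}$ and comparing $\cN(s')$ for $s'\nearrow s$ with $\cN(s+\varepsilon)$), rather than by appealing to continuity of the slices directly; with that spelled out, the argument is complete.
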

\begin{proof}
Let $\cN$ denote the rescaled Brakke flow obtained from $\cM$ with respect to the point $X$. By assumption there is $\tau_i \to\infty$ so that $\cN(\tau_i) \rightharpoonup \cH^2\lfloor \Sigma$ as $i\to\infty$. For $\eps_0>0$ defined in Corollary \ref{coro:close-smooth-shrink-impl-smooth} we consider
\[
\tau_i' : = \sup\{\tau' \geq \tau_i : d_V(\cN(\tau'),\cS_2) \leq \eps_0/2 \textrm{ for all } \tau \in [\tau_i,\tau']\}. 
\]
We consider two cases. First, we suppose that $\tau_i' = \infty$ for some $i$. Consider $s_i\to\infty$ and pass to a subsequence (see Lemma \ref{lemm:huisk-mon-rescaled-shrinker-close}) so that $\cN(s_i)\rightharpoonup V \in \cF$. By assumption, we have $d_V(V,\cS_2) \leq \eps_0$ so $V \in \cS_2$ by Corollary \ref{coro:close-smooth-shrink-impl-smooth}. Thus, the proof is completed in this case. 

It thus suffices to rule out the possibility that $\tau_i' <\infty$ for all $i\to\infty$. First, note that $\tau_i'-\tau_i \to\infty$. Indeed, Lemma  \ref{lemm:huisk-mon-rescaled-shrinker-close} implies that for $s_0>0$ fixed 
\[
\sup_{\tau' \in [\tau_i,\tau_i+s_0]} d_V(\cN(\tau'),\cS_2) \leq d_V(\cN({\tau_i}),\cS_2) + o(1) = o(1)
\]
as $i\to\infty$. Now, we have
\[
\sup_{\tau' \in [\tau_i'-1,{ \tau_i'}+1]} d_V(\cN(\tau'),\cS_2) \leq o(1) + d_V(\cN(\tau_i'-1),\cS_2)
\]
Note that  Lemma \ref{lemm:huisk-mon-rescaled-shrinker-close} implies that after passing to a subsequence, $\cN(\tau_i'-1)$ converges to an element $V$ of $\mathcal{F}$. This element satisfies $d_V(V,\cS_2) \leq \eps_0/2$. Thus by Corollary \ref{coro:close-smooth-shrink-impl-smooth}, we have $V \in \cS_2$. In particular, this yields $d_V(\cN(\tau_i'-1),\cS_2) = o(1)$. Thus 
\[
\sup_{\tau' \in [\tau_i'-1,{\tau_i'}+1]} d_V(\cN(\tau'),\cS_2) = o(1).
\]
This contradicts the definition of $\tau_i'$, completing the proof. 	
\end{proof}

\subsection{Genus and multiplicity at the first non-generic time} The results of this subsection are not explicitly used in this paper, but we include it because it may be of some interest elsewhere. Consider $\cM$ an integral unit-regular Brakke flow with $\cM(0) = \cH^2\lfloor M$ for $M$ a closed embedded surface in $\RR^3$.  We recall here (part of) Definition \ref{defi:generic-time}. 

\begin{definition}[Generic singularities and first non-generic time]\item
\begin{itemize}
\item We define $\sing_\textnormal{gen}\cM\subset \sing\cM$ to be the set of singular points for $\cM$ so that one (and thus all \cite{ColdingMinicozzi:uniqueness-tangent-flow}) tangent flow is a multiplicity-one shrinking sphere or cylinder. 
\item Then, we define $T_\textrm{gen}(\cM) : = \inf \ft(\sing\cM \setminus \sing_\textrm{gen}\cM)$, i.e., the first time that there is a ``non-generic'' singularity. 
\end{itemize}
\end{definition}

Recall that any tangent flow to $\cM$ at $(\bx,T_\textrm{gen}(\cM)) \in \sing \cM$ is of the form 
\[
t\mapsto k \cH^2\lfloor \sqrt{-t}\Sigma
\]
 for some smooth properly embedded self-shrinker $\Sigma \in\cS_2$ with $\genus(\Sigma) \leq \genus(M) : = g_0$. Write $\cS_2(g)\subset \cS_2$ for the set of genus $g$ shrinkers. Continuing with the notation from the previous section, we will write $k\cS_2(g) : = \{k \cH^2\lfloor\Sigma : \Sigma \in \cS_2(g)\} \subset \mathfrak{F}$ for the multiplicity $k$ shrinkers of genus $g$. 
 
\begin{lemma}\label{lemm:gen.mult.fixed.shrink.closed}
The set $k\cS_2(g)$ is closed with respect to $d_V$. 
\end{lemma}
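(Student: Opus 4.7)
Suppose $V_i = k \cH^2 \lfloor \Sigma_i$ with $\Sigma_i \in \cS_2(g)$ and $V_i \to V$ in $d_V$. The plan is to extract a smooth subsequential limit $\Sigma_\infty$ of the $\Sigma_i$, identify $V$ with $k \cH^2 \lfloor \Sigma_\infty$, and then verify that the topology is preserved, i.e., $\Sigma_\infty \in \cS_2(g)$.

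First, $d_V$-convergence of $V_i \to V$ together with the fact that $V_i$ and $V$ are cyclic integral $F$-stationary gives a uniform entropy bound $\lambda(\Sigma_i) = F(\Sigma_i) \leq \Lambda$ (since $d_V$ controls $F$, and $F = \lambda$ for shrinkers). The cyclic integral $F$-stationary property passes to the weak* limit, so $V$ is also a cyclic integral $F$-stationary varifold.

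Next, I would invoke the compactness of smooth properly embedded self-shrinkers in $\RR^3$ with bounded entropy and bounded genus (this follows from the Colding--Minicozzi curvature estimates for $F$-stable shrinkers, combined with standard Brakke/Allard-type smooth compactness; cf.\ Wang \cite{Wang:ends-conical} and Proposition \ref{prop:ends.shrinkers.R3} for the classification of ends). After passing to a subsequence, $\Sigma_i \to \Sigma_\infty$ in $C^\infty_{\textnormal{loc}}(\RR^3)$ with some integer multiplicity $m \geq 1$, where $\Sigma_\infty$ is a smooth properly embedded shrinker with $F(\Sigma_\infty) \leq \Lambda$ and $\genus(\Sigma_\infty) \leq g$. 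To see that $\Sigma_\infty$ has finitely many ends (so $\Sigma_\infty \in \cS_2$), use that each end contributes at least a fixed amount of entropy and the total is bounded by $\Lambda$.

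Now I would identify multiplicities. Since $V_i \to V$ in $d_V$ and $d_V$ convergence upgrades (for sequences of integral varifolds with uniformly bounded $F$-mass) to convergence of $F$-mass, we obtain $F(V) = \lim F(V_i) = k \lim F(\Sigma_i) = k F(\Sigma_\infty)$. On the other hand, the smooth convergence $\Sigma_i \to \Sigma_\infty$ with multiplicity $m$ yields $V = km \cH^2 \lfloor \Sigma_\infty$ (locally, hence globally since $V$ is an integral varifold and the smooth piece is dense in $\supp V$). Therefore $F(V) = km F(\Sigma_\infty)$, forcing $m = 1$ and $V = k \cH^2 \lfloor \Sigma_\infty$.

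It remains to rule out genus drop, i.e., to show $\genus(\Sigma_\infty) = g$. This is the main obstacle: a priori handles of $\Sigma_i$ could escape to infinity along an end. The plan is to use the nice-ends structure of Proposition \ref{prop:ends.shrinkers.R3}: by the entropy bound and Brakke regularity, after passing to a subsequence the number and asymptotic type of the ends of the $\Sigma_i$ are constant in $i$, and each end of $\Sigma_i$ is modeled (far out) on a fixed cone or cylinder. A barrier argument along each end (using the cylindrical/conical barriers of Section \ref{sec:barriers} to trap $\Sigma_i$ in a controlled tubular neighborhood of the asymptotic model) upgrades $C^\infty_{\textnormal{loc}}$ convergence to convergence of ends, so each end of $\Sigma_i$ converges smoothly to a corresponding end of $\Sigma_\infty$, with no handles escaping. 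Combined with the preservation of genus under smooth convergence on any fixed compact set, this gives $\genus(\Sigma_\infty) = g$, hence $V \in k\cS_2(g)$ as desired. The core technical work is this last step, which is where the explicit end structure provided by Proposition \ref{prop:ends.shrinkers.R3} is essential.
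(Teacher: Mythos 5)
Your overall approach parallels the paper's: both rest on the smooth compactness theory for self-shrinkers in $\RR^3$ with bounded genus and entropy, and the paper's proof simply cites \cite{ColdingMinicozzi:compactness-shrinkers,SunWang:compact-genus} (cf.\ \cite[Proposition 3.4]{BernsteinWang:high-mult-unique}) for the fact that $\Sigma_j$ subconverges in $C^\infty_{\textnormal{loc}}(\RR^3)$ with multiplicity one to a genus $g$ shrinker. Where you go astray is in trying to re-derive the multiplicity-one conclusion from $F$-mass bookkeeping: the chain $F(V) = k \lim F(\Sigma_i) = kF(\Sigma_\infty)$ is circular, because the last equality $\lim F(\Sigma_i) = F(\Sigma_\infty)$ already presupposes multiplicity-one convergence. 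If instead the $\Sigma_i$ converged with multiplicity $m$, you would have $\lim F(\Sigma_i) = mF(\Sigma_\infty)$, and both of your computations of $F(V)$ would return $kmF(\Sigma_\infty)$, giving no constraint on $m$ at all. The absence of higher-multiplicity limits for shrinkers of bounded genus in $\RR^3$ is a substantive theorem (it uses curvature estimates, the local structure of potential multi-sheeted limits, and ultimately the topology of the sheets), not a consequence of mass normalization.

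Two smaller points. First, the compactness theorem you want is not ``Colding--Minicozzi curvature estimates for $F$-stable shrinkers'' — there is no stability hypothesis on the $\Sigma_i$; the relevant input is \cite{ColdingMinicozzi:compactness-shrinkers} and \cite{SunWang:compact-genus}, which are compactness theorems for smooth embedded shrinkers under genus and entropy bounds alone. Second, the proposed barrier argument from Section \ref{sec:barriers} to prevent genus drop is not the right tool: those barriers are built for a one-sided flow on a fixed shrinker, not for controlling the ends of a variable sequence $\Sigma_i$. Genus preservation (handles do not escape because the ends are annular cones/cylinders by \cite{Wang:ends-conical}) is again part of the package delivered by the cited compactness theorems, and the cleanest proof — which is what the paper does — is to quote them directly rather than attempt to rebuild the pieces.
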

\begin{proof}
Consider $\mu \in \mathfrak{F}$ and $\mu_j = k \cH^2 \lfloor \Sigma_j \in k\cS_2(g)$ with $d_V(\mu_j , \mu)\to 0$. By \cite{ColdingMinicozzi:compactness-shrinkers,SunWang:compact-genus} (cf.\  \cite[Proposition 3.4]{BernsteinWang:high-mult-unique}) we can pass to a subsequence so that $\Sigma_j$ converges in $C^\infty_\textrm{loc}(\RR^3)$ to $\Sigma \in \cS_2(g)$ with multiplicity one. Thus, the weak* limit of $\mu_j = k \cH^2 \lfloor \Sigma_j$ is $k\cH^2 \lfloor \Sigma=\mu$. This completes the proof. 
\end{proof}

Let $\cN$ denote the rescaled Brakke flow obtained by rescaling $\cM$ around $X = (\bx,T_\textrm{gen}(\cM))$. We record the following addendum to Lemma \ref{lemm:huisk-mon-rescaled-shrinker-close}. Set $\mathfrak{F}^{k_0,g_0}:=\cup_{k=1}^{k_0} \cup_{g=0}^{g_0}k\cS_2$. 
\begin{lemma}\label{lemm:huisk-mon-rescaled-shrinker-close-addendum}
There is $k_0\geq 0$ so that $\lim_{\tau\to\infty} d_V(\cN(\tau),\mathfrak{F}^{k_0,g_0}) = 0$. 
\end{lemma}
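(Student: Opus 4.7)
The plan is to take $k_{0} := \lfloor \lambda(M) \rfloor$ (and note $g_{0} = \genus(M)$ already suffices by Proposition \ref{prop:properties.until.nongeneric.time}) and to argue by contradiction, matching weak-$*$ limits of $\cN(\tau)$ with time slices of tangent flows at $X$ and then bounding the multiplicity via the entropy. Throughout, $\lambda(\cN) \leq \lambda(\cM) \leq \lambda(M) < \infty$ by Huisken's monotonicity, which provides uniformly bounded weighted mass for the measures $\cN(\tau)$ and hence ensures that on this family the pseudo-metric $d_{V}$ metrises the weak-$*$ topology.

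\emph{Step 1: subsequential limits are tangent-flow slices.} Let $\tau_{j}\to\infty$ and consider the time-shifted rescaled Brakke flows $\cN_{j}(\tau):=\cN(\tau+\tau_{j})$. These inherit the uniform entropy bound $\lambda(\cN_{j})\leq \lambda(M)$, so by Ilmanen's compactness for integral Brakke flows we may (after extraction) pass to a limit $\cN_{j}\rightharpoonup \cN_{\infty}$, where $\cN_{\infty}$ is an ancient rescaled Brakke flow. Unwinding the rescaling, $\cN_{\infty}$ is the rescaling of a tangent flow to $\cM$ at $X$. By Proposition \ref{prop:properties.until.nongeneric.time}, for each $\tau$ we have $\cN_{\infty}(\tau)=k\,\cH^{2}\lfloor \Sigma'$ for some $k\in\NN$ and some $\Sigma'\in\cS_{2}$ with $\genus(\Sigma')\leq g_{0}$. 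In particular, $\cN(\tau_{j}) = \cN_{j}(0) \rightharpoonup \cN_{\infty}(0) = k\,\cH^{2}\lfloor \Sigma'$.

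\emph{Step 2: the multiplicity is bounded by entropy.} Since $\Sigma'\in\cS_{2}$ (including the flat case) satisfies $F(\Sigma')\geq 1$, and since $\Sigma'$ is a shrinker so that $F(k\,\cH^{2}\lfloor \Sigma') = kF(\Sigma')$ equals its entropy, we get
\[
 k \leq kF(\Sigma') = F(\cN_{\infty}(0)) \leq \lambda(\cN) \leq \lambda(M),
\]
so $k\leq k_{0}$. Thus $\cN_{\infty}(0)\in \mathfrak{F}^{k_{0},g_{0}}$, and the entropy bound upgrades the weak-$*$ convergence $\cN(\tau_{j})\rightharpoonup \cN_{\infty}(0)$ to $d_{V}(\cN(\tau_{j}),\cN_{\infty}(0))\to 0$.

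\emph{Step 3: conclude by contradiction.} The set $\mathfrak{F}^{k_{0},g_{0}}$ is a finite union of $d_{V}$-closed sets by Lemma \ref{lemm:gen.mult.fixed.shrink.closed}, hence closed. If the asserted convergence failed, there would exist $\eps>0$ and $\tau_{j}\to\infty$ with $d_{V}(\cN(\tau_{j}),\mathfrak{F}^{k_{0},g_{0}})\geq \eps$; applying Steps 1--2 along this sequence extracts a further subsequence along which $d_{V}(\cN(\tau_{j}),V)\to 0$ for some $V\in \mathfrak{F}^{k_{0},g_{0}}$, contradicting the lower bound. The only genuinely nontrivial point is the identification in Step 1 of weak limits as tangent-flow slices (a standard but slightly technical diagonal extraction from Ilmanen's compactness); once this is in hand the entropy bound on multiplicities and the closedness of $\mathfrak{F}^{k_{0},g_{0}}$ are the easy ingredients.
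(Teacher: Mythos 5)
Your proof is correct and takes the same route as the paper's (very terse) argument: identify subsequential weak limits of $\cN(\tau_j)$ as tangent-flow slices $k\cH^2\lfloor\Sigma'$ via Proposition \ref{prop:properties.until.nongeneric.time}, bound $k$ using $F(\Sigma')\geq 1$ together with the entropy bound, and conclude; the paper takes $k_0>\Theta_\cM(X)$ while you take the slightly larger $\lfloor\lambda(M)\rfloor$, and either works. One small remark: the appeal to Lemma \ref{lemm:gen.mult.fixed.shrink.closed} in Step 3 is superfluous, since the weak limit $\cN_\infty(0)$ already lies in $\mathfrak{F}^{k_0,g_0}$ by Steps 1--2, so $d_V(\cN(\tau_j),\mathfrak{F}^{k_0,g_0})\to 0$ along the subsequence without needing closedness of the target set.
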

\begin{proof}
Since $\Sigma \in \cS_2$ has $F(\Sigma)\geq 1$ (by Brakke's theorem, cf.\ \cite{White:Brakke}) we can choose $k_0 > \Theta_\cM(X)$ and then apply \cite{White:topology-weak,Ilmanen:singularities} (cf.\ Proposition \ref{prop:properties.until.nongeneric.time}). 
\end{proof}
\begin{proposition}\label{prop:unique-tf-genus-mult}
Suppose that one tangent flow to $\cM$ at $(\bx,T_\textnormal{gen}(\cM))$ is associated to a genus $g$ shrinker with multiplicity $k$. Then, any other tangent flow to $\cM$ at $(\bx,T_\textnormal{gen}(\cM))$ is associated to a (possibly different) genus $g$ shrinker with multiplicity $k$. 
\end{proposition}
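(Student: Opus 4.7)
The plan is to extend the argument of Proposition \ref{prop:smooth-tf-all-are} to keep track of both the multiplicity $k$ and genus $g$. Let $\cN$ be the rescaled Brakke flow obtained from $\cM$ at $X = (\bx, T_\textnormal{gen}(\cM))$, and set $\theta_0 := \Theta_\cM(X)$. Huisken's monotonicity guarantees that $F(\cN(\tau)) \searrow \theta_0$ as $\tau \to \infty$, so every subsequential limit $V$ of $\cN(\tau_i)$ satisfies $F(V) = \theta_0$. By Lemma \ref{lemm:huisk-mon-rescaled-shrinker-close-addendum}, we may fix $k_0$ so that $d_V(\cN(\tau), \mathfrak{F}^{k_0,g_0}) \to 0$, and by hypothesis some subsequence has $\cN(\tau_i) \rightharpoonup k\cH^2\lfloor\Sigma$ with $\Sigma \in \cS_2(g)$.

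The crucial step is to prove the following analog of Corollary \ref{coro:close-smooth-shrink-impl-smooth}: there is $\eps_1 > 0$ so that if $V \in \mathfrak{F}^{k_0,g_0}$ satisfies $d_V(V, k\cS_2(g)) \leq \eps_1$ and $F(V) \leq \theta_0 + 1$, then $V \in k\cS_2(g)$. To establish this, suppose $V_j \in \mathfrak{F}^{k_0,g_0}$ and $W_j \in k\cS_2(g)$ with $F(V_j) \leq \theta_0 + 1$ and $d_V(V_j, W_j) \to 0$. Each $V_j$ belongs to some $k'\cS_2(g')$ with $(k',g') \in \{1,\ldots,k_0\} \times \{0,\ldots,g_0\}$, and after passing to a subsequence $(k',g')$ is constant. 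The uniform $F$-bound combined with the shrinker compactness theorems of Colding--Minicozzi \cite{ColdingMinicozzi:compactness-shrinkers} and Sun--Wang \cite{SunWang:compact-genus} (which yield $C^\infty_\textnormal{loc}$ convergence with multiplicity one, preserving the genus in the limit) allows us to extract a subsequential $d_V$-limit of both $V_j$ and $W_j$; by Lemma \ref{lemm:gen.mult.fixed.shrink.closed} applied separately to $k\cS_2(g)$ and $k'\cS_2(g')$, both limits lie in their respective closed sets. Since $d_V(V_j, W_j) \to 0$, the two limits coincide, forcing $(k',g') = (k,g)$, hence $V_j \in k\cS_2(g)$ for $j$ large.

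Granted this isolation statement, the remainder is a direct adaptation of Proposition \ref{prop:smooth-tf-all-are}. Define
\[
\tau_i' := \sup\{\tau' \geq \tau_i : d_V(\cN(\tau''), k\cS_2(g)) \leq \eps_1/2 \text{ for all } \tau'' \in [\tau_i, \tau']\}.
\]
If $\tau_i' = \infty$ for some $i$, then every subsequential limit along $[\tau_i, \infty)$ lies in $\mathfrak{F}^{k_0,g_0}$ (by Lemma \ref{lemm:huisk-mon-rescaled-shrinker-close-addendum}) within $\eps_1/2$ of $k\cS_2(g)$, and the isolation above forces it into $k\cS_2(g)$, completing the proof. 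Otherwise $\tau_i' < \infty$ for all $i$, and $\tau_i' - \tau_i \to \infty$ by the continuity of $d_V(\cN(\cdot), k\cS_2(g))$ guaranteed by Lemma \ref{lemm:huisk-mon-rescaled-shrinker-close} (since $d_V(\cN(\tau_i), k\cS_2(g)) \to 0$). Then passing $\cN(\tau_i' - 1)$ to a subsequential limit $V' \in \mathfrak{F}^{k_0,g_0}$ with $d_V(V', k\cS_2(g)) \leq \eps_1/2$, isolation gives $V' \in k\cS_2(g)$, and by continuity $d_V(\cN(\tau''), k\cS_2(g)) = o(1)$ uniformly on $[\tau_i' - 1, \tau_i' + 1]$, contradicting the definition of $\tau_i'$.

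The main obstacle is the isolation step. Unlike Corollary \ref{coro:close-smooth-shrink-impl-smooth} (which only needed to rule out singularities), here one must rule out all \emph{other} values of the discrete pair $(k',g')$ simultaneously, and this requires the stronger compactness input for smooth embedded shrinkers of bounded genus and entropy---specifically that $C^\infty_\textnormal{loc}$ convergence occurs with multiplicity one and preserves genus, so that no collapsing of multiplicity or loss of genus can bridge two distinct strata $k\cS_2(g)$ and $k'\cS_2(g')$. Once this is in place, the spectral-gap-style argument carries through verbatim.
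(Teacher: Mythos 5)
Your proof takes essentially the same approach as the paper: the argument is modeled on Proposition \ref{prop:smooth-tf-all-are}, with $\mathfrak{F}$ replaced by $\mathfrak{F}^{k_0,g_0}$, the set $\cS_2$ replaced by the stratum $k\cS_2(g)$, and the stopping-time $\tau_i'$ defined in the same way. The one place where you go beyond the paper's text is the isolation statement. The paper obtains it by citing Lemma \ref{lemm:gen.mult.fixed.shrink.closed} alone, which says only that each stratum $k'\cS_2(g')$ is closed in $d_V$; since $\mathfrak{F}^{k_0,g_0}$ is a finite union of disjoint closed strata each stratum is clopen, but without a compactness input clopenness does not by itself yield a \emph{uniform} $\eps_0$. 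You have correctly identified that the isolation really uses the bound $F \leq \theta_0 + O(1)$ coming from Huisken monotonicity (present throughout, since one is always dealing with subsequential limits of $\cN(\tau)$), under which the Colding--Minicozzi/Sun--Wang compactness makes the strata intersected with an entropy sublevel set compact, disjoint, hence at positive $d_V$-distance. Making this explicit is a welcome addition, though it does not change the route; and the final contradiction via the asymptotic continuity of $\tau\mapsto d_V(\cN(\tau),\cdot)$ from Lemma \ref{lemm:huisk-mon-rescaled-shrinker-close} is exactly the paper's. One small wrinkle you should tidy: in the isolation argument you bound $F(V_j)$ but not $F(W_j)$; since $d_V(V_j,W_j)\to 0$ and both $V_j$ and $W_j$ are $F$-stationary, either argue that $F(W_j)$ is also bounded or simply note that both sequences already converge weakly to the same $F$-stationary limit $V'$ and then apply Lemma \ref{lemm:gen.mult.fixed.shrink.closed} directly to the $W_j$'s with that limit.
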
 
\begin{proof}
We follow a similar strategy to the proof of Proposition \ref{prop:smooth-tf-all-are}. Write $\cN$ for the rescaled Brakke flow associated to $\cM$ at $(\bx,T_\textnormal{gen}(\cM))$. By assumption there is $\tau_i \to \infty$ with $\cN(\tau_i) \rightharpoonup k\cH^2\lfloor \Sigma \in k \cS_2^g$. Using Lemma \ref{lemm:gen.mult.fixed.shrink.closed} we can fix $\eps_0>0$ so that if $V \in \mathfrak{F}^{k_0,g_0}$ has $d_V(V,k'\cS_2(g'))\leq \eps_0$ for some $k' \in \{1,\dots,k_0\},g'\in\{0,\dots,g_0\}$ then $V \in k'\cS_2(g')$. Then, we can define
\[
\tau_i' : = \sup\{\tau' \geq \tau_i : d_V(\cN(\tau'),k\cS_2^g) \leq \eps_0/2 \textrm{ for all } \tau \in [\tau_i,\tau']\}. 
\]
If $\tau_i' = \infty$ for some $i$, then for $s_i\to\infty$ so that $\cN(s_i) \to V \in \mathfrak{F}^{k_0,g_0}$ we have $V \in k\cS_2^g$ by the choice of $\eps_0$. Thus, it remains to rule out $\tau_i' <\infty$ for all $i$. 

As in Proposition \ref{prop:smooth-tf-all-are}, Lemmas \ref{lemm:huisk-mon-rescaled-shrinker-close} and \ref{lemm:huisk-mon-rescaled-shrinker-close-addendum} yield  $\tau_i'-\tau_i\to\infty$. Moreover, up to a subsequence $\cN(\tau_i'-1)$ converges to $V \in \mathfrak{F}^{k_0,g_0}$ and by choice of $\tau_i'$ and $\eps_0$, we thus see that $V \in k \cS_2^g$. Lemma \ref{lemm:huisk-mon-rescaled-shrinker-close} then yields a contradiction, completing the proof. 
\end{proof}

\bibliographystyle{alpha}
\bibliography{generic4}

\begin{thebibliography}{CHHW22}

\bibitem[ADS19]{ADS}
Sigurd Angenent, Panagiota Daskalopoulos, and Natasa Sesum.
\newblock Unique asymptotics of ancient convex mean curvature flow solutions.
\newblock {\em J. Differential Geom.}, 111(3):381--455, 2019.

\bibitem[ADS20]{ADS2}
Sigurd Angenent, Panagiota Daskalopoulos, and Natasa Sesum.
\newblock Uniqueness of two-convex closed ancient solutions to the mean
  curvature flow.
\newblock {\em Ann. of Math. (2)}, 192(2):353--436, 2020.

\bibitem[And12]{Andrews:noncollapse}
Ben Andrews.
\newblock Noncollapsing in mean-convex mean curvature flow.
\newblock {\em Geom. Topol.}, 16(3):1413--1418, 2012.

\bibitem[Ang92]{Angenent:doughnuts}
Sigurd~B. Angenent.
\newblock Shrinking doughnuts.
\newblock In {\em Nonlinear diffusion equations and their equilibrium states, 3
  ({G}regynog, 1989)}, volume~7 of {\em Progr. Nonlinear Differential Equations
  Appl.}, pages 21--38. Birkh\"{a}user Boston, Boston, MA, 1992.

\bibitem[BC19]{BrendleChoi:3d}
Simon Brendle and Kyeongsu Choi.
\newblock Uniqueness of convex ancient solutions to mean curvature flow in
  {$\Bbb R^3$}.
\newblock {\em Invent. Math.}, 217(1):35--76, 2019.

\bibitem[BC21]{BrendleChoi:nD}
Simon Brendle and Kyeongsu Choi.
\newblock Uniqueness of convex ancient solutions to mean curvature flow in
  higher dimensions.
\newblock {\em Geom. Topol.}, 25(5):2195--2234, 2021.

\bibitem[BN21]{BrendleNaff:noncollapse}
Simon Brendle and Keaton Naff.
\newblock A local noncollapsing estimate for mean curvature flow.
\newblock {\em to appear in Amer.\ J.\ Math
  \url{https://arxiv.org/abs/2103.15641}}, 2021.

\bibitem[BNS21]{BuzanoNguyenSchulz}
Reto Buzano, Huy~The Nguyen, and Mario~B. Schulz.
\newblock Noncompact self-shrinkers for mean curvature flow with arbitrary
  genus.
\newblock {\em \url{https://arxiv.org/abs/2110.06027}}, 2021.

\bibitem[Bre15]{Brendle:inscribed-sharp}
Simon Brendle.
\newblock A sharp bound for the inscribed radius under mean curvature flow.
\newblock {\em Invent. Math.}, 202(1):217--237, 2015.

\bibitem[Bre16]{Brendle:genus0}
Simon Brendle.
\newblock Embedded self-similar shrinkers of genus 0.
\newblock {\em Ann. of Math. (2)}, 183(2):715--728, 2016.

\bibitem[BW15]{BernsteinWang:high-mult-unique}
Jacob Bernstein and Lu~Wang.
\newblock A remark on a uniqueness property of high multiplicity tangent flows
  in dimension 3.
\newblock {\em Int. Math. Res. Not. IMRN}, (15):6286--6294, 2015.

\bibitem[BW16]{BernsteinWang:1}
Jacob Bernstein and Lu~Wang.
\newblock A sharp lower bound for the entropy of closed hypersurfaces up to
  dimension six.
\newblock {\em Invent. Math.}, 206(3):601--627, 2016.

\bibitem[BW17]{BernsteinWang:TopologicalProperty}
Jacob Bernstein and Lu~Wang.
\newblock A topological property of asymptotically conical self-shrinkers of
  small entropy.
\newblock {\em Duke Math. J.}, 166(3):403--435, 2017.

\bibitem[BW18a]{BernsteinWang:degree-expander}
Jacob {Bernstein} and Lu~{Wang}.
\newblock An integer degree for asymptotically conical self-expanders.
\newblock {\em \url{https://arxiv.org/abs/1807.06494}}, 2018.

\bibitem[BW18b]{BernsteinWang:topology-small-ent}
Jacob Bernstein and Lu~Wang.
\newblock Topology of closed hypersurfaces of small entropy.
\newblock {\em Geom. Topol.}, 22(2):1109--1141, 2018.

\bibitem[BW21a]{BernsteinWang:expander-compactness}
Jacob Bernstein and Lu~Wang.
\newblock Smooth compactness for spaces of asymptotically conical
  self-expanders of mean curvature flow.
\newblock {\em Int. Math. Res. Not. IMRN}, (12):9016--9044, 2021.

\bibitem[BW21b]{BernsteinWang:SpaceOfExpanders}
Jacob Bernstein and Lu~Wang.
\newblock The space of asymptotically conical self-expanders of mean curvature
  flow.
\newblock {\em Math. Ann.}, 380(1-2):175--230, 2021.

\bibitem[BW22a]{Bernstein-Wang:schoenflies}
Jacob Bernstein and Lu~Wang.
\newblock Closed hypersurfaces of low entropy in {$\Bbb R^4$} are isotopically
  trivial.
\newblock {\em Duke Math. J.}, 171(7):1531--1558, 2022.

\bibitem[BW22b]{Bernstein-Wang:mtn-pass}
Jacob Bernstein and Lu~Wang.
\newblock A mountain-pass theorem for asymptotically conical self-expanders.
\newblock {\em Peking Math. J.}, 5(2):213--278, 2022.

\bibitem[BW22c]{BernsteinWang:relative-entropy}
Jacob Bernstein and Lu~Wang.
\newblock Relative expander entropy in the presence of a two-sided obstacle and
  applications.
\newblock {\em Adv. Math.}, 399:Paper No. 108284, 48, 2022.

\bibitem[BW22d]{bernsteinWang:top-uniqueness-expanders}
Jacob Bernstein and Lu~Wang.
\newblock Topological uniqueness for self-expanders of small entropy.
\newblock {\em Camb. J. Math.}, 10(4):785--833, 2022.

\bibitem[CCMS20]{CCMS:generic1}
Otis Chodosh, Kyeongsu Choi, Christos Mantoulidis, and Felix Schulze.
\newblock Mean curvature flow with generic initial data.
\newblock {\em \url{https://arxiv.org/abs/2003.14344}}, 2020.

\bibitem[CCMS21]{CCMS:low-ent-gen}
Otis Chodosh, Kyeongsu Choi, Christos Mantoulidis, and Felix Schulze.
\newblock Mean curvature flow with generic low-entropy initial data.
\newblock {\em \url{https://arxiv.org/abs/2102.11978}}, 2021.

\bibitem[CGG91]{ChenGigaGoto}
Yun~Gang Chen, Yoshikazu Giga, and Shun'ichi Goto.
\newblock Uniqueness and existence of viscosity solutions of generalized mean
  curvature flow equations.
\newblock {\em J. Differential Geom.}, 33(3):749--786, 1991.

\bibitem[CHH21]{CHH:4D}
Kyeongsu Choi, Robert Haslhofer, and Or~Hershkovits.
\newblock A nonexistence result for wing-like mean curvature flows in
  $\mathbb{R}^4$.
\newblock {\em \url{https://arxiv.org/abs/2105.13100}}, 2021.

\bibitem[CHH22]{ChoiHaslhoferHershkovits}
Kyeongsu Choi, Robert Haslhofer, and Or~Hershkovits.
\newblock Ancient low-entropy flows, mean-convex neighborhoods, and uniqueness.
\newblock {\em Acta Math.}, 228(2):217--301, 2022.

\bibitem[CHHW22]{ChoiHaslhoferHershkovitsWhite}
Kyeongsu Choi, Robert Haslhofer, Or~Hershkovits, and Brian White.
\newblock Ancient asymptotically cylindrical flows and applications.
\newblock {\em Invent. Math.}, 229(1):139--241, 2022.

\bibitem[Cho94]{Chopp}
David~L. Chopp.
\newblock Computation of self-similar solutions for mean curvature flow.
\newblock {\em Experiment. Math.}, 3(1):1--15, 1994.

\bibitem[CIM15]{ColdingIlmanenMinicozzi}
Tobias~Holck Colding, Tom Ilmanen, and William~P. Minicozzi, II.
\newblock Rigidity of generic singularities of mean curvature flow.
\newblock {\em Publ. Math. Inst. Hautes \'{E}tudes Sci.}, 121:363--382, 2015.

\bibitem[CIMW13]{ColdingMinicozziIlmanenWhite}
Tobias~Holck Colding, Tom Ilmanen, William~P. Minicozzi, II, and Brian White.
\newblock The round sphere minimizes entropy among closed self-shrinkers.
\newblock {\em J. Differential Geom.}, 95(1):53--69, 2013.

\bibitem[CM12a]{ColdingMinicozzi:generic}
Tobias~H. Colding and William~P. Minicozzi, II.
\newblock Generic mean curvature flow {I}: generic singularities.
\newblock {\em Ann. of Math. (2)}, 175(2):755--833, 2012.

\bibitem[CM12b]{ColdingMinicozzi:compactness-shrinkers}
Tobias~H. Colding and William~P. Minicozzi, II.
\newblock Smooth compactness of self-shrinkers.
\newblock {\em Comment. Math. Helv.}, 87(2):463--475, 2012.

\bibitem[CM15]{ColdingMinicozzi:uniqueness-tangent-flow}
Tobias~Holck Colding and William~P. Minicozzi, II.
\newblock Uniqueness of blowups and \l ojasiewicz inequalities.
\newblock {\em Ann. of Math. (2)}, 182(1):221--285, 2015.

\bibitem[CM16]{ColdingMinicozzi:sing-generic}
Tobias~Holck Colding and William~P. Minicozzi, II.
\newblock The singular set of mean curvature flow with generic singularities.
\newblock {\em Invent. Math.}, 204(2):443--471, 2016.

\bibitem[CM19]{ColdingMinicozzi:dynamics}
Tobias~Holck Colding and William~P. Minicozzi, II.
\newblock Dynamics of closed singularities.
\newblock {\em Ann. Inst. Fourier (Grenoble)}, 69(7):2973--3016, 2019.

\bibitem[CM22]{ChoiMantoulidis}
Kyeongsu Choi and Christos Mantoulidis.
\newblock Ancient gradient flows of elliptic functionals and {M}orse index.
\newblock {\em Amer. J. Math.}, 144(2):541--573, 2022.

\bibitem[CMS23]{CMS:910}
Otis Chodosh, Christos Mantoulidis, and Felix Schulze.
\newblock Generic regularity for minimizing hypersurfaces in dimensions 9 and
  10.
\newblock {\em \url{https://arxiv.org/abs/2302.02253}}, 2023.

\bibitem[CS21]{ChodoshSchulze}
Otis Chodosh and Felix Schulze.
\newblock Uniqueness of asymptotically conical tangent flows.
\newblock {\em Duke Math. J.}, 170(16):3601--3657, 2021.

\bibitem[CY07]{ChenYin}
Bing-Long Chen and Le~Yin.
\newblock Uniqueness and pseudolocality theorems of the mean curvature flow.
\newblock {\em Comm. Anal. Geom.}, 15(3):435--490, 2007.

\bibitem[DH22]{Daniels-Holgate}
J.~M. Daniels-Holgate.
\newblock Approximation of mean curvature flow with generic singularities by
  smooth flows with surgery.
\newblock {\em Adv. Math.}, 410(part A):Paper No. 108715, 42, 2022.

\bibitem[Eck00]{Ecker:Sobolev}
Klaus Ecker.
\newblock Logarithmic {S}obolev inequalities on submanifolds of {E}uclidean
  space.
\newblock {\em J. Reine Angew. Math.}, 522:105--118, 2000.

\bibitem[EH91]{EckerHuisken:interior}
Klaus Ecker and Gerhard Huisken.
\newblock Interior estimates for hypersurfaces moving by mean curvature.
\newblock {\em Invent. Math.}, 105(3):547--569, 1991.

\bibitem[ES91]{EvansSpruck1}
L.~C. Evans and J.~Spruck.
\newblock Motion of level sets by mean curvature. {I}.
\newblock {\em J. Differential Geom.}, 33(3):635--681, 1991.

\bibitem[Hat02]{Hatcher}
Allen Hatcher.
\newblock {\em Algebraic topology}.
\newblock Cambridge University Press, Cambridge, 2002.

\bibitem[Hat07]{Hatcher:3mfld}
Allen Hatcher.
\newblock Notes on basic 3-manifold topology.
\newblock {\em \url{https://pi.math.cornell.edu/~hatcher/3M/3Mfds.pdf}}, 2007.

\bibitem[HH18]{HaslhoferHershkovits}
Robert Haslhofer and Or~Hershkovits.
\newblock Singularities of mean convex level set flow in general ambient
  manifolds.
\newblock {\em Adv. Math.}, 329:1137--1155, 2018.

\bibitem[HIMW19]{HIMW:trans}
D.~Hoffman, T.~Ilmanen, F.~Mart\'{\i}n, and B.~White.
\newblock Graphical translators for mean curvature flow.
\newblock {\em Calc. Var. Partial Differential Equations}, 58(4):Paper No. 117,
  29, 2019.

\bibitem[HS79]{HardtSimon:bdry.reg}
Robert Hardt and Leon Simon.
\newblock Boundary regularity and embedded solutions for the oriented {P}lateau
  problem.
\newblock {\em Ann. of Math. (2)}, 110(3):439--486, 1979.

\bibitem[HS85]{HardtSimon:foliation}
Robert Hardt and Leon Simon.
\newblock Area minimizing hypersurfaces with isolated singularities.
\newblock {\em J. Reine Angew. Math.}, 362:102--129, 1985.

\bibitem[Hui90]{Huisken:sing}
Gerhard Huisken.
\newblock Asymptotic behavior for singularities of the mean curvature flow.
\newblock {\em J. Differential Geom.}, 31(1):285--299, 1990.

\bibitem[HW18]{HershkovitsWhite:set-theoretic}
Or~Hershkovits and Brian White.
\newblock Avoidance for set-theoretic solutions of mean-curvature-type flows.
\newblock {\em \url{https://arxiv.org/abs/1809.03026}}, 2018.

\bibitem[HW19]{HershkovitsWhite:sharp-entropy}
Or~Hershkovits and Brian White.
\newblock Sharp entropy bounds for self-shrinkers in mean curvature flow.
\newblock {\em Geom. Topol.}, 23(3):1611--1619, 2019.

\bibitem[HW20]{HershkovtisWhite}
Or~Hershkovits and Brian White.
\newblock Nonfattening of mean curvature flow at singularities of mean convex
  type.
\newblock {\em Comm. Pure Appl. Math.}, 73(3):558--580, 2020.

\bibitem[Ilm93]{Ilmanen:levelset}
Tom Ilmanen.
\newblock The level-set flow on a manifold.
\newblock In {\em Differential geometry: partial differential equations on
  manifolds ({L}os {A}ngeles, {CA}, 1990)}, volume~54 of {\em Proc. Sympos.
  Pure Math.}, pages 193--204. Amer. Math. Soc., Providence, RI, 1993.

\bibitem[Ilm94]{Ilmanen:elliptic}
Tom Ilmanen.
\newblock Elliptic regularization and partial regularity for motion by mean
  curvature.
\newblock {\em Mem. Amer. Math. Soc.}, 108(520):x+90, 1994.

\bibitem[Ilm95]{Ilmanen:singularities}
Tom Ilmanen.
\newblock Singularities of mean curvature flow of surfaces.
\newblock {\em \url{https://people.math.ethz.ch/~ilmanen/papers/sing.ps}},
  1995.

\bibitem[Ilm96]{Ilmanen:maximum}
T.~Ilmanen.
\newblock A strong maximum principle for singular minimal hypersurfaces.
\newblock {\em Calc. Var. Partial Differential Equations}, 4(5):443--467, 1996.

\bibitem[Ilm03]{Ilmanen:problems}
Tom Ilmanen.
\newblock Problems in mean curvature flow.
\newblock {\em
  \url{https://people.math.ethz.ch/~ilmanen/classes/eil03/problems03.ps}},
  2003.

\bibitem[Ket16]{Ketover:self-shrinkers}
Daniel Ketover.
\newblock Self-shrinking platonic solids.
\newblock {\em \url{https://arxiv.org/abs/1602.07271}}, 2016.

\bibitem[KKM18]{KKM:AC}
Nikolaos Kapouleas, Stephen~James Kleene, and Niels~Martin M{\o}ller.
\newblock Mean curvature self-shrinkers of high genus: non-compact examples.
\newblock {\em J. Reine Angew. Math.}, 739:1--39, 2018.

\bibitem[KM14]{KleeneMoller}
Stephen Kleene and Niels~Martin M{\o}ller.
\newblock Self-shrinkers with a rotational symmetry.
\newblock {\em Trans. Amer. Math. Soc.}, 366(8):3943--3963, 2014.

\bibitem[Lin20]{Lin:star}
Longzhi Lin.
\newblock Mean curvature flow of star-shaped hypersurfaces.
\newblock {\em Comm. Anal. Geom.}, 28(6):1315--1336, 2020.

\bibitem[LW19]{LiWang1}
Haozhao Li and Bing Wang.
\newblock The extension problem of the mean curvature flow ({I}).
\newblock {\em Invent. Math.}, 218(3):721--777, 2019.

\bibitem[LW22]{LiWang2}
Haozhao Li and Bing Wang.
\newblock On {I}lmanen's multiplicity-one conjecture for mean curvature flow
  with type-{$I$} mean curvature.
\newblock {\em J. Eur. Math. Soc. (JEMS)}, 24(1):37--135, 2022.

\bibitem[MY82]{MeeksYau}
William~W. Meeks, III and Shing~Tung Yau.
\newblock The existence of embedded minimal surfaces and the problem of
  uniqueness.
\newblock {\em Math. Z.}, 179(2):151--168, 1982.

\bibitem[Ngu14]{Nguyen:AC}
Xuan~Hien Nguyen.
\newblock Construction of complete embedded self-similar surfaces under mean
  curvature flow, {P}art {III}.
\newblock {\em Duke Math. J.}, 163(11):2023--2056, 2014.

\bibitem[Sch14]{Schulze:Loj}
Felix Schulze.
\newblock Uniqueness of compact tangent flows in mean curvature flow.
\newblock {\em J. Reine Angew. Math.}, 690:163--172, 2014.

\bibitem[Sim83]{Simon:GMT}
Leon Simon.
\newblock {\em Lectures on geometric measure theory}, volume~3 of {\em
  Proceedings of the Centre for Mathematical Analysis, Australian National
  University}.
\newblock Australian National University, Centre for Mathematical Analysis,
  Canberra, 1983.

\bibitem[Sma93]{Smale}
Nathan Smale.
\newblock Generic regularity of homologically area minimizing hypersurfaces in
  eight-dimensional manifolds.
\newblock {\em Comm. Anal. Geom.}, 1(2):217--228, 1993.

\bibitem[SW89]{SolomonWhite}
Bruce Solomon and Brian White.
\newblock A strong maximum principle for varifolds that are stationary with
  respect to even parametric elliptic functionals.
\newblock {\em Indiana Univ. Math. J.}, 38(3):683--691, 1989.

\bibitem[SW09]{ShengWang}
Weimin Sheng and Xu-Jia Wang.
\newblock Singularity profile in the mean curvature flow.
\newblock {\em Methods Appl. Anal.}, 16(2):139--155, 2009.

\bibitem[SW20a]{SchulzeWhite}
Felix Schulze and Brian White.
\newblock A local regularity theorem for mean curvature flow with triple edges.
\newblock {\em J. Reine Angew. Math.}, 758:281--305, 2020.

\bibitem[SW20b]{SunWang:compact-genus}
Ao~Sun and Zhichao Wang.
\newblock Compactness of self-shrinkers in {$\Bbb{R}^3$} with fixed genus.
\newblock {\em Adv. Math.}, 367:107110, 39, 2020.

\bibitem[SW23]{SunWang:translator}
Ao~Sun and Zhihan Wang.
\newblock On mean curvature flow translators with prescribed ends.
\newblock {\em \url{https://arxiv.org/abs/2301.08224}}, 2023.

\bibitem[SWZ20]{SunWangZhou}
Ao~Sun, Zhichao Wang, and Xin Zhou.
\newblock Multiplicity one for min-max theory in compact manifolds with
  boundary and its applications.
\newblock {\em \url{https://arxiv.org/abs/2011.04136}}, 2020.

\bibitem[SX21a]{SunXue:conical}
Ao~Sun and Jinxin Xue.
\newblock Initial perturbation of the mean curvature flow for asymptotical
  conical limit shrinker.
\newblock {\em \url{https://arxiv.org/abs/2107.05066}}, 2021.

\bibitem[SX21b]{SunXue:cpt}
Ao~Sun and Jinxin Xue.
\newblock Initial perturbation of the mean curvature flow for closed limit
  shrinker.
\newblock {\em \url{https://arxiv.org/abs/2104.03101}}, 2021.

\bibitem[SX22]{SunXue:cylinder}
Ao~Sun and Jinxin Xue.
\newblock Generic mean curvature flows with cylindrical singularities.
\newblock {\em \url{https://arxiv.org/abs/2210.00419}}, 2022.

\bibitem[Wan16a]{Wang:ends-conical}
Lu~Wang.
\newblock Asymptotic structure of self-shrinkers.
\newblock {\em \url{https://arxiv.org/abs/1610.04904}}, 2016.

\bibitem[Wan16b]{Wang:cylindrical-ends-unique}
Lu~Wang.
\newblock Uniqueness of self-similar shrinkers with asymptotically cylindrical
  ends.
\newblock {\em J. Reine Angew. Math.}, 715:207--230, 2016.

\bibitem[Wan22]{Wang:HSfol}
Zhihan Wang.
\newblock Mean convex smoothing of mean convex cones.
\newblock {\em \url{https://arxiv.org/abs/2202.07851}}, 2022.

\bibitem[Whi95]{White:topology-weak}
Brian White.
\newblock The topology of hypersurfaces moving by mean curvature.
\newblock {\em Comm. Anal. Geom.}, 3(1-2):317--333, 1995.

\bibitem[Whi97]{White:stratification}
Brian White.
\newblock Stratification of minimal surfaces, mean curvature flows, and
  harmonic maps.
\newblock {\em J. Reine Angew. Math.}, 488:1--35, 1997.

\bibitem[Whi00]{White:size}
Brian White.
\newblock The size of the singular set in mean curvature flow of mean-convex
  sets.
\newblock {\em J. Amer. Math. Soc.}, 13(3):665--695, 2000.

\bibitem[Whi02]{white:ICM}
Brian White.
\newblock Evolution of curves and surfaces by mean curvature.
\newblock In {\em Proceedings of the {I}nternational {C}ongress of
  {M}athematicians, {V}ol. {I} ({B}eijing, 2002)}, pages 525--538. Higher Ed.
  Press, Beijing, 2002.

\bibitem[Whi05]{White:Brakke}
Brian White.
\newblock A local regularity theorem for mean curvature flow.
\newblock {\em Ann. of Math. (2)}, 161(3):1487--1519, 2005.

\bibitem[Whi09]{White:cyclic}
Brian White.
\newblock Currents and flat chains associated to varifolds, with an application
  to mean curvature flow.
\newblock {\em Duke Math. J.}, 148(1):41--62, 2009.

\bibitem[Whi15]{White:MCF-lectures}
Brian White.
\newblock Stanford course on mean curvature flow.
\newblock {\em \url{http://web.stanford.edu/~ochodosh/MCFnotes.pdf}}, 2015.

\bibitem[Zhu16]{Zhu:entropy}
Jonathan~J. Zhu.
\newblock On the entropy of closed hypersurfaces and singular self-shrinkers.
\newblock {\em to appear in J.\ Differential Geom.,
  \url{https://arxiv.org/abs/1607.07760}}, 2016.

\bibitem[Zhu21]{Zhu:mcvx.loj.rigid}
Jonathan~J. Zhu.
\newblock {\L }ojasiewicz inequalities, uniqueness and rigidity for cylindrical
  self-shrinkers.
\newblock {\em \url{https://arxiv.org/abs/2011.01633}}, 2021.

\bibitem[Zhu23]{Zhu:mcvx.loj}
Jonathan~J. Zhu.
\newblock {\L ojasiewicz} {I}nequalities for {M}ean {C}onvex
  {S}elf-{S}hrinkers.
\newblock {\em Int. Math. Res. Not. IMRN}, (2):1236--1254, 2023.

\end{thebibliography}

\end{document}